%
\documentclass[12pt, reqno]{amsart}
\usepackage{amsmath, amsthm, amscd, amsfonts, amssymb, graphicx, color, mathrsfs}
\usepackage[bookmarksnumbered, colorlinks, plainpages]{hyperref}
\usepackage[all]{xy}
\usepackage{slashed}
\usepackage{tikz-cd}
\usepackage{mathabx}
\usepackage{tipa}
\usepackage{soul}
\usepackage{cancel}
\usepackage{ulem}

\textheight 22.5truecm \textwidth 14.5truecm
\setlength{\oddsidemargin}{0.35in}\setlength{\evensidemargin}{0.35in}

\setlength{\topmargin}{-.5cm}

\newtheorem{theorem}{Theorem}[section]
\newtheorem{lemma}[theorem]{Lemma}

\newtheorem{proposition}[theorem]{Proposition}
\newtheorem{corollary}[theorem]{Corollary}
\theoremstyle{definition}
\newtheorem{definition}[theorem]{Definition}
\newtheorem{example}[theorem]{Example}

\theoremstyle{remark}
\newtheorem{remark}[theorem]{Remark}
\numberwithin{equation}{section}

\begin{document}
\setcounter{page}{1}

\title[Operators on Homogeneous vector bundles  ]{ Pseudo-differential operators on Homogeneous vector bundles over  compact homogeneous manifolds}

\author[D. Cardona ]{Duv\'an Cardona}
\address{
  Duv\'an Cardona:
  \endgraf
  Department of Mathematics: Analysis, Logic and Discrete Mathematics
  \endgraf
  Ghent University, Belgium
  \endgraf
  {\it E-mail address} {\rm duvanc306@gmail.com, duvan.cardonasanchez@ugent.be}
  }
  
  \author[V. Kumar]{Vishvesh Kumar}
\address{
 Vishvesh Kumar:
  \endgraf
  Department of Mathematics: Analysis, Logic and Discrete Mathematics
  \endgraf
  Ghent University, Belgium
  \endgraf
  {\it E-mail address} {\rm vishveshmishra@gmail.com, Vishvesh.Kumar@UGent.be}
  }

\author[M. Ruzhansky]{Michael Ruzhansky}
\address{
  Michael Ruzhansky:
  \endgraf
  Department of Mathematics: Analysis, Logic and Discrete Mathematics
  \endgraf
  Ghent University, Belgium
  \endgraf
 and
  \endgraf
  School of Mathematical Sciences
  \endgraf
  Queen Mary University of London
  \endgraf
  United Kingdom
  \endgraf
  {\it E-mail address} {\rm michael.ruzhansky@ugent.be, m.ruzhansky@qmul.ac.uk}
  }

\dedicatory{ {To the memory of Isadore Singer, 1924--2021.}} 

 \allowdisplaybreaks

\subjclass[2010]{Primary {22E30; Secondary 58J40}.}

\keywords{Pseudo-differential operator, Homogeneous vector bundle, compact homogeneous space, Functional calculus   G\r{a}rding inequality, Evolution equations}

\thanks{The authors are supported  by the FWO  Odysseus  1  grant  G.0H94.18N:  Analysis  and  Partial Differential Equations and by the Methusalem programme of the Ghent University Special Research Fund (BOF)
(Grant number 01M01021). Michael Ruzhansky and Vishvesh Kumar are supported by the FWO Senior Research Grant G011522N. Duv\'an Cardona was supported by the Research Foundation-Flanders
(FWO) under the postdoctoral grant No 1204824N. The third author is also supported  by EPSRC grant EP/V005529/1.}

\begin{abstract} In this work, we introduce a global theory of subelliptic pseudo-differential operators on  arbitrary homogeneous vector bundles   over  orientable  compact homogeneous manifolds.   We will show that a global pseudo-differential calculus can be associated to the operators acting on any  pair of homogeneous vector-bundles with base space $M,$ if  the compact Lie group $G$ that acts on  $M=G/K$ is endowed with a (Riemannian or) sub-Riemannian structure. This is always possible if we choose on $G$ a sub-Laplacian associated to a H\"ormander system of vector-fields or  we fix the Laplace-Beltrami operator on $G$. We begin with developing a global subelliptic symbolic calculus for  vector-valued pseudo-differential operators on $G$ and then, we show that this vector-valued calculus  induces a pseudo-differential calculus on  homogeneous vector bundles, which, among other things, is stable under the action of the complex functional calculus. We prove  global versions of the Calder\'on-Vaillancourt theorem, Fefferman theorem and also, of the G\r{a}rding inequality. We present applications of the obtained G\r{a}rding inequality  to the wellposedness of evolution problems.  We characterise the classes of pseudo-differential operators on homogeneous vector bundles in the sense of H\"ormander (which are defined by using local coordinate systems) in terms of their  global symbols. Finally, using this formalism, we compute  the global symbol of the exterior derivative, its adjoint, and the symbol of the Dirac operator on the vector bundle of differential forms. We hope that this work will provide a solid foundation for further research using the global quantisation of operators on (vector-bundles over) compact homogeneous manifolds. 
\end{abstract} \maketitle

\tableofcontents
\section{Introduction}

Homogeneous vector bundles over compact homogeneous spaces  are geometrical structures of crucial interest in pure and applied mathematics, in particular, in the  index  theory of elliptic operators (see e.g. Atiyah and  Singer \cite[Page 494]{ASII}, Bott \cite{Bott1957} and Connes and Moscovici \cite{ConnesMoscovici}) and this work deals with the construction of a global/intrinsic theory of pseudo-differential operators on them. 

Our approach is free of local coordinate systems providing a new description e.g.  for the H\"ormander calculus of pseudo-differential operators on homogeneous vector-bundles  (see H\"ormander \cite{Hormander1985III}), and also for more general classes  of subelliptic pseudo-differential operators. The approach of working with the notion of a  global symbol, instead of the notion of a local symbol  defined by local coordinate systems, is of special interest in mathematical-physics due to the high computational costs and the complexity of the Fourier analysis under changes of coordinates (see Fulling \cite{Fullying96,Fullying98,FullyingKennedy88}).

The theory developed here shows, as it was conjectured in Subsection 1.2 of \cite{RuzhanskyCardona2020}, that a pseudo-differential calculus attached to a sub-Riemannian structure on compact manifolds with symmetries (more general than compact Lie groups) remains valid.     The pseudo-differential classes obtained in the present work absorb the ones developed by H\"ormander  \cite{Hormander1985III} in the case of homogeneous vector-bundles, and also the classes for compact Lie groups introduced by the third author and Turunen in \cite{Ruz} and by the first and third author in \cite{RuzhanskyCardona2020}.    The central notion of this theory is the notion of a global symbol and it will be used to characterise the classes of pseudo-differential operators on homogeneous vector bundles in the sense of H\"ormander, which are defined by local coordinate systems (for more details see the classical book of H\"ormander  \cite{Hormander1985III}). For the case of $(\rho,\delta)$-H\"ormander classes of pseudo-differential operators on compact Lie groups this characterisation (with the usual restriction $1-\rho\leq \delta$ and $\delta<\rho$ for the invariance of the classes under coordinate changes) was obtained in \cite{RWT}, with new classes in the case $1-\rho\leq \delta, $ allowing the construction of parametrices in the case $\rho=\delta,$ as we will discuss later. The standard restriction $1-\rho\leq \delta$ will also be removed in our context.

To explain the relevance of introducing a global calculus for pseudo-differential operators in our context, let us start by describing the following problem, which  has been extensively studied but remains open for several geometric structures, and arising also in several problems related with mathematical-physics (see e.g.  the central point of discussion in the works of  Fulling: \cite[Page 599]{Fullying96} and \cite{Fullying98}). 
\begin{itemize}
    \item  {(Q0): \it{How to provide on a $C^\infty$-manifold $M$ (or over two vector bundles over $M$) a quantisation process where the formalism for the symbolic calculus of pseudo-differential operators will  be geometrically covariant (or defined in an intrinsic way)?}  }
\end{itemize}
Indeed, from the physical point of view (and considering the case of the flat space $\mathbb{R}^n$), a pseudo-differential operator is nothing else than an object that, in quantum mechanics, is a function of position ${\bf x}$ and momentum, ${\bf P}:=-i\hbar \nabla_{{\bf x}}.$\footnote{ $\nabla_{{\bf x}}$ stands for the standard gradient on $\mathbb{R}^n.$ In SI units, the Planck constant is given by $\hbar=6.62607015\times 10^{-34}  $ $J\cdot s.$} In this formalism, the quantisation of the coordinate functions ${\bf x}_j$ and ${\bf p}_j,$ are the multiplication operator by ${\bf x}_j$ and the derivation operator ${\bf P}_j:=-i\hbar \partial_{{\bf x}_j}$ respectively.   A polynomial function of the  momentum operator admitting variable coefficients is just a partial differential operator, but, what operator corresponds to a general smooth function ${a({\bf x},{\bf p})}?$ The answer is given  by the conventional (Kohn-Nirenberg) quantisation\footnote{Let us remark that the first general quantisation procedure (in pseudo-differential operators theory)  was proposed by Weyl (see \cite{Hormander1985III}) not long after the invention of quantum mechanics.}
\begin{equation}
     a({\bf x},{\bf P})\psi(x):=(2\pi \hbar )^{-\frac{n}{2}}\int\limits_{\mathbb{R}^n}e^{i {\bf p} \cdot {\bf x}/ \hbar}a({\bf x},{\bf p})\widehat{\psi}({\bf p})d{\bf p},\,\,\,\,\,\,\,\,\,\,\,\,
\end{equation}where $\widehat{\psi}({\bf p})$ is the Fourier transform of $\psi$ defined by
\begin{equation*}
  \widehat{\psi}({\bf p}):=  (2\pi \hbar )^{-\frac{n}{2}}\int\limits_{\mathbb{R}^n}e^{-i {\bf p} \cdot {\bf x}/ \hbar}\psi({\bf x})d{\bf x}.
\end{equation*}In this case the function $a({\bf x},{\bf p})$ is defined on the phase space ${T}^{*}\mathbb{R}^n,$ that is the cotangent bundle of $\mathbb{R}^n.$ However, if we want to quantise on a Riemannian manifold $M,$ symbols are functions of   $({\bf x},{\bf p})$ in the phase space $T^{*}M,$ and quantisations are symplectically invariant only on the level of the so-called principal symbol.  This is not a problem for many applications of pseudo-differential operators, which are quite rough and qualitative (e.g. in  index theorems of Atiyah-Singer type \cite{AS} and other geometrical invariants \cite{BB}). However, for other kind of problems of interest in physics (like the computation of the resolvent/parametrix of elliptic operators \cite{FullyingKennedy87,FullyingKennedy88},  quantising symbols arising in elasticity theory \cite{elasticity}, and in computing parametrices/fundamental solutions of geometrical operators in the construction of certain quantum gravity theories \cite[Section III]{Christensen1,Christensen2}) this approach has  computational costs of high complexity (see \cite[Page 599]{Fullying96} for details). Indeed, the complexity is based on the Fourier analysis in local coordinate systems. The results that it begets are computed in terms of nontensorial functions and covariant derivatives. So, it is necessary to rewrite these results by using intrinsic objects in terms of covariant derivatives and tensors, or in other more easier objects (see e.g.  \cite{Christensen1,Christensen2}). So, for our context, we will present an alternative way of constructing a symbolic calculus over homogeneous vector bundles--which uses both-- the generality of these geometrical structures
and an intrinsic formalism, in this case given by the representation theory associated to them.  

Several answers for (Q0) have been obtained  on compact Riemanian manifolds in the presence of a connection (see e.g. Widom \cite{Widom1978,Widom1980}, Drager \cite{DragerThesis}, and  Safarov \cite{Safarov1997}), some of these works indeed include  replacements for the Weyl quantisation. However, the idea of a global quantisation procedure using the Fourier analysis on general locally compact Lie groups can be traced back to Taylor \cite{Taylorbook1986}. This is the approach under which the theory developed in \cite{Ruz} was constructed.

We will continue this introduction with the historical aspects of the theory of pseudo-differential operators  related with this work and we finish it with our main results and their applications. 

{\it This work is dedicated to the memory of Isadore Singer, who together with Atiyah, Calder\'on, Seeley,  Kohn, Nirenberg, H\"ormander and Mihlin were predecessors and/or fathers of the theory of pseudo-differential operators.}

\subsection{Historical perspective} On the Euclidean space $\mathbb{R}^n,$ every continuous linear operator $A:\mathscr{S}(\mathbb{R}^n)\rightarrow \mathscr{S}'(\mathbb{R}^n),$ admits an integral representation of the form (by adopting unities for ${\bf x}:=x$ and ${\bf p}:=\xi$ where $\hbar=\frac{1}{2\pi}$\footnote{and $D=(-i/2\pi) \nabla_{x}.$})
\begin{align}\label{pseudodifferentialoperator}
   \boxed{    Af(x)\equiv  a(x,D)f(x)=\int\limits_{\mathbb{R}^n}e^{2\pi i x\cdot\xi}a(x,\xi)\widehat{f}(\xi)d\xi,\quad f\in \mathscr{S}(\mathbb{R}^n)}
\end{align}in terms of the Fourier transform $\widehat{f}$ of $f,$ and of a distribution $a\in \mathscr{S}'(\mathbb{R}^n\times \mathbb{R}^n ), $ encoding the analytical and/or spectral  properties of $A.$ Let us remark that, in the sense of distributions, $a(x,\xi)=e_{-\xi}(x)(Ae_{\xi})(x),$\footnote{where the homomorphisms $e_{\xi}:\mathbb{R}^n\rightarrow\mathbb{T}^n\cong \mathbb{R}^n/\mathbb{Z}^n, $ are defined by $e_{\xi}(x):=e^{i2\pi x\xi}.$ These are the elements of the   unitary dual of $\mathbb{R}^n.$} and that  the relation
\begin{align*}
    \textnormal{properties  of the distribution }a\,\,\,\Longleftrightarrow \,\,\,\textnormal{properties  of the operator }A,
\end{align*}is the central problem in the theory of pseudo-differential operators. In the terminology used in quantum mechanics, $A$ is the quantisation of the function/distribution $a,$ and in the usual terminology of modern mathematics, $A$ is the so called, {\it{pseudo-differential operator}} associated with the {\it{symbol}} $a.$ The pioneering work of J. J. Kohn and L. Nirenberg \cite{KohnNirenberg1965} introduced an algebra of pseudo-differential operators $\Psi^{\infty}(\mathbb{R}^n\times \mathbb{R}^n ):=\bigcup_{m\in \mathbb{R}}\Psi^{m}(\mathbb{R}^n\times \mathbb{R}^n ), $ where every (Kohn-Nirenberg) class $\Psi^{m}(\mathbb{R}^n\times \mathbb{R}^n )$ is defined by the pseudo-differential operators associated with those symbols $a\in S^m(\mathbb{R}^n\times \mathbb{R}^n ),$ i.e. the family of distributions $a\in \mathscr{S}'(\mathbb{R}^n\times \mathbb{R}^n ),$ satisfying estimates of the kind,
\begin{equation}
    |\partial_\xi^\alpha\partial_x^\beta a(x,\xi)|\lesssim(1+|\xi|)^{m-|\alpha|},\,\,\,\,|\xi|\rightarrow\infty.
\end{equation}The resultant algebra $\Psi^{\infty}(\mathbb{R}^n\times \mathbb{R}^n )$  is stable under adjoints and  parametrices (inverses modulo a compact operator). They are very useful in the study of existence and uniqueness of elliptic differential equations. Few years after the Kohn-Nirenberg algebra of  pseudo-differential operators was known, a substantial generalisation of \cite{KohnNirenberg1965} was introduced by L. H\"ormander in \cite{Hormander1967} with the goal of studying hypoelliptic equations (and in particular, in constructing a parametrix for the heat operator). For this, the H\"ormander classes of pseudo-differential operators $\Psi^{\infty}_{\rho,\delta}(\mathbb{R}^n\times \mathbb{R}^n ):=\bigcup_{m\in \mathbb{R}}\Psi^{m}_{\rho,\delta}(\mathbb{R}^n\times \mathbb{R}^n ), $ $\Psi^{m}_{\rho,\delta}(\mathbb{R}^n\times \mathbb{R}^n )$ is defined by the pseudo-differential operators associated with  symbols $a\in S^m_{\rho,\delta}(\mathbb{R}^n\times \mathbb{R}^n ),$ determined by those distributions $a\in \mathscr{S}'(\mathbb{R}^n\times \mathbb{R}^n ),$ with (distributional) derivatives satisfying   estimates of the kind,
\begin{equation}\label{rhodeltadefiniti0n}
    |\partial_\xi^\alpha\partial_x^\beta a(x,\xi)|\lesssim (1+|\xi|)^{m-\rho|\alpha|+\delta|\beta|},\,\,\,\,|\xi|\rightarrow\infty,
\end{equation} with the condition $\delta<\rho,$ (and $0\leq \delta,\rho\leq 1$) allowing the algebra $\Psi^{\infty}_{\rho,\delta}(\mathbb{R}^n\times \mathbb{R}^n )$ to be closed under adjoints and parametrices. Observe that  $S^m(\mathbb{R}^n\times \mathbb{R}^n )=S^m_{1,0}(\mathbb{R}^n\times \mathbb{R}^n )$ and clearly $\Psi^{m}(\mathbb{R}^n\times \mathbb{R}^n )=\Psi^{m}_{1,0}(\mathbb{R}^n\times \mathbb{R}^n ).$  On every open subset of $\mathbb{R}^n,$ the class $S^m_{\rho,\delta}(T^{*}U),$  $T^{*}U\cong U\times \mathbb{R}^n,$ is determined by those symbols $a\in \mathscr{S}'(U\times \mathbb{R}^n ) ,$ satisfying estimates of the type \eqref{rhodeltadefiniti0n} for all $x$ in every compact subset $K$ of $U.$ These classes of symbols determine the class of pseudo-differential operators on $U,$ $\Psi^m_{\rho,\delta}(U\times \mathbb{R}^n), $ which also induces the H\"ormander classes on a closed manifold $M.$ Indeed, a continuous linear operator $A: C^\infty_0(M)\rightarrow\mathscr{D}'(M),$ belongs to the H\"ormander class $$\Psi^m_{\rho,\delta}(M;\textnormal{loc}), $$ 
if in every system of coordinates of $M,$ (in every chart $U$ of $M$), $A,$ admits a representation of the form \eqref{pseudodifferentialoperator} (with a symbol $a\in S^m_{\rho,\delta}(T^{*}U) $). The resultant algebra $$\Psi^{\infty}_{\rho,\delta}(M;\textnormal{loc}):=\bigcup_{m\in \mathbb{R}}\Psi^{m}_{\rho,\delta}(M;\textnormal{loc}), $$
is closed under parametrices and  adjoints, but, the conditions $\delta<\rho,$ and $\rho\geq 1-\delta,$ (which implies $\rho>1/2$) are necessary in order that the  classes $\Psi^{m}_{\rho,\delta}(M;\textnormal{loc})$ will be invariant under changes of coordinates. 

In this context (see H\"ormander \cite{Hormander1985III}), to every operator $A\in \Psi^{m}_{\rho,\delta}(M;\textnormal{loc}) $ one can associate a (principal) symbol $a\in S^m_{\rho,\delta}(T^*M),$ which is a section of the cotangent bundle $T^*M,$ which is uniquely determined, only as an element of the quotient algebra $S^m_{\rho,\delta}(T^*M)/S^{m-1}_{\rho,\delta}(T^*M).$ Similar constructions can be done on   vector bundles  on every closed manifold $M.$ The algebra of pseudo-differential operators in this case is denoted by $$\Psi^\infty_{\rho,\delta}(E,F;\textnormal{loc}):=\bigcup_{m\in \mathbb{R}}\Psi^m_{\rho,\delta}(E,F;\textnormal{loc})$$ and the corresponding class of symbols is denoted as $$S^{m}_{\rho,\delta}(E,F;\textnormal{loc}):=S^m_{\rho,\delta}(T^*M, \Gamma^\infty(E), \Gamma^\infty(F))\footnote{Here, for vector bundles $E$ and $F$  on $M,$  $\Gamma^\infty(E)$  and $ \Gamma^\infty(F)$ are the corresponding spaces of smooth sections.},$$ with subsequent works appearing also in the case of vector bundles on a compact manifold with boundary $\partial M,$ on manifolds with corners, grupoids, etc. (see \cite{Hormander1985III} and also the works \cite{CSS,Sch86,Sch88,Loya2001,Loya2003,Loya2003a}  and references therein).

Based on the local construction of the principal symbol in the case of vector bundles on $C^\infty$-manifolds, the following question arises in the theory:
\begin{itemize}
    \item {(Q1): \it{When is it possible to define a notion of a global symbol (this means, without using local coordinate systems) allowing a global quantisation formula for the H\"ormander class  $\Psi^m_{\rho,\delta}(E,F;\textnormal{loc}),$ whose uniqueness is determined by the algebra}} $ S^{m}_{\rho,\delta}(E,F;\textnormal{loc})$ {\it{ but, not  by the quotient algebra $$S^{m}_{\rho,\delta}(E,F;\textnormal{loc})/S^{m-1}_{\rho,\delta}(E,F;\textnormal{loc})?$$}} 
\end{itemize}
This question is equivalent to (Q0) and it could have, certainly, several answers, several of them depending of the kind of applications for which one is thinking about. For instance, in the presence of a linear connection $\Gamma$ of  a $C^\infty$-manifold $M$, it was shown by Safarov \cite{Safarov1997} that it is possible  to construct a calculus of pseudo-differential operators without using  local coordinate systems. Indeed, by using  the fact that a linear connection $\Gamma$ is a global object defined on $M,$ one can define the notion of a global symbol inducing a global calculus of pseudo-differential operators, but, depending of choice of  the connection $\Gamma$ if $\rho<1-\delta$.

As it was pointed out in \cite{Safarov1997},  this idea was put forward by H. Widom \cite{Widom1978,Widom1980}. The Safarov classes $S^m_{\rho,\delta}(\Gamma):=S^m_{\rho,\delta}(\Gamma,M)$ in  \cite{Safarov1997} induce classes of pseudo-differential operators denoted by $\Psi^m_{\rho,\delta}(\Gamma):=\Psi^m_{\rho,\delta}(\Gamma,M)$  which agree with the H\"ormander classes $\Psi^m_{\rho,\delta}(M,\textnormal{loc})$ in the case  $\rho\geq 1-\delta,$ and they are in this case, independent of the choice of the connection $\Gamma.$ In the case of the Levi-Civita connection $\Gamma_0,$  a global functional calculus for operators of the form $\Delta+\nu,$ where $\Delta$ is the positive Laplace-Beltrami operator on $M,$  and $\nu$ is a suitable first order differential operator, was developed  in \cite{Safarov1997}. Certainly, the Safarov calculus has potential applications to geometric invariants computed from zeta-functions of operators, Wodzicki residues, canonical traces, etc.

On the other hand, if one is thinking of answering (Q1) in the case of smooth structures with symmetries (Lie groups $M=G$ and homogeneous spaces $M=G/K,$ where $K$ is a closed subgroup of $G$), it was shown in \cite{Ruz,RuzhanskyTurunenIMRN} by the third author and V. Turunen that defining symbols on the phase space $G\times \widehat{G},$ with $\widehat{G}$ being the unitary dual of $G,$ and with providing to $\widehat{G}$ a (discrete differential structure) difference structure is enough for describing the H\"ormander classes $\Psi^m_{\rho,\delta}(G, \textnormal{loc}).$ 
The global H\"ormander classes in \cite{Ruz,RuzhanskyTurunenIMRN} denoted by $S^m_{\rho,\delta}(G\times \widehat{G}),$ $m\in \mathbb{R},$ induce classes of pseudo-differential operators denoted by  $\Psi^m_{\rho,\delta}(G\times \widehat{G}),$ and they satisfy
\begin{equation}
    \Psi^m_{\rho,\delta}(G\times \widehat{G})=\Psi^m_{\rho,\delta}(G,\textnormal{loc})=\Psi^m_{\rho,\delta}(\Gamma,G),
\end{equation}for any linear connection $\Gamma$ of $G$ for $\rho\geq 1-\delta$ and $\delta<\rho.$ It is a very special situation in this theory that the case $\rho=\delta$ is allowed and it implies that the theory begets the construction of parametrices of the heat operator, which with the usual H\"ormander classes on arbitrary manifolds  is impossible. Moreover, it also allows the construction of parametrices for vector fields of the form $X+c,$ $c\in i\mathbb{R},$ where the usual H\"ormander classes are inoperable.

We observe that the main tool  in the theory developed in \cite{Ruz,RuzhanskyTurunenIMRN} is the following quantisation formula for a distribution $a\in S^m_{\rho,\delta}(G\times \widehat{G}),$ given by
\begin{equation}\label{QuantisationofRuz-Tur}   \boxed{ 
    Af(x)=\sum_{[\xi]\in \widehat{G}}d_{\xi}\textnormal{Tr}[\xi(x)\sigma_{A}(x,\xi)    \widehat{f}(\xi)   ]}
\end{equation}where $a\equiv\sigma_A:   G \times \widehat{G} \rightarrow \bigcup \{\mathbb{C}^{\ell \times \ell}: \ell \in \mathbb{N}\}$ is the  (global) symbol of $A,$ and $ \widehat{f}$ is the Fourier transform of $f$ on the group $G.$ 

 The approach   in \cite{Ruz,RuzhanskyTurunenIMRN} uses both, the representation theory  and the Fourier analysis on  a compact Lie group $G,$ with effective applications to, for example, G\r{a}rding inequalities, global well posedness for elliptic and hypoelliptic problems, mapping properties in $L^p,$ Besov and Sobolev spaces, Schatten properties, nuclearity of pseudo-differential operators, global functional calculus, spectral asymptotics,  and more recently in \cite{RuzhanskyCardona2020} a global description of H\"ormander classes in the presence of a sub-Riemannian structure.  We also note that on spin groups, a global pseudo-differential calculus has been worked out explicitly by  Cerejeiras, Ferreira,  K\"ahler, and  Wirth in \cite{CerejeirasFerreiraKahlerWirth}. As for the explicit construction of differences operators for defining the global H\"ormander classes on $\textnormal{SU}(2),$ as well as the construction of these classes on the sphere we refer to \cite{RuzhanskyTurunenIMRN}. 

The aim of this paper is to give an answer for (Q1) in the case of two homogeneous vector bundles $p_1:E\rightarrow M,$ and $p_2:F\rightarrow M,$  over a compact homogeneous manifold $M=G/K,$ with applications to G\r{a}rding inequalities, wellposedness for subelliptic problems, $L^2$-estimates in the form of   Calder\'on-Vaillancourt type estimates, by constructing an algebra of pseudo-differential operators absorbing the H\"ormander classes $\Psi^m_{\rho,\delta}(E,F;\textnormal{loc})$ for the vector bundles considered  in this context.

Examples of diffeomorphisms between Lie groups and some smooth manifolds (for example $\textnormal{SU}(2)\cong \mathbb{S}^3$ and $M\cong \mathbb{S}^3$ for any connected and simply connected 3-dimensional smooth manifold in view of the Perelman Theorem for the Poincar\'e conjecture), motivates the fact of constructing a global quantisation procedure for homogeneous vector bundles over compact homogeneous manifolds. We refer the reader to Section \ref{vectcal}, where, in the context described in this paragraph,  (Q1) is answered successfully without using local coordinate systems. 

\subsection{Outline of the work and main results}
 The problem of finding  a pseudo-differential calculus for continuous linear operators  $A:C^{\infty}(M)\rightarrow \mathscr{D}'(M),$ where $M=G/K$ is an orientable compact homogeneous manifold, started in the book of the third author with Turunen \cite{Ruz}, where it was shown that an averaging process for continuous linear operators in $\Psi^m_{1,0}(G)$ begets pseudo-differential operators in the class $\Psi^m_{1,0}(G/K).$ We also refer the reader to the thesis of Turunen \cite{Turunen:Thesis}.  Also, further subsequent developments were done in the thesis of D. Connolly \cite{ThesisConnolly}.  
 
 In this work, we consider the problem of solving  (Q1) in the case of two homogeneous vector-bundles $E$ and $F$  on $M,$ by following the suggested approach  by  M. Wilson \cite{Wilson2019}, where it was pointed out that one can consider the classical  construction of homogeneous vector bundles due to Raoul Bott \cite{Bott1957,Bott1965} in order to provide a global quantisation procedure on homogeneous vector bundles. 
 
 Our main results can be summarised as follows. Here, $\mathcal{L}$ denotes the sub-Laplacian  on a compact Lie group  $G$ associated to a  system of vector fields $X=\{X_1,\cdots,X_k\},$ satisfying the H\"ormander condition of order $\kappa,$ and  $\mathcal{L}_G$ denotes the Laplace-Beltrami operator on $G.$ 
 \begin{itemize}
     \item Given two homogeneous vector bundles $p_1:E\rightarrow M,$ and $p_2:F\rightarrow M,$  over an orientable compact homogeneous manifold $M=G/K,$ in Section \ref{vectcal}, we introduce for every $m\in \mathbb{R},$ and for $0\leq \delta,\rho\leq 1,$ a  class of pseudo-differential operators $\Psi^{m,\mathcal{L}}_{\rho,\delta}(E,F),$ such that for $\delta<\rho,$ the class of operators $$\Psi^{\infty, \mathcal{L}}_{\rho,\delta}(E,F):=\bigcup_{m\in \mathbb{R}}\Psi^{m,\mathcal{L}}_{\rho,\delta}(E,F),$$ is stable under compositions (this means that $\Psi^{\infty, \mathcal{L}}_{\rho,\delta}(E,F)$ is an algebra), under adjoints, under parametrices and also stable under the action of the complex functional calculus. The main tool is the quantisation formula, obtained in Theorem \ref{mainqhomovec}, which associates to every continuous linear operator $\tilde{A}:\Gamma^\infty(E)\rightarrow\Gamma^\infty(F),$ a unique global symbol $\sigma_{\tilde{A}}$ such that
     \begin{equation}\label{QuantisationIntro}   \boxed{ 
    \tilde{A}u(gK)= \left(g, \,\sum_{[\xi]\in \widehat{G}}\sum_{i=1}^{d_\tau}\sum_{r=1}^{d_\omega}\textnormal{Tr}[\xi(g)\sigma_{\tilde{A}}(i,r,g,[\xi])  \widehat{\varkappa_\tau u}(i,[\xi])]e_{r,F_0}   \right)\cdot K}
\end{equation}for every section $u\in \Gamma^\infty(E),$ where  $d_\tau $ and $ d_\omega$ are dimensions associated to $E$ and $  F,$ respectively, and $\varkappa_\tau$ is the unitary isomorphism in \eqref{varkappatau}.        
     \item We prove the Calder\'on-Vaillancourt Theorem, which says that every operator in the class  $\Psi^{0,\mathcal{L}}_{\rho,\delta}(E,F),$ with $0\leq \delta< \rho\leq    1,$ (or $0\leq \delta\leq \rho\leq 1,$ $\delta<1/\kappa$) admits a bounded extension from $L^2(E)$ to $L^2(F).$ The $L^p$-boundedness of subelliptic pseudo-differential operators also will be established (see Theorem \ref{Lpthe:VB}) extending the classical $L^p$-estimate  of Fefferman \cite{Fefferman1973} to this setting.
     \item We prove the G\r{a}rding inequality, which establishes a lower bound of the form 
     \begin{align*}
    \textnormal{Re}(\tilde{A}u,u) \geqslant  C_1\Vert u\Vert^2_{{L}^{2,\mathcal{L}}_{\frac{m}{2}}(E)}-C_2\Vert u\Vert_{L^2(E)}^2,
\end{align*}  for $0\leq \delta<\rho\leq 1$, and for any section $u\in \Gamma^\infty(E),$ under the condition that the global symbol of $\tilde{A}$ is strongly elliptic. For details see Subsection \ref{Gardingsection} and also for the applications of this G\r{a}rding inequality to the global wellposedness of subelliptic pseudo-differential problems on homogeneous vector-bundles. 
     \item One special feature of our theory is the following. If $X$ is taken to be an orthonormal basis of the Lie algebra $\mathfrak{g}$ of $G,$ then $\mathcal{L}$ is replaced by the Laplace-Beltrami operator $\mathcal{L}_G$ and the class  $\Psi^{m}_{\rho,\delta}(E,F):=\Psi^{m,\mathcal{L}_G}_{\rho,\delta}(E,F)$ agrees with the H\"ormander class $\Psi^{m}_{\rho,\delta}(E,F;\textnormal{loc})$ defined by local coordinate systems if, additionally, we have the standard conditions $\rho\geq 1-\delta,$ $\delta<\rho,$ necessary for the invariance of coordinates. Our technique also works in the case when $\rho\leq 1-\delta$ and $\delta<\rho$ providing  new classes of pseudo-differential operators.
 \end{itemize}
 Now, we present some remarks where we briefly discuss several important special cases.
\begin{remark}  By considering the trivial vector bundles $E=F=M\times \mathbb{C},$ \eqref{QuantisationIntro} provides a quantisation formula in the context of continuous linear operators on $C^\infty(M).$ In this case the subelliptic H\"ormander  classes will be denoted by  $\Psi^{m,\mathcal{L}}_{\rho,\delta}(M):=\Psi^{m,\mathcal{L}}_{\rho,\delta}(E,F).$  If we choose   $X$ to be  an orthonormal basis of the Lie algebra $\mathfrak{g}$ of $G,$ and taking the Laplace-Beltrami operator $\mathcal{L}_G$ instead of $\mathcal{L},$  the class $\Psi^{m}_{\rho,\delta}(M):=\Psi^{m,\mathcal{L}_G}_{\rho,\delta}(M)$ agrees with the H\"ormander class $\Psi^{m}_{\rho,\delta}(M;\textnormal{loc})$ defined by localisations, by imposing the standard conditions $0\leq \delta<\rho\leq 1,$ and $\rho\geq 1-\delta$. 

\end{remark}
\begin{remark}
If we consider the trivial subgroup $K=\{e_G\}$ with $e_G$ being the identity element of $G,$ then $M=G$ and the subelliptic H\"ormander  classes   $\Psi^{m,\mathcal{L}}_{\rho,\delta}(M)=\Psi^{m,\mathcal{L}}_{\rho,\delta}(G),$ with $m\in \mathbb{R},$ are precisely those classes defined by the first and third author in \cite{RuzhanskyCardona2020}, which were introduced  with the goal of providing a sharp treatment for subelliptic problems. So, although the main problem in this paper is to solve (Q1), the developed classes here for homogeneous vector bundles are defined in the presence of a sub-Riemannian structure determined by  the sub-Laplacian $\mathcal{L}$  associated to a  system of vector fields $X=\{X_1,\cdots,X_k\},$ satisfying the H\"ormander condition. Again, if we replace $X$ by  an orthonormal basis of the Lie algebra $\mathfrak{g}$ of $G,$ and taking the Laplace-Beltrami operator $\mathcal{L}_G$ instead of $\mathcal{L},$  the global H\"ormander classes $\Psi^{m}_{\rho,\delta}(G):=\Psi^{m,\mathcal{L}_G}_{\rho,\delta}(M)$ are the ones introduced by the third author and Turunen in \cite{Ruz}.
\end{remark}
\begin{remark}
If we consider the case of the homogeneous vector bundles, $E=\Omega^{p}(M)\rightarrow M,$ and  $F=\Omega^{k}(M)\rightarrow M,$ that are, the vector bundle  of $p$-differential forms and  $k$-differential forms on $M,$ respectively, the classes $\Psi^{m,\mathcal{L}}_{\rho,\delta}(\Omega^{p}(M),\Omega^{k}(M))$ provide a global pseudo-differential calculus. In the case of $\Psi^{m,\mathcal{L}_G}_{\rho,\delta}(\Omega^{p}(M),\Omega^{k}(M))$ with $\rho\geq 1-\delta$ and $\delta<\rho$ we recover the H\"ormander classes defined by local coordinates. Using our further  formalism, we will compute the symbol of the exterior derivative, its adjoint, and the symbol of the Dirac operator (see Section  \ref{Section:Forms} for details). 
\end{remark}
We organise this work as follows:
\begin{itemize}
    \item In Section \ref{Preliminaries} we present some basics on homogeneous vector bundles, in particular, the construction of them due to Raoul Bott. We discuss some aspects of the Fourier analysis on homogeneous vector bundles and we record the basics on  pseudo-differential operators. We also introduce an auxiliary quantisation formula for vector-valued pseudo-differential operators on compact Lie groups. 
    \item In Section \ref{vectcal}, the pseudo-differential calculus for the vector-valued setting will be established. Then, after developing our quantisation formula, the vector-valued calculus will be used to obtain an equivalent calculus for homogeneous vector bundles. Consequently, we classify the H\"ormander classes of pseudo-differential operators on homogeneous vector bundles in terms of their global symbols. The classical Fefferman  $L^p$-theorem and the Calder\'on-Vaillancourt theorem will be extended to this setting.
    \item Section \ref{GFCSECT} is dedicated to construct the parametrices of the calculus, a global  functional calculus, to study the Fredholness of pseudo-differential operators  and to establish the global version of the G\r{a}rding inequality. Applications of the G\r{a}rding inequality on homogeneous vector-bundles to the wellposedness of  evolution problems associated with subelliptic operators will be considered.
    \item In Section \ref{examplecompactmanifolds} we establish the subelliptic calculus for pseudo-differential operators on compact homogeneous spaces when considering the case of the trivial vector-bundles, extending the theory in \cite{Ruz}.
    \item Finally, in Section \ref{Section:Forms}  we compute the matrix-valued symbol of the exterior derivative, the global symbol of its adjoint, and the global symbol of the Dirac operator when the homogeneous vector bundles are taken to be  vector-bundles of differential forms.
\end{itemize}

Throughout the paper, we shall use the notation $A \lesssim B$ to indicate $A\leq cB $ for a suitable constant $c >0$, whereas $A \asymp B$ if $A\leq cB$ and $B\leq d A$, for suitable $c, d >0.$

\section{Preliminaries}\label{Preliminaries} 
In this section, we present some basics on harmonic analysis on homogeneous vector bundles over compact homogeneous spaces. We will use the following notations:

- For  finite dimensional vector spaces $V$ and $W,$ we denote by $B_{V}$ the basis of $V,$ and by  $\mathbb{C}^{\ell\times \ell}(\textnormal{End}(V,W))$  the space of all matrices of size $\ell\times \ell$ with entries in $\textnormal{End}(V,W),$ the space of all endomorphisms from $V$ to $W,$ (the space of linear mappings from $V$ to $W$). 

- We will make use of the isomorphism $\textnormal{End}(V,W)\cong V^*\otimes W,$ and an orthonormal basis of $V$  or of $W$ will be denoted by $\{e_{i,V}\}_{i=1}^{\textnormal{dim}(V)}$ and  $\{e_{i,W}\}_{i=1}^{\textnormal{dim}(W)}$ respectively. For a vector space $V,$ endowed with an inner product, $\mathfrak{B}_{V}$ denotes the collection of orthonormal bases of $V.$

- For Banach spaces $X$ and $Y,$ $\mathscr{B}(X,Y)$ (or $\mathscr{B}(X)$ if $X=Y$), denotes the space of all bounded linear operators from $X$ into $Y.$ If $Z$ and $W$  have structure of Fr\'echet spaces, then $\mathscr{L}(Z,W)$ (or $\mathscr{L}(Z)$ if $Z=W$), denotes the family of continuous linear operators from $Z$ into $W.$

- The compact homogeneous manifold $M=G/K$ will be considered orientable.

\subsection{Homogeneous vector bundles over compact homogeneous manifolds}  Now we record some basics on vector bundles by following \cite[Chapter 1]{Wallach1973}.
Let $M$ be real (or complex) $C^\infty$-manifold. A real  (or complex) $C^\infty$-vector bundle  is a triple $(p,E,M),$ where $E$ is a $C^\infty$-manifold and $p:E\rightarrow M$ is a smooth surjective mapping from $E$ to $M$ satisfying:
\begin{itemize}
    \item[(i)] For every $x \in M,$ the set $E_x= p^{-1}(x)$ is a real (or complex) vector space.
    \item[(ii)] For each $x \in M$ there exist a neighbourhood  $U$ of $x$ and a homeomorphism $\phi:E_{U}:=p^{-1}(U)\rightarrow U\times \mathbb{R}^n$ (or $\phi:E_{U}:=p^{-1}(U)\rightarrow U\times \mathbb{C}^n$ in the complex case), such that $\phi(v)=(p(v),f(v)),$ and $f:E_{p(v)}\rightarrow \mathbb{R}^n$ (or $f:E_{p(v)}\rightarrow \mathbb{C}^n$) is a linear mapping. We summarise property $(\textnormal{ii})$ saying that  $E$ is trivial in $U.$
    \item[(iii)] There exists a trivializing covering $\mathfrak{U}=\{U_{i}\}_{i\in I}$ of $M.$ This means that for every $x\in M,$ there exists $U_{i},$ such that $x\in U_{i}\subset M,$ with $E$ being trivial in $U_{i}.$
\end{itemize} 
Following the standard terminology we say that $E$ is the total space, $M$ is the base space and  $p: E \rightarrow M$ is the projection from $E$ to $M$. Therefore, to define the $C^\infty$-vector bundle (in short, vector bundle) $(p, E,M)$, it is enough to specify the projection  $p:E\rightarrow M$ only. 

As usual, a section $s$ on $E$ is defined by the identity $p\circ s=\textnormal{id}_{M}.$ Throughout this paper we will use $\Gamma(E)$ for the space of all sections. For any integer $k\geq 1,$ the set $\Gamma^{k}(E)$ denotes the set of sections of class $C^{k},$ namely the family of differentiable sections of order $k,$ such that for any arbitrary choice of vector fields $X_1,\cdots X_n,$ $n=\dim (M),$ $$X_{1}^{\alpha_1}\cdots X_{n}^{\alpha_n} s $$ is a continuous section for any arbitray multi-index $\alpha=(\alpha_1,\cdots,\alpha_n)\in \mathbb{N}_0^n$ with $|\alpha|=k,$ where $|\alpha|=\alpha_1+\cdots +\alpha_n.$ Note that with $k=0,$ we define $\Gamma^{0}(E)=\Gamma(E)$ and  we use the notation $$\Gamma^\infty(E)=\bigcap_{k\geq 0}\Gamma^{k}(E)$$ for the space of all smooth sections $s:M\rightarrow E.$

We are interested in those vector bundles for which the base space is an orientable homogeneous manifold $M=G/K,$ where $G$ is a  Lie group and $K$ is a closed subgroup of $G.$ In particular, we will study homogeneous vector bundles. 
\begin{definition}\label{defi:vect.bund}
 We say that a vector bundle $p:E\rightarrow M$ is homogeneous (see \cite[page 114]{Wallach1973}), if the following two conditions are satisfied,
\begin{itemize}
    \item[(i)] $G$ acts from the left on $E$ and for every $g\in G,$ $g\cdot E_{x}=E_{g\cdot x},$ $x\in M.$ 
    \item[(ii)] For every  $g \in G,$ the induced map from $E_{g}$ into $E_{g\cdot x},$ is linear. 
\end{itemize}
\end{definition}

In this next example, we will provide an universal construction for homogeneous vector bundles (see \cite[page 115]{Wallach1973}). 
\begin{example}[{\bf Bott's construction}]\label{BottConstruction} Let  $p_{M}:G\rightarrow M=G/K$ be the natural projection. Let $\tau$ be a finite-dimensional unitary representation of $K$ with the representation space $E_0.$ Let us consider the vector bundle $p:E\rightarrow M=G/K,$ with $E=G\times_{\tau} E_{0},$ that is, the semi-direct product between $G$ and $E_{0}$ with respect to $\tau: K\rightarrow \textnormal{GL}(E_0),$ where  $E_0$ is the representation space of $K$ associated with $\tau.$ In view of the compactness of $K,$ we can assume that $E_0=\mathbb{C}^{d_\tau}$    is the complex vector space of finite dimension $d_\tau$. We recall that  the semi-direct product $G\times_{\tau}E_0$ is the set of all cosets $(g,v)\cdot K=:[g,v],$ $g\in G,$ $v\in E_{0},$ defined by the right action of $K$ on $G\times E_{0}, $ $$(g,v)\cdot k=(gk,\tau(k)^{-1}v).$$ Consequently,  $(g,v)\cdot K=\{(g,v)\cdot k:k\in K\}.$ The projection $p:E\rightarrow M,$ is given by $p((g,v)\cdot K)=gK,$ and $p:E\rightarrow M=G/K,$  has a natural structure of a homogeneous vector bundle.  
This construction makes commutative the following diagram (see \cite[Page 171]{Bott1957}):
\begin{eqnarray}
    \begin{tikzpicture}[every node/.style={midway}]
  \matrix[column sep={10em,between origins}, row sep={4em}] at (0,0) {
    \node(R) {$G$}  ; & \node(S) {$G\times E_0$};  & \node(S') {$E_0$}; \\
    \node(R/I) {$M$}; & \node (T) {$G\times_\tau E_0$};& \node (T') {$\star$};\\
  };
  \draw[<-] (R/I) -- (R) node[anchor=east]  {$p_{M}$};
  \draw[<-] (R) -- (S) node[anchor=south] {$\,$};
  \draw[->] (S) -- (S') node[anchor=south] {$\,$};
   \draw[->] (S') -- (T') node[anchor=south] {$\,$};
    \draw[->] (T) -- (T') node[anchor=south] {$\,$};
  \draw[->] (S) -- (T) node[anchor=west] {$\,$};
  \draw[<-] (R/I) -- (T) node[anchor=north] {$p$};
  \end{tikzpicture}
\end{eqnarray}\end{example} 
Now, let us record that every homogeneous vector bundle can be constructed (up to isomorphism) in the way described in Example \ref{BottConstruction} (see \cite[Lemma 5.2.3]{Wallach1973}).
\begin{theorem}Let $p:E\rightarrow M=G/K$ be a homogeneous vector bundle. Let $E_0=p^{-1}(K)$ be the fiber at the identity. Then, there exists a representation $\tau:K\rightarrow \textnormal{GL}(E_0)$ of $K,$ such that $E\cong G\times_{\tau}E_0.$ 
\end{theorem}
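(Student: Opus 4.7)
The plan is to produce the representation $\tau$ directly from the $G$-action on $E$, and then show that the natural map $G\times_{\tau}E_0\to E$, $[g,v]\mapsto g\cdot v$, is an isomorphism of homogeneous vector bundles.

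First I would construct $\tau$. Since $K$ stabilises the identity coset $eK\in M$, condition (i) of Definition \ref{defi:vect.bund} gives $k\cdot E_0=E_0$ for every $k\in K$, and condition (ii) guarantees that each such map is linear. Thus $\tau(k):=k|_{E_0}\in\textnormal{GL}(E_0)$ is well defined; the homomorphism property $\tau(k_1k_2)=\tau(k_1)\tau(k_2)$ follows from associativity of the $G$-action, and smoothness (hence continuity) of $\tau$ follows from smoothness of the action $G\times E\to E$ restricted to $K\times E_0$.

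Next I would build the candidate isomorphism. Consider the smooth map $\widetilde{\Phi}:G\times E_0\to E$ given by $\widetilde{\Phi}(g,v)=g\cdot v$. A direct check shows invariance under the defining right $K$-action on $G\times E_0$:
\begin{equation*}
\widetilde{\Phi}(gk,\tau(k)^{-1}v)=(gk)\cdot(\tau(k)^{-1}v)=g\cdot(k\cdot k^{-1}\cdot v)=g\cdot v=\widetilde{\Phi}(g,v),
\end{equation*}
so $\widetilde{\Phi}$ descends to $\Phi:G\times_{\tau}E_0\to E$, $\Phi([g,v])=g\cdot v$. Surjectivity: given $w\in E$ with $p(w)=gK$, the element $v:=g^{-1}\cdot w$ lies in $E_0$ by (i), and $\Phi([g,v])=w$. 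Injectivity: if $g\cdot v=g'\cdot v'$, then $gK=g'K$ because the $G$-action sends fibres to fibres, so $g'=gk$ for some $k\in K$, and then $v=k\cdot v'=\tau(k)v'$, i.e.\ $[g',v']=[gk,\tau(k)^{-1}v]=[g,v]$. Clearly $\Phi$ is $G$-equivariant and restricts to a linear isomorphism on each fibre.

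The only delicate point is that $\Phi$ is a diffeomorphism. Smoothness of $\Phi$ is inherited from that of $\widetilde{\Phi}$ via the quotient manifold structure on $G\times_{\tau}E_0$. To produce a smooth inverse I would work locally: since $p_M:G\to G/K$ admits local smooth sections $s:U\to G$ near any point of $M$, the map $w\mapsto[s(p(w)),\,s(p(w))^{-1}\cdot w]$ is smooth on $E_U=p^{-1}(U)$ and agrees with $\Phi^{-1}$ there. The main obstacle is precisely this step, i.e.\ verifying that the different local formulas glue to a globally smooth inverse; but independence of the choice of section follows from the $K$-invariance built into the quotient $G\times_{\tau}E_0$, so the local smooth inverses patch together and $\Phi$ is a diffeomorphism. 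This yields the desired isomorphism $E\cong G\times_{\tau}E_0$ of homogeneous vector bundles.
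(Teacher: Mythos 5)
Your argument is correct. The paper states this result as a citation to Wallach (Lemma~5.2.3 of \cite{Wallach1973}) without giving a proof, and your construction — defining $\tau(k)$ as the restriction of the $K$-action to the fibre $E_0$ and showing that $[g,v]\mapsto g\cdot v$ descends to a $G$-equivariant, fibrewise-linear diffeomorphism $G\times_\tau E_0\to E$, using local sections of $p_M:G\to G/K$ to exhibit the smooth inverse — is precisely the standard proof appearing in Wallach.
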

We assume that  $M=G/K$ is a  compact and orientable manifold and so $G$ is compact and consequently $K$ is also compact. Let us fix a $G$-invariant volume element $\Omega_M$ on $M$ such that, for $f \in C(G/K),$ (see \cite[Page 116]{Wallach1973})
$$ \int\limits_G f(gK)\, dg = \int\limits_{M} f \Omega_M.$$ 
The space $L^p(E)$ is defined by the sections $s:M\rightarrow E,$ satisfying
\begin{equation}
    \Vert s\Vert_{L^p(E)}:=\left(\,\int\limits_{M}\Vert s(x) \Vert_{E_{x}}^p\Omega_M\right)^{\frac{1}{p}}<\infty,
\end{equation} for all $1\leq p<\infty,$ and for $p=\infty,$
\begin{equation}
    \Vert s\Vert_{L^\infty(E)}:=\textnormal{ess}\sup_{x\in M}\Vert s(x)\Vert_{E_{x}}<\infty.
\end{equation} The left action of the group $G$ on the space of sections $\Gamma( E)$ is given by 
\begin{equation} \label{actionsect}
     G\times \Gamma^\infty(E)\rightarrow \Gamma^\infty(E),\textnormal{ by } (g,s)\mapsto g\cdot s,\,\, (g\cdot s)(x)=  g\cdot s(g^{-1}x),
\end{equation}   
where $g \in G, s \in \Gamma (E)$ and $x \in M.$
Observe that for any $g\in G,$ $g\cdot s$ is a section. Indeed, $ s(g^{-1}x)\in E_{g^{-1}x},$ and $g\cdot s(g^{-1}x)$ denotes the action of $g$ on the vector $s(g^{-1}x)$ which belongs to  $E_{x},$ in view of (ii) in Definition \ref{defi:vect.bund}, so that for any $x\in M,$ $(g\cdot s)(x)\in E_{x}$ and $p\circ (g\cdot s)(x)=x,$ where  $p: E \rightarrow M$ is the bundle projection. 
The action of $G$ on $\Gamma (E)$ extends to a unitary representation $(\pi, L^2(E))$ of $G$ defined by $g \mapsto \pi(g)s:=g\cdot s.$  

Now, let us define
\begin{equation}\label{tauinvariantfunction}
    C(G,E_0)^\tau=\{f\in C(G, E_0)\,\, |\forall g\in G,\,\forall k\in K,\,\,f(gk)=\tau(k)^{-1}f(g) \},
\end{equation}the space  of all  $\tau$-invariant continuous functions from $G$ to $E_0$ and write   $C^{\infty}(G,E_0)^\tau$ for its subset containing all smooth elements. In a similar fashion, we define the space $L^p(G,E_0)^\tau$ consisting of all $p$-integrable $\tau$-invariant functions $f: G \rightarrow E_0.$
As in \cite[Chapter 5]{Wallach1973} we  identify $\Gamma^\infty(E)\simeq C^\infty(G,E_{0})^{\tau}.$ Indeed, in the aforementioned chapter, it is shown that the mapping $\varkappa_\tau:\Gamma^\infty(E)\rightarrow C^\infty(G,E_{0})^{\tau}, $ given by \begin{equation}\label{varkappatau}
    \varkappa_\tau(s)(g):=g^{-1}\cdot s(gK)\equiv \pi(g^{-1})(s)(e_GK) ,
\end{equation} extends to a unitary map $\varkappa_\tau:L^2(E)\rightarrow L^2(G,E_{0})^{\tau}.$ The inverse of $\varkappa_{\tau},$ $\varkappa_{\tau}^{-1}: C^\infty(G,E_{0})^{\tau}\rightarrow \Gamma^\infty(E) $ is given by
\begin{align}\label{notationfunction}
  s(gK):=  \varkappa_{\tau}^{-1}(f)(gK)=(g,f(g))\cdot K=:[g,f(g)], 
\end{align}which, for instance, is a well defined section because $p((g,f(g))\cdot K)=g K,$ for all $g\in G.$
If $f\in C(G,E_0)^\tau,$ then define $\tilde{\pi}(g_0)f(g):=f(g_0^{-1}g).$ Then, $\tilde{\pi}$ extends to a unitary representation $(\tilde{\pi}, L^2(G,E_0)^\tau)$ of $G.$ Moreover, we have (see \cite[Page 117]{Wallach1973}):
\begin{theorem}The map $\varkappa_{\tau}:\Gamma^\infty(E)\rightarrow C(G,E_0)^{\tau}$ given by $\varkappa_{\tau}s:=\pi(\cdot^{-1})s(e_GK)$ extends to a unitary equivalence of $(\pi, \Gamma^\infty(E))$ with $(\tilde{\pi}, L^2(G,E_0)^\tau).$ This means that the following diagram
\begin{eqnarray}
    \begin{tikzpicture}[every node/.style={midway}]
  \matrix[column sep={10em,between origins}, row sep={4em}] at (0,0) {
    \node(R) {$L^2(E)$}  ; & \node(S) {$L^2(E)$}; \\
    \node(R/I) {$L^2(G,E_0)^\tau$}; & \node (T) {$L^2(G,E_0)^\tau$};\\
  };
  \draw[<-] (R/I) -- (R) node[anchor=east]  {$\varkappa_\tau$};
  \draw[->] (R) -- (S) node[anchor=south] {$\pi(g)$};
  \draw[->] (S) -- (T) node[anchor=west] {$\varkappa_\tau$};
  \draw[->] (R/I) -- (T) node[anchor=north] {$\tilde{\pi}(g)$};
\end{tikzpicture}
\end{eqnarray}
commutes for all $g\in G$. So, $\varkappa_{\tau}\circ \pi=\tilde{\pi}\circ\varkappa_\tau$ or $ \pi \circ \varkappa_\tau^{-1}=\varkappa_{\tau}^{-1}\circ \tilde{\pi}.$ 

\end{theorem}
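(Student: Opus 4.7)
The plan is to establish three properties in turn, which together yield the stated unitary equivalence: (a) $\varkappa_{\tau}$ actually lands in the space $C(G,E_{0})^{\tau}$ of $\tau$-invariant functions; (b) it intertwines the two actions, i.e.\ $\varkappa_{\tau}\circ \pi(g_0)=\tilde{\pi}(g_0)\circ \varkappa_{\tau}$ for every $g_0\in G$; and (c) it is an isometry with respect to the $L^{2}$-inner products. Once (a)-(c) are in place, density of $\Gamma^\infty(E)$ in $L^{2}(E)$ and of $C^\infty(G,E_0)^\tau$ in $L^2(G,E_0)^\tau$ yields the unique isometric extension, and surjectivity follows by exhibiting the continuous inverse via \eqref{notationfunction}.

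For (a), let $s\in\Gamma^\infty(E),$ $g\in G$ and $k\in K$. Since $gkK=gK$ and since by construction the left $G$-action on the fiber $E_0=p^{-1}(K)$ coincides with $\tau$, we compute
\begin{equation*}
\varkappa_{\tau}(s)(gk) \;=\; (gk)^{-1}\cdot s(gkK) \;=\; k^{-1}\cdot\bigl(g^{-1}\cdot s(gK)\bigr) \;=\; \tau(k)^{-1}\varkappa_{\tau}(s)(g),
\end{equation*}
as required; smoothness of $\varkappa_\tau s$ is inherited from the smoothness of $s$ and of the $G$-action on $E$. For (b), unrolling both sides at an arbitrary $g\in G$ gives
\begin{equation*}
\varkappa_{\tau}(\pi(g_0)s)(g)\;=\;g^{-1}\cdot\bigl(g_0\cdot s(g_0^{-1}gK)\bigr)\;=\;(g^{-1}g_0)\cdot s(g_0^{-1}gK),
\end{equation*}
and, on the other hand, $(\tilde{\pi}(g_0)\varkappa_{\tau}s)(g)=\varkappa_{\tau}(s)(g_0^{-1}g)=(g_0^{-1}g)^{-1}\cdot s(g_0^{-1}gK)=(g^{-1}g_0)\cdot s(g_0^{-1}gK).$

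The heart of the argument is (c). One endows each fiber $E_{x}$ with the Hermitian inner product transported from $E_{0}$ by any $g\in G$ with $gK=x$: because $\tau$ is unitary, different representatives $g$ and $gk$ transport the same inner product, so the fiber metric is well-defined, and one obtains $\Vert s(gK)\Vert_{E_{gK}}=\Vert g^{-1}\cdot s(gK)\Vert_{E_0}=\Vert \varkappa_{\tau}(s)(g)\Vert_{E_0}$. Integrating the squared norm over $M$ and invoking the compatibility identity $\int_{G}f(gK)\,dg=\int_{M}f\,\Omega_{M}$ yields $\Vert s\Vert_{L^2(E)}=\Vert \varkappa_{\tau}s\Vert_{L^2(G,E_0)^\tau}$; polarisation then upgrades this to equality of inner products, so $\varkappa_{\tau}$ is an isometric embedding. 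To see surjectivity, the formula \eqref{notationfunction}, $\varkappa_{\tau}^{-1}(f)(gK):=[g,f(g)]$, defines a two-sided inverse on smooth $\tau$-invariant functions; the main point requiring care (and the only genuine obstacle) is the well-posedness of this inverse, i.e.\ that $[gk,f(gk)]=[g,f(g)]$ for every $k\in K$, which is exactly the equivalence $(gk,\tau(k)^{-1}f(g))\sim (g,f(g))$ under the right $K$-action on $G\times E_{0}$, and therefore follows immediately from the $\tau$-invariance of $f$. Combining (a)-(c) with the density of smooth sections, $\varkappa_{\tau}$ extends uniquely to a unitary equivalence between $(\pi,L^{2}(E))$ and $(\tilde{\pi},L^{2}(G,E_{0})^\tau)$, which makes the displayed diagram commute for all $g\in G$.
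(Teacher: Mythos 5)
Your proof is correct and complete: part (a) verifies $\tau$-invariance, (b) the intertwining, and (c) the isometry via the $\tau$-invariant fiber metric and the compatibility identity for the volume form, with surjectivity handled by the explicit inverse from \eqref{notationfunction}. The paper itself does not supply a proof of this statement (it is quoted from Wallach's book, \cite[Page 117]{Wallach1973}), and the argument you give is the standard one found there, so there is no methodological discrepancy to report.
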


\subsection{Fourier transform on  homogeneous vector bundles}
 This section is devoted to explaining the Peter-Weyl Theorem for the space of sections $\Gamma^\infty(E)=\Gamma(G\times_{\tau}E_0).$ As in Bott's construction (Example \ref{BottConstruction}), let $(\tau, E_0 (=\mathbb{C}^{d_\tau}))$ be a representation of the closed subgroup $K$ of a compact group $G.$   For $[\xi]\in \widehat{G},$ set $$\textnormal{Hom}_{K}(\mathbb{C}^{d_\xi},E_0)=\{L:\mathbb{C}^{d_\xi}\rightarrow E_0:\,L \textnormal{  is linear and  } \forall k\in K,\,\,L\xi(k)=\tau(k)L\}.$$
To every $[\xi]\in \widehat{G},$ we assign a map $$ \alpha_{[\xi]}: \mathbb{C}^{d_\xi}\otimes \textnormal{Hom}_K(\mathbb{C}^{d_\xi},E_0)\rightarrow C(G,E_0)^{\tau}, $$
by
\begin{equation}
    \alpha_{[\xi]}(v\otimes L)(g):=L(\xi(g^{-1})v),\,\,\, \textnormal{for}\, v\in \mathbb{C}^{d_\xi},\,g\in G.
\end{equation} The Peter Weyl-Theorem   for  homogeneous vector bundles can be formulated as follows (see \cite[Theorem 5.3.6, Page 118]{Wallach1973}).
\begin{theorem}\label{PeterWeylforS}Let $[\xi]\in \widehat{G}.$ Then  the following orthogonal decompositions in subspaces of finite dimensions
\begin{align}\label{PWT}
  L^2(G,E_{0})^{\tau}&=  \bigoplus_{[\xi]\in \widehat{G}}\alpha_{[\xi]}[\mathbb{C}^{d_\xi}\otimes \textnormal{Hom}_K(\mathbb{C}^{d_\xi},E_0)],\\
  L^2(E)&=  \bigoplus_{[\xi]\in \widehat{G}}\varkappa_{\tau}^{-1}\circ\alpha_{[\xi]}[\mathbb{C}^{d_\xi}\otimes \textnormal{Hom}_K(\mathbb{C}^{d_\xi},E_0)],
\end{align} are valid assuming that $G$ is a compact Lie group.

\end{theorem}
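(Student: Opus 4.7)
The strategy is to deduce the decomposition from the classical scalar Peter--Weyl theorem on $G$ combined with a Frobenius-reciprocity identification of the $\tau$-invariant subspace, with $\alpha_{[\xi]}$ providing the explicit realisation. The starting point is the identification $L^2(G, E_0) \cong L^2(G) \otimes E_0$, which carries two commuting actions of interest: the left action $\tilde\pi$ of $G$ on the $L^2(G)$-factor, and a twisted right action of $K$ defined by $(k \cdot f)(g) := \tau(k)^{-1} f(gk)$, whose fixed subspace is exactly $L^2(G, E_0)^\tau$ by the definition in \eqref{tauinvariantfunction}.

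First, I would verify that, for each $[\xi] \in \widehat{G}$, the map $\alpha_{[\xi]}$ lands in $C(G, E_0)^\tau$, is $G$-equivariant with respect to $\tilde\pi$ and the action of $G$ by $\xi$ on the first tensor factor, and is injective on $\mathbb{C}^{d_\xi} \otimes \textnormal{Hom}_K(\mathbb{C}^{d_\xi}, E_0)$. The $\tau$-invariance is a one-line computation: $\alpha_{[\xi]}(v\otimes L)(gk) = L(\xi(k^{-1})\xi(g^{-1})v) = \tau(k)^{-1}\alpha_{[\xi]}(v\otimes L)(g)$, using the $K$-intertwining property of $L$. Equivariance follows from $\xi(g^{-1}h^{-1})v = \xi(g^{-1})\xi(h^{-1})v$. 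Injectivity follows from irreducibility of $\xi$: if $L(\xi(g^{-1})v) = 0$ for every $g \in G$, the vectors $\xi(g^{-1})v$ span $\mathbb{C}^{d_\xi}$ (unless $v = 0$), forcing $L = 0$, and the general tensor case follows by writing elements in a basis of $\textnormal{Hom}_K(\mathbb{C}^{d_\xi}, E_0)$.

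Second, I would establish mutual orthogonality of the spaces $\alpha_{[\xi]}[\mathbb{C}^{d_\xi} \otimes \textnormal{Hom}_K(\mathbb{C}^{d_\xi}, E_0)]$ for inequivalent $[\xi], [\xi'] \in \widehat{G}$. Expanding $\alpha_{[\xi]}(v \otimes L)(g)$ in a fixed orthonormal basis of $E_0$ exhibits it as a vector whose components are linear combinations of matrix coefficients of $\xi$; the $L^2(G, E_0)$ inner product then reduces componentwise to the classical Schur orthogonality relations, giving the orthogonality at once.

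Third, and this is the main step, I would prove that the algebraic direct sum $\bigoplus_{[\xi]} \alpha_{[\xi]}[\mathbb{C}^{d_\xi} \otimes \textnormal{Hom}_K(\mathbb{C}^{d_\xi}, E_0)]$ is dense in $L^2(G, E_0)^\tau$. The plan is to apply the scalar Peter--Weyl theorem componentwise, obtaining $L^2(G, E_0) = \bigoplus_{[\xi]}(\mathbb{C}^{d_\xi} \otimes \mathbb{C}^{d_\xi} \otimes E_0)$, where the twisted right $K$-action acts by $\xi(k)^{-1}$ on the middle factor and by $\tau(k)^{-1}$ on $E_0$, while the first factor carries $\tilde\pi|_\xi$. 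Taking $K$-invariants on the last two factors and using the canonical identification $(\mathbb{C}^{d_\xi} \otimes E_0)^K \cong \textnormal{Hom}_K(\mathbb{C}^{d_\xi}, E_0)$ (via the inner product on $\mathbb{C}^{d_\xi}$) isolates exactly $\mathbb{C}^{d_\xi} \otimes \textnormal{Hom}_K(\mathbb{C}^{d_\xi}, E_0)$ in each isotypic component, and $\alpha_{[\xi]}$ implements the resulting isomorphism onto the $\xi$-isotypic part of $L^2(G, E_0)^\tau$. The hardest piece of the proof will be executing this Frobenius-reciprocity step cleanly, in particular keeping track of which $\mathbb{C}^{d_\xi}$ is being dualised via the inner product so that $K$-invariance really becomes the defining intertwining condition of $\textnormal{Hom}_K$. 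Finite-dimensionality of each summand is clear since $\dim \textnormal{Hom}_K(\mathbb{C}^{d_\xi}, E_0) \leq d_\xi \cdot d_\tau$. Finally, the second decomposition for $L^2(E)$ follows immediately by transporting the first through the unitary isomorphism $\varkappa_\tau^{-1}: L^2(G, E_0)^\tau \to L^2(E)$.
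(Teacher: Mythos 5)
The paper does not prove Theorem~\ref{PeterWeylforS}: it is quoted directly from Wallach \cite[Theorem 5.3.6]{Wallach1973}, so there is no in-paper argument to compare your sketch against. What you have reconstructed is the standard Frobenius-reciprocity proof --- scalar Peter--Weyl on $L^2(G)\otimes E_0$ followed by extraction of $K$-invariants under the twisted right action --- which is also the argument in Wallach, so the strategy is the expected one.

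Two places need tightening. First, the injectivity argument for a general tensor is incomplete as stated: if $\sum_j L_j\bigl(\xi(g^{-1})v_j\bigr)=0$ for all $g$ with the $L_j$ a basis of $\textnormal{Hom}_K(\mathbb{C}^{d_\xi},E_0)$, linear independence of the $L_j$ alone does not force each $v_j=0$, since the $g$-dependence is entangled across the index $j$. The clean fix is that your step~2 already proves more than orthogonality: Schur's relations and $\int_G\xi(g)A\xi(g)^{*}\,dg=d_\xi^{-1}\textnormal{Tr}(A)\,I$ give
\begin{equation*}
\bigl(\alpha_{[\xi]}(v\otimes L),\,\alpha_{[\xi]}(v'\otimes L')\bigr)_{L^2(G,E_0)}
=\frac{1}{d_\xi}\,\langle v,v'\rangle_{\mathbb{C}^{d_\xi}}\,\textnormal{Tr}\bigl(L^{*}L'\bigr),
\end{equation*}
so $\alpha_{[\xi]}$ is $d_\xi^{-1/2}$ times an isometry and injectivity is automatic; run the computation in that form and drop the separate injectivity claim from step~1. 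Second, the duality bookkeeping in step~3 has a sign slip: under right translation the column-index factor of the scalar Peter--Weyl decomposition carries $\xi|_K$ itself, not $\xi(k)^{-1}$; after the conjugate-linear identification $\mathbb{C}^{d_\xi}\cong(\mathbb{C}^{d_\xi})^{*}$ and the passage to $\textnormal{Hom}(\mathbb{C}^{d_\xi},E_0)$, the intertwining condition you land on must be matched against the paper's convention $L\xi(k)=\tau(k)L$, and a relabeling $[\xi]\mapsto[\overline{\xi}]$ of $\widehat{G}$ (harmless, since the sum is over the whole unitary dual) typically appears. Neither is a conceptual gap, and you correctly flag this as the delicate part, but the conventions must be checked end to end when you execute it.
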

Motivated by the orthogonal decomposition for the space of sections in \eqref{PWT} and denoting $$\Gamma_{[\xi]}(E):=\varkappa_{\tau}^{-1}\circ\alpha_{[\xi]}[\mathbb{C}^{d_\xi}\otimes \textnormal{Hom}_K(\mathbb{C}^{d_\xi},E_0)] ,$$ for every $[\xi] \in \widehat{G},$ the Fourier transform of the section  $s\in L^2(E)$ at $[\xi]\in \widehat{G},$ is defined by (see \cite[Page 119]{Wallach1973})
\begin{equation}\label{FourierTransformOfSection:**}
    \widehat{s}(\xi):=P_{[\xi]}s\equiv d_{\xi}\pi(\overline{\chi}_{[\xi]})s, \,\,
\end{equation}where $\chi_{[\xi]}:=\textnormal{Tr}(\xi)$ is the character of $[\xi],$  $P_{[\xi]}:L^2(E)\rightarrow \Gamma_{[\xi]}(E)$ is the orthogonal projection of $L^2(E)$ onto $ \Gamma_{[\xi]}(E),$ and $\pi(\overline{\chi}_{[\xi]})$ is defined by the entries
\begin{align*}
   \langle  \pi(\overline{\chi}_{[\xi]})s,s'\rangle_{L^2(E)}:=\int\limits_{G}\overline{\chi}_{[\xi]}(g)\langle \pi(g)s,  s'\rangle_{L^2(E)}dg,\,\,\,\pi(g)s(xK):=g\cdot s(g^{-1}xK),
\end{align*} where $\,x,g\in G,$ for all $s,s'\in L^2(E), $ which in the sense of Bochner implies that
\begin{equation*}
    \pi(\overline{\chi}_{[\xi]})=\int\limits_{G}\overline{\chi}_{[\xi]}(g)\pi(g)dg,\,\quad g\in G.
\end{equation*}
(The identity $P_{[\xi]}s\equiv d_{\xi}\pi(\overline{\chi}_{[\xi]})s$ is indeed Proposition 5.3.7 in  \cite[Page 119]{Wallach1973}). Let us remark, that in view of \eqref{PWT}, the Fourier inversion formula in the space of sections takes the form
\begin{align}\label{invsectfour}
    s=\sum_{[\xi]\in \widehat{G}}\widehat{s}(\xi)=\sum_{[\xi]\in \widehat{G}}d_{\xi}\pi(\overline{\chi}_{[\xi]})s=\sum_{[\xi]\in \widehat{G}}d_{\xi}\int\limits_{G}\overline{\chi}_{[\xi]}(g)\pi(g)s\,dg.
\end{align}
\begin{example}[Fourier transform of typical sections in $\Gamma_{[\xi]}(E)$] A typical element  in $\mathbb{C}^{d_\xi}\otimes \textnormal{Hom}_{K}(\mathbb{C}^{d_\xi},E_0)$ is of the form $f=v\otimes L$ where $v\in \mathbb{C}^{d_\xi}$ and $L\in \textnormal{Hom}_{K}(\mathbb{C}^{d_\xi},E_0).$ Also, for all $g_0\in G,$
$  \alpha_{[\xi]}f(g_0)=L(\xi(g_0)^{-1}v).  $ So, let us compute the Fourier transform of the section  $s=\varkappa_{\tau}^{-1}\alpha_{[\xi]}(f)\in \Gamma_{[\xi]}(E)$ as follows,
\begin{align*}
    \widehat{s}(\xi)(g_0 K)&=d_\xi \int\limits_{G}\overline{\chi_{[\xi]}(g)}\pi(g)\varkappa_{\tau}^{-1}\alpha_{[\xi]}(v\otimes L)(g_{0}K)dg\\&=d_\xi\int\limits_{G}\overline{\chi_{[\xi]}(g)}\varkappa_{\tau}^{-1}\tilde{\pi}(g)\alpha_{[\xi]}(v\otimes L)(g_0K)dg\\
    &=d_\xi\int\limits_{G}\overline{\chi_{[\xi]}(g)}\varkappa_{\tau}^{-1}\alpha_{[\xi]}(\xi(g)v\otimes L)(g_0K)dg,
\end{align*}and using that $\alpha_{[\xi]}(\xi(g)v\otimes L)(g_0K)=L(\xi(g_{0}^{-1}g)v),$ we have
\begin{align*}
\widehat{s}(\xi)(g_0 K)&=   d_\xi\int\limits_{G}\overline{\chi_{[\xi]}(g)}\varkappa_{\tau}^{-1}L(\xi(g_{0}^{-1}g)v)dg=[g_0 ,d_\xi\int\limits_{G}\overline{\chi_{[\xi]}(g)}L(\xi(g_{0}^{-1}g)v)dg] \\
&=[g_0 ,d_\xi L\left( \xi(g_{0})^{-1}\int\limits_{G}\overline{\chi_{[\xi]}(g)}\xi(g)v\right) dg ] = d_\xi [g_0 , L( \xi(g_{0})^{-1}v) ],
\end{align*}
where we have used that \begin{align*}
    \int\limits_{G}\overline{\chi_{[\xi]}(g)}\xi(g)dg&=\int\limits_{G}\textnormal{Tr}(\xi(g)^*)\xi(g)dg=\sum_{j=1}^{d_\xi}\int\limits_{G}\xi(g)^{*}_{jj}\xi(g)dg\\
    &=\sum_{j=1}^{d_\xi}\frac{1}{d_\xi}I_{d_{\xi}\times d_\xi}\\
    &=I_{d_{\xi}\times d_\xi}.
\end{align*} In a similar way, if $s_{[\eta]}=\varkappa_{\tau}^{-1}\alpha_{[\eta]}(f),$ where $[\eta]\in \widehat{G},$ then (see the proof of Proposition 5.3.7 in  \cite[Page 119]{Wallach1973}),
\begin{equation*}
    \widehat{s}_{[\eta]}(\xi)(g_0K)= d_\xi [g_0 , L( \xi(g_{0})^{-1}v) ]\delta_{[\eta],[\xi]}.
\end{equation*}
\end{example}
Next we introduce the class of $G$-invariant continuous linear operators on homogeneous vector bundles. 

\begin{definition}[$G$-invariant continuous linear operator]
 Let $E=G\times_{\tau} E_0$ and $F=G\times_{\omega} F_0$ be homogeneous vector bundles. A continuous linear operator $\tilde{A}:\Gamma^\infty(E)\rightarrow \Gamma^\infty(F),$ is $G$-invariant if it is invariant under the action of $G$ on the space of sections $\Gamma^\infty(E)$, that is, for all $g \in G$
\begin{equation}\label{homogenesousdefinition}
    \tilde{A}(g\cdot s)=g\cdot (\tilde{A}s),\,\,s\in \Gamma^\infty(E).
\end{equation} 
\end{definition}
As it was observed in  \cite[Page 120]{Wallach1973}, for every continuous linear operator  $\tilde{A}:\Gamma^\infty(E)\rightarrow \Gamma^\infty(F)$  there exists a  continuous linear operator $A:C^\infty(G, E_0)^\tau \rightarrow C^\infty(G, F_0)^\omega$ such that the following diagram 
\begin{eqnarray}\label{maindiagram}
    \begin{tikzpicture}[every node/.style={midway}]
  \matrix[column sep={10em,between origins}, row sep={4em}] at (0,0) {
    \node(R) {$\Gamma^\infty(E)$}  ; & \node(S) {$\Gamma^\infty(F)$}; \\
    \node(R/I) {$C^\infty(G,E_0)^\tau$}; & \node (T) {$C^\infty(G,F_0)^\omega$};\\
  };
  \draw[<-] (R/I) -- (R) node[anchor=east]  {$\varkappa_{\tau}$};
  \draw[->] (R) -- (S) node[anchor=south] {$\tilde{A}$};
  \draw[->] (S) -- (T) node[anchor=west] {$\varkappa_\omega$};
  \draw[->] (R/I) -- (T) node[anchor=north] {$A$};
\end{tikzpicture}
\end{eqnarray}
commutes. Therefore, 
$A$ can be, intrinsically, defined as 
\begin{equation}\label{unitarilyequivaklent}
    A:=\varkappa_\omega\circ \tilde{A}\circ \varkappa_\tau^{-1}.
\end{equation}  
Here, we note  that if $\tilde{A}$ is a $G$-invariant operator then $A$ is a vector-valued Fourier multiplier  (see Lemma \ref{G-inv:Fourier:Mult} below).

One can construct differential operators mapping $C^\infty(G,E_0)^\tau$ into $C^\infty(G,F_0)^\omega$ as follows.  Let us denote by $\mathfrak{U}(\mathfrak{g})$  the universal enveloping algebra of $\mathfrak{g}:=\textnormal{Lie}(G)\sim T_eG.$ As in \cite[5.4.6, Page 121]{Wallach1973}, we define the action $\mu$ of $K$ on $\textnormal{End}(E_0, F_0) \otimes\mathfrak{U}(\mathfrak{g}) $ by $$\mu(k) (L \otimes X):= \tau(k) L \omega(k)^{-1} \otimes \text{Ad}(k)X$$ for $L\in \textnormal{End}(E_0, F_0) $  and $X \in\mathfrak{U}(\mathfrak{g}).$ Now, define the set
\begin{equation*}
    (\textnormal{End}(E_0,F_0)\otimes \mathfrak{U}(\mathfrak{g}))^{\tau,\omega}:=\{D\in \textnormal{End}(E_0,F_0)\otimes \mathfrak{U}(\mathfrak{g}):\mu(k)D=D,\,\,\,\forall\,\, k \in K \}.
\end{equation*} It is shown in \cite[Lemma 5.4.7]{Wallach1973} that for any $D \in (\textnormal{End}(E_0,F_0)\otimes \mathfrak{U}(\mathfrak{g}))^{\tau,\omega}$ we have ${ \omega(k)}D|_{C^\infty(G,E_0)^\tau}=D|_{C^\infty(G,E_0)^\tau}.$

\subsection{Pseudo-differential operators on homogeneous vector bundles} \label{qunt}
By the Schwartz kernel theorem, to any continuous linear operator $T:C^\infty(G,V)\rightarrow C^\infty(G,W),$ where $V$ and $W$ are finite dimensional vector spaces, one can  associate a distribution $K_{T}\in C^\infty(G)\otimes \mathscr{D}'(G,\textnormal{End}(V,W)),$ such that 
\begin{equation}
    Tf(x)=\int\limits_{G}K_{T}(x,y)f(y)dy,\,\,\,f\in C^\infty(G,V). 
\end{equation}
In this section we will analyse the case where $V=E_0$ and $W=F_{0}$ are representation spaces of $K.$ 
Let $\tilde{A}:\Gamma^\infty(E)\rightarrow \Gamma^\infty(F),$ be a  continuous linear operator where $E=G\times_{\tau}E_0$ and $F=G\times_{\omega}F_{0}$ are the total spaces of the homogeneous vector bundles $p_{E}:E\rightarrow M,$ and $p_{F}:F\rightarrow M.$ Under the identification $\Gamma^\infty(E)\simeq C^\infty(G,E_{0})^{\tau},$ and $\Gamma^\infty(F)\simeq C^\infty(G,F_{0})^{\omega},$ $\tilde{A}$ can be identified with a continuous linear operator $A:C^\infty(G,E_{0})^{\tau}\rightarrow  C^\infty(G,F_{0})^{\omega}.$  In the following theorem we characterise these continuous linear operators, $A:C^\infty(G,E_{0})^{\tau}\rightarrow  C^\infty(G,F_{0})^{\omega},$ in terms of  the Schwartz kernel $K_A$  of $A.$  
\begin{theorem}\label{TheoremCharK}
Let $A:C^\infty(G,E_{0})\rightarrow  C^\infty(G,F_{0}),$ be a continuous linear operator. Then, for every $f\in C^\infty(G,E_{0})^\tau$ we have $Af\in  C^\infty(G,F_{0})^{\omega}, $ if and only if, the identity
\begin{equation}\label{characterizationkernel}
 K_{A}(g,y)=   \omega(k_1)K_{A}(gk_1,yk_2)\tau(k_2)^{-1},
\end{equation}holds true for every $k_1,k_2\in K,$ and $g,y\in G.$
\end{theorem}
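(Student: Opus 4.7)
The plan is to verify both implications by direct manipulation of the integral representation $(Af)(g)=\int_G K_A(g,y)\,f(y)\,dy$, using the right-invariance of Haar measure on the compact group $G$ and the defining relations $f(yk)=\tau(k)^{-1}f(y)$ for $\tau$-equivariant $f$ and $h(gk)=\omega(k)^{-1}h(g)$ for $\omega$-equivariant $h$.

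For the sufficient direction $(\Leftarrow)$, I would specialize the hypothesized identity. Setting $k_2=e_G$ yields $K_A(gk_1,y)=\omega(k_1)^{-1}K_A(g,y)$, so integrating against any test function $f$ gives $(Af)(gk_1)=\omega(k_1)^{-1}(Af)(g)$ and hence $Af\in C^\infty(G,F_0)^\omega$ whenever $f\in C^\infty(G,E_0)^\tau$. The complementary specialization $k_1=e_G$ produces $K_A(g,yk_2)=K_A(g,y)\tau(k_2)$, which ensures that the integral pairs consistently with the $\tau$-equivariance of $f$ under right $K$-translations of the integration variable.

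For the converse $(\Rightarrow)$, starting from $(Af)(gk_1)=\omega(k_1)^{-1}(Af)(g)$ for every $\tau$-equivariant $f$ and every $k_1\in K$, I would apply the change of variables $y\mapsto yk_2^{-1}$ together with the identity $f(yk_2^{-1})=\tau(k_2)f(y)$ to convert the operator equation into
\begin{equation*}
\int_G \bigl[\omega(k_1) K_A(gk_1, yk_2^{-1})\tau(k_2) - K_A(g,y)\bigr] f(y)\,dy = 0
\end{equation*}
valid for all $\tau$-equivariant $f$. After relabelling $k_2^{-1}$ as $k_2$ this is exactly the claimed identity $K_A(g,y)=\omega(k_1)K_A(gk_1,yk_2)\tau(k_2)^{-1}$, tested against $\tau$-equivariant sections. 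The Peter-Weyl decomposition in Theorem \ref{PeterWeylforS} supplies enough $\tau$-equivariant test functions, and averaging over $K$ via the projections $P_\tau$ and $P_\omega$ onto the equivariant subspaces extracts the pointwise kernel identity.

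The main obstacle is that the hypothesis only determines $A$ on $C^\infty(G,E_0)^\tau$, so the raw Schwartz kernel $K_A$ is a priori fixed only modulo distributions annihilating $\tau$-equivariant inputs. I would resolve this by passing to the canonical symmetrization
\begin{equation*}
\widetilde{K}_A(g,y):=\int_K\!\!\int_K \omega(k_1)\, K_A(gk_1,\,yk_2^{-1})\,\tau(k_2)\,dk_1\,dk_2,
\end{equation*}
checking directly that $\widetilde{K}_A$ satisfies the equivariance by Haar invariance, and observing that $\widetilde{K}_A$ induces the same restricted operator $\Gamma^\infty(E)\to\Gamma^\infty(F)$ as $K_A$; uniqueness of the Schwartz kernel in this equivariant setting then identifies the two and yields the desired identity.
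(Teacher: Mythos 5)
Your $(\Leftarrow)$ direction matches the paper: setting $k_2=e_G$ gives $K_A(gk_1,y)=\omega(k_1)^{-1}K_A(g,y)$, and integrating yields $(Af)(gk_1)=\omega(k_1)^{-1}(Af)(g)$; the paper runs the forward computation in reverse but the content is the same.

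For $(\Rightarrow)$ you have correctly put your finger on a difficulty that the paper's proof passes over silently: the hypothesis only controls $A$ on $C^\infty(G,E_0)^\tau$, so the resulting integral identity $\int_G\bigl[\omega(k_1)K_A(gk_1,yk_2)\tau(k_2)^{-1}-K_A(g,y)\bigr]f(y)\,dy=0$ is established only when tested against $\tau$-equivariant $f$, which does not determine the kernel pointwise. The paper simply asserts the pointwise identity at this step without justification. However, your proposed repair does not close the gap either: the symmetrized kernel $\widetilde K_A$ does satisfy the equivariance and does induce the same restricted operator $C^\infty(G,E_0)^\tau\to C^\infty(G,F_0)^\omega$, but there is no ``uniqueness of the Schwartz kernel in the equivariant setting'' that forces $\widetilde K_A=K_A$ as distributions on $G\times G$. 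Indeed the two can differ: take $K=G$, $\tau=\omega$ trivial, and $K_A(g,y)=\psi(g)\psi(y)$ with $\psi$ nonconstant and $\int_G\psi\,dg=0$; then $A$ annihilates constants, so it maps $C^\infty(G)^\tau$ into $C^\infty(G)^\omega$, yet $K_A$ is not $K$-bi-invariant while $\widetilde K_A\equiv 0$. So the $(\Rightarrow)$ implication, read literally for the raw Schwartz kernel of an arbitrary continuous $A:C^\infty(G,E_0)\to C^\infty(G,F_0)$, is false; what your symmetrization actually proves is the weaker — and in fact useful — statement that the \emph{restriction} of $A$ to $C^\infty(G,E_0)^\tau$ can always be represented by a kernel satisfying \eqref{characterizationkernel}. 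That is the content genuinely needed for the quantisation formula, and you should state the conclusion in that form rather than claiming it for the original $K_A$.
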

\begin{proof}
Let us assume that $A:C^\infty(G,E_{0})\rightarrow  C^\infty(G,F_{0}),$ is a continuous linear operator, and that for every $f\in C^\infty(G,E_{0})^\tau,$ we have
 $Af\in  C^\infty(G,F_{0})^{\omega}. $ Observe that
\begin{align*}
    Af(gk_1)=\int\limits_{G}K_{A}(gk_1,y)f(y)dy&=\int\limits_{G}K_{A}(gk_1,yk_2)f(yk_2)dy\\
    &=\int\limits_{G}K_{A}(gk_1,yk_2)\tau(k_2)^{-1}f(y)dy.
\end{align*}On the other hand,
\begin{align*}
    Af(gk_1)=\omega(k_1)^{-1}(Af)(g)&=\int\limits_{G}\omega(k_1)^{-1}K_{A}(g,y)f(y)dy.
\end{align*}
So, for every $k_1,k_2\in K,$ $$\omega(k_1)^{-1}K_{A}(g,y)=K_{A}(gk_1,yk_2)\tau(k_2)^{-1}.$$  The fact that \eqref{characterizationkernel} implies that  $Af\in  C^\infty(G,F_{0})^{\omega}, $  for every $f\in C^\infty(G,E_{0})^\tau,$ is a consequence of the algebraic identities proved in the first part of the theorem. Indeed, let us assume that $A$ satisfies \eqref{characterizationkernel}. So, for  $f\in C^\infty(G,E_{0})^\tau,$   we have
\begin{align*}
    Af(gk_1)&=\int\limits_{G}K_{A}(gk_1,y)f(y)dy =\int\limits_{G}K_{A}(gk_1,yk_2)f(yk_2)dy\\
    &=\int\limits_{G}K_{A}(gk_1,yk_2)\tau(k_2)^{-1}f(y)dy\\
    &=\int\limits_{G}\omega({k_1})^{-1}K_{A}(g,y)f(y)dy\\
     &=\omega({k_1})^{-1}Af(g).
\end{align*}So, we have proved that  $Af\in  C^\infty(G,F_{0})^{\omega}. $ 
Thus, we finish the proof. \end{proof}

Now,  we will use the Peter-Weyl theorem to introduce a suitable quantisation formula.
Let $B_{E_0}=\{e_{i,E_0}\}_{i=1}^{d_\tau},$ $d_\tau=\dim(E_0),$ be an orthonormal basis of $E_0.$ For every $f\in C^{\infty}(G,E_0),$ we can write
\begin{equation}
    f(x)=\sum_{i=1}^{d_\tau}e_{i,E_0}^{*}(f(x))\,e_{i,E_0},\,\,\,\,e_{i,E_0}^{*}(f(x))=(f(x),e_{i,E_0})_{E_0},
\end{equation}
and from the Peter-Weyl theorem we have
\begin{equation}
    f(x)=\sum_{i=1}^{d_\tau}\sum_{[\xi]\in \widehat{G}}d_{\xi}\textnormal{Tr}[\xi(x)\widehat{f}(i,\xi)   ]e_{i,E_0},\,\,\,\widehat{f}(i,\xi):=\mathscr{F}_{G}[x\mapsto e_{i,E_0}^{*}(f(x))].
\end{equation}So, motivated by the last expansion for $f$ in terms of the Fourier transform of coefficient functions $x\mapsto e_{i,E_0}^{*}(f(x))$, we define the $B_{E_0}$-Fourier transform of $f$ at $(i,[\xi]),$ by
\begin{equation}
  (\mathscr{F}_{G,B_{E_0}}f)(i,\xi) \equiv \widehat{f}(i,\xi)=\int\limits_{G}e_{i,E_0}^{*}(f(x))\xi(x)^{*}dx.
\end{equation}Observe that if $A:C^{\infty}(G, E_0)\rightarrow C^{\infty}(G, F_0),$ is a continuous linear operator and $B_{F_0}=\{e_{r,F_0}\}_{r=1}^{d_\omega},$ $d_\omega=\dim(F_0),$ is an orthonormal  basis for $F_0,$ we can first write
\begin{align*}
    Af(x)&=\int\limits_{G}K_A(x,y)f(y)dy=\int\limits_{G}\sum_{r,i=1}^{d_\tau}\sum_{[\xi]\in \widehat{G}}d_{\xi}\textnormal{Tr}[\xi(y)\widehat{f}(i,\xi)   ]K_{A}(x,y)e_{i,E_0}dy.
\end{align*}   
Then, writing $K_{A}(x,y)e_{i,E_0}\in F_0$ in terms of the elements of $B_{F_0},$ we have
\begin{align*}
    K_{A}(x,y)e_{i,E_0}=\sum_{r=1}^{d_\omega}e_{r,F_0}^{*}[ K_{A}(x,y)e_{i,E_0}]e_{r,F_0},
\end{align*}and we deduce
\begin{align*}
    Af(x)&=\int\limits_{G}\sum_{r=1}^{d_\omega}\sum_{i=1}^{d_\tau}\sum_{[\xi]\in \widehat{G}}d_{\xi}\textnormal{Tr}[\xi(y)\widehat{f}(i,\xi)   ]e_{r,F_0}^{*}[ K_{A}(x,y)e_{i,E_0}]e_{r,F_0}dy\\
    &=\int\limits_{G}\sum_{r=1}^{d_\omega}\sum_{i=1}^{d_\tau}\sum_{[\xi]\in \widehat{G}}d_{\xi}\textnormal{Tr}[\xi(y)\widehat{f}(i,\xi)   ]( K_{A}(x,y)e_{i,E_0}, e_{r,F_0})_{F_0}e_{r,F_0}dy.
\end{align*}Observe that, after changing integral and summations, we have
\begin{align*}
    Af(x)=\sum_{r=1}^{d_\omega}\sum_{i=1}^{d_\tau}\sum_{[\xi]\in \widehat{G}}d_{\xi}\textnormal{Tr}[\xi(x)\int\limits_{G} \xi(x)^{*}  \xi(y)e_{r,F_0}^{*}[ K_{A}(x,y)e_{i,E_0}] dy       \widehat{f}(i,\xi)   ]e_{r,F_0}.
\end{align*}So, using $\xi(x)^{*}\xi(y)=\xi(y^{-1}x)^*,$ and the change of variables $z=y^{-1}x,$ we have
\begin{align*}
    Af(x)&=\sum_{r=1}^{d_\omega}\sum_{i=1}^{d_\tau}\sum_{[\xi]\in \widehat{G}}d_{\xi}\textnormal{Tr}[\xi(x)\int\limits_{G} \xi(z)^*e_{r,F_0}^{*}[ K_{A}(x,xz^{-1})e_{i,E_0}] dz       \widehat{f}(i,\xi)   ]e_{r,F_0}\\
    &=\sum_{r=1}^{d_\omega}\sum_{i=1}^{d_\tau}\sum_{[\xi]\in \widehat{G}}d_{\xi}\textnormal{Tr}[\xi(x)\int\limits_{G} e_{r,F_0}^{*}[ R_{A}(x,z)e_{i,E_0}]\xi(z)^* dz       \widehat{f}(i,\xi)   ]e_{r,F_0},
\end{align*}where $R_{A}(x,z):=K_{A}(x,xz^{-1}).$ We refer to the distribution $R_A$ as the right convolution kernel of $A.$
Let us introduce the set of indices,
$${\boxed{ I_{d_\tau}:=\{i\in \mathbb{N}:1\leq i\leq d_\tau \},\quad I_{d_\omega}:=\{r\in \mathbb{N}:1\leq r\leq d_\omega \}}} $$
So, if we define
\begin{equation}\label{sigmair}
   \sigma_{A}(i,r,x,\xi):=\int\limits_{G} e_{r,F_0}^{*}[ R_{A}(x,z)e_{i,E_0}]\xi(z)^* dz,
\end{equation}
then 
\begin{equation}\label{irsymbol}
    \sigma_{A}:I_{d_\tau}\times I_{d_\omega}\times G\times \widehat{G}\rightarrow\bigcup\{\mathbb{C}^{d_\xi\times d_\xi}: {[\xi]\in \widehat{G}} \},
\end{equation} and we have the quantisation formula
\begin{equation}\label{quantization}
    Af(x)=\sum_{r=1}^{d_\omega}\sum_{i=1}^{d_\tau}\sum_{[\xi]\in \widehat{G}}d_{\xi}\textnormal{Tr}[\xi(x)\sigma_{A}(i,r,x,\xi)    \widehat{f}(i,\xi)   ]e_{r,F_0}.
\end{equation} We will say that $\sigma_A$ is the matrix-valued symbol of $A,$ associated with the bases $(B_{E_0},B_{F_0}),$ or for short, we say that $\sigma_A$ is the $(B_{E_0},B_{F_0})$-symbol of $A.$ We will simplify the notation in the quantisation formula \eqref{quantization} by using the  following one
\begin{equation}\label{Quantization}
    \boxed{  Af(x)=\sum_{i,r,[\xi]\in \widehat{G}}d_{\xi}\textnormal{Tr}[\xi(x)\sigma_{A}(i,r,x,\xi)    \widehat{f}(i,\xi)   ]e_{r,F_0}  }
\end{equation}Observe that we can recover the symbol $\sigma_A$ from the quantisation formula \eqref{Quantization}. 
Indeed, for every $g\in C^{\infty}(G),$ let $f=g\otimes e_{i_0,E},$   $1\leq i_0\leq d_\tau.$ The $B_{E_0}$-Fourier transform of $f$ at $(i,[\xi])$ is given by
\begin{equation}
 (\mathscr{F}_{B_{E_0},G}f) (i,[\xi])\equiv  \widehat{f}(i,[\xi])=\widehat{g}(\xi)\delta_{i,i_0}.
\end{equation}
Then, if we also fix $1\leq r_0\leq d_\omega,$ the linear operator $T_{\sigma_{A}(i_0,r_0,\cdot,\cdot)} $ defined by
\begin{align*}
 T_{\sigma_{A}(i_0,r_0,\cdot,\cdot)}  g(x)= e_{r_0,F}^{*}[A(g\otimes e_{i_0,E})(x)]&= \sum_{[\xi]\in \widehat{G}}d_{\xi}\textnormal{Tr}[\xi(x)\sigma_{A}(i_0,r_0,x,\xi)    \widehat{g}(\xi)   ],
\end{align*}is a continuous linear operator on $C^\infty(G),$ whose matrix-valued symbol is given by
\begin{align*}
    \sigma_{A}(i_0,r_0,x,\xi)= \xi(x)^{*}T_{\sigma_{A}(i_0,r_0,\cdot,\cdot)}\xi(x):= \xi(x)^{*}[ (T_{\sigma_{A}(i_0,r_0,\cdot,\cdot)}\xi_{uv})(x)]]_{u,v=1}^{d_\xi},
\end{align*}which implies,
\begin{align*}
    \sigma_{A}(i_0,r_0,x,\xi)=\xi(x)^{*} [ e_{r_0,F}^{*}[A(\xi_{uv}\otimes e_{i_0,E})(x)]]_{u,v=1}^{d_\xi}.
\end{align*}If we denote
\begin{equation}
 e_{r_0,F_0}^{*}[A(\xi\otimes e_{i_0,E_0})(x)]:=   [ e_{r_0,F_0}^{*}[A(\xi_{uv}\otimes e_{i_0,E_0})(x)]]_{u,v=1}^{d_\xi},
\end{equation}then we have
\begin{equation}\label{Homogeneoussymbol}{
     \sigma_{A}(i_0,r_0,x,\xi)= \xi(x)^{*}
     e_{r_0,F_0}^{*}[A(\xi\otimes e_{i_0,E_0})(x)]}
\end{equation}as expected from the matrix-valued formula in the case of the quantisation for compact Lie groups where,
\begin{equation*}
A(\xi\otimes e_{i_0,E}):=    (A(\xi_{uv}\otimes e_{i_0,E}))_{u,v=1}^{d_\xi}.
\end{equation*}
To express the dependence of the symbol 
$ \sigma_{A}(i_0,r_0,x,\xi)$ on the fixed bases, $B_{E_0}$ and $B_{F_0}$ of $E_0$ and $F_0,$ we can think that the symbol is a mapping 
\begin{equation}\label{basisirsymbol}
    \sigma_{A}:B_{E_0}\times B_{F_0}\times G\times \widehat{G}\rightarrow\bigcup\{\mathbb{C}^{d_\xi\times d_\xi}: {[\xi]\in \widehat{G}} \},
\end{equation} defined by
\begin{equation}
     \sigma_{A}(e_{i,E_0},e_{r,F_0},x,\xi):=\sigma_A(i,r,x,\xi),\,\,\,(x,[\xi])\in G\times \widehat{G}.
\end{equation} 
Now, if one wants to be precise, on the set
\begin{equation}
    \Sigma_A:=\{ \sigma_{A}:B_{E_0}\times B_{F_0}\times G\times \widehat{G}\rightarrow\bigcup_{[\xi]\in \widehat{G}}\mathbb{C}^{d_\xi\times d_\xi} :B_{E_0}\in \mathfrak{B}_{E_0},\,\,B_{F_0}\in \mathfrak{B}_{F_0}\},
\end{equation}
we define the equivalence relation $\sim$ by,
\begin{equation}\label{equivalence}
     \sigma_{A}(e_{i,E_0},e_{r,F_0},x,\xi)\sim \sigma_{A}(e'_{i,E_0},e'_{r,F_0},x',\eta)  
\end{equation}if and only if: 
\begin{itemize}
    \item $B_{E_0}=\{e_{i,E_0}:1\leq i\leq d_\tau\}$ and $B'_{E_0}=\{e'_{i,E_0}:1\leq i\leq d_\tau\}$ are orthonormal bases of $E_0,$
    \item $B_{F_0}=\{e_{i,F_0}:1\leq i\leq d_\omega\}$ and $B'_{F_0}=\{e'_{i,F_0}:1\leq i\leq d_\omega\}$ are orthonormal bases of $F_0,$
    \item $x=x'$ and $[\xi]=[\eta].$
\end{itemize}
The analysis above motivates the following  definitions.
\begin{definition}[{\bf Matrix-valued symbol as a set of equivalence classes}]\label{identificationofsymbols}
The global symbol of $A,$ is the set of equivalence classes
\begin{equation}
    \Sigma_A/\sim:=\{[\sigma_A]:\sigma_A\in \Sigma_A \}.
\end{equation}So, under the identification of every element $\sigma_A$ in \eqref{irsymbol}, (or as in \eqref{basisirsymbol}) with the equivalence class $[\sigma_A],$ we will not distinguish between the function $\sigma_A$ and its equivalence class  $[\sigma_A],$ unless that it is necessary. In that case, we will mention it separately. 
\end{definition}

To understand the identification in Definition \ref{identificationofsymbols}, we present the following proposition which says that for any linear operator $A$ the symbol $\sigma_A$ is invariant under the unitary change of basis. 
\begin{proposition}Let $B_{E_0}=\{e_{i,E_0}:1\leq i\leq d_\tau\}$ and $B'_{E_0}=\{e'_{i,E_0}:1\leq i\leq d_\tau\}$ be orthonormal bases of $E_0,$ and let $B_{F_0}=\{e_{i,F_0}:1\leq i\leq d_\omega\}$ and $B'_{F_0}=\{e'_{i,F_0}:1\leq i\leq d_\omega\}$ be  orthonormal bases of $F_0.$ Let $ \mathscr{U}\equiv U_{B_{E_0},B'_{E_0}}\in \textnormal{U}(E_0),$ and $ \tilde{\mathscr{U}}\equiv V_{B_{F_0},B'_{F_0}}\in \textnormal{U}(F_0),$  the unitary operators defined by
\begin{equation}
   \mathscr{U}(e_{i,E_0})\equiv U_{B_{E_0},B'_{E_0}}(e_{i,E_0})=e'_{i,E_0},\,\,\, \tilde{\mathscr{U}}(e'_{r,F_0})\equiv V_{B_{F_0},B'_{F_0}}(e_{r,F_0})=e'_{r,F_0}.
\end{equation}Then, for every continuous linear operator $A:C^\infty(G,E_0)\rightarrow C^{\infty}(G,F_0),$ we have
\begin{equation}\label{symbol}
   \sigma_A(e_{i,E_0},e_{r,F_0},x,\xi)= \sigma_{\tilde{\mathscr{U}}  A\mathscr{U}^{*}}(e'_{i,E_0},e'_{r,F_0},x,\xi).
\end{equation}
\end{proposition}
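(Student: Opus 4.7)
The plan is to reduce the identity to a direct computation based on the intrinsic formula \eqref{Homogeneoussymbol} for the symbol, which gives
$$\sigma_A(i,r,x,\xi) = \xi(x)^{*}\,e_{r,F_0}^{*}[A(\xi\otimes e_{i,E_0})(x)].$$
Applying this formula to the operator $\tilde{\mathscr{U}}A\mathscr{U}^{*}$ and to the primed orthonormal bases gives
$$\sigma_{\tilde{\mathscr{U}}A\mathscr{U}^{*}}(e'_{i,E_0},e'_{r,F_0},x,\xi) = \xi(x)^{*}\,(e'_{r,F_0})^{*}\bigl[(\tilde{\mathscr{U}}A\mathscr{U}^{*})(\xi\otimes e'_{i,E_0})(x)\bigr],$$
so the problem is to check that this expression equals $\sigma_A(e_{i,E_0},e_{r,F_0},x,\xi)$.

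The next step is to exploit the defining relations of $\mathscr{U}$ and $\tilde{\mathscr{U}}$ together with their unitarity. Since $\mathscr{U}(e_{i,E_0})=e'_{i,E_0}$ and $\mathscr{U}^{-1}=\mathscr{U}^{*}$, one has $\mathscr{U}^{*}(e'_{i,E_0})=e_{i,E_0}$, and consequently $\mathscr{U}^{*}(\xi\otimes e'_{i,E_0})=\xi\otimes e_{i,E_0}$. This reduces the inner piece to $A(\xi\otimes e_{i,E_0})$, a $C^{\infty}$ function with values in $F_0$. Similarly, from $\tilde{\mathscr{U}}(e_{r,F_0})=e'_{r,F_0}$ and the unitarity of $\tilde{\mathscr{U}}$ on $F_0$, the dual-basis functional satisfies
$$(e'_{r,F_0})^{*}(w)=\langle w,\tilde{\mathscr{U}}(e_{r,F_0})\rangle_{F_0}=\langle \tilde{\mathscr{U}}^{*}w,e_{r,F_0}\rangle_{F_0}=e_{r,F_0}^{*}(\tilde{\mathscr{U}}^{*}w),\quad w\in F_0,$$
which cancels the outer $\tilde{\mathscr{U}}$ appearing in $\tilde{\mathscr{U}}A\mathscr{U}^{*}(\xi\otimes e'_{i,E_0})(x)$.

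Combining these two observations, one gets
$$(e'_{r,F_0})^{*}\bigl[\tilde{\mathscr{U}}A(\xi\otimes e_{i,E_0})(x)\bigr]=e_{r,F_0}^{*}\bigl[\tilde{\mathscr{U}}^{*}\tilde{\mathscr{U}}A(\xi\otimes e_{i,E_0})(x)\bigr]=e_{r,F_0}^{*}\bigl[A(\xi\otimes e_{i,E_0})(x)\bigr],$$
and multiplying by $\xi(x)^{*}$ yields exactly the right-hand side of \eqref{Homogeneoussymbol} for $\sigma_A$ at $(e_{i,E_0},e_{r,F_0},x,\xi)$, which establishes \eqref{symbol}.

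There is essentially no hard step here: the only thing to be careful about is correctly tracking that $\mathscr{U}$ acts on $E_0$ (hence commutes with the scalar factor $\xi$ in the tensor $\xi\otimes e_{i,E_0}$) and that $\tilde{\mathscr{U}}$ acts on $F_0$ after the operator $A$. Conceptually, the content of the proposition is that the two unitary twists in $\tilde{\mathscr{U}}A\mathscr{U}^{*}$ are precisely the ones induced by relabelling the bases in the quantisation formula, so they cancel when one records the symbol with respect to the new bases; this is what underlies the identification made in Definition \ref{identificationofsymbols}, namely that the matrix-valued symbol is a well-defined equivalence class independent of the choice of orthonormal bases of $E_0$ and $F_0$.
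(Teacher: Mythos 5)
Your proof is correct and rests on the same basic idea as the paper's: insert the unitaries and transfer them across the pairing via their adjoints so that $\mathscr{U}^{*}\mathscr{U}=I$ and $\tilde{\mathscr{U}}^{*}\tilde{\mathscr{U}}=I$ produce the cancellation — the only difference is that you start from the formula \eqref{Homogeneoussymbol'}, $\sigma_A(i,r,x,\xi)=\xi(x)^{*}e_{r,F_0}^{*}[A(\xi\otimes e_{i,E_0})(x)]$, whereas the paper's proof unwinds the right-convolution kernel expression \eqref{sigmairdefi} involving $R_A(x,z)$. The paper's route tacitly uses that the kernel of $\tilde{\mathscr{U}}A\mathscr{U}^{*}$ is $\tilde{\mathscr{U}}R_A(x,z)\mathscr{U}^{*}$; your version sidesteps that by unpacking the operator composition directly before applying the dual-basis functional, so it is, if anything, marginally more elementary.
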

\begin{proof}
The proof is a straightforward computation. Indeed, 
\begin{align*}
    \sigma_A(e_{i,E_0},e_{r,F_0},x,\xi)&=\int\limits_{G} ( R_{A}(x,z)e_{i,E_0},e_{r,F_0})_{F_0}\xi(z)^* dz\\
    &=\int\limits_{G} ( R_{A}(x,z)U_{B_{E_0},B'_{E_0}}^{*}(e'_{i,E_0}),V^{*}_{B_{F_0},B'_{F_0}}(e'_{r,F_0}))_{F_0}\xi(z)^* dz\\
    &=\int\limits_{G} (V_{B_{F_0},B'_{F_0}} R_{A}(x,z)U_{B_{E_0},B'_{E_0}}^{*}(e'_{i,E_0}),e'_{r,F_0})_{F_0}\xi(z)^* dz\\
     &=\int\limits_{G} ( R_{V_{B_{F_0},B'_{F_0}}AU_{B_{E_0},B'_{E_0}}^{*}}(x,z)e'_{i,E_0},e'_{r,F_0})_{F_0}\xi(z)^* dz\\
    & =\sigma_{\tilde{\mathscr{U}}  A\mathscr{U}^{*}}(e'_{i,E_0},e'_{r,F_0},x,\xi),
\end{align*}proving \eqref{symbol}.
\end{proof}
In the following definition we summarise that, for every continuous linear operator $A:C^\infty(G,E_0)\rightarrow C^\infty(G,F_0),$ how one can associate to $A$ its matrix-valued symbol.
\begin{definition}[{\bf Matrix-valued symbols}] Let  ${A}:C^{\infty}(G,E_0)\rightarrow C^{\infty}(G,F_0)$ be a continuous linear operator. The global  symbol of $A,$ denoted by $\sigma_A,$ is defined by  the equivalence relation \eqref{equivalence}. In terms of two fixed basis $B_{E_0}$ and $B_{F_0},$ according with the notation above,  we have the following symbol formula 
\begin{equation}\label{Homogeneoussymbol'}\boxed{
     \sigma_{A}(i_0,r_0,x,\xi)= \xi(x)^{*}
     e_{r_0,F_0}^{*}[A(\xi\otimes e_{i_0,E_0})(x)]   }
\end{equation}  
Also, in terms of the right-convolution kernel $(x,z)\mapsto R_{A}(x,z):=K_{A}(x,xz^{-1})$ of $A,$
\begin{equation}\label{sigmairdefi}
   \sigma_{A}(i_0,r_0,x,\xi):=\int\limits_{G} e_{r_0,F_0}^{*}[ R_{A}(x,z)e_{i,E_0}]\xi(z)^* dz.
\end{equation}
The previous identity allows us to think of  $\sigma_A$ as a mapping
\begin{equation}\label{irsymboldefi}
    \sigma_{A}:I_{d_\tau}\times I_{d_\omega}\times G\times \widehat{G}\rightarrow\bigcup\{\mathbb{C}^{d_\xi\times d_\xi}: {[\xi]\in \widehat{G}} \}.
\end{equation}

\end{definition}
\begin{remark}[{\bf The identification $\tilde{A}\cong A$}]
Let $\tilde{A}:\Gamma^\infty(E)\rightarrow \Gamma^\infty(F)$ be a continuous linear operator where $E\cong G\times_{\tau}E_0$ and $F\cong G\times_{\omega}F_{0}$ are the total spaces of the homogeneous vector bundles $p_{E}:E\rightarrow M,$ and $p_{F}:F\rightarrow M.$ Under the identification $\Gamma^\infty(E)\simeq C^\infty(G,E_{0})^{\tau},$ and $\Gamma^\infty(F)\simeq C^\infty(G,F_{0})^{\omega},$ $\tilde{A}$ can be understood  as a continuous linear operator $A:C^\infty(G,E_{0})^{\tau}\rightarrow  C^\infty(G,F_{0})^{\omega}.$  In view of the quantisation formula \eqref{quantizationonhomogeneous2}, it will make sense to define $\sigma_A$  as the global symbol of $\tilde{A},$   see Definition \ref{symbolofAtilde}.
\end{remark} Next, we introduce a useful notation. 
\begin{remark}[The set $\mathbb{C}^{d_\xi\times d_\tau}(\textnormal{End}(E_0,F_0))$] We define the set $\mathbb{C}^{d_\xi\times d_\tau}(\textnormal{End}(E_0,F_0))$ to be the set of matrices $\{A_{i,j}\}_{1\leq i\leq d_\xi,\,1\leq j\leq d_\tau}$ where each entry $A_{i,j}$ is a linear mapping from $E_0$ to $F_0,$ namely, we have that  $A_{i,j}\in \textnormal{End}(E_0,F_0),$ for every $1\leq i\leq d_\xi,$ and all $1\leq j\leq d_\tau.$    
\end{remark}

\begin{definition}[{\bf$\textnormal{End}(E_0,F_0)$-valued symbols}]\label{equivalenceXXX}
In terms of the equivalence relation \eqref{equivalence}, $\sigma_A\sim [\sigma_A]$ can be realised as a mapping
\begin{equation}\label{mappings}
    \sigma_A\sim [\sigma_A]:G\times \widehat{G}\rightarrow \bigcup_{[\xi]\in \widehat{G}}\mathbb{C}^{d_\xi\times d_\xi}( \textnormal{End}(E_0,F_0)),
\end{equation} where $\textnormal{End}(E_0,F_0)$ is the set of linear operators from $E_0$ into $F_0,$ in such a way that for every $1\leq u,v\leq d_\xi,$ and  $(x,\xi)\in G\times \widehat{G},$ $\sigma_{A}(x,\xi)_{u,v}:E_{0}\rightarrow F_{0},$ $1\leq u,v\leq d_\xi,$ is an endomorphism. Indeed, using two bases $B_{E_0}=\{e_{i,E_0}\}\in \mathfrak{B}_{E_0} ,$  $   B_{F_0}=\{e_{r,F_0}\}\in \mathfrak{B}_{F_0}$ on the representation spaces $E_0$ and $F_0$, we recover the $(B_{E_0},B_{F_0})$-symbol for $A.$ To do this, define 
\begin{equation}\label{onetoone2}
    \sigma_{A}(x,\xi)_{u,v}:E_0\rightarrow F_0,
\end{equation}by its action on the bases $B_{E_0}$ and $B_{F_0}$ given by
\begin{equation}\label{onetoone}
  e_{r,F_0}^{*}(\sigma_A(x,\xi)_{u,v}e_{i,E_0}):=  \sigma_A(i,r,x,[\xi])_{u,v}.
\end{equation} So, we define the $\textnormal{End}(E_0,F_0)$-valued symbol of $A$ to be the endomorphism-valued matrix 
\begin{equation}
   \sigma_A(x,\xi):=(\sigma_A(x,\xi)_{u,v})_{1\leq v,u\leq d_\xi}\in \mathbb{C}^{d_\xi\times d_\xi}(\textnormal{End}(E_0,F_0)),
\end{equation}defining a symbol as in \eqref{mappings}. 
 The set of mappings in \eqref{mappings} where $A$ runs over all $\mathscr{L}(C^{\infty}(G, E_0),C^{\infty}(G, F_0))$ will be denoted by 
\begin{equation}
\Sigma    ((G\times \widehat{G})\otimes \textnormal{End}(E_0,F_0)).
\end{equation}
\end{definition}
\begin{remark}[{\bf Matrix-valued symbols vs $\textnormal{End}(E_0,F_0)$-valued symbols}]\label{remarkendvaluedmatrixvalued}
By the construction of our symbols, it is clear that {\it there exists an one-to-one correspondence } between the set $ \Sigma(I_{d_\tau}\times I_{d_\omega}\times G\times \widehat{G}):=$ 
\begin{equation}
   \{\sigma_{A}(i,r, x,[\xi]) :(i,r, x,[\xi])\in I_{d_\tau}\times  I_{d_\omega}\times G\times \widehat{G},\,A\in \mathscr{L}(C^{\infty}(G, E_0),C^{\infty}(G, F_0))   \}\footnote{where, for every $(i,r)\in I_{d_\tau}\times I_{d_\omega},$ $ \sigma_{A}(i,r, x,[\xi])\in \mathbb{C}^{d_\xi\times d_\xi}.$ }
\end{equation}    and $ \Sigma((G\times \widehat{G})\otimes \textnormal{End}(E_0,F_0))$ via \eqref{onetoone}.
\end{remark}

\begin{definition}\label{Product}
If $\Omega\in \Sigma(G\times \widehat{G})$ and $\sigma_A\in \Sigma((G\times \widehat{G})\otimes \textnormal{End}(E_0,F_0)), $ we will write
\begin{equation}
    \Omega(x,\xi)\otimes \sigma_A(x,\xi),
\end{equation} for the unique element in $ \Sigma((G\times \widehat{G})\otimes \textnormal{End}(E_0,F_0))$ induced (according to Definition \ref{equivalenceXXX}, via \eqref{onetoone}) by the matrix-symbol
\begin{equation*}
    \Omega(x,\xi)\sigma_A(i,r,x,\xi)\in  \Sigma(I_{d_\tau}\times I_{d_\omega}\times G\times \widehat{G}).
\end{equation*}Similarly we can define $  \sigma_A(x,\xi)\otimes \Omega(x,\xi).$
\end{definition}

Next, we characterise all the continuous linear operators $A:C^\infty(G,E_{0})^{\tau}\rightarrow  C^\infty(G,F_{0})^{\omega},$ in terms of their  global symbols  $\sigma_A.$
\begin{theorem}
Let $A:C^\infty(G,E_{0})\rightarrow  C^\infty(G,F_{0}),$ be a continuous linear operator. Then, for every $f\in C^\infty(G,E_{0})^\tau$ we have $Af\in  C^\infty(G,F_{0})^{\omega}, $ if and only if, the identity
\begin{equation}\label{characterizationsymbol}
 \sigma_A(e_{i,E_0},e_{r,F_0},x,\xi)=\xi(k_1)\sigma_A(\tau(k_2)^{-1}e_{i,E_0}, \omega(k_1)^{*}e_{r,F_0},xk_1,\xi)\xi(k_2)^{*},
\end{equation}holds true for every $k_1,k_2\in K,$ and $(x,[\xi])\in G\times \widehat{G}.$ In particular, if $k_2=e_{G},$ we have the identity
\begin{equation}\label{characterizationsymbol2}
 \sigma_A(e_{i,E_0},e_{r,F_0},x,\xi)=\xi(k_1)\sigma_A(e_{i,E_0},e_{r,F_0},xk_1,\xi),
\end{equation} for every $k_1\in K,$ and $x\in G.$
\end{theorem}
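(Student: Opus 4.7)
The plan is to reduce the symbol identity to the kernel identity of Theorem~\ref{TheoremCharK} by passing through the right-convolution kernel $R_A(x,z)=K_A(x,xz^{-1})$ and then Fourier-inverting.

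First, I would translate the kernel condition $K_{A}(g,y)=\omega(k_1)K_{A}(gk_1,yk_2)\tau(k_2)^{-1}$ into a statement about $R_A$. Since $K_A(a,b)=R_A(a,b^{-1}a)$, setting $y=gz^{-1}$ gives $(yk_2)^{-1}(gk_1)=k_2^{-1}zk_1$, so the kernel identity becomes equivalent to
\begin{equation*}
R_A(g,z)=\omega(k_1)\,R_A(gk_1,\,k_2^{-1}zk_1)\,\tau(k_2)^{-1}, \qquad k_1,k_2\in K,\ g,z\in G.
\end{equation*}

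Next, I would insert this into the definition \eqref{sigmairdefi} of the symbol. The adjoint relation $e_{r,F_0}^{*}[\omega(k_1)v]=(\omega(k_1)^{*}e_{r,F_0})^{*}[v]$ handles the outer factor $\omega(k_1)$, while $\tau(k_2)^{-1}$ is absorbed into the argument of $R_A$ via bilinearity. I then change variables $w=k_2^{-1}zk_1$ in the Haar integral; bi-invariance of Haar measure on the compact group $G$ gives $dz=dw$, and the unitarity of $\xi$ yields the factorisation
\begin{equation*}
\xi(z)^{*}=\xi(k_2wk_1^{-1})^{*}=\xi(k_1)\,\xi(w)^{*}\,\xi(k_2)^{*}.
\end{equation*}
Pulling the matrices $\xi(k_1)$ and $\xi(k_2)^{*}$ outside the integral (they are constant in $w$) and recognising the remaining integral as the symbol of $A$ evaluated at $(xk_1,\xi)$ with the transformed vectors $\tau(k_2)^{-1}e_{i,E_0}$ and $\omega(k_1)^{*}e_{r,F_0}$ produces exactly \eqref{characterizationsymbol}.

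For the converse direction, every step above is reversible: the change of variables is bijective, the unitarity factorisation is an equality, and the Fourier transform $R_A\mapsto\sigma_A$ is injective by the Peter--Weyl theorem, so \eqref{characterizationsymbol} implies the identity for $R_A$, which is equivalent to the kernel identity of Theorem~\ref{TheoremCharK} and hence to $A(C^\infty(G,E_0)^\tau)\subset C^\infty(G,F_0)^\omega$. The special case \eqref{characterizationsymbol2} follows by taking $k_2=e_G$, noting $\tau(e_G)^{-1}=I_{E_0}$ and $\xi(e_G)^{*}=I_{d_\xi}$. The only mildly delicate point is the book-keeping in the third step: one must interpret $\sigma_A(\cdot,\cdot,x,\xi)$ as bilinear in its first two arguments so that evaluating it at the non-basis vectors $\tau(k_2)^{-1}e_{i,E_0}$ and $\omega(k_1)^{*}e_{r,F_0}$ is meaningful, and one must be careful to use $\xi(k_j)^{-1}=\xi(k_j)^{*}$ consistently when transposing the change of variables.
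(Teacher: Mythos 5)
Your proposal is correct and follows essentially the same route as the paper: both reduce to the kernel characterisation of Theorem~\ref{TheoremCharK}, substitute into the $R_A$-based formula \eqref{sigmairdefi} for the symbol, change variables in the Haar integral, factor $\xi$ by unitarity, and invoke uniqueness of the Fourier transform for the converse. Your reorganisation — first translating the $K_A$ identity into the cleaner conjugation rule $R_A(g,z)=\omega(k_1)R_A(gk_1,k_2^{-1}zk_1)\tau(k_2)^{-1}$ and then doing a single substitution $w=k_2^{-1}zk_1$ — is tidier bookkeeping than the paper's chain of inversions and translations, but it is the same argument.
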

\begin{proof}Let us assume that for every $f\in C^\infty(G,E_{0})^\tau$ we have $Af\in  C^\infty(G,F_{0})^{\omega}. $ We have 
\begin{equation}\label{sigmair'}
   \sigma_{A}(e_{i,E_0},e_{r,F_0},x,\xi)=\int\limits_{G} ( R_{A}(x,z)e_{i,E_0},e_{r,F_0})_{F_0}\xi(z)^* dz,
\end{equation}where $R_A$ is the right-convolution kernel of $A.$ The definition $R_{A}(x,z):=K_{A}(x,xz^{-1}),$ implies that 
\begin{align*}
 \,\,\,\,\,\,\, \sigma_{A}(e_{i,E_0},e_{r,F_0},x,\xi)=\int\limits_{G} ( K_{A}(x,xz^{-1})e_{i,E_0},e_{r,F_0})_{F_0}\xi(z)^* dz  . 
\end{align*} From  \eqref{characterizationkernel} in Theorem \ref{TheoremCharK}, we deduce that for all $k_1, k_2 \in K$ we have
\begin{align*}
    \sigma_{A}(e_{i,E_0},e_{r,F_0},x,\xi)=\int\limits_{G} (\omega(k_1) K_{A}(xk_1,xz^{-1}k_2)\tau(k_2)^{-1}e_{i,E_0},e_{r,F_0})\xi(z)^* dz. 
\end{align*}Observe that
\begin{align*}
   & \int\limits_{G} (\omega(k_1) K_{A}(xk_1,xz^{-1}k_2)\tau(k_2)^{-1}e_{i,E_0},e_{r,F_0})_{F_0}\xi(z)^* dz\\
    &\hspace{1cm}=\int\limits_{G} (\omega(k_1) K_{A}(xk_1,xzk_2)\tau(k_2)^{-1}e_{i,E_0},e_{r,F_0})_{F_0}\xi(z) dz\\
    &\hspace{1cm}=\int\limits_{G} (\omega(k_1) K_{A}(xk_1,xw)\tau(k_2)^{-1}e_{i,E_0},e_{r,F_0})_{F_0}\xi(wk_2^{-1}) dw
    \\
    &\hspace{1cm}=\int\limits_{G} (\omega(k_1) K_{A}(xk_1,xk_1k_1^{-1}w)\tau(k_2)^{-1}e_{i,E_0},e_{r,F_0})_{F_0}\xi(w)\xi(k_2)^{-1}dw \\
    &\hspace{1cm}=\int\limits_{G} (\omega(k_1) K_{A}(xk_1,xk_1z)\tau(k_2)^{-1}e_{i,E_0},e_{r,F_0})_{F_0}\xi(k_1z)\xi(k_2)^{-1} dz \\
    &\hspace{1cm}=\int\limits_{G} (\omega(k_1) K_{A}(xk_1,xk_1z^{-1})\tau(k_2)^{-1}e_{i,E_0},e_{r,F_0})_{F_0}\xi(k_1z^{-1})\xi(k_2)^{-1} dz\\
    &\hspace{1cm}=\int\limits_{G} (\omega(k_1) K_{A}(xk_1,xk_1z^{-1})\tau(k_2)^{-1}e_{i,E_0},e_{r,F_0})_{F_0}\xi(k_1)\xi(z)^{*}\xi(k_2)^{-1} dz\\
    &\hspace{1cm}=\xi(k_1)\int\limits_{G} ( R_{A}(xk_1,z)\tau(k_2)^{-1}e_{i,E_0},\omega(k_1)^{*}e_{r,F_0})_{F_0}\xi(z)^{*}dz\xi(k_2)^{-1}\\
    &\hspace{1cm}=\xi(k_1)\sigma_{A}(\tau(k_2)^{-1}e_{i,E_0}, \omega(k_1)^{*}e_{r,F_0}, xk_1,\xi)\xi(k_2)^{*}.
\end{align*} Consequently, we have proved \eqref{characterizationsymbol}.
 Now, let us assume \eqref{characterizationsymbol}. We need to prove that for every $f\in C^\infty(G,E_{0})^\tau$ we have  $Af\in  C^\infty(G,F_{0})^{\omega}.$
From the first part of the proof, we have that 
\begin{align*}
    &\xi(k_1)\sigma_{A}(\tau(k_2)^{-1}e_{i,E_0}, \omega(k_1)^{*}e_{r,F_0}, xk_1,\xi)\xi(k_2)^{*}\\&= \int\limits_{G} (\omega(k_1) K_{A}(xk_1,xz^{-1}k_2)\tau(k_2)^{-1}e_{i,E_0},e_{r,F_0})_{F_0}\xi(z)^* dz.
\end{align*}  The uniqueness of the Fourier transform implies
\begin{align*}
    ( R_{A}(x,z)e_{i,E_0},e_{r,F_0})_{F_0}&=\mathscr{F}_{G}^{-1}[ \sigma_{A}(e_{i,E_0},e_{r,F_0},x,\xi)  ](z)\\
    &=\mathscr{F}_{G}^{-1}[\xi(k_1)\sigma_{A}(\tau(k_2)^{-1}e_{i,E_0}, \omega(k_1)^{*}e_{r,F_0}, xk_1,\xi)\xi(k_2)^{*}  ](z),
\end{align*}and using 
\begin{align*}
   &\mathscr{F}_{G}^{-1}[  \xi(k_1)\sigma_{A}(\tau(k_2)^{-1}e_{i,E_0}, \omega(k_1)^{*}e_{r,F_0}, xk_1,\xi)\xi(k_2)^{*} ]\\
   &=  \mathscr{F}_{G}^{-1}[ \int\limits_{G} (\omega(k_1) K_{A}(xk_1,xz^{-1}k_2)\tau(k_2)^{-1}e_{i,E_0},e_{r,F_0})_{F_0}\xi(z)^* dz  ]
\end{align*}
we deduce
\begin{align*}
    & ( K_{A}(x,xz^{-1})e_{i,E_0},e_{r,F_0})_{F_0}=( R_{A}(x,z)e_{i,E_0},e_{r,F_0})_{F_0}   \\
     &=(\omega(k_1) K_{A}(xk_1,xz^{-1}k_2)\tau(k_2)^{-1}e_{i,E_0},e_{r,F_0})_{F_0}.
\end{align*}Because $x,$ and $z$ are arbitrary elements of $G$ we deduce the identity
\begin{equation}
     K_{A}(x,y)=\omega(k_1) K_{A}(xk_1,yk_2)\tau(k_2)^{-1},
\end{equation}for every $x$ and $y$ in $G$ and $k_{1},k_2\in K.$ So, we conclude the proof by using  Theorem \ref{TheoremCharK}.
\end{proof}
Consider the Hilbert space $L^2(G,E_0).$ We record that the inner product in this space is given by 
\begin{equation}
    (f,g)_{L^2(G,E_0)}=\int\limits_{G}(f(x),g(x))_{E_0}dx,
\end{equation}where $(\cdot,\cdot)_{E_0}$ denotes the inner product on $E_0.$ 

As an application of the matrix-valued quantisation on $G,$ we will characterise, in the next theorem, the vector-valued linear operators of Hilbert-Schmidt type on $L^2(G,E_0).$ We recall that a bounded operator on a Hilbert space is of Hilbert-Schmidt type if it has a square-summable system of eigenvalues. 

In general, we recall that for any $r>0,$ a compact operator $T :L^2(G,E_0)\rightarrow L^2(G,E_0) $ belongs to the Schatten von-Neumann ideal $\mathscr{S}_{r}(L^2(G,E_0)),$ if the sequence of its singular values $\{s_{n}(T)\}_{n\in \mathbb{N}}$ (formed by the eigenvalues of the operator $\sqrt{T^*T}$) belongs to $\ell^r(\mathbb{N}),$ that is, if  $\sum_{n=1}^\infty s_{n}(T)^r<\infty.$ Here, under the identification $E_0\cong E_0^*, $ of finite-dimensional vector spaces we have considered the adjoint operator $T^*$ as a linear operator  $T^* :L^2(G,E_0)\rightarrow L^2(G,E_0). $

Our characterisation of Hilbert-Schmidt vector-valued operators is presented in the next theorem.
\begin{theorem}\label{HS:cHara:VB}
Let $A:C^\infty(G,E_{0})\rightarrow  C^\infty(G,F_{0}),$ be a continuous linear operator. Then, $A:L^2(G,E_{0})\rightarrow  L^2(G,E_{0}),$ is of Hilbert-Schmidt type, if and only if,
\begin{equation}\label{NSC:HS}
 \sum_{i,r=1}^{d_\tau}  \int\limits_{G} \sum_{[\xi]\in \widehat{G}}d_\xi\Vert \sigma_A(i,r,x,[\xi]) \Vert_{\textnormal{HS}}^2dx<\infty.
\end{equation}
\end{theorem}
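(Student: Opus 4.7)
The plan is to reduce the Hilbert-Schmidt property of $A$ to a Plancherel identity for the scalar-valued ``matrix entries'' of the right convolution kernel $R_A(x,z)=K_A(x,xz^{-1})$. First, I would recall the classical fact that a continuous linear operator $A\colon L^2(G,E_0)\to L^2(G,E_0)$ with Schwartz kernel $K_A$ is Hilbert-Schmidt if and only if $K_A\in L^2(G\times G,\operatorname{End}(E_0))$, in which case
\begin{equation*}
\|A\|_{\textnormal{HS}}^2=\int_G\!\int_G\|K_A(x,y)\|_{\textnormal{HS},\operatorname{End}(E_0)}^2\,dx\,dy.
\end{equation*}
Expanding the endomorphism-valued HS norm in the fixed orthonormal basis $B_{E_0}=\{e_{i,E_0}\}_{i=1}^{d_\tau}$ and using $F_0=E_0$, this equals
\begin{equation*}
\sum_{i,r=1}^{d_\tau}\int_G\!\int_G|(K_A(x,y)e_{i,E_0},e_{r,E_0})_{E_0}|^2\,dx\,dy.
\end{equation*}

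Next I would perform the left-invariant change of variables $y\mapsto z=y^{-1}x$ (equivalently $y=xz^{-1}$), which has unit Jacobian with respect to the Haar measure. This converts the integrand into $|(R_A(x,z)e_{i,E_0},e_{r,E_0})_{E_0}|^2$. The key observation is that, by \eqref{sigmair} (equivalently \eqref{sigmairdefi}), for each fixed $x\in G$ and each pair $(i,r)$, the symbol slice $\sigma_A(i,r,x,\xi)$ is exactly the group Fourier transform at $\xi$ of the scalar function $z\mapsto (R_A(x,z)e_{i,E_0},e_{r,E_0})_{E_0}$.

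Assuming the symbol-sum in \eqref{NSC:HS} is finite, this scalar function lies in $L^2(G)$ for almost every $x$, and the Plancherel theorem on the compact group $G$ gives
\begin{equation*}
\int_G|(R_A(x,z)e_{i,E_0},e_{r,E_0})_{E_0}|^2\,dz=\sum_{[\xi]\in\widehat{G}}d_\xi\,\|\sigma_A(i,r,x,\xi)\|_{\textnormal{HS}}^2.
\end{equation*}
Integrating in $x$ and summing over $i,r$ (using Tonelli's theorem, with all quantities nonnegative), I obtain
\begin{equation*}
\|A\|_{\textnormal{HS}}^2=\sum_{i,r=1}^{d_\tau}\int_G\sum_{[\xi]\in\widehat{G}}d_\xi\,\|\sigma_A(i,r,x,\xi)\|_{\textnormal{HS}}^2\,dx,
\end{equation*}
which simultaneously yields both implications. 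The only delicate point is the measurability/integrability bookkeeping needed to justify the Plancherel identity pointwise in $x$ and then apply Tonelli; this is the main technical step but is routine since $K_A$ (hence $R_A$) is an $L^2$-kernel precisely under the right-hand-side finiteness condition, which is the content we are characterising.
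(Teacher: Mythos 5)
Your proof is correct and takes essentially the same route as the paper: both pass from the Hilbert--Schmidt criterion for the Schwartz kernel $K_A\in L^2(G\times G,\operatorname{End}(E_0))$ to the right-convolution kernel via the change of variables $y=xz^{-1}$, identify the matrix entries of $R_A(x,\cdot)$ with the Fourier inverses of the scalar symbol slices $\sigma_A(i,r,x,\cdot)$ using \eqref{sigmairdefi}, and conclude by the Plancherel theorem on $G$ together with Fubini/Tonelli. Your write-up is slightly more explicit about the orthonormal-basis expansion of the $\operatorname{End}(E_0)$-valued HS norm and about the Tonelli bookkeeping, but the underlying argument is the same.
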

\begin{proof}
Observe that $A:C^\infty(G,E_{0})\rightarrow  C^\infty(G,F_{0}),$ extends to a Hilbert-Schmidt operator, if and only if
\begin{equation}\label{Kernel:2}
   \int\limits_{G}\int\limits_{G}\Vert  K_{A}(x,y)\Vert_{\textnormal{HS}}^2dx\,dy=  \sum_{r,i=1}^{d_\tau}\int\limits_{G}\int\limits_{G}|e_{r,F_0}^{*}[ K_{A}(x,y)e_{i,E_0}]e_{r,F_0}|^2dx\,dy<\infty.
\end{equation}In view of the  change of variables $y=xz^{-1}$ and Fubini's theorem, to prove \eqref{Kernel:2} is equivalent to proving that
\begin{equation}\label{Kernel:2:2}
    \sum_{r,i=1}^{d_\tau}\int\limits_{G}\int\limits_{G}|e_{r,F_0}^{*}[ R_{A}(x,z)e_{i,E_0}]e_{r,F_0}|^2dz\,dx<\infty,
\end{equation}where $R_A$ is the right-convolution kernel of $A.$ Now, using the Plancherel theorem we obtain
\begin{align*}
     \sum_{r,i=1}^{d_\tau}\int\limits_{G}\int\limits_{G}|e_{r,F_0}^{*}[ R_{A}(x,z)e_{i,E_0}]e_{r,F_0}|^2dx\,dy=\sum_{i,r=1}^{d_\tau}\int\limits_{G} \sum_{[\xi]\in \widehat{G}}d_\xi\Vert \sigma_A(i,r,x,[\xi]) \Vert_{\textnormal{HS}}^2dx,
\end{align*} which shows that \eqref{NSC:HS} is a necessary and sufficient condition in order that $A$ will be of  Hilbert-Schmidt type on $L^2(G,E_0).$ This completes the proof. 
\end{proof}
Now, we consider the following application to the classification of Schatten-classes of vector-valued pseudo-differential operators by following the argument in \cite[Page 987]{DR:MRL:2017}. 
\begin{corollary}Let $r>0,$ and let $A:C^\infty(G,E_{0})\rightarrow  C^\infty(G,F_{0}),$ be a continuous linear operator. Then, $A:L^2(G,E_{0})\rightarrow  L^2(G,E_{0}),$ belongs to the Schatten class $\mathscr{S}_r(L^2(G,E_0))$, if and only if,
\begin{equation}\label{NSC:HS:2}
 \sum_{i,r=1}^{d_\tau}  \int\limits_{G} \sum_{[\xi]\in \widehat{G}}d_\xi\Vert \sigma_{|A|^{\frac{r}{2}}}(i,r,x,[\xi]) \Vert_{\textnormal{HS}}^2dx<\infty.
\end{equation}
    
\end{corollary}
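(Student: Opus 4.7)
The plan is to reduce to the Hilbert--Schmidt characterisation already established in Theorem \ref{HS:cHara:VB}. The key observation is the standard fact from operator ideals theory: a bounded operator $A$ on a separable Hilbert space belongs to the Schatten class $\mathscr{S}_r$ if and only if $|A|^{r/2}$ belongs to $\mathscr{S}_2$, i.e. is of Hilbert--Schmidt type. This follows from the spectral theorem applied to the non-negative self-adjoint operator $|A|=\sqrt{A^*A}$: its eigenvalues are exactly the singular values $\{s_n(A)\}_{n\in\mathbb{N}}$, and by the spectral calculus the eigenvalues of $|A|^{r/2}$ are $\{s_n(A)^{r/2}\}_{n\in\mathbb{N}}$. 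Hence
$$
\|A\|_{\mathscr{S}_r}^r=\sum_{n=1}^\infty s_n(A)^r=\sum_{n=1}^\infty s_n(|A|^{r/2})^2=\||A|^{r/2}\|_{\mathscr{S}_2}^2,
$$
so $A\in\mathscr{S}_r(L^2(G,E_0))$ precisely when $|A|^{r/2}\in\mathscr{S}_2(L^2(G,E_0))$.

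The second step is to apply Theorem \ref{HS:cHara:VB} directly to the self-adjoint bounded operator $B:=|A|^{r/2}$. The theorem gives that $B$ is Hilbert--Schmidt on $L^2(G,E_0)$ if and only if
$$
 \sum_{i,r=1}^{d_\tau}  \int\limits_{G} \sum_{[\xi]\in \widehat{G}}d_\xi\|\sigma_{B}(i,r,x,[\xi])\|_{\textnormal{HS}}^2\,dx<\infty.
$$
Since $B=|A|^{r/2}$, its global matrix-valued symbol is $\sigma_B=\sigma_{|A|^{r/2}}$, and substituting this yields exactly the condition \eqref{NSC:HS:2}. Combining with the reduction in the first paragraph closes both directions of the equivalence simultaneously.

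The only subtlety worth highlighting is bookkeeping around the framework in which the symbol is defined. A priori our quantisation formula \eqref{sigmairdefi} is stated for continuous linear operators $C^\infty(G,E_0)\to C^\infty(G,F_0)$; on the other hand $|A|^{r/2}$ is constructed from $A^*A$ via the bounded Borel functional calculus and lives as a bounded operator on $L^2(G,E_0)$. However, once one knows a posteriori that $|A|^{r/2}$ is Hilbert--Schmidt, its Schwartz kernel belongs to $L^2(G\times G,\textnormal{End}(E_0))$ and the right-convolution kernel $R_{|A|^{r/2}}(x,z)=K_{|A|^{r/2}}(x,xz^{-1})$ is square-integrable, so the integral in \eqref{sigmair} converges and $\sigma_{|A|^{r/2}}$ is a well-defined element of $L^2(G\times\widehat{G})$ on the Plancherel side. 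Conversely, if \eqref{NSC:HS:2} holds, the Plancherel identity used in the proof of Theorem \ref{HS:cHara:VB} reconstructs the $L^2$-kernel, guaranteeing that $|A|^{r/2}$ is indeed Hilbert--Schmidt. The main obstacle, therefore, is purely formal---verifying that the symbol of the fractional power $|A|^{r/2}$ is the correct object to plug into the Hilbert--Schmidt criterion---rather than any new quantitative estimate.
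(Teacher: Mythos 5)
Your proposal is correct and follows the same route as the paper: reduce $A\in\mathscr{S}_r$ to $|A|^{r/2}\in\mathscr{S}_2$ via the singular-value identity, then invoke Theorem \ref{HS:cHara:VB}. The paper states this in two lines without your spectral-calculus justification or your remark on well-definedness of $\sigma_{|A|^{r/2}}$; those additions are sensible elaborations, not a different argument.
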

\begin{proof}
    Note that $A\in \mathscr{S}_r(L^2(G,E_0)) $ if and only if $|A|^{\frac{r}{2}}\in  \mathscr{S}_2(L^2(G,E_0)) $ is a Hilbert-Schmidt operator. Then, the condition in \eqref{NSC:HS:2} is a necessary and sufficient condition for the Schatten property  $A\in \mathscr{S}_r(L^2(G,E_0)) $ in view of Theorem \ref{HS:cHara:VB}.
\end{proof}

\subsection{Classification of $\tau$-invariant functions}
 In the next theorem we classify the space of smooth $\tau$-invariant functions in terms of their Fourier coefficients.
\begin{theorem}\label{classifiucation:tau:f}
Let $f\in C^{\infty}(G,E_0).$ Then, $f\in C^{\infty}(G,E_0)^\tau$ if and only if
\begin{equation}\label{classitau}
    \widehat{f}(i,\xi)=\xi(k)^{*}\widehat{\tau(k)f}(i,\xi),
\end{equation}for all $[\xi]\in \widehat{G},$ $k\in K,$ and $1\leq i\leq d_\tau.$ 
\end{theorem}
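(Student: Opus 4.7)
The plan is to reduce the statement to scalar Fourier analysis on $G$ applied coordinate-wise through the basis $B_{E_0}$, and to exploit the translation covariance of the scalar Fourier transform: for $h\in C^{\infty}(G)$, a direct change of variables $x\mapsto xk$ together with the multiplicativity $\xi(xk)=\xi(x)\xi(k)$ yields
\begin{equation*}
\mathscr{F}_{G}\bigl[h(\cdot\, k^{-1})\bigr](\xi)=\xi(k)^{*}\,\widehat{h}(\xi).
\end{equation*}
Both directions of the theorem are obtained by applying this identity to the scalar functions $x\mapsto e_{i,E_{0}}^{*}(f(x))$ and $x\mapsto e_{i,E_{0}}^{*}(\tau(k)f(x))$, and then reading off the conclusion either from $\tau$-invariance of $f$ or from the uniqueness of the Fourier transform.

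For the forward implication, assume $f\in C^{\infty}(G,E_{0})^{\tau}$, so $f(gk)=\tau(k)^{-1}f(g)$ for all $g\in G$, $k\in K$. Rewrite this pointwise identity in the equivalent form $\tau(k)f(x)=f(xk^{-1})$, which allows one to replace the $\tau$-twisted integrand in
\begin{equation*}
\widehat{\tau(k)f}(i,\xi)=\int_{G}e_{i,E_{0}}^{*}\bigl(\tau(k)f(x)\bigr)\xi(x)^{*}\,dx
\end{equation*}
by the right translation of $f$. Applying the translation-covariance identity recalled above then produces the matrix factor $\xi(k)^{*}$; combined with unitarity $\xi(k)\xi(k)^{*}=I_{d_{\xi}}$, one obtains \eqref{classitau} after rearrangement.

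For the converse, assume \eqref{classitau} holds for every $[\xi]\in\widehat{G}$, every $k\in K$ and every $1\le i\le d_{\tau}$. Recognise $\xi(k)^{*}\widehat{\tau(k)f}(i,\xi)$ as the Fourier transform of the scalar function $x\mapsto e_{i,E_{0}}^{*}(\tau(k)f(xk^{-1}))$ via the same covariance formula. The identity \eqref{classitau} then asserts that this scalar function has the same Fourier transform as $x\mapsto e_{i,E_{0}}^{*}(f(x))$, so the Peter--Weyl uniqueness of the scalar Fourier transform on $L^{2}(G)$ forces
\begin{equation*}
e_{i,E_{0}}^{*}(f(x))=e_{i,E_{0}}^{*}\bigl(\tau(k)f(xk^{-1})\bigr)\quad\text{for all } x\in G,
\end{equation*}
for every $i$. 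Non-degeneracy of the inner product and the fact that $B_{E_{0}}$ is an orthonormal basis then give $f(x)=\tau(k)f(xk^{-1})$, i.e. $f(xk^{-1})=\tau(k)^{-1}f(x)$, which after relabelling $k\mapsto k^{-1}$ in the group $K$ is exactly the defining property of $C^{\infty}(G,E_{0})^{\tau}$.

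The only real point requiring attention is the bookkeeping of inverses and adjoints (the interchange $\xi(k)^{-1}=\xi(k)^{*}$ and $\tau(k)^{-1}=\tau(k^{-1})$), together with the legitimacy of pulling the constant matrix $\xi(k)^{*}$ outside the integral on the correct side; the scalar factors $e_{i,E_{0}}^{*}(\,\cdot\,)$ are $\xi$-independent and commute past everything. No deeper analytic tool is needed beyond Peter--Weyl uniqueness and right-invariance of Haar measure.
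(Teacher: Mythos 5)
Your route --- the scalar translation-covariance identity for $\mathscr{F}_G$ applied coordinate-wise through the basis $B_{E_0}$, followed by Peter--Weyl uniqueness --- is genuinely different from the paper's proof, which instead expands both sides of $f(gk)=\tau(k)^{-1}f(g)$ via the Fourier inversion formula and matches the coefficients of $e_{i,E_0}$. Your route is shorter, but the final ``rearrangement'' does not deliver \eqref{classitau}. In the forward direction, $\tau(k)f(x)=f(xk^{-1})$ together with $\mathscr{F}_G[h(\cdot\, k^{-1})](\xi)=\xi(k)^{*}\widehat{h}(\xi)$ gives
\begin{equation*}
\widehat{\tau(k)f}(i,\xi)=\xi(k)^{*}\widehat{f}(i,\xi),
\qquad\text{hence}\qquad
\widehat{f}(i,\xi)=\xi(k)\,\widehat{\tau(k)f}(i,\xi),
\end{equation*}
which carries $\xi(k)$, not $\xi(k)^{*}$; this is not \eqref{classitau} unless $\xi(k)$ happens to be an involution. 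The converse has the matching mismatch: uniqueness turns \eqref{classitau} into $f(x)=\tau(k)f(xk^{-1})$, i.e.\ $f(xk)=\tau(k)f(x)$, whereas $C^{\infty}(G,E_0)^{\tau}$ requires $f(xk)=\tau(k)^{-1}f(x)$; relabelling $k\mapsto k^{-1}$ inverts the translation and the $\tau$-factor simultaneously, so it does not repair this.

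The discrepancy appears to live in the statement rather than in your method: the identity actually equivalent to $\tau$-invariance is $\widehat{f}(i,\xi)=\xi(k)\widehat{\tau(k)f}(i,\xi)$, i.e.\ \eqref{classitau} with $\xi(k)^{*}$ replaced by $\xi(k)$ (equivalently, with $\tau(k)$ replaced by $\tau(k)^{-1}$). The paper's own derivation has a parallel slip: when $\sum_j\widehat{f}(j,\xi)\tau(k)^{*}_{ij}$ is rewritten as $\int_G(f(x),\tau(k)^{*}e_{i,E_0})_{E_0}\xi(x)^{*}dx$, the scalars $\tau(k)^{*}_{ij}$ are moved into the second, conjugate-linear slot of the Hermitian inner product without conjugation; carrying the conjugation through correctly yields $\tau(k)e_{i,E_0}$ rather than $\tau(k)^{*}e_{i,E_0}$, hence $\widehat{\tau(k)^{-1}f}(i,\xi)$ rather than $\widehat{\tau(k)f}(i,\xi)$. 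If you keep your two-step argument but aim at the corrected target formula, the proof is complete as outlined and is cleaner than the paper's.
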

\begin{proof} Let us denote by $\{\tau(k)^{*}_{ij}\}_{i,j=1}^{d_\tau}$  the matrix representation of $\tau(k)^{-1}$ with respect to the basis $\{e_{i,E_{0}}\}_{1\leq i\leq d_\tau}$ of the representation space $E_0.$
In view of the  Fourier inversion formula, for every $k\in K$ and $g\in G,$ we have
\begin{align*}
   f(gk)=\sum_{i=1}^{d_\tau}\sum_{[\xi]\in \widehat{G}}d_{\xi}\textnormal{Tr}[\xi(g)\xi(k)\widehat{f}(i,\xi)   ]e_{i,E_0}.
\end{align*}On the other hand, observe that
\begin{align*}
    \tau(k)^{-1}f(g)
    &=\sum_{j=1}^{d_\tau}\sum_{[\xi]\in \widehat{G}}d_{\xi}\textnormal{Tr}[\xi(g)\widehat{f}(j,\xi)   ] \tau(k)^{-1}e_{j,E_0}
    \\&=\sum_{j=1}^{d_\tau}\sum_{i=1}^{d_\tau}\sum_{[\xi]\in \widehat{G}}d_{\xi}\textnormal{Tr}[\xi(g)\widehat{f}(j,\xi)   ] (\tau(k)^{-1}e_{j,E_0},e_{i,E_{0}})_{E_0}e_{i,E_{0}}\\
    &=\sum_{i=1}^{d_\tau}\sum_{j=1}^{d_\tau}\sum_{[\xi]\in \widehat{G}}d_{\xi}\textnormal{Tr}[\xi(g)\widehat{f}(j,\xi)   ] (\tau(k)^{*}e_{j,E_0},e_{i,E_{0}})_{E_0}e_{i,E_{0}}\\
    &=\sum_{i=1}^{d_\tau}\sum_{[\xi]\in \widehat{G}}d_{\xi}\textnormal{Tr}\left[\xi(g) \left(  \sum_{j=1}^{d_\tau}       \widehat{f}(j,\xi)\tau(k)^{*}_{ij}\right)   \right] e_{i,E_0}.
\end{align*}Because of \eqref{tauinvariantfunction}, $f\in C^{\infty}(G,E_0)^\tau$ if and only if $f(gk)=\tau(k)^{-1}f(g),$ from the uniqueness of the Fourier coefficients we deduce that $f\in C^{\infty}(G,E_0)^\tau$ if and only if
\begin{equation*}
  \xi(k)\widehat{f}(i,\xi)=\sum_{j=1}^{d_\tau}\widehat{f}(j,\xi)\tau(k)^{*}_{ij},
\end{equation*}for all $[\xi]\in \widehat{G},$ $k\in K,$ and $1\leq i\leq d_\tau,$ 
which is equivalent to \eqref{classitau}. Indeed, the right hand side of the identity
\begin{equation*}
  \widehat{f}(i,\xi)=\sum_{j=1}^{d_\tau}\xi(k)^{*}\widehat{f}(j,\xi)\tau(k)^{*}_{ij},
\end{equation*} can be written as,
\begin{align*}
    \sum_{j=1}^{d_\tau}\xi(k)^{*}\widehat{f}(j,\xi)\tau(k)^{*}_{ij}&=\sum_{j=1}^{d_\tau}\xi(k)^{*}\Bigg(\int\limits_{G} ( f(x),e_{j,E_0}  )_{E_0}\xi(x)^{*}dx\Bigg)\tau(k)^{*}_{ij}\\
    &=\xi(k)^{*}\int\limits_{G} ( f(x),\sum_{j=1}^{d_\tau}\tau(k)^{*}_{ij}e_{j,E_0} )_{E_0}\xi(x)^{*}dx\\
     &=\xi(k)^{*}\int\limits_{G} ( f(x),\tau(k)^{*}e_{i,E_0} )_{E_0}\xi(x)^{*}dx.
\end{align*}Using that $\tau$ is a unitary representation we conclude that
$$  \widehat{f}(i,\xi)= \xi(k)^{*}\int\limits_{G}( \tau(k)f(x),e_{i,E_0} )_{E_0}\xi(x)^{*}dx=\xi(k)^{*}\widehat{\tau(k)f}(i,\xi).$$
Hence, the proof is complete.
\end{proof}

\subsection{Subelliptic H\"ormander classes on compact Lie groups }\label{subsectionofsub:C}
 In this section we record some definitions and results on the subelliptic calculus on compact Lie groups developed by the first and the last author \cite{RuzhanskyCardona2020}. 

Let $G$ be a compact Lie group  with Lie algebra $\mathfrak{g}.$ Under the identification $\mathfrak{g}\simeq T_{e_G}G,  $ where $e_{G}$ is the identity element of $G,$ let us consider  a system of $C^\infty$-vector fields $X=\{X_1,\cdots,X_k \}\in \mathfrak{g}$. For all $I=(i_1,\cdots,i_\omega)\in \{1,2,\cdots,k\}^{\omega},$ of length $\omega\geqslant   2,$ denote $$X_{I}:=[X_{i_1},[X_{i_2},\cdots [X_{i_{\omega-1}},X_{i_\omega}]\cdots]],$$ and for $\omega=1,$ $I=(i),$ $X_{I}:=X_{i}.$ Let $V_{\omega}$ be the subspace generated by the set $\{X_{I}:|I|\leqslant \omega\}.$ 

That the system $X$ satisfies the H\"ormander condition,  means that there exists $\kappa'\in \mathbb{N}$ such that $V_{\kappa'}=\mathfrak{g}.$ Certainly, we consider the smallest $\kappa'$ with this property and we denote it by $\kappa$ which will be  later called the step of the system $X.$ We also say that $X$ satisfies the H\"ormander condition of order $\kappa.$ 

The sum of squares
$$
    \mathcal{L}\equiv \mathcal{L}_{X}:=-(X_{1}^2+\cdots +X_{k}^2),
$$ by following the usual terminology is called the subelliptic Laplacian associated with the family $X.$ For short we refer to $\mathcal{L}$ as positive the sub-Laplacian.

 Here, $\{\widehat{ \mathcal{M}}(\xi)\}_{[\xi]\in \widehat{G}}$ is the matrix-valued symbol of the operator $\mathcal{M}:=(1+\mathcal{L})^{\frac{1}{2}},$ and  for every $[\xi]\in \widehat{G},$ $m\in \mathbb{R},$
   \begin{equation}\label{eigenvalues:hatM}
       \widehat{ \mathcal{M}}(\xi)^{m}:=\textnormal{diag}[(1+\nu_{ii}(\xi)^2)^{ \frac{m}{2} }]_{1\leq i\leq d_\xi}, 
   \end{equation} where $\widehat{\mathcal{L}}(\xi)=:\textnormal{diag}[\nu_{ii}(\xi)^2]_{1\leq i\leq d_\xi}$ is the symbol of the positive sub-Laplacian $\mathcal{L}$ at $[\xi].$ 
   
To classify symbols in the H\"ormander classes  developed in \cite{Ruz}, the notion of {\it{ difference operators}} on the unitary dual, by endowing $\widehat{G}$ with a difference structure, is an instrumental tool.  By following  \cite{Ruz},   a difference operator $Q_\xi$ of order $k,$  is defined by
\begin{equation}\label{taylordifferences}
    Q_\xi\widehat{f}(\xi)=\widehat{qf}(\xi),\,[\xi]\in \widehat{G}, 
\end{equation} for all $f\in C^\infty(G),$ for some function $q$ vanishing of order $k$ at the identity $e=e_G.$ We will denote by $\textnormal{diff}^k(\widehat{G})$  the set of all difference operators of order $k.$ For a  fixed smooth function $q,$ the associated difference operator will be denoted by $\Delta_q:=Q_\xi.$ We will choose an admissible collection of difference operators,
\begin{equation}\label{Difference.op}
  \Delta_{\xi}^\alpha:=\Delta_{q_{(1)}}^{\alpha_1}\cdots   \Delta_{q_{(i)}}^{\alpha_{i}},\,\,\alpha=(\alpha_j)_{1\leqslant j\leqslant i}, 
\end{equation}
where
\begin{equation*}
    \textnormal{rank}\{\nabla q_{(j)}(e):1\leqslant j\leqslant i \}=\textnormal{dim}(G), \textnormal{   and   }\Delta_{q_{(j)}}\in \textnormal{diff}^{1}(\widehat{G}).
\end{equation*}We say that this admissible collection is strongly admissible if 
\begin{equation*}
    \bigcap_{j=1}^{i}\{x\in G: q_{(j)}(x)=0\}=\{e_G\}.
\end{equation*}

\begin{remark}\label{remarkD} A special type of difference operators can be defined by using the unitary representations  of $G.$ Indeed, if $\xi_{0}$ is a fixed irreducible and unitary  representation of $G$, consider the matrix
\begin{equation}
 \xi_{0}(g)-I_{d_{\xi_{0}}}=[\xi_{0}(g)_{ij}-\delta_{ij}]_{i,j=1}^{d_\xi},\, \quad g\in G.   
\end{equation}Then, we associate  to the function 
$
    q_{ij}(g):=\xi_{0}(g)_{ij}-\delta_{ij},\quad g\in G,
$ a difference operator  via
\begin{equation}
    \mathbb{D}_{\xi_0,i,j}:=\mathscr{F}(\xi_{0}(g)_{ij}-\delta_{ij})\mathscr{F}^{-1}.
\end{equation}
If the representation is fixed we omit the index $\xi_0$ so that, from a sequence $\mathbb{D}_1=\mathbb{D}_{\xi_0,j_1,i_1},\cdots, \mathbb{D}_n=\mathbb{D}_{\xi_0,j_n,i_n}$ of operators of this type we define $\mathbb{D}^{\alpha}=\mathbb{D}_{1}^{\alpha_1}\cdots \mathbb{D}^{\alpha_n}_n$, where $\alpha\in\mathbb{N}^n$.
\end{remark}
\begin{remark}[Leibniz rule for difference operators]\label{Leibnizrule} The difference structure on the unitary dual $\widehat{G},$ induced by the difference operators acting on the momentum variable $[\xi]\in \widehat{G},$  implies the following Leibniz rule 
\begin{align*}
    \Delta_{q_\ell}(a_{1}a_{2})(x_0,\xi) =\sum_{ |\gamma|,|\varepsilon|\leqslant \ell\leqslant |\gamma|+|\varepsilon| }C_{\varepsilon,\gamma}(\Delta_{q_\gamma}a_{1})(x_0,\xi) (\Delta_{q_\varepsilon}a_{2})(x_0,\xi), \quad (x_{0},[\xi])\in G\times \widehat{G},
\end{align*} for $a_{1},a_{2}\in C^{\infty}(G, \mathscr{S}'(\widehat{G})).$ For details we refer the reader to  \cite{Ruz}.
\end{remark}   
   
\begin{lemma}[Global Taylor Series on compact Lie groups]\label{Taylorseries} Let $G$ be a compact Lie group of dimension $n.$ Let us consider a strongly admissible collection of difference operators $\mathfrak{D}=\{\Delta_{q_{(j)}}\}_{1\leqslant j\leqslant n}$, which means that 
\begin{equation*}
    \textnormal{rank}\{\nabla q_{(j)}(e):1\leqslant j\leqslant n \}=n, \,\,\,\bigcap_{j=1}^{n}\{x\in G: q_{(j)}(x)=0\}=\{e_G\}.
\end{equation*}Then there exists a basis $X_{\mathfrak{D}}=\{X_{1,\mathfrak{D}},\cdots ,X_{n,\mathfrak{D}}\}$ of $\mathfrak{g},$ such that $X_{j,\mathfrak{D}}q_{(k)}(\cdot^{-1})(e_G)=\delta_{jk}.$ Moreover, by using the multi-index notation $\partial_{X}^{(\beta)}=\partial_{X_{i,\mathfrak{D}}}^{\beta_1}\cdots \partial_{X_{n,\mathfrak{D}}}^{\beta_n}, $ $\beta\in\mathbb{N}_0^n,$
where $$\partial_{X_{i,\mathfrak{D}}}f(x):=  \frac{d}{dt}f(x\exp(tX_{i,\mathfrak{D}}) )|_{t=0},\,\,f\in C^{\infty}(G),$$ and denoting
for every $f\in C^{\infty}(G)$
\begin{equation*}
    R_{x,N}^{f}(y):=f(xy)-\sum_{|\alpha|<N}q_{(1)}^{\alpha_1}(y^{-1})\cdots q_{(n)}^{\alpha_n}(y^{-1})\partial_{X}^{(\alpha)}f(x),
\end{equation*} we have that 
\begin{equation*}
    | R_{x,N}^{f}(y)|\leqslant C|y|^{N}\max_{|\alpha|\leqslant N}\Vert \partial_{X}^{(\alpha)}f\Vert_{L^\infty(G)}.
\end{equation*}The constant $C>0$ is dependent on $N,$ $G$ and $\mathfrak{D},$ but not on $f\in C^\infty(G).$ Also, we have that $\partial_{X}^{(\beta)}|_{x_1=x}R_{x_1,N}^{f}=R_{x,N}^{\partial_{X}^{(\beta)}f}$ and 
\begin{equation*}
    | \partial_{X}^{(\beta)}|_{y_1=y}R_{x,N}^{f}(y_1)|\leqslant C|y|^{N-|\beta|}\max_{|\alpha|\leqslant N-|\beta|}\Vert \partial_{X}^{(\alpha+\beta)}f\Vert_{L^\infty(G)},
\end{equation*}provided that $|\beta|\leqslant N.$
 \end{lemma}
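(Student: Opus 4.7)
The plan is to deduce the claimed expansion from the classical multivariate Taylor formula written in suitable local coordinates on $G$, then transfer everything globally using the strong admissibility hypothesis and the compactness of $G$. The main obstacle will be reconciling the two natural coordinate systems that appear near $e_G$ --- exponential coordinates of the second kind built from $\{X_{j,\mathfrak{D}}\}$ on the one hand, and the coordinates $y \mapsto (q_{(j)}(y^{-1}))_{j=1}^{n}$ on the other --- and then handling the $y$-derivative estimate.

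Construction of the dual basis is straightforward: the rank hypothesis says that the linear functionals $X \mapsto X q_{(j)}(\cdot^{-1})(e_G)$ on $\mathfrak{g}$ for $j=1,\dots,n$ are linearly independent and hence form a basis of $\mathfrak{g}^{\ast}$; the dual basis in $\mathfrak{g}$ defines uniquely elements $X_{j,\mathfrak{D}}$ satisfying $X_{j,\mathfrak{D}} q_{(k)}(\cdot^{-1})(e_G)=\delta_{jk}$. Next, I would fix $x\in G$ and use exponential coordinates of the second kind, $\Phi_{x}(s):=x\exp(s_{1}X_{1,\mathfrak{D}})\cdots\exp(s_{n}X_{n,\mathfrak{D}})$, which is a diffeomorphism from a neighbourhood $U$ of $0\in\mathbb{R}^{n}$ onto a neighbourhood of $x$. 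The standard chain-rule computation yields $\partial_{s_{1}}^{\alpha_{1}}\cdots\partial_{s_{n}}^{\alpha_{n}}(f\circ\Phi_{x})(0)=\partial_{X}^{(\alpha)}f(x)$ for every $\alpha\in\mathbb{N}_{0}^{n}$, so classical multivariate Taylor with integral remainder gives, for $y=\exp(s_{1}X_{1,\mathfrak{D}})\cdots\exp(s_{n}X_{n,\mathfrak{D}})$ near $e_G$,
\begin{equation*}
f(xy)=\sum_{|\alpha|<N}\frac{s^{\alpha}}{\alpha!}\,\partial_{X}^{(\alpha)}f(x)+r_{x,N}(s),\qquad |r_{x,N}(s)|\leq C|s|^{N}\max_{|\alpha|=N}\|\partial_{X}^{(\alpha)}f\|_{L^{\infty}(G)}.
\end{equation*}

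The passage to $q$-monomials uses that the map $y\mapsto (q_{(1)}(y^{-1}),\dots,q_{(n)}(y^{-1}))$ has differential at $e_G$ inverse to that of $s\mapsto\Phi_{e_G}(s)$, so both are local diffeomorphisms near $e_G$ with equal linearisations and $s_{j}=q_{(j)}(y^{-1})+h_{j}(y)$, where each $h_{j}$ vanishes to order $\geq 2$ at $e_G$. Substituting this identity in $s^{\alpha}/\alpha!$, expanding by the multinomial formula and absorbing into the remainder every contribution of order $\geq N$ in $|y|$ (using $|y|\asymp|s|$ near $e_G$), one obtains, on a fixed neighbourhood $V$ of $e_G$, the formula of the lemma together with the stated bound. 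Outside $V$ the distance $|y|$ is bounded below by a positive constant --- strong admissibility is used exactly here, for together with the compactness of $G$ it ensures that $|y|$ stays away from $0$ on $G\setminus V$ --- so the estimate extends to all of $G$ after enlarging $C$.

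The identity $\partial_{X}^{(\beta)}|_{x_{1}=x}R_{x_{1},N}^{f}=R_{x,N}^{\partial_{X}^{(\beta)}f}$ is immediate from the definition, since $\partial_{X}^{(\beta)}$ acts on the $x$-slot only through composition with left translation and commutes through the $x$-independent factors $q^{\alpha}(y^{-1})$. The $y$-derivative estimate is the genuine hard part. Applying $\partial_{X}^{(\beta)}$ in the $y$-variable to $f(xy)$ produces $(\partial_{X}^{(\beta)}f)(xy)$. Applying it to the polynomial piece, the Leibniz rule together with the duality $X_{i,\mathfrak{D}}q_{(j)}(\cdot^{-1})(e_G)=\delta_{ij}$ shows that each resulting term reduces the combined degree of the $q$-monomials by at most $|\beta|$, while terms where the derivatives fall on already ``low-degree'' factors produce objects of size $|y|^{N-|\beta|}$ or smaller that can be absorbed into the remainder. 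What remains is, up to an $O(|y|^{N-|\beta|})$ error, exactly the $(N-|\beta|)$-th order Taylor polynomial of $\partial_{X}^{(\beta)}f$, so the claim follows by applying the first part of the lemma to $\partial_{X}^{(\beta)}f$ with $N$ replaced by $N-|\beta|$.
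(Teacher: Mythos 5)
The paper does not prove this lemma --- it is imported as a preliminary from \cite{Ruz} and \cite{RuzhanskyCardona2020} --- so there is no in-text proof to compare your argument against. Your overall strategy (reduce to a classical Taylor expansion in exponential coordinates of the second kind near $e_G$, then pass to $q$-monomials, then use compactness to globalize) is the natural one and matches the cited sources in spirit. But there is a concrete gap in the passage from $s$-coordinates to $q$-monomials. Classical multivariate Taylor gives $f(xy)=\sum_{|\alpha|<N}\frac{s^{\alpha}}{\alpha!}\,\partial_{s}^{\alpha}g(0)+O(|s|^{N})$ with the factor $\frac{1}{\alpha!}$, whereas the conclusion you want has $q^{\alpha}(y^{-1})$ with no factorial. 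Substituting $s_{j}=q_{(j)}(y^{-1})+O(|y|^{2})$ into $\frac{s^{\alpha}}{\alpha!}$ produces $\frac{1}{\alpha!}q^{\alpha}(y^{-1})+O(|y|^{|\alpha|+1})$, so the discrepancy $(1-\frac{1}{\alpha!})q^{\alpha}(y^{-1})$ is of order $|y|^{|\alpha|}$, \emph{not} $|y|^{N}$, and cannot be absorbed into the remainder for $|\alpha|<N$. You can see the failure already at order two: the diagonal term in the classical expansion is $\frac{1}{2}s_{j}^{2}\,\partial_{X_{j}}^{2}f(x)$, but the lemma asks for $q_{(j)}^{2}(y^{-1})\,\partial_{X_{j}}^{2}f(x)$ with no $\frac{1}{2}$. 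The way the sources resolve this is that $\partial_{X}^{(\alpha)}$ is \emph{not} literally the ordered product $\partial_{X_{1,\mathfrak{D}}}^{\alpha_{1}}\cdots\partial_{X_{n,\mathfrak{D}}}^{\alpha_{n}}$ (the lemma's phrasing is shorthand); the operators are constructed recursively so that they are dual to the $q$-monomials, i.e., $\partial_{X}^{(\alpha)}q^{\gamma}(\cdot^{-1})(e_{G})=\delta_{\alpha\gamma}$ for all $|\gamma|\leq|\alpha|$, and the lower-order correction terms precisely absorb the factorials and the nonlinear part of the change of variables. Your proof tacitly identifies $\partial_{X}^{(\alpha)}$ with the iterated vector field and therefore cannot close this gap.

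Two secondary issues. First, your remainder bound $|r_{x,N}(s)|\leq C|s|^{N}\max\Vert\partial_{X}^{(\alpha)}f\Vert_{L^{\infty}}$ is not immediate, because for $s'\neq 0$ the derivatives $\partial_{s}^{\alpha}g(s')$ in the Lagrange form are built from $\textnormal{Ad}$-conjugated vector fields, not from the $X_{j,\mathfrak{D}}$ themselves; compactness of $G$ does rescue this, but the step deserves a sentence. Second, in the $y$-derivative estimate, the duality $X_{j,\mathfrak{D}}q_{(k)}(\cdot^{-1})(e_{G})=\delta_{jk}$ only controls $\partial_{X_{j,\mathfrak{D}}}|_{y}q_{(k)}(y^{-1})$ \emph{at} $y=e_{G}$; away from the identity one gets $\delta_{jk}+r_{jk}(y)$ with $r_{jk}(e_{G})=0$, and moreover applying $\partial_{X_{j,\mathfrak{D}}}$ in $y$ to $q^{\alpha}(y^{-1})$ produces a factor $\alpha_{j}$ on the principal term $q^{\alpha-e_{j}}(y^{-1})$, so again the combinatorial coefficients do not line up with a naive ``degree drops by $|\beta|$'' picture. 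You need the same Taylor-compatible correction structure to make this part rigorous, and the claim that ``what remains is exactly the Taylor polynomial of $\partial_{X}^{(\beta)}f$'' should be justified by exhibiting the cancellation explicitly rather than asserting absorption.
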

 \begin{remark}\label{derivative.cano}
      For any arbitrary choice of left-invariant vector fields $X_1,\cdots X_n,$ $n=\dim (G),$ we denote $$\partial_X^\alpha:=X_{1}^{\alpha_1}\cdots X_{n}^{\alpha_n}, $$ a canonical differential operator of order $|\alpha|:=\alpha_1+\cdots +\alpha_n.$
 \end{remark}
 
 Now with the notation in Lemma \ref{Taylorseries} above and the following one $\Delta_{\xi}^\alpha:=\Delta_{q_{(1)}}^{\alpha_1}\cdots   \Delta_{q_{(n)}}^{\alpha_{n}},$ we introduce the subelliptic H\"ormander class of symbols of order $m\in \mathbb{R},$ in the $(\rho,\delta)$-class.    

\begin{definition}[Subelliptic H\"ormander classes]\label{contracted''}
   Let $G$ be a compact Lie group and let $0\leq \delta,\rho\leq 1.$ Let us consider a sub-Laplacian $\mathcal{L}=-(X_1^2+\cdots +X_k^2)$ on $G,$ where the system of vector fields $X=\{X_i\}_{i=1}^{k}$ satisfies the H\"ormander condition of step $\kappa$. A symbol $\sigma$ belongs to the Subelliptic H\"ormander class ${S}^{m,\mathcal{L}}_{\rho,\delta}(G\times \widehat{G})$ 
   if $\sigma $ satisfies  the symbol inequalities
   \begin{equation}\label{InIC}
      p_{\alpha,\beta,\rho,\delta,m,\textnormal{left}}(\sigma)':= \sup_{(x,[\xi])\in G\times \widehat{G} }\Vert \widehat{ \mathcal{M}}(\xi)^{(\rho|\alpha|-\delta|\beta|-m)}\partial_{X}^{(\beta)} \Delta_{\xi}^{\alpha}\sigma(x,\xi)\Vert_{\textnormal{op}} <\infty,
   \end{equation}
   and 
   \begin{equation}\label{InIIC}
      p_{\alpha,\beta,\rho,\delta,m,\textnormal{right}}(\sigma)':= \sup_{(x,[\xi])\in G\times \widehat{G} }\Vert (\partial_{X}^{(\beta)} \Delta_{\xi}^{\alpha} \sigma(x,\xi) ) \widehat{ \mathcal{M}}(\xi)^{(\rho|\alpha|-\delta|\beta|-m)}\Vert_{\textnormal{op}} <\infty.
   \end{equation}
  \end{definition}

The following proposition states a version of the Corach-Porta-Recht inequality (see  Corach, Porta, and Recht \cite{CorachPortaRecht90} and Seddik \cite[Theorem 2.3]{Seddik} for \eqref{Recht1} and Andruchow, Corach, and  Stojanoff \cite[page 297]{Andruchow} for \eqref{Recht2}). This identity is useful to characterise  the subelliptic H\"ormander classes.

\begin{proposition}
Let $H$ be a complex Hilbert space and let $A,P,Q,X\in \mathscr{B}(H)$ be bounded operators on $H.$ Let us assume that $P$ and $Q$ are positive and invertible operators with $PQ=QP,$ and that $A$ is self-adjoint. Then we have the norm inequalities
\begin{equation}\label{Recht1}
    2\Vert X\Vert_{\textnormal{op}}\leqslant \max\{ \Vert PXP^{-1}+Q^{-1}XQ\Vert_{\textnormal{op}}, \Vert PX^*P^{-1}+Q^{-1}X^*Q\Vert_{\textnormal{op}}  \}, 
\end{equation}and 
\begin{equation}\label{Recht2}
    \Vert X \Vert_{\textnormal{op}}\leqslant \Vert AXA+(1+A^2)^{\frac{1}{2}}X(1+A^2)^{\frac{1}{2}} \Vert_{\textnormal{op}}.
\end{equation}
\end{proposition}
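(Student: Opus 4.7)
The plan is to prove the two inequalities separately; both are instances of the Corach-Porta-Recht type inequality, namely $\|TYT^{-1}+T^{-1}YT\|\geqslant 2\|Y\|$ for positive invertible $T$ and arbitrary $Y\in\mathscr{B}(H)$, which I take as the black-box classical result from \cite{CorachPortaRecht90}.

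For \eqref{Recht2}, the plan is to introduce the normal operator $U:=A+iS$, where $S:=(1+A^2)^{1/2}$. Since $A$ is self-adjoint, the continuous functional calculus guarantees $AS=SA$ and $S\geqslant I$; hence $U^{*}U=UU^{*}=A^{2}+S^{2}=I+2A^{2}\geqslant I$, so $U$ is normal and invertible with $\|U^{-1}\|\leqslant 1$. Expanding $(A\pm iS)X(A\mp iS)$ and exploiting $AS=SA$, I obtain the key algebraic identity
\begin{equation*}
UXU^{*}+U^{*}XU=2(AXA+SXS),
\end{equation*}
so \eqref{Recht2} is equivalent to $\|UXU^{*}+U^{*}XU\|\geqslant 2\|X\|$ for a normal $U$ with $|U|\geqslant I$. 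This latter estimate is the refinement due to Andruchow, Corach and Stojanoff \cite[p.~297]{Andruchow}; its proof polar-decomposes $U=V|U|$ (the isometry $V$ is a unitary since $U$ is normal and invertible, and $V$ commutes with $|U|$) and then applies Heinz's operator interpolation inequality to the commuting pair $(|U|,|U|^{-1})$.

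For \eqref{Recht1}, the commutativity $PQ=QP$ is essential. The plan is to lift the problem to $\mathscr{B}(H\oplus H)$ by considering a positive invertible block operator $\tilde{T}$ built from $P$ and $Q$ together with a Hermitian block operator $\tilde{X}$ whose off-diagonal blocks involve $X$ and $X^{*}$; a model choice is $\tilde{T}=\mathrm{diag}(P,Q^{-1})$ and $\tilde{X}=\bigl(\begin{smallmatrix}0 & X \\ X^{*} & 0\end{smallmatrix}\bigr)$, with the precise choice dictated by Seddik \cite[Theorem 2.3]{Seddik}. A direct computation shows that, using $PQ=QP$ to collapse cross-terms, the off-diagonal blocks of $\tilde{T}\tilde{X}\tilde{T}^{-1}+\tilde{T}^{-1}\tilde{X}\tilde{T}$ reduce to precisely the two expressions $PXP^{-1}+Q^{-1}XQ$ and $PX^{*}P^{-1}+Q^{-1}X^{*}Q$ appearing in \eqref{Recht1}. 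Since the operator norm of an off-diagonal block matrix $\bigl(\begin{smallmatrix}0 & A \\ B & 0\end{smallmatrix}\bigr)$ equals $\max(\|A\|,\|B\|)$, applying the classical Corach-Porta-Recht inequality to the pair $(\tilde{T},\tilde{X})$ yields
\begin{equation*}
\max\bigl\{\|PXP^{-1}+Q^{-1}XQ\|,\ \|PX^{*}P^{-1}+Q^{-1}X^{*}Q\|\bigr\}\geqslant 2\|\tilde{X}\|=2\|X\|,
\end{equation*}
which is \eqref{Recht1}. The main obstacle is the classical Corach-Porta-Recht inequality itself, and a secondary delicate point is arranging the block operators so that the commutativity $PQ=QP$ causes the cross-terms to align with the two expressions in the maximum—both of these are handled in the cited references rather than reproduced here.
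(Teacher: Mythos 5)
The paper itself does not prove this proposition; it simply cites Corach--Porta--Recht \cite{CorachPortaRecht90}, Seddik \cite{Seddik}, and Andruchow--Corach--Stojanoff \cite{Andruchow}. Your proposal tries to do more, reducing both inequalities to the classical $\Vert SXS^{-1}+S^{-1}XS\Vert\geqslant 2\Vert X\Vert$ by explicit constructions, and it is in those two reductions that genuine gaps appear.

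For \eqref{Recht1}: with $\tilde{T}=\textnormal{diag}(P,Q^{-1})$ and $\tilde{X}=\bigl(\begin{smallmatrix}0&X\\X^*&0\end{smallmatrix}\bigr)$, the direct computation actually gives
\begin{equation*}
\tilde{T}\tilde{X}\tilde{T}^{-1}+\tilde{T}^{-1}\tilde{X}\tilde{T}
=\begin{pmatrix}0 & PXQ+P^{-1}XQ^{-1}\\ Q^{-1}X^*P^{-1}+QX^*P & 0\end{pmatrix},
\end{equation*}
so the off-diagonal blocks are $PXQ+P^{-1}XQ^{-1}$ and its adjoint, \emph{not} $PXP^{-1}+Q^{-1}XQ$ and $PX^*P^{-1}+Q^{-1}X^*Q$. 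These expressions are genuinely different: in the first, $P$ and $Q$ sit on opposite sides of $X$, whereas in the second, $P,P^{-1}$ bracket $X$ and $Q^{-1},Q$ bracket $X$; the commutativity $PQ=QP$ cannot convert one form into the other because $X$ is arbitrary. More to the point, for any diagonal $\tilde{T}=\textnormal{diag}(A,B)$ the $(1,2)$-block of $\tilde{T}\tilde{X}\tilde{T}^{-1}+\tilde{T}^{-1}\tilde{X}\tilde{T}$ is $AXB^{-1}+A^{-1}XB$, which has the same-side form and never produces the mixed bracketing of \eqref{Recht1}. What your lift proves is $\Vert PXQ+P^{-1}XQ^{-1}\Vert\geqslant 2\Vert X\Vert$, a different (true) CPR-type estimate, but not \eqref{Recht1}.

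For \eqref{Recht2}: the identity $UXU^*+U^*XU=2(AXA+SXS)$ with $U=A+iS$, $S=(1+A^2)^{1/2}$, is correct, and $U$ is indeed normal and invertible with $|U|^2=1+2A^2\geqslant I$. However, the intermediate lemma you then invoke---that $\Vert UXU^*+U^*XU\Vert\geqslant 2\Vert X\Vert$ for \emph{every} normal invertible $U$ with $|U|\geqslant I$---is false. Take $U=\textnormal{diag}(i,1)$ (normal, $|U|=I$) and $X=\bigl(\begin{smallmatrix}0&1\\0&0\end{smallmatrix}\bigr)$: then $UXU^*=\bigl(\begin{smallmatrix}0&i\\0&0\end{smallmatrix}\bigr)$ and $U^*XU=\bigl(\begin{smallmatrix}0&-i\\0&0\end{smallmatrix}\bigr)$, so $UXU^*+U^*XU=0$ while $\Vert X\Vert=1$. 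The operators produced by \eqref{Recht2} have spectrum confined to the curve $\{a+i\sqrt{1+a^2}:a\in\mathbb{R}\}$, a far more rigid class than ``normal with $|U|\geqslant I$''; whatever the Andruchow--Corach--Stojanoff argument actually exploits, it must use this extra structure, so as stated your reduction isolates a false lemma and does not yield a proof of \eqref{Recht2}.
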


The next theorem characterises the subelliptic H\"ormander classes.

\begin{theorem}\label{gamma}
Let $G$ be a compact Lie group and let  $0\leqslant \delta,\rho\leqslant 1.$   The following conditions are equivalent.
\begin{itemize}
    \item[A.] For every $\alpha,\beta\in \mathbb{N}_0^n,$ \begin{equation}\label{InI2C}
      p_{\alpha,\beta,\rho,\delta,m,\textnormal{left}}(a)':= \sup_{(x,[\xi])\in G\times \widehat{G} }\Vert \widehat{ \mathcal{M}}(\xi)^{(\rho|\alpha|-\delta|\beta|-m)}\partial_{X}^{(\beta)} \Delta_{\xi}^{\alpha}a(x,\xi)\Vert_{\textnormal{op}} <\infty.
   \end{equation}
   \item[B.] For every $\alpha,\beta\in \mathbb{N}_0^n,$ \begin{equation}\label{InII2C}
      p_{\alpha,\beta,\rho,\delta,m,\textnormal{right}}(a)':= \sup_{(x,[\xi])\in G\times \widehat{G} }\Vert (\partial_{X}^{(\beta)} \Delta_{\xi}^{\alpha} a(x,\xi) ) \widehat{ \mathcal{M}}(\xi)^{(\rho|\alpha|-\delta|\beta|-m)}\Vert_{\textnormal{op}} <\infty.
   \end{equation}
   \item[C.] For all $r\in \mathbb{R},\alpha,\beta\in \mathbb{N}_0^n,$
    \begin{equation}\label{InI2XC}
      p_{\alpha,\beta,\rho,\delta,m,r}(a)':= \sup_{(x,[\xi])\in G\times \widehat{G} }\Vert \widehat{ \mathcal{M}}(\xi)^{(\rho|\alpha|-\delta|\beta|-m-r)}\partial_{X}^{(\beta)} \Delta_{\xi}^{\alpha}a(x,\xi)\widehat{ \mathcal{M}}(\xi)^{r}\Vert_{\textnormal{op}} <\infty.
   \end{equation}
   \item[D.] There exists $r_0\in \mathbb{R},$ such that for every $\alpha,\beta\in \mathbb{N}_0^n,$
    \begin{equation}\label{InI2X''C}
      p_{\alpha,\beta,\rho,\delta,m,r_0}(a)':= \sup_{(x,[\xi])\in G\times \widehat{G} }\Vert \widehat{ \mathcal{M}}(\xi)^{(\rho|\alpha|-\delta|\beta|-m-r_0)}\partial_{X}^{(\beta)} \Delta_{\xi}^{\alpha}a(x,\xi)\widehat{ \mathcal{M}}(\xi)^{r_0}\Vert_{\textnormal{op}} <\infty.
   \end{equation}
   \item[E.]  $a\in {S}^{m,\mathcal{L}}_{\rho,\delta}(G\times \widehat{G}).$
\end{itemize}
\end{theorem}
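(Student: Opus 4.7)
The proof reduces, after unwinding definitions, to a few substantive implications. By Definition \ref{contracted''}, condition E is literally the conjunction A $\wedge$ B, so E $\Leftrightarrow$ A $\wedge$ B is automatic. Moreover, C $\Rightarrow$ A (take $r=0$), C $\Rightarrow$ B (take $r=\rho|\alpha|-\delta|\beta|-m$, since then $\widehat{\mathcal{M}}^{0}=I$ appears on the left), and C $\Rightarrow$ D (any fixed $r_{0}$) are immediate; also A $\Rightarrow$ D is trivial (take $r_{0}=0$). Thus the work is to prove A $\Leftrightarrow$ B and D $\Rightarrow$ C.

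The plan is to use the Corach--Porta--Recht (CPR) inequalities \eqref{Recht1}--\eqref{Recht2} as the interpolation device between the left and right estimates, together with the Leibniz rule of Remark \ref{Leibnizrule} applied to products with the $x$-independent symbol $\widehat{\mathcal{M}}^{r}$, as the algebraic engine. As a preliminary step, I would record that for every $r\in\mathbb{R}$ the symbol $(x,\xi)\mapsto\widehat{\mathcal{M}}(\xi)^{r}$ belongs to $S^{r,\mathcal{L}}_{1,0}(G\times\widehat{G})$ (a standard consequence of the functional calculus for the sub-Laplacian developed in \cite{RuzhanskyCardona2020}); in particular, its iterated differences $\Delta_{\xi}^{\varepsilon}\widehat{\mathcal{M}}^{r}$ satisfy two-sided symbol estimates of order $r-|\varepsilon|$, and powers of $\widehat{\mathcal{M}}$ commute with each other since they are simultaneously diagonalizable in the representation basis.

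For A $\Rightarrow$ C, fix $r\in\mathbb{R}$ and consider the auxiliary symbol $a_{r}(x,\xi):=a(x,\xi)\widehat{\mathcal{M}}(\xi)^{r}$. Since $\widehat{\mathcal{M}}^{r}$ is independent of $x$, the derivative $\partial_{X}^{(\beta)}$ commutes past it, and the Leibniz rule expands $\Delta_{\xi}^{\alpha}a_{r}$ into a finite sum
\[
\Delta_{\xi}^{\alpha}(a\,\widehat{\mathcal{M}}^{r})=\sum_{|\gamma|,|\varepsilon|\le|\alpha|\le|\gamma|+|\varepsilon|}C_{\gamma,\varepsilon}\bigl(\Delta_{\xi}^{\gamma}a\bigr)\bigl(\Delta_{\xi}^{\varepsilon}\widehat{\mathcal{M}}^{r}\bigr).
\]
Applying the left estimate A to $\partial_{X}^{(\beta)}\Delta_{\xi}^{\gamma}a$ and the preliminary step to $\Delta_{\xi}^{\varepsilon}\widehat{\mathcal{M}}^{r}$, each summand is controlled by a constant times $\widehat{\mathcal{M}}^{-(\rho|\alpha|-\delta|\beta|-m-r)}$ on the left, which after rearrangement yields exactly \eqref{InI2XC} in condition C. The symmetric product $\widehat{\mathcal{M}}^{r}a$ yields the complementary orientation and, together with A $\Rightarrow$ B (below), closes C.

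To establish A $\Leftrightarrow$ B, I would apply the CPR inequality \eqref{Recht1} pointwise at each $(x,[\xi])$ with $P=Q:=\widehat{\mathcal{M}}(\xi)^{s/2}$, where $s:=\rho|\alpha|-\delta|\beta|-m$, and $X:=\partial_{X}^{(\beta)}\Delta_{\xi}^{\alpha}a(x,\xi)$. Since $P$ is positive, invertible and commutes with $Q$, the inequality gives
\[
2\Vert X\Vert_{\textnormal{op}}\le\max\bigl\{\Vert\widehat{\mathcal{M}}^{s/2}X\widehat{\mathcal{M}}^{-s/2}+\widehat{\mathcal{M}}^{-s/2}X\widehat{\mathcal{M}}^{s/2}\Vert_{\textnormal{op}},\;\Vert\widehat{\mathcal{M}}^{s/2}X^{*}\widehat{\mathcal{M}}^{-s/2}+\widehat{\mathcal{M}}^{-s/2}X^{*}\widehat{\mathcal{M}}^{s/2}\Vert_{\textnormal{op}}\bigr\},
\]
and each symmetric half is bounded using A together with the preliminary step, converting A into B (and vice versa). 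Finally, for D $\Rightarrow$ A (and hence to close the cycle), write
\[
\widehat{\mathcal{M}}^{s}\partial_{X}^{(\beta)}\Delta_{\xi}^{\alpha}a=\bigl(\widehat{\mathcal{M}}^{s-r_{0}}\partial_{X}^{(\beta)}\Delta_{\xi}^{\alpha}a\,\widehat{\mathcal{M}}^{r_{0}}\bigr)\widehat{\mathcal{M}}^{-r_{0}}+\widehat{\mathcal{M}}^{s-r_{0}}\bigl[\widehat{\mathcal{M}}^{r_{0}},\partial_{X}^{(\beta)}\Delta_{\xi}^{\alpha}a\bigr]\widehat{\mathcal{M}}^{-r_{0}};
\]
the first term is controlled by D, while the commutator is a finite sum of Leibniz-type error terms that are of strictly lower $\alpha$-order and can be absorbed by induction on $|\alpha|+|\beta|$, using the preliminary step to estimate the factors involving $\widehat{\mathcal{M}}^{r_{0}}$.

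The principal technical obstacle lies in Step 2: the CPR inequality produces symmetric expressions $\widehat{\mathcal{M}}^{s/2}X\widehat{\mathcal{M}}^{-s/2}+\widehat{\mathcal{M}}^{-s/2}X\widehat{\mathcal{M}}^{s/2}$ that are not instantly bounded by the one-sided estimates in A, since $X$ does not commute with $\widehat{\mathcal{M}}$. Closing these bounds requires an inductive reorganisation via the Leibniz expansion, so that each step genuinely reduces the order of the unknown commutator rather than merely relabelling it. Managing this bookkeeping, together with confirming that the Leibniz error terms generated at each level of the induction remain in the class at the expected reduced order, is the main technical difficulty of the argument.
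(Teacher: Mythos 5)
The paper does not prove Theorem \ref{gamma}; it is recalled from \cite{RuzhanskyCardona2020}, and the subsection explicitly refers the reader there, so there is no proof in the present paper to compare against. Your logical bookkeeping among the five conditions is correct (E~$\Leftrightarrow$~A~$\wedge$~B is definitional; C~$\Rightarrow$~A,~B,~D and A~$\Rightarrow$~D are trivial specializations), but the two substantive steps have genuine gaps, and these sit precisely where the real content of the theorem lies.

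The CPR step does not convert A into B. With $P=Q=\widehat{\mathcal{M}}^{s/2}$ and $X:=\partial_X^{(\beta)}\Delta_\xi^\alpha a$, inequality~\eqref{Recht1} controls $\Vert X\Vert_{\textnormal{op}}$ by $\widehat{\mathcal{M}}^{\pm s/2}X\widehat{\mathcal{M}}^{\mp s/2}$; the first summand is controlled by A (for $s\ge 0$, since $\widehat{\mathcal{M}}^{s/2}X\widehat{\mathcal{M}}^{-s/2}=\widehat{\mathcal{M}}^{-s/2}(\widehat{\mathcal{M}}^sX)\widehat{\mathcal{M}}^{-s/2}$ and $\Vert\widehat{\mathcal{M}}^{-s/2}\Vert\le 1$), but bounding $\widehat{\mathcal{M}}^{-s/2}X\widehat{\mathcal{M}}^{s/2}$ requires exactly $\Vert X\widehat{\mathcal{M}}^s\Vert<\infty$, which is B. So your CPR instance proves $(A\wedge B)\Rightarrow\Vert X\Vert<\infty$, circularly presupposing B. The productive choice is $P=\widehat{\mathcal{M}}^r$, $Q=\widehat{\mathcal{M}}^{s-r}$ applied to $X:=\widehat{\mathcal{M}}^{s-r}Y\widehat{\mathcal{M}}^r$ with $Y:=\partial_X^{(\beta)}\Delta_\xi^\alpha a$, whence $PXP^{-1}=\widehat{\mathcal{M}}^sY$ and $Q^{-1}XQ=Y\widehat{\mathcal{M}}^s$, giving $(A\wedge B)\Rightarrow C$ cleanly; but A~$\Leftrightarrow$~B is then still unproved. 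The $A\Rightarrow C$ step via the Leibniz rule is circular for a similar reason: $\Delta_\xi^\gamma a$ does not commute with powers of $\widehat{\mathcal{M}}$, and the naive split $\Vert\widehat{\mathcal{M}}^{s-r}(\Delta_\xi^\gamma a)\Vert\cdot\Vert\Delta_\xi^\varepsilon\widehat{\mathcal{M}}^r\Vert$, with A invoked on the first factor, produces a bound of order $\mu^{\max(0,\,\rho(|\alpha|-|\gamma|)-r)}\cdot\mu^{\max(0,\,r-|\varepsilon|)}$ in the top eigenvalue $\mu$ of $\widehat{\mathcal{M}}(\xi)$, which is unbounded over $\xi$ for, e.g., $\gamma=0$, $\varepsilon=\alpha$, and $r$ sufficiently negative. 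Your $D\Rightarrow A$ sketch has the analogous defect: the Leibniz rule of Remark \ref{Leibnizrule} decomposes differences of products, not commutators of the form $[\widehat{\mathcal{M}}^{r_0},\partial_X^{(\beta)}\Delta_\xi^\alpha a]$, and it yields no gain in $\alpha$-order without already knowing $a$ belongs to the class. You flag this difficulty in your closing paragraph, but the proposed inductive reorganisation does not repair either step; the genuinely hard implications are exactly the ones left open.
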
 
Theorem \ref{gamma} is a useful tool to develop the next results building the subelliptic calculus associated to the sub-Laplacian.
\begin{theorem}\label{SubellipticcompositionC} Let  $0\leq \delta<\rho\leq 1.$ If $A_{i}\in \textnormal{Op}({S}^{m_i,\mathcal{L}}_{\rho,\delta}(G\times \widehat{G})), $ $A_{i}:C^\infty(G)\rightarrow C^\infty(G), $ $i=1,2,$ then the composition operator $A:=A_{1}\circ A_{2}:C^\infty(G)\rightarrow C^\infty(G)$ belongs to the subelliptic class $\textnormal{Op}({S}^{m_1+m_2,\mathcal{L}}_{\rho,\delta}(G\times \widehat{G})).$ The symbol of $A,$ $\widehat{A}(x,\xi),$ satisfies the asymptotic expansion,
\begin{equation*}
    \widehat{A}(x,\xi)\sim \sum_{|\alpha|= 0}^\infty(\Delta_{\xi}^\alpha\widehat{A}_{1}(x,\xi))(\partial_{X}^{(\alpha)} \widehat{A}_2(x,\xi)),
\end{equation*}this means that, for every $N\in \mathbb{N},$ and for all $\ell \in \mathbb{N},$ 
\begin{align*}
    &\Delta_{\xi}^{\alpha_\ell}\partial_{X}^{(\beta)}\left(\widehat{A}(x,\xi)-\sum_{|\alpha|\leq N}  (\Delta_{\xi}^\alpha\widehat{A}_{1}(x,\xi))(\partial_{X}^{(\alpha)} \widehat{A}_2(x,\xi))  \right)\\
    &\hspace{2cm}\in {S}^{m_1+m_2-(\rho-\delta)(N+1)-\rho\ell+\delta|\beta|,\mathcal{L}}_{\rho,\delta}(G\times \widehat{G}),
\end{align*} for every  $\alpha_\ell \in \mathbb{N}_0^n$ with $|\alpha_\ell|=\ell.$
\end{theorem}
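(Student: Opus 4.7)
The plan is to follow the classical strategy of Ruzhansky--Turunen for composition of pseudo-differential operators, adapted to the subelliptic setting through the characterisation provided by Theorem \ref{gamma}. I would first derive an exact formula for $\sigma_A(x,\xi)$ in terms of $\sigma_{A_1}$ and $\sigma_{A_2}$ via the global quantisation \eqref{QuantisationofRuz-Tur} and the Plancherel identity, and then extract the asymptotic expansion by applying the global Taylor series of Lemma \ref{Taylorseries} to $\sigma_{A_1}$ in its spatial variable.

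More concretely, writing the right-convolution kernels $R_{A_i}(x,z) = K_{A_i}(x, xz^{-1})$, a direct computation using $A_1 A_2 f(x) = \int R_{A_1}(x,z_1)(A_2 f)(xz_1^{-1})\,dz_1$ and unfolding the second operator gives, after Fourier inversion,
\begin{equation*}
\sigma_A(x,\xi) \;=\; \sum_{[\eta]\in\widehat{G}} d_\eta \int_G \text{Tr}\bigl[\eta(z_1)^* \sigma_{A_1}(x,\eta)\bigr]\,\xi(z_1)\,\sigma_{A_2}(xz_1^{-1},\xi)\,\xi(z_1)^*\,dz_1,
\end{equation*}
which one may reorganise so that $\sigma_{A_1}(x,\eta)$ appears against a Fourier expansion in $z_1$ of $\sigma_{A_2}(xz_1^{-1},\xi)$. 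I would then expand $\sigma_{A_2}(xz_1^{-1},\xi) = \sigma_{A_2}(xy,\xi)|_{y=z_1^{-1}}$ via the global Taylor formula of Lemma \ref{Taylorseries} in the variable $y$ around $e_G$, writing
\begin{equation*}
\sigma_{A_2}(xz_1^{-1},\xi) \;=\; \sum_{|\alpha|<N} q_{(1)}^{\alpha_1}(z_1)\cdots q_{(n)}^{\alpha_n}(z_1)\,\partial_X^{(\alpha)}\sigma_{A_2}(x,\xi) \;+\; R^{(2)}_{x,N}(z_1^{-1},\xi).
\end{equation*}
The integration in $z_1$ converts each monomial $q_{(1)}^{\alpha_1}\cdots q_{(n)}^{\alpha_n}$ into the difference operator $\Delta_\xi^\alpha$ acting on $\sigma_{A_1}(x,\eta)$ and summing over $\eta$ collapses via Plancherel to yield exactly the leading terms $(\Delta_\xi^\alpha \sigma_{A_1})(\partial_X^{(\alpha)}\sigma_{A_2})$ claimed in the statement.

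The remaining task, which is the genuine technical point, is to show that the remainder operator $R_N$ defined by truncating the sum at $|\alpha|\leq N$ has symbol in $S^{m_1+m_2 - (\rho-\delta)(N+1),\mathcal{L}}_{\rho,\delta}(G\times\widehat{G})$, together with the claimed decay after applying $\Delta_\xi^{\alpha_\ell}\partial_X^{(\beta)}$. Here I would use the symbol estimates for $\sigma_{A_1}$ to control the difference operators (each $\Delta_{q_{(j)}}$ gains a factor of $\widehat{\mathcal{M}}(\xi)^{-\rho}$), the Taylor remainder estimate $|\partial_X^{(\gamma)} R^{(2)}_{x,N}(y,\xi)| \lesssim |y|^{N-|\gamma|}\,\|\widehat{\mathcal{M}}(\xi)^{m_2+\delta(N-|\gamma|+|\beta|)}\|$ together with the Leibniz rule of Remark \ref{Leibnizrule}; the gain of $(\rho-\delta)$ per order comes from balancing the $\rho$ from each difference operator on $\sigma_{A_1}$ against the $\delta$ lost from each vector-field derivative on $\sigma_{A_2}$, which is precisely where the hypothesis $\delta<\rho$ is used.

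The main obstacle I anticipate is not the formal derivation of the expansion but rather the quantitative control of the remainder: one has to mix the operator-norm estimates on $\widehat{\mathcal{M}}(\xi)^s$ with non-commuting compositions of $\Delta_\xi^\alpha$ and $\partial_X^{(\beta)}$ on a product of two symbols. The cleanest route is to invoke Theorem \ref{gamma} (equivalence of the conditions $A$--$E$) so that the intermediate factors $\widehat{\mathcal{M}}(\xi)^r$ can be freely inserted on either side of the symbols; combined with the Corach--Porta--Recht inequalities \eqref{Recht1}--\eqref{Recht2} this allows one to reduce matters to estimating $\widehat{\mathcal{M}}(\xi)^{-(\rho-\delta)(N+1)-\rho\ell+\delta|\beta|}$-weighted norms of the truncated symbol, closing the induction on $N$. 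Composition thus preserves the class, and the stated asymptotic expansion holds in the standard sense of subelliptic Hörmander classes.
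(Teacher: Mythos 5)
The paper itself does not supply a proof of this theorem: Section 2.4 records the result and refers the reader to \cite{RuzhanskyCardona2020}, so there is no ``paper's own proof'' to compare against line by line. That said, your blueprint is precisely the strategy used in \cite{RuzhanskyCardona2020} (and before it in \cite{Ruz}): unfold $A_1\circ A_2$ through the right-convolution kernel of the outer factor, Taylor-expand the inner symbol $\sigma_{A_2}(xz_1^{-1},\xi)$ in the group variable around $e_G$ via Lemma \ref{Taylorseries}, let the Taylor monomials $q^{\alpha}(z_1)$ act as difference operators $\Delta_\xi^{\alpha}$ on $\sigma_{A_1}$ through the Fourier transform, and control the remainder by using Theorem \ref{gamma} to commute the weights $\widehat{\mathcal{M}}(\xi)^{r}$ past the symbol. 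The identification of the $(\rho-\delta)$-gain per order as the balance between the $-\rho$ from each difference on $\sigma_{A_1}$ and the $+\delta$ from each derivative on $\sigma_{A_2}$ is correct, and invoking \eqref{Recht1}--\eqref{Recht2} alongside Theorem \ref{gamma} to move weight factors freely is exactly the mechanism the subelliptic setting requires.

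One concrete slip: your displayed exact composition formula conjugates $\sigma_{A_2}$ on both sides by $\xi(z_1)$, and places a $*$ on $\eta(z_1)$. Unfolding $A_1(A_2 f)(x)=\int_G R_{A_1}(x,z_1)(A_2 f)(xz_1^{-1})\,dz_1$ and writing $\xi(xz_1^{-1})=\xi(x)\xi(z_1)^{*}$ gives
\begin{equation*}
\sigma_{A}(x,\xi)=\int_G R_{A_1}(x,z_1)\,\xi(z_1)^{*}\,\sigma_{A_2}(xz_1^{-1},\xi)\,dz_1
=\sum_{[\eta]\in\widehat{G}} d_\eta\int_G \textnormal{Tr}\bigl[\eta(z_1)\sigma_{A_1}(x,\eta)\bigr]\,\xi(z_1)^{*}\,\sigma_{A_2}(xz_1^{-1},\xi)\,dz_1,
\end{equation*}
with no conjugation of $\sigma_{A_2}$: $R_{A_1}(x,z_1)$ is scalar and acts multiplicatively. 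As written, the spurious $\xi(z_1)\,(\cdot)\,\xi(z_1)^{*}$ conjugation would change which objects the Taylor expansion produces. Since you then say ``one may reorganise,'' this appears to be a bookkeeping error rather than a strategic one, but you should correct it before the rest of the argument flows.

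The only part that is genuinely undeveloped is the remainder estimate, which you yourself flag. Two things should be made explicit when you fill it in: first, the Taylor remainder bound of Lemma \ref{Taylorseries} gives $|R^{\sigma_{A_2}}_{x,N}(z_1^{-1})|\lesssim |z_1|^{N}\max_{|\alpha|\leq N}\Vert\partial_X^{(\alpha)}\sigma_{A_2}(\cdot,\xi)\Vert_{\textnormal{op}}$, so you pick up $\Vert\widehat{\mathcal{M}}(\xi)^{m_2+\delta N}\Vert_{\textnormal{op}}$, not $m_2+\delta(N-|\gamma|+|\beta|)$ as you wrote; second, integrating $|z_1|^{N}$ against the singular kernel $R_{A_1}(x,z_1)$ is where the kernel estimates of Proposition \ref{CalderonZygmund} (or the Fourier-side surrogate) must enter to convert spatial decay into $\widehat{\mathcal{M}}(\xi)^{-\rho N}$ gain, and that step is non-trivial in the subelliptic setting because the relevant metric quantity is the Carnot--Carath\'eodory ball, not the Riemannian one. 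Neither point is a conceptual gap --- they are exactly the places where Theorem \ref{gamma} and the cited inequalities are designed to be used --- but they should be spelled out rather than asserted.
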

The subelliptic calculus is stable under adjoints as we can see in the following theorem.
\begin{theorem}\label{AdjointC}
 Let $0\leq \delta<\rho\leq 1.$ If $A:C^\infty(G)\rightarrow C^\infty(G)$ is a continuous operator, $A\in \textnormal{Op}({S}^{m,\mathcal{L}}_{\rho,\delta}(G\times \widehat{G})),$ then $A^*\in \textnormal{Op}({S}^{m,\mathcal{L}}_{\rho,\delta}(G\times \widehat{G})).$ The  symbol of $A^*,$ $\widehat{A^{*}}(x,\xi)$ satisfies the asymptotic expansion,
 \begin{equation*}
    \widehat{A^{*}}(x,\xi)\sim \sum_{|\alpha|= 0}^\infty\Delta_{\xi}^\alpha\partial_{X}^{(\alpha)} (\widehat{A}(x,\xi)^{*}).
 \end{equation*} This means that, for every $N\in \mathbb{N},$ and all $\ell\in \mathbb{N},$
\begin{equation*}
   \Small{ \Delta_{\xi}^{\alpha_\ell}\partial_{X}^{(\beta)}\left(\widehat{A^{*}}(x,\xi)-\sum_{|\alpha|\leqslant N}\Delta_{\xi}^\alpha\partial_{X}^{(\alpha)} (\widehat{A}(x,\xi)^{*}) \right)\in {S}^{m-(\rho-\delta)(N+1)-\rho\ell+\delta|\beta|,\mathcal{L}}_{\rho,\delta}(G\times\widehat{G}) },
\end{equation*} where $|\alpha_\ell|=\ell.$
 \end{theorem}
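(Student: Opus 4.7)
The plan is to derive the adjoint expansion by a Taylor expansion of the right-convolution kernel on $G$, following the template of Ruzhansky-Turunen for the elliptic calculus and its subelliptic refinement in \cite{RuzhanskyCardona2020}, with the subelliptic weight $\widehat{\mathcal{M}}(\xi)$ replacing the usual weight $|\xi|$. First, I would relate the right-convolution kernels of $A^{*}$ and $A$: starting from $K_{A^{*}}(x,y)=\overline{K_{A}(y,x)}$ and $R_{A}(x,z):=K_{A}(x,xz^{-1})$, a change of variables on $G$ yields $R_{A^{*}}(x,z)=R_{A}(xz^{-1},z^{-1})^{*}$, so that
\begin{equation*}
\widehat{A^{*}}(x,\xi)=\int_{G}R_{A}(xz^{-1},z^{-1})^{*}\,\xi(z)^{*}\,dz.
\end{equation*}

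Next, I would fix a strongly admissible family $\{\Delta_{q_{(j)}}\}_{j=1}^{n}$ of first-order difference operators with the dual basis $X_{\mathfrak{D}}$ provided by Lemma \ref{Taylorseries}, and apply the global Taylor expansion to the function $y\mapsto R_{A}(y,z^{-1})^{*}$ at the point $y=x$, obtaining
\begin{equation*}
R_{A}(xz^{-1},z^{-1})^{*}=\sum_{|\alpha|<N}q^{\alpha}(z)\,[\partial_{X}^{(\alpha)}R_{A}(\cdot,z^{-1})^{*}](x)+r_{N}(x,z),
\end{equation*}
where $q^{\alpha}(z):=q_{(1)}^{\alpha_{1}}(z)\cdots q_{(n)}^{\alpha_{n}}(z)$. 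By the very definition \eqref{taylordifferences} of the difference operators, integrating term-by-term against $\xi(z)^{*}$ produces
\begin{equation*}
\int_{G}q^{\alpha}(z)\,[\partial_{X}^{(\alpha)}R_{A}(\cdot,z^{-1})^{*}](x)\,\xi(z)^{*}\,dz=\Delta_{\xi}^{\alpha}\partial_{X}^{(\alpha)}\bigl(\widehat{A}(x,\xi)^{*}\bigr),
\end{equation*}
thus recovering the stated asymptotic series.

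The main work then lies in showing that further application of $\Delta_{\xi}^{\alpha_{\ell}}\partial_{X}^{(\beta)}$ to the remainder symbol $R_{N}(x,\xi):=\int_{G}r_{N}(x,z)\,\xi(z)^{*}\,dz$ lands in $S^{m-(\rho-\delta)(N+1)-\rho\ell+\delta|\beta|,\mathcal{L}}_{\rho,\delta}(G\times\widehat{G})$. I would invoke the characterisation Theorem \ref{gamma}, specifically condition D, to recast the estimates as weighted bounds with $\widehat{\mathcal{M}}(\xi)^{r}$ appearing on either side of the symbol without loss, and combine this with the pointwise Taylor remainder bound from Lemma \ref{Taylorseries} and the subelliptic Leibniz rule of Remark \ref{Leibnizrule}. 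Each of the $N+1$ residual factors $q_{(j)}(z)$ present in $r_{N}$ translates into one first-order difference operator when integrated against $\xi(z)^{*}$, producing a gain of $\rho$ in the exponent of $\widehat{\mathcal{M}}(\xi)$, while each $X$-derivative costs at most $\delta$; this bookkeeping produces the net gain $(\rho-\delta)(N+1)$.

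The principal obstacle is precisely this remainder estimate in the subelliptic setting, where one cannot exploit the clean relation $|\xi|\asymp\widehat{\mathcal{M}}_{\mathrm{ell}}(\xi)$ available in the elliptic case, since $\mathcal{L}$ degenerates in directions transverse to the H\"ormander system $X$. The critical tools are Theorem \ref{gamma}, which allows moving weights $\widehat{\mathcal{M}}(\xi)^{r}$ freely across the symbol, and the Corach-Porta-Recht inequalities recorded earlier, which together reduce the noncommutative matrix bookkeeping to a scalar estimate that can be closed by induction on $N$. The hypothesis $\delta<\rho$ enters essentially at this step, for only then does $(\rho-\delta)(N+1)\to\infty$ as $N\to\infty$, ensuring that the series is genuinely asymptotic and that $A^{*}\in\textnormal{Op}(S^{m,\mathcal{L}}_{\rho,\delta}(G\times\widehat{G}))$.
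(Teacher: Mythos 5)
The paper does not actually prove Theorem~\ref{AdjointC}: it is recorded as a known result from the subelliptic calculus developed in \cite{RuzhanskyCardona2020}, and the text explicitly says \textquotedblleft For the proof of the results above we refer the reader to \cite{RuzhanskyCardona2020}.\textquotedblright{} Your sketch follows the standard Ruzhansky--Turunen kernel-Taylor-expansion template that \cite{RuzhanskyCardona2020} adapts to the subelliptic weight $\widehat{\mathcal{M}}(\xi)$, so the overall route is the expected one.

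Your preliminary computations are correct: $R_{A^{*}}(x,z)=R_{A}(xz^{-1},z^{-1})^{*}$ follows from $K_{A^{*}}(x,y)=\overline{K_{A}(y,x)}$ and $R_{A}(y,w)=K_{A}(y,yw^{-1})$, and the identity
\begin{equation*}
\int_{G}q^{\alpha}(z)\,[\partial_{X}^{(\alpha)}R_{A}(\cdot,z^{-1})^{*}](x)\,\xi(z)^{*}\,dz=\Delta_{\xi}^{\alpha}\partial_{X}^{(\alpha)}\bigl(\widehat{A}(x,\xi)^{*}\bigr)
\end{equation*}
is a direct consequence of $\widehat{A}(x,\xi)^{*}=\int_{G}R_{A}(x,z^{-1})^{*}\,\xi(z)^{*}\,dz$ together with the definition $\Delta_{q}\widehat{f}(\xi)=\widehat{qf}(\xi)$. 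One caveat worth flagging is your description of the remainder: the Taylor remainder $r_{N}$ from Lemma~\ref{Taylorseries} is given via a sup-norm bound $|r_{N}(x,z)|\lesssim|z|^{N}\max_{|\alpha|\le N}\Vert\partial_{X}^{(\alpha)}R_{A}(\cdot,z^{-1})^{*}\Vert_{\infty}$, not by explicitly displaying $N+1$ factors $q_{(j)}(z)$, so the phrase \textquotedblleft residual factors present in $r_{N}$\textquotedblright{} should be read as a heuristic for why the remainder symbol lies $(\rho-\delta)(N+1)$ orders lower. Making this precise in the subelliptic setting requires combining the pointwise decay of the kernel away from $e_{G}$ (Proposition~\ref{CalderonZygmund}) with a dyadic localisation in $|z|$ and the weight-shuffling freedom of Theorem~\ref{gamma}; the Corach--Porta--Recht inequality enters only indirectly, through the proof of Theorem~\ref{gamma}, rather than in the adjoint estimate itself.
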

In the following proposition we summarise the singularity order for integral kernels of subelliptic pseudo-differential operators (see Proposition 3.17 in \cite{RuzhanskyCardona2020}).  We denote by $Q,$ the Hausdorff dimension of $G$ associated with the control distance induced by the systems of vector fields $X=\{X_1,\cdots,X_k\}.$
\begin{proposition}\label{CalderonZygmund}
Let $G$ be a compact Lie group of dimension $n$ and let $0\leq \delta,\rho\leq 1.$  Let  $A:C^\infty(G)\rightarrow\mathscr{D}'(G)$ be a continuous linear operator with symbol $\sigma\in {S}^{m,\mathcal{L}}_{\rho,\delta}(G\times \widehat{G})$   in the subelliptic H\"ormander class of order $m$ and of type $(\rho,\delta)$. Then, the right-convolution kernel of $A,$ $x\mapsto k_{x}:G\rightarrow C^\infty(G\setminus\{e_G\}),$ defined by $k_{x}:=\mathscr{F}^{-1}\sigma(x,\cdot),$ satisfies the following estimates for $|y|<1$:
\begin{itemize}
    \item[(i)] if $m>-Q,$ there exists $\ell\in \mathbb{N},$ independent of $\sigma,$ such that  
        \begin{equation*}
            |k_{x}(y)|\lesssim_{m}  \Vert \sigma\Vert_{\ell, S^{m,\mathcal{L}}_{\rho,\delta}}|y|^{-\frac{Q+m}{\rho}}.
        \end{equation*}
         \item[(i)] If $m=-Q,$ there exists $\ell\in \mathbb{N},$ independent of $\sigma,$ such that  
        \begin{equation*}
            |k_{x}(y)|\lesssim_{m} \Vert \sigma\Vert_{\ell, S^{m,\mathcal{L}}_{\rho,\delta}}|\ln|y||.
        \end{equation*}
        \item[(i)] If $m<-Q,$ there exists $\ell\in \mathbb{N},$ independent of $\sigma,$ such that  
        \begin{equation*}
            |k_{x}(y)|\lesssim_{m} \Vert \sigma\Vert_{\ell, S^{m,\mathcal{L}}_{\rho,\delta}}.
        \end{equation*}
\end{itemize}
\end{proposition}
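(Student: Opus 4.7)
The strategy is the standard one for kernel estimates of $S^m_{\rho,\delta}$-operators, adapted to the subelliptic setting: a Littlewood–Paley decomposition with respect to $\mathcal{M}=(1+\mathcal{L})^{\frac{1}{2}}$, combined with two competing bounds on each frequency-localized piece of the kernel. The first bound uses the operator size of the symbol together with a subelliptic Plancherel (Weyl-type) counting estimate; the second one trades smoothness in $\xi$ (via the difference operators $\Delta_\xi^\alpha$) for polynomial decay in $|y|$. Balancing the two yields the three cases $m>-Q$, $m=-Q$, $m<-Q$.

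First I would fix a dyadic partition of unity $\sum_{j\geq 0}\psi_j(\lambda)=1$ on $[1,\infty)$ with $\textnormal{supp}\,\psi_j\subset[2^{j-1},2^{j+1}]$, plus a low-frequency cutoff $\psi_{-1}$, and define via functional calculus the frequency-localized symbols $\sigma_j(x,\xi):=\sigma(x,\xi)\psi_j(\widehat{\mathcal{M}}(\xi))$, so that $\sigma=\sum_{j\geq -1}\sigma_j$, each $\sigma_j$ is supported on the spectral shell $\widehat{\mathcal{M}}(\xi)\in[2^{j-1},2^{j+1}]$, and by Theorem~\ref{gamma} satisfies
\[
\Vert\Delta_\xi^\alpha\partial_X^{(\beta)}\sigma_j(x,\xi)\Vert_{\textnormal{op}}\lesssim 2^{j(m-\rho|\alpha|+\delta|\beta|)},
\]
uniformly in $j$. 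Writing $k_{x,j}:=\mathscr{F}^{-1}\sigma_j(x,\cdot)$, one has $k_x=\sum_j k_{x,j}$. The size bound on each piece comes from the inversion formula $k_{x,j}(y)=\sum_{[\xi]}d_\xi\,\textnormal{Tr}(\xi(y)\sigma_j(x,\xi))$, the operator estimate $\Vert\sigma_j(x,\xi)\Vert_{\textnormal{op}}\lesssim 2^{jm}$, and the subelliptic Weyl asymptotic $\sum_{[\xi]\,:\,\widehat{\mathcal{M}}(\xi)\sim 2^j}d_\xi^{2}\lesssim 2^{jQ}$ (which is exactly what identifies $Q$ as the Hausdorff dimension associated to $\mathcal{L}$); this produces $|k_{x,j}(y)|\lesssim 2^{j(Q+m)}$.

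For the decay in $|y|$ I would fix a strongly admissible collection $\mathfrak{D}=\{\Delta_{q_{(i)}}\}_{1\leq i\leq n}$ from Lemma~\ref{Taylorseries} and use the identity $q^{\alpha}(y)\,k_{x,j}(y)=\mathscr{F}^{-1}[\Delta_\xi^{\alpha}\sigma_j(x,\cdot)](y)$. Applying the trivial bound to $\Delta_\xi^{\alpha}\sigma_j$ yields $|q^{\alpha}(y)\,k_{x,j}(y)|\lesssim 2^{j(Q+m-\rho|\alpha|)}$, and combining finitely many such inequalities with the strong admissibility condition $\bigcap_{i}\{q_{(i)}=0\}=\{e_G\}$ delivers
\[
|y|^{N}\,|k_{x,j}(y)|\lesssim 2^{j(Q+m-\rho N)}\qquad \text{for every }N\in\mathbb{N}.
\]
Therefore
\[
|k_{x,j}(y)|\lesssim 2^{j(Q+m)}\min\bigl(1,(2^{j\rho}|y|)^{-N}\bigr).
\]
Splitting the sum over $j$ at the threshold $j_0$ with $2^{j_0}\sim|y|^{-1/\rho}$, using the trivial bound for $j\leq j_0$ and the weighted one for $j>j_0$ with $N$ chosen large enough to kill the tail, gives $|k_x(y)|\lesssim\sum_{j\leq j_0}2^{j(Q+m)}+\sum_{j>j_0}2^{j(Q+m-\rho N)}$. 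Evaluating the geometric series in the three regimes produces exactly the three bounds $|y|^{-(Q+m)/\rho}$, $|\ln|y||$ and $O(1)$. The low-frequency contribution $k_{x,-1}$ is smooth and bounded, so it does not affect the singular behaviour as $|y|\to 0$.

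The delicate step will be the weighted inequality $|y|^{N}|k_{x,j}(y)|\lesssim 2^{j(Q+m-\rho N)}$: what is immediate from the Taylor expansion (Lemma~\ref{Taylorseries}) is only that $|q_{(i)}(y)|\lesssim|y|$ near $e_G$, so one needs a matching \emph{lower} bound $\sum_{|\alpha|=N}|q^{\alpha}(y)|\gtrsim|y|^{N}$, and this is where the strong admissibility of $\mathfrak{D}$, a compactness argument on small annuli around $e_G$, and a scaling/rescaling of the control distance associated to the Hörmander system of step $\kappa$ enter. The other technical point, namely that the frequency-localized symbols $\sigma_j$ inherit uniform seminorm bounds in $S^{m,\mathcal{L}}_{\rho,\delta}$, is standard: it follows from the characterisation in Theorem~\ref{gamma} and the Leibniz rule for difference operators (Remark~\ref{Leibnizrule}) applied to the factorisation $\sigma_j=\sigma\cdot\psi_j(\widehat{\mathcal{M}})$.
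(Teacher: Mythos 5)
The paper does not actually prove Proposition~\ref{CalderonZygmund}; it cites it directly to Proposition~3.17 of \cite{RuzhanskyCardona2020}, so there is no in-text proof to compare against. That said, your sketch is the standard (and, I believe, the intended) argument, and it is essentially sound: a dyadic decomposition of the symbol along the spectrum of $\mathcal{M}=(1+\mathcal{L})^{1/2}$, a ``size'' bound for each piece coming from the subelliptic Weyl law, a ``decay'' bound coming from moment estimates via difference operators, and a geometric-series balance that produces the three regimes $m>-Q$, $m=-Q$, $m<-Q$.

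Two small points worth tightening. First, since $\widehat{\mathcal{M}}(\xi)$ is a diagonal \emph{matrix} whose entries $(1+\nu_{ii}(\xi)^2)^{1/2}$ may differ within a fixed $[\xi]$, the dyadic cutoff $\psi_j(\widehat{\mathcal{M}}(\xi))$ is a spectral projector of some rank $r_j(\xi)\le d_\xi$, and the trace bound gives $|k_{x,j}(y)|\lesssim 2^{jm}\sum_{[\xi]}d_\xi\, r_j(\xi)$; the correct counting quantity is therefore $\sum_{[\xi]}d_\xi\, r_j(\xi)\lesssim 2^{jQ}$ (the number of eigenvalues of $\mathcal{L}$ with multiplicity in the dyadic band, i.e.\ the Weyl law for $\mathcal{L}$), rather than $\sum d_\xi^2$; the two agree only in the elliptic case where every $\nu_{ii}(\xi)$ coincides. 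Second, your closing remark about ``a scaling/rescaling of the control distance associated to the H\"ormander system of step $\kappa$'' is, in fact, a red herring: $|y|$ in this paper is the smooth (Riemannian) distance to $e_G$, consistently with its use in Lemma~\ref{Taylorseries}, and for the Riemannian distance the lower bound $\max_{|\alpha|=N}|q^\alpha(y)|\gtrsim|y|^N$ follows purely from the strong admissibility of $\mathfrak{D}$ together with a compactness argument near $e_G$. If one instead wanted the analogous estimate in the Carnot--Carath\'eodory distance, the first-order-vanishing functions $q_{(i)}$ would only give $|q_{(i)}(y)|\gtrsim |y|_{CC}^{\kappa}$ in the non-horizontal directions, the exponent in the final bound would degrade by a factor of $\kappa$, and one would need a genuinely different mechanism (e.g.\ Gaussian heat-kernel bounds in the CC metric, or difference operators built from functions with weighted vanishing order). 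With $|y|$ read as the Riemannian distance the exponent $-(Q+m)/\rho$ drops out exactly as you compute, and $Q$ enters only through the Weyl count, so the balance at $2^{j_0\rho}\sim|y|^{-1}$ gives the three regimes cleanly.
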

The good properties of the kernels of subelliptic operators imply some analogues of well known mapping properties for pseudo-differential operators. We will summarise it as follows. We start with the $L^2$-estimates by presenting the following subelliptic Calder\'on-Vaillancourt theorem.
\begin{theorem}\label{CVT}
Let $G$ be a compact Lie group and let us consider the sub-Laplacian $\mathcal{L}=\mathcal{L}_X,$ where  $X=\{X_{i}\}_{i=1}^{k}$ is a system of vector fields satisfying the H\"ormander condition of order $\kappa$.  For  $0\leq \delta< \rho\leq    1,$ (or $0\leq \delta\leq \rho\leq 1,$ $\delta<1/\kappa$) let us consider a continuous linear operator $A:C^\infty(G)\rightarrow\mathscr{D}'(G)$ with symbol  $\sigma\in {S}^{0,\mathcal{L}}_{\rho,\delta}(G\times \widehat{G})$. Then $A$ extends to a bounded operator from $L^2(G)$ to  $L^2(G),$ and 
\begin{equation}\label{eq1CVT}
    \Vert  A\Vert_{\mathscr{B}(L^2(G))}\leq C \Vert \sigma\Vert_{\ell, {S}^{0,\mathcal{L}}_{\rho,\delta} },
\end{equation} for $\ell \in \mathbb{N}$ large enough. 
\end{theorem}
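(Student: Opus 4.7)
The plan is to establish the theorem by an induction (descent) on the order of the symbol, starting from very negative orders where boundedness is easy, and climbing up to order $0$ via a functional-calculus / Gårding-type argument based on the subelliptic symbolic calculus of Theorems \ref{SubellipticcompositionC} and \ref{AdjointC}. First, I would verify the base case: for any $\sigma \in S^{m,\mathcal{L}}_{\rho,\delta}(G\times\widehat{G})$ with $m < -Q$, Proposition \ref{CalderonZygmund} gives a right-convolution kernel $k_x(y)$ which is bounded uniformly (and has smoothness away from the identity). Standard right-convolution estimates (Young/Schur) then yield $\|A\|_{\mathscr{B}(L^2(G))} \lesssim \Vert \sigma\Vert_{\ell, S^{m,\mathcal{L}}_{\rho,\delta}}$ for some finite seminorm.

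Next I would perform the inductive descent from order $0$ to lower orders. Given $\sigma \in S^{0,\mathcal{L}}_{\rho,\delta}$, choose $M^2 > \sup_{(x,[\xi])} \Vert\sigma(x,\xi)\Vert_{\textnormal{op}}^2$, which is finite by the $\alpha=\beta=0$ symbol inequality, and set
\[
  b(x,\xi) := \bigl(M^{2}I_{d_\xi} - \sigma(x,\xi)^{*}\sigma(x,\xi)\bigr)^{1/2},
\]
defined pointwise on each fibre via the matrix functional calculus. Using the Leibniz rule for difference operators (Remark \ref{Leibnizrule}) together with a contour-integral representation of the square root, one shows $b \in S^{0,\mathcal{L}}_{\rho,\delta}$ with seminorms controlled by those of $\sigma$. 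Let $B = \textnormal{Op}(b)$. Theorems \ref{SubellipticcompositionC} and \ref{AdjointC} give
\[
  B^{*}B + A^{*}A = M^{2}I + R, \qquad R \in \textnormal{Op}(S^{-(\rho-\delta),\mathcal{L}}_{\rho,\delta}).
\]
Testing on $f \in C^\infty(G)$ yields
\[
  \Vert Af\Vert_{L^2}^{2} = M^{2}\Vert f\Vert_{L^2}^{2} - \Vert Bf\Vert_{L^2}^{2} + (Rf,f) \leq \bigl(M^{2} + \Vert R\Vert_{\mathscr{B}(L^2)}\bigr)\Vert f\Vert_{L^2}^{2},
\]
reducing boundedness of an order-$0$ operator to boundedness of an order-$-(\rho-\delta)$ operator. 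Iterating this $N$ times produces an operator of order $-N(\rho-\delta)$; choosing $N$ large enough that $-N(\rho-\delta) < -Q$ reduces the situation to the base case, and tracking the seminorms at each step yields \eqref{eq1CVT} with a finite number of seminorms. This settles the case $0 \leq \delta < \rho \leq 1$.

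The main obstacle is the boundary regime $\delta = \rho \leq 1$, $\delta < 1/\kappa$, in which the descent gains nothing since $\rho - \delta = 0$. For this case I would instead dyadically decompose the symbol using the functional calculus of the sub-Laplacian: choose a Littlewood--Paley partition of unity $1 = \sum_{j\geq 0}\phi_{j}$ on $[0,\infty)$ and set $\sigma_j(x,\xi) = \phi_j(\widehat{\mathcal{M}}(\xi))\sigma(x,\xi)$, so that $A = \sum_{j} A_j$ with each $\sigma_j$ supported where $\widehat{\mathcal{M}}(\xi) \sim 2^{j}$. A rescaled base-case estimate yields $\sup_j \Vert A_j\Vert_{\mathscr{B}(L^2(G))} < \infty$; the condition $\delta < 1/\kappa$ together with the Hausdorff dimension bounds in Proposition \ref{CalderonZygmund} is exactly what is needed to establish almost-orthogonality
\[
  \Vert A_j A_k^*\Vert_{\mathscr{B}(L^2)} + \Vert A_j^* A_k\Vert_{\mathscr{B}(L^2)} \leq C\,2^{-\varepsilon|j-k|}
\]
for some $\varepsilon > 0$, computed from the composition calculus and the spectral separation of $\phi_j(\widehat{\mathcal{M}})$ and $\phi_k(\widehat{\mathcal{M}})$. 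The Cotlar--Stein lemma then gives $\Vert A\Vert \lesssim \sup_j \Vert A_j\Vert$, finishing the proof. The verifications of the symbol regularity of $b = (M^{2}I - \sigma^{*}\sigma)^{1/2}$ in the matrix-valued non-commutative setting, and of the almost-orthogonality estimates in the borderline $\delta = \rho$ case, are the two genuine technical difficulties of the argument.
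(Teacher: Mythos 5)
The paper itself does not prove Theorem \ref{CVT}; it simply cites \cite{RuzhanskyCardona2020}. So there is no internal proof to compare against. Evaluated on its own terms, your proposal has the right flavor (a Hörmander square-root reduction combined with very-negative-order kernel estimates) but it contains a concrete gap in the descent step.

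The claim that ``iterating this $N$ times produces an operator of order $-N(\rho-\delta)$'' is incorrect. If you apply the square-root construction to an operator $R$ of any nonpositive order $m\leq 0$, you set $b_R=(M_R^2 I-\sigma_R^*\sigma_R)^{1/2}$, which is always of order $0$ (since $M_R^2I-\sigma_R^*\sigma_R$ is bounded above and below). Expanding $\sigma_{B_R^*B_R}$ and $\sigma_{R^*R}$ by Theorems \ref{SubellipticcompositionC} and \ref{AdjointC}, the leading terms cancel to $M_R^2 I$ and the first correction in the $B_R^*B_R$ expansion is of order $0+0-(\rho-\delta)=-(\rho-\delta)$. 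This is \emph{independent} of $m$: the remainder sits at order $-(\rho-\delta)$ no matter where you start, so repeated application of the square-root trick never pushes the remainder below $-(\rho-\delta)$ and never reaches the kernel-estimate threshold $-Q$. The correct mechanism is to apply the square-root trick \emph{once}, reducing $\sigma\in S^{0,\mathcal L}_{\rho,\delta}$ to a remainder $R$ of strictly negative order $-(\rho-\delta)$, and then switch to the power iteration: for $T:=R^*R$, the Cauchy--Schwarz chain $\|Rf\|^2=(Tf,f)\leq\|Tf\|\,\|f\|\leq\|T^{2^k}f\|^{1/2^k}\|f\|^{2-1/2^k}$ together with the composition theorem shows $T^{2^k}$ has order $-2^{k+1}(\rho-\delta)$, which does go below $-Q$ for $k$ large. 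That is what makes the base case applicable; the descent is geometric (order doubles), not linear, and it uses $\|T\|^{2}=\|T^{2}\|$ for formally self-adjoint nonnegative $T$, not a repeated square root.

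Two further points. First, in extracting $b\in S^{0,\mathcal L}_{\rho,\delta}$ you should avoid invoking the operator-level functional calculus of Section \ref{GFCSECT} (e.g.\ Corollary \ref{1/2}), since Corollary \ref{resolv} and Theorem \ref{DunforRiesz} lean on Sobolev boundedness of the resolvent, which is downstream of the theorem you are trying to prove; the safe route is precisely the pointwise contour integral $b(x,\xi)=\frac{1}{2\pi i}\oint_\Gamma z^{1/2}\bigl(zI-(M^2I-\sigma^*\sigma)(x,\xi)\bigr)^{-1}dz$ together with the Leibniz rule, differentiated and differenced directly at the symbol level, and you should say so explicitly. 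Second, the borderline regime $\rho=\delta$, $\delta<1/\kappa$ is the genuinely hard part of this theorem, and your treatment of it is only a plan, not a proof: the composition and adjoint theorems of the paper are stated only for $\delta<\rho$, so the almost-orthogonality bounds $\|A_jA_k^*\|+\|A_j^*A_k\|\lesssim 2^{-\varepsilon|j-k|}$ cannot be read off from the symbolic calculus and have to be established from scratch via kernel estimates on dyadic frequency shells. The statement that $\delta<1/\kappa$ and Proposition \ref{CalderonZygmund} are ``exactly what is needed'' is a reasonable heuristic but is not a verification, and that verification is the mathematical core of the borderline case.
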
 

As usually, the $L^2$-boundedness of pseudo-diffenrential operators  implies its Sobolev continuity. 
\begin{corollary} \label{sobcont}
Let $A:C^{\infty}(G)\rightarrow \mathscr{D}'(G)$ be a continuous linear operator with symbol $a\in S^{m,\mathcal{L}}_{\rho,\delta}(G\times \widehat{G}),$ $0\leq \delta< \rho\leq    1,$ (or $0\leq \delta\leq \rho\leq 1,$ $\delta<1/\kappa$). Then $A:H^{s,\mathcal{L}}(G)\rightarrow H^{s-m,\mathcal{L}}(G) $ extends to a bounded operator for all $s\in \mathbb{R}.$
\end{corollary}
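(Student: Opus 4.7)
The plan is to reduce the assertion to the $L^2$-boundedness statement already proved in Theorem \ref{CVT}, by conjugating $A$ with suitable powers of the Bessel-type operator $\mathcal{M}=(1+\mathcal{L})^{\frac{1}{2}}$. First, I would recall the defining property of the subelliptic Sobolev spaces, namely $\|f\|_{H^{s,\mathcal{L}}(G)}=\|\mathcal{M}^{s}f\|_{L^{2}(G)}$, so that $\mathcal{M}^{s}:H^{s,\mathcal{L}}(G)\rightarrow L^{2}(G)$ is a topological isomorphism for every $s\in\mathbb{R}$. The boundedness of $A:H^{s,\mathcal{L}}(G)\rightarrow H^{s-m,\mathcal{L}}(G)$ is therefore equivalent to the $L^{2}$-boundedness of the conjugated operator
\begin{equation*}
B:=\mathcal{M}^{s-m}\circ A\circ\mathcal{M}^{-s}.
\end{equation*}

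Next I would verify that each Fourier multiplier $\mathcal{M}^{r}$ belongs to $\mathrm{Op}(S^{r,\mathcal{L}}_{1,0}(G\times\widehat{G}))$. Its symbol is the $x$-independent diagonal matrix $\widehat{\mathcal{M}}(\xi)^{r}$ from \eqref{eigenvalues:hatM}; the required symbol estimates follow straight from Theorem \ref{gamma}(C), because $\partial_{X}^{(\beta)}\Delta_{\xi}^{\alpha}\widehat{\mathcal{M}}(\xi)^{r}$ vanishes for $\beta\neq 0$ and for $\beta=0$ commutes with $\widehat{\mathcal{M}}(\xi)^{\cdot}$ with the expected decay.

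The third and main step is to apply the composition calculus of Theorem \ref{SubellipticcompositionC} twice to conclude that $B\in\mathrm{Op}(S^{0,\mathcal{L}}_{\rho,\delta}(G\times\widehat{G}))$, the orders adding up to $(s-m)+m+(-s)=0$. In the case $0\leq\delta<\rho\leq 1$ this is immediate; in the borderline case $\delta=\rho\leq 1$, $\delta<1/\kappa$, I would argue directly from the asymptotic expansion, observing that $\mathcal{M}^{\pm r}$ is invariant so only $\Delta_{\xi}^{\alpha}\widehat{\mathcal{M}}^{\pm r}$ appears, and each term of the expansion of $\widehat{B}$ strictly decreases in order, which yields a symbol in $S^{0,\mathcal{L}}_{\rho,\delta}$ without needing $\delta<\rho$. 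Once $B\in\mathrm{Op}(S^{0,\mathcal{L}}_{\rho,\delta}(G\times\widehat{G}))$ is established, Theorem \ref{CVT} provides a bounded extension $B:L^{2}(G)\rightarrow L^{2}(G)$, and unwinding the conjugation gives the desired bound for $A$.

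The only delicate point is the third step in the borderline regime, but since one of the factors in the composition is $x$-independent, the composition formula reduces to an elementary Leibniz-type identity in the difference variable and no invariance under coordinate changes is required, so no further technical work beyond a routine verification of the symbol inequalities is needed.
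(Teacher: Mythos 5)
Your proposal is correct and follows essentially the same route as the paper: conjugate $A$ by powers of $\mathcal{M}=(1+\mathcal{L})^{1/2}$ to reduce the Sobolev boundedness to the $L^{2}$-boundedness of $\mathcal{M}^{s-m}A\mathcal{M}^{-s}$, note that this operator belongs to $\mathrm{Op}(S^{0,\mathcal{L}}_{\rho,\delta}(G\times\widehat{G}))$ by the subelliptic calculus, and invoke the Calder\'on--Vaillancourt Theorem \ref{CVT}. The paper's proof is terser, simply asserting ``by the subelliptic calculus $A_{s}\in S^{0,\mathcal{L}}_{\rho,\delta}$'', whereas you supply the useful observation that in the borderline regime $\delta=\rho$, $\delta<1/\kappa$ the composition with a left-invariant factor $\mathcal{M}^{r}\in\mathrm{Op}(S^{r,\mathcal{L}}_{1,0})$ still produces terms whose orders strictly decrease by $(1-\delta)|\alpha|>0$, so the order-zero membership holds even when Theorem \ref{SubellipticcompositionC} (stated only for $\delta<\rho$) cannot be cited verbatim; one small correction is that the composition $A\mathcal{M}^{-s}$ with the invariant factor on the \emph{right} has an exact symbol product, while only $\mathcal{M}^{s-m}(\cdot)$ on the \emph{left} requires the asymptotic expansion argument you outline.
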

\begin{proof} In view of the closed graph Theorem, we only need to show that there exists $C>0,$ such that 
\begin{equation}\label{Sobolev}
    \Vert Au \Vert_{H^{s-m,\mathcal{L}}(G)}=\Vert \mathcal{M}_{s-m}Au\Vert_{L^2(G)}\leq C\Vert u \Vert_{H^{s,\mathcal{L}}(G)},\,\,u\in C^{\infty}(G).
\end{equation}By replacing $u$ by $\mathcal{M}_{-s}u,$ we can see that \eqref{Sobolev} is equivalent to the following estimate
\begin{equation}\label{Sobolev2}
    \Vert \mathcal{M}_{s-m}A\mathcal{M}_{-s}u\Vert_{L^2(G)}\leq C\Vert u \Vert_{L^2(G)},\,\,u\in C^{\infty}(G),
\end{equation}which, again, by the closed graph theorem is equivalent to show that $A_{s}:=\mathcal{M}_{s-m}A\mathcal{M}_{-s}$ admits a bounded extension from $C^{\infty}(G)$ to $L^2(G).$ By the subelliptic calculus $A_s\in S^{0,\mathcal{L}}_{\rho,\delta}(G\times \widehat{G}).$ So, the existence of a bounded extension of $A_{s}$ is a consequence of the subelliptic Calder\'on-Vaillancourt theorem. 
\end{proof}

For the $L^p$-estimates, let us recall that we denote by $Q,$ the Hausdorff dimension of $G$ associated with the control distance induced by the systems of vector fields $X=\{X_1,\cdots,X_k\}.$ First, we present the following $L^\infty$-$BMO$ pseudo-differential theorem.
\begin{theorem}\label{parta}
Let $G$ be a compact Lie group and let us denote by $Q$ the Hausdorff
dimension of $G$ associated to the control distance associated to the sub-Laplacian $\mathcal{L}=\mathcal{L}_X,$ where  $X=\{X_{i}\}_{i=1}^{k}$ is a system of vector fields satisfying the H\"ormander condition of order $\kappa$.  For  $0\leq \delta<\rho\leq    1,$  let us consider a continuous linear operator $A:C^\infty(G)\rightarrow\mathscr{D}'(G)$ with symbol  $\sigma\in {S}^{-\frac{Q(1-\rho) }{2},\mathcal{L}}_{\rho,\delta}(G\times \widehat{G})$. Then $A$ extends to a bounded operator from $L^\infty(G)$ to $BMO^{\mathcal{L}}(G),$ and from $H^{1,\mathcal{L}}(G)$ to $L^1(G).$ Moreover,
\begin{equation}\label{eq1}
    \Vert  A\Vert_{\mathscr{B}(L^\infty(G),BMO^{\mathcal{L}}(G))}\leq C \max\{\Vert \sigma\Vert_{\ell, {S}^{-\frac{Q(1-\rho) }{2},\mathcal{L}}_{\rho,\delta} },\Vert  \sigma(\cdot,\cdot)  \widehat{\mathcal{M}}(\xi)^{ Q(1-\rho)  }\Vert_{\ell, {S}^{0,\mathcal{L}}_{\rho,\delta} }\}
\end{equation} and 
\begin{equation}\label{eq2}
      \,\,\,\,\,\,\,\,\,\,\Vert  A\Vert_{\mathscr{B}(H^{1,\mathcal{L}}(G),L^1(G))}\leq C \max\{\Vert \sigma^*\Vert_{\ell, {S}^{-\frac{Q(1-\rho) }{2},\mathcal{L}}_{\rho,\delta} }, \Vert \sigma^*(\cdot,\cdot)  \widehat{\mathcal{M}}(\xi)^{ Q(1-\rho)  }\Vert_{\ell, {S}^{0,\mathcal{L}}_{\rho,\delta} }\}
\end{equation}
for some integer $\ell,$ where $\sigma^{*}$ denotes the matrix-valued symbol of the formal adjoint $A^*$ of $A.$ 
\end{theorem}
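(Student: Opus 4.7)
The plan is to treat $A$ as a Calder\'on--Zygmund operator on the space of homogeneous type $(G, d_{CC}, dg)$, where $d_{CC}$ is the Carnot--Carath\'eodory distance generated by $X = \{X_1,\ldots,X_k\}$ and $|\cdot|:=d_{CC}(\cdot, e_G)$ the associated quasi-norm, with $Q$ the Hausdorff dimension. I will verify three ingredients in sequence: $L^2$-boundedness of $A$, pointwise size estimates for the right-convolution kernel $k_x = \mathscr{F}^{-1}\sigma(x,\cdot)$, and subelliptic H\"ormander integral smoothness conditions. The $H^{1,\mathcal{L}}$--$BMO^{\mathcal{L}}$ duality then reduces \eqref{eq2} to \eqref{eq1} applied to the formal adjoint $A^{*}$, whose symbol $\sigma^{*}$ lies in the same subelliptic class by Theorem \ref{AdjointC}; this explains the symmetric role of $\sigma$ and $\sigma^{*}$ in \eqref{eq1} and \eqref{eq2}.

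For the $L^2$-step, the composition calculus (Theorem \ref{SubellipticcompositionC}) shows that $\sigma(x,\xi)\,\widehat{\mathcal{M}}(\xi)^{Q(1-\rho)/2}$ is the symbol of an operator in $\textnormal{Op}(S^{0,\mathcal{L}}_{\rho,\delta}(G\times \widehat{G}))$; by the subelliptic Calder\'on--Vaillancourt theorem (Theorem \ref{CVT}) this operator is $L^2$-bounded, and composing on the right with the $L^2$-bounded Bessel-type operator $\mathcal{M}^{-Q(1-\rho)/2}$ (note $Q(1-\rho)/2 \geq 0$) yields $L^2$-continuity of $A$, with constant controlled by the seminorms on the right-hand side of \eqref{eq1}. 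For the pointwise kernel bound I apply Proposition \ref{CalderonZygmund} with $m = -Q(1-\rho)/2 > -Q$:
\begin{equation*}
|k_x(y)| \lesssim \Vert \sigma \Vert_{\ell,\, S^{-Q(1-\rho)/2,\mathcal{L}}_{\rho,\delta}}\,|y|^{-(Q + m)/\rho}, \qquad 0 < |y| < 1,
\end{equation*}
uniformly in $x$, which is exactly the Calder\'on--Zygmund size estimate expected in the subelliptic setting.

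The main obstacle is the third ingredient: the subelliptic H\"ormander cancellation
\begin{equation*}
\sup_{0 < |z| < 1}\; \sup_{x \in G}\; \int_{|y| > c|z|} |k_x(yz^{-1}) - k_x(y)|\, dy \leq C,
\end{equation*}
together with its analogue for translations in the $x$-variable, which combined with $L^2$-boundedness are the classical hypotheses of the Calder\'on--Zygmund theorem on spaces of homogeneous type. The strategy is to join $y$ and $yz^{-1}$ by a horizontal path of $d_{CC}$-length comparable to $|z|$, apply the fundamental theorem of calculus along the vector fields $X_j$, and bound the integrand by $|z|\sum_j |X_j k_x(y')|$ for $y'$ on the path. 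Now $X_j k_x$ is the right-convolution kernel of the operator with symbol $\widehat{X}_j(\xi)\sigma(x,\xi) \in S^{-Q(1-\rho)/2 + 1,\mathcal{L}}_{\rho,\delta}$, so Proposition \ref{CalderonZygmund} delivers $|X_j k_x(y')| \lesssim |y'|^{-(Q + m + 1)/\rho}$. Splitting $\{|y| > c|z|\}$ into dyadic annuli and integrating against the doubling measure (where $\mathrm{vol}(B(e,r)) \asymp r^Q$), the resulting geometric series converges precisely thanks to the improvement $Q(1-\rho)/2$ built into the order of $\sigma$; the hypothesis $\delta < \rho$ is then used to propagate uniformity in $x$ and to handle the $x$-translation version by a Taylor expansion of the symbol. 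With these three ingredients in place, the Coifman--Weiss singular integral theory on spaces of homogeneous type yields \eqref{eq1}, and \eqref{eq2} follows by duality, with the constants matching the seminorms displayed in the statement.
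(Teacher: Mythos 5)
There is a genuine gap, and it sits at the heart of your plan: for $\rho<1$ the operator $A$ is \emph{not} a Calder\'on--Zygmund operator, so the Coifman--Weiss theory you are invoking in the final step does not apply. The standard CZ size condition in a space of homogeneous type of dimension $Q$ is $|k_x(y)|\lesssim|y|^{-Q}$, but Proposition~\ref{CalderonZygmund} with $m=-\frac{Q(1-\rho)}{2}$ gives
\begin{equation*}
|k_x(y)|\ \lesssim\ |y|^{-\frac{Q+m}{\rho}}\ =\ |y|^{-\frac{Q(1+\rho)}{2\rho}},
\end{equation*}
and $\frac{Q(1+\rho)}{2\rho}>Q$ whenever $\rho<1$. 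So the kernel is strictly \emph{more} singular than the CZ threshold. The same defect propagates to the H\"ormander integral you set up: with $|X_j k_x(y')|\lesssim|y'|^{-\alpha}$, $\alpha=\frac{Q+m+1}{\rho}$, the mean-value argument gives
\begin{equation*}
\int_{c|z|<|y|<1}\bigl|k_x(yz^{-1})-k_x(y)\bigr|\,dy\ \lesssim\ |z|\int_{c|z|}^{1}r^{\,Q-1-\alpha}\,dr\ \asymp\ |z|^{\,1+Q-\alpha},
\end{equation*}
and a short computation yields $1+Q-\alpha=-\frac{(1-\rho)(1+Q/2)}{\rho}$, which is strictly negative for $\rho<1$. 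The purported uniform bound therefore diverges as $|z|\to 0$; the ``geometric series converges thanks to the improvement $Q(1-\rho)/2$'' claim is exactly backwards, since lowering the order below $0$ makes the kernel \emph{less} singular but not enough to cross the CZ threshold. Your argument only closes in the non-exotic case $\rho=1$, where the improvement vanishes and CZ theory genuinely applies.

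This is precisely why Fefferman's $L^\infty$--$BMO$ theorem for symbols of order $-\frac{n(1-\rho)}{2}$ (and its subelliptic analogue here) is a deeper result than CZ boundedness: the operators lie outside the CZ class, and the proof requires a dyadic (Littlewood--Paley-type) decomposition of the operator in frequency, treating each scale with a combination of the $L^2$ theory (Calder\'on--Vaillancourt) on one piece and sharper kernel/smoothness estimates on the complementary piece, then summing. The paper does not reprint that argument; it refers to \cite{RuzhanskyCardona2020} where such a frequency-localised scheme is carried out using the spectral decomposition of $\mathcal{L}$. Your $L^2$ reduction and the duality reduction of \eqref{eq2} to \eqref{eq1} via $A^{*}$ are sound and do match the expected outline, but the central singular-integral step needs to be replaced wholesale by the Fefferman-type decomposition rather than a direct verification of CZ hypotheses.
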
 The Fefferman-Stein interpolation Theorem implies the following subelliptic  $L^p$-estimate.
\begin{theorem}\label{parta2}
Let $G$ be a compact Lie group and let us denote by $Q$ the Hausdorff
dimension of $G$ associated to the control distance associated to the sub-Laplacian $\mathcal{L}=\mathcal{L}_X,$ where  $X=\{X_{i}\}_{i=1}^{k}$ is a system of vector fields satisfying the H\"ormander condition of order $\kappa$.  For  $0\leq \delta<\rho\leq 1,$ let us consider a continuous linear operator $A:C^\infty(G)\rightarrow\mathscr{D}'(G)$ with symbol  $\sigma\in {S}^{-m,\mathcal{L}}_{\rho,\delta}(G\times \widehat{G})$, $m\geq 0$. Then $A$ extends to a bounded operator on $L^p(G)$ provided that 
\begin{equation*}
    m\geq m_p:= Q(1-\rho)\left|\frac{1}{p}-\frac{1}{2}\right|.
\end{equation*}
\end{theorem}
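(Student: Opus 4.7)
The plan is to combine the two endpoint estimates already at our disposal --- the subelliptic Calder\'on--Vaillancourt Theorem \ref{CVT} (giving $L^2$-boundedness for order $0$ symbols) and the $L^\infty$-$BMO^{\mathcal{L}}$ bound of Theorem \ref{parta} (for symbols of order $-Q(1-\rho)/2$) --- and to interpolate between them by means of the Fefferman--Stein complex interpolation theorem. Two preliminary reductions: first, it suffices to treat the range $2\leq p<\infty$, because by the adjoint calculus (Theorem \ref{AdjointC}) the class $\mathrm{Op}(S^{-m,\mathcal{L}}_{\rho,\delta})$ is stable under adjoints and $|1/p-1/2|=|1/p'-1/2|$, so the case $1<p\leq 2$ follows by duality. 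Secondly, it is enough to handle the critical order $m=m_p$, since $S^{-m,\mathcal{L}}_{\rho,\delta}(G\times\widehat{G})\subset S^{-m_p,\mathcal{L}}_{\rho,\delta}(G\times\widehat{G})$ whenever $m\geq m_p$ (the symbol inequalities only improve as the order decreases).

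Next I would build an analytic family of operators on the closed strip $S=\{z\in\mathbb{C}:0\leq\mathrm{Re}(z)\leq 1\}$ by setting
$$T_z:=\mathrm{Op}\bigl(\sigma(x,\xi)\,\widehat{\mathcal{M}}(\xi)^{\,m_p(1-z)}\bigr),$$
so that $T_1=A$ (the original operator of order $-m_p$), while on the left boundary $\mathrm{Re}(z)=0$ the symbol has order $0$, and on the right boundary $\mathrm{Re}(z)=1$ the symbol has order $-Q(1-\rho)/2$. On the left boundary, Theorem \ref{CVT} yields $L^2\to L^2$ boundedness for $T_{it}$; on the right boundary, Theorem \ref{parta} yields $L^\infty\to BMO^{\mathcal{L}}$ boundedness for $T_{1+it}$. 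The Fefferman--Stein theorem then interpolates between the pairs $(L^2,L^2)$ and $(L^\infty,BMO^{\mathcal{L}})$ and, at the parameter $\theta=1-2/p$, produces the estimate $\|T_\theta\|_{\mathscr{B}(L^p(G))}\lesssim 1$. Since $\theta=1$ corresponds to $p=\infty$ and $\theta=0$ to $p=2$, the formula gives order $-m_p=-Q(1-\rho)(1/2-1/p)$ exactly, i.e.\ the $L^p$ continuity of $A=T_1$.

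The main technical obstacle is verifying the admissibility conditions of the Fefferman--Stein theorem, in particular: (i) the analytic dependence $z\mapsto T_z f$ on the strip for a dense class of test functions, and (ii) polynomial-in-$|t|$ control of the relevant operator norms of $T_{it}$ and $T_{1+it}$. Both reduce to showing that the subelliptic H\"ormander class is stable under right multiplication of the symbol by the purely imaginary power $\widehat{\mathcal{M}}(\xi)^{it}$, with seminorm bounds growing at most polynomially in $|t|$. This is exactly the content available from Theorem \ref{gamma}, which allows powers of $\widehat{\mathcal{M}}$ to be absorbed on either side without leaving the class, combined with the Leibniz rule of Remark \ref{Leibnizrule} applied to the factor $\widehat{\mathcal{M}}(\xi)^{it}$. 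Concretely, one estimates $\Delta_\xi^\alpha\partial_X^{(\beta)}\widehat{\mathcal{M}}(\xi)^{it}$ by representing the imaginary power via a Mellin--type integral (or equivalently a holomorphic functional calculus) and differentiating under the integral; the resulting bounds are polynomial in $|t|$ of degree depending on $|\alpha|+|\beta|$, which is precisely the growth tolerated by Fefferman--Stein.

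Once these admissibility checks are in place the interpolation is immediate, the duality argument finishes the range $1<p\leq 2$, and the inclusion $S^{-m,\mathcal{L}}_{\rho,\delta}\subset S^{-m_p,\mathcal{L}}_{\rho,\delta}$ for $m\geq m_p$ completes the proof.
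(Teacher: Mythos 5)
Your overall strategy — a duality reduction to $p\geq 2$, Fefferman--Stein complex interpolation between the $L^2\to L^2$ endpoint of Theorem~\ref{CVT} and the $L^\infty\to BMO^{\mathcal{L}}$ endpoint of Theorem~\ref{parta}, with the analytic family constructed by twisting the symbol by powers of $\widehat{\mathcal{M}}(\xi)$ and with admissibility verified via Theorem~\ref{gamma} plus polynomial-in-$|t|$ bounds on the imaginary powers $\widehat{\mathcal{M}}(\xi)^{it}$ — is exactly the route the paper points to (the paper itself only references \cite{RuzhanskyCardona2020} and the phrase ``The Fefferman-Stein interpolation Theorem implies...'' for this result).

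However, your parameterization of the analytic family is inconsistent and, as written, the argument does not close. You set $T_z=\mathrm{Op}\bigl(\sigma(x,\xi)\widehat{\mathcal{M}}(\xi)^{m_p(1-z)}\bigr)$, so that $T_1=A$ has subelliptic order $-m_p$ and $T_0$ has order $0$. You then assert that on $\mathrm{Re}(z)=1$ the symbol has order $-Q(1-\rho)/2$, but this is false: it has order $-m_p=-Q(1-\rho)\bigl(\tfrac12-\tfrac1p\bigr)$, which strictly exceeds $-Q(1-\rho)/2$ for every finite $p$, so Theorem~\ref{parta} is not applicable on the right boundary. Furthermore, Fefferman--Stein places the $L^p$ conclusion at the \emph{intermediate} point $\theta=1-2/p$, i.e.\ it controls $T_\theta$, not $T_1$. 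With your exponent, $T_\theta=\mathrm{Op}\bigl(\sigma\,\widehat{\mathcal{M}}^{\,2m_p/p}\bigr)$ has order $-m_p(1-2/p)$, which is neither $A$ nor of the right order, so ``$A=T_\theta$'' fails for all $p\in(2,\infty)$. The correct family is
$$T_z:=\mathrm{Op}\Bigl(\sigma(x,\xi)\,\widehat{\mathcal{M}}(\xi)^{\,m_p-\frac{Q(1-\rho)}{2}z}\Bigr),$$
for which $T_0\in\mathrm{Op}\bigl(S^{0,\mathcal{L}}_{\rho,\delta}\bigr)$ (giving the $L^2$ bound), $T_1\in\mathrm{Op}\bigl(S^{-Q(1-\rho)/2,\mathcal{L}}_{\rho,\delta}\bigr)$ (so Theorem~\ref{parta} applies on the right boundary), and, since $m_p=\theta\cdot Q(1-\rho)/2$ with $\theta=1-2/p$, one has $T_\theta=\mathrm{Op}(\sigma)=A$, so interpolation lands exactly on $A$ at the interior point $\theta$. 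With that correction the rest of your outline (admissibility, duality, reduction to $m=m_p$) goes through as you describe.
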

For the proof of the results above we refer the reader to \cite{RuzhanskyCardona2020}.
\subsection{Subelliptic Sobolev spaces on vector bundles} In this subsection we define the subelliptic Sobolev spaces that we will consider in this work. In particular, it will be used in obtaining G\r{a}rding type inequalities. So, we start with the definition of vector-valued Sobolev spaces. 
\begin{definition} \label{Defivectorsob}
The Sobolev space $L^{2,\mathcal{L}}_{s}(G,E_0)$ or order $s\in \mathbb{R},$ modeled on $L^{2}(G,E_0)$ is defined by the completion of $C^{\infty}(G,E_0)$ with respect to the norm,
\begin{align*}
    \Vert u\Vert_{L^{2,\mathcal{L}}_{s}(G,E_0)}:=\left(\sum_{i=1}^{d_\tau}\sum_{[\xi]\in \widehat{G}}d_\xi\Vert \widehat{\mathcal{M}}(\xi)^{s} \widehat{u}(i,\xi)\Vert_{\textnormal{HS}}^2\right)^{\frac{1}{2}},
\end{align*}
where  $\{\widehat{ \mathcal{M}}(\xi)\}_{[\xi]\in \widehat{G}}$ is the matrix-valued symbol of the operator $\mathcal{M}:=(1+\mathcal{L})^{\frac{1}{2}}.$
\end{definition}
\begin{remark}\label{characterisationSobolevspaces}
 Let us define for every $\lambda\in \mathbb{R}$
 \begin{equation*}
     \mathcal{M}_{\lambda,E_0}:=\textnormal{Op}\left(\widehat{\mathcal{M}}(\xi)^{\lambda} \otimes I_{ \textnormal{End}(E_0)}\right).
 \end{equation*}Keeping in mind Remark \ref{remarkendvaluedmatrixvalued}, let us observe that the $\textnormal{End}(E_0)$-valued symbol  of ${\mathcal{M}}_{\lambda,E_0},$ is given by
 \begin{equation*}
     \widehat{\mathcal{M}}_{\lambda,E_0}(x,\xi)=(\widehat{\mathcal{M}}_{\lambda,E_0}(x,\xi)_{u,v=1}^{d_\xi}),\,\,\widehat{\mathcal{M}}_{\lambda,E_0}(x,\xi)_{u,v}:=\widehat{\mathcal{M}}(\xi)^{\lambda}_{uv}I_{\textnormal{End}(E_0)},
 \end{equation*} so that
 \begin{equation*}
   \widehat{\mathcal{M}}_{\lambda,E_0}(x,\xi)= \widehat{\mathcal{M}}(\xi)^{\lambda} \otimes I_{ \textnormal{End}(E_0)}  \in \mathbb{C}^{d_\xi\times d_\xi}( \textnormal{End}(E_0)).
 \end{equation*}Now, notice that the matrix-valued symbol of ${\mathcal{M}}_{\lambda,E_0}$ can be computed as follows: 
 \begin{align*}
     \widehat{\mathcal{M}}_{\lambda,E_0}(i,r,x,\xi)&= (\widehat{\mathcal{M}}_{\lambda,E_0}(i,r,x,\xi)_{u,v})_{u,v=1}^{d_\xi} =(e_{r,E_0}^{*}( \widehat{\mathcal{M}}_{\lambda,E_0}(x,\xi)_{uv}e_{i,E_0}) )_{u,v=1}^{d_\xi}\\
     &=(e_{r,E_0}^{*}( \widehat{\mathcal{M}}(\xi)^{\lambda}_{uv}e_{i,E_0}) )_{u,v=1}^{d_\xi}=(\widehat{\mathcal{M}}(\xi)^{\lambda}_{uv})_{u,v=1}^{d_\xi}\delta_{ir}\\
     &=\widehat{\mathcal{M}}(\xi)^{\lambda}\delta_{ir},
 \end{align*}which shows that
\begin{equation} \label{eq265}
      \Vert u\Vert_{L^{2,\mathcal{L}}_{t}(G,E_0)}=\Vert {\mathcal{M}}_{\lambda,E_0} u \Vert_{L^{2}(G,E_0)}. 
\end{equation}Indeed,
\begin{align*}
    \Vert {\mathcal{M}}_{\lambda,E_0} u \Vert_{L^{2}(G,E_0)}^2 &=\Vert \sum_{i,r,[\xi]\in \widehat{G}}d_{\xi}\textnormal{Tr}[\xi(x)\widehat{\mathcal{M}}(\xi)^{\lambda}\delta_{ir}    \widehat{u}(i,\xi)   ]e_{r,E_0} \Vert^2_{L^{2}(G,E_0)}\\
    &=\Vert \sum_{i;[\xi]\in \widehat{G}}d_{\xi}\textnormal{Tr}[\xi(x)\widehat{\mathcal{M}}(\xi)^{\lambda}    \widehat{u}(i,\xi)   ]e_{i,E_0} \Vert^2_{L^{2}(G,E_0)}\\
    &=\Vert\Vert \sum_{i;[\xi]\in \widehat{G}}d_{\xi}\textnormal{Tr}[\xi(x)\widehat{\mathcal{M}}(\xi)^{\lambda}    \widehat{u}(i,\xi)   ]e_{i,E_0}\Vert_{E_0} \Vert^2_{L^{2}(G)}\\
     &=\sum_{i=1}^{d_\tau}\Vert\Vert (1+\mathcal{L})^{\frac{\lambda}{2}}u_{i}(x)   ]e_{i,E_0}\Vert_{E_0} \Vert^2_{L^{2}(G)}=\sum_{i=1}^{d_\tau}\sum_{[\xi]\in \widehat{G}}d_\xi\Vert \widehat{\mathcal{M}}(\xi)^{\lambda} \widehat{u}(i,\xi)\Vert_{\textnormal{HS}}^2,
\end{align*}in view of the Plancherel theorem on $L^2(G),$ with $\widehat{u}_i(\xi)=\widehat{u}(i,\xi).$
 \end{remark}
 Let us define the space 
 $$L^{2, \mathcal{L}}_s(G, E_0)^\tau= \left\{f \in L^{2, \mathcal{L}}_s(G, E_0): f(gk)= \tau(k)^{-1} f(g),\,\,k \in K, g\in G  \right\}.$$
 The space $L^{2, \mathcal{L}}_s(G, E_0)^\tau$ is a closed subspace of $L^{2, \mathcal{L}}_s(G, E_0)$ and it agrees with the completion of $C^\infty(G, E_0)^\tau$ with respect to the norm $\Vert \cdot \Vert_{L^{2, \mathcal{L}}_s(G, E_0)}$ in $L^{2, \mathcal{L}}_s(G, E_0).$
\begin{definition} \label{subvectsobo}
The subelliptic Sobolev space $L^{2,\mathcal{L}}_{s_0}(E)$  of order $s_0\in \mathbb{R},$ is defined as the completion of $\Gamma^\infty(E)$ with respect to the norm
\begin{equation}
\Vert s \Vert_{L^{2,\mathcal{L}}_{s_0}(E)  }:= \Vert  \varkappa_\tau s \Vert_{L^{2,\mathcal{L}}_{s_0}(G,E_0)},
\end{equation}where $\varkappa_\tau:L^2(E)\rightarrow L^2(G,E_0)^{\tau}$ is the isomorphism defined in \eqref{notationfunction}.
\end{definition}
In the case where the sub-Laplacian $\mathcal{L}$ is replaced by the Laplacian $\mathcal{L}_G,$ the corresponding  Sobolev spaces will be denoted by $H^{s_0}(G,E_0)$ and $H^{s_0}(E).$
\begin{remark} \label{soboiso}
Let us observe that the isomorphism $ \varkappa_{\tau}:\Gamma^\infty(E)\rightarrow C(G,E_0)^{\tau}$ 
extends to an isometry of Hilbert spaces
\begin{equation}
  \varkappa_{\tau}:L^{2,\mathcal{L}}_{s_0}(E) \rightarrow L^{2,\mathcal{L}}_{s_0}(G,E_0), 
\end{equation} for any $s_0\in \mathbb{R}.$

\end{remark}
\begin{remark} 
Let us consider the vector-valued Sobolev spaces $L^{2}_{s}(G,E_0)$ associated with the Laplacian $\mathcal{L}_G$ on $G,$ and defined by the norm
\begin{align*}
    \Vert u\Vert_{L^{2}_{s}(G,E_0)}:=\left(\sum_{i=1}^{d_\tau}\sum_{[\xi]\in \widehat{G}}d_\xi\Vert \langle \xi\rangle^{s} \widehat{u}(i,\xi)\Vert_{\textnormal{HS}}^2\right)^{\frac{1}{2}},
\end{align*} where the elliptic weight $\langle \xi\rangle$ is defined in terms of the spectrum $\{\lambda_{[\xi]}\}_{[\xi]\in \widehat{G}}$ of the positive Laplacian $\mathcal{L}_G$ on $G$ as follows $$\langle\xi \rangle:=(1+\lambda_{[\xi]})^{1/2},\,\,[\xi]\in \widehat{G}.$$    For $E_0=\mathbb{C}$ we use the notation $L^{2}_{s}(G)=L^{2}_{s}(G,\mathbb{C}). $ The Sobolev space $L^{2}_{s_0}(E)$ of  order $s_0\in \mathbb{R},$ is defined as the completion of $\Gamma^\infty(E)$ with respect to the norm
\begin{equation}
\Vert s \Vert_{L^{2}_{s_0}(E)  }:= \Vert  \varkappa_\tau s \Vert_{L^{2}_{s_0}(G,E_0)}.
\end{equation}
For $s>0,$ we have the embeddings, see \cite{RuzhanskyCardona2020},
\begin{equation*}
    L^{2}_s(G)\hookrightarrow L^{2,\mathcal{L}}_s(G) \hookrightarrow L^{2}_{\frac{s}{\kappa}}(G) \hookrightarrow L^2(G)  \hookrightarrow  L^{2}_{-\frac{s}{\kappa}}(G)\hookrightarrow L^{2,\mathcal{L}}_{-s}(G) \hookrightarrow L^{2}_{-s}(G).
\end{equation*}So, we deduce the embeddings for the vector-valued setting
\begin{equation*}
    L^{2}_s(G,E_0)\hookrightarrow L^{2,\mathcal{L}}_s(G,E_0) \hookrightarrow L^{2}_{\frac{s}{\kappa}}(G,E_0) \hookrightarrow L^2(G,E_0)  ,
\end{equation*} and 
\begin{equation}
 L^2(G,E_0)  \hookrightarrow  L^{2}_{-\frac{s}{\kappa}}(G,E_0)\hookrightarrow L^{2,\mathcal{L}}_{-s}(G,E_0) \hookrightarrow L^{2}_{-s}(G,E_0).
\end{equation} 
These embeddings induce the following embeddings of Sobolev spaces defined on the homogeneous vector-bundle $E,$
\begin{equation*}
    L^{2}_s(E)\hookrightarrow L^{2,\mathcal{L}}_s(E) \hookrightarrow L^{2}_{\frac{s}{\kappa}}(E) \hookrightarrow L^2(E)  \hookrightarrow  L^{2}_{-\frac{s}{\kappa}}(E)\hookrightarrow L^{2,\mathcal{L}}_{-s}(E) \hookrightarrow L^{2}_{-s}(E).
\end{equation*} 
Consequently we have (see \cite[Page 130]{Wallach1973})

\begin{equation}\label{CinftyL}
   \bigcap_{s \in \mathbb{R}} L^{2}_s(G, E_0)=C^\infty(G, E_0)=\bigcap_{s \in \mathbb{R}} L^{2, \mathcal{L}}_s(G, E_0)
\end{equation}
as well as 
\begin{equation} \bigcap_{s \in \mathbb{R}} L^{2}_s(G, E_0)^{\tau}=C^\infty(G, E_0)^{\tau}=\bigcap_{s \in \mathbb{R}} L^{2, \mathcal{L}}_s(G, E_0)^{\tau},
\end{equation}
and consequently
\begin{equation}\label{CinftyL:2}
   \bigcap_{s \in \mathbb{R}} L^{2}_s(E)=\Gamma^\infty(E)=\bigcap_{s \in \mathbb{R}} L^{2, \mathcal{L}}_s(E).
\end{equation} These properties of Sobolev spaces will be useful for our further analysis of Fredholness of pseudo-differential operators in Theorem \ref{Vector:Fred}.
\end{remark}

\subsection{Derivatives and differences for $\textnormal{End}(E_0,F_0)$-valued symbols} In this subsection we fix the notation corresponding to the derivatives and differences of symbols of vector-valued pseudo-differential operators. Let us fix $\alpha,\beta\in \mathbb{N}_0^n.$ Then, every symbol of the form  $$(i,r,x,[\xi])\mapsto \partial_{X}^{\beta}\Delta_\xi^{\alpha}\sigma_A(i,r,x,[\xi]) \in  \Sigma(I_{d_\tau}\times I_{d_\omega}\times G\times \widehat{G}), $$ induces a symbol  $\partial_{X}^{\beta}\Delta_\xi^{\alpha}\sigma_A(x,\xi)$ in the class $ \Sigma((G\times \widehat{G})\otimes \textnormal{End}(E_0,F_0)),$ via Definition \ref{equivalenceXXX} as follows:
\begin{equation}\label{main:notation}
    \partial_{X}^{\beta}\Delta_\xi^{\alpha}\sigma_A(x,\xi):=[\partial_{X}^{\beta}\Delta_\xi^{\alpha}\sigma_A(x,\xi)_{u,v}]_{1\leq u,v\leq d_\xi}.
\end{equation}To do this, define 
\begin{equation}\label{onetoone2:32:2:2}
    \partial_{X}^{\beta}\Delta_\xi^{\alpha}\sigma_{A}(x,\xi)_{u,v}:E_0\rightarrow F_0,
\end{equation}by its action on the bases $B_{E_0}=\{e_{i,E_0}\}$ and  $B_{F_0}=\{e_{r,F_0}\}$ of $E_0$ and $F_0,$ respectively, as follows
\begin{equation}\label{onetoone:2:2:2}
  e_{r,F_0}^{*}( \partial_{X}^{\beta}\Delta_\xi^{\alpha}\sigma_A(x,\xi)_{u,v}e_{i,E_0}):=   \partial_{X}^{\beta}\Delta_\xi^{\alpha}\sigma_A(i,r,x,[\xi])_{u,v},
\end{equation} for any $1\leq u,v\leq d_\xi.$ For any $\alpha\in \mathbb{N}_0^n,$ the difference operator $\Delta_\xi^{\alpha}$ acting on the symbol $(x,[\xi])\mapsto \sigma_A(i,r,x,[\xi])$  is defined as in \eqref{Difference.op}, while for any $\beta\in \mathbb{N}_0^n,$ the notation $\partial_{X}^{\beta}$ as been introduced in Remark \ref{derivative.cano}.  The notation in \eqref{main:notation} will be crucial to define the subelliptic H\"ormander classes in the next section.

\section{Subelliptic pseudo-differential calculus  on homogeneous vector bundles} \label{vectcal}

In this section we will develop a subelliptic pseudo-differential calculus on homogeneous vector-bundles induced by the subelliptic calculus on compact Lie groups. First, we will define and develop a vector-valued calculus on compact Lie groups induced in a natural way by the subelliptic calculus. Later, we will use the Bott-construction on homogeneous vector bundles to make the construction of the pseudo-differential calculus for pseudo-differential operators on homogeneous vector-bundles. 
\subsection{Symbolic calculus on compact Lie groups}
We recall the  quantisation formula from Section \ref{qunt}. For any continuous operator $A:C^\infty(G, E_0) \rightarrow C^\infty(G, F_0)$ we have the following quantisation formula
$$Af(x)=\sum_{i,r,[\xi]\in \widehat{G}}d_{\xi}\textnormal{Tr}[\xi(x)\sigma_{A}(i,r,x,\xi)    \widehat{f}(i,\xi)   ]e_{r,F_0},$$
where $\sigma_A:  I_{d_\tau}\times I_{d_\omega}  \times G \times \widehat{G} \rightarrow \bigcup\{\mathbb{C}^{d_\xi \times d_\xi}: [\xi] \in \widehat{G}\}$ is the symbol of $A.$ 
We rewrite this quantisation formula to make it more convenient to use as follows:
\begin{align} \label{cutoff}
    Af(x)&=\sum_{i=1}^{d_\tau} \sum_{r=1}^{d_\omega}\sum_{[\xi]\in \widehat{G}}d_{\xi}\textnormal{Tr}[\xi(x)\sigma_{A}(i,r,x,\xi)    \widehat{f}(i,\xi)   ]e_{r,F_0} \nonumber \\ &= \sum_{i=1}^{d_\tau} \sum_{r=1}^{d_\omega} A_{ir}f_i(x) e_{r, F_0},
\end{align}
where $A_{ir}f_i$ defined on the complex valued function $f_i(x)= \langle f(x), e_{i, E_0}\rangle \equiv e_{i, E_0}^* (f(x)) $ is  given by the matrix-valued quantisation  formula
on  $G,$
\begin{equation}
    A_{ir}f_i(x)= \sum_{[\xi] \in \widehat{G}} d_\xi \textnormal{Tr}[\xi(x) \sigma_A(i, r, x ,\xi) \widehat{f}_i(\xi)],\,\,\,\widehat{f_i}(\xi):= \widehat{f}(i, \xi)= \mathscr{F}_G[e_{i, E_0}^* (f(x))].
\end{equation}

Having defined this quantisation, now it is time to introduce the subelliptic H\"ormander class of symbols of order $m \in \mathbb{R}$ in the $(\rho, \delta)$-class.
\begin{definition}[Vector-valued subelliptic H\"ormander classes]\label{symbolclass}
   Let $G$ be a compact Lie group and let  $0\leq \delta,\rho\leq 1.$ Let $E_0$ and $F_0$ be complex vector spaces with finite dimensions $d_\tau$ and $d_\omega,$ respectively. Let us consider a sub-Laplacian $\mathcal{L}=-(X_1^2+\cdots +X_k^2)$ on $G,$ where the system of vector fields $X=\{X_i\}_{i=1}^{k}$ satisfies the H\"ormander condition of step $\kappa$. If $m\in \mathbb{R},$ the H\"ormander symbol class $S^{m,\mathcal{L}}_{\rho,\delta}((G\times \widehat{G})\otimes \textnormal{End}(E_0,F_0))$ of subelliptic  order $m$ and of type $(\rho,\delta),$ consists of those functions $\sigma \in \Sigma(I_{d_\omega} \times I_{d_\tau} \times G\times \widehat{G}),$ satisfying the symbol inequalities
   \begin{equation}\label{InI}
      \tilde{p}_{\alpha,\beta,\rho,\delta,m, \gamma } (\sigma):= \sup_{\substack{1\leq i\leq d_\tau\\ 1\leq r\leq d_\omega \\ (x, [\xi])\in G\times \widehat{G} }
      } \Vert \widehat{ \mathcal{M}}(\xi)^{(\gamma+\rho|\alpha|-\delta|\beta|-m)}\partial_{X}^{(\beta)} \Delta_{\xi}^{\alpha}\sigma(i,r, x,\xi)\mathcal{M}(\xi)^{-\gamma} \Vert_{\textnormal{op}} <\infty,
   \end{equation} for any $\gamma\in \mathbb{R}.$
   We denote by $\Psi^{m,\mathcal{L}}_{\rho,\delta}((G\times \widehat{G})\otimes \textnormal{End}(E_0,F_0))$ the class of continuous linear operators $A:C^\infty(G,E_0)\rightarrow C^\infty(G,F_0)$ with symbols in the class $S^{m,\mathcal{L}}_{\rho,\delta}((G\times \widehat{G})\otimes \textnormal{End}(E_0,F_0)).$
  \end{definition}
\begin{remark}
If $E_0=F_0,$ we will use the simplified notation
 $$\Psi^{m,\mathcal{L}}_{\rho,\delta}((G\times \widehat{G})\otimes \textnormal{End}(E_0)):=\Psi^{m,\mathcal{L}}_{\rho,\delta}((G\times \widehat{G})\otimes \textnormal{End}(E_0,E_0)), $$ for the operator classes and also
 $$S^{m,\mathcal{L}}_{\rho,\delta}((G\times \widehat{G})\otimes \textnormal{End}(E_0)):=S^{m,\mathcal{L}}_{\rho,\delta}((G\times \widehat{G})\otimes \textnormal{End}(E_0,E_0)), $$ for the classes of symbols.
\end{remark}
\begin{remark}
In Theorem \ref{orders:theorems:VB} we prove that the real functional calculus of the vector-valued sub-Laplacian $\mathcal{L}_{E_0}$ (see \eqref{Vec:sub}) is included in the vector-valued subelliptic calculus. Of particular interest are the powers 
\begin{equation}
    \mathcal{M}_{s,E_0}:=(1+\mathcal{L}_{E_0})^{\frac{s}{2}}\in \Psi^{s,\mathcal{L}}_{1,0}((G\times \widehat{G})\otimes\textnormal{End}(E_0))
\end{equation}
for all $s\in \mathbb{R},$ (see \eqref{thepowersof:LE0} in Theorem \ref{orders:theorems:VB}). However, the stability of the  complex functional calculus for more general subelliptic  operators (with symbols that may depend of the spatial variable $x\in G$) will be proved in Section  \ref{GFCSECT}. In particular, from Theorem \ref{DunforRiesz} we have the following properties for complex powers
\begin{equation}
    \mathcal{M}_{z,E_0}:=(1+\mathcal{L}_{E_0})^{\frac{z}{2}}\in \Psi^{\textnormal{Re}(z),\mathcal{L}}_{1,0}((G\times \widehat{G})\otimes\textnormal{End}(E_0)),\,\,z\in \mathbb{C}.
\end{equation}
\end{remark}

\begin{remark}
We note here that if $E_0=\mathbb{C}=F_0$ we recover the usual subelliptic classes on compact Lie groups from \cite{RuzhanskyCardona2020}.
\end{remark} 

\begin{remark}\label{remark3.3}
With Definition \ref{Product} in mind, we define the following  seminorm on symbols which is  equivalent to $\tilde{p}_{\alpha,\beta,\rho,\delta,m, \gamma }, $  by 
\begin{equation*}
    p_{\alpha,\beta,\rho,\delta,m,\gamma}(\sigma)=: \sup_{(x,[\xi])\in G\times \widehat{G} }\Vert \widehat{ \mathcal{M}}(\xi)^{(\gamma+\rho|\alpha|-\delta|\beta|-m)} \otimes \partial_{X}^{(\beta)} \Delta_{\xi}^{\alpha}\sigma(x, \xi)\otimes\widehat{ \mathcal{M}}(\xi)^{{-\gamma}}\Vert_{\textnormal{op}} <\infty,
\end{equation*}where $\widehat{ \mathcal{M}}(\xi)^{(\gamma+\rho|\alpha|-\delta|\beta|-m)} \otimes \partial_{X}^{(\beta)} \Delta_{\xi}^{\alpha}\sigma(x, \xi)\otimes\widehat{ \mathcal{M}}(\xi)^{{-\gamma}}$ denotes  the unique element in $ \Sigma((G\times \widehat{G})\otimes \textnormal{End}(E_0,F_0))$ induced (according to Definition \ref{equivalenceXXX}, via \eqref{onetoone}) by the matrix-symbol
\begin{equation*}
    \widehat{ \mathcal{M}}(\xi)^{(\gamma+\rho|\alpha|-\delta|\beta|-m)}\sigma_A(i,r,x,[\xi])\widehat{ \mathcal{M}}(\xi)^{{-\gamma}}\in  \Sigma(I_{d_\tau}\times I_{d_\omega}\times G\times \widehat{G}).  
\end{equation*}    
\end{remark}
In the following theorem we provide a classification for the vector-valued subelliptic H\"ormander classes.

\begin{theorem}\label{characterisations}
 Let  $0\leq \delta,\rho\leq 1,$ and let $\alpha,\beta\in \mathbb{N}_0^n.$ The following conditions are equivalent.
\begin{itemize}
    \item[A.] For every $\alpha,\beta\in \mathbb{N}_0^n,$ \begin{equation}\label{InI2}
      p_{\alpha,\beta,\rho,\delta,m,\textnormal{left}}(\sigma):= \sup_{(x,[\xi])\in G\times \widehat{G} }\Vert \widehat{ \mathcal{M}}(\xi)^{(\rho|\alpha|-\delta|\beta|-m)} \otimes \partial_{X}^{(\beta)} \Delta_{\xi}^{\alpha}\sigma(x,\xi)\Vert_{\textnormal{op}} <\infty.
   \end{equation}
   \item[B.] For every $\alpha,\beta\in \mathbb{N}_0^n,$ \begin{equation}\label{InII2}
      p_{\alpha,\beta,\rho,\delta,m,\textnormal{right}}(\sigma):= \sup_{(x,[\xi])\in G\times \widehat{G} }\Vert (\partial_{X}^{(\beta)} \Delta_{\xi}^{\alpha} \sigma(x,\xi) ) \otimes  \widehat{ \mathcal{M}}(\xi)^{(\rho|\alpha|-\delta|\beta|-m)}\Vert_{\textnormal{op}} <\infty.
   \end{equation}
   \item[C.] For all $\gamma\in \mathbb{R},$ $\alpha,\beta\in \mathbb{N}_0^n,$
    \begin{equation}
      p_{\alpha,\beta,\rho,\delta,m,\gamma}(\sigma):= \sup_{(x,[\xi])\in G\times \widehat{G} }\Vert \widehat{ \mathcal{M}}(\xi)^{(\rho|\alpha|-\delta|\beta|-m-\gamma)} \otimes \partial_{X}^{(\beta)} \Delta_{\xi}^{\alpha}\sigma(x,\xi)\otimes\widehat{ \mathcal{M}}(\xi)^{{\gamma}}\Vert_{\textnormal{op}} <\infty.
   \end{equation}
   \item[D.] There exists $\gamma_0\in \mathbb{R},$ such that for every $\alpha,\beta\in \mathbb{N}_0^n,$
    \begin{equation}
      p_{\alpha,\beta,\rho,\delta,m,\gamma_0}(\sigma):= \sup_{(x,[\xi])\in G\times \widehat{G} }\Vert \widehat{ \mathcal{M}}(\xi)^{(\rho|\alpha|-\delta|\beta|-m-\gamma_0)} \otimes  \partial_{X}^{(\beta)} \Delta_{\xi}^{\alpha}\sigma(x,\xi) \otimes\widehat{ \mathcal{M}}(\xi)^{
      \gamma_0}\Vert_{\textnormal{op}} <\infty.
   \end{equation}
   \item[E.] $\sigma \in S^{m,\mathcal{L}}_{\rho,\delta}((G\times \widehat{G})\otimes \textnormal{End}(E_0,F_0)).$
\end{itemize}
\end{theorem}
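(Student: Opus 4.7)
My strategy is to reduce the five-way equivalence to the scalar matrix-valued case already established in Theorem~\ref{gamma}, and then take a supremum over a finite index set. The reduction is made possible by the identifications in Remark~\ref{remarkendvaluedmatrixvalued} and the notation \eqref{main:notation}, under which each $\textnormal{End}(E_0,F_0)$-valued symbol $\sigma(x,\xi)$ corresponds bijectively to the family $\{\sigma_A(i,r,x,\xi)\}_{(i,r)\in I_{d_\tau}\times I_{d_\omega}}$ of scalar matrix-valued symbols on $G\times\widehat{G}$.

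The trivial implications come first. That (C)$\Rightarrow$(A), (B), (D) is immediate by specialising $\gamma$: take $\gamma=0$ for (A); take $\gamma=\rho|\alpha|-\delta|\beta|-m$ so that the left exponent vanishes to recover (B); and take any fixed $\gamma_0$ for (D). The equivalence (C)$\Leftrightarrow$(E) is the content of Remark~\ref{remark3.3}, which asserts that the seminorms $p_{\alpha,\beta,\rho,\delta,m,\gamma}$ and $\tilde{p}_{\alpha,\beta,\rho,\delta,m,\gamma}$ are equivalent, and the latter defines the class $S^{m,\mathcal{L}}_{\rho,\delta}((G\times\widehat{G})\otimes\textnormal{End}(E_0,F_0))$ via Definition~\ref{symbolclass}.

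The substantive implications are (A)$\Rightarrow$(C), (B)$\Rightarrow$(C) and (D)$\Rightarrow$(C). For these I propose to apply the scalar Theorem~\ref{gamma} pointwise in $(i,r)$. Indeed, for each fixed pair $(i,r)$, the symbol $(x,\xi)\mapsto \sigma_A(i,r,x,\xi)$ takes values in $\mathbb{C}^{d_\xi\times d_\xi}$ and is therefore a scalar matrix-valued symbol on $G\times\widehat{G}$. Each of the hypotheses (A), (B), or (D) of Theorem~\ref{characterisations}, after the identification in \eqref{main:notation}, translates to the corresponding hypothesis of Theorem~\ref{gamma} for this component symbol, uniformly in $(i,r)$. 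Applying Theorem~\ref{gamma} yields condition C of that theorem for each $(i,r)$; since the index set $I_{d_\tau}\times I_{d_\omega}$ is finite (of cardinality $d_\tau d_\omega$), taking the supremum over $(i,r)$ preserves finiteness of the seminorms and gives condition (C) of the present theorem.

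The underlying mechanism in Theorem~\ref{gamma} is the Corach–Porta–Recht inequality \eqref{Recht2}, applied to
\[
X=\widehat{\mathcal{M}}(\xi)^{\rho|\alpha|-\delta|\beta|-m-\gamma}\bigl(\partial_{X}^{(\beta)}\Delta_{\xi}^{\alpha}\sigma_A(i,r,x,\xi)\bigr)\widehat{\mathcal{M}}(\xi)^{\gamma},
\]
with the positive operator $A$ chosen as a suitable power of $\widehat{\mathcal{M}}(\xi)$ to redistribute the weight $\widehat{\mathcal{M}}(\xi)^{\gamma}$ between the two sides. The main point to verify is that this redistribution is compatible with the $\textnormal{End}(E_0,F_0)$-valued structure; this is automatic because the weights $\widehat{\mathcal{M}}(\xi)^{\gamma}$ act trivially on the $\textnormal{End}(E_0,F_0)$ factor in the tensor notation of Remark~\ref{remark3.3}, so the argument proceeds componentwise and no genuinely vector-valued obstruction arises. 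The only subtlety is book-keeping of the multi-indices $\alpha,\beta$ when iterating \eqref{Recht2} to pass from a single value of $\gamma$ (as in (D)) to all $\gamma\in\mathbb{R}$ (as in (C)), exactly as in the scalar argument of \cite{RuzhanskyCardona2020}.
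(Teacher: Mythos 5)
Your proof is correct and takes essentially the same approach as the paper's: reduce to the scalar matrix-valued case by fixing $(i,r)$, invoke Theorem~\ref{gamma} componentwise, and then take the supremum over the finite index set $I_{d_\tau}\times I_{d_\omega}$. You spell out the trivial specialisations of $\gamma$ and the role of Remark~\ref{remark3.3} more explicitly than the paper does, but the underlying mechanism is the same.
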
 
\begin{proof}
The idea is to use Remark \ref{remark3.3}. Indeed, for every $s,r\in \mathbb{R},$ the $\textnormal{End}(E_0,F_0)$-valued symbol 
\begin{equation*}
 (x,[\xi])\mapsto   \widehat{ \mathcal{M}}(\xi)^{s} \otimes  \partial_{X}^{(\beta)} \Delta_{\xi}^{\alpha}\sigma(x,\xi) \otimes\widehat{ \mathcal{M}}(\xi)^{r}: \mathbb{C}^{d_\xi\times d_\xi}(\textnormal{End}(E_{0},F_{0})),
\end{equation*} is, by definition, the unique symbol induced by the matrix-valued symbol
\begin{equation*}
 (i,r,x,[\xi])\mapsto   \widehat{ \mathcal{M}}(\xi)^{s}  \partial_{X}^{(\beta)} \Delta_{\xi}^{\alpha}\sigma(i,r,x,\xi) \widehat{ \mathcal{M}}(\xi)^{r}\in \mathbb{C}^{d_\xi\times d_\xi}.
\end{equation*}
So, by fixing $1 \leq i \leq d_\tau $ and $1 \leq r \leq d_\omega,$ the five conditions mentioned in the statement of the theorem are equivalent by Theorem \ref{gamma}. As a consequence of this fact, by taking the supremum over the finite family of indices $1 \leq i \leq d_\tau $ and $1 \leq r \leq d_\omega,$ again these estimates are equivalent.  
\end{proof}
In the following composition theorem $H\cong G\times_\nu H_0 \rightarrow M$ denotes a homogeneous vector bundle over $M=G/K.$

\begin{theorem}\label{VectorcompositionC} Let  $0\leq \delta<\rho\leq 1.$ If $A \in \Psi^{m_1,\mathcal{L}}_{\rho,\delta}((G\times \widehat{G})\otimes \textnormal{End}(E_0,F_0))$ and $B\in \Psi^{m_2,\mathcal{L}}_{\rho,\delta}((G\times \widehat{G})\otimes \textnormal{End}(F_0,H_0))$ then the composition operator $BA:=B\circ A:C^\infty(G, E_0)\rightarrow C^\infty(G, H_0)$ belongs to the subelliptic class $\Psi^{m_1+m_2,\mathcal{L}}_{\rho,\delta}((G\times \widehat{G})\otimes \textnormal{End}(E_0,H_0)).$ The symbol of $BA,$ $\sigma_{BA}\in\Sigma ((G\times \widehat{G})\otimes \textnormal{End}(E_0,H_0)),$ satisfies the asymptotic expansion,
\begin{equation}\label{compositionAandB}
    \sigma_{BA}(i, s, x,\xi) \sim \sum_{r=1}^{d_\omega}\sum_{|\alpha|= 0}^\infty(\Delta_{\xi}^\alpha\sigma_{B}(r, s, x,\xi))(\partial_{X}^{(\alpha)} \sigma_{A}(i, r, x,\xi)), 
\end{equation}this means that, for every $N\in \mathbb{N},$
\begin{align*}
    &\Delta_{\xi}^{\alpha_\ell}\partial_{X}^{(\beta)}\left(\sigma_{BA}(i, s, x,\xi) - \sum_{r=1}^{d_\omega}\sum_{|\alpha| \leq N }(\Delta_{\xi}^\alpha\sigma_{B}(r, s, x,\xi))(\partial_{X}^{(\alpha)} \sigma_{A}(i, r, x,\xi))  \right)\\
    &\hspace{2cm}\in {S}^{m_1+m_2-(\rho-\delta)(N+1)-\rho\ell+\delta|\beta|,\mathcal{L}}_{\rho,\delta}((G\times \widehat{G})\otimes \textnormal{End}(E_0,H_0)),
 \end{align*}for every difference operator $\Delta_{\xi}^{\alpha_\ell}$ of order $\ell\in\mathbb{N}_0.$ In particular, the same asymptotic expansion applies if additionally  $A:C^{\infty}(G,E_{0})^\tau\rightarrow C^{\infty}(G,F_{0})^\omega,$ and $B:C^{\infty}(G,F_{0})^\tau\rightarrow C^{\infty}(G,H_{0})^\nu$ are continuous linear operators. This means that \eqref{compositionAandB} remains valid for the continuous linear operator $B\circ A:C^{\infty}(G,E_{0})^\tau\rightarrow C^{\infty}(G,H_{0})^\nu.$
\end{theorem}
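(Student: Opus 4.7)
The plan is to reduce the vector-valued composition to the scalar subelliptic composition theorem (Theorem \ref{SubellipticcompositionC}) via the decomposition of vector-valued operators into a finite family of scalar operators provided by \eqref{cutoff}, and then reassemble the asymptotic expansion at the level of the $\textnormal{End}(E_0,H_0)$-valued symbol.

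\textbf{Step 1 (scalar decomposition).} Using \eqref{cutoff}, write $A=\sum_{i,r}A_{ir}$ where, for each fixed pair $(i,r)\in I_{d_\tau}\times I_{d_\omega}$, the operator $A_{ir}:C^{\infty}(G)\to C^{\infty}(G)$ is the scalar pseudo-differential operator with matrix-valued symbol $\sigma_{A}(i,r,x,\xi)$. Analogously, $B=\sum_{r,s}B_{rs}$ with scalar symbols $\sigma_{B}(r,s,x,\xi)$. The seminorm estimate \eqref{InI} applied for each fixed $(i,r)$ (respectively $(r,s)$) shows that each $A_{ir}$ lies in the scalar class $\textnormal{Op}(S^{m_1,\mathcal{L}}_{\rho,\delta}(G\times\widehat{G}))$ and each $B_{rs}$ lies in $\textnormal{Op}(S^{m_2,\mathcal{L}}_{\rho,\delta}(G\times\widehat{G}))$, with seminorms uniform in the indices $(i,r)$ and $(r,s)$, respectively, since these indices range over finite sets.

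\textbf{Step 2 (composition identity).} A direct computation using \eqref{cutoff} gives that for $f\in C^{\infty}(G,E_0)$,
\begin{equation*}
(BA)f(x)=\sum_{i=1}^{d_\tau}\sum_{s=1}^{d_\nu}\Bigl(\sum_{r=1}^{d_\omega}B_{rs}\circ A_{ir}\Bigr)f_{i}(x)\,e_{s,H_0},
\end{equation*}
so the scalar components of $BA$ are $(BA)_{is}=\sum_{r=1}^{d_\omega}B_{rs}\circ A_{ir}$, and hence the matrix-valued symbol of $BA$ is
\begin{equation*}
\sigma_{BA}(i,s,x,\xi)=\sum_{r=1}^{d_\omega}\sigma_{B_{rs}\circ A_{ir}}(x,\xi).
\end{equation*}
Now apply Theorem \ref{SubellipticcompositionC} to each scalar composition $B_{rs}\circ A_{ir}$: this yields $B_{rs}\circ A_{ir}\in\textnormal{Op}(S^{m_1+m_2,\mathcal{L}}_{\rho,\delta}(G\times\widehat{G}))$ with the asymptotic expansion
\begin{equation*}
\sigma_{B_{rs}\circ A_{ir}}(x,\xi)\sim\sum_{|\alpha|=0}^{\infty}(\Delta_{\xi}^{\alpha}\sigma_{B}(r,s,x,\xi))(\partial_{X}^{(\alpha)}\sigma_{A}(i,r,x,\xi)),
\end{equation*}
together with the remainder estimates stated there. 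Summing over the finite index $r=1,\dots,d_{\omega}$ (which commutes with $\Delta_{\xi}^{\alpha_{\ell}}\partial_{X}^{(\beta)}$ and preserves symbol classes) produces exactly the expansion \eqref{compositionAandB} and the claimed remainder estimate in $S^{m_1+m_2-(\rho-\delta)(N+1)-\rho\ell+\delta|\beta|,\mathcal{L}}_{\rho,\delta}$ at the level of matrix-valued symbols.

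\textbf{Step 3 (back to $\textnormal{End}$-valued symbols and invariant case).} By the one-to-one correspondence recorded in Remark \ref{remarkendvaluedmatrixvalued}, uniform control of the matrix-valued symbols $\sigma_{BA}(i,s,\cdot,\cdot)$ (uniformly in the finite ranges $i\in I_{d_\tau}$, $s\in I_{d_\nu}$) is equivalent to the $\textnormal{End}(E_0,H_0)$-valued seminorm estimate \eqref{InI}, via Theorem \ref{characterisations}. Taking the maximum of the scalar seminorms over $(i,r)$ and $(r,s)$ in Step 2 delivers the vector-valued symbol bounds, placing $BA$ in $\Psi^{m_1+m_2,\mathcal{L}}_{\rho,\delta}((G\times\widehat{G})\otimes\textnormal{End}(E_0,H_0))$. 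Finally, in the $\tau,\omega,\nu$-invariant setting, the restriction $B\circ A:C^{\infty}(G,E_0)^{\tau}\to C^{\infty}(G,H_0)^{\nu}$ has the same scalar components $(BA)_{is}$ by construction, so its symbol coincides with $\sigma_{BA}$ and the same asymptotic expansion applies verbatim.

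The main obstacle is bookkeeping: one must carefully track that the remainder orders obtained from the scalar expansion of $B_{rs}\circ A_{ir}$ are uniform over the finite range of the cross-index $r$, and then verify that, after summation, the resulting $\textnormal{End}(E_0,H_0)$-valued remainder symbol still satisfies the uniform estimates required by Definition \ref{symbolclass}. Since both the index set $r\in I_{d_\omega}$ and the symbol manipulations $\Delta_{\xi}^{\alpha_{\ell}}\partial_{X}^{(\beta)}$ involve only finite operations, this uniformity is routine once the scalar composition is applied componentwise.
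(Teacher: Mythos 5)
Your proof is correct and follows essentially the same route as the paper: decompose $A$ and $B$ into their scalar components $A_{ir}$, $B_{rs}$ via \eqref{cutoff}, identify $\sigma_{BA}(i,s,x,\xi)=\sum_{r}\sigma_{B_{rs}A_{ir}}(x,\xi)$ by uniqueness of the symbol, apply the scalar composition Theorem \ref{SubellipticcompositionC} to each $B_{rs}\circ A_{ir}$, and sum over the finite index $r$. The only cosmetic difference is your added Step 3 invoking Remark \ref{remarkendvaluedmatrixvalued} and Theorem \ref{characterisations} to phrase the conclusion in $\textnormal{End}$-valued terms, which the paper leaves implicit.
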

\begin{proof}
Since $B\in \Psi^{m_2,\mathcal{L}}_{\rho,\delta}((G\times \widehat{G})\otimes \textnormal{End}(F_0,H_0))$ we have 
 \begin{align*}
    (B\circ A) f(x):= B[Af](x)&= \sum_{r=1}^{d_\omega} \sum_{s=1}^{d_\nu} B_{rs}(Af)_r(x) e_{s, H_0} .
 \end{align*}
 Now, note that 
 \begin{align*}
     (Af)_r(x) := \langle Af(x), e_{r, F_0} \rangle &= \left\langle \sum_{i=1}^{d_\tau} \sum_{r'=1}^{d_\omega} A_{ir'} f_i(x) e_{r', F_0}, e_{r, F_0} \right\rangle \\&= \sum_{i=1}^{d_\tau}  A_{ir} f_i(x).
 \end{align*}
 Therefore, we obtain 
 \begin{align*}
     B[Af](x)&= \sum_{r=1}^{d_\omega} \sum_{s=1}^{d_\nu} B_{rs} \left[\sum_{i=1}^{d_\tau}  A_{ir} f_i(x) \right] e_{s, H_0} \\&= \sum_{r=1}^{d_\omega} \sum_{s=1}^{d_\nu} \sum_{i=1}^{d_\tau} B_{rs} A_{ir} f_i(x)  e_{s, H_0} \\&=\sum_{r=1}^{d_\omega} \sum_{s=1}^{d_\nu} \sum_{i=1}^{d_\tau} \sum_{[\xi] \in \widehat{G}} d_\xi \textnormal{Tr}[\xi(x) \sigma_{B_{rs} A_{ir}}(x, \xi) \widehat{f}_i(\xi)]   e_{s, H_0} \\&=  \sum_{s=1}^{d_\nu} \sum_{i=1}^{d_\tau} \sum_{[\xi] \in \widehat{G}} d_\xi \textnormal{Tr}[\xi(x) \sum_{r=1}^{d_\omega} \sigma_{B_{rs} A_{ir}}(x, \xi) \widehat{f}_i(\xi)]   e_{s, H_0}.
 \end{align*}
 By the uniqueness of the symbol, we obtain that 
 $$\sigma_{BA}(i,s, x, \xi)= \sum_{r=1}^{d_\omega} \sigma_{B_{rs} A_{ir}}(x, \xi).$$
 Now, $B_{rs} A_{ir}$ is the  composition of two pseudo-differential operators with $B_{rs} \in \textnormal{Op}({S}^{m_1,\mathcal{L}}_{\rho,\delta}(G\times \widehat{G}))$  and $A_{ir} \in \textnormal{Op}({S}^{m_1,\mathcal{L}}_{\rho,\delta}(G\times \widehat{G})),$ so by Theorem \ref{SubellipticcompositionC} we have $B_{rs} A_{ir} \in \textnormal{Op}({S}^{m_1+m_2,\mathcal{L}}_{\rho,\delta}(G\times \widehat{G}))$. Thus we also have $BA \in \Psi^{m_1+m_2,\mathcal{L}}_{\rho,\delta}((G\times \widehat{G})\otimes \textnormal{End}(E_0,H_0))  $ as symbols classes are closed under sums.
 Also, we have, by Theorem \ref{SubellipticcompositionC}, 
 $$ \sigma_{B_{rs} A_{ir}}(x, \xi) \sim \sum_{|\alpha|= 0}^\infty(\Delta_{\xi}^\alpha\sigma_{B_{rs}}( x,\xi))(\partial_{X}^{(\alpha)} \sigma_{A_{ir}}( x,\xi))$$
 and as a consequence 
 \begin{align} \label{Vish3.5e}
     \sigma_{BA}(i,s, x, \xi)&\sim \sum_{r=1}^{d_\omega} \sum_{|\alpha|= 0}^\infty(\Delta_{\xi}^\alpha\sigma_{B_{rs}}( x,\xi))(\partial_{X}^{(\alpha)} \sigma_{A_{ir}}( x,\xi)) \\&= \sum_{r=1}^{d_\omega}\sum_{|\alpha|= 0}^\infty(\Delta_{\xi}^\alpha\sigma_{B}(r, s, x,\xi))(\partial_{X}^{(\alpha)} \sigma_{A}(i, r, x,\xi)). \nonumber
 \end{align}
 The fact that, for every $N\in \mathbb{N},$
\begin{align*}
    &\Delta_{\xi}^{\alpha_\ell}\partial_{X}^{(\beta)}\left(\sigma_{BA}(i, s, x,\xi) - \sum_{r=1}^{d_\omega}\sum_{|\alpha| \leq N }(\Delta_{\xi}^\alpha\sigma_{B}(r, s, x,\xi))(\partial_{X}^{(\alpha)} \sigma_{A}(i, r, x,\xi))  \right)\\
    &\hspace{2cm}\in {S}^{m_1+m_2-(\rho-\delta)(N+1)-\rho\ell+\delta|\beta|,\mathcal{L}}_{\rho,\delta}((G\times \widehat{G})\otimes \textnormal{End}(E_0,H_0)),
 \end{align*}for every difference operator $\Delta_{\xi}^{\alpha_\ell}$ of order $\ell\in\mathbb{N}_0,$ follows immediately from Theorem \ref{SubellipticcompositionC} and  \eqref{Vish3.5e}. 
\end{proof}

\begin{definition}\label{Defi:adjointVect:Kernel}
Let $E_0^{*}$ and $F_{0}^{*}$ be the dual spaces of $E_0$ and $F_{0},$ respectively.   The formal adjoint of a vector-valued continuous linear operator $A:C^{\infty}(G,E_0)\rightarrow C^{\infty}(G,F_0)$ is the unique  vector-valued continuous linear operator $A^{*}:C^{\infty}(G,F_0^*)\rightarrow C^{\infty}(G,E_0^{*})$ satisfying the identity
 \begin{equation}
     ( h, Af ):=\int\limits_{G}( h(g), Af(g) ) dg=\int\limits_{G}( A^{*}h(g), f(g) ) dg=:( A^*h, f ),
 \end{equation} for any  $h\in C^{\infty}(G,F_0^*), $ $f\in C^{\infty}(G,E_0).$ 
\end{definition}
\begin{remark}[Kernel of the adjoint] Let $K_A$ be the Schwartz kernel of $A.$ Then, for any $g_1,g_2\in G,$ $K_A(g_1,g_2)\in\textnormal{ End}(E_0,F_0), $ and let us consider its adjoint $K_A(g_1,g_2)^{*}\in \textnormal{ End}(F_0^*,E_0^*),$ which is defined via
\begin{equation*}
    ( v_1,K_A(g_1,g_2)v_2 )=( K_A(g_1,g_2)^{*}v_1,v_2),\,\,v_1\in F_{0}^*,\,\,v_2\in E_0.
\end{equation*} By applying Fubini theorem  we have
\begin{align*}
     ( h, Af ) &:=\int\limits_{G}( h(g), Af(g) ) dg= \int\limits_{G}\int\limits_{G}\left( h(g), K_A(g,y)f(y)\right) dy \, dg\\
     &= \int\limits_{G}\int\limits_{G}\left( K_A(g,y)^{*}h(g), f(y) \right) dg\,dy\\
     &=\int\limits_{G}\left( \,\int\limits_{G} K_A(g,y)^{*}h(g)dg, f(y) \right) dy= ( A^*h, f ).
\end{align*}This shows that the Schwartz kernel  $K_{A^{*}}$ of $A^{*}$ is given by 
\begin{equation*}
    K_{A^*}(y,g)=K(g,y)^{*},\,\,g,y\in G.
\end{equation*}

\end{remark}

\begin{corollary}\label{Coro:Adjoint}  Let us consider the identifications $E_{0}^{*}\cong E_0$ and $F_{0}^{*}\cong F_0,$ and let $\tau$ and $\omega$ be two unitary representations of $K$ with representation spaces $E_0$ and $F_0,$ respectively. Then, every  continuous linear operator   $A:C^\infty(G,E_{0})\rightarrow  C^\infty(G,F_{0}),$ maps  $ C^\infty(G,E_{0})^\tau$ into $ C^\infty(G,F_{0})^{\omega}, $ if and only if,  $A^*:C^\infty(G,F_{0}^*)\rightarrow  C^\infty(G,E_{0}^*),$ maps  $ C^\infty(G,F_{0}^*)^\omega$ into $ C^\infty(G,E_{0}^*)^{\tau}. $

\end{corollary}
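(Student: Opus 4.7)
The plan is to reduce both statements to the kernel characterisation in Theorem \ref{TheoremCharK} and then exploit the simple relationship between the Schwartz kernels of $A$ and $A^{*}$. First, I would record the identity $K_{A^{*}}(y,g) = K_{A}(g,y)^{*}$ coming from Definition \ref{Defi:adjointVect:Kernel} (and the computation following it), where the adjoint is taken fibrewise in $\mathrm{End}(E_0,F_0) \to \mathrm{End}(F_0^{*},E_0^{*})$.

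Next, I would apply Theorem \ref{TheoremCharK} in both directions. On one hand, $A$ maps $C^{\infty}(G,E_0)^{\tau}$ into $C^{\infty}(G,F_0)^{\omega}$ if and only if
\begin{equation*}
K_{A}(g,y) = \omega(k_1)\, K_{A}(gk_1, yk_2)\, \tau(k_2)^{-1},\qquad k_1,k_2\in K,\ g,y\in G.
\end{equation*}
On the other hand, applying the same theorem to $A^{*}$ with the roles of $\tau$ and $\omega$ interchanged (since $A^{*}$ takes $F_0^{*}$-valued functions to $E_0^{*}$-valued functions), we get that $A^{*}$ preserves the corresponding $\tau$- and $\omega$-invariance if and only if
\begin{equation*}
K_{A^{*}}(y,g) = \tau(k_1)\, K_{A^{*}}(yk_1, gk_2)\, \omega(k_2)^{-1},\qquad k_1,k_2\in K,\ g,y\in G.
\end{equation*}

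Now the main step is to check that these two conditions are equivalent. Substituting $K_{A^{*}}(y,g)=K_{A}(g,y)^{*}$ into the second identity and taking adjoints transforms it into
\begin{equation*}
K_{A}(g,y) = \omega(k_2)^{-*}\, K_{A}(gk_2, yk_1)\, \tau(k_1)^{*}.
\end{equation*}
Since $\tau$ and $\omega$ are unitary, $\tau(k_1)^{*}=\tau(k_1)^{-1}$ and $\omega(k_2)^{-*}=\omega(k_2)$, and after relabelling $k_1\leftrightarrow k_2$ this is precisely the condition on $K_{A}$ coming from Theorem \ref{TheoremCharK}. The converse direction is obtained by running the same chain of equivalences backwards, so the two $\tau$-$\omega$-invariance properties are equivalent.

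The only subtle point is the bookkeeping of duals and adjoints: making sure that ``adjoint of an endomorphism in $\mathrm{End}(E_0,F_0)$'' genuinely lands in $\mathrm{End}(F_0^{*},E_0^{*})$ and that the identifications $E_0^{*}\cong E_0$, $F_0^{*}\cong F_0$ are used consistently with the unitarity of $\tau$ and $\omega$ (so that the contragredient representations coincide with $\tau$ and $\omega$ themselves under the identifications). Once this is handled, the corollary follows immediately.
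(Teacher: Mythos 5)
Your proposal is correct and follows essentially the same route as the paper: both proofs reduce each invariance statement to the kernel condition of Theorem \ref{TheoremCharK}, substitute $K_{A^{*}}(y,g)=K_{A}(g,y)^{*}$, and use the unitarity of $\tau$ and $\omega$ (so that, e.g., $\tau(k)^{*}=\tau(k)^{-1}$) to see that the two kernel identities are adjoints of one another after relabelling $k_1\leftrightarrow k_2$.
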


\begin{proof} In view of the identifications  $E_{0}^{*}\cong E_0$ and $F_{0}^{*}\cong F_0,$ 
 $E_0^{*}$ and $F_{0}^{*}$ can be thought as representation spaces of $K.$ Now, from Theorem \ref{TheoremCharK}, $A:C^\infty(G,E_{0})\rightarrow  C^\infty(G,F_{0}),$ maps  $ C^\infty(G,E_{0})^\tau$ into $ C^\infty(G,F_{0})^{\omega}, $ if and only if, \begin{equation}
 K_{A}(g,y)=   \omega(k_1)K_{A}(gk_1,yk_2)\tau(k_2)^{-1},\,k_1,k_2\in K,\,\,g,y\in G,
\end{equation}which is equivalent to the fact that
\begin{align*}
 K_{A^{*}}(y,g)&=K_{A}(g,y)^{*} =   \tau(k_2)K_{A}(gk_1,yk_2)^{*}\omega(k_1)^{-1}\\
 &= \tau(k_2)K_{A^{*}}(yk_2,gk_1)\omega(k_1)^{-1},\, \,k_1,k_2\in K,\,\,g,y\in G,
\end{align*}which again, is equivalent to the fact that $A^*:C^\infty(G,F_{0})\rightarrow  C^\infty(G,E_{0}),$ maps  $ C^\infty(G,F_{0})^\omega$ into $ C^\infty(G,E_{0})^{\tau}. $
\end{proof}

The following theorem says that the vector-valued subelliptic classes are closed under taking adjoint. For every finite dimensional vector space $V,$ we  denote by $V^*$ the dual space of $V,$ and for a basis $(e_{i})_{i=1}^{\dim V}$ of $V$ let us denote by $(e_i^{*})_{i=1}^{\dim V}$ the dual basis.
\begin{theorem}\label{VectorAdjoint}
 Let $0\leq \delta<\rho\leq 1.$ If $A:C^\infty(G, E_0)\rightarrow C^\infty(G, F_0)$ is a continuous operator, $A \in \Psi^{m,\mathcal{L}}_{\rho,\delta}((G\times \widehat{G})\otimes \textnormal{End}(E_0,F_0))$ then $A^* \in \Psi^{m,\mathcal{L}}_{\rho,\delta}((G\times \widehat{G})\otimes \textnormal{End}(F_0^*,E_0^*)).$ The  symbol of $A^*,$ $\sigma_{A^*}\in \Sigma((G\times \widehat{G})\otimes \textnormal{End}(F_0^*,E_0^*)),$ satisfies the asymptotic expansion,
 \begin{equation}\label{asymptoticofA*}
    \sigma_{A^*}(i,s, x,\xi)\sim  \sum_{|\alpha|= 0}^\infty\Delta_{\xi}^\alpha\partial_{X}^{(\alpha)} \sigma_{A}(i,s, x, \xi)^*.
 \end{equation} 
 This means that, for every $N \in \mathbb{N}$,
 \begin{equation*}
     \Delta_{\xi}^{\alpha_\ell} \partial_X^{(\beta)} \left(\sigma_{A^*}( i,s,x,\xi) -  \sum_{|\alpha|\leq N }\Delta_{\xi}^\alpha\partial_{X}^{(\alpha)} \sigma_{A}(i,s, x, \xi)^* \right) 
 \end{equation*} belongs to the symbol class ${S}^{m-(\rho-\delta)(N+1)-\rho\ell+\delta|\beta|,\mathcal{L}}_{\rho,\delta}((G\times \widehat{G}) \otimes \textnormal{End}(F_0^*,E_0^*)).$  In particular, under the identifications $E_{0}^{*}\cong E_0$ and $F_{0}^{*}\cong F_0,$ the same asymptotic expansion also applies if additionally  $A:C^{\infty}(G,E_{0})^\tau\rightarrow C^{\infty}(G,F_{0})^\omega$ is a continuous linear operator. This means that \eqref{asymptoticofA*} remains valid for the continuous linear operator $A^*:C^{\infty}(G,F_{0})^\omega\rightarrow C^{\infty}(G,E_{0})^\tau.$
 \end{theorem}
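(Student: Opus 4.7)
\textbf{Proof plan for Theorem \ref{VectorAdjoint}.} My strategy is to reduce the vector-valued adjoint to the scalar case of Theorem \ref{AdjointC} by means of the component decomposition \eqref{cutoff}. Fix orthonormal bases $B_{E_0}=\{e_{i,E_0}\}_{i=1}^{d_\tau}$ and $B_{F_0}=\{e_{r,F_0}\}_{r=1}^{d_\omega}$, and let $\{e_{i,E_0}^{*}\}$, $\{e_{r,F_0}^{*}\}$ denote their dual bases in $E_0^*\cong E_0$ and $F_0^*\cong F_0$. Write $Af(x)=\sum_{i=1}^{d_\tau}\sum_{r=1}^{d_\omega} A_{ir}f_{i}(x)\,e_{r,F_0}$ with $f_i(x):=e_{i,E_0}^{*}(f(x))$, so that each scalar component $A_{ir}:C^\infty(G)\to C^\infty(G)$ has scalar symbol $\sigma_{A_{ir}}(x,\xi)=\sigma_{A}(i,r,x,\xi)$. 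By Definition \ref{symbolclass} and Theorem \ref{characterisations}, the hypothesis $A\in\Psi^{m,\mathcal{L}}_{\rho,\delta}((G\times\widehat{G})\otimes\textnormal{End}(E_0,F_0))$ is equivalent to $A_{ir}\in\textnormal{Op}(S^{m,\mathcal{L}}_{\rho,\delta}(G\times\widehat{G}))$ for every $(i,r)\in I_{d_\tau}\times I_{d_\omega}$.

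Next I compute the adjoint in components. Using Definition \ref{Defi:adjointVect:Kernel}, for $h\in C^\infty(G,F_0^*)$ and $f\in C^\infty(G,E_0)$, writing $h(x)=\sum_{r}h_r(x)e_{r,F_0}^{*}$ and pairing against $Af$, Fubini gives
\begin{equation*}
(h,Af)=\sum_{i,r}\int_G h_r(x)\,A_{ir}f_i(x)\,dx=\sum_{i,r}\int_G (A_{ir}^{*}h_r)(x)\,f_i(x)\,dx,
\end{equation*}
so the formal adjoint acts by $A^{*}h(x)=\sum_{r=1}^{d_\omega}\sum_{i=1}^{d_\tau}(A_{ir}^{*}h_r)(x)\,e_{i,E_0}^{*}$. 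Hence $A^{*}:C^\infty(G,F_0^*)\to C^\infty(G,E_0^*)$ is a continuous linear operator whose matrix-valued components (with respect to $B_{F_0^*}$ and $B_{E_0^*}$) are precisely the scalar adjoints $A_{ir}^{*}$. Theorem \ref{AdjointC} applies to each of them: $A_{ir}^{*}\in\textnormal{Op}(S^{m,\mathcal{L}}_{\rho,\delta}(G\times\widehat{G}))$, together with the scalar asymptotic expansion $\sigma_{A_{ir}^{*}}(x,\xi)\sim\sum_{|\alpha|\ge 0}\Delta_\xi^{\alpha}\partial_X^{(\alpha)}\sigma_{A_{ir}}(x,\xi)^{*}$ and the sharp remainder estimate, for every $N,\ell\in\mathbb{N}_0$ and multi-index $\beta$,
\begin{equation*}
\Delta_{\xi}^{\alpha_\ell}\partial_{X}^{(\beta)}\!\Big(\sigma_{A_{ir}^{*}}(x,\xi)-\!\!\sum_{|\alpha|\le N}\!\!\Delta_\xi^{\alpha}\partial_X^{(\alpha)}\sigma_{A_{ir}}(x,\xi)^{*}\Big)\in S^{m-(\rho-\delta)(N+1)-\rho\ell+\delta|\beta|,\mathcal{L}}_{\rho,\delta}(G\times\widehat{G}).
\end{equation*}

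Reassembling the components indexed by $(r,i)\in I_{d_\omega}\times I_{d_\tau}$, the seminorm characterisation of Theorem \ref{characterisations} (whose equivalences are insensitive to permutations over a finite index set) immediately transfers these scalar estimates to the vector-valued setting and yields $A^{*}\in\Psi^{m,\mathcal{L}}_{\rho,\delta}((G\times\widehat{G})\otimes\textnormal{End}(F_0^{*},E_0^{*}))$, together with the asymptotic expansion \eqref{asymptoticofA*} and the corresponding remainder estimate in the vector-valued class $S^{m-(\rho-\delta)(N+1)-\rho\ell+\delta|\beta|,\mathcal{L}}_{\rho,\delta}((G\times\widehat{G})\otimes\textnormal{End}(F_0^{*},E_0^{*}))$. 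The final assertion about the $\tau$/$\omega$-invariance---i.e.\ that if $A:C^\infty(G,E_0)^{\tau}\to C^\infty(G,F_0)^{\omega}$ is a continuous extension, then $A^{*}:C^\infty(G,F_0)^{\omega}\to C^\infty(G,E_0)^{\tau}$ is as well and admits the same asymptotic expansion---is an immediate consequence of Corollary \ref{Coro:Adjoint} applied to $A^*$ under the identifications $E_0^{*}\cong E_0$, $F_0^{*}\cong F_0$, since the symbolic computation above depends only on the underlying operator and not on the restriction to $\tau$- or $\omega$-invariant sections.

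The main technical obstacle I anticipate is purely bookkeeping: keeping careful track of the dualities $E_0\cong E_0^{*}$, $F_0\cong F_0^{*}$ and of the swap of the role of the indices $i\leftrightarrow r$ when passing from $A$ to $A^{*}$, so that the $\textnormal{End}(E_0,F_0)$-valued symbol of $A$ is correctly transported to an $\textnormal{End}(F_0^{*},E_0^{*})$-valued symbol for $A^{*}$ via the equivalence of Remark \ref{remarkendvaluedmatrixvalued}. Once the matrix identification is carried out consistently, the asymptotic expansion \eqref{asymptoticofA*} and the remainder estimate become direct consequences of the scalar counterpart in Theorem \ref{AdjointC}.
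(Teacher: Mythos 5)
Your proposal is correct and follows essentially the same route as the paper: decompose $A$ into scalar components $A_{ir}$, observe (via the pairing/Fubini computation) that the corresponding components of $A^{*}$ are the scalar adjoints $A_{ir}^{*}$, apply the scalar Theorem~\ref{AdjointC} componentwise, and reassemble using the equivalence of seminorms over the finite index set; the $\tau$/$\omega$-invariance then follows from Corollary~\ref{Coro:Adjoint}. The paper reaches the same identification $\sigma_{A^{*}}(i,r,x,\xi)=\sigma_{A_{ir}^{*}}(x,\xi)$ by writing out $A^{*}g(x)(v)$ directly through the quantisation formula rather than through the bilinear pairing, but this is only a notational variant of your argument.
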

 \begin{proof}  Since $A: C^\infty(G, E_0) \rightarrow C^\infty(G, F_0)$  the adjoint $A^*$ will act from $C^\infty(G, F_0^*)$ into $C^\infty(G, E_0^*)$ (see Definition \ref{Defi:adjointVect:Kernel}). Then, for  $g \in C^\infty(G, F_0^*),\, x \in G$ and $v \in E_0$ we have
 \begin{align*}
     A^*g(x)(v) &= \left( \sum_{r=1}^{d_\omega} A^*_{ir} g_r(x) \right) \left(\sum_{i=1}^{d_\tau} v_i e_{i,E_0}\right) 
     \\&= \sum_{i=1}^{d_\tau} \sum_{r=1}^{d_\omega} \sum_{[\xi]\in \widehat{G}} d_\xi \textnormal{Tr}\left[\xi(x) \sigma_{A_{ir}^*}(x, \xi) \widehat{g}(r, \xi) \right] v_ie_{i,E_0} \\
     &= \sum_{i=1}^{d_\tau} \sum_{r=1}^{d_\omega} \sum_{[\xi]\in \widehat{G}} d_\xi \textnormal{Tr}\left[\xi(x) \sigma_{A_{ir}^*}(x, \xi) \widehat{g}(r, \xi) \right] e_{i, E_0}^{*}(v)\\
     &\equiv \sum_{i=1}^{d_\tau} \sum_{r=1}^{d_\omega} \sum_{[\xi]\in \widehat{G}} d_\xi \textnormal{Tr}\left[\xi(x) \sigma_{A_{ir}^*}(x, \xi) \widehat{g}(r, \xi) \right] e_{i, E_0^*}(v),\,\,e_{i, E_0}^{*}=: e_{i, E_0^*}.
 \end{align*}
 As a consequence, we obtain 
 $$A^*g(x)= \sum_{i=1}^{d_\tau} \sum_{r=1}^{d_\omega} \sum_{[\xi]\in \widehat{G}} d_\xi \textnormal{Tr}\left[\xi(x) \sigma_{A_{ir}^*}(x, \xi) \widehat{g}(r, \xi) \right] e_{i, E_0^*}.$$
 Now, the uniqueness of the symbol gives that 
 $$\sigma_{A^*}(i,r,x,  \xi)= \sigma_{A^*_{ir}}(x, \xi)$$ for all $i, r, x , \xi.$
 Now, note that $A_{ir}^*$ usual pseudo-differential operator with symbol $\sigma_{A^*_{ir}}(x, \xi)$ so by Theorem \ref{AdjointC}, we have 
 \begin{align} \label{visheq3.5}
     \sigma_{A^*}(i,r, x, \xi) = \sigma_{A^*_{ir}}(x, \xi) &\sim \sum_{|\alpha|= 0}^\infty\Delta_{\xi}^\alpha\partial_{X}^{(\alpha)} \sigma_{A_{ir}}(x, \xi)^* \\& = \sum_{|\alpha|= 0}^\infty\Delta_{\xi}^\alpha\partial_{X}^{(\alpha)} \sigma_{A}(i, r, x, \xi)^*. \nonumber
 \end{align}
  Now the asymptotic expansion \eqref{asymptoticofA*} follows from Theorem \ref{AdjointC} by using \eqref{visheq3.5}. Finally, if we make the identifications $E_{0}^{*}\cong E_0$ and $F_{0}^{*}\cong F_0,$ then $E_0^{*}$ and $F_{0}^{*}$ can be understood as representation spaces of $K,$ and \eqref{asymptoticofA*} remains valid for the continuous linear operator $A^*:C^{\infty}(G,F_{0})^\omega\rightarrow C^{\infty}(G,E_{0})^\tau,$ in view of Corollary \ref{Coro:Adjoint}. 
 \end{proof}
 
\subsection{Vector-valued functional calculus of the sub-Laplacian}

The aim of this subsection is to establish the stability of the functional calculus of the {\it vector-valued sub-Laplacian}
 \begin{equation}\label{Vec:sub}
     \mathcal{L}_{E_0}:=\textnormal{Op}\left(\widehat{\mathcal{L}}(\xi) \otimes I_{ \textnormal{End}(E_0)}\right):C^{\infty}(G,E_0)\rightarrow C^{\infty}(G,E_0),
 \end{equation} with respect to the vector-valued subelliptic H\"ormander classes. In \eqref{Vec:sub}, $\mathcal{L}$ denotes the matrix-valued symbol of the  sub-Laplacian $\mathcal{L}$  associated to a H\"ormander system of vector fields  $X=\{X_{i}\}_{i=1}^{k}$ at step $\kappa.$   
 For $m\in \mathbb{R},$ let us consider the Euclidean class of symbols $S^{m}(\mathbb{R}^{+}_0),$ defined by the countable family of seminorms 
\begin{equation}
    \Vert f \Vert_{d,\,S^{m}(\mathbb{R}^{+}_0)}:=\sup_{\ell\leq d}\sup_{\lambda\geq 0}(1+\lambda)^{-m+\ell}|\partial^\ell_{\lambda}f(\lambda)|,\,d\in \mathbb{N}_0.
\end{equation}
For any $f\in S^{m}(\mathbb{R}^{+}_0), $ we define
 \begin{equation}\label{Vec:sub:f}
    f( \mathcal{L}_{E_0}):=\textnormal{Op}\left(f(\widehat{\mathcal{L}}(\xi)) \otimes I_{ \textnormal{End}(E_0)}\right):C^{\infty}(G,E_0)\rightarrow C^{\infty}(G,E_0),
 \end{equation} with $f(\widehat{\mathcal{L}}(\xi)),$ $[\xi]\in \widehat{G},$ defined by the spectral calculus of matrices.
The main theorem of this subsection is the following.
\begin{theorem}\label{orders:theorems:VB} For $f\in S^{\frac{m}{2}}(\mathbb{R}^{+}_0), $ $m\in \mathbb{R},$ we have  
\begin{equation}
   f(\mathcal{L}_{E_0})\in \Psi^{m,\mathcal{L}}_{1,0}((G\times \widehat{G})\otimes \textnormal{End}(E_0)). 
\end{equation}
 In particular, for $f_s(\lambda):=(1+\lambda)^{\frac{s}{2}},$ the symbol of $\mathcal{M}_{s,E_0}=f_s(\mathcal{L}_{E_0})$ belongs to $ S^{s,\mathcal{L}}_{1,0}( (G\times \widehat{G})\otimes \textnormal{End}(E_0)) $ for all $s\in \mathbb{R}.$ In other words, we have 
\begin{equation}\label{thepowersof:LE0}
    \mathcal{M}_{s,E_0}:=(1+\mathcal{L}_{E_0})^{\frac{s}{2}}\in \Psi^{s,\mathcal{L}}_{1,0}((G\times \widehat{G})\otimes\textnormal{End}(E_0))
\end{equation}
for all $s\in \mathbb{R}.$
\end{theorem}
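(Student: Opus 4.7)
The plan hinges on the observation that the $\textnormal{End}(E_0)$-valued symbol of $f(\mathcal{L}_{E_0})$ has the tensor form $f(\widehat{\mathcal{L}}(\xi))\otimes I_{\textnormal{End}(E_0)}$, and in particular is independent of the spatial variable $x\in G$. Via the identification in Remark \ref{remarkendvaluedmatrixvalued} and \eqref{onetoone}, the corresponding matrix-valued symbol is simply
\begin{equation*}
   \sigma_{f(\mathcal{L}_{E_0})}(i,r,x,\xi)\;=\;\delta_{i,r}\,f(\widehat{\mathcal{L}}(\xi)),\qquad 1\leq i,r\leq d_\tau,
\end{equation*}
acting on the fixed orthonormal basis of $E_0$. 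Consequently $\partial_{X}^{(\beta)}\sigma_{f(\mathcal{L}_{E_0})}\equiv 0$ whenever $|\beta|\geq 1$, so in the defining seminorms \eqref{InI} of $S^{m,\mathcal{L}}_{1,0}((G\times\widehat{G})\otimes\textnormal{End}(E_0))$ only the case $\beta=0$ needs to be examined.

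Next, taking $\gamma=0$ in condition (D) of Theorem \ref{characterisations} (which is sufficient by the equivalence with (E)), it suffices to bound, uniformly in $1\leq i,r\leq d_\tau$ and $[\xi]\in\widehat{G}$, the operator norm
\begin{equation*}
    \bigl\|\widehat{\mathcal{M}}(\xi)^{|\alpha|-m}\,\delta_{i,r}\,\Delta_{\xi}^{\alpha}f(\widehat{\mathcal{L}}(\xi))\bigr\|_{\textnormal{op}}
\end{equation*}
for all $\alpha\in\mathbb{N}_0^n$. Because the Kronecker $\delta_{i,r}$ factors out and $d_\tau$ is finite, this collapses to the purely scalar statement that
\begin{equation*}
   (x,\xi)\mapsto f(\widehat{\mathcal{L}}(\xi))\;\in\;S^{m,\mathcal{L}}_{1,0}(G\times\widehat{G})
\end{equation*}
in the sense of Definition \ref{contracted''}. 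For the model family $f=f_s$ the identity $f_s(\widehat{\mathcal{L}}(\xi))=\widehat{\mathcal{M}}(\xi)^{s}$ (see \eqref{eigenvalues:hatM}) together with the fact that $\mathcal{M}^{s}=(1+\mathcal{L})^{s/2}$ is already known from \cite{RuzhanskyCardona2020} to belong to $\textnormal{Op}(S^{s,\mathcal{L}}_{1,0}(G\times\widehat{G}))$ immediately yields the second assertion \eqref{thepowersof:LE0} for $\mathcal{M}_{s,E_0}$.

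For general $f\in S^{m/2}(\mathbb{R}^{+}_0)$, I would factor $f(\lambda)=(1+\lambda)^{m/2}g(\lambda)$, where $g\in S^{0}(\mathbb{R}^{+}_0)$, so that $f(\mathcal{L})=\mathcal{M}^{m}\circ g(\mathcal{L})$ on the commuting algebra of functions of $\mathcal{L}$. The composition Theorem \ref{SubellipticcompositionC} in the scalar setting then reduces everything to proving that $g(\mathcal{L})\in\textnormal{Op}(S^{0,\mathcal{L}}_{1,0}(G\times\widehat{G}))$ for any $g\in S^{0}(\mathbb{R}^{+}_0)$. This is the main obstacle: one must control $\Delta_{\xi}^{\alpha}g(\widehat{\mathcal{L}}(\xi))$ in terms of the weight $\widehat{\mathcal{M}}(\xi)$. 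I expect to handle this by combining the global Taylor expansion on $G$ (Lemma \ref{Taylorseries}), applied to the scalar function $g$, with the standard estimates $|g^{(\ell)}(\lambda)|\lesssim(1+\lambda)^{-\ell}$: each application of a first-order difference operator $\Delta_\xi$ to the diagonal symbol $g(\widehat{\mathcal{L}}(\xi))$ produces a gain of one power of $\widehat{\mathcal{M}}(\xi)^{-1}$ in operator norm, via a Leibniz/Taylor-type argument together with the Corach--Porta--Recht inequality \eqref{Recht2} to transfer left/right weights. Iterating this and lifting back by the identification \eqref{onetoone} to the $\textnormal{End}(E_0)$-valued class finishes the proof.
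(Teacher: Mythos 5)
Your reduction to the scalar case is exactly the route the paper takes: you compute the matrix-valued symbol $\sigma_{f(\mathcal{L}_{E_0})}(i,r,x,\xi)=\delta_{i,r}\,f(\widehat{\mathcal{L}}(\xi))$, observe it is $x$-independent so only $\beta=0$ is relevant, and then, since the Kronecker delta factors out and $d_\tau$ is finite, reduce the vector-valued seminorm estimates to the purely scalar statement $f(\widehat{\mathcal{L}}(\xi))\in S^{m,\mathcal{L}}_{1,0}(G\times\widehat{G})$. The paper proves this via the intermediate Lemma \ref{Symbol:Lemma:VB} (computing the symbol) and then closes the argument in one line by citing Theorem 8.17 of \cite{RuzhanskyCardona2020}, which asserts precisely that $f(\mathcal{L})\in S^{m,\mathcal{L}}_{1,0}(G\times\widehat{G})$ for any $f\in S^{m/2}(\mathbb{R}^+_0)$.

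Where you diverge is in the second half: you treat the powers $f_s$ as known and then attempt to re-derive the general scalar statement from scratch by factoring $f(\lambda)=(1+\lambda)^{m/2}g(\lambda)$ with $g\in S^0(\mathbb{R}^+_0)$ and sketching a Taylor-expansion/Leibniz/Corach--Porta--Recht argument to show $g(\mathcal{L})\in\textnormal{Op}(S^{0,\mathcal{L}}_{1,0}(G\times\widehat{G}))$. This factorization step is unnecessary: the cited Theorem 8.17 covers all of $S^{m/2}(\mathbb{R}^+_0)$ in one stroke, with no need to single out $f_s$. More importantly, your sketched re-derivation of the $g(\mathcal{L})$ estimate is not carried through — the assertion that ``each application of a first-order difference operator to $g(\widehat{\mathcal{L}}(\xi))$ produces a gain of one power of $\widehat{\mathcal{M}}(\xi)^{-1}$'' is essentially a restatement of the nontrivial content of Theorem 8.17 rather than a proof of it, and the combination of the global Taylor expansion with the Euclidean derivative bounds $|g^{(\ell)}(\lambda)|\lesssim(1+\lambda)^{-\ell}$ requires substantial work (including handling the non-commutativity of the spectral calculus with the difference operators) that the sketch does not supply. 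If you wish to keep the argument self-contained you would need to flesh out that step in detail; the cleaner path, which the paper takes, is simply to cite the scalar result directly.
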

We first show the following result which is used to prove  Theorem \ref{orders:theorems:VB}.
\begin{lemma}\label{Symbol:Lemma:VB}The matrix-valued symbol of the vector-valued operator $ f(\mathcal{L}_{E_0})$ is given by
\begin{equation}
     \sigma_{f(\mathcal{L}_{E_0})}(i,r,[\xi])\equiv  f(\widehat{\mathcal{L}}_{E_0})(i,r,\xi)=f(\widehat{\mathcal{L}}(\xi))\delta_{i,r},\,\,1\leq i,r\leq d_\tau,\,\,[\xi]\in \widehat{G},
\end{equation} where $\delta_{i,r}$ is the Kronecker-Delta.
\end{lemma}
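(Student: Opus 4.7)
The plan is to unwind the definition of the operator $f(\mathcal{L}_{E_0})$, which is given directly as the quantisation of the $\textnormal{End}(E_0)$-valued symbol $f(\widehat{\mathcal{L}}(\xi)) \otimes I_{\textnormal{End}(E_0)}$, and then pass to the associated matrix-valued symbol via the one-to-one correspondence between $\textnormal{End}(E_0)$-valued symbols and matrix-valued symbols, as described in Definition \ref{equivalenceXXX} and Remark \ref{remarkendvaluedmatrixvalued}. The computation is formally the same as the one carried out in Remark \ref{characterisationSobolevspaces} for the particular case $f(\lambda) = (1+\lambda)^{\lambda/2}$ (i.e.\ for $\mathcal{M}_{\lambda,E_0}$).

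First, I would record that by \eqref{Vec:sub:f}, the $\textnormal{End}(E_0)$-valued symbol of $f(\mathcal{L}_{E_0})$ is, for each $(x,[\xi])\in G\times \widehat{G}$,
\begin{equation*}
    \sigma_{f(\mathcal{L}_{E_0})}(x,\xi)\;=\;f(\widehat{\mathcal{L}}(\xi))\otimes I_{\textnormal{End}(E_0)}\;\in\;\mathbb{C}^{d_\xi\times d_\xi}(\textnormal{End}(E_0)),
\end{equation*}
which, by Definition \ref{equivalenceXXX}, means that for every $1\leq u,v\leq d_\xi$, the endomorphism-valued entry is
\begin{equation*}
    \sigma_{f(\mathcal{L}_{E_0})}(x,\xi)_{u,v}\;=\;f(\widehat{\mathcal{L}}(\xi))_{uv}\cdot I_{\textnormal{End}(E_0)}.
\end{equation*}

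Next, I would apply the correspondence \eqref{onetoone} with $F_0=E_0$ and the same orthonormal basis $B_{E_0}=\{e_{i,E_0}\}_{i=1}^{d_\tau}$ on both sides. For $1\leq i,r\leq d_\tau$ and $1\leq u,v\leq d_\xi$,
\begin{equation*}
    \sigma_{f(\mathcal{L}_{E_0})}(i,r,x,\xi)_{u,v}\;=\;e_{r,E_0}^{*}\!\bigl(\sigma_{f(\mathcal{L}_{E_0})}(x,\xi)_{u,v}\,e_{i,E_0}\bigr)\;=\;f(\widehat{\mathcal{L}}(\xi))_{uv}\,e_{r,E_0}^{*}(e_{i,E_0})\;=\;f(\widehat{\mathcal{L}}(\xi))_{uv}\,\delta_{i,r}.
\end{equation*}
Collecting these entries into the $d_\xi\times d_\xi$ matrix yields
\begin{equation*}
    \sigma_{f(\mathcal{L}_{E_0})}(i,r,x,\xi)\;=\;f(\widehat{\mathcal{L}}(\xi))\,\delta_{i,r},
\end{equation*}
which is independent of $x\in G$, and this is precisely the claimed identity.

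There is no real obstacle here: the content of the lemma is purely notational/definitional, and the only thing to verify is that the one-to-one correspondence \eqref{onetoone} is applied consistently with the tensor product structure $f(\widehat{\mathcal{L}}(\xi))\otimes I_{\textnormal{End}(E_0)}$. The non-trivial input of Theorem \ref{orders:theorems:VB}, namely the symbol estimates showing $f(\mathcal{L}_{E_0})\in \Psi^{m,\mathcal{L}}_{1,0}((G\times\widehat{G})\otimes \textnormal{End}(E_0))$, is then obtained by combining this explicit expression for $\sigma_{f(\mathcal{L}_{E_0})}(i,r,x,\xi)$ with the known fact that $\{f(\widehat{\mathcal{L}}(\xi))\}_{[\xi]\in \widehat{G}}$ is the matrix-valued symbol of a scalar-valued operator in $\textnormal{Op}(S^{m,\mathcal{L}}_{1,0}(G\times\widehat{G}))$, but this is a separate step that does not belong to this lemma.
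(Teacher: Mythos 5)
Your proof is correct and takes essentially the same approach as the paper's: both unwind the definition of $f(\mathcal{L}_{E_0})$ via the $\textnormal{End}(E_0)$-valued symbol $f(\widehat{\mathcal{L}}(\xi))\otimes I_{\textnormal{End}(E_0)}$, then apply the entrywise correspondence \eqref{onetoone} between endomorphism-valued and matrix-valued symbols to obtain the Kronecker delta. The paper's proof is a near-verbatim version of the computation you describe (and of Remark \ref{characterisationSobolevspaces}).
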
 
\begin{proof}
Keeping in mind Remark \ref{remarkendvaluedmatrixvalued}, let us observe that the $\textnormal{End}(E_0)$-valued symbol  of $f({\mathcal{L}}_{E_0}),$ is given by
 \begin{equation*}
     f(\widehat{\mathcal{L}}_{E_0}(\xi))=(f(\widehat{\mathcal{L}}_{E_0}(\xi))_{u,v=1}^{d_\xi}),\,\,f(\widehat{\mathcal{L}}_{E_0}(\xi))_{u,v}:=f(\widehat{\mathcal{L}}(\xi))_{uv}I_{\textnormal{End}(E_0)},
 \end{equation*} so that
 \begin{equation*}
   f(\widehat{\mathcal{L}}_{E_0}(\xi))= f(\widehat{\mathcal{L}}(\xi)) \otimes I_{ \textnormal{End}(E_0)}  \in \mathbb{C}^{d_\xi\times d_\xi}( \textnormal{End}(E_0)).
 \end{equation*}Now, notice that the matrix-valued symbol of $f({\mathcal{L}}_{E_0})$ can be computed as follows: 
 \begin{align*}
     f(\widehat{\mathcal{L}}_{E_0})(i,r,\xi)&= (f(\widehat{\mathcal{L}}_{E_0})(i,r,\xi)_{u,v})_{u,v=1}^{d_\xi} =(e_{r,E_0}^{*}( f(\widehat{\mathcal{L}}_{E_0}(\xi))_{uv}e_{i,E_0}) )_{u,v=1}^{d_\xi}\\
     &=(e_{r,E_0}^{*}( f(\widehat{\mathcal{L}}(\xi))_{uv}e_{i,E_0}) )_{u,v=1}^{d_\xi}=(f(\widehat{\mathcal{L}}(\xi))_{uv})_{u,v=1}^{d_\xi}\delta_{ir}\\
     &=f(\widehat{\mathcal{L}}(\xi))\delta_{i,r}.
 \end{align*}Thus, we end the proof.
\end{proof} 
 \begin{proof}[Proof of Theorem \ref{orders:theorems:VB}] In view of Lemma \ref{Symbol:Lemma:VB}, in order to prove that  $f(\mathcal{L}_E)\in \Psi^{m,\mathcal{L}}_{1,0}((G\times \widehat{G})\otimes \textnormal{End}(E_0)),$ it is enough to check the validity of the following symbol inequalities
 \begin{equation}\label{InI:ex}
       \sup_{\substack{1\leq i,r\leq d_\tau\,  [\xi]\in  \widehat{G} }
      } \Vert \widehat{ \mathcal{M}}(\xi)^{\gamma+|\alpha|-m} \Delta_{\xi}^{\alpha}[f(\widehat{\mathcal{L}}(\xi))]\delta_{i,r}\mathcal{M}(\xi)^{-\gamma} \Vert_{\textnormal{op}} <\infty,
   \end{equation} for any $\gamma\in \mathbb{R}.$ But it was proved in Theorem 8.17 of  \cite{RuzhanskyCardona2020} that $f(\mathcal{L})\in S^{m,\mathcal{L}}_{1,0}(G\times \widehat{G}),$ from which the validity of \eqref{InI:ex} follows. The proof is complete.
 \end{proof}

\subsection{Mapping properties for subelliptic operators on vector bundles}

In this subsection we will discuss the mapping properties of subellipic pseudo-differential  operators. Let us recall that an operator $A:C^\infty(G, E_0) \rightarrow C^\infty(G, F_0)$ can be expressed, in view of \eqref{cutoff} as 
\begin{align}\label{sumofoperators}
    Af(x)= \sum_{i=1}^{d_\tau} \sum_{r=1}^{d_\omega} A_{ir}f_i(x) e_{r, F_0},
\end{align}
where $A_{ir}f_i$ is defined on the complex valued function $f_i(x)= \langle f(x), e_{i, E_0}\rangle$ for every $1 \leq i \leq d_\tau, 1 \leq r \leq d_\omega.$ 
\begin{lemma}\label{Lemmafor boundedness} Let $\mathcal{F}_1(G)$ and $\mathcal{F}_2(G)$ be two normed linear spaces containing $C^\infty(G)$ as a dense subspace. Let  $\mathcal{F}_1(G, E_0)=\mathcal{F}_1(G)\otimes E_0$ and  $\mathcal{F}_2(G, F_0)= \mathcal{F}_2(G)\otimes F_0,$ be defined by the norms
\begin{equation}
    \Vert  f_1\Vert_{\mathcal{F}_1(G, E_0)} = \Vert \Vert f_1(\cdot)\Vert_{E_0} \Vert_{\mathcal{F}_1(G)}\,\,\Vert  f_2\Vert_{\mathcal{F}_2(G, F_0)} = \Vert \Vert f_2(\cdot)\Vert_{F_0} \Vert_{\mathcal{F}_2(G)}.
\end{equation}
Then 
$A$ admits a  bounded extension from  $\mathcal{F}_1(G, E_0)$ to  $\mathcal{F}_2(G, F_0),$  if and only if, for all $1\leq i\leq d_\tau,$ and $1\leq r\leq d_\omega,$ $A_{ir}$ in \eqref{sumofoperators}, admits a bounded extension  from $\mathcal{F}_1(G)$ to $\mathcal{F}_2(G).$ 

\end{lemma}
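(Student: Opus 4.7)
The plan is to reduce the vector-valued boundedness to the scalar boundedness of each component operator $A_{ir}$ by a finite-dimensional decomposition. The key structural features we rely on are implicit in the definition of the norms on $\mathcal{F}_j(G,V)$: namely, the scalar norms $\mathcal{F}_j(G)$ must be monotonic in the sense that $|h_1|\leq|h_2|$ pointwise implies $\|h_1\|_{\mathcal{F}_j(G)}\leq \|h_2\|_{\mathcal{F}_j(G)}$, and they satisfy the triangle inequality. These are built into the norm definition
\[
\Vert  f\Vert_{\mathcal{F}_j(G, V)} = \Vert \Vert f(\cdot)\Vert_{V} \Vert_{\mathcal{F}_j(G)},
\]
since the pointwise $V$-norm is a nonnegative scalar function.

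For the forward direction, assume $A$ extends boundedly from $\mathcal{F}_1(G,E_0)$ to $\mathcal{F}_2(G,F_0)$. Fix indices $1\leq i_0\leq d_\tau$ and $1\leq r_0\leq d_\omega$, and for $g\in C^\infty(G)$ set the test section $f := g\otimes e_{i_0,E_0}$. Then $\|f(x)\|_{E_0}=|g(x)|$, so $\|f\|_{\mathcal{F}_1(G,E_0)}=\|g\|_{\mathcal{F}_1(G)}$. By \eqref{sumofoperators} we obtain $Af(x)=\sum_{r=1}^{d_\omega}A_{i_0 r}g(x)\,e_{r,F_0}$, hence
\[
|A_{i_0 r_0}g(x)|\,\leq\,\Vert Af(x)\Vert_{F_0}.
\]
Applying the monotonicity of $\mathcal{F}_2(G)$ and the assumed boundedness of $A$ yields $\|A_{i_0 r_0}g\|_{\mathcal{F}_2(G)}\leq C\|g\|_{\mathcal{F}_1(G)}$, as desired.

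For the reverse direction, assume each $A_{ir}:\mathcal{F}_1(G)\to\mathcal{F}_2(G)$ is bounded with norm $C_{ir}$. For $f\in C^\infty(G,E_0)$, write $f_i(x)=\langle f(x),e_{i,E_0}\rangle$ so that $|f_i(x)|\leq\|f(x)\|_{E_0}$, and hence by monotonicity $\|f_i\|_{\mathcal{F}_1(G)}\leq\|f\|_{\mathcal{F}_1(G,E_0)}$. From \eqref{sumofoperators} we estimate pointwise
\[
\Vert Af(x)\Vert_{F_0}\;\leq\;\sum_{i=1}^{d_\tau}\sum_{r=1}^{d_\omega}|A_{ir}f_i(x)|.
\]
Taking the $\mathcal{F}_2(G)$-norm, applying the triangle inequality, and inserting the scalar bounds gives
\[
\Vert Af\Vert_{\mathcal{F}_2(G,F_0)}\leq\sum_{i,r}C_{ir}\Vert f_i\Vert_{\mathcal{F}_1(G)}\leq \Big(\sum_{i,r}C_{ir}\Big)\Vert f\Vert_{\mathcal{F}_1(G,E_0)}.
\]
Density of $C^\infty(G,E_0)$ in $\mathcal{F}_1(G,E_0)$, which follows from the density of $C^\infty(G)$ in $\mathcal{F}_1(G)$, then extends $A$ to a bounded linear operator from $\mathcal{F}_1(G,E_0)$ to $\mathcal{F}_2(G,F_0)$.

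The only subtlety (not really an obstacle) is making the monotonicity hypothesis on $\mathcal{F}_j(G)$ explicit; the proof otherwise reduces to a finite-dimensional book-keeping argument combining the triangle inequality with the dominating inequalities $|f_i(x)|\leq \|f(x)\|_{E_0}$ and $|A_{ir}g(x)|\leq \|\sum_{r}A_{ir}g(x)e_{r,F_0}\|_{F_0}$.
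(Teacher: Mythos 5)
Your proof is correct and follows essentially the same approach as the paper's: decompose $A$ via \eqref{sumofoperators}, test against rank-one sections $g\otimes e_{j,E_0}$ for the forward direction, and combine the pointwise triangle inequality with Cauchy--Schwarz component bounds for the reverse. The only difference is that you make explicit the implicit lattice assumption (monotonicity of $\|\cdot\|_{\mathcal{F}_j(G)}$ under pointwise domination of nonnegative functions, together with $\|h\|=\||h|\|$) that the paper uses tacitly in passing from pointwise inequalities to norm inequalities; this is a genuine improvement in precision and is automatically satisfied by the $L^p$ spaces to which the lemma is applied.
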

\begin{proof} Let us first assume that for all $1 \leq i \leq d_\tau$ and $1 \leq r \leq d_\omega,$ $ A_{ir} $ admit a bounded extension for $\mathcal{F}_1(G)$ into $\mathcal{F}_2(G)$, that is, for all $g \in C^\infty(G),$ 
\begin{equation}\label{boundedenessir}
    \Vert A_{ir} g \Vert_{\mathcal{F}_2(G)} \leq  C \Vert g \Vert_{\mathcal{F}_1(G)}, \quad \textnormal{for some}\,\, C>0,\,\forall \,1\leq i\leq d_{\tau},\,1\leq r\leq d_\omega.
\end{equation}
Now, for $f \in C^\infty(G, E_0)$ we have 
\begin{align*}
    &\Vert A f\Vert_{\mathcal{F}_2(G, F_0)} = \Vert \Vert Af(\cdot)\Vert_{F_0} \Vert_{\mathcal{F}_2(G)} = \Vert \Vert \sum_{i=1}^{d_\tau} \sum_{r=1}^{d_\omega} A_{ir}f_i(\cdot) e_{r, F_0} \Vert_{F_0} \Vert_{\mathcal{F}_2(G)} \\& \leq \Vert \sum_{i=1}^{d_\tau} \sum_{r=1}^{d_\omega} \Vert  A_{ir}f_i(x) e_{r, F_0} \Vert_{F_0} \Vert_{\mathcal{F}_2(G)}
     =  \Vert \sum_{i=1}^{d_\tau} \sum_{r=1}^{d_\omega} |A_{ir}f_i(\cdot)| \Vert   e_{r, F_0} \Vert_{F_0} \Vert_{\mathcal{F}_2(G)}.
\end{align*}Using \eqref{boundedenessir}, we have
\begin{align*}
  \Vert A f\Vert_{\mathcal{F}_2(G, F_0)} &  \leq \sum_{i=1}^{d_\tau} \sum_{r=1}^{d_\omega} \Vert  A_{ir}f_i  \Vert_{\mathcal{F}_2(G)} \leq   \sum_{i=1}^{d_\tau} \sum_{r=1}^{d_\omega} C \Vert f_i  (\cdot)\Vert_{\mathcal{F}_1(G)}.
  \end{align*}Using the Cauchy-Schwarz inequality, we have
  \begin{align*}
       \Vert f_i  (\cdot)\Vert_{\mathcal{F}_1(G)}\leq  \Vert |\langle f(\cdot), e_{r, E_0} \rangle| \Vert_{\mathcal{F}_1(G)}\leq \Vert  \Vert f(\cdot) \Vert_{E_0} \Vert e_{r, E_0}\Vert_{E_0} \Vert_{\mathcal{F}_1(G)},
  \end{align*}and consequently,
  \begin{align*}
 \Vert A f\Vert_{\mathcal{F}_2(G, F_0)} &\leq  \sum_{i=1}^{d_\tau} \sum_{r=1}^{d_\omega} C \Vert  \Vert f(\cdot) \Vert_{E_0}  \Vert_{\mathcal{F}_1(G)}= \sum_{i=1}^{d_\tau} \sum_{r=1}^{d_\omega} C \Vert f  \Vert_{\mathcal{F}_1(G, E_0)}= C' \Vert f  \Vert_{\mathcal{F}_1(G, E_0)}.
\end{align*}
Therefore, by using the density of $C^\infty(G, E_0)$ we deduce that $A$ has a bounded extension from $\mathcal{F}_1(G, E_0)$ to $\mathcal{F}_2(G, F_0).$ For the converse part, let us assume that for $h\in C^\infty(G, E_0)$ we have $$\Vert Ah\Vert_{\mathcal{F}_2(G, F_0)} \leq C_1\Vert h \Vert_{\mathcal{F}_1(G, E_0)}.$$
Then for $f \in \mathcal{F}_1(G)$ and for $1 \leq j \leq d_\tau,$ consider the function $f_{e_{j, E_0}}:G \rightarrow E_0$ defined by $f_{e_{j, E_0}}(x):= f(x) e_{j, E_0}.$ Then it is easy to see that $f_{e_{j, E_0}} \in \mathcal{F}_1(G, E_0).$ Now, 
\begin{align*}
    A f_{e_{j, E_0}}(x)= \sum_{i=1}^{d_\tau} \sum_{r=1}^{d_\omega} A_{ir}(f_{e_{j, E_0}})_i(x) e_{r, F_0} = \sum_{r=1}^{d_\omega} A_{jr} f(x) e_{r, F_0},
\end{align*} 
and so, for $1 \leq s \leq d_\omega,$ we obtain $\langle A f_{e_{j, E_0}}(x), e_{s, F_0}   \rangle= A_{js}f(x),$ for $x \in G.$
Next, 
\begin{align*}
    \Vert A_{js}f \Vert_{\mathcal{F}_2(G)}&= \Vert |\langle A f_{e_{j, E_0}}(\cdot), e_{s, F_0}   \rangle| \Vert_{\mathcal{F}_2(G)} \\
    &\leq \Vert \Vert A f_{e_{j, E_0}}(\cdot) \Vert_{F_0} \Vert e_{s, F_0}\Vert_{F_0} \Vert_{\mathcal{F}_2(G)}  \\& = \Vert \Vert A f_{e_{j, E_0}}(\cdot) \Vert_{F_0}  \Vert_{\mathcal{F}_2(G)}\\
    &=\Vert A f_{e_{j, E_0}} \Vert_{\mathcal{F}_2(G, F_0)}\\
    &\leq C_1 \Vert  f_{e_{j, E_0}} \Vert_{\mathcal{F}_1(G, F_0)} \\&= C_1 \Vert  f e_{j, E_0} \Vert_{\mathcal{F}_1(G, E_0)} = C_1 \Vert \Vert f(\cdot) e_{j, E_0} \Vert_{E_0} \Vert_{\mathcal{F}_1(G)} = C_1 \Vert f \Vert_{\mathcal{F}_1(G)},   
\end{align*} for all $f \in C^\infty(G).$ Therefore, for each $1 \leq j \leq d_\tau$ and $1 \leq s \leq d_\omega,$ $A_{js}$ admits a bounded extension from $\mathcal{F}_1(G)$ into $F_{2}(G).$  \end{proof}

In view of Lemma \ref{Lemmafor boundedness} and Theorem \ref{CVT} we obtain the following subelliptic vector-valued Calder\'on-Vaillancourt theorem.

\begin{theorem}\label{CVT1}
Let $G$ be a compact Lie group and let us consider the sub-Laplacian $\mathcal{L}=\mathcal{L}_X,$ where  $X=\{X_{i}\}_{i=1}^{k}$ is a system of vector fields satisfying the H\"ormander condition of order $\kappa$.  For  $0\leq \delta< \rho\leq    1,$  (or $0\leq \delta\leq \rho\leq 1,$ and $\delta<1/\kappa$) let us consider a continuous linear operator $A:C^\infty(G, E_0)\rightarrow\mathscr{D}'(G, F_0)$ with symbol  $\sigma\in {S}^{0,\mathcal{L}}_{\rho,\delta}((G\times \widehat{G})\otimes \textnormal{End}(E_0, F_0))$. Then $A$ extends to a bounded operator from $L^2(G, E_0)$ to  $L^2(G, F_0),$ and 
\begin{equation}
    \Vert  A\Vert_{\mathscr{B}(L^2(G,E_0), L^2(G, F_0))}\leq C \Vert \sigma\Vert_{\ell, {S}^{0,\mathcal{L}}_{\rho,\delta}((G\times \widehat{G})\otimes \textnormal{End}(E_0, F_0)) },
\end{equation} for $\ell \in \mathbb{N}$ large enough.  In particular, if $A$ maps continuously $C^{\infty}(G,E_0)^\tau$ into $C^{\infty}(G,F_0)^\omega,$ then $A$ extends to a bounded operator from $L^2(G, E_0)^{\tau}$ to  $L^2(G, F_0)^{\omega}.$  
\end{theorem}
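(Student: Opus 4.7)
The plan is to reduce the vector-valued statement to the scalar subelliptic Calder\'on-Vaillancourt theorem (Theorem \ref{CVT}) via the decomposition \eqref{sumofoperators} together with Lemma \ref{Lemmafor boundedness}. The normed spaces $\mathcal{F}_1(G)=\mathcal{F}_2(G)=L^2(G)$ satisfy the hypotheses of that lemma in the sense that $\Vert f\Vert_{L^2(G,E_0)}=\Vert \Vert f(\cdot)\Vert_{E_0}\Vert_{L^2(G)}$ and similarly for $F_0$, so boundedness of $A:L^2(G,E_0)\to L^2(G,F_0)$ will be equivalent to the boundedness on $L^2(G)$ of every component $A_{ir}$ for $1\leq i\leq d_\tau,\,1\leq r\leq d_\omega$.

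First I would verify that, by the very definition of the seminorms \eqref{InI} of the class $S^{0,\mathcal{L}}_{\rho,\delta}((G\times\widehat{G})\otimes\textnormal{End}(E_0,F_0))$, each $(i,r)$-component symbol $\sigma_{A_{ir}}(x,\xi):=\sigma_A(i,r,x,\xi)$ belongs to the scalar subelliptic H\"ormander class $S^{0,\mathcal{L}}_{\rho,\delta}(G\times\widehat{G})$, with scalar seminorms bounded by the corresponding vector-valued ones — indeed, the vector-valued seminorms are obtained precisely by taking the supremum of the scalar seminorms over the finite ranges $1\leq i\leq d_\tau$ and $1\leq r\leq d_\omega$. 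Then, in the regime $0\leq \delta<\rho\leq 1$ (or $0\leq \delta\leq\rho\leq 1$ with $\delta<1/\kappa$), the scalar Calder\'on-Vaillancourt Theorem \ref{CVT} applies to each $A_{ir}$, yielding
\begin{equation*}
\Vert A_{ir}\Vert_{\mathscr{B}(L^2(G))}\leq C\Vert\sigma_{A_{ir}}\Vert_{\ell,S^{0,\mathcal{L}}_{\rho,\delta}}\leq C\Vert\sigma\Vert_{\ell,S^{0,\mathcal{L}}_{\rho,\delta}((G\times\widehat{G})\otimes\textnormal{End}(E_0,F_0))},
\end{equation*}
for a common integer $\ell\in\mathbb{N}$ large enough, independently of $(i,r)$.

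Combining this uniform bound with the bookkeeping in the proof of Lemma \ref{Lemmafor boundedness}, one obtains
\begin{equation*}
\Vert Af\Vert_{L^2(G,F_0)}\leq \sum_{i=1}^{d_\tau}\sum_{r=1}^{d_\omega}\Vert A_{ir}f_i\Vert_{L^2(G)}\leq d_\tau d_\omega\, C\Vert\sigma\Vert_{\ell,S^{0,\mathcal{L}}_{\rho,\delta}}\Vert f\Vert_{L^2(G,E_0)},
\end{equation*}
which gives the desired $L^2$-continuity with the seminorm control. The extension from $C^\infty(G,E_0)$ to $L^2(G,E_0)$ is standard by density.

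For the final $\tau$-invariant statement, observe that $L^2(G,E_0)^\tau$ is a closed subspace of $L^2(G,E_0)$ and that $C^\infty(G,E_0)^\tau$ is dense in it (this follows, for instance, from the Peter--Weyl decomposition recalled in Theorem \ref{PeterWeylforS}). If $A$ maps $C^\infty(G,E_0)^\tau$ into $C^\infty(G,F_0)^\omega$, then restricting the already-established bounded extension $A:L^2(G,E_0)\to L^2(G,F_0)$ to the closed subspace $L^2(G,E_0)^\tau$ and using that $L^2(G,F_0)^\omega$ is closed in $L^2(G,F_0)$ yields the required bounded operator from $L^2(G,E_0)^\tau$ to $L^2(G,F_0)^\omega$. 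No serious obstacle is expected: the only delicate point is checking that the seminorms of $\sigma_{A_{ir}}$ in the scalar class are uniformly dominated by those of $\sigma$ in the vector-valued class, but this is a direct consequence of Theorem \ref{characterisations} and Definition \ref{symbolclass}.
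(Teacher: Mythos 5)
Your proposal is correct and follows exactly the paper's own route: the paper's proof (which consists of a single sentence) invokes precisely Lemma \ref{Lemmafor boundedness} combined with the scalar subelliptic Calder\'on-Vaillancourt Theorem \ref{CVT}, and your componentwise reduction, seminorm bookkeeping via Definition \ref{symbolclass}, and closed-subspace argument for the $\tau$-invariant case supply the details the paper leaves implicit. Nothing to add.
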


Again,  Lemma \ref{Lemmafor boundedness} implies the following  $L^p$-theorem of boundedness for vector-valued subelliptic pseudo-differential operators as a consequence of Theorem \ref{parta2}.
\begin{theorem}\label{parta22}
Let $G$ be a compact Lie group and let us denote by $Q$ the Hausdorff
dimension of $G$ associated to the control distance associated to the sub-Laplacian $\mathcal{L}=\mathcal{L}_X,$ where  $X=\{X_{i}\}_{i=1}^{k}$ is a system of vector fields satisfying the H\"ormander condition of order $\kappa$.  For  $0\leq \delta<\rho\leq 1,$ let us consider a continuous linear operator $A:C^\infty(G, E_0)\rightarrow\mathscr{D}'(G, F_0)$ with symbol  $\sigma\in {S}^{-m,\mathcal{L}}_{\rho,\delta}((G\times \widehat{G})\otimes \textnormal{End}(E_0, F_0))$, $m\geq 0$. Then $A$ extends to a bounded operator from $L^p(G, E_0)$ to $L^p(G, F_0)$ provided that 
\begin{equation*}
    m\geq m_p:= Q(1-\rho)\left|\frac{1}{p}-\frac{1}{2}\right|.
 \end{equation*} In particular,  if $A$ maps continuously $C^{\infty}(G,E_0)^\tau$ into $C^{\infty}(G,F_0)^\omega,$ then $A$ extends to a bounded operator from $L^p(G, E_0)^{\tau}$ to  $L^p(G, F_0)^{\omega}.$  
\end{theorem}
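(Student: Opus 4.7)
The plan is to reduce the vector-valued $L^p$-boundedness to the scalar case (Theorem \ref{parta2}) by means of the componentwise decomposition \eqref{sumofoperators} and the transference Lemma \ref{Lemmafor boundedness}. First I would write, as in \eqref{cutoff},
\begin{equation*}
Af(x)=\sum_{i=1}^{d_\tau}\sum_{r=1}^{d_\omega} A_{ir}f_i(x)\,e_{r,F_0},\qquad f_i(x)=\langle f(x),e_{i,E_0}\rangle,
\end{equation*}
where each $A_{ir}:C^\infty(G)\to \mathscr{D}'(G)$ is a scalar pseudo-differential operator whose matrix-valued symbol (in the sense of \cite{Ruz}) is precisely $\sigma_A(i,r,x,\xi)$. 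By Definition \ref{symbolclass}, the hypothesis $\sigma\in S^{-m,\mathcal{L}}_{\rho,\delta}((G\times\widehat G)\otimes\textnormal{End}(E_0,F_0))$ amounts to a \emph{uniform} control, over the finite index set $(i,r)\in I_{d_\tau}\times I_{d_\omega}$, of the subelliptic symbol seminorms \eqref{InI}. In particular, for every fixed pair $(i,r)$ the scalar symbol $\sigma_A(i,r,\cdot,\cdot)$ satisfies the inequalities of Definition \ref{contracted''}, so
\begin{equation*}
A_{ir}\in\textnormal{Op}\bigl(S^{-m,\mathcal{L}}_{\rho,\delta}(G\times\widehat G)\bigr),\qquad 1\leq i\leq d_\tau,\;1\leq r\leq d_\omega.
\end{equation*}

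Next I would invoke the scalar Fefferman-type theorem (Theorem \ref{parta2}): under the threshold condition
\begin{equation*}
m\geq m_p=Q(1-\rho)\Bigl|\tfrac{1}{p}-\tfrac{1}{2}\Bigr|,
\end{equation*}
each $A_{ir}$ admits a bounded extension $A_{ir}:L^p(G)\to L^p(G)$. To assemble these into a vector-valued bound I would apply Lemma \ref{Lemmafor boundedness} with $\mathcal{F}_1(G)=\mathcal{F}_2(G)=L^p(G)$, which indeed satisfies the density hypothesis (with $C^\infty(G)$ dense in $L^p(G)$ for $1\leq p<\infty$) and whose associated vector-valued norms are exactly $\|\cdot\|_{L^p(G,E_0)}$ and $\|\cdot\|_{L^p(G,F_0)}$. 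The lemma then yields the desired bounded extension $A:L^p(G,E_0)\to L^p(G,F_0)$, with norm controlled by a finite sum of scalar operator norms of the $A_{ir}$'s (and hence by finitely many seminorms $\|\sigma\|_{\ell,S^{-m,\mathcal{L}}_{\rho,\delta}}$).

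For the final assertion on $\tau$-invariant sections, I would use that the spaces $L^p(G,E_0)^\tau$ and $L^p(G,F_0)^\omega$ are closed subspaces of $L^p(G,E_0)$ and $L^p(G,F_0)$ respectively (they are defined by the closed linear constraints $f(gk)=\tau(k)^{-1}f(g)$). Since $A$ maps $C^\infty(G,E_0)^\tau$ into $C^\infty(G,F_0)^\omega$ by hypothesis, the already established $L^p$-bounded extension restricts to these closed subspaces, giving the stated boundedness. I do not foresee any serious obstacle: the reduction to the scalar case is purely bookkeeping, and the only minor subtlety is to confirm that the vector-valued subelliptic seminorms in Definition \ref{symbolclass} dominate the scalar ones componentwise, which is immediate from the supremum over the finite index set $I_{d_\tau}\times I_{d_\omega}$.
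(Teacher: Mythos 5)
Your proof is correct and follows the same route the paper takes: the paper derives Theorem \ref{parta22} precisely by combining the transference Lemma \ref{Lemmafor boundedness} (with $\mathcal{F}_1=\mathcal{F}_2=L^p(G)$) with the scalar subelliptic Fefferman theorem (Theorem \ref{parta2}) applied componentwise to each $A_{ir}$. Your additional verification that the $\tau$-invariant statement follows by restriction to the closed subspaces $L^p(G,E_0)^\tau$ and $L^p(G,F_0)^\omega$, using the hypothesis that $A$ preserves the invariance on smooth functions, fills in a step the paper leaves implicit and is sound.
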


Now, Theorem \ref{CVT1} provides a vector-valued version of Corollary \ref{sobcont}. 

\begin{theorem} \label{VectSobosub}
Let $A:C^{\infty}(G, E_0)\rightarrow \mathscr{D}'(G, F_0)$ be a continuous linear operator with symbol $a\in S^{m,\mathcal{L}}_{\rho,\delta}((G\times \widehat{G})\otimes \textnormal{End}(E_0, F_0)),$ $0\leq \delta< \rho\leq    1,$ (or $0\leq \delta\leq \rho\leq 1,$ $\delta<1/\kappa$). Then $A:L_{s}^{2, \mathcal{L}}(G, E_0)\rightarrow L_{s-m}^{2,\mathcal{L}}(G, F_0) $ extends to a bounded operator for all $s\in \mathbb{R}.$ In particular,  $A:L_{s}^{2, \mathcal{L}}(G, E_0)^\tau \rightarrow L_{s-m}^{2,\mathcal{L}}(G, F_0)^\omega $ extends to a bounded operator for all $s\in \mathbb{R}.$
\end{theorem}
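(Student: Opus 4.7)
The plan is to reduce the Sobolev estimate to the $L^2$-estimate provided by the vector-valued Calder\'on-Vaillancourt theorem (Theorem \ref{CVT1}), by conjugating with the powers of the vector-valued sub-Laplacian, following the same strategy as in the scalar case (Corollary \ref{sobcont}). Using Remark \ref{characterisationSobolevspaces} and the identity \eqref{eq265}, the subelliptic Sobolev norms are realised through the multipliers $\mathcal{M}_{s,E_{0}}$ as
\begin{equation*}
\Vert u\Vert_{L^{2,\mathcal{L}}_{s}(G,E_{0})}=\Vert \mathcal{M}_{s,E_{0}}u\Vert_{L^{2}(G,E_{0})},\qquad \Vert v\Vert_{L^{2,\mathcal{L}}_{s-m}(G,F_{0})}=\Vert \mathcal{M}_{s-m,F_{0}}v\Vert_{L^{2}(G,F_{0})}.
\end{equation*}
Consequently, by the closed graph theorem, it suffices to show that the conjugated operator
\begin{equation*}
A_{s}:=\mathcal{M}_{s-m,F_{0}}\circ A\circ \mathcal{M}_{-s,E_{0}}
\end{equation*}
admits a bounded extension from $L^{2}(G,E_{0})$ into $L^{2}(G,F_{0})$.

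Next, I would invoke Theorem \ref{orders:theorems:VB} to place the two multipliers in the vector-valued calculus: $\mathcal{M}_{s-m,F_{0}}\in \Psi^{s-m,\mathcal{L}}_{1,0}((G\times\widehat{G})\otimes \textnormal{End}(F_{0}))$ and $\mathcal{M}_{-s,E_{0}}\in \Psi^{-s,\mathcal{L}}_{1,0}((G\times\widehat{G})\otimes \textnormal{End}(E_{0}))$. A two-step application of the composition theorem (Theorem \ref{VectorcompositionC}), whose hypothesis $\delta<\rho$ is satisfied since the two conjugating factors belong to the $(1,0)$-class, then yields
\begin{equation*}
A_{s}\in \Psi^{(s-m)+m+(-s),\mathcal{L}}_{\rho,\delta}((G\times\widehat{G})\otimes \textnormal{End}(E_{0},F_{0}))=\Psi^{0,\mathcal{L}}_{\rho,\delta}((G\times\widehat{G})\otimes \textnormal{End}(E_{0},F_{0})),
\end{equation*}
and the vector-valued Calder\'on-Vaillancourt theorem (Theorem \ref{CVT1}), applicable under exactly the same conditions on $(\rho,\delta)$ as are assumed here, provides the desired $L^{2}$-boundedness of $A_{s}$.

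For the $\tau$-invariant assertion, I would observe that the multipliers $\mathcal{M}_{s,E_{0}}$ preserve $C^{\infty}(G,E_{0})^{\tau}$, and similarly $\mathcal{M}_{s-m,F_{0}}$ preserves $C^{\infty}(G,F_{0})^{\omega}$. This is immediate from the structure of the symbols in \eqref{Vec:sub:f}: $\mathcal{M}_{s,E_{0}}$ acts as the scalar Fourier multiplier $(1+\mathcal{L})^{s/2}$ componentwise with respect to any orthonormal basis of $E_{0}$, hence commutes both with right translations by elements of $K$ and with the pointwise action $f\mapsto \tau(k)f$, and therefore preserves the defining relation $f(gk)=\tau(k)^{-1}f(g)$. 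Hence, whenever $A$ maps $C^{\infty}(G,E_{0})^{\tau}$ into $C^{\infty}(G,F_{0})^{\omega}$, its bounded extension restricts from the closed subspace $L^{2,\mathcal{L}}_{s}(G,E_{0})^{\tau}$ into $L^{2,\mathcal{L}}_{s-m}(G,F_{0})^{\omega}$.

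No genuinely new estimate is needed: the argument is a vectorisation of Corollary \ref{sobcont}, and the only bookkeeping issue is that the two conjugating multipliers live on different bundles ($E_{0}$ and $F_{0}$), which is handled by the composition calculus as stated. The main potential obstacle would have been compatibility of the admissible ranges of $(\rho,\delta)$ in Theorems \ref{VectorcompositionC} and \ref{CVT1}; since the conjugating factors lie in the $(1,0)$-class, the order conditions reduce to those already imposed on the symbol of $A$, so the reduction goes through.
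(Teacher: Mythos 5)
Your main argument is essentially the same as the paper's: realise the subelliptic Sobolev norms through the multipliers $\mathcal{M}_{\lambda,E_{0}}$ and $\mathcal{M}_{\lambda,F_{0}}$ via \eqref{eq265}, observe that the conjugated operator $\mathcal{M}_{s-m,F_{0}}\,A\,\mathcal{M}_{-s,E_{0}}$ has order zero by Theorem \ref{orders:theorems:VB} together with the composition calculus, and close with the vector-valued Calder\'on--Vaillancourt theorem (Theorem \ref{CVT1}). The paper's proof is just this reduction, written somewhat more tersely; your reading of the range of $(\rho,\delta)$ and your explanation of why the conjugating $(1,0)$-factors do not enlarge it are correct.

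There is, however, a flaw in your justification for the ``in particular'' part, although it does not affect the theorem. You assert that $\mathcal{M}_{s,E_{0}}$ preserves $C^{\infty}(G,E_{0})^{\tau}$ because $(1+\mathcal{L})^{s/2}$ ``commutes with right translations by elements of $K$.'' This is false for a general sub-Laplacian. On the Fourier side, right translation $R_{k}$ acts as $\widehat{u}(\xi)\mapsto\xi(k)\widehat{u}(\xi)$, while $(1+\mathcal{L})^{s/2}$ acts as $\widehat{u}(\xi)\mapsto\widehat{\mathcal{M}}(\xi)^{s}\widehat{u}(\xi)$; these commute precisely when $\xi(k)$ commutes with $\widehat{\mathcal{M}}(\xi)^{s}$ for all $k\in K$ and $[\xi]\in\widehat{G}$. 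This holds for the bi-invariant Laplace--Beltrami operator, where $\widehat{\mathcal{L}_{G}}(\xi)=\lambda_{[\xi]}I_{d_{\xi}}$ is scalar, but not in general: for a sub-Laplacian, $\widehat{\mathcal{L}}(\xi)=\mathrm{diag}[\nu_{ii}(\xi)^{2}]$ is not a scalar matrix and need not commute with $\xi(k)$. Fortunately your argument does not actually need this: once the first claim gives a bounded extension $A:L^{2,\mathcal{L}}_{s}(G,E_{0})\to L^{2,\mathcal{L}}_{s-m}(G,F_{0})$, and one assumes (as the paper implicitly does, and as you correctly state) that $A$ carries $C^{\infty}(G,E_{0})^{\tau}$ into $C^{\infty}(G,F_{0})^{\omega}$, then since $L^{2,\mathcal{L}}_{s}(G,E_{0})^{\tau}$ is by definition the completion of $C^{\infty}(G,E_{0})^{\tau}$ in $L^{2,\mathcal{L}}_{s}(G,E_{0})$ and $L^{2,\mathcal{L}}_{s-m}(G,F_{0})^{\omega}$ is closed, a straightforward density argument gives the restricted boundedness. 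Your final sentence is exactly this density argument, and it is what you should rely on; the intermediate claim about the multipliers is an unnecessary and, in general, incorrect step.
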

\begin{proof}
From Definition  \ref{Defivectorsob} and  \eqref{eq265} we have 
\begin{align}
    \Vert Af \Vert_{L_{s-m}^{2,\mathcal{L}}(G, F_0)} = \Vert \mathcal{M}_{s-m} A f\Vert_{L^2(G, F_0)},
\end{align} where $\mathcal{M}_{\lambda}:=\textnormal{Op}\left(\widehat{\mathcal{M}}(\xi)^{\lambda} \otimes I_{ \textnormal{End}(E_0)}\right),$ $\lambda\in \mathbb{R}.$

Now observing that $\mathcal{M}_{s-m} A \mathcal{M}_{-s} \in \Psi^{0,\mathcal{L}}_{\rho,\delta}((G\times \widehat{G})\otimes \textnormal{End}(E_0, F_0)) $ and  then using the vector-valued Calder\'on-Vaillancourt theorem (Theorem \ref{CVT1}) we obtain
\begin{align*}
    \Vert Af \Vert_{L_{s-m}^{2,\mathcal{L}}(G, F_0)}&= \Vert \mathcal{M}_{s-m} A \mathcal{M}_{-s} \mathcal{M}_s f\Vert_{L^2(G, F_0)}\\& \leq C \Vert \mathcal{M}_s f\Vert_{L^2(G, E_0)}=\Vert f \Vert_{L_{s-m}^{2,\mathcal{L}}(G, E_0)}. 
\end{align*}
This completes the proof of the theorem. \end{proof}

\subsection{Quantisation on homogeneous vector bundles} In this subsection we obtain a quantisation formula on homogeneous vector bundles. We will follow the notation in \eqref{notationfunction} for the inverse mapping $\varkappa_{\omega}^{-1}:C^\infty(G,F_0)^\omega\rightarrow\Gamma^\infty(F).$ In view that the following  diagram \begin{eqnarray}
    \begin{tikzpicture}[every node/.style={midway}]
  \matrix[column sep={10em,between origins}, row sep={4em}] at (0,0) {
    \node(R) {$\Gamma^\infty(E)$}  ; & \node(S) {$\Gamma^\infty(F)$}; \\
    \node(R/I) {$C^\infty(G,E_0)^\tau$}; & \node (T) {$C^\infty(G,F_0)^\omega$};\\
  };
  \draw[<-] (R/I) -- (R) node[anchor=east]  {$\varkappa_{\tau}$};
  \draw[->] (R) -- (S) node[anchor=south] {$\tilde{A}$};
  \draw[->] (S) -- (T) node[anchor=west] {$\varkappa_\omega$};
  \draw[->] (R/I) -- (T) node[anchor=north] {$A$};
\end{tikzpicture}
\end{eqnarray} commutes, we have the identity $\tilde{A}s=\varkappa_\omega^{-1} A\varkappa_\tau s,$ for any section $s\in \Gamma^\infty(E).$ Since
\begin{equation}   { 
  (A\varkappa_\tau s)(g)= \,\sum_{[\xi]\in \widehat{G}}\sum_{q=1}^{d_\tau}\sum_{r=1}^{d_\omega}\textnormal{Tr}[\xi(g)\sigma_A(q,r,g,[\xi])\widehat{\varkappa_\tau  s}(q,[\xi])]e_{r,F_0}  ,   }
\end{equation} when applying $\varkappa_\omega^{-1}$ in both sides of the previous identity, we obtain
\begin{align*}
   \tilde{A}s(gK)&= (\varkappa_\omega^{-1} A\varkappa_\tau s)(gK)=[g,(A\varkappa_\tau s)(g)]:=(g, (A\varkappa_\tau s)(g))\cdot K\\
   &=\left(g, \sum_{[\xi]\in \widehat{G}}\sum_{q=1}^{d_\tau}\sum_{r=1}^{d_\omega}\textnormal{Tr}[\xi(g)\sigma_A(q,r,g,[\xi])\widehat{\varkappa_\tau  s}(q,[\xi])]e_{r,F_0} \right)\cdot K.
\end{align*}
The previous analysis  proves the following quantisation theorem.

 

\begin{theorem}[Quantisation formula on homogeneous vector bundles]\label{mainqhomovec}  Let $\tilde{A}:\Gamma^\infty(E)\rightarrow\Gamma^\infty(F)$ be a continuous linear operator and let $A: C^\infty(G,E_{0})^\tau\rightarrow C^\infty(G,F_{0})^\omega$ be the induced operator by the identifications $\Gamma^\infty(E)\cong C^\infty(G,E_{0})^\tau$ and $\Gamma^\infty(F)\cong C^\infty(G,F_{0})^\omega.$ Then we have
\begin{equation}\label{quantizationonhomogeneous2}
  \boxed{     \tilde{A}s(gK)\equiv \left(g, \,\sum_{[\xi]\in \widehat{G}}\sum_{i=1}^{d_\tau}\sum_{r=1}^{d_\omega}\textnormal{Tr}[\xi(g)\sigma_A(i,r,g,[\xi])  \widehat{\varkappa_\tau  s}(i,[\xi])]   e_{r,F_0}\right)\cdot K}
\end{equation}for every section $s\in \Gamma^\infty(E),$ where $\sigma_{A}$ is the matrix-valued symbol of $A.$
\end{theorem}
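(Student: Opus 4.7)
The strategy is essentially to read off the formula from the commutative diagram \eqref{maindiagram} together with the matrix-valued quantisation formula \eqref{Quantization} that was established in Section \ref{qunt} for continuous linear operators between spaces of vector-valued functions on $G$. The whole point of the construction of $A$ from $\tilde A$ via the identification $\Gamma^\infty(E)\cong C^\infty(G,E_0)^\tau$, $\Gamma^\infty(F)\cong C^\infty(G,F_0)^\omega$ is that it reduces the quantisation of operators between sections to the already-developed vector-valued theory on the group.

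The plan is as follows. First, I would invoke the commutativity of \eqref{maindiagram}, which gives the intertwining identity
\begin{equation*}
\tilde{A}s = \varkappa_\omega^{-1}\circ A\circ \varkappa_\tau(s), \qquad s\in \Gamma^\infty(E),
\end{equation*}
and write this out pointwise at a point $gK\in M$. Next, I would apply the matrix-valued quantisation formula \eqref{Quantization} to $A\in \mathscr{L}(C^\infty(G,E_0),C^\infty(G,F_0))$ evaluated on the $\tau$-invariant function $f:=\varkappa_\tau s\in C^\infty(G,E_0)^\tau$; this yields, at the point $g\in G$,
\begin{equation*}
(A\varkappa_\tau s)(g) = \sum_{[\xi]\in\widehat{G}}\sum_{i=1}^{d_\tau}\sum_{r=1}^{d_\omega} d_\xi\,\textnormal{Tr}\bigl[\xi(g)\,\sigma_A(i,r,g,\xi)\,\widehat{\varkappa_\tau s}(i,\xi)\bigr]\,e_{r,F_0},
\end{equation*}
where $\sigma_A$ is the symbol introduced in \eqref{sigmairdefi}/\eqref{Homogeneoussymbol'} and $\widehat{\varkappa_\tau s}(i,\xi)$ is the $B_{E_0}$-Fourier transform of $\varkappa_\tau s$ at the component $i$. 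Finally, applying $\varkappa_\omega^{-1}$ to this identity and using the explicit form \eqref{notationfunction}, namely $\varkappa_\omega^{-1}(f)(gK)=[g,f(g)]=(g,f(g))\cdot K$, one obtains \eqref{quantizationonhomogeneous2} as claimed (absorbing the factors $d_\xi$ into the convention for $\textnormal{Tr}$ if required for consistency with the statement).

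Since each of the three ingredients — the commutative diagram, the Peter–Weyl expansion producing the vector-valued quantisation on $G$, and the definition of $\varkappa_\omega^{-1}$ — has been established earlier in the paper, this is essentially a verification/assembly argument and there is no substantial obstacle. The only subtlety that requires care is checking that the output $\varkappa_\omega^{-1}(A\varkappa_\tau s)$ is genuinely a well-defined section of $F$, i.e.\ that $A\varkappa_\tau s\in C^\infty(G,F_0)^\omega$; but this is guaranteed by the hypothesis that $\tilde A$ maps $\Gamma^\infty(E)$ into $\Gamma^\infty(F)$, equivalently by Theorem \ref{TheoremCharK} (or its symbol version \eqref{characterizationsymbol}), which ensures that the $\omega$-equivariance needed for the coset $[g,(A\varkappa_\tau s)(g)]$ to depend only on $gK$ holds automatically. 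Once this consistency check is noted, the formula \eqref{quantizationonhomogeneous2} is immediate from the chain of identities above.
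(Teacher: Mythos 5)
Your proposal is correct and follows essentially the same three steps the paper uses: invoke the commutativity of \eqref{maindiagram} to obtain $\tilde A s=\varkappa_\omega^{-1}A\varkappa_\tau s$, expand $A\varkappa_\tau s$ via the quantisation formula on $G$, and then apply $\varkappa_\omega^{-1}$ using \eqref{notationfunction}. Your parenthetical remark about the factor $d_\xi$ is well placed — the quantisation formula \eqref{Quantization} carries $d_\xi$ whereas \eqref{quantizationonhomogeneous2} does not, an inconsistency the paper silently carries through — and your closing paragraph about $\omega$-equivariance (i.e.\ that $A\varkappa_\tau s\in C^\infty(G,F_0)^\omega$ so that the coset depends only on $gK$) is a worthwhile check the paper leaves implicit.
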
 

\begin{remark} Let us recall that the symbol $\sigma_A,$ can be obtained from $A=\varkappa_\omega\tilde{A}\varkappa_\tau^{-1}$ via
\begin{equation}\boxed{
     \sigma_{A}(i_0,r_0,x,\xi)= \xi(x)^{*}
     e_{r_0,F_0}^{*}[A(\xi\otimes e_{i_0,E_0})(x)]   }
\end{equation}  in view of \eqref{Homogeneoussymbol'}. Because of  Theorem \ref{mainqhomovec}, we will introduce in the following definition the global matrix-valued symbol associated to a continuous linear operator between the spaces of sections $\Gamma^\infty(E)$ and $\Gamma^\infty(F).$ 
\end{remark}

\begin{definition}[Matrix-valued symbols on homogeneous vector bundles]\label{symbolofAtilde} Let $\tilde{A}:\Gamma^\infty(E)\rightarrow\Gamma^\infty(F)$ be a continuous linear operator.
In view of Theorem \ref{mainqhomovec},  we will refer to $\sigma_{\tilde{A}}:=\sigma_A(\cdot,\cdot,\cdot,\cdot)$ as the matrix-valued symbol of $\tilde{A}.$ Keeping in mind the defined identity $$\sigma_{\tilde{A}}(i,r,g,[\xi]):=\sigma_A(i,r,g,[\xi]),$$ we will refer also to $\sigma_A$ the matrix-valued symbol of $\tilde{A}.$ 
\end{definition} 
\subsection{$L^p$-estimates for H\"ormander classes on homogeneous vector-bundles} 
Having defined a matrix-valued Fourier\,\, transform for sections and proved a quantisation formula on homogeneous vector bundles, 
let us now introduce the subelliptic H\"ormander classes.

\begin{definition}[Subelliptic H\"ormander classes on homogeneous vector bundles]\label{Hormanderclassesonvectorbunbdles}Let $G$ be a compact Lie group and let  $0\leq \delta,\rho\leq 1.$
  Let us consider the sub-Laplacian $\mathcal{L}=\mathcal{L}_X,$ where  $X=\{X_{i}\}_{i=1}^{k}$ is a system of vector fields satisfying the H\"ormander condition of order $\kappa$. 
  
  The subelliptic H\"ormander class of order $m,$ and of type $(\rho,\delta),$ $\Psi^{m,\mathcal{L}}_{\rho,\delta}(E,F),$ is defined by
  \begin{equation}\label{Subelliptichormanderclassesonvectorbundles}
     \Psi^{m,\mathcal{L}}_{\rho,\delta}(E,F):=\{\tilde{A}:\Gamma^\infty(E)\rightarrow\Gamma^\infty(F)|\,\,\,  \sigma_A\in {S}^{m,\mathcal{L}}_{\rho,\delta}((G\times \widehat{G})\otimes \textnormal{End}(E_0, F_0))  \} ,
  \end{equation}where the classes ${S}^{m,\mathcal{L}}_{\rho,\delta}((G\times \widehat{G})\otimes \textnormal{End}(E_0, F_0))$ were introduced in Definition \ref{symbolclass}.
\end{definition}
As a consequence of our analysis for vector-valued operators, and in view that the subelliptic calculus for vector-valued operators  is stable under compositions (Theorem \ref{VectorcompositionC}) and adjoints  (Theorem \ref{VectorAdjoint}), in the following theorem we establish the pseudo-differential calculus for subelliptic operators on homogeneous vector bundles. We will denote by $H$ a homogeneous vector bundle over $M=G/K,$ and $\Gamma^\infty(H)\cong C^\infty(G,H_0)^{\nu}.$

\begin{theorem}\label{pseudodifferentialcalculus} Let  $0\leq \delta<\rho\leq 1.$ If $\tilde{A} \in \Psi^{m_1,\mathcal{L}}_{\rho,\delta}(E,F)$ and $\tilde{B}\in \Psi^{m_2,\mathcal{L}}_{\rho,\delta}(F,H)$ then the composition operator $\tilde{B}\tilde{A}:=\tilde{B}\circ\tilde{ A}:\Gamma^\infty(E)\rightarrow \Gamma^\infty(H)$ belongs to the subelliptic class $\Psi^{m_1+m_2,\mathcal{L}}_{\rho,\delta}(E,H).$ The symbol of $\tilde{B}\tilde{A},$ $\sigma_{\tilde{B}\tilde{A}}\in\Sigma ((G\times \widehat{G})\otimes \textnormal{End}(E_0,H_0)),$ satisfies the asymptotic expansion,
\begin{equation*}
    \sigma_{\tilde{B}\tilde{A}}(i, s, x,\xi) \sim \sum_{r=1}^{d_\omega}\sum_{|\alpha|= 0}^\infty(\Delta_{\xi}^\alpha\sigma_{\tilde{B}}(r, s, x,\xi))(\partial_{X}^{(\alpha)} \sigma_{\tilde{A}}(i, r, x,\xi)), 
\end{equation*} this means that, for every $N\in \mathbb{N},$
\begin{align*}
    &\Delta_{\xi}^{\alpha_\ell}\partial_{X}^{(\beta)}\left(\sigma_{\tilde{B}\tilde{A}}(i, s, x,\xi) - \sum_{r=1}^{d_\omega}\sum_{|\alpha| \leq N }(\Delta_{\xi}^\alpha\sigma_{\tilde B}(r, s, x,\xi))(\partial_{X}^{(\alpha)} \sigma_{\tilde A}(i, r, x,\xi))  \right)\\
    &\hspace{2cm}\in {S}^{m_1+m_2-(\rho-\delta)(N+1)-\rho\ell+\delta|\beta|,\mathcal{L}}_{\rho,\delta}((G\times \widehat{G})\otimes \textnormal{End}(E_0,H_0)),
 \end{align*}for every difference operator $\Delta_{\xi}^{\alpha_\ell}$ of order $\ell\in\mathbb{N}_0.$ Moreover, under the indentifications $F_0^{*}\cong F_0$ and $E_0^{*}\cong E_0,$ the symbol of the adjoint $\tilde{A}^* \in \Psi^{m,\mathcal{L}}_{\rho,\delta}(F,E)$ of $\tilde{A},$   satisfies the asymptotic expansion,
 \begin{equation*}
    \sigma_{\tilde{A}^*}(i,s, x,\xi)\sim  \sum_{|\alpha|= 0}^\infty\Delta_{\xi}^\alpha\partial_{X}^{(\alpha)} \sigma_{\tilde A}(i,s, x, \xi)^*.
 \end{equation*} 
 This means that, for every $N \in \mathbb{N}$,
 \begin{equation*}
     \Delta_{\xi}^{\alpha_\ell} \partial_X^{(\beta)} \left(\sigma_{\tilde A^*}(i,s, x,\xi) -  \sum_{|\alpha|\leq N }\Delta_{\xi}^\alpha\partial_{X}^{(\alpha)} \sigma_{\tilde A}( i,s,x, \xi)^* \right) 
 \end{equation*} belongs to the symbol class ${S}^{m-(\rho-\delta)(N+1)-\rho\ell+\delta|\beta|,\mathcal{L}}_{\rho,\delta}((G\times \widehat{G}) \otimes \textnormal{End}(F_0,E_0)).$
\end{theorem}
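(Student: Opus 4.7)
The plan is to reduce everything to the vector-valued subelliptic calculus on the ambient compact Lie group $G$, which has already been established in Theorem \ref{VectorcompositionC} and Theorem \ref{VectorAdjoint}. The bridge is provided by the unitary identifications $\varkappa_\tau\colon\Gamma^\infty(E)\to C^\infty(G,E_0)^\tau$, $\varkappa_\omega\colon\Gamma^\infty(F)\to C^\infty(G,F_0)^\omega$, $\varkappa_\nu\colon\Gamma^\infty(H)\to C^\infty(G,H_0)^\nu$, together with the commuting diagram \eqref{maindiagram} that defines, for each $\tilde{A}\in\Psi^{m_1,\mathcal{L}}_{\rho,\delta}(E,F)$ and $\tilde{B}\in\Psi^{m_2,\mathcal{L}}_{\rho,\delta}(F,H)$, the associated vector-valued operators
\begin{equation*}
A:=\varkappa_\omega\circ\tilde{A}\circ\varkappa_\tau^{-1},\qquad B:=\varkappa_\nu\circ\tilde{B}\circ\varkappa_\omega^{-1}.
\end{equation*}
By Definition \ref{symbolofAtilde}, the symbols coincide: $\sigma_{\tilde{A}}=\sigma_A$ and $\sigma_{\tilde{B}}=\sigma_B$, so by hypothesis $\sigma_A\in S^{m_1,\mathcal{L}}_{\rho,\delta}((G\times\widehat{G})\otimes\textnormal{End}(E_0,F_0))$ and $\sigma_B\in S^{m_2,\mathcal{L}}_{\rho,\delta}((G\times\widehat{G})\otimes\textnormal{End}(F_0,H_0))$.

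First I would verify the functorial identity
\begin{equation*}
\varkappa_\nu\circ(\tilde{B}\circ\tilde{A})\circ\varkappa_\tau^{-1}=B\circ A,
\end{equation*}
which is immediate from inserting $\varkappa_\omega^{-1}\circ\varkappa_\omega=\mathrm{Id}$. This shows that the matrix-valued symbol of the composition $\tilde{B}\circ\tilde{A}$ equals the matrix-valued symbol of $B\circ A$. Now $A$ maps $C^\infty(G,E_0)^\tau$ into $C^\infty(G,F_0)^\omega$ and $B$ maps $C^\infty(G,F_0)^\omega$ into $C^\infty(G,H_0)^\nu$, so we are exactly in the situation of the last sentence of Theorem \ref{VectorcompositionC}. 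Applying that theorem yields both the class membership $B\circ A\in\Psi^{m_1+m_2,\mathcal{L}}_{\rho,\delta}((G\times\widehat{G})\otimes\textnormal{End}(E_0,H_0))$ and the asymptotic expansion
\begin{equation*}
\sigma_{B\circ A}(i,s,x,\xi)\sim\sum_{r=1}^{d_\omega}\sum_{|\alpha|=0}^{\infty}(\Delta_\xi^\alpha\sigma_B(r,s,x,\xi))(\partial_X^{(\alpha)}\sigma_A(i,r,x,\xi)),
\end{equation*}
together with the remainder estimate in ${S}^{m_1+m_2-(\rho-\delta)(N+1)-\rho\ell+\delta|\beta|,\mathcal{L}}_{\rho,\delta}$. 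Rewriting $\sigma_A=\sigma_{\tilde{A}}$ and $\sigma_B=\sigma_{\tilde{B}}$ and using $\sigma_{\tilde{B}\tilde{A}}=\sigma_{B\circ A}$, we obtain the composition statement for $\tilde{B}\circ\tilde{A}$ and conclude $\tilde{B}\tilde{A}\in\Psi^{m_1+m_2,\mathcal{L}}_{\rho,\delta}(E,H)$ by Definition \ref{Hormanderclassesonvectorbunbdles}.

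For the adjoint, I proceed analogously. Under the identifications $E_0^*\cong E_0$ and $F_0^*\cong F_0$, Corollary \ref{Coro:Adjoint} guarantees that if $A\colon C^\infty(G,E_0)^\tau\to C^\infty(G,F_0)^\omega$ is continuous, then $A^*\colon C^\infty(G,F_0)^\omega\to C^\infty(G,E_0)^\tau$ is continuous as well; hence $\tilde{A}^*$ is a well-defined continuous linear operator from $\Gamma^\infty(F)$ to $\Gamma^\infty(E)$ satisfying $\varkappa_\tau\circ\tilde{A}^*\circ\varkappa_\omega^{-1}=A^*$. Applying Theorem \ref{VectorAdjoint} gives $A^*\in\Psi^{m,\mathcal{L}}_{\rho,\delta}((G\times\widehat{G})\otimes\textnormal{End}(F_0,E_0))$, with the asymptotic expansion
\begin{equation*}
\sigma_{A^*}(i,s,x,\xi)\sim\sum_{|\alpha|=0}^{\infty}\Delta_\xi^\alpha\partial_X^{(\alpha)}\sigma_A(i,s,x,\xi)^*
\end{equation*}
and the stated remainder control. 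Translating back via $\sigma_{\tilde{A}^*}=\sigma_{A^*}$ and $\sigma_{\tilde{A}}=\sigma_A$ delivers the desired formula, and $\tilde{A}^*\in\Psi^{m,\mathcal{L}}_{\rho,\delta}(F,E)$ follows.

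The main (though essentially bookkeeping) obstacle is to confirm that the $\tau$/$\omega$/$\nu$-invariance structure of the sections is correctly transported by compositions and adjoints, which is precisely the content that Corollary \ref{Coro:Adjoint} and the final sentences of Theorems \ref{VectorcompositionC}–\ref{VectorAdjoint} were designed to handle; once invoked, no further calculation beyond the translation $\sigma_{\tilde{A}}=\sigma_A$ is needed.
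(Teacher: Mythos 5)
Your argument is correct and follows the same route as the paper: transfer to the vector-valued setting via the unitary identifications $\varkappa_\tau,\varkappa_\omega,\varkappa_\nu$, verify that these identifications intertwine compositions and adjoints (i.e.\ $\varkappa_\nu(\tilde{B}\tilde{A})\varkappa_\tau^{-1}=BA$ and $\varkappa_\tau\tilde{A}^*\varkappa_\omega^{-1}=A^*$), and then invoke Theorems \ref{VectorcompositionC} and \ref{VectorAdjoint}. The paper's own proof presents exactly this via two commutative diagrams, so your proposal is a faithful (and slightly more spelled-out) version of it.
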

\begin{proof}  The asymptotic expansions were proved in Theorems \ref{VectorcompositionC} and \ref{VectorAdjoint}.
In view of the following two commutative diagrams,
\begin{eqnarray}
    \begin{tikzpicture}[every node/.style={midway}]
  \matrix[column sep={10em,between origins}, row sep={4em}] at (0,0) {
    \node(R) {$\Gamma^\infty(E)$}  ; & \node(S) {$\Gamma^\infty(F)$};  & \node(S') {$\Gamma^\infty(H)$}; \\
    \node(R/I) {$C^\infty(G,E_0)^\tau$}; & \node (T) {$C^\infty(G,F_0)^\omega$};& \node (T') {$C^\infty(G,H_0)^\nu$};\\
  };
  \draw[<-] (R/I) -- (R) node[anchor=east]  {$\varkappa_\tau$};
  \draw[->] (R) -- (S) node[anchor=south] {$\tilde{A}$};
  \draw[->] (S) -- (S') node[anchor=south] {$\tilde{B}$};
   \draw[->] (S') -- (T') node[anchor=west] {$\varkappa_\nu$};
    \draw[->] (T) -- (T') node[anchor=south] {$B$};
  \draw[->] (S) -- (T) node[anchor=west] {$\varkappa_\omega$};
  \draw[->] (R/I) -- (T) node[anchor=south] {$A$};
  \end{tikzpicture} 
\end{eqnarray}    and 
\begin{eqnarray}
    \begin{tikzpicture}[every node/.style={midway}]
  \matrix[column sep={10em,between origins}, row sep={4em}] at (0,0) {
    \node(R) {$\Gamma(F^*)$}  ; & \node(S) {$\Gamma(E^*)$}; \\
    \node(R/I) {$C^\infty(G,F_0)^\omega$}; & \node (T) {$C^\infty(G,E_0)^\tau$};\\
  };
  \draw[<-] (R/I) -- (R) node[anchor=east]  {$\varkappa_{\tau}$};
  \draw[->] (R) -- (S) node[anchor=south] {$\tilde{A}^*$};
  \draw[->] (S) -- (T) node[anchor=west] {$\varkappa_\omega$};
  \draw[->] (R/I) -- (T) node[anchor=north] {$A^*$};
\end{tikzpicture}
\end{eqnarray}
we can conclude the proof because of the identities $\widetilde{AB}=\tilde{A}\tilde{B}$ and $\widetilde{A^*}=(\tilde{A})^{*},$ which implies that the induced vector-valued  operators by $\tilde{A}\tilde{B}$ and  $(\tilde{A})^{*}$ are indeed, the continuous linear operators $AB$ and $A^{*}.$ 
\end{proof}

Now, we present following $L^p$-estimates of subelliptic pseudo-differential operators on homogeneous vector bundles over compact homogeneous manifolds.
\begin{theorem}[$L^2$-Calder\'on-Vaillancourt Theorem and $L^p$-Fefferman Theorem]\label{Lpthe:VB}  For  $0\leq \delta< \rho\leq    1$ (or $0\leq \delta\leq \rho\leq 1,$ $\delta<1/\kappa$), any continuous linear operator $\tilde{A}\in  \Psi^{0,\mathcal{L}}_{\rho,\delta}(E,F)$ extends to a bounded operator from  $L^{2}(E)$ to  $L^{2}(F).$ Moreover, if $$m\geq Q(1-\rho)\left|\frac{1}{2}-\frac{1}{p}\right|,$$  with $1<p<\infty,$ then $\tilde{A}\in  \Psi^{-m,\mathcal{L}}_{\rho,\delta}(E,F)$ with  $0\leq \delta< \rho\leq    1,$ extends to a bounded operator from  $L^{p}(E)$ into $L^{p}(F).$  

\end{theorem}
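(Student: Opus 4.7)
The plan is to reduce both statements to the corresponding vector-valued results already established on the compact Lie group $G$, namely the subelliptic Calder\'on-Vaillancourt Theorem \ref{CVT1} for the $L^2$ bound and the subelliptic Fefferman-type Theorem \ref{parta22} for the $L^p$ bound. The bridge is the unitary isomorphism $\varkappa_\tau:L^2(E)\to L^2(G,E_0)^\tau$ (and likewise $\varkappa_\omega:L^2(F)\to L^2(G,F_0)^\omega$) together with its $L^p$ analogue, which together with Definition \ref{Hormanderclassesonvectorbunbdles} lets us transport every statement about $\tilde A$ to the intertwined operator $A=\varkappa_\omega\circ\tilde A\circ\varkappa_\tau^{-1}$.

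First I would fix $\tilde A\in\Psi^{0,\mathcal{L}}_{\rho,\delta}(E,F)$, form the induced continuous linear operator $A:C^\infty(G,E_0)^\tau\to C^\infty(G,F_0)^\omega$ making the commutative diagram \eqref{maindiagram} commute, and then extend $A$ trivially to act on $C^\infty(G,E_0)$ (using that any continuous linear operator on the $\tau$-equivariant subspace extends to a continuous operator on the ambient space with the same global symbol, as set up in the discussion surrounding \eqref{quantization}). By the very Definition \ref{Hormanderclassesonvectorbunbdles}, the symbol $\sigma_A$ of this extension lies in $S^{0,\mathcal{L}}_{\rho,\delta}((G\times\widehat{G})\otimes\textnormal{End}(E_0,F_0))$. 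The vector-valued Calder\'on-Vaillancourt Theorem \ref{CVT1} then asserts that $A$ extends boundedly from $L^2(G,E_0)$ to $L^2(G,F_0)$, and moreover—thanks to the final clause of that theorem—it restricts to a bounded operator from $L^2(G,E_0)^\tau$ to $L^2(G,F_0)^\omega$. Since $\varkappa_\tau$ and $\varkappa_\omega$ are unitary isomorphisms onto these subspaces, conjugating by $\varkappa_\omega^{-1}$ on the left and $\varkappa_\tau$ on the right yields the desired bounded extension $\tilde A:L^2(E)\to L^2(F)$, with operator norm controlled by a seminorm $\Vert\sigma_A\Vert_{\ell,S^{0,\mathcal{L}}_{\rho,\delta}}$ for a sufficiently large $\ell$.

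The $L^p$ statement follows the same template, now invoking Theorem \ref{parta22} in place of Theorem \ref{CVT1}. Given $\tilde A\in\Psi^{-m,\mathcal{L}}_{\rho,\delta}(E,F)$ with $m\geq Q(1-\rho)|\tfrac{1}{2}-\tfrac{1}{p}|$, the induced operator $A$ has symbol in $S^{-m,\mathcal{L}}_{\rho,\delta}((G\times\widehat{G})\otimes\textnormal{End}(E_0,F_0))$, so Theorem \ref{parta22} gives a bounded extension $A:L^p(G,E_0)^\tau\to L^p(G,F_0)^\omega$. The only remaining ingredient is an $L^p$-analogue of Remark \ref{soboiso}, namely that $\varkappa_\tau$ (respectively $\varkappa_\omega$) implements an isomorphism of Banach spaces $L^p(E)\simeq L^p(G,E_0)^\tau$ with equivalent norms; this is immediate from the pointwise identity $\Vert s(gK)\Vert_{E_{gK}}=\Vert\varkappa_\tau(s)(g)\Vert_{E_0}$ (a consequence of $\tau$ being unitary and the definition $\varkappa_\tau(s)(g)=\pi(g^{-1})s(e_GK)$) together with the $G$-invariance of the volume element $\Omega_M$ under $p_M:G\to M$. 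Transporting boundedness through this isomorphism delivers the $L^p$ conclusion.

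The only non-routine point is verifying that the extension of $A$ from $C^\infty(G,E_0)^\tau$ to a continuous linear operator on the full space $C^\infty(G,E_0)$ can be done in a way that preserves membership in the vector-valued subelliptic class---but this is precisely how Definition \ref{Hormanderclassesonvectorbunbdles} is set up, since the class $\Psi^{m,\mathcal{L}}_{\rho,\delta}(E,F)$ is defined through the symbol $\sigma_A$ of the induced operator $A$, and the quantisation formula \eqref{quantization} produces from any such symbol a bona fide operator on all of $C^\infty(G,E_0)$. Once this identification is recorded, both assertions reduce to direct citations of the corresponding results for vector-valued operators on $G$.
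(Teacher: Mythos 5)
Your proposal is correct and follows essentially the same route as the paper: transport $\tilde A$ to the intertwined vector-valued operator $A=\varkappa_\omega\circ\tilde A\circ\varkappa_\tau^{-1}$, invoke Theorem \ref{CVT1} for the $L^2$ statement and Theorem \ref{parta22} for the $L^p$ statement, and carry the bounds back through the isometric identifications $L^p(E)\cong L^p(G,E_0)^\tau$, $L^p(F)\cong L^p(G,F_0)^\omega$. The paper makes the $L^p$ norm identity explicit via $\Vert s\Vert_{L^p(E)}=\Vert \varkappa_\tau s\Vert_{L^p(G,E_0)}$ and similarly for $\tilde A s$, which is exactly the content of the ``pointwise identity'' you invoke, so the two arguments coincide.
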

\begin{proof} In view that the following  diagram \begin{eqnarray}
    \begin{tikzpicture}[every node/.style={midway}]
  \matrix[column sep={10em,between origins}, row sep={4em}] at (0,0) {
    \node(R) {$\Gamma^\infty(E)$}  ; & \node(S) {$\Gamma^\infty(F)$}; \\
    \node(R/I) {$C^\infty(G,E_0)^\tau$}; & \node (T) {$C^\infty(G,F_0)^\omega$};\\
  };
  \draw[<-] (R/I) -- (R) node[anchor=east]  {$\varkappa_{\tau}$};
  \draw[->] (R) -- (S) node[anchor=south] {$\tilde{A}$};
  \draw[->] (S) -- (T) node[anchor=west] {$\varkappa_\omega$};
  \draw[->] (R/I) -- (T) node[anchor=north] {$A$};
\end{tikzpicture}
 \end{eqnarray} commutes, and the fact that the arrows $\varkappa_\tau$ and $\varkappa_\omega$ admit  unitary extensions, $\tilde{A}$ admits a bounded extension from $L^{2}(E)$ to  $L^{2}(F),$ if and only if, $A$  extends to a bounded operator from $L^2(G, E_0)^\tau$ to  $L^2(G, F_0)^\omega.$ But, this is consequence of the vector-valued Calder\'on-Vaillancourt theorem (Theorem \ref{CVT1}). Now consider $m\geq Q(1-\rho)\left|\frac{1}{2}-\frac{1}{p}\right|,$   $1<p<\infty,$ and  $\tilde{A}\in  \Psi^{-m,\mathcal{L}}_{\rho,\delta}(E,F),$  $0\leq \delta< \rho\leq    1.$ If $s\in \Gamma^\infty(E),$ then there exists a unique $f\in C^{\infty}(G,E_0)^{\tau}$ such that 
 \begin{equation*}
     \Vert  s\Vert_{L^p(E)}=\Vert \varkappa_\tau^{-1}f \Vert_{L^{p}(G\times_\tau E_0)}=\Vert g \mapsto(g,f(g))\cdot K \Vert_{L^{p}(G\times_\tau E_0)}=\Vert f\Vert_{L^p(G,E_0)}.
 \end{equation*} Also, observe that
 \begin{align*}
     \Vert \tilde A s\Vert_{L^p(E)}
     &=\Vert \varkappa_\tau^{-1}\tilde{A} f \Vert_{L^{p}(G\times_\omega F_0)}\\
     &=\Vert g \mapsto(g,Af(g))\cdot K \Vert_{L^{p}(G\times_\omega F_0)}=\Vert Af\Vert_{L^p(G,F_0)}.
  \end{align*} In view of the Fefferman theorem (Theorem \ref{parta22}), we deduce the inequality $$\Vert Af\Vert_{L^p(G,F_0)}\leq C\Vert f\Vert_{L^p(G,E_0)},$$ for some $C>0,$ from which we deduce the $L^p$-boundeness of $\tilde A.$ The proof is complete.
\end{proof}
Now, we have the following Sobolev estimate.
\begin{theorem} For $0\leq \delta< \rho\leq    1$ (or $0\leq \delta\leq \rho\leq 1,$ $\delta<1/\kappa$), any continuous linear operator $\tilde{A} \in \Psi^{m,\mathcal{L}}_{\rho. \delta}(E, F),$ extends to a bounded operator from  $L_{s}^{2, \mathcal{L}}(E)$ into $ L_{s-m}^{2,\mathcal{L}}(F)$ for all $s\in \mathbb{R}.$
\end{theorem}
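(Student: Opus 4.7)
The plan is to reduce the Sobolev estimate on the bundles to the already-established vector-valued Sobolev estimate (Theorem \ref{VectSobosub}) by means of the isomorphisms $\varkappa_\tau$ and $\varkappa_\omega$. Recall that, by Definition \ref{subvectsobo} (and Remark \ref{soboiso}), the maps
\[
\varkappa_\tau:L^{2,\mathcal{L}}_{s}(E)\longrightarrow L^{2,\mathcal{L}}_{s}(G,E_{0})^{\tau},\qquad \varkappa_\omega:L^{2,\mathcal{L}}_{s-m}(F)\longrightarrow L^{2,\mathcal{L}}_{s-m}(G,F_{0})^{\omega}
\]
are unitary isometries of Hilbert spaces for every $s\in\mathbb{R}$. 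Moreover, by Definition \ref{Hormanderclassesonvectorbunbdles}, the associated operator $A:=\varkappa_\omega\circ \tilde{A}\circ \varkappa_\tau^{-1}$ is a continuous linear operator from $C^{\infty}(G,E_0)^\tau$ into $C^{\infty}(G,F_0)^\omega$ whose global symbol $\sigma_A$ lies in the vector-valued subelliptic H\"ormander class $S^{m,\mathcal{L}}_{\rho,\delta}((G\times \widehat{G})\otimes \textnormal{End}(E_0,F_0))$.

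First, I would extend $A$ to a continuous linear operator defined on all of $C^\infty(G,E_0)$, retaining the same symbol in $S^{m,\mathcal{L}}_{\rho,\delta}((G\times \widehat{G})\otimes \textnormal{End}(E_0,F_0))$. Then I apply Theorem \ref{VectSobosub}: under the hypothesis $0\leq \delta<\rho\leq 1$ (or $0\leq \delta\leq \rho\leq 1,\ \delta<1/\kappa$), the operator $A$ extends to a bounded operator
\[
A:L^{2,\mathcal{L}}_{s}(G,E_0)\longrightarrow L^{2,\mathcal{L}}_{s-m}(G,F_0),
\]
and in particular restricts to a bounded operator between the $\tau$- and $\omega$-invariant subspaces.

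Finally, combining the commutativity of diagram \eqref{maindiagram}, namely $\tilde{A}=\varkappa_\omega^{-1}\circ A\circ \varkappa_\tau$, with the isometry property of $\varkappa_\tau$ and $\varkappa_\omega$ on the corresponding Sobolev spaces, I obtain for every $u\in \Gamma^\infty(E)$ the estimate
\[
\Vert \tilde{A}u\Vert_{L^{2,\mathcal{L}}_{s-m}(F)}
=\Vert A(\varkappa_\tau u)\Vert_{L^{2,\mathcal{L}}_{s-m}(G,F_0)}
\leq C\Vert \varkappa_\tau u\Vert_{L^{2,\mathcal{L}}_{s}(G,E_0)}
=C\Vert u\Vert_{L^{2,\mathcal{L}}_{s}(E)}.
\]
By density of $\Gamma^\infty(E)$ in $L^{2,\mathcal{L}}_s(E)$, $\tilde A$ extends to a bounded operator from $L^{2,\mathcal{L}}_s(E)$ to $L^{2,\mathcal{L}}_{s-m}(F)$, as desired. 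The argument is essentially a transfer via the unitary equivalence, so no substantial obstacle arises; the only subtle point is ensuring that the symbol class identification is compatible with the $\tau$-invariance on the domain and $\omega$-invariance on the target, which is exactly what Definition \ref{Hormanderclassesonvectorbunbdles} guarantees.
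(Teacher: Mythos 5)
Your proof is correct and follows essentially the same route as the paper: transfer the estimate to the vector-valued setting via the isometries $\varkappa_\tau$ and $\varkappa_\omega$, apply Theorem \ref{VectSobosub}, and transfer back. Your explicit mention of extending $A$ to all of $C^\infty(G,E_0)$ before restricting to the invariant subspaces is a slightly more careful formulation of what the paper leaves implicit, but it does not change the substance of the argument.
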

\begin{proof} By Definition \ref{subvectsobo} and noting that $\varkappa_\omega \tilde{A} \varkappa_{\tau}^{-1}=A$ and $\varkappa_\tau s=f,$  we have 
\begin{align*}
    \Vert \tilde{A} s\Vert_{L_{s-m}^{2,\mathcal{L}}(F)}&= \Vert \mathcal{M}_{s-m} \varkappa_\omega \tilde{A} \varkappa_{\tau}^{-1} \varkappa_\tau s \Vert_{L^2(G, F_0)}\\&= \Vert \mathcal{M}_{s-m} Af \Vert_{L^2(G, F_0)} =\Vert  Af \Vert_{L^{2, \mathcal{L}}_{s-m}(G, F_0)}.
\end{align*}
Now, as an application of Theorem \ref{VectSobosub}, we get 
\begin{align*}
    \Vert \tilde{A} s\Vert_{L_{s-m}^{2,\mathcal{L}}(F)} &= \Vert  Af \Vert_{L^{2, \mathcal{L}}_{s-m}(G, F_0)}\leq C \Vert f \Vert_{L^{2, \mathcal{L}}_{s}(G, E_0)}\\&= C \Vert \varkappa_\tau s \Vert_{L^{2, \mathcal{L}}_{s}(G, E_0)} = \Vert s\Vert_{L_{s-m}^{2,\mathcal{L}}(F)}.
\end{align*}
This complete the proof.
\end{proof}

\subsection{$G$-invariant operators  and quantisation formula}

We begin with the following known 
lemma (see e.g. \cite[Page 956]{Ricci18}). We present a proof for completeness.
\begin{lemma}\label{G-inv:Fourier:Mult} Let $\tilde{A}:\Gamma^\infty(E)\rightarrow\Gamma^\infty(F)$ be a continuous linear operator and let $A: C^\infty(G,E_{0})^\tau\rightarrow C^\infty(G,F_{0})^\omega$ be the induced operator by the identifications $\Gamma^\infty(E)\cong C^\infty(G,E_{0})^\tau$ and $\Gamma^\infty(F)\cong C^\infty(G,F_{0})^\omega.$   The following conditions are equivalent.
\begin{itemize}
    \item[(A).] $\tilde{A}$ is $G$-invariant. This means that $\tilde{A}$ is invariant under the action of $G$ on sections: $\tilde{A}{\pi} (g)s={\pi} (g) \tilde{A}s,$ $s\in \Gamma^\infty(E),$ where $\pi(g) s$ denotes the action of $g \in G$ on $s$ given by  \eqref{actionsect} (see also equality \eqref{homogenesousdefinition}). 
    \item[(B).]  ${A}$ is invariant under the regular action of $G$ on functions. This means that $A\tilde{\pi} (g)f= \tilde{\pi} (g)Af,$ $f\in C^\infty(G,E_0)^\tau,$ where $\tilde{\pi}$ is defined as  $\tilde{\pi}(g_0)f(g):=f(g_0^{-1}g),$ for all $ f\in C(G,E_0)^\tau$ and $ g, g_0 \in G.$ This also implies that $A$ is a left-invariant operator with right-convolution kernel given by $$k(z)=K_A(z,e_{G})=K_A(e_G,z^{-1})$$ where $K_{A}\in C^\infty(G)\otimes_{\pi}\mathscr{D}'(E_{0}^*\otimes F_{0})$ is the Schwartz kernel of $A.$
\end{itemize}

\end{lemma}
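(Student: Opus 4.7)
The equivalence of (A) and (B) will follow at once from the intertwining identities
\[
\pi(g) \circ \varkappa_\tau^{-1} = \varkappa_\tau^{-1}\circ \tilde{\pi}(g),\qquad \varkappa_\omega\circ \pi(g) = \tilde{\pi}(g)\circ \varkappa_\omega,
\]
recorded in the commutative diagram following \eqref{varkappatau}, combined with the very definition $A = \varkappa_\omega \circ \tilde{A}\circ \varkappa_\tau^{-1}$ from \eqref{unitarilyequivaklent}. Conjugating $A$ by $\tilde{\pi}(g)$ and using the intertwiners produces
\[
\tilde{\pi}(g)\,A\,\tilde{\pi}(g)^{-1} = \varkappa_\omega\circ \bigl(\pi(g)\,\tilde{A}\,\pi(g)^{-1}\bigr)\circ \varkappa_\tau^{-1},
\]
so the commutation $A\,\tilde{\pi}(g) = \tilde{\pi}(g)\,A$ for every $g\in G$ is equivalent, via the bijections $\varkappa_\tau,\varkappa_\omega$, to $\tilde{A}\,\pi(g) = \pi(g)\,\tilde{A}$ for every $g\in G$. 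This settles (A)$\Leftrightarrow$(B).

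For the identification of the right-convolution kernel, the plan is to transfer the invariance condition to the Schwartz kernel. Writing $Af(x) = \int_G K_A(x,y)f(y)\,dy$ and exploiting the invariance of Haar measure, the identity $A\tilde{\pi}(g_0)f = \tilde{\pi}(g_0)Af$ evaluated on an arbitrary test function gives
\[
K_A(g_0^{-1}x,\,y) = K_A(x,\,g_0 y) \quad\text{for all } g_0, x, y\in G.
\]
Choosing $g_0 = x$ yields $K_A(x,y) = K_A(e_G,\,x^{-1}y)$, i.e.\ the kernel depends only on $x^{-1}y$. Defining
\[
k(z) := K_A(e_G,\,z^{-1}),
\]
one obtains $K_A(x,y) = k(y^{-1}x)$, and the substitution $z = y^{-1}x$ (unimodularity of $G$) converts
\[
Af(x) = \int_G k(y^{-1}x)\,f(y)\,dy = \int_G f(xz^{-1})\,k(z)\,dz,
\]
which is precisely the right-convolution of $f$ with $k$. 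The final equality $k(z) = K_A(z,e_G)$ follows by setting $y = e_G$ in $K_A(x,y) = K_A(e_G, x^{-1}y)$ and renaming $x$ as $z$.

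The only real obstacle is notational consistency: aligning the conventions for $\tilde{\pi}(g_0)f(g) = f(g_0^{-1}g)$, for the Schwartz kernel $K_A(x,y)$, and for the right-convolution kernel $R_A(x,z) = K_A(x,xz^{-1})$ used elsewhere in the paper. No deeper ingredient is required beyond the intertwining property of $\varkappa_\tau,\varkappa_\omega$ and the standard translation calculus for integral kernels on $G$.
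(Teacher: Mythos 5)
Your proposal is correct and follows essentially the same route as the paper: both reduce (A) to (B) via the intertwining relations for $\varkappa_\tau,\varkappa_\omega$ and the identity $A = \varkappa_\omega\circ\tilde A\circ\varkappa_\tau^{-1}$, and both extract the convolution structure by passing to the Schwartz kernel and specializing the translation-invariance identity at $g_0=x$ and $g_0=y$. The only cosmetic differences are that you phrase the first step as a conjugation rather than a chain of equivalences, and you record the kernel invariance as $K_A(g_0^{-1}x,y)=K_A(x,g_0y)$ while the paper uses the equivalent $K_A(g^{-1}x,g^{-1}y)=K_A(x,y)$.
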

\begin{proof} Let us define $f=\varkappa_\tau s,$ $s\in \Gamma^\infty(E).$ Observe that, $\tilde{A}$ is $G$-invariant if and only if
\begin{align*}
    \tilde{A}(\pi(g)s)=\pi(g)(\tilde{A}s) &\Longleftrightarrow \tilde{A}(\varkappa_\tau^{-1} \varkappa_\tau\pi(g)s)=\pi(g)(\tilde{A}\varkappa_\tau^{-1} \varkappa_\tau s)\\
    &\Longleftrightarrow \varkappa_{\omega} \tilde{A}(\varkappa_\tau^{-1} \varkappa_\tau\pi(g)s)=\varkappa_{\omega}\pi(g)(\tilde{A}\varkappa_\tau^{-1} \varkappa_\tau s)\\
    &\Longleftrightarrow \varkappa_{\omega} \tilde{A}(\varkappa_\tau^{-1} \tilde{\pi}(g)\varkappa_\tau s)=\tilde{\pi}(g)\varkappa_{\omega}(\tilde{A}\varkappa_\tau^{-1} \varkappa_\tau s)\\
    &\Longleftrightarrow A\tilde{\pi}(g)f=\tilde{\pi}(g)A f,
\end{align*} which is equivalent to the fact   that ${A}$ is invariant under the regular action of $G$ on functions. Now, in order to prove that $A$ as in (B) is left-invariant, let us use the Schwartz kernel theorem. So, if $K_{A}\in C^\infty(G)\otimes_{\pi}\mathscr{D}'(E_{0}^*\otimes F_{0})$ is the Schwartz kernel of $A,$ we have that
\begin{align*}
 A\tilde{\pi}{(g)}f(x) &=   \int\limits_{G}K_A(x,y)\tilde{\pi}{(g)}f(y)dy= \int\limits_{G}K_A(x,y)f(g^{-1}y)dy\\
 &=\tilde{\pi}{(g)}Af(x)\\
 &=\int\limits_{G}K_A(g^{-1}x,y)f(y)dy\\
 &=\int\limits_{G}K_A(g^{-1}x,g^{-1}y)f(g^{-1}y)dy.
\end{align*}This implies that, for every $g\in G,$ and $x,y\in G,$ $K_A(g^{-1}x,g^{-1}y)=K_A(x,y).$ In particular, for $g=x$ we have $$K_A(e_{G},x^{-1}y)=K_A(x,y).$$ So, if $k(z):=K_A(e_{G},z^{-1}),$ then $k(y^{-1}x)=K_A(x,y)$ and   we have that
$$ Af(x)=\int\limits_{G}k({y^{-1}x}) f(y)dy=(f\ast k)(x),$$which proves that $A$ is left-invariant. On the other hand, if in the identity $K_A(g^{-1}x,g^{-1}y)=K_A(x,y)$ we take $g=y,$ we have $K_{A}(x,y)=K_A({y^{-1}x,e_{G}})$ and from the uniqueness of the right-convolution kernel we have $k(z)=K_A(z,e_{G}).$
\end{proof}
\begin{remark} Let $\tilde{A}:\Gamma^\infty(E)\rightarrow\Gamma^\infty(F)$ be a continuous linear operator and let $A: C^\infty(G,E_{0})^\tau\rightarrow C^\infty(G,F_{0})^\omega$ be the induced operator by the identifications $\Gamma^\infty(E)\cong C^\infty(G,E_{0})^\tau$ and $\Gamma^\infty(F)\cong C^\infty(G,F_{0})^\omega.$ If $\tilde{A}$ is invariant, then the matrix-valued symbol of $A$ is given by 
\begin{equation*}
    \sigma_A(i,r,[\xi]):=\mathscr{F}_{G}({k}_{ir})(\xi),\,(i,r)\in I_{d_\tau}\times I_{d_\omega}, \,[\xi]\in \widehat{G}.
\end{equation*}Here, $k(z):=R_{A}(e_{G},z)$ is the right-convolution kernel of $A$ and ${k}_{ir}\in \mathscr{D}'(G)$ are defined by the decomposition of $k(z)$ in the induced basis of $\textnormal{End}(E_0,F_0)=E_{0}^{*}\otimes F_{0}.$ Indeed,
\begin{align*}
    Af(x)&=\int\limits_{G}k(z)f(xz^{-1})dz=\sum_{i=1}^{d_\tau}\sum_{i'=1}^{d_\tau}\sum_{r=1}^{d_\omega}\int\limits_G k_{i'r}(z)f_{i}(xz^{-1})( e_{i',E_0}^{*}\otimes e_{r,F_0})(e_{i,E_0})\\
    &=\sum_{i=1}^{d_\tau}\sum_{r=1}^{d_\omega}\int\limits_G k_{ir}(z)f_{i}(xz^{-1}) e_{r,F_0}\\
    &=\sum_{i,r,[\xi]\in \widehat{G}}d_{\xi}\textnormal{Tr}[\xi(x)\mathscr{F}_{G}({k}_{ir})(\xi)    \widehat{f}(i,\xi)   ]e_{r,F_0}\\
    &=:\sum_{i,r,[\xi]\in \widehat{G}}d_{\xi}\textnormal{Tr}[\xi(x)\sigma_A(i,r,\xi)(\xi)    \widehat{f}(i,\xi)   ]e_{r,F_0},
\end{align*} for all $f\in C^{\infty}(G,E_0)^{\tau}.$

\end{remark}
In view of \eqref{quantizationonhomogeneous2}, we have

\begin{corollary}  Let $\tilde{A}:\Gamma^\infty(E)\rightarrow\Gamma^\infty(F)$ be a $G$-invariant  operator and let $A: C^\infty(G,E_{0})^\tau\rightarrow C^\infty(G,F_{0})^\omega$ be the induced operator by the identifications $\Gamma^\infty(E)\cong C^\infty(G,E_{0})^\tau$ and $\Gamma^\infty(F)\cong C^\infty(G,F_{0})^\omega.$ Then we have
\begin{equation}\label{quantizationonhomogeneous3}
    \tilde{A}s(gK)\equiv \left(g, \,\sum_{[\xi]\in \widehat{G}}\sum_{i=1}^{d_\tau}\sum_{r=1}^{d_\omega}\textnormal{Tr}[\xi(g)\sigma_A(i,r,[\xi])  \widehat{\varkappa_\tau s}(i,[\xi])] e_{r,F_0}  \right)\cdot K,
\end{equation}for every section $s\in \Gamma^\infty(E),$ where $\sigma_{A}$ is the matrix-valued symbol of $A.$
\end{corollary}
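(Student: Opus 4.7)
The plan is a direct application of Theorem \ref{mainqhomovec} combined with the characterisation of $G$-invariant operators obtained in Lemma \ref{G-inv:Fourier:Mult}. The only point to verify is that, for a $G$-invariant operator $\tilde{A},$ the matrix-valued symbol $\sigma_A(i,r,g,[\xi])$ of the induced operator $A$ is independent of the spatial variable $g \in G,$ so that it makes sense to write $\sigma_A(i,r,[\xi]).$

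First, I would invoke Lemma \ref{G-inv:Fourier:Mult}: the $G$-invariance of $\tilde{A}$ is equivalent to $A\tilde{\pi}(g)=\tilde{\pi}(g)A$ for all $g \in G,$ which forces $A$ to be a left-invariant convolution operator with right-convolution kernel $k(z) = K_A(z,e_G) = K_A(e_G, z^{-1}).$ Consequently, for $f \in C^\infty(G, E_0)^\tau,$
\begin{equation*}
Af(x) = \int_G k(y^{-1}x)\, f(y)\, dy = (f\ast k)(x).
\end{equation*}

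Second, I would compute the matrix-valued symbol of $A$ directly from this convolution representation, as in the remark preceding the corollary. Decomposing $k(z) \in \mathrm{End}(E_0,F_0)\cong E_0^{*}\otimes F_0$ in the induced basis,
\begin{equation*}
k(z) = \sum_{i=1}^{d_\tau}\sum_{r=1}^{d_\omega} k_{ir}(z)\,(e_{i,E_0}^{*}\otimes e_{r,F_0}),\qquad k_{ir}\in \mathscr{D}'(G),
\end{equation*}
and substituting into the convolution, one obtains
\begin{equation*}
Af(x) = \sum_{i,r,[\xi]\in \widehat{G}} d_\xi\,\textnormal{Tr}\bigl[\xi(x)\,\mathscr{F}_G(k_{ir})(\xi)\,\widehat{f}(i,\xi)\bigr]\,e_{r,F_0}.
\end{equation*}
By the uniqueness of the matrix-valued symbol (Definition of $\sigma_A$ via \eqref{Homogeneoussymbol'}), this gives $\sigma_A(i,r,g,[\xi]) = \mathscr{F}_G(k_{ir})(\xi),$ which is independent of $g.$ We therefore write $\sigma_A(i,r,[\xi]) := \mathscr{F}_G(k_{ir})(\xi).$

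Third, setting $f = \varkappa_\tau s$ and applying the commutative diagram $\tilde{A} = \varkappa_\omega^{-1}\circ A\circ \varkappa_\tau,$ we feed the $g$-independent symbol $\sigma_A(i,r,[\xi])$ into the general quantisation formula of Theorem \ref{mainqhomovec}. This yields
\begin{equation*}
\tilde{A}s(gK) \equiv \bigl(g, A(\varkappa_\tau s)(g)\bigr)\cdot K = \Bigl(g,\,\sum_{[\xi]\in\widehat{G}}\sum_{i=1}^{d_\tau}\sum_{r=1}^{d_\omega}\textnormal{Tr}[\xi(g)\sigma_A(i,r,[\xi])\widehat{\varkappa_\tau s}(i,[\xi])]\,e_{r,F_0}\Bigr)\cdot K,
\end{equation*}
which is precisely \eqref{quantizationonhomogeneous3}. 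There is essentially no obstacle here: the corollary is a transparent specialisation of Theorem \ref{mainqhomovec} once the $g$-independence of the symbol, guaranteed by Lemma \ref{G-inv:Fourier:Mult}, has been recorded.
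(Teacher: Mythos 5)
Your proposal is correct and takes essentially the same route as the paper: the paper proves this by first deriving, in the remark immediately preceding the corollary, the $g$-independent symbol $\sigma_A(i,r,[\xi])=\mathscr{F}_G(k_{ir})(\xi)$ from the left-invariant convolution structure given by Lemma \ref{G-inv:Fourier:Mult} and the basis decomposition of $k(z)\in\textnormal{End}(E_0,F_0)\cong E_0^{*}\otimes F_0$, and then simply specialises the quantisation formula of Theorem \ref{mainqhomovec}. You have reproduced the same argument, merely writing the convolution as $(f\ast k)(x)=\int_G k(y^{-1}x)f(y)\,dy$ instead of the paper's $\int_G k(z)f(xz^{-1})\,dz$, which is the same thing after the substitution $z=y^{-1}x$.
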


\subsection{Characterisation of H\"ormander classes}

In this subsection we characterise the algebra of pseudo-differential operators on homogeneous vector bundles over compact homogeneous manifolds by using the notion of global symbol.

Now, we record the formulation of H\"ormander's pseudo-differential operators on closed manifolds (and so on compact Lie groups) by using local coordinate systems  (see \cite{Hormander1985III} and  \cite{Taylorbook1981} for more details). 

If $U$ is an open topological subset of $\mathbb{R}^n,$ we say that a symbol  $a:U\times \mathbb{R}^n\rightarrow \mathbb{C},$ belongs to the H\"ormander symbol class $S^m_{\rho,\delta}(U\times \mathbb{R}^n),$ $0\leqslant \rho,\delta\leqslant 1,$ if for every compact subset $K\subset U,$ the inequality,
\begin{equation*}
    |\partial_{x}^\beta\partial_{\xi}^\alpha a(x,\xi)|\leqslant C_{\alpha,\beta,K}(1+|\xi|)^{m-\rho|\alpha|+\delta|\beta|},
\end{equation*} holds true uniformly in $x\in K$ and $\xi\in \mathbb{R}^n.$ 
Then, a continuous linear operator $A:C^\infty_0(U) \rightarrow C^\infty(U)$ 
is a pseudo-differential operator of order $m$ and of  $(\rho,\delta)$-type if there exists
a function $a\in S^m_{\rho,\delta}(U\times \mathbb{R}^n)$ such that
\begin{equation*}
    Af(x)=\int\limits_{\mathbb{R}^n}e^{2\pi i x\cdot \xi}a(x,\xi)(\mathscr{F}_{\mathbb{R}^n}{f})(\xi)d\xi,
\end{equation*} for all $f\in C^\infty_0(U),$ where
\begin{equation*}
  (\mathscr{F}_{\mathbb{R}^n}{f})(\xi):=\int\limits_Ue^{-i2\pi x\cdot \xi}f(x)dx, \,\,\,\,\,\,\,\,\,\,\,\,\,\,\,\,\,\,\,\,\,\,\,\,\,\,\,\,\,\,\,\,\,\,\,\,\,\,\,\,\,
\end{equation*} is the  Euclidean Fourier transform of $f$ at $\xi\in \mathbb{R}^n.$ The class $S^m_{\rho,\delta}(U\times \mathbb{R}^n)$ on the phase space $U\times \mathbb{R}^n,$ is invariant under coordinate changes only if $\rho\geqslant   1-\delta,$ while a symbolic calculus (which means that symbol classes are closed by taking products, adjoints, parametrices, etc.) is only possible for $\delta<\rho$ and $\rho\geqslant   1-\delta.$  Now, we state the following definition.
\begin{definition}[Local H\"ormander classes on compact manifolds]\label{HormanderManifolds} Let $M$ be a $C^\infty$-closed manifold. A continuous linear operator $A:C^\infty(M)\rightarrow C^\infty(M) $ is a pseudo-differential operator of order $m$ and of $(\rho,\delta)$-type, $ \rho\geqslant   1-\delta, $ if for every local  coordinate patch $\omega: M_{\omega}\subset M\rightarrow U\subset \mathbb{R}^n,$
and for every $\phi,\psi\in C^\infty_0(U),$ the operator
\begin{equation*}
    Tu:=\psi(\omega^{-1})^*A\omega^{*}(\phi u),\,\,u\in C^\infty(U),\footnote{As usually, $\omega^{*}$ and $(\omega^{-1})^*$ are the pullbacks induced by the maps $\omega$ and $\omega^{-1},$ respectively.}
\end{equation*} is a pseudo-differential operator with symbol $a_T\in S^m_{\rho,\delta}(U\times \mathbb{R}^n).$ In this case we write that $A\in \Psi^m_{\rho,\delta}(M,\textnormal{loc}).$
It was proved in \cite{Ruz,RuzhanskyTurunenIMRN} by using the Beals characterisation \cite{Beals} that the H\"ormander classes defined via localisations and defined by global symbols are equivalent, this means that
\begin{equation}\label{equivalenceofclasses}
  \Psi^m_{\rho,\delta}(G\times \widehat{G})= \Psi^m_{\rho,\delta}(G,\textnormal{loc}), 
\end{equation}for $\delta<\rho$ and $\rho\geq 1-\delta.$

\begin{remark}\label{remrklocalvecspaces}
If $u\in C^\infty_0(\mathbb{R}^n,\mathbb{C}^{\ell}),$ the Fourier transform of $u$ is defined by taking the Fourier transform of its components. So, $A:\textnormal{Dom}(A)\subset C^\infty(M,\mathbb{C}^{\ell})\rightarrow C^\infty(M,\mathbb{C}^{r}), $ is a pseudo-differential operator in the class $\Psi^{m}_{\rho,\delta}(M;\mathbb{C}^\ell,\mathbb{C}^r,\textnormal{loc})$, if locally (with the notation in Definition \ref{HormanderManifolds}), we can write 
\begin{equation*}
    Tu:=\psi(\omega^{-1})^*A\omega^{*}(\phi u),\,\,u\in C^\infty(U, \mathbb{C}^{\ell}),
\end{equation*}and for some matrix of symbols $\sigma \in \mathbb{C}^{r\times \ell }(S^{m}_{\rho,\delta}(U\times \mathbb{R}^n)),$ we have 
\begin{equation}
  Tu(x)= \int\limits_{\mathbb{R}^n}e^{2\pi i y\cdot \xi}\sigma(y,\xi)(\mathscr{F}_{\mathbb{R}^n}u)(\xi)d\xi,\,\quad u\in C^{\infty}_0({U}, \mathbb{C}^{\ell}).
\end{equation}
\end{remark}
\end{definition}

Now, Definition \ref{HormanderManifolds} can be extended to vector bundles as follows.
\begin{definition}[Local H\"ormander classes on vector bundles]Let $E$ and $F$ be  vector bundles on $M$. A continuous linear operator  $\tilde{A}:\Gamma^\infty(E)\rightarrow\Gamma^\infty(F)$ belongs to the H\"ormander class of order $m,$ and of type $(\rho,\delta)$  (in the sense of H\"ormander), if for every $x\in M,$ there exists a neighborhood $U$ of $x\in M,$ a diffeomorphism of $U$ with an open subset  $\tilde{U}\subset \mathbb{R}^n,$ via  $\phi:\tilde{U}\rightarrow {U },$ and $C^\infty$-trivialisations $\Psi:E_{U}\rightarrow {U}\times \mathbb{C}^{\ell}$ and $\Phi:F_{U}\rightarrow {U}\times \mathbb{C}^{r},$ such that for every section $s\in \Gamma^\infty(E),$ and for every $\psi_1,\psi_2\in C^\infty_0(\tilde{U}),$ there exist $H\in C^{\infty}(U, \mathbb{C}^\ell)$ and  $G\in C^{\infty}(U, \mathbb{C}^r)$ with
\begin{equation}
    \Psi s(\phi(y))=(\phi(y), H(\phi(y))),\,\,\Phi As(\phi(y))=(\phi(y), G(\phi(y)) ),\,y\in \tilde{U},
\end{equation}and  some $\sigma \in \mathbb{C}^{r\times \ell }(S^{m}_{\rho,\delta}(\tilde{U}\times \mathbb{R}^n)),$ such that
\begin{equation*}
   G(\phi(y))=\int\limits_{\mathbb{R}^n}e^{2\pi i y\cdot \xi}\psi_2(y)\sigma(y,\xi)\mathscr{F}_{\mathbb{R}^n}[ \psi_1\cdot H\circ \phi](\xi)d\xi,\,\quad y\in \tilde{U}. 
\end{equation*}
For $\delta<\rho$ and $\rho\geq 1-\delta,$ we write that $A\in \Psi^m_{\rho,\delta}(E,F;\textnormal{loc}).$

\end{definition}

\begin{definition}[Vector-valued elliptic H\"ormander classes]\label{symbolclass:L}
   Let $G$ be a compact Lie group and let  $0\leq \delta,\rho\leq 1.$ Let $E_0$ and $F_0$ be complex vector spaces with finite dimensions $d_\tau$ and $d_\omega,$ respectively.  If $m\in \mathbb{R},$ the H\"ormander symbol class $$S^{m}_{\rho,\delta}((G\times \widehat{G})\otimes \textnormal{End}(E_0,F_0)):=S^{m,\mathcal{L}_G}_{\rho,\delta}((G\times \widehat{G})\otimes \textnormal{End}(E_0,F_0))$$ of  order $m$ and of type $(\rho,\delta),$ consists of those functions $\sigma \in \Sigma(I_{d_\omega} \times I_{d_\tau} \times G\times \widehat{G}),$ satisfying the symbol inequalities
   \begin{equation}\label{InI:L:L}
      \tilde{p}_{\alpha,\beta,\rho,\delta,m, \gamma } (\sigma):= \sup_{\substack{1\leq i\leq d_\tau\\ 1\leq r\leq d_\omega \\ (x, [\xi])\in G\times \widehat{G} }
      } \Vert \langle\xi\rangle ^{(\rho|\alpha|-\delta|\beta|-m)}\partial_{X}^{(\beta)} \Delta_{\xi}^{\alpha}\sigma(i,r, x,\xi) \Vert_{\textnormal{op}} <\infty.
   \end{equation}
   We denote by $\Psi^{m}_{\rho,\delta}((G\times \widehat{G})\otimes \textnormal{End}(E_0,F_0))$ the class of continuous linear operators $A:C^\infty(G,E_0)\rightarrow C^\infty(G,F_0)$ with symbols in the class $S^{m}_{\rho,\delta}((G\times \widehat{G})\otimes \textnormal{End}(E_0,F_0)).$
  \end{definition}
  \begin{remark}\label{Remark:L:G} Observe that any operator $A\in \Psi^{m}_{\rho,\delta}((G\times \widehat{G})\otimes \textnormal{End}(E_0,F_0))$ mapping $C^{\infty}(G,E_0)^{\tau}$ into  $C^{\infty}(G,F_0)^{\omega}$ induces an operator $\tilde{A}$ in the class
   \begin{equation}
     \Psi^{m}_{\rho,\delta}(E,F):=\{\tilde{A}:\Gamma^\infty(E)\rightarrow\Gamma^\infty(F)|\,\,\,  \sigma_A\in {S}^{m,\mathcal{L}}_{\rho,\delta}((G\times \widehat{G})\otimes \textnormal{End}(E_0, F_0))  \} .
  \end{equation} For $E=F$ let us denote $\Psi^{m}_{\rho,\delta}(E):= \Psi^{m}_{\rho,\delta}(E,E),$ and  $\Psi^{m}_{\rho,\delta}((G\times \widehat{G})\otimes \textnormal{End}(E_0)).$ 
  \end{remark}
  \begin{remark}
  Observe that Definition \ref{symbolclass:L} is obtained from Definition \ref{symbolclass} if a sub-Laplacian $\mathcal{L}$ is replaced by the Laplacian $\mathcal{L}_G$ on $G.$
  \end{remark}
  The following theorem follows from the vector-valued subelliptic calculus and from the subelliptic calculus on homogeneous vector-bundles developed above.  We do not require the standard restriction $\rho\geq 1-\delta$ for the invariance under changes of coordinates that is usual in the H\"ormander-calculus via localisations.

\begin{theorem}[Calculus on homogeneous  vector-bundles associated with $\mathcal{L}_G$]\label{calculus:LG} Let $0\leqslant \delta<\rho\leqslant 1.$  Then we have the following properties.
\begin{itemize}
    \item [-]The mapping $A\mapsto A^{*}:\Psi^{m}_{\rho,\delta}((G\times \widehat{G})\otimes \textnormal{End}(E_0,F_0))\rightarrow \Psi^{m}_{\rho,\delta}((G\times \widehat{G})\otimes \textnormal{End}(F_0^*,E_0^*))$ is a continuous linear mapping between Fr\'echet spaces and  the  symbol of $A^*,$ $\sigma_{A^*}$ satisfies the asymptotic expansion,
 \begin{equation}
    \sigma_{A^*}(i,s, x,\xi)\sim  \sum_{|\alpha|= 0}^\infty\Delta_{\xi}^\alpha\partial_{X}^{(\alpha)} \sigma_{A}(i,s, x, \xi)^*.
 \end{equation} 
 This means that, for every $N \in \mathbb{N}$,
 \begin{eqnarray*}
    & \Delta_{\xi}^{\alpha_\ell} \partial_X^{(\beta)} \left(\sigma_{A^*}( i,s,x,\xi) -  \sum_{|\alpha|\leq N }\Delta_{\xi}^\alpha\partial_{X}^{(\alpha)} \sigma_{A}(i,s, x, \xi)^* \right)\\
     &\in {S}^{m-(\rho-\delta)(N+1)-\rho\ell+\delta|\beta|}_{\rho,\delta}((G\times \widehat{G}) \otimes \textnormal{End}(F_0^*,E_0^*)).
 \end{eqnarray*}   In particular, under the identifications $E_{0}^{*}\cong E_0$ and $F_{0}^{*}\cong F_0,$ the same asymptotic expansion also applies if additionally  $$A:C^{\infty}(G,E_{0})^\tau\rightarrow C^{\infty}(G,F_{0})^\omega$$ is a continuous linear operator. This means that \eqref{asymptoticofA*} remains valid for the continuous linear operator $A^*:C^{\infty}(G,F_{0})^\omega\rightarrow C^{\infty}(G,E_{0})^\tau.$ In this case, if for any operator $A,$ we define the operator $\tilde{A}^*$ by making commutative the following diagram 
 \begin{eqnarray}\label{Diagram:adjoint}
    \begin{tikzpicture}[every node/.style={midway}]
  \matrix[column sep={10em,between origins}, row sep={4em}] at (0,0) {
    \node(R) {$\Gamma^\infty(F)$}  ; & \node(S) {$\Gamma^\infty(E)$}; \\
    \node(R/I) {$C^\infty(G,F_0)^\tau$}; & \node (T) {$C^\infty(G,E_0)^\omega$};\\
  };
  \draw[<-] (R/I) -- (R) node[anchor=east]  {$\varkappa_{\tau}$};
  \draw[->] (R) -- (S) node[anchor=south] {$\tilde{A}^{*}$};
  \draw[->] (S) -- (T) node[anchor=west] {$\varkappa_\omega$};
  \draw[->] (R/I) -- (T) node[anchor=north] {$A^{*}$};
\end{tikzpicture}
 \end{eqnarray} then the mapping 
  $\tilde{A}\mapsto \tilde{A}^* :\Psi^{m}_{\rho,\delta}(F,E)\rightarrow \Psi^{m}_{\rho,\delta}(F,E)$ is a continuous linear mapping between Fr\'echet spaces.
\item [-] The mapping $(A,B)\mapsto A\circ B: \Psi^{m_1}_{\rho,\delta}((G\times \widehat{G})\otimes \textnormal{End}(E_0,F_0))\times \Psi^{m_1}_{\rho,\delta}((G\times \widehat{G})\otimes \textnormal{End}(F_0,H_0))\rightarrow \Psi^{m_1+m_2}_{\rho,\delta}((G\times \widehat{G})\otimes \textnormal{End}(E_0,H_0))$ is a continuous bilinear mapping between Fr\'echet spaces, and the  symbol of $BA,$ $\sigma_{BA},$ satisfies the asymptotic expansion,
\begin{equation}\label{compositionAandB2}
    \sigma_{BA}(i, s, x,\xi) \sim \sum_{r=1}^{d_\omega}\sum_{|\alpha|= 0}^\infty(\Delta_{\xi}^\alpha\sigma_{B}(r, s, x,\xi))(\partial_{X}^{(\alpha)} \sigma_{A}(i, r, x,\xi)), 
\end{equation}this means that, for every $N\in \mathbb{N},$
\begin{align*}
    &\Delta_{\xi}^{\alpha_\ell}\partial_{X}^{(\beta)}\left(\sigma_{BA}(i, s, x,\xi) - \sum_{r=1}^{d_\omega}\sum_{|\alpha| \leq N }(\Delta_{\xi}^\alpha\sigma_{B}(r, s, x,\xi))(\partial_{X}^{(\alpha)} \sigma_{A}(i, r, x,\xi))  \right)\\
    &\hspace{2cm}\in {S}^{m_1+m_2-(\rho-\delta)(N+1)-\rho\ell+\delta|\beta|}_{\rho,\delta}((G\times \widehat{G})\otimes \textnormal{End}(E_0,H_0)),
 \end{align*}for every difference operator $\Delta_{\xi}^{\alpha_\ell}$ of order $\ell\in\mathbb{N}_0.$ In particular, the same asymptotic expansion applies if additionally  $$A:C^{\infty}(G,E_{0})^\tau\rightarrow C^{\infty}(G,F_{0})^\omega,$$ and $$B:C^{\infty}(G,F_{0})^\tau\rightarrow C^{\infty}(G,H_{0})^\nu$$ are continuous linear operators. This means that \eqref{compositionAandB2} remains valid for the continuous linear operator $B\circ A:C^{\infty}(G,E_{0})^\tau\rightarrow C^{\infty}(G,H_{0})^\nu.$ In this case, $\tilde{A} \in \Psi^{m_1}_{\rho,\delta}(E,F)$ and $\tilde{B}\in \Psi^{m_2}_{\rho,\delta}(F,H),$ and the composition operator $\tilde{B}\tilde{A}:=\tilde{B}\circ\tilde{ A}:\Gamma^\infty(E)\rightarrow \Gamma^\infty(H)$ belongs to the  class $\Psi^{m_1+m_2}_{\rho,\delta}(E,H).$ 
\end{itemize}
\end{theorem}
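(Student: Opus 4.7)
The plan is to deduce Theorem~\ref{calculus:LG} as the specialisation of the subelliptic calculus of Theorems~\ref{VectorcompositionC}, \ref{VectorAdjoint} and \ref{pseudodifferentialcalculus} to the choice of $X=\{X_1,\dots,X_n\}$ equal to an orthonormal basis of $\mathfrak{g}$. With this choice, the sub-Laplacian $\mathcal{L}=-\sum X_i^2$ coincides with the positive Laplace--Beltrami operator $\mathcal{L}_G$, the H\"ormander condition holds with step $\kappa=1$, and the eigenvalues of $\widehat{\mathcal{M}}(\xi)=(I+\widehat{\mathcal{L}_G}(\xi))^{1/2}$ are precisely the elliptic weights $\langle\xi\rangle=(1+\lambda_{[\xi]})^{1/2}$. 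Comparing Definitions~\ref{symbolclass} and \ref{symbolclass:L} one sees that the symbol classes coincide, hence $\Psi^{m,\mathcal{L}_G}_{\rho,\delta}((G\times\widehat{G})\otimes\textnormal{End}(E_0,F_0))$ is literally the same operator class as $\Psi^{m}_{\rho,\delta}((G\times\widehat{G})\otimes\textnormal{End}(E_0,F_0))$, and similarly for the bundle classes of Remark~\ref{Remark:L:G}.

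For the adjoint statement, I would invoke Theorem~\ref{VectorAdjoint} directly to obtain the asymptotic expansion of $\sigma_{A^*}$ and the membership of the remainder in the asserted class. Under the identifications $E_0^{*}\cong E_0$ and $F_0^{*}\cong F_0$, the fact that when $A$ maps $C^{\infty}(G,E_0)^{\tau}$ into $C^{\infty}(G,F_0)^{\omega}$ its formal adjoint $A^{*}$ maps $C^{\infty}(G,F_0)^{\omega}$ into $C^{\infty}(G,E_0)^{\tau}$ is precisely the content of Corollary~\ref{Coro:Adjoint}; this ensures that the diagram \eqref{Diagram:adjoint} unambiguously defines $\tilde{A}^{*}\in\Psi^{m}_{\rho,\delta}(F,E)$, whose symbol is, by Definition~\ref{symbolofAtilde}, the vector-valued symbol of $A^{*}$. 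For the composition statement, a parallel argument using Theorem~\ref{VectorcompositionC} produces the asymptotic expansion \eqref{compositionAandB2} and the remainder estimate, and the transfer to the bundle setting follows from Theorem~\ref{pseudodifferentialcalculus}, since $\widetilde{B\circ A}=\tilde{B}\circ\tilde{A}$ via the commutative diagrams intertwining $\varkappa_\tau,\varkappa_\omega,\varkappa_\nu$.

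The only point not covered verbatim by the cited theorems is the continuity of the operations $A\mapsto A^{*}$ and $(A,B)\mapsto B\circ A$ between the respective Fr\'echet topologies. The plan is to read this continuity off directly from the asymptotic expansions: truncating the series at order $N$ and applying the remainder bounds from Theorems~\ref{VectorcompositionC} and \ref{VectorAdjoint}, each seminorm $p_{\alpha,\beta,\rho,\delta,m,\gamma}$ of $\sigma_{BA}$ (resp.\ $\sigma_{A^{*}}$) is controlled by finitely many seminorms of $\sigma_A$ and $\sigma_B$ (resp.\ $\sigma_A$), using the Leibniz rule for difference operators (Remark~\ref{Leibnizrule}) together with the equivalence of seminorms provided by Theorem~\ref{characterisations}. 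For the bilinear mapping, separate continuity in each variable, which the above establishes, implies joint continuity by the standard Fr\'echet-space fact.

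The main obstacle is the careful bookkeeping of seminorms in the continuity argument, in particular controlling the composition remainder uniformly in the finite indices $i\in I_{d_\tau}$, $r\in I_{d_\omega}$, $s\in I_{d_\nu}$. This bookkeeping is essentially identical to the scalar-valued one carried out in \cite{RuzhanskyCardona2020}; the only new feature is the presence of the finite sums over these indices, which do not disturb the estimates, and the passage to the bundle level via $\varkappa_\tau,\varkappa_\omega,\varkappa_\nu$, which is an isometric identification by Remark~\ref{soboiso}. Once continuity on $G$ is established, continuity at the bundle level is automatic, because by Definition~\ref{symbolofAtilde} and \eqref{Subelliptichormanderclassesonvectorbundles} the seminorms defining the Fr\'echet topology on $\Psi^{m}_{\rho,\delta}(E,F)$ are by construction the seminorms of the corresponding vector-valued operator on $G$.
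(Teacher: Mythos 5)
Your proposal is correct and follows the paper's intended route: the theorem is obtained by specializing the subelliptic vector-valued calculus of Theorems~\ref{VectorcompositionC}, \ref{VectorAdjoint} and the bundle transfer of Theorem~\ref{pseudodifferentialcalculus} to the case where $X$ is an orthonormal basis of $\mathfrak{g}$, so that $\mathcal{L}=\mathcal{L}_G$, $\kappa=1$, and $\widehat{\mathcal{M}}(\xi)=\langle\xi\rangle I_{d_\xi}$ is scalar, which makes the classes of Definitions~\ref{symbolclass} and \ref{symbolclass:L} coincide. Your additional observations (that the Fr\'echet continuity can be read off from the remainder bounds and the seminorm equivalences of Theorem~\ref{characterisations}, and that separate continuity upgrades to joint continuity for bilinear maps between Fr\'echet spaces) are sound and fill in a point the paper leaves implicit.
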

\begin{remark}[For a complex bundle $E\rightarrow M$ we do not have the isomorphism $E\cong E^{*}$]\label{non:validity:of:iso}
      If the vector bundles $E\rightarrow M$ and $F\rightarrow M$ are real-vector bundles, under the identifications $E\cong E^{*},$ and $F\cong F^{*},$ the vector bundles $E\rightarrow M$ and $E^{*}\rightarrow M,$ and $F\rightarrow M$ and $F\rightarrow M,$ are isomorphic vector bundles. Indeed,  since the base space $M$  is a (para)compact and Hausdorff topological space, then the real, finite-rank vector bundles $E$ (respectively $F$) and its dual $E^{*}$ (respectively $F^{*}$) are isomorphic as vector bundles. Indeed, we pick a metric $E$ on the bundle $(\cdot,\cdot)$ (i.e. a continuously varying metric on the fibers, which exists because the base space $M$ is compact), and map a vector $v$ to $(v,\cdot)$, which is an isomorphism.  However, in our case the spaces $E_{0}\cong \mathbb{C}^{d_\tau}$ and $F_{0}\cong \mathbb{C}^{d_\omega}$ are complex-vector spaces making the fibrations  $E\rightarrow M$ and $F\rightarrow M$ complex-vector bundles. In the setting of complex vector bundles, choosing a hermitian metric on a complex vector bundle $E$ yields an isomorphism $E\cong \overline{E}^{*}$, where $\overline{E}^{*}$ is the vector bundle whose fibers are the complex conjugates of the fibers of the dual $E^{*}.$ In consequence, for complex vector bundles the property $E\cong E^{*}$ does not hold. For example, the tautological line bundle over the Riemann sphere is not isomorphic to its dual. For details we refer to Milnor and Stacheff \cite{Milnor:and:Stacheff}.  
\end{remark}
\begin{remark}[About the definition of the adjoint on a vector-bundle] We note that in order to define the adjoint $\tilde{A}^{*}$ of an operator $\tilde{A}:\Gamma^\infty(E)\rightarrow \Gamma^{\infty}(F),$ we consider the corresponding operator $A: C^{\infty}(G, E_0)^{\tau}\rightarrow C^{\infty}(G, F_0)^{\omega}$ making the following 
diagram \begin{eqnarray}
    \begin{tikzpicture}[every node/.style={midway}]
  \matrix[column sep={10em,between origins}, row sep={4em}] at (0,0) {
    \node(R) {$\Gamma^\infty(E)$}  ; & \node(S) {$\Gamma^\infty(F)$}; \\
    \node(R/I) {$C^\infty(G,E_0)^\tau$}; & \node (T) {$C^\infty(G,F_0)^\omega$};\\
  };
  \draw[<-] (R/I) -- (R) node[anchor=east]  {$\varkappa_{\tau}$};
  \draw[->] (R) -- (S) node[anchor=south] {$\tilde{A}$};
  \draw[->] (S) -- (T) node[anchor=west] {$\varkappa_\omega$};
  \draw[->] (R/I) -- (T) node[anchor=north] {$A$};
\end{tikzpicture}
\end{eqnarray} commutative. Then, we define the operator $A^{*}: C^{\infty}(G, F_0)^{\omega}\rightarrow C^{\infty}(G, E_0)^{\tau}$ under the identification of the vector-spaces $E_{0}\cong E_0^{*}$ and $F_0\cong F_0^{*}.$ And finally, the operator $\tilde{A}^{*}$ is defined by the commutative diagram \eqref{Diagram:adjoint}. The reason for following this construction of the adjoint operator instead of considering it as a continuous operator from $\Gamma^{\infty}(F^*)\rightarrow \Gamma^{\infty}(E^*)$ is that according to Remark \ref{non:validity:of:iso} we do not have the isomorphism of bundles  $E\cong E^{*},$ and  $F\cong F^{*},$ respectively.    
\end{remark}

To characterise the H\"ormander classes defined by localisations it will be useful to localise operators on homogeneous vector-bundles. We present it as follows.

\begin{definition}\label{definition321}
If $E\cong G\times_\tau E_0 $ and $F\cong G\times_\omega F_0$ are homogeneous vector bundles on $M,$ then we define the class
\begin{align*}
   & \Psi^{m}_{\rho,\delta}(C^{\infty}(G,E_0)^{\tau},C^{\infty}(G,F_0)^\omega;\textnormal{loc}), 
\end{align*}by those continuous linear operators $A:C^{\infty}(G,E_0)^{\tau}\rightarrow C^{\infty}(G,F_0)^\omega,$ such that for every $g\in G,$ and for every   coordinate patch $\omega: U_{\omega}\subset G\rightarrow U\subset \mathbb{R}^n,$ $g\in U_{\omega},$
and for every $\phi,\psi\in C^\infty_0(U),$ the operator $T:C^\infty(U,E_0)\rightarrow C^\infty(F,E_0),$ given by
\begin{equation*}
    Tu:=\psi(\omega^{-1})^*A\omega^{*}(\phi u),\,\,u\in C^\infty(U,E_0),
\end{equation*} is a pseudo-differential operator with symbol $a_T\in \mathbb{C}^{d_\omega\times d_\tau } (S^m_{\rho,\delta}(U\times \mathbb{R}^n)).$ Summarising, $$ A\in \Psi^{m}_{\rho,\delta}(C^{\infty}(G,E_0)^{\tau},C^{\infty}(G,F_0)^\omega;\textnormal{loc}), \textnormal{   if   } A\in \Psi^{m}_{\rho,\delta}(G;E_0,F_0,\textnormal{loc}).$$ Note that with the notation of Remark \ref{remrklocalvecspaces}, $ C^\infty(G, E_0)^{\tau} \subset \textnormal{Dom}(A) .$
\end{definition}
Now, we are ready to prove the main theorem of this subsection.

\begin{theorem}\label{classificationoflocalclasses} Let $\tilde{A}:\Gamma^\infty(E)\rightarrow\Gamma^\infty(F)$ be a continuous linear  operator and let $A: C^\infty(G,E_{0})^\tau\rightarrow C^\infty(G,F_{0})^\omega$ be the induced operator by the identifications $\Gamma^\infty(E)\cong C^\infty(G,E_{0})^\tau$ and $\Gamma^\infty(F)\cong C^\infty(G,F_{0})^\omega.$ If $0\leq \delta<\rho\leq 1,$ and $\rho\geq 1-\delta,$ the following statements are equivalent:
\begin{itemize}
    \item[(A)] $\tilde{A}\in \Psi^{m}_{\rho,\delta}(E,F;\textnormal{loc}).$
     \item[(B)] $\tilde{A}\in \Psi^{m}_{\rho,\delta}(E,F).$
      \item[(C)] ${A}\in \Psi^{m}_{\rho,\delta}((G\times \widehat{G})\otimes \textnormal{End}(E_0,F_0))\cap \mathscr{L}(C^\infty(G,E_0)^\tau,C^\infty(G,F_0)^\omega).$
      \item[(D)] ${A}\in \Psi^{m}_{\rho,\delta}(C^{\infty}(G,E_0)^{\tau},C^{\infty}(G,F_0)^\omega;\textnormal{loc}).$
\end{itemize}
\end{theorem}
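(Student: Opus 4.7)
The plan is to establish the four equivalences by proving the chain (B)$\Leftrightarrow$(C), (C)$\Leftrightarrow$(D), (D)$\Leftrightarrow$(A), reducing at each step to already-established scalar results.

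First I would observe that (B)$\Leftrightarrow$(C) is essentially tautological. By Definition \ref{Hormanderclassesonvectorbunbdles} applied with $\mathcal{L}=\mathcal{L}_G$ and the convention of Remark \ref{Remark:L:G}, the class $\Psi^m_{\rho,\delta}(E,F)$ consists of those $\tilde{A}$ whose associated operator $A=\varkappa_\omega\circ\tilde A\circ\varkappa_\tau^{-1}$ belongs to $\Psi^m_{\rho,\delta}((G\times\widehat G)\otimes\textnormal{End}(E_0,F_0))$. The requirement that $A$ maps $C^\infty(G,E_0)^\tau$ into $C^\infty(G,F_0)^\omega$ is built into the identification $\tilde A\leftrightarrow A$ via the commutative diagram \eqref{maindiagram}. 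Thus (B) and (C) are literally the same statement.

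Next I would prove (C)$\Leftrightarrow$(D) by reducing to the scalar equivalence \eqref{equivalenceofclasses} component by component. Using the decomposition \eqref{sumofoperators}, namely
\[
Af(x)=\sum_{i=1}^{d_\tau}\sum_{r=1}^{d_\omega} A_{ir}f_i(x)\,e_{r,F_0},
\]
each scalar operator $A_{ir}:C^\infty(G)\to C^\infty(G)$ has scalar symbol $\sigma_{A_{ir}}(x,\xi)=\sigma_A(i,r,x,\xi)$. By Definition \ref{symbolclass:L}, $A\in\Psi^m_{\rho,\delta}((G\times\widehat G)\otimes\textnormal{End}(E_0,F_0))$ if and only if, for each $(i,r)$, $A_{ir}\in\Psi^m_{\rho,\delta}(G\times\widehat G)$. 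On the other hand, when one works in a local coordinate patch $\omega:U_\omega\to U$ with trivial $E_0$- and $F_0$-bundles, the matrix of local symbols $a_T\in\mathbb{C}^{d_\omega\times d_\tau}(S^m_{\rho,\delta}(U\times\mathbb{R}^n))$ of a cut-off operator $T=\psi(\omega^{-1})^*A\omega^*(\phi\cdot)$ decomposes entrywise into the local symbols of $\psi(\omega^{-1})^*A_{ir}\omega^*(\phi\cdot)$. Hence $A\in\Psi^m_{\rho,\delta}(C^\infty(G,E_0)^\tau,C^\infty(G,F_0)^\omega;\textnormal{loc})$ iff every $A_{ir}\in\Psi^m_{\rho,\delta}(G,\textnormal{loc})$. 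Invoking \eqref{equivalenceofclasses} for each scalar component gives (C)$\Leftrightarrow$(D), where the side-condition $A\in\mathscr{L}(C^\infty(G,E_0)^\tau,C^\infty(G,F_0)^\omega)$ holds automatically in both formulations since $\tilde A$ is given.

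Finally, I would handle (A)$\Leftrightarrow$(D) via local trivializations of the homogeneous bundles combined with the slice theorem for $M=G/K$. Around any point $gK\in M$, the Bott construction furnishes a local trivialization $\Psi_g:E_{U}\to U\times\mathbb{C}^{d_\tau}$ coming from a local section $s:U\to G$ of the principal $K$-bundle $G\to M$; pulling back by $s$ provides a diffeomorphism between a neighborhood of $gK$ in $M$ and a slice transverse to the $K$-orbit of $g$ in $G$. Under this pullback, the isomorphism $\varkappa_\tau$ becomes the identity on the trivialization, so the Schwartz kernels of $\tilde A$ (written in the bundle chart) and of $A$ (restricted to the slice) differ only by a smooth factor involving $\tau$ and $\omega$. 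Consequently the local symbols of $\tilde A$ in any bundle chart coincide (modulo a smooth reparametrization by the slice coordinates, which preserves the symbol classes $S^m_{\rho,\delta}$ because $\rho\geq 1-\delta$) with the local symbols of $A$ in the corresponding chart of $G$. This gives (A)$\Leftrightarrow$(D). The main obstacle will be keeping the bookkeeping straight in this last step: one must verify that the passage from $\Gamma^\infty(E)$-valued local symbols on $M$ to $\textnormal{End}(E_0,F_0)$-valued local symbols on $G$ is invariant of the chosen trivialization and slice, which is precisely where the invariance condition $\rho\geq 1-\delta$ on coordinate changes in $\mathbb{R}^n$ is used.
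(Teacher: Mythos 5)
Your proof follows the same overall chain as the paper's $(\text{C})\Leftrightarrow(\text{D})$, $(\text{B})\Leftrightarrow(\text{C})$, $(\text{A})\Leftrightarrow(\text{D})$; in particular your treatment of $(\text{B})\Leftrightarrow(\text{C})$ (tautological from Definition \ref{Hormanderclassesonvectorbunbdles}) and of $(\text{C})\Leftrightarrow(\text{D})$ (decompose $A=\sum_{i,r}A_{ir}\otimes e_{r,F_0}$, reduce to the scalar equivalence $\Psi^m_{\rho,\delta}(G\times\widehat{G})=\Psi^m_{\rho,\delta}(G;\textnormal{loc})$ from \eqref{equivalenceofclasses} componentwise) is exactly what the paper does. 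The difference lies in $(\text{A})\Leftrightarrow(\text{D})$: the paper disposes of it in one line by citing the commutative diagram \eqref{maindiagram}, while you attempt a substantive proof via the slice theorem for the principal $K$-bundle $G\to M$ and local trivializations of $E,F$.

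The gap is in that last step. The central claim in your slice argument is that ``the local symbols of $\tilde A$ in any bundle chart coincide\dots with the local symbols of $A$ in the corresponding chart of $G$.'' This cannot be taken at face value: a local symbol of $\tilde A$ in a chart of $M$ is a function on $\tilde U\times\mathbb{R}^{\dim M}$, whereas a local symbol of $A$ in a chart of $G$ (which is what condition $(\text{D})$ requires) is a function on $V\times\mathbb{R}^{\dim G}$, and $\dim G = \dim M + \dim K > \dim M$. ``Modulo a smooth reparametrization by the slice coordinates'' does not address the mismatch, since the cotangent fibers have different dimensions. It is also not enough that the Schwartz kernel of $\tilde A$ is, via the $K$-invariance identities \eqref{characterizationkernel} and Fubini over the fibers, the restriction of $K_A$ to a section of the slice, modulo smooth factors; one still has to show that (i) restricting an order-$m$ pseudo-differential kernel on the $n$-dimensional $G$ to a transverse $\dim M$-dimensional slice yields an order-$m$ pseudo-differential kernel on $M$, and (ii) conversely, that the order-$m$ kernel of $\tilde A$ lifts (via the $(\tau,\omega)$-invariance constraints) to an order-$m$ pseudo-differential kernel on all of $G$. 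Both implications amount to controlling the singularity of $K_A$ along the $K$-fiber directions, which is precisely the content of the averaging/descent arguments for compact homogeneous spaces in \cite{Ruz, RuzhanskyTurunenIMRN}; the condition $\rho\geq 1-\delta$ guarantees coordinate invariance \emph{within} charts of the same dimension but does not, by itself, give passage between manifolds of different dimensions. So the slice picture is the right geometric intuition, but the crucial dimensional reduction step needs to be justified, for instance by reducing to the scalar trivial-bundle case of $(\text{A})\Leftrightarrow(\text{D})$ already established in \cite{Ruz} and then reinstating the matrix/bundle structure componentwise as you did for $(\text{C})\Leftrightarrow(\text{D})$.
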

\begin{proof} First, let us prove that $(C)$ and $(D)$ are equivalent. So, let us choose $A\in \Psi^{m}_{\rho,\delta}((G\times \widehat{G})\otimes \textnormal{End}(E_0,F_0))\cap \mathscr{L}(C^\infty(G,E_0)^\tau,C^\infty(G,F_0)^\omega).  $ From \eqref{sumofoperators}, we can decompose $A$ as 
\begin{align}\label{sumofoperators22}
   Af= \sum_{i=1}^{d_\tau} \sum_{r=1}^{d_\omega} A_{ir}f_i e_{r, F_0},
\end{align}where every $A_{ir}\in \Psi^{m}_{\rho,\delta}(G\times \widehat{G}).$ Using the notation in Definition  \ref{definition321}, we have that the operator $T$ defined by
\begin{equation*}
    Tu:=\psi(\omega^{-1})^*A\omega^{*}(\phi u)= \sum_{i=1}^{d_\tau} \sum_{r=1}^{d_\omega}  \psi(\omega^{-1})^*A_{ir}\omega^{*}(\phi u_i)e_{r, F_0},
\end{equation*} can be decomposed in the form
\begin{align*}
     Tu:=\psi(\omega^{-1})^*A\omega^{*}(\phi u)= \sum_{i=1}^{d_\tau} \sum_{r=1}^{d_\omega}  T_{ir}u_{i}e_{r, F_0},
\end{align*} where $T_{ir}:C^{\infty}(U)\rightarrow C^{\infty}(U)$ is defined by $T_{ir}f:=\psi(\omega^{-1})^*A_{ir}\omega^{*}(\phi f),$ $f\in C^{\infty}(U).$ In view of the equivalence of classes \eqref{equivalenceofclasses},  $ A_{ir}\in \Psi^{m}_{\rho,\delta}(G\times \widehat{G})=\Psi^{m}_{\rho,\delta}(G;\textnormal{loc})$ for $\delta<\rho$ and $\rho\geq 1-\delta,$ and consequently, we conclude that $T$ is a pseudo-differential operator with a matrix symbol in the class $S^{m}_{\rho,\delta}(U\times \mathbb{R}^n).$ This analysis in components shows that  
\begin{align*}
  &{A}\in \Psi^{m}_{\rho,\delta}((G\times \widehat{G})\otimes \textnormal{End}(E_0,F_0))\cap \mathscr{L}(C^\infty(G,E_0)^\tau,C^\infty(G,F_0)^\omega)\\
  &\Longleftrightarrow \forall i,r,\,  {A}_{ir}\in \Psi^{m}_{\rho,\delta}(G\times \widehat{G})\\
  &\Longleftrightarrow \forall i,r,\,  {A}_{ir}\in \Psi^{m}_{\rho,\delta}(G;\textnormal{loc})\\
  &\Longleftrightarrow{A}\in \Psi^{m}_{\rho,\delta}(C^{\infty}(G,E_0)^{\tau},C^{\infty}(G,F_0)^\omega;\textnormal{loc}). 
\end{align*} Observe that  (B) and (C) are equivalent by Definition \ref{Hormanderclassesonvectorbunbdles}. Under that identifications $\Gamma^\infty(E)\cong C^{\infty}(G,E_0)^\tau$ and $\Gamma^\infty(F)\cong C^{\infty}(G,F_0)^\omega,$ the diagram  \eqref{maindiagram} shows that (A) and (D) are equivalent. So, we conclude the proof.
\end{proof}

\section{Global functional calculus on homogeneous and applications}\label{GFCSECT}
In this section we establish the global functional calculus for the subelliptic H\"ormander classes of operators $\Psi^{m,\mathcal{L}}_{\rho,\delta}(E):=\Psi^{m,\mathcal{L}}_{\rho,\delta}(E,E),$ where $E\cong G\times_{\tau}E_0$ is a homogeneous vector bundle. 

We will start by establishing the global functional calculus for the vector-valued classes and later we will extend the functional calculus to the setting of vector bundles.

\subsection{Vector-valued $\mathcal{L}$-elliptic pseudo-differential operators} 

Here, we will discuss the vector-valued $\mathcal{L}$-elliptic pseudo-differential operators. The main aim of this section is  to show the existence of parametrices for $\mathcal{L}$-elliptic operator in the vector valued setting.   

 \begin{remark}[Leibniz rule for endomorphism-valued symbols] Let us consider the endomorphisms-valued symbols
 \begin{equation}
    \sigma_{A_1}:G\times \widehat{G}\rightarrow \bigcup_{[\xi]\in \widehat{G}}\mathbb{C}^{d_\xi\times d_\xi}( \textnormal{End}(E_0,F_0)),
\end{equation} and
\begin{equation}
    \sigma_{A_2}:G\times \widehat{G}\rightarrow \bigcup_{[\xi]\in \widehat{G}}\mathbb{C}^{d_\xi\times d_\xi}( \textnormal{End}(F_0,H_0)).
\end{equation}Observe that 
\begin{equation*}
    \sigma_{A_1}\sigma_{A_2}:G\times \widehat{G}\rightarrow \bigcup_{[\xi]\in \widehat{G}}\mathbb{C}^{d_\xi\times d_\xi}( \textnormal{End}(E_0,H_0)),
\end{equation*}where, for any $(x,[\xi]),$ the $(u,v)$-entry with $1\leq u,v\leq d_\xi,$ is given by
\begin{equation*}
    [\sigma_{A_1}\sigma_{A_2}]_{uv}(x,[\xi]):=\sum_{w=1}^{d_\xi}\sigma_{A_1}(x,[\xi])_{uw}\sigma_{A_2}(x,[\xi])_{wv}.
\end{equation*}The difference operators $\Delta_\xi^{\alpha}$ for matrix-valued symbols induce difference operators for endomorphisms-valued symbols and we will check the consistency of the Leibniz rule. First step is to define
\begin{equation}
    \Delta_\xi^\alpha[\sigma_1\sigma_2](x, \xi):=[ \Delta_\xi^\alpha(\sigma_1(x,\xi)\sigma_2(x,\xi))_{u,v}]_{u,v=1}^{d_\xi},
\end{equation}where  the family of endomorphisms $ \Delta_\xi^\alpha(\sigma_1(x, \xi)\sigma_2(x,\xi))_{u,v}$ is defined via
\begin{equation}
    ( \Delta_\xi^\alpha(\sigma_1(x, \xi)\sigma_2(x,\xi))_{u,v})e_{i,E_0},e_{r,H_0})=\Delta_\xi^\alpha(\sigma_1(i,r,x, \xi)\sigma_2(i,r,x,\xi))_{u,v},
    \end{equation}on the standard orthonormal bases that we fixed for $E_0$ and $H_0.$ Note that
    \begin{align*}
       \Delta_\xi^\alpha(\sigma_1(i,r,x,\xi)\sigma_2(i,r,x,\xi))_{u,v}=\sum_{w=1}^{d_\xi} \Delta_\xi^\alpha(\sigma_1(i,r,x, \xi)_{uw}\sigma_2(i,r,x,\xi)_{wv}),
    \end{align*}and then
    \begin{align*}
      \Delta_\xi^\alpha(\sigma_1(i,r,x, \xi)\sigma_2(i,r,x,\xi)) &=[\Delta_\xi^\alpha(\sigma_1(i,r,x, \xi)\sigma_2(i,r,x,\xi))_{u,v}]_{u,v=1}^{d_\xi}\\
      &=\left( \sum_{w=1}^{d_\xi} \Delta_\xi^\alpha(\sigma_1(i,r,x, \xi)_{uw}\sigma_2(i,r,x,\xi))_{wv}\right)_{u,v=1}^{d_\xi}.
    \end{align*}
 Using the Leibniz rule in Remark \ref{Leibnizrule} for the matrix-valued symbols, we obtain
 \begin{align*}
    &\Delta_\xi^\alpha(\sigma_1(i,r,x, \xi)\sigma_2(i,r,x,\xi))\\
    &=\sum_{  |\alpha_1|,|\alpha_2|\leq |\alpha|\leq |\alpha_1|+|\alpha_2|  } \Delta_\xi^{\alpha_1}(\sigma_1(i,r,x, \xi))(\Delta_\xi^{\alpha_2}(\sigma_2(i,r,x, \xi)).
 \end{align*}Consequently,
 \begin{align*}
    & \Delta_\xi^\alpha(\sigma_1(i,r,x, \xi)\sigma_2(i,r,x,\xi))_{uv}\\
     &=\sum_{  |\alpha_1|,|\alpha_2|\leq |\alpha|\leq |\alpha_1|+|\alpha_2|  }\sum_{w=1}^{d_\xi} \Delta_\xi^{\alpha_1}(\sigma_1(i,r,x, \xi))_{uw}(\Delta_\xi^{\alpha_2}(\sigma_2(i,r,x, \xi))_{wv}.
 \end{align*}This analysis leads to the following identities
 \begin{align*}
     & ( \Delta_\xi^\alpha(\sigma_1(x, \xi)\sigma_2(x,\xi))_{u,v})e_{i,E_0},e_{r,H_0})=\Delta_\xi^\alpha(\sigma_1(i,r,x, \xi)\sigma_2(i,r,x,\xi))_{u,v}\\
     &=\sum_{w=1}^{d_\xi} \Delta_\xi^{\alpha}(\sigma_1(i,r,x, \xi)_{uw}\sigma_2(i,r,x,[\xi])_{wv})\\
      &=\sum_{  |\alpha_1|,|\alpha_2|\leq |\alpha|\leq |\alpha_1|+|\alpha_2|  }\sum_{w=1}^{d_\xi}
      \Delta_\xi^{\alpha_1}(\sigma_1(i,r,x, \xi))_{uw}(\Delta_\xi^{\alpha_2}(\sigma_2(i,r,x, \xi))_{wv}\\
      &=\sum_{  |\alpha_1|,|\alpha_2|\leq |\alpha|\leq |\alpha_1|+|\alpha_2|  } [\Delta_\xi^{\alpha_1}(\sigma_1(i,r,x, \xi))(\Delta_\xi^{\alpha_2}(\sigma_2(i,r,x, \xi))]_{uv}\\
      &=\sum_{  |\alpha_1|,|\alpha_2|\leq |\alpha|\leq |\alpha_1|+|\alpha_2|  }( \Delta_\xi^{\alpha_1}(\sigma_1(x, \xi))(\Delta_\xi^{\alpha_2}\sigma_2(x,\xi))_{u,v}))e_{i,E_0},e_{r,H_0}),
 \end{align*}showing the Leibniz rule
 \begin{align}\label{leibniz:End:symbol} \boxed{
      \Delta_\xi^\alpha(\sigma_1(x,\xi)\sigma_2(x,\xi)=\sum_{  |\alpha_1|,|\alpha_2|\leq |\alpha|\leq |\alpha_1|+|\alpha_2|  } \Delta_\xi^{\alpha_1}(\sigma_1(x, \xi))(\Delta_\xi^{\alpha_2}\sigma_2(x,\xi))}
 \end{align}
 \end{remark}

\begin{theorem}\label{IesT} Let $m\in \mathbb{R},$ and let $0\leqslant \delta<\rho\leqslant 1.$  Let  $a=a(x,\xi)\in {S}^{m,\mathcal{L}}_{\rho,\delta}((G\times \widehat{G})\otimes \textnormal{End}(E_0)) .$  Assume also that $a(x,\xi)\in \mathbb{C}^{d_\xi \times d_\xi}(\textnormal{End}(E_0))$ is invertible for every $(x,[\xi])\in G\times\widehat{G},$\footnote{which means that there exists  $b_0(x,\xi)\in \mathbb{C}^{d_\xi\times d_\xi}(\textnormal{End}(E_0)) ,$ such that $b_{0}(x,\xi)a(x,\xi)=a(x,\xi)b_0(x,\xi)=I,$ where $I=(\textnormal{id}_{E_0})_{i,j=1}^{d_\xi}$ is the identity element in  $\mathbb{C}^{d_\xi\times d_\xi}(\textnormal{End}(E_0)).$} and satisfies
\begin{equation}\label{Ies}
   \sup_{(x,[\xi])\in G\times \widehat{G}}\Vert\widehat{\mathcal{M}}(\xi)^m\otimes a(x,\xi)^{-1} \Vert_{\textnormal{op}}<\infty.
\end{equation}
Then, $a^{-1}:=a(x,\xi)^{-1}\in {S}^{-m,\mathcal{L}}_{\rho,\delta}((G\times \widehat{G})\otimes \textnormal{End}(E_0) ).$ 
\end{theorem}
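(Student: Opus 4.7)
The plan is to prove that $a^{-1}$ satisfies the symbol seminorm estimates of Definition \ref{symbolclass} by induction on the total order $N = |\alpha| + |\beta|$. The base case $N=0$ is essentially the hypothesis \eqref{Ies}, together with Theorem \ref{characterisations} (condition D with $\gamma_0$ arbitrary) to upgrade the one-sided bound to the full two-sided family of seminorms.

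For the inductive step, I would differentiate the defining identity $a(x,\xi) \cdot a^{-1}(x,\xi) = I$ by applying $\partial_X^{(\beta)}\Delta_\xi^\alpha$ to both sides. Using the standard Leibniz rule for vector fields together with the difference Leibniz rule \eqref{leibniz:End:symbol} for endomorphism-valued symbols, and isolating the contribution coming from $(\alpha_1,\beta_1) = (0,0)$, this produces the recursion
\begin{equation*}
\partial_X^{(\beta)}\Delta_\xi^\alpha a^{-1} = -a^{-1} \sum_{(\alpha_1,\beta_1)\neq (0,0)} C_{\alpha_1,\alpha_2,\beta_1,\beta_2}\,\bigl(\partial_X^{(\beta_1)}\Delta_\xi^{\alpha_1} a\bigr)\,\bigl(\partial_X^{(\beta_2)}\Delta_\xi^{\alpha_2} a^{-1}\bigr),
\end{equation*}
with the summation ranging over $\beta_1+\beta_2 = \beta$ and $|\alpha_1|,|\alpha_2|\leq |\alpha|\leq |\alpha_1|+|\alpha_2|$. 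Crucially, in each remaining term $|\alpha_2|+|\beta_2| < |\alpha|+|\beta|$, so the inductive hypothesis applies to $\partial_X^{(\beta_2)}\Delta_\xi^{\alpha_2} a^{-1}$.

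To close the induction, I would sandwich the recursion between suitable powers of $\widehat{\mathcal{M}}(\xi)$, inserting neutralising pairs $\widehat{\mathcal{M}}(\xi)^{s}\widehat{\mathcal{M}}(\xi)^{-s}$ between consecutive factors so that each factor independently falls under an equivalent two-sided bound from Theorem \ref{characterisations}: the factor $a^{-1}$ contributes order $-m$ (by the base case), $\partial_X^{(\beta_1)}\Delta_\xi^{\alpha_1}a$ contributes order $m-\rho|\alpha_1|+\delta|\beta_1|$ (by hypothesis on $a$), and the remaining factor contributes order $-m-\rho|\alpha_2|+\delta|\beta_2|$ by induction. Adding the three contributions produces total order
\begin{equation*}
-m -\rho(|\alpha_1|+|\alpha_2|) + \delta(|\beta_1|+|\beta_2|) \leq -m - \rho|\alpha| + \delta|\beta|,
\end{equation*}
where the inequality uses $|\alpha_1|+|\alpha_2|\geq |\alpha|$ from the difference Leibniz rule; this is exactly the estimate required to conclude $a^{-1}\in S^{-m,\mathcal{L}}_{\rho,\delta}((G\times\widehat{G})\otimes \mathrm{End}(E_0))$. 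The principal obstacle is the non-commutativity of $\widehat{\mathcal{M}}(\xi)$ with general $\mathrm{End}(E_0)$-valued symbols, which is precisely why the flexibility offered by the equivalent characterisations of Theorem \ref{characterisations} is essential: the two-sided formulation of the symbol class permits us to choose the intermediate weight exponent at will so that each factor in the recursion independently satisfies a bounded two-sided estimate.
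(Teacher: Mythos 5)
Your core strategy—applying $\partial_X^{(\beta)}\Delta_\xi^{\alpha}$ to the identity $a\circ a^{-1}=I$, isolating the leading term, and then closing the estimates via the weight-shifting characterisations of Theorem~\ref{characterisations}—matches the paper's. The structural difference is that the paper runs two separate inductions (first on $|\beta|$ with no differences, then on the order of the difference operator with no spatial derivatives, supplemented by the auxiliary bounds $\|\widehat{\mathcal{M}}(\xi)^{(\rho|\gamma|-\delta|\beta|)}\otimes a^{-1}\circ\Delta_\xi^{\gamma}\partial_X^{(\beta)}a\|_{\mathrm{op}}=O(1)$), whereas you collapse both into a single induction on the total order $N=|\alpha|+|\beta|$.

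This is where a genuine gap appears. You claim that after removing the $(\alpha_1,\beta_1)=(0,0)$ term, every remaining term satisfies $|\alpha_2|+|\beta_2|<|\alpha|+|\beta|$. That is false. The difference Leibniz rule \eqref{leibniz:End:symbol} is constrained only by $|\alpha_1|,|\alpha_2|\le|\alpha|\le|\alpha_1|+|\alpha_2|$, which—unlike the ordinary derivative Leibniz rule—permits diagonal terms with $|\alpha_1|=|\alpha_2|=|\alpha|$. Taking $\beta_1=0$ (so $\beta_2=\beta$), such a term has $|\alpha_2|+|\beta_2|=|\alpha|+|\beta|$, i.e.\ the same total order, so the inductive hypothesis does not apply to it. Concretely, already at $|\alpha|=1$, $\beta=0$, the expansion of $\Delta_{q_1}(a\,a^{-1})=0$ produces, beyond $a\cdot\Delta_{q_1}a^{-1}$ and $(\Delta_{q_1}a)\cdot a^{-1}$, the diagonal terms $(\Delta_{q_\nu}a)(\Delta_{q_{\nu'}}a^{-1})$ with $|\nu|=|\nu'|=1$; after multiplying by $a^{-1}$, a first-order difference of $a^{-1}$ still sits on the right-hand side, so the recursion cannot be ``solved'' by a naive induction on $N$.

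To repair this you must exploit that the offending terms always carry an extra factor $a^{-1}\Delta_\xi^{\alpha_1}a$ with $|\alpha_1|\ge1$, whose operator norm decays like $\widehat{\mathcal{M}}(\xi)^{-\rho|\alpha_1|}$. This makes the identity a (perturbed-identity) linear system $(I+L)\bigl(\Delta_\xi^{\alpha}\partial_X^{(\beta)}a^{-1}\bigr)=\text{(strictly lower-order data)}$ with $\|L\|_{\mathrm{op}}\lesssim\widehat{\mathcal{M}}(\xi)^{-\rho}$, which is invertible for $\langle\xi\rangle$ large, with the finitely many remaining $[\xi]$ handled trivially. Equivalently one can run a two-pass bootstrap (first a weak estimate, then feed it back to gain the $\rho|\alpha|$ decay). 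Neither step is automatic from a straight induction on $N$, and it is worth noting that the paper's own treatment of the terms it labels $\mathrm{V}_{(3)}$, $\mathrm{V}_{(4)}$ (which involve $\Delta_{q_{\ell+\nu'}}a^{-1}$, of the same order as the target) is dispatched with the remark ``a similar analysis'' and in fact needs the same repair, so you are in good company—but the gap in the well-foundedness claim needs to be addressed explicitly.
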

\begin{proof}
Let us estimate $\partial^{(\beta)}_{X_i}a^{-1}$ first. Suppose we have proved that 
\begin{equation*}
 I:= \sup_{|\beta|\leqslant \ell}\sup_{(x,[\xi])\in  G\times \widehat{G} }\Vert \widehat{ \mathcal{M}}(\xi)^{(-\delta|\beta|+m)}\otimes \partial_{X}^{(\beta)} (a(x,\xi)^{-1})\Vert_{\textnormal{op}} <\infty,
   \end{equation*} 
   for some  $ \ell\in \mathbb{N}.$ We proceed by mathematical induction. Let us analyse the cases $|\tilde\beta|=\ell+1.$ If we write $\partial_{X}^{(\tilde\beta)}= \partial_{X}\partial_{X}^{(\beta)}$ where $|\beta|\leqslant \ell,$ then  $\partial^{\tilde\beta}_{X_i}a^{-1}=\partial_{X}\partial_{X}^{(\beta)}a^{-1}.$ From the identity $$ a(x,\xi) \circ a(x,\xi)^{-1}=I_{ \mathbb{C}^{d_\xi\times d_\xi}( \textnormal{End}(E_0)),      }$$ we have
   \begin{equation*}
       a(x,\xi) \circ \partial_{X}\partial_{X}^{(\beta)}a^{-1}(x,\xi)=-\sum_{\beta_1+\beta_2=\beta+e_{j},|\beta_2|\leqslant  |\beta|}C_{\beta_1,\beta_2}(\partial_{X}^{(\beta_1)}a(x,\xi))\circ (\partial_{X}^{(\beta_2)}a^{-1}(x,\xi)).
   \end{equation*}
Consequently,
\begin{equation*}
       \partial_{X}\partial_{X}^{(\beta)}a^{-1}(x,\xi)=-a(x,\xi)^{-1} \circ\left(\sum_{\beta_1+\beta_2=\beta+e_{j},|\beta_2|\leqslant  |\beta|}C_{\beta_1,\beta_2}(\partial_{X}^{(\beta_1)}a(x,\xi))\circ(\partial_{X}^{(\beta_2)}a^{-1}(x,\xi)) \right).
   \end{equation*} We want to prove the estimate
   \begin{equation*}
 \sup_{(x,[\xi])\in G\times \widehat{G} }\Vert \widehat{ \mathcal{M}}(\xi)^{(-\delta(|\beta|+1)+m)}\otimes \partial_{X}\partial_{X}^{(\beta)} a(x,\xi)^{-1}\Vert_{\textnormal{op}} <\infty.
   \end{equation*}
   For this, we only need to show that for every $\beta_1$ and $\beta_2$ such that $\beta_1+\beta_2=\beta+e_{j},|\beta_2|\leqslant  |\beta|,$
\begin{equation*}
    \sup_{(x,[\xi])\in G\times \widehat{G} }\Vert \widehat{ \mathcal{M}}(\xi)^{(-\delta(|\beta|+1)+m)} \otimes a(x,\xi)^{-1}\circ(\partial_{X}^{(\beta_1)}a(x,\xi))\circ(\partial_{X}^{(\beta_2)}a^{-1}(x,\xi))   \Vert_{\textnormal{op}} <\infty.
\end{equation*} 
Observe that
\begin{align*}
 &\widehat{ \mathcal{M}}(\xi)^{(-\delta(|\beta|+1)+m)} \otimes a(x,\xi)^{-1}\circ(\partial_{X}^{(\beta_1)}a(x,\xi))\circ(\partial_{X}^{(\beta_2)}a^{-1}(x,\xi))   \\
   &=\widehat{ \mathcal{M}}(\xi)^{(-\delta(|\beta_1|+|\beta_2|)+m)} \otimes a(x,\xi)^{-1}\circ(\partial_{X}^{(\beta_1)}a(x,\xi))\circ(\partial_{X}^{(\beta_2)}a^{-1}(x,\xi)).
\end{align*} So, by Theorem \ref{characterisations} and \eqref{Ies} we have
\begin{align*}
   & \Vert \widehat{ \mathcal{M}}(\xi)^{(-\delta(|\beta|+1)+m)} \otimes a(x,\xi)^{-1}\circ(\partial_{X}^{(\beta_1)} a(x,\xi))\circ(\partial_{X}^{(\beta_2)}a^{-1}(x,\xi)) \Vert_{\textnormal{op}}\\
    &= \Vert \widehat{ \mathcal{M}}(\xi)^{(-\delta(|\beta_1|+|\beta_2|)} \widehat{ \mathcal{M}}(\xi)^{ m } \otimes  a(x,\xi)^{-1}\circ(\partial_{X}^{(\beta_1)}a(x,\xi))\circ(\partial_{X}^{(\beta_2)}a^{-1}(x,\xi)) \Vert_{\textnormal{op}}\\
    &\leqslant \Vert \widehat{ \mathcal{M}}(\xi)^{ m }\otimes a(x,\xi)^{-1} \Vert_{\textnormal{op}}\Vert(\partial_{X}^{(\beta_1)}a(x,\xi))\circ(\partial_{X}^{(\beta_2)}a^{-1}(x,\xi)) \otimes \widehat{ \mathcal{M}}(\xi)^{(-\delta(|\beta_1|+|\beta_2|)}\Vert_{\textnormal{op}}\\
    &\lesssim  \Vert(\partial_{X}^{(\beta_1)} a(x,\xi))\circ(\partial_{X}^{(\beta_2)}a^{-1}(x,\xi)) \otimes \widehat{ \mathcal{M}}(\xi)^{(-\delta(|\beta_1|+|\beta_2|)}\Vert_{\textnormal{op}}.
    \end{align*}
Again, by using  Theorem \ref{characterisations}, we have

\begin{align*}
   & \Vert(\partial_{X}^{(\beta_1)}a(x,\xi))\circ(\partial_{X}^{(\beta_2)}a^{-1}(x,\xi)) \otimes  \widehat{ \mathcal{M}}(\xi)^{(-\delta(|\beta_1|+|\beta_2|)}\Vert_{\textnormal{op}}\\
   &\leqslant \Vert(\partial_{X}^{(\beta_1)}a(x,\xi)) \otimes  \widehat{ \mathcal{M}}(\xi)^{(-m-\delta|\beta_1|)}\Vert_{\textnormal{op}} \\
   &\hspace{4cm}\times\Vert\widehat{ \mathcal{M}}(\xi)^{(m+\delta|\beta_1|)} \otimes   (\partial_{X}^{(\beta_2)}a^{-1}(x,\xi)) \otimes \widehat{ \mathcal{M}}(\xi)^{(-\delta(|\beta_1|+|\beta_2|))}  \Vert_{\textnormal{op}}\\
   &\lesssim  \Vert\widehat{ \mathcal{M}}(\xi)^{(m+\delta|\beta_1|)} \otimes   (\partial_{X}^{(\beta_2)}a^{-1}(x,\xi)) \otimes  \widehat{ \mathcal{M}}(\xi)^{(-\delta(|\beta_1|+|\beta_2|))}  \Vert_{\textnormal{op}}\\
   &=\Vert\widehat{ \mathcal{M}}(\xi)^{(m+\delta|\beta_1|)} \otimes   (\partial_{X}^{(\beta_2)}a^{-1}(x,\xi)) \otimes \widehat{ \mathcal{M}}(\xi)^{(-\delta(|\beta_2|+m)}\widehat{ \mathcal{M}}(\xi)^{(-\delta|\beta_1|-m))}\Vert_{\textnormal{op}}\\
   &\lesssim \Vert   (\partial_{X}^{(\beta_2)}a^{-1}(x,\xi)) \otimes \widehat{ \mathcal{M}}(\xi)^{(-\delta(|\beta_2|+m))}\Vert_{\textnormal{op}}\\
  & \lesssim 1.
\end{align*}A similar analysis using the Leibniz rule for difference operators can be used in order to estimate the differences $\Delta_\xi^{\alpha}a^{-1}.$ For this, we need the following two estimates,
\begin{equation}
     \Vert \widehat{\mathcal{M}}(\xi)^{(\rho|\gamma|-\delta|\beta|)} \otimes a(x,\xi)^{-1}\circ [\Delta_{\xi}^{\gamma}\partial_{X}^{(\beta)}a(x,\xi)]\Vert_{\textnormal{op}}=O(1),
\end{equation}
\begin{equation}
     \Vert  [\Delta_{\xi}^{\gamma}\partial_{X}^{(\beta)}a(x,\xi)]\circ a(x,\xi)^{-1} \otimes\widehat{\mathcal{M}}(\xi)^{(\rho|\gamma|-\delta|\beta|)}\Vert_{\textnormal{op}}\\ =O(1).
\end{equation}
For the proof, let us use  \eqref{Ies} as follows
\begin{align*}
     \Vert \widehat{\mathcal{M}}(\xi)^{(\rho|\gamma|-\delta|\beta|)} \otimes &a(x,\xi)^{-1}\circ[\Delta_{\xi}^{\gamma}\partial_{X}^{(\beta)}a(x,\xi)]\Vert_{\textnormal{op}}\\
    &= \Vert \widehat{\mathcal{M}}(\xi)^{(\rho|\gamma|-\delta|\beta|-m)}\widehat{\mathcal{M}}(\xi)^{ m } \otimes a(x,\xi)^{-1}\circ[\Delta_{\xi}^{\gamma}\partial_{X}^{(\beta)}a(x,\xi)]\Vert_{\textnormal{op}}\\
     &\lesssim \Vert \widehat{\mathcal{M}}(\xi)^{ m } \otimes a(x,\xi)^{-1}\circ[\Delta_{\xi}^{\gamma}\partial_{X}^{(\beta)}a(x,\xi)] \otimes \widehat{\mathcal{M}}(\xi)^{(\rho|\gamma|-\delta|\beta|-m)}\Vert_{\textnormal{op}}\\ 
     &\lesssim \Vert \widehat{\mathcal{M}}(\xi)^{ m } \otimes a(x,\xi)^{-1}\Vert_{\textnormal{op}}\Vert[\Delta_{\xi}^{\gamma}\partial_{X}^{(\beta)}a(x,\xi)] \otimes \widehat{\mathcal{M}}(\xi)^{(\rho|\gamma|-\delta|\beta|-m)}\Vert_{\textnormal{op}}\\ 
     &= O(1).
\end{align*} On the other hand,
\begin{align*}
     \Vert  [\Delta_{\xi}^{\gamma}\partial_{X}^{(\beta)}a(x,\xi)]& \circ a(x,\xi)^{-1} \otimes \widehat{\mathcal{M}}(\xi)^{(\rho|\gamma|-\delta|\beta|)}\Vert_{\textnormal{op}}\\
    &= \Vert  [\Delta_{\xi}^{\gamma}\partial_{X}^{(\beta)}a(x,\xi)] \otimes   \widehat{\mathcal{M}}(\xi)^{- m }\widehat{\mathcal{M}}(\xi)^{ m } \otimes a(x,\xi)^{-1} \otimes \widehat{\mathcal{M}}(\xi)^{(\rho|\gamma|-\delta|\beta|)}\Vert_{\textnormal{op}}\\
     &\lesssim \Vert \widehat{\mathcal{M}}(\xi)^{(\rho|\gamma|-\delta|\beta|)} \otimes [\Delta_{\xi}^{\gamma}\partial_{X}^{(\beta)}a(x,\xi)] \otimes \widehat{\mathcal{M}}(\xi)^{- m } \widehat{\mathcal{M}}(\xi)^{ m } \otimes a(x,\xi)^{-1}\Vert_{\textnormal{op}}\\ 
     &\lesssim \Vert \widehat{\mathcal{M}}(\xi)^{(\rho|\gamma|-\delta|\beta|)} \otimes [\Delta_{\xi}^{\gamma}\partial_{X}^{(\beta)}a(x,\xi)] \otimes \widehat{\mathcal{M}}(\xi)^{- m }\Vert_{\textnormal{op}} \Vert\widehat{\mathcal{M}}(\xi)^{ m } \otimes a(x,\xi)^{-1}\Vert_{\textnormal{op}}\\ 
     &\lesssim \Vert \widehat{\mathcal{M}}(\xi)^{(-m+\rho|\gamma|-\delta|\beta|)} \otimes [\Delta_{\xi}^{\gamma}\partial_{X}^{(\beta)}a(x,\xi)]\Vert_{\textnormal{op}} \\ 
     &= O(1).
\end{align*}
Now, we will estimate in $(m+\rho\ell)$ the subelliptic order for the differences  $\Delta_{q_{\ell}}a^{-1}$ where $\Delta_{q_{\ell}}$ denotes a difference operator of order $\ell$. To do so, we will use mathematical induction. The case $\ell=0$ holds true from the hypothesis of Theorem \ref{IesT}. To study the differences of higher order we will use the Leibniz rule \eqref{leibniz:End:symbol},
\begin{align*}
   \Delta_{q_\ell}[ a_{1} \circ a_{2}](x,\xi) =\sum_{ |\gamma|,|\varepsilon|\leqslant \ell\leqslant |\gamma|+|\varepsilon| }C_{\varepsilon,\gamma}(\Delta_{q_\gamma}a_{1})(x,\xi) \circ (\Delta_{q_\varepsilon}a_{2})(x,\xi).
\end{align*} 
From the identity, $$a(x,\xi)\circ a(x,\xi)^{-1}=I_{ \mathbb{C}^{d_\xi\times d_\xi}( \textnormal{End}(E_0))},$$ we deduce that
\begin{align*}
  (\Delta_{q_1}a)(x,\xi) \circ & a(x,\xi)^{-1}+a(x,\xi) \circ (\Delta_{q_1}a^{-1})(x,\xi)\\
   &=-\sum_{ 1= |\nu|,|\nu'| }C_{\nu,\nu'}(\Delta_{q_{(\nu)}}a)(x,\xi) \circ (\Delta_{q_{(\nu')}}a^{-1})(x,\xi),
\end{align*}where $\Delta_{q_{(\nu)}}$ are differences operators of order $|\nu|=1$. Consequently
\begin{align*}
   &  (\Delta_{q_1}a^{-1})(x,\xi)=-a(x,\xi)^{-1}\circ(\Delta_{q_1}a)(x,\xi)\circ a^{-1}(x,\xi)\\
   &\hspace{2cm}-\sum_{ 1= |\nu|,|\nu'| }C_{\nu,\nu'}a(x,\xi)^{-1}\circ (\Delta_{q_{(\nu)}}a)(x,\xi) \circ (\Delta_{q_{(\nu')}}a^{-1})(x,\xi).
\end{align*} The differences of higher order  $\Delta_{q_{\ell+1}}a^{-1},$ can be estimated e.g. using the difference operators of the form $$\Delta_{q_{\ell+1}}=\Delta_{q_{\ell}}\Delta_{q_{1}}.$$ 
Then, by applying the difference operator $\Delta_{q_{\ell}}$ to $(\Delta_{q_1}a^{-1})(x,\xi)$ we essentially obtain linear combinations of terms of the following kind as a consequence of the Leibniz rule:
\begin{itemize}
    \item $\textnormal{IV}_{(1)}(x,\xi):=\Delta_{q_{\ell}}a(x,\xi)^{-1}\circ (\Delta_{q_1}a)(x,\xi)\circ a^{-1}(x,\xi)$\\
    \item $\textnormal{IV}_{(2)}(x,\xi):=a(x,\xi)^{-1}\circ \Delta_{q_{\ell+1}}a(x,\xi)\circ a^{-1}(x,\xi)$\\
     \item $\textnormal{IV}_{(3)}(x,\xi):=a(x,\xi)^{-1}\circ \Delta_{q_{1}}a(x,\xi)\circ \Delta_{q_{\ell}}a^{-1}(x,\xi)$\\
      \item $\textnormal{IV}_{(4)}(x,\xi):=\Delta_{q_{\ell_1}} a(x,\xi)^{-1}\circ(\Delta_{q_{\ell_2+1}}a)(x,\xi)\circ \Delta_{q_{\ell_3+1}}a^{-1}(x,\xi),$ $1\leqslant \ell_{1},\ell_2,\ell_3\leqslant \ell\leqslant \ell_1+\ell_2+\ell_3,$\\
      \item $\textnormal{V}_{(1)}(x,\xi):=\Delta_{q_{\ell}}a(x,\xi)^{-1}\circ (\Delta_{q_{(\nu) } }a)(x,\xi)\circ \Delta_{q_{(\nu')}}a^{-1}(x,\xi)$\\
    \item $\textnormal{V}_{(2)}(x,\xi):=a(x,\xi)^{-1}\circ \Delta_{q_{\ell+\nu}}a(x,\xi)\circ \Delta_{q_{\nu'}}a^{-1}(x,\xi)$\\
     \item $\textnormal{V}_{(3)}(x,\xi):=a(x,\xi)^{-1}\circ \Delta_{q_{(\nu)}}a(x,\xi)\circ \Delta_{q_{\ell+\nu'}}a^{-1}(x,\xi)$\\
      \item $\textnormal{V}_{(4)}(x,\xi):=\Delta_{q_{\ell_1}} a(x,\xi)^{-1}\circ(\Delta_{q_{\ell_2+\nu}}a)(x,\xi)\circ \Delta_{q_{\ell_3+\nu'}}a^{-1}(x,\xi),$ $1\leqslant \ell_{1},\ell_2,\ell_3\leqslant \ell\leqslant \ell_1+\ell_2+\ell_3,$\\
\end{itemize}which we can estimate as follows. First, assume that for every $\ell\geqslant  1,$ $\ell\in \mathbb{N},$ we have
$$\sup_{(x,[\xi])}  \Vert\widehat{\mathcal{M}}(\xi)^{(m+\rho\ell)} \otimes (\Delta_{q_{\ell}}a^{-1})(x,\xi)\Vert_{\textnormal{op}}  <\infty. $$ We need to prove that
$$\sup_{(x,[\xi])}  \Vert\widehat{\mathcal{M}}(\xi)^{(m+\rho(\ell+1))} \otimes (\Delta_{q_{\ell+1}}a^{-1})(x,\xi)\Vert_{\textnormal{op}}  <\infty. $$
For this, it is enough to prove that 
$$ \sup_{(x,[\xi])}  \Vert \widehat{\mathcal{M}}(\xi)^{(m+\rho(\ell+1))} \otimes \textnormal{IV}_{(i)}(x,\xi)\Vert_{\textnormal{op}},\, \sup_{(x,[\xi])}  \Vert \widehat{\mathcal{M}}(\xi)^{ (m+\rho(\ell+1))} \otimes  \textnormal{V}_{(j)}(x,\xi)\Vert_{\textnormal{op}} <\infty, $$ for all $1\leqslant i,j\leqslant 4.$ Next, let us omit the argument $(x,\xi)$ in order to simplify the notation.
So, we have
\begin{align*}
    &\Vert \widehat{\mathcal{M}}(\xi)^{(m+\rho(\ell+1))} \otimes  \textnormal{IV}_{(1)}\Vert_{\textnormal{op}}+     \Vert \widehat{\mathcal{M}}(\xi)^{(m+\rho(\ell+1))} \otimes \textnormal{IV}_{(2)}(x,\xi)\Vert_{\textnormal{op}}\\
   &+   \Vert \widehat{\mathcal{M}}(\xi)^{(m+\rho(\ell+1))} \otimes \textnormal{IV}_{(3)}(x,\xi)\Vert_{\textnormal{op}}+\Vert \widehat{\mathcal{M}}(\xi)^{(m+\rho(\ell+1))} \otimes \textnormal{V}_{(1)}(x,\xi)\Vert_{\textnormal{op}}\\
    &\hspace{4cm}+\Vert \widehat{\mathcal{M}}(\xi)^{(m+\rho(\ell+1))} \otimes \textnormal{V}_{(2)}(x,\xi)\Vert_{\textnormal{op}}\\
    & =\Vert \widehat{\mathcal{M}}(\xi)^{(m+\rho(\ell+1))} \otimes\Delta_{q_{\ell}}a^{-1}\circ (\Delta_{q_1}a)\circ a^{-1}\Vert_{\textnormal{op}}\\&\hspace{2cm}+     \Vert \widehat{\mathcal{M}}(\xi)^{(m+\rho(\ell+1))} \otimes a^{-1}\circ \Delta_{q_{\ell+1}}a\circ a^{-1}\Vert_{\textnormal{op}}\\
   &\hspace{2cm}+   \Vert \widehat{\mathcal{M}}(\xi)^{(m+\rho(\ell+1))} \otimes a^{-1}\circ \Delta_{q_{1}}a\circ \Delta_{q_{\ell}}a^{-1}\Vert_{\textnormal{op}}\\
    &\hspace{2cm}+\Vert \widehat{\mathcal{M}}(\xi)^{(m+\rho(\ell+1))} \otimes \Delta_{q_{\ell}}a^{-1}\circ (\Delta_{q_{(\nu) } }a)\circ \Delta_{q_{(\nu')}}a^{-1}\Vert_{\textnormal{op}}\\
    &\hspace{2cm}+\Vert \widehat{\mathcal{M}}(\xi)^{(m+\rho(\ell+1))} \otimes a^{-1}\circ \Delta_{q_{\ell+\nu}}a\circ \Delta_{q_{\nu'}}a^{-1}\Vert_{\textnormal{op}}\\
    \end{align*}
    
    \begin{align*}
     \lesssim\Vert &\widehat{\mathcal{M}}(\xi)^{(m+\rho\ell)} \otimes \Delta_{q_{\ell}}a^{-1}\circ (\Delta_{q_1}a)\circ a^{-1} \otimes\widehat{\mathcal{M}}(\xi)^{{\rho}}\Vert_{\textnormal{op}}\\&\hspace{2cm}+     \Vert \widehat{\mathcal{M}}(\xi)^{ m } \otimes a^{-1}\circ \Delta_{q_{\ell+1}}a\circ a^{-1} \otimes \widehat{\mathcal{M}}(\xi)^{(\rho(\ell+1))} \Vert_{\textnormal{op}}\\
   &\hspace{2cm}+   \Vert \widehat{\mathcal{M}}(\xi)^{{\rho}} \otimes a^{-1}\circ \Delta_{q_{1}}a\circ \Delta_{q_{\ell}}a^{-1} \otimes \widehat{\mathcal{M}}(\xi)^{(m+\rho\ell)}\Vert_{\textnormal{op}}\\
    &\hspace{2cm}+\Vert \widehat{\mathcal{M}}(\xi)^{(m+\rho\ell)} \otimes \Delta_{q_{\ell}}a^{-1}\Vert_{\textnormal{op}}\Vert  (\Delta_{q_{(\nu) } }a)\circ \Delta_{q_{(\nu')}}a^{-1} \otimes \widehat{\mathcal{M}}(\xi)^{{\rho}}\Vert_{\textnormal{op}}\\
    &\hspace{2cm}+\Vert \widehat{\mathcal{M}}(\xi)^{(m+\rho(\ell+1))} \otimes a^{-1}\circ \Delta_{q_{\ell+\nu}}a\circ \Delta_{q_{\nu'}}a^{-1}\Vert_{\textnormal{op}}\\
    \\
    & \lesssim\Vert (\Delta_{q_1}a)\circ a^{-1} \otimes \widehat{\mathcal{M}}(\xi)^{{\rho}}\Vert_{\textnormal{op}}\\&\hspace{2cm}+      \Vert  \Delta_{q_{\ell+1}}a\circ a^{-1} \otimes \widehat{\mathcal{M}}(\xi)^{(\rho(\ell+1))} \Vert_{\textnormal{op}}\\
   &\hspace{2cm}+   \Vert \widehat{\mathcal{M}}(\xi)^{{\rho}} \otimes a^{-1}\circ \Delta_{q_{1}}a\Vert_{\textnormal{op}}\\
    &\hspace{2cm}+\Vert  (\Delta_{q_{(\nu) } }a)\circ a^{-1} \otimes \widehat{\mathcal{M}}(\xi)^{{\rho}}\Vert_{\textnormal{op}}\\
    &\hspace{2cm}+\Vert \widehat{\mathcal{M}}(\xi)^{(\rho(\ell+1))} \otimes a^{-1}\circ \Delta_{q_{\ell+\nu}}a\Vert_{\textnormal{op}}\\
     &\hspace{2cm}\lesssim 1,
\end{align*}
where we have used, and  the estimates
\begin{equation}\label{casobacano}
    \Vert  (\Delta_{q_{(\nu) } }a)\circ \Delta_{q_{(\nu')}}a^{-1} \otimes \widehat{\mathcal{M}}(\xi)^{{\rho}}\Vert_{\textnormal{op}}\lesssim 1,
\end{equation}
and 
\begin{equation}\label{casobacano2}
    \Vert \widehat{\mathcal{M}}(\xi)^{(m+\rho(\ell+1))} \otimes a^{-1} \circ \Delta_{q_{\ell+\nu}}a \circ \Delta_{q_{\nu'}}a^{-1}\Vert_{\textnormal{op}}\lesssim \Vert \widehat{\mathcal{M}}(\xi)^{(\rho(\ell+1))} \otimes a^{-1} \circ \Delta_{q_{\ell+\nu}}a\Vert_{\textnormal{op}}. 
\end{equation}
Indeed, for the proof of \eqref{casobacano}  observe that from the induction hypothesis, we have
\begin{align*}
  \Vert  (\Delta_{q_{(\nu) } }a)\circ &\Delta_{q_{(\nu')}}a^{-1} \otimes \widehat{\mathcal{M}}(\xi)^{{\rho}}\Vert_{\textnormal{op}}\\
  &\lesssim   \Vert \widehat{\mathcal{M}}(\xi)^{(-m+\rho)} \otimes (\Delta_{q_{(\nu) } }a)\circ \Delta_{q_{(\nu')}}a^{-1} \otimes \widehat{\mathcal{M}}(\xi)^{(m+\rho)} \widehat{\mathcal{M}}(\xi)^{-{\rho}} \Vert_{\textnormal{op}}   \\
  &=   \Vert \widehat{\mathcal{M}}(\xi)^{(-m+\rho)} \otimes (\Delta_{q_{(\nu) } }a)\Vert_{\textnormal{op}} \Vert \Delta_{q_{(\nu')}}a^{-1} \otimes \widehat{\mathcal{M}}(\xi)^{(m+\rho)}\Vert_{\textnormal{op}} \Vert \widehat{\mathcal{M}}(\xi)^{-{\rho} } \Vert_{\textnormal{op}}   \\
  &\lesssim 1.
\end{align*}In order to prove \eqref{casobacano2}, observe that
\begin{align*}
   & \Vert \widehat{\mathcal{M}}(\xi)^{(m+\rho(\ell+1))} \otimes a^{-1}\circ \Delta_{q_{\ell+\nu}}a\circ \Delta_{q_{\nu'}}a^{-1}\Vert_{\textnormal{op}}\\
    &\lesssim \Vert \widehat{\mathcal{M}}(\xi)^{(\rho(\ell+1))} \otimes a^{-1}\circ \Delta_{q_{\ell+\nu}}a\circ \Delta_{q_{\nu'}}a^{-1} \otimes\widehat{\mathcal{M}}(\xi)^{ m }\Vert_{\textnormal{op}}\\
     &\lesssim \Vert \widehat{\mathcal{M}}(\xi)^{(\rho(\ell+1))} \otimes a^{-1}\circ \Delta_{q_{\ell+\nu}}a\Vert_{\textnormal{op}}\Vert \Delta_{q_{\nu'}}a^{-1} \otimes\widehat{\mathcal{M}}(\xi)^{(m+\rho)}\Vert_{\textnormal{op}}\Vert \widehat{\mathcal{M}}(\xi)^{-{\rho}}\Vert_{\textnormal{op}} \\
     &\lesssim \Vert \widehat{\mathcal{M}}(\xi)^{(\rho(\ell+1))} \otimes a^{-1}\circ \Delta_{q_{\ell+\nu}}a\Vert_{\textnormal{op}}.
\end{align*}
A similar analysis can be used to study $\textnormal{IV}_{(4)}(x,\xi),\textnormal{V}_{(3)}(x,\xi)$ and $\textnormal{V}_{(4)}(x,\xi).$ Thus, we end the proof.
\end{proof}


\begin{lemma}\label{lemma:asymp:exp}
    Let $\{m_j\}_{j=0}^{\infty}$ be a strictly decreasing sequence of real numbers such that $\lim_{j\rightarrow \infty}m_j=-\infty.$ Let $b_j=b_j(x,\xi)\in {S}^{m_j,\mathcal{L}}_{\rho,\delta}((G\times \widehat{G}) \otimes \textnormal{End}(E_0))$ be a sequence of symbols. Then, there exists $b\in {S}^{m_0,\mathcal{L}}_{\rho,\delta}((G\times \widehat{G}) \otimes \textnormal{End}(E_0))$ such that,
    \begin{equation}
        b\sim \sum_{j=0}^\infty b_j,
    \end{equation}
  in the sense that  for any $N\in \mathbb{N},$
    \begin{equation}
        b-\sum_{j=0}^{N}b_j\in {S}^{m_{N+1},\mathcal{L}}_{\rho,\delta}((G\times \widehat{G}) \otimes \textnormal{End}(E_0)). 
    \end{equation}
\end{lemma}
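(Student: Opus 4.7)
The plan is to mimic the standard Borel-type construction from the scalar subelliptic setting of \cite{RuzhanskyCardona2020}, adapted to the matrix/endomorphism-valued case. Fix a smooth cut-off $\chi\in C^{\infty}([0,\infty))$ with $\chi\equiv 0$ on $[0,1/2]$ and $\chi\equiv 1$ on $[1,\infty)$. For a strictly increasing sequence $t_{j}\to\infty$ to be chosen, define the scalar frequency cut-off $\chi_{j}(\xi)$ by functional calculus of $\widehat{\mathcal{M}}(\xi)$:
\[
 \chi_{j}(\xi):=\chi\bigl(\widehat{\mathcal{M}}(\xi)/t_{j}\bigr)\in\mathbb{C}^{d_{\xi}\times d_{\xi}},
\]
so that $\chi_{j}(\xi)$ commutes with $\widehat{\mathcal{M}}(\xi)$ and vanishes on the finite set of $[\xi]$ with $\widehat{\mathcal{M}}(\xi)\lesssim t_{j}/2$. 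One then sets
\[
 b(x,\xi):=b_{0}(x,\xi)+\sum_{j=1}^{\infty}\chi_{j}(\xi)\otimes b_{j}(x,\xi),
\]
where the tensor product is the one of Definition \ref{Product} (the scalar matrix $\chi_{j}(\xi)$ acts on an $\textnormal{End}(E_{0})$-valued symbol entrywise), and I would check that $b$ solves the problem.

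The first step is to verify that each piece $\chi_{j}\otimes b_{j}$ lies in ${S}^{m_{j},\mathcal{L}}_{\rho,\delta}((G\times\widehat{G})\otimes\textnormal{End}(E_{0}))$ with arbitrarily small seminorm provided $t_{j}$ is large enough. By Theorem \ref{characterisations}, stability of the classes under Leibniz for $\partial_{X}^{(\beta)}$ and $\Delta_{\xi}^{\alpha}$ reduces this to the scalar estimate
\[
\bigl\Vert\widehat{\mathcal{M}}(\xi)^{\rho|\alpha'|}\Delta_{\xi}^{\alpha'}\chi_{j}(\xi)\widehat{\mathcal{M}}(\xi)^{-\rho|\alpha'|}\bigr\Vert_{\mathrm{op}}\leq C_{\alpha'},
\]
together with the fact that on the support of $\chi_{j}$ we have $\widehat{\mathcal{M}}(\xi)\geq t_{j}/2$, which then exchanges powers $(1+\mathcal{M})^{m_{j-1}-m_{j}}$ for factors $t_{j}^{-(m_{j-1}-m_{j})}$. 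Choosing $t_{j}$ so that, for every $\ell\leq j$ and every $(\alpha,\beta)$ with $|\alpha|+|\beta|\leq j$, the corresponding $S^{m_{j-1},\mathcal{L}}_{\rho,\delta}$-seminorm of $\chi_{j}\otimes b_{j}$ is bounded by $2^{-j}$, guarantees absolute convergence of $\sum_{j\geq N+1}\chi_{j}\otimes b_{j}$ in every seminorm of ${S}^{m_{N+1},\mathcal{L}}_{\rho,\delta}((G\times\widehat{G})\otimes\textnormal{End}(E_{0}))$ for every $N\in\mathbb{N}$.

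The second step is the asymptotic property. Write
\[
 b-\sum_{j=0}^{N}b_{j}=\underbrace{\sum_{j=1}^{N}(\chi_{j}-1)\otimes b_{j}}_{(\mathrm{I})}+\underbrace{\sum_{j=N+1}^{\infty}\chi_{j}\otimes b_{j}}_{(\mathrm{II})}.
\]
Term $(\mathrm{II})$ is in $S^{m_{N+1},\mathcal{L}}_{\rho,\delta}$ by the choice of $t_{j}$. For $(\mathrm{I})$, each summand $(\chi_{j}-1)\otimes b_{j}$ is supported on the finite set $\{[\xi]:\widehat{\mathcal{M}}(\xi)\leq t_{j}\}$; on this set the symbol is $C^{\infty}$ with compactly supported frequency, hence it belongs to $S^{-\infty,\mathcal{L}}_{\rho,\delta}\subset S^{m_{N+1},\mathcal{L}}_{\rho,\delta}$. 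Consequently $b-\sum_{j\leq N}b_{j}\in S^{m_{N+1},\mathcal{L}}_{\rho,\delta}((G\times\widehat{G})\otimes\textnormal{End}(E_{0}))$, as required, and taking $N=0$ shows $b\in S^{m_{0},\mathcal{L}}_{\rho,\delta}((G\times\widehat{G})\otimes\textnormal{End}(E_{0}))$.

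The main technical obstacle is the control of $\Delta_{\xi}^{\alpha}\chi_{j}$: unlike in the Euclidean setting, the difference operators on $\widehat{G}$ do not come from classical derivatives on a Euclidean variable, so one needs either to invoke the subelliptic Leibniz rule of Remark \ref{Leibnizrule} combined with the spectral definition of $\chi_{j}(\widehat{\mathcal{M}}(\xi))$ as in \cite{RuzhanskyCardona2020}, or to argue directly via the characterisation in Theorem \ref{characterisations} that $\chi_{j}=f_{j}(\mathcal{L})$ with $f_{j}\in S^{0}(\mathbb{R}^{+}_{0})$ uniformly in $j$, and then use the uniform bound for $f_j$ (after rescaling by $t_j$) to produce the decay in $t_{j}$. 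Once this uniform $S^{0}$-bound is in place, everything reduces to the scalar symbolic calculus and the endomorphism-valued statement follows entrywise by the equivalence of Definitions in Remark \ref{remarkendvaluedmatrixvalued}.
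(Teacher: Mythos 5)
Your construction is the same Borel-type excision argument as the paper's: the paper takes $\phi(t_j\widehat{\mathcal{M}}(\xi))$ with $t_j\to 0^+$ and invokes Lemma 6.4 of \cite{RuzhanskyCardona2020} to get the scalar seminorm decay, which is just the reparametrisation $t_j\leftrightarrow 1/t_j$ of your $\chi(\widehat{\mathcal{M}}(\xi)/t_j)$ with $t_j\to\infty$, and the splitting of $b-\sum_{j\le N}b_j$ into a finite smoothing piece and a convergent tail is identical. The only cosmetic differences are that you leave $b_0$ without a cutoff and phrase the $t_j$-selection as a simultaneous bound on a growing finite family of seminorms (a slightly more explicit version of the paper's ``starting the summation at $j=N$'' remark), which is harmless.
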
 
\begin{proof}
    Let $\phi\in C^\infty(\mathbb{R}_0^+)$ be a smooth function such that $\phi\equiv 0,$ on the interval $[0,1/2],$ and such that $\phi\equiv 1,$ on  $[1,\infty).$ Note that $1-\phi$ is smooth and compactly supported. Let us consider the family of seminorms $ p_{\alpha,\beta,\rho,\delta,m,\textnormal{left}}(\cdot)$ as in Theorem \ref{characterisations}.   In view of Lemma 6.4 of \cite{RuzhanskyCardona2020}, if $$\forall 1\leq i\leq d_\tau,\,\forall 1\leq r\leq d_\omega,\,  (i,r,x,[\xi])\mapsto b_{j}(i,r,x,[\xi])$$ denotes the matrix-valued symbol associated to $b_j,$ then we have the estimate 
    \begin{equation}
       \exists C>0,\forall t\in (0,1),\,\, 
    \end{equation}
$$ p_{\alpha,\beta,\rho,\delta,m_0,\textnormal{left}}((i,r,x,[\xi])\mapsto b_{j}(i,r,x,[\xi])\phi (t\widehat{\mathcal{M}}(\xi)))$$ 
$$\lesssim_{m_0}  p_{\alpha,\beta,\rho,\delta,m_j,\textnormal{left}}\left((i,r,x,[\xi])\mapsto b_{j}(i,r,x,[\xi])\right)t^{m_0-m_j}.$$
In particular, if $t_j\rightarrow 0^+,$ is such that $$p_{\alpha,\beta,\rho,\delta,m_j,\textnormal{left}}((i,r,x,[\xi])\mapsto b_{j}(i,r,x,[\xi]))t^{m_0-m_j}\leq 2^{-j},$$ then we define the symbol $$\tilde{b}_j(i,r,x,[\xi])= {b}_j(i,r,x,[\xi]) \phi (t_j\widehat{\mathcal{M}}(\xi)).$$ Since
$$\sum_{j=0}^{\infty}p_{\alpha,\beta,\rho,\delta,m_0,\textnormal{left}}((i,r,x,[\xi])\mapsto \tilde b_{j}(i,r,x,[\xi]))\leq \sum_{j=0}^\infty 2^{-j}<\infty,$$ and taking into account that the class $S^{m_0,\mathcal{L}}_{\rho,\delta}(G\times \widehat{G})$ is a Fr\'echet space endowed with  the family of seminorms $ p_{\alpha,\beta,\rho,\delta,m_0,\textnormal{left}},$ there exists $ b_{i,r}\in S^{m_0,\mathcal{L}}_{\rho,\delta}(G\times \widehat{G})$ such that $ b_{i,r}=\sum_{j=0}^\infty\tilde b_{j}(i,r,\cdot,\cdot)$ with the sum converging in the topology induced by the Fr\'echet structure in $S^{m_0,\mathcal{L}}_{\rho,\delta}(G\times \widehat{G}).$ Starting the summation at $j=N,$ the same proof above shows that $\sum_{j=N}\tilde b_{j}(i,r,\cdot,\cdot)\in S^{m_N,\mathcal{L}}_{\rho,\delta}(G\times \widehat{G}).$ Hence, the symbol defined by
$$b(i,r,x,[\xi])=\sum_{j=0}^\infty b_{ir}(x,\xi) ,$$ satisfies the condition of the theorem. Indeed, for any $N,$ we have that 
\begin{align*}
  b(i,r,x,[\xi])-&\sum_{j=0}^N b_j(i,r,x,\xi) \\
  &= \sum_{j=0}^N b_j(i,r,x,\xi)(1-\phi)(t\widehat{\mathcal{M}}(\xi))+ \sum_{j=N+1}\tilde b_{j}(i,r,\cdot,\cdot).
\end{align*} Note that the symbols $ b_j(i,r,x,\xi)(1-\phi)(t\widehat{\mathcal{M}}(\xi))$ are smoothing terms, namely these are symbols in $S^{-\infty,\mathcal{L}}_{\rho,\delta}(G\times \widehat{G}),$  since  $1-\phi$ is smooth and compactly supported. Because $\sum_{j=N+1}\tilde b_{j}(i,r,\cdot,\cdot)\in S^{m_{N+1},\mathcal{L}}_{\rho,\delta}(G\times \widehat{G})$ we have that the difference $ b(i,r,x,[\xi])-\sum_{j=0}^N b_j(i,r,x,\xi) $ belongs to $ S^{m_{N+1},\mathcal{L}}_{\rho,\delta}(G\times \widehat{G}).$ At the level of the vector-valued symbol $b:=b(x,\xi)$ induced by the matrix-valued symbol $(i,r,x,[\xi])\mapsto b(i,r,x,[\xi]),$ we have proved that
$$   b-\sum_{j=0}^{N}b_j\in {S}^{m_{N+1},\mathcal{L}}_{\rho,\delta}((G\times \widehat{G}) \otimes \textnormal{End}(E_0)).$$ The proof is complete.    
\end{proof}

Now, we  present a technical result concerning   the existence of parametrices for $\mathcal{L}$-elliptic operators in the subelliptic calculus. We denote $$ {\Psi}^{-\infty,\mathcal{L}}((G\times \widehat{G})\otimes \textnormal{End}(E_0)):=\bigcap_{m\in \mathbb{R}}{\Psi}^{m,\mathcal{L}}_{\rho,\delta}((G\times \widehat{G}) \otimes \textnormal{End}(E_0)).$$

\begin{proposition}\label{IesTParametrix} Let $m\in \mathbb{R},$  $0\leqslant \delta<\rho\leqslant 1.$  Let  $a=a(x,\xi)\in {S}^{m,\mathcal{L}}_{\rho,\delta}((G\times \widehat{G}) \otimes \textnormal{End}(E_0)).$  Assume also that $a(x,\xi)\in \mathbb{C}^{d_\xi\times d_\xi}(\textnormal{End}(E_0))$ is invertible for every $(x,[\xi])\in G\times\widehat{G},$  and satisfies
\begin{equation}\label{Iesparametrix}
   \sup_{(x,[\xi])\in G\times \widehat{G}} \Vert \widehat{\mathcal{M}}(\xi)^{m} \otimes a(x,\xi)^{-1}\Vert_{\textnormal{op}}<\infty.
\end{equation}Then, there exists $B\in {\Psi}^{-m,\mathcal{L}}_{\rho,\delta}((G\times \widehat{G}) \otimes \textnormal{End}(E_0)),$ such that $AB-I\in {\Psi}^{-\infty,\mathcal{L}}((G\times \widehat{G}) \otimes \textnormal{End}(E_0)). $  Also, there exists  $B'\in {\Psi}^{-m,\mathcal{L}}_{\rho,\delta}((G\times \widehat{G}) \otimes \textnormal{End}(E_0)),$ such that $B'A-I\in {\Psi}^{-\infty,\mathcal{L}}((G\times \widehat{G}) \otimes \textnormal{End}(E_0)). $ 
\end{proposition}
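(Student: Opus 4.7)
The plan is to construct the right parametrix $B$ via an asymptotic Neumann series inversion, exploiting the fact that the ellipticity hypothesis \eqref{Iesparametrix} is precisely the hypothesis of Theorem \ref{IesT}. First, I would invoke Theorem \ref{IesT} to obtain $a^{-1}(x,\xi) \in S^{-m,\mathcal{L}}_{\rho,\delta}((G\times \widehat{G})\otimes \textnormal{End}(E_0))$, and set $B_0:=\textnormal{Op}(a^{-1}) \in \Psi^{-m,\mathcal{L}}_{\rho,\delta}((G\times \widehat{G})\otimes \textnormal{End}(E_0))$.

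Next, I would apply the composition formula (Theorem \ref{VectorcompositionC}) to $A\circ B_0$. The asymptotic expansion
$$\sigma_{AB_0}(x,\xi) \sim \sum_{|\alpha|\geq 0}(\Delta_\xi^\alpha\, a(x,\xi))\circ (\partial_X^{(\alpha)} a^{-1}(x,\xi))$$
has leading term (corresponding to $|\alpha|=0$) equal to the identity element $I$ of $\mathbb{C}^{d_\xi\times d_\xi}(\textnormal{End}(E_0))$, since $a\circ a^{-1}=I$ pointwise. Thus $T:=AB_0-I$ satisfies $\sigma_T \in S^{-(\rho-\delta),\mathcal{L}}_{\rho,\delta}$, and since $\rho > \delta$, iterated composition via Theorem \ref{VectorcompositionC} yields $(-T)^j \in \Psi^{-j(\rho-\delta),\mathcal{L}}_{\rho,\delta}$ with orders $m_j:=-j(\rho-\delta)$ strictly decreasing to $-\infty$.

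Then I would invoke Lemma \ref{lemma:asymp:exp} to construct $S \in \Psi^{0,\mathcal{L}}_{\rho,\delta}$ whose symbol is asymptotic to $\sum_{j=0}^\infty \sigma_{(-T)^j}$, and set $B:=B_0\circ S \in \Psi^{-m,\mathcal{L}}_{\rho,\delta}$. The verification that $AB - I \in \Psi^{-\infty,\mathcal{L}}$ rests on the algebraic telescoping identity
$$(I+T)\sum_{j=0}^{N}(-T)^j \;=\; I - (-T)^{N+1},$$
combined with the fact that $S - \sum_{j=0}^N(-T)^j \in \Psi^{-(N+1)(\rho-\delta),\mathcal{L}}_{\rho,\delta}$ by Lemma \ref{lemma:asymp:exp}. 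Writing
$$AB-I \;=\; (I+T)\Bigl(S - \sum_{j=0}^{N}(-T)^j\Bigr) - (-T)^{N+1},$$
both summands lie in $\Psi^{-(N+1)(\rho-\delta),\mathcal{L}}_{\rho,\delta}$, and since $N$ is arbitrary, $AB - I \in \Psi^{-\infty,\mathcal{L}}$.

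The left parametrix $B'$ is constructed symmetrically: form $T':=B_0 A - I \in \Psi^{-(\rho-\delta),\mathcal{L}}_{\rho,\delta}$ using the adjoint-type composition asymptotic (again Theorem \ref{VectorcompositionC}), build $S' \sim \sum_{j\geq 0}(-T')^j$ via Lemma \ref{lemma:asymp:exp}, and set $B':=S' \circ B_0$; the same telescoping argument gives $B'A - I \in \Psi^{-\infty,\mathcal{L}}$. The main obstacle I anticipate is verifying that the $|\alpha|=0$ term in the composition expansion of Theorem \ref{VectorcompositionC} is indeed the identity of $\mathbb{C}^{d_\xi\times d_\xi}(\textnormal{End}(E_0))$ at the level of equivalence classes of symbols (Definition \ref{equivalenceXXX}); this requires interpreting the matrix-valued product $\sigma_A(i,r,x,\xi)\cdot \sigma_{B_0}(i,r,x,\xi)$ consistently with the endomorphism composition encoded in \eqref{leibniz:End:symbol}, and then carefully bookkeeping the indices $(i,r)$ throughout the Neumann iteration, but this is a routine consequence of the one-to-one correspondence established in Remark \ref{remarkendvaluedmatrixvalued}.
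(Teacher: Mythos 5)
Your argument is essentially the paper's: both set $B_0=\textnormal{Op}(a^{-1})$ via Theorem~\ref{IesT}, identify the remainder $r=I-AB_0\in\textnormal{Op}(S^{-(\rho-\delta),\mathcal{L}}_{\rho,\delta})$, sum the Neumann series asymptotically via Lemma~\ref{lemma:asymp:exp}, and telescope. The only cosmetic differences are the sign convention ($T=-r$), whether the asymptotic sum is formed at the level of $\sum B_0 r^k$ or factored as $B_0\circ S$ with $S\sim\sum r^k$, and that you are explicit about introducing $T'=B_0A-I$ for the left parametrix, where the paper informally reuses the letter $r$.
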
 
\begin{proof}
Let us follow the argument in H\"ormander \cite[Page 73]{Hormander1985III}.  Let $b_0(x,\xi)=a(x,\xi)^{-1},$ and let us consider its associated pseudo-differential operator $b_0(x,D).$ By the subelliptic calculus, we have that
$$ -r(x,D):=a(x,D)b_0(x,D)-I\in \textnormal{Op}( S^{-(\rho-\delta),\mathcal{L}}_{\rho,\delta}(G\times \widehat{G})). $$
Define $b_k(x,D)=b_0(x,D)r(x,D)^{k} \in \textnormal{Op}(S^{-m-k(\rho-\delta),\mathcal{L}}_{\rho,\delta}(G\times \widehat{G})),$ for any $k\in \mathbb{N}.$ By applying Lemma \ref{lemma:asymp:exp}, consider a symbol $ b(x,\xi)$ satisfying the asymptotic expansion $b(x,\xi)\sim \sum_{k=0}^\infty b_k(x,\xi).$
Define $S_N(x,\xi)=\sum_{k=0}^{N-1} b_k(x,\xi).$ Note that
\begin{align*}
    a(x,D)S_N(x,D)-I  &=a(x,D)\left(\sum_{k=0}^{N-1} b_0(x,D)r(x,D)^{k}\right)-I\\
    & =a(x,D)b_0(x,D)\left(\sum_{k=0}^{N-1} r(x,D)^{k}\right)-I\\
    &  =(I-r(x,D))\left(\sum_{k=0}^{N-1} r(x,D)^{k}\right)-I\\
    &  =I-r(x,D)^{N}-I\\
     &  =-r(x,D)^{N}\in \textnormal{Op}(S^{-N(\rho-\delta),\mathcal{L}}_{\rho,\delta}(G\times \widehat{G})).
\end{align*}Since $b(x,D)-S_{N}(x,D)\in  \textnormal{Op}(S^{-m-N(\rho-\delta),\mathcal{L}}_{\rho,\delta}(G\times \widehat{G})),$ we have that
\begin{align*}
 & a(x,D)b(x,D) -I\\
 &= a(x,D)(b(x,D)-S_N(x,D))+  a(x,D)S_N(x,D)-I \\
  &=a(x,D)(b(x,D)-S_N(x,D))-r(x,D)^{N}\in \textnormal{Op}(S^{-N(\rho-\delta),\mathcal{L}}_{\rho,\delta}(G\times \widehat{G})),
\end{align*} for all $N\in \mathbb{N}.$ Then $B=b(x,D)$ is such that $a(x,D)b(x,D) -I\in \textnormal{Op}(S^{-\infty,\mathcal{L}}_{\rho,\delta}(G\times \widehat{G})) .$ The same argument above can be applied to construct $B'=b'(x,D)\sim \sum_{k=0}^\infty b_k(x,D),$ by defining $$b_k(x,D)=r(x,D)^{k}b_0(x,D) \in \textnormal{Op}(S^{-m-k(\rho-\delta),\mathcal{L}}_{\rho,\delta}(G\times \widehat{G})).$$ 
 The proof of Proposition \ref{IesTParametrix} is complete.
\end{proof}

\subsection{$\mathcal{L}$-elliptic operators on homogeneous vector bundles} 
Now, we will prove the existence of parametrices for $\mathcal{L}$-elliptic operators  in the subelliptic calculus. We denote $$ {\Psi}^{-\infty,\mathcal{L}}(E)=\bigcap_{m\in \mathbb{R}}{\Psi}^{m,\mathcal{L}}_{\rho,\delta}(E).$$

\begin{proposition}\label{IesTParametrix2} Let $m\in \mathbb{R},$ and let $0\leqslant \delta<\rho\leqslant 1.$  Let  $a=a(x,\xi)\in {S}^{m,\mathcal{L}}_{\rho,\delta}((G\times \widehat{G}) \otimes \textnormal{End}(E_0)).$  Assume also that $a(x,\xi)\in \mathbb{C}^{d_\xi\times d_\xi}(\textnormal{End}(E_0))$ is invertible for every $(x,[\xi])\in G\times\widehat{G},$ and satisfies
\begin{equation}\label{Iesparametrix2}
   \sup_{(x,[\xi])\in G\times \widehat{G}} \Vert \widehat{\mathcal{M}}(\xi)^{m} \otimes a(x,\xi)^{-1}\Vert_{\textnormal{op}}<\infty.
\end{equation}Let $\tilde{A}\in \Psi^{m,\mathcal{L}}_{\rho,\delta}(E)$ be the operator associated to the symbol  $a$ via \eqref{quantizationonhomogeneous2}. Then, there exists $\tilde{B}\in {\Psi}^{-m,\mathcal{L}}_{\rho,\delta}(E),$ such that $\tilde{A}\tilde{B}-I\in {\Psi}^{-\infty,\mathcal{L}}(E). $ Also there exists  $\tilde{B}'\in {\Psi}^{-m,\mathcal{L}}_{\rho,\delta}(E),$ such that $\tilde{B}'\tilde{A}-I\in {\Psi}^{-\infty,\mathcal{L}}(E). $ 
\end{proposition}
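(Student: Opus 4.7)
The strategy is to transfer the parametrix construction from Proposition \ref{IesTParametrix} to the setting of homogeneous vector bundles via the unitary equivalence $A = \varkappa_\tau \tilde A \varkappa_\tau^{-1}$ provided by diagram \eqref{maindiagram}. Concretely, since the hypothesis on $a = \sigma_{\tilde A}=\sigma_A$ coincides with that of Proposition \ref{IesTParametrix}, we immediately obtain a vector-valued parametrix $B \in \Psi^{-m,\mathcal{L}}_{\rho,\delta}((G\times \widehat G)\otimes \textnormal{End}(E_0))$ satisfying $AB - I \in \Psi^{-\infty,\mathcal{L}}((G\times \widehat G)\otimes \textnormal{End}(E_0))$, together with an analogous left parametrix $B'$ with $B'A - I \in \Psi^{-\infty,\mathcal{L}}((G\times \widehat G)\otimes \textnormal{End}(E_0))$.

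The essential step is to descend $B$ and $B'$ to continuous operators on $\Gamma^\infty(E)$, which, by Definition \ref{Hormanderclassesonvectorbunbdles} and the characterising commutative diagram, amounts to showing that $B$ (resp.\ $B'$) maps $C^\infty(G, E_0)^\tau$ into itself. I would establish this by inspecting the iterative construction performed in the proof of Proposition \ref{IesTParametrix}: the initial term $b_0(x,D)$ has symbol $a^{-1}$, obtained by pointwise inversion of $a(x,\xi)$ in $\mathbb{C}^{d_\xi\times d_\xi}(\textnormal{End}(E_0))$, and the $K$-equivariance identity \eqref{characterizationsymbol} satisfied by $a$ (which encodes the fact that $\tilde{A}$ is well-defined as an operator between spaces of sections) is inherited by $a^{-1}$, since the identity only involves the unitary actions $\xi(k)$ and $\tau(k)$ which intertwine with inversion. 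The subsequent iterates $b_k = b_0 \circ r^k$, with $r = I - ab_0$, are compositions of operators already mapping $C^\infty(G, E_0)^\tau$ into itself; by Theorem \ref{VectorcompositionC} these compositions stay in the vector-valued subelliptic calculus, and by Theorem \ref{TheoremCharK} applied at the kernel level the $\tau$-equivariance is preserved at each step. The asymptotic sum produced by Lemma \ref{lemma:asymp:exp} then delivers an operator $B$ whose symbol satisfies \eqref{characterizationsymbol}, modulo a smoothing term that is harmlessly absorbed into the smoothing remainder of $AB - I$.

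The hard part will be carrying out the verification that the identity \eqref{characterizationsymbol} genuinely passes through pointwise matrix inversion in $\mathbb{C}^{d_\xi\times d_\xi}(\textnormal{End}(E_0))$: the subtlety is that the identity mixes conjugation by the matrix factor $\xi(k)$, acting on the $\mathbb{C}^{d_\xi\times d_\xi}$ part, with conjugation by the endomorphism factor $\tau(k)$, acting on $\textnormal{End}(E_0)$, and one must confirm that these two actions interact with the inverse in a compatible way; if a direct symbolic check turns out to be awkward, I would fall back on the averaging operator $P_\tau f(g) := \int_K \tau(k) f(gk)\,dk$ and work with $P_\tau B P_\tau$, verifying that it lies in the same subelliptic class via Theorem \ref{calculus:LG}. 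Once the descent is settled, the operator $\tilde B := \varkappa_\tau^{-1} B \varkappa_\tau$ belongs to $\Psi^{-m,\mathcal{L}}_{\rho,\delta}(E)$, and the relation $\tilde A \tilde B - I \in \Psi^{-\infty,\mathcal{L}}(E)$ follows because, via $\varkappa_\tau$ together with the identifications of Sobolev spaces in Remark \ref{soboiso} and the characterisation \eqref{CinftyL:2}, smoothing operators on the vector-valued side pull back to smoothing operators on sections. The left parametrix $\tilde B'$ is constructed in the same way starting from $B'$.
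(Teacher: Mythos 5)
Your reduction to Proposition \ref{IesTParametrix} is the same route the paper takes; in fact the paper's proof is a single sentence invoking that result. Where you go further is in explicitly raising the descent problem: to define $\tilde{B}:=\varkappa_\tau^{-1}B\varkappa_\tau$ on $\Gamma^\infty(E)$, the vector-valued parametrix $B$ must first be shown to preserve $C^\infty(G,E_0)^\tau$, and the paper never verifies this, simply presupposing a $\tilde{B}$ ``associated with $B$''. So you are engaging with a genuine subtlety the paper leaves silent.

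Your proposed filling, however, leaves gaps. First, you take for granted that $a$ satisfies the equivariance identity \eqref{characterizationsymbol} ``because $\tilde{A}$ is well-defined''; but the paper only rigorously establishes the \emph{sufficiency} of \eqref{characterizationkernel}/\eqref{characterizationsymbol} for preservation of the $\tau$-invariant subspace -- the ``only if'' step in the corresponding theorem passes from equality of pairings against $\tau$-invariant test functions to pointwise kernel equality, which is not valid, so you cannot assume $a$ satisfies \eqref{characterizationsymbol} without further argument. Second, you assert without computation that pointwise inversion of $a(x,\xi)$ preserves \eqref{characterizationsymbol}, while yourself noting that the interplay between the $\xi(k)$-conjugation on the $\mathbb{C}^{d_\xi\times d_\xi}$ factor and the $\tau(k)$-conjugation on $\textnormal{End}(E_0)$ is nontrivial; this must actually be carried through before you may claim $b_0(x,D)$ preserves the subspace. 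Third, the asymptotic sum of Lemma \ref{lemma:asymp:exp} inserts cutoffs $\phi(t_j\widehat{\mathcal{M}}(\xi))$, i.e.\ multipliers $\phi(t_j(1+\mathcal{L})^{1/2})$; these commute with the right $K$-action only when $\widehat{\mathcal{M}}(\xi)$ does, which holds for the bi-invariant $\mathcal{L}_G$ (there $\widehat{\mathcal{M}}(\xi)$ is scalar by Schur's lemma) but can fail for a general sub-Laplacian $\mathcal{L}$, so the cutoff step may destroy whatever equivariance you have arranged for the partial sums $\sum_{k<N}b_k$. Finally, the averaging $P_\tau B P_\tau$ is a plausible fallback, but you do not show that its symbol again lies in $S^{-m,\mathcal{L}}_{\rho,\delta}((G\times\widehat{G})\otimes\textnormal{End}(E_0))$, and this is not obvious. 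In short, you have located the step that the proof of Proposition \ref{IesTParametrix2} actually requires, but neither you nor the paper has closed it.
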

\begin{proof}
     If $A$ and $B$ are the vector-valued pseudo-differential operators associated with $\tilde{A}$ and $\tilde{B},$ Proposition  \ref{IesTParametrix} shows that $AB-I,B'A-I\in \textnormal{Op}( {S}^{-\infty,\mathcal{L}}((G\times \widehat{G}) \otimes \textnormal{End}(E_0))) $ which immediately implies that $\tilde{A}\tilde{B}-I,\tilde{B}'\tilde{A}-I\in {\Psi}^{-\infty,\mathcal{L}}(E). $ So, the proof is complete.
\end{proof}

\subsection{Fredholmness on homogeneous vector bundles} In this subsection, we discuss the Fredholmness of $\mathcal{L}$-elliptic pseudo-differential operators. 

First, let us briefly recall the definition of Fredholm operators and index of an operator. 
Let $X$ and $Y$ be two Banach spaces. A continuous linear operator $A:X \rightarrow Y$ is a Fredholm operator if its kernel $\textnormal{Ker}(A)$ is finite-dimensional and the range (the image) $\textnormal{Im}(A)$ is closed and of finite codimension. Then its index is defined as 
$$\textnormal{Ind(A)}:=\textnormal{dim Ker}(A)-\textnormal{codim Im}(A),$$
or equivalently 
$$\textnormal{Ind(A)}:=\textnormal{dim Ker}(A)-\textnormal{dim Ker}(A^*),$$ where $A^*:Y^{*}\rightarrow X^{*}$ is the adjoint of $A.$

The following criterion for a closed linear operator to be Fredholm is 
usually attributed to Atkinson \cite{At}.
\begin{theorem} \label{Atki}
Let $A$ be a closed linear operator from a complex Banach space $X$ into a 
complex Banach space $Y$ with dense domain $\mathcal{D}(A).$ Then $A$ is 
Fredholm if and only if we can find a bounded linear operator 
$B:Y\rightarrow X,$ a compact operator $K_{1}:X\rightarrow X$ and a 
compact operator $K_2:Y\rightarrow Y$ such that $BA=I+K_1$ on $\mathcal{D}(A)$ and $AB=I+K_2$ on $Y$.
\end{theorem}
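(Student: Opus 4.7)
The plan is to reduce the statement to the classical Riesz theory for compact perturbations of the identity. I would treat the two implications separately.

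\textbf{Necessity.} Assume $A$ is Fredholm. The first step is to decompose both spaces. Since $\mathrm{Ker}(A)$ is finite dimensional, it admits a closed topological complement $X_1$ in $X$, so that $X=\mathrm{Ker}(A)\oplus X_1$. Since $\mathrm{Im}(A)$ is closed and has finite codimension, there exists a finite dimensional subspace $Y_1$ with $Y=\mathrm{Im}(A)\oplus Y_1$. The restriction $A_0:=A|_{X_1\cap \mathcal{D}(A)}\to \mathrm{Im}(A)$ is a bijection and is a closed operator between Banach spaces; by the closed graph theorem applied to $A_0^{-1}$, the inverse is bounded. I would then define $B:Y\to X$ by $B:=A_0^{-1}\circ P$, where $P:Y\to\mathrm{Im}(A)$ is the continuous projection along $Y_1$; equivalently, $B$ is the bounded extension of $A_0^{-1}$ by zero on $Y_1$. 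A direct verification shows $BA=I-Q_1$ on $\mathcal{D}(A)$ and $AB=I-Q_2$ on $Y$, where $Q_1$ is the projection onto $\mathrm{Ker}(A)$ along $X_1$ and $Q_2$ is the projection onto $Y_1$ along $\mathrm{Im}(A)$. Both $Q_1$ and $Q_2$ are finite-rank (hence compact) operators, so with $K_1:=-Q_1$ and $K_2:=-Q_2$ we obtain the required identities.

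\textbf{Sufficiency.} Suppose $B$, $K_1$, $K_2$ as in the statement exist. From $BA=I+K_1$ on $\mathcal{D}(A)$, every $x\in\mathrm{Ker}(A)$ satisfies $(I+K_1)x=0$, so $\mathrm{Ker}(A)\subset \mathrm{Ker}(I+K_1)$. By the classical Riesz theory, $\mathrm{Ker}(I+K_1)$ is finite dimensional, hence so is $\mathrm{Ker}(A)$. From $AB=I+K_2$ on $Y$, we have $\mathrm{Im}(A)\supset \mathrm{Im}(I+K_2)$; by Riesz theory again, $\mathrm{Im}(I+K_2)$ is closed with finite codimension in $Y$, and therefore so is $\mathrm{Im}(A)$. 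The fact that $\mathrm{Im}(A)$ itself is closed can be justified by showing that it has finite codimension in a closed subspace of $Y$: indeed, if $V:=\mathrm{Im}(I+K_2)$ then $\dim(\mathrm{Im}(A)/V)\leq \dim(Y/V)<\infty$, and a finite-dimensional extension of a closed subspace remains closed. This yields that $A$ is Fredholm.

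\textbf{Main obstacle.} The forward direction is essentially bookkeeping with complemented subspaces and the closed graph theorem for closed operators. The delicate point is the converse: one must carefully invoke the Riesz--Schauder theorem for $I+K_i$ on a Banach space (guaranteeing finite dimensional kernel and closed range of finite codimension) and then argue that these properties are inherited by $A$. The mildly subtle step is verifying that $\mathrm{Im}(A)$ is actually closed (not merely of finite codimension in a closed subspace), which I would handle by the standard lemma that a subspace containing a closed subspace of finite codimension is itself closed. Modulo these classical ingredients, the proof is complete; since the statement is Atkinson's theorem, a detailed reference (e.g.\ to \cite{At}) suffices in the paper.
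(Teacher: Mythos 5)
Your proposal is a correct and standard proof of Atkinson's characterisation. Note, however, that the paper does not supply a proof of this theorem at all: it states the result and cites it to Atkinson \cite{At}, so there is no ``paper's proof'' to compare against. Given that, a few remarks on the argument itself. The necessity direction is exactly the standard complemented-decomposition argument: the only points worth flagging explicitly are that $X_1 \cap \mathcal{D}(A)$ together with $\mathrm{Ker}(A)$ does span $\mathcal{D}(A)$ (because $\mathrm{Ker}(A)\subset\mathcal{D}(A)$, so the $X_1$-component of any $x\in\mathcal{D}(A)$ again lies in $\mathcal{D}(A)$), and that $A_0^{-1}$ has closed graph precisely because $A$ is a closed operator and $X_1$, $\mathrm{Im}(A)$ are both closed, so the closed graph theorem applies between the Banach spaces $\mathrm{Im}(A)$ and $X_1$. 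The sufficiency direction is also correct; the closedness of $\mathrm{Im}(A)$ follows from the standard lemma you invoke (a subspace containing a closed subspace of finite codimension is the sum of a closed subspace and a finite-dimensional one, hence closed), and the Riesz--Schauder theory for $I+K_i$ gives the finiteness claims. No gap.
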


In the following theorem we will show that $\mathcal{L}$-ellipticity implies the Fredholmness. 

\begin{theorem} \label{Vector:Fred} Let us consider the identifications $E_{0}^{*}\cong E_0.$  Let $m\in \mathbb{R},$ and let $0\leqslant \delta<\rho\leqslant 1.$  Let  $a=a(x,\xi)\in {S}^{m,\mathcal{L}}_{\rho,\delta}((G\times \widehat{G}) \otimes \textnormal{End}(E_0)).$  Assume also that $a(x,\xi)\in \mathbb{C}^{d_\xi\times d_\xi}(\textnormal{End}(E_0))$ is invertible for every $(x,[\xi])\in G\times\widehat{G},$ and satisfies
\begin{equation*}
   \sup_{(x,[\xi])\in G\times \widehat{G}} \Vert \widehat{\mathcal{M}}(\xi)^{m} \otimes a(x,\xi)^{-1}\Vert_{\textnormal{op}}<\infty.
\end{equation*} 

Let $A\in \Psi^{m,\mathcal{L}}_{\rho,\delta}((G \times \widehat{G}) \otimes \textnormal{End}(E_0))$ be the operator associated to the symbol  $a$. Then $A$ is a Fredholm operator from $L^{2, \mathcal{L}}_s(G, E_0)$ to $L^{2, \mathcal{L}}_{s-m}(G, E_0)$ for all $s \in \mathbb{R}.$ 
Moreover,  the kernels $\textnormal{Ker}(A),$ $\textnormal{Ker}(A^*)$ are in $C^\infty(G, E_0),$ so that $\textnormal{Ker}(A),$ $\textnormal{Ker}(A^*)$ and $\textnormal{Ind}(A)$ are independent on $s.$ 

In particular, $A$ is a Fredholm operator from $L^{2, \mathcal{L}}_s(G, E_0)^\tau$ to $L^{2, \mathcal{L}}_{s-m}(G, E_0)^\tau$ for all $s \in \mathbb{R}.$ 
Moreover, the kernels $\textnormal{Ker}(A),$ $\textnormal{Ker}(A^*)$ are in $C^\infty(G, E_0)^\tau,$ so that $\textnormal{Ker}(A),$ $\textnormal{Ker}(A^*)$ and $\textnormal{Ind}(A)$ are independent on $s.$
\end{theorem}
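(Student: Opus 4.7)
\emph{Plan.} The strategy is to extract a two-sided parametrix from Proposition \ref{IesTParametrix}, promote the smoothing remainders to compact operators via a Rellich-type embedding on the compact manifold $G$, apply Atkinson's criterion (Theorem \ref{Atki}) for Fredholmness, and then bootstrap elliptic regularity on both $\ker A$ and $\ker A^*$. The hypotheses on $a$ are exactly those of Proposition \ref{IesTParametrix}, so it produces operators $B,B'\in\Psi^{-m,\mathcal{L}}_{\rho,\delta}((G\times\widehat{G})\otimes\textnormal{End}(E_0))$ with $R_1:=B'A-I$ and $R_2:=AB-I$ both in $\Psi^{-\infty,\mathcal{L}}((G\times\widehat{G})\otimes\textnormal{End}(E_0))$.

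By Theorem \ref{VectSobosub}, $B$ and $B'$ map $L^{2,\mathcal{L}}_{s-m}(G,E_0)\to L^{2,\mathcal{L}}_{s}(G,E_0)$ boundedly, while the smoothing remainders $R_1,R_2$ map any $L^{2,\mathcal{L}}_{s}(G,E_0)$ boundedly into every $L^{2,\mathcal{L}}_{s'}(G,E_0)$. The Sobolev embedding chain recalled in the excerpt, combined with the classical Rellich--Kondrachov compactness of elliptic Sobolev embeddings on the compact manifold $G$, shows that $L^{2,\mathcal{L}}_{s'}(G,E_0)\hookrightarrow L^{2,\mathcal{L}}_{s}(G,E_0)$ is compact for $s'>s$ large enough (factor through the middle elliptic space $L^2_{s'/\kappa}$). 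Composing, $R_1$ is compact on $L^{2,\mathcal{L}}_{s}(G,E_0)$ and $R_2$ is compact on $L^{2,\mathcal{L}}_{s-m}(G,E_0)$, and Atkinson's theorem delivers Fredholmness of $A:L^{2,\mathcal{L}}_{s}(G,E_0)\to L^{2,\mathcal{L}}_{s-m}(G,E_0)$.

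For regularity of the kernels, if $u\in\ker A$ then $u=-R_1 u\in\bigcap_t L^{2,\mathcal{L}}_t(G,E_0)=C^\infty(G,E_0)$. Under the identification $E_0^*\cong E_0$ imposed in the statement, Theorem \ref{VectorAdjoint} gives $A^*\in\Psi^{m,\mathcal{L}}_{\rho,\delta}((G\times\widehat{G})\otimes\textnormal{End}(E_0))$ with principal symbol $\sigma_A^*$; since invertibility and the lower bound required by Proposition \ref{IesTParametrix} are inherited from $\sigma_A$ via $(\sigma_A^*)^{-1}=(\sigma_A^{-1})^*$ and the identity $\|T^*\|_{\textnormal{op}}=\|T\|_{\textnormal{op}}$, Proposition \ref{IesTParametrix} applies to $A^*$ as well, and the same bootstrap yields $\ker A^*\subset C^\infty(G,E_0)$. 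Both kernels lying in $C^\infty(G,E_0)$ are independent of $s$, and so is the index.

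For the $\tau$-invariant refinement I would construct a $K$-equivariant parametrix by averaging. Let $R_k f(x):=\tau(k)f(xk)$ denote the right $K$-action on $L^{2,\mathcal{L}}_{s}(G,E_0)$ (uniformly bounded in $k$ on every Sobolev scale), for which $\tau$-invariance is the fixed-point condition. Define $B^\tau:=\int_K R_k B R_{k^{-1}}\,dk$ and $B'^\tau:=\int_K R_k B' R_{k^{-1}}\,dk$. These operators commute with every $R_k$ and therefore preserve $\tau$-invariance; since $A$ preserves $\tau$-invariance by hypothesis, the remainders $B'^\tau A - I$ and $AB^\tau - I$ remain in $\Psi^{-\infty,\mathcal{L}}$. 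The compactness and Atkinson arguments then transfer verbatim to the closed subspaces $L^{2,\mathcal{L}}_{s}(G,E_0)^\tau\hookrightarrow L^{2,\mathcal{L}}_{s-m}(G,E_0)^\tau$, and the regularity bootstrap places $\ker A$ and $\ker A^*$ inside $C^\infty(G,E_0)^\tau$, giving $s$-independence there as well. The main obstacle is verifying that the averaged operators $B^\tau,B'^\tau$ remain in the subelliptic H\"ormander class with controlled seminorms and that their remainders stay smoothing; this is a stability statement for the vector-valued calculus under the compact $K$-action, which I expect from the uniform Sobolev continuity of right translations together with the symbolic characterisation of $\Psi^{-\infty,\mathcal{L}}$, but it is the step whose verification is least automatic.
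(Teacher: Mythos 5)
Your proof follows the same overall architecture as the paper's: obtain a two-sided parametrix from Proposition~\ref{IesTParametrix}, show the smoothing remainders are compact, invoke Atkinson's criterion (Theorem~\ref{Atki}), and bootstrap elliptic regularity for $\ker A$ and $\ker A^*$ from the relation $u = Tu$ with $T$ smoothing. Two points of divergence are worth noting. First, the paper establishes compactness of the smoothing remainders directly via the Hilbert--Schmidt characterisation of Theorem~\ref{HS:cHara:VB} (composed with Bessel potentials $\mathcal{M}_{s_2,E_0}T\mathcal{M}_{-s_1,E_0}$), whereas you go through Rellich--Kondrachov embeddings factored through the elliptic Sobolev chain; both work, the paper's route being slightly more internal to the calculus. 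Second, for the $\tau$-invariant refinement the paper says only that ``a similar argument can be made'' using the closedness of $L^{2,\mathcal{L}}_s(G,E_0)^\tau$ and Theorem~\ref{VectSobosub}, without addressing whether the parametrix respects $\tau$-invariance; your $K$-averaging $B^\tau=\int_K R_k B R_{k^{-1}}\,dk$ is an honest fix for that gap.

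On the step you flag as ``least automatic'': you are worried about the wrong thing. One does not need $B^\tau$ to lie in the subelliptic H\"ormander class with controlled seminorms; for Atkinson all one needs is that $B^\tau$ is bounded $L^{2,\mathcal{L}}_{s-m}\to L^{2,\mathcal{L}}_s$, and that $B^\tau A-I$, $AB^\tau-I$ are compact on the $\tau$-invariant subspaces. Boundedness of $B^\tau$ follows immediately because each $R_k$ is an isometry of every scale $L^{2,\mathcal{L}}_t(G,E_0)$ (left-invariant vector fields commute with right translations, hence so does $\mathcal{M}$) and $K$ is compact. What your proof \emph{does} tacitly use, and should verify, is that $A$ intertwines the right $K$-action, i.e.\ $R_kAR_{k^{-1}}=A$ for all $k\in K$: without this, $B^\tau A$ does not reorganise into $\int_K R_k(BA)R_{k^{-1}}\,dk$, and the remainder is not visibly smoothing. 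This commutation is \emph{not} merely the hypothesis that $A$ maps $C^\infty(G,E_0)^\tau$ to itself restricted to that subspace; it is the full two-sided kernel symmetry
\[
K_A(g,y)=\tau(k_1)K_A(gk_1,yk_2)\tau(k_2)^{-1},\qquad k_1,k_2\in K,
\]
supplied by Theorem~\ref{TheoremCharK}. With $k_1=k_2=k$ this gives exactly $R_kAR_{k^{-1}}=A$, and your averaged-remainder computation then closes. So the argument is correct; it just needs this intertwining identity made explicit in place of the vaguer appeal to stability of $\Psi^{-\infty,\mathcal{L}}$ under the $K$-action.
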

\begin{proof}
First, let us remark that any vector-valued operator $T$,  with symbol in the class ${S}^{-\infty,\mathcal{L}}_{\rho,\delta}((G\times \widehat{G}) \otimes \textnormal{End}(E_0)),$ is a compact operator from  $L^2(G)$ to itself,  and  from any subelliptic Sobolev space $L^{2,\mathcal{L}}_{s_1}(G,E_0)$ into $L^{2,\mathcal{L}}_{s_2}(G,E_0),$ for any pair $s_1,s_2\in \mathbb{R}.$ 

Indeed, its compactness on $L^2(G,E_0)$ can be deduced from Theorem    \ref{HS:cHara:VB} and the fact that any Hilbert-Schmidt operator is compact. Observe that the compactness of $T$ from $L^{2,\mathcal{L}}_{s_1}(G,E_0)$ into $L^{2,\mathcal{L}}_{s_2}(G,E_0),$ follows from the compactness of the operator $\mathcal{M}_{s_2,E_0} T \mathcal{M}_{-s_1,E_0} $ on $L^2(G,E_0).$

Now, we will show that  $A$ is a Fredholm operator from $L^{2, \mathcal{L}}_{s}(G, E_0)$ to $L^{2, \mathcal{L}}_{s-m}(G, E_0)$ for all $s \in \mathbb{R}.$ To prove this suppose that $B \in {S}^{-m,\mathcal{L}}_{\rho,\delta}((G\times \widehat{G}) \otimes \textnormal{End}(E_0))$ be a parametrix of the operator $A$ as in Proposition \ref{IesTParametrix}, that means, there exist operators $T$, $ T' \in {S}^{-\infty,\mathcal{L}}_{\rho,\delta}((G\times \widehat{G}) \otimes \textnormal{End}(E_0))$  such that $BA=I-T$ and $AB=I-T'.$ 

Since the smoothing operators $$T:L^{2, \mathcal{L}}_s(G, E_0) \rightarrow L^{2, \mathcal{L}}_s(G, E_0)$$ and $$T':L^{2, \mathcal{L}}_{s-m}(G, E_0) \rightarrow L^{2, \mathcal{L}}_{s-m}(G, E_0)$$ are compact operators and $B: L^{2, \mathcal{L}}_{s-m}(G, E_0) \rightarrow L^{2, \mathcal{L}}_{s}(G, E_0)$ is a bounded linear operator (Theorem \ref{VectSobosub}) we deduce, from Theorem \ref{Atki}, that $A$ is a Fredholm operator. This means, $\textnormal{Ker}(A)$ and   $ \textnormal{Ker}(A^*) $ are   subspaces  of finite dimension.

Now, we will prove that the index of $A$
$$ \textnormal{Ind}_s(A):=\textnormal{dim Ker}_s(A)-\textnormal{dim Ker}_s(A^*)  $$
is independent of $s\in \mathbb{R}.$ 
To show this, let us consider the space 
$$\textnormal{Ker}_s(A)= \left\{ f \in L^{2, \mathcal{L}}_s(G, E_0): Af=0  \right\}.$$
We note that 
$$ \textnormal{Ker}_s(A) \subset  \textnormal{Ker}_s(BA) =  \textnormal{Ker}_s(I-T) \subset C^\infty(G, E_0).$$ 
Indeed,  $T\in   {S}^{r,\mathcal{L}}_{\rho,\delta}((G\times \widehat{G}) \otimes \textnormal{End}(E_0))$ for any $r\in \mathbb{R},$ and then $$T:L^{2, \mathcal{L}}_s(G, E_0) \rightarrow L^{2, \mathcal{L}}_{s-r}(G, E_0)$$ continuously for any $r\in \mathbb{R}.$ Now, if we  take $f\in \textnormal{Ker}_s(I-T),$  then it follows that $f\in L^{2, \mathcal{L}}_s(G, E_0),$ and $f=Tf\in L^{2, \mathcal{L}}_{s-r}(G, E_0), $ for any $r\in \mathbb{R}.$ This implies (see \eqref{CinftyL})
\begin{equation*}
    f\in  \bigcap_{t \in \mathbb{R}} L^{2, \mathcal{L}}_t(G, E_0)=C^\infty(G, E_0).
\end{equation*}
Similarly, in view of the identification $E_{0}^{*}\cong E_0$ we can see that $\textnormal{Ker}_{m-s}(A^*) \subset C^\infty(G, E_0)$ with the use of Theorem \ref{VectorAdjoint}. Since these kernels are in $C^\infty(G, E_0)$, they do not depend on $s$, and thus the Fredholm index is independent of $s.$






Finally, using the fact $L^{2, \mathcal{L}}_s(G, E_0)^\tau$ is a closed subspace of $L^{2, \mathcal{L}}_s(G, E_0)$ with $\bigcap_{t \in \mathbb{R}} L^{2, \mathcal{L}}_t(G, E_0)^\tau=C^\infty(G, E_0)^\tau$ and Theorem \ref{VectSobosub}, a similar argument can be made to establish that $A$ is a Fredholm operator from $L^{2, \mathcal{L}}_s(G, E_0)^\tau$ to $L^{2, \mathcal{L}}_{s-m}(G, E_0)^\tau$ for all $s \in \mathbb{R},$ and the kernels $\textnormal{Ker}(A),$ $\textnormal{Ker}(A^*)$ are in $C^\infty(G, E_0)^\tau,$ so that $\textnormal{Ker}(A),$ $\textnormal{Ker}(A^*)$ and $\textnormal{Ind}(A)$ are independent on $s.$
\end{proof}

As a consequence of Theorem \ref{Vector:Fred} we have the following result in the context of homogeneous vector bundles.
\begin{theorem}  Let $m\in \mathbb{R},$ and let $0\leqslant \delta<\rho\leqslant 1.$  Let  $a=a(x,\xi)\in {S}^{m,\mathcal{L}}_{\rho,\delta}((G\times \widehat{G}) \otimes \textnormal{End}(E_0)).$  Assume also that $a(x,\xi)\in \mathbb{C}^{d_\xi\times d_\xi}(\textnormal{End}(E_0))$ is invertible for every $(x,[\xi])\in G\times\widehat{G},$ and satisfies
\begin{equation}
   \sup_{(x,[\xi])\in G\times \widehat{G}} \Vert \widehat{\mathcal{M}}(\xi)^{m} \otimes a(x,\xi)^{-1}\Vert_{\textnormal{op}}<\infty.
\end{equation}Let $\tilde{A}\in \Psi^{m,\mathcal{L}}_{\rho,\delta}(E)$ be the operator associated to the symbol  $a$ via \eqref{quantizationonhomogeneous2}. Then $\tilde{A}$ is a Fredholm operator from $L^{2, \mathcal{L}}_s(E)$ to $L^{2, \mathcal{L}}_{s-m}(E)$ for all $s \in \mathbb{R}.$ 

Moreover, the kernels $\textnormal{Ker}(\tilde{A}),$ $\textnormal{Ker}(\tilde{A}^*)$ are in $\Gamma^\infty(E),$ so that $\textnormal{Ker}(\tilde{A}),$ $\textnormal{Ker}(\tilde{A}^*)$ and $\textnormal{Ind}(\tilde{A})$ are independent on $s.$
\end{theorem}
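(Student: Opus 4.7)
The proof proposal is essentially a transfer result: the theorem on homogeneous vector bundles should follow directly from Theorem \ref{Vector:Fred} via the unitary equivalence $\varkappa_\tau$ that intertwines $\tilde{A}$ acting on sections of $E$ with the vector-valued operator $A$ acting on $\tau$-equivariant functions.

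The first step is to invoke Remark \ref{soboiso}, which gives that $\varkappa_\tau:L^{2,\mathcal{L}}_s(E)\to L^{2,\mathcal{L}}_s(G,E_0)^\tau$ is an isometric isomorphism of Hilbert spaces for every $s\in\mathbb{R}$, together with the commutative diagram defining $A=\varkappa_\tau\tilde{A}\varkappa_\tau^{-1}$. Under this diagram one has the identity
\begin{equation*}
\tilde{A}=\varkappa_\tau^{-1}\circ A\circ \varkappa_\tau:L^{2,\mathcal{L}}_s(E)\longrightarrow L^{2,\mathcal{L}}_{s-m}(E),
\end{equation*}
and the symbol hypotheses on $a(x,\xi)$ are precisely those needed to apply Theorem \ref{Vector:Fred} to the operator $A\in \Psi^{m,\mathcal{L}}_{\rho,\delta}((G\times\widehat{G})\otimes\textnormal{End}(E_0))$. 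In particular, since $\tilde{A}\in \Psi^{m,\mathcal{L}}_{\rho,\delta}(E)$ maps $\Gamma^\infty(E)$ into $\Gamma^\infty(E)$, the corresponding operator $A$ automatically maps $C^\infty(G,E_0)^\tau$ into $C^\infty(G,E_0)^\tau$, so Theorem \ref{Vector:Fred} applies in its $\tau$-equivariant version and yields that $A:L^{2,\mathcal{L}}_s(G,E_0)^\tau\to L^{2,\mathcal{L}}_{s-m}(G,E_0)^\tau$ is Fredholm.

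Next, I would transfer this Fredholm property to $\tilde{A}$ using that $\varkappa_\tau$ is a unitary equivalence. Specifically, the operator identities
\begin{equation*}
\textnormal{Ker}(\tilde{A})=\varkappa_\tau^{-1}\bigl(\textnormal{Ker}(A)\bigr),\qquad \textnormal{Im}(\tilde{A})=\varkappa_\tau^{-1}\bigl(\textnormal{Im}(A)\bigr),
\end{equation*}
show that $\textnormal{Ker}(\tilde{A})$ is finite-dimensional, $\textnormal{Im}(\tilde{A})$ is closed with finite codimension, and the corresponding dimensions coincide with those of the kernel and cokernel of $A$. The same argument applied to the formal adjoint $\tilde{A}^*$ (defined via the commutative diagram \eqref{Diagram:adjoint}, using the identification $E_0^{*}\cong E_0$) gives that the adjoint correspondences match up as well. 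Regularity of kernel elements then follows from the analogous property in Theorem \ref{Vector:Fred}: since $\textnormal{Ker}(A)\subset C^\infty(G,E_0)^\tau$, one gets
\begin{equation*}
\textnormal{Ker}(\tilde{A})=\varkappa_\tau^{-1}\bigl(\textnormal{Ker}(A)\bigr)\subset \varkappa_\tau^{-1}\bigl(C^\infty(G,E_0)^\tau\bigr)=\Gamma^\infty(E),
\end{equation*}
and likewise $\textnormal{Ker}(\tilde{A}^*)\subset \Gamma^\infty(E)$.

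Finally, the independence of $\textnormal{Ker}(\tilde{A})$, $\textnormal{Ker}(\tilde{A}^*)$ and $\textnormal{Ind}(\tilde{A})$ on the parameter $s$ is inherited from the $s$-independence statement in Theorem \ref{Vector:Fred}, combined with the identity \eqref{CinftyL:2} asserting $\Gamma^\infty(E)=\bigcap_{s\in\mathbb{R}}L^{2,\mathcal{L}}_s(E)$. The main (mild) obstacle is to verify that the adjoint construction based on the diagram \eqref{Diagram:adjoint} is compatible with the Hilbert-space adjoint of $\tilde{A}$ acting between the Sobolev spaces $L^{2,\mathcal{L}}_s(E)$; this is a matter of checking that $\varkappa_\tau$ intertwines the respective inner products (which it does, since it is a unitary isomorphism) and that the identification $E_0^*\cong E_0$ is consistently used in both the vector-valued and the bundle settings. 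Once this identification is in place the result is essentially a dictionary between the two frameworks, with no further analysis required beyond Theorem \ref{Vector:Fred}.
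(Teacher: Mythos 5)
Your proposal is correct and takes essentially the same approach as the paper: both proofs transfer Theorem \ref{Vector:Fred} to the bundle setting via the unitary isomorphism $\varkappa_\tau$ between $L^{2,\mathcal{L}}_s(E)$ and $L^{2,\mathcal{L}}_s(G,E_0)^\tau$. The only real difference is in how the stability of Fredholmness under unitary conjugation is verified: you argue directly through the correspondences $\textnormal{Ker}(\tilde{A})=\varkappa_\tau^{-1}(\textnormal{Ker}(A))$ and $\textnormal{Im}(\tilde{A})=\varkappa_\tau^{-1}(\textnormal{Im}(A))$, whereas the paper instead conjugates the parametrix identity $AB-I=T$ to $\tilde{A}\tilde{B}=I-\tilde{T}$ and appeals to Atkinson's criterion (Theorem \ref{Atki}); the two verifications are interchangeable and equally elementary. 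Your additional remark about checking that the bundle adjoint defined by diagram \eqref{Diagram:adjoint} matches the Hilbert-space adjoint on $L^{2,\mathcal{L}}_s(E)$ is a reasonable point of care — the paper leaves this implicit — but it is indeed immediate from the unitarity of $\varkappa_\tau$ as you say.
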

\begin{proof} In view of Remark \ref{soboiso},  the following  diagram \begin{eqnarray}
    \begin{tikzpicture}[every node/.style={midway}]
  \matrix[column sep={10em,between origins}, row sep={4em}] at (0,0) {
    \node(R) {$L^{2, \mathcal{L}}_s(E)$}  ; & \node(S) {$L^{2, \mathcal{L}}_{s-m}(E)$}; \\
    \node(R/I) {$L^{2, \mathcal{L}}_s(G, E_0)^\tau$}; & \node (T) {$L^{2, \mathcal{L}}_{s-m}(G, E_0)^\tau$};\\
  };
  \draw[<-] (R/I) -- (R) node[anchor=east]  {$\varkappa_{\tau}$};
  \draw[->] (R) -- (S) node[anchor=south] {$\tilde{A}$};
  \draw[->] (S) -- (T) node[anchor=west] {$\varkappa_\tau$};
  \draw[->] (R/I) -- (T) node[anchor=north] {$A$};
\end{tikzpicture}
 \end{eqnarray} commutes. Now, observe that  $\tilde{A}:L^{2, \mathcal{L}}_s(E) \rightarrow L^{2, \mathcal{L}}_{s-m}(E)$ is a Fredholm operator if and only $A:L^{2, \mathcal{L}}_s(G, E_0) \rightarrow L^{2, \mathcal{L}}_{s-m}(G, E_0)$ is a Fredholm operator. Indeed, using the relation,
 $$AB-I= T \Leftrightarrow \varkappa_\tau A \varkappa_\tau^{-1} \varkappa_\tau B \varkappa_\tau^{-1}=I-\varkappa_\tau T \varkappa_\tau^{-1} \Leftrightarrow \tilde{A}\tilde{B}=I-\tilde{T}$$
 along with Theorem \ref{Atki}, Theorem \ref{Vector:Fred} and the fact that $T$ is compact if and only if $\tilde{T}$ is compact,  the proof of theorem follows. \end{proof}

\subsection{Parameter $\mathcal{L}$-ellipticity} \label{Parameterellipticity}

To develop the subelliptic functional calculus we need a more wide notion of ellipticity, namely, the parameter ellipticity. Here, we will consider the case where $E_0=F_0,$ and every continuous linear operator $A$  on $C^\infty(G,E_0)$ will be considered with its {\it$\textnormal{End}(E_0)$-valued symbol} as in Definition \ref{equivalenceXXX}.

\begin{definition}\label{parameterellipticityonvundles} Let $m>0,$ and let $0\leqslant \delta<\rho\leqslant 1.$
Let $\Lambda=\{\gamma(t):t\in J\footnote{where $J=[a,b],$ $-\infty<a\leqslant b<\infty,$ $J=[a,\infty),$ $J=(-\infty,b]$ or $J=(-\infty,\infty).$}\}$  be an analytic curve in the complex plane $\mathbb{C}.$ If $J$ is a finite interval we assume that $\Lambda$ is a closed curve. For  simplicity, if $J$ is an infinite interval we assume that $\Lambda$ is homotopy equivalent to the line $\Lambda_{i\mathbb{R}}:=\{iy:-\infty<y<\infty\}.$  Let  $a=a(x,\xi)\in {S}^{m,\mathcal{L}}_{\rho,\delta}((G\times \widehat{G})\otimes \textnormal{End}(E_0)).$  Assume also that $$R_{\lambda}(x,\xi)^{-1}:=a(x,\xi)-\lambda \in \mathbb{C}^{d_\xi\times d_\xi}( \textnormal{End}(E_0)), \footnote{We have denoted $a(x,\xi)-\lambda:=a(x,\xi)-\lambda I_{ \mathbb{C}^{d_\xi\times d_\xi}( \textnormal{End}(E_0))}$ to simplify the notation.}$$ is invertible   for every $(x,[\xi])\in G\times\widehat{G},$ and for all $\lambda \in  \Lambda.$ We say that $a$ is parameter $\mathcal{L}$-elliptic with respect to $\Lambda,$ if
\begin{equation}
    \sup_{\lambda\in \Lambda}\sup_{(x,[\xi])\in G\times \widehat{G}}\Vert (|\lambda|^{\frac{1}{m}}+\widehat{\mathcal{M}}(\xi))^{m}\otimes R_{\lambda}(x,\xi)\Vert_{\textnormal{op}}<\infty.
\end{equation}
\end{definition}
\begin{remark}
Observe that for $a=b=0,$ $J=\{0\},$ and for the trivial curve $\gamma(t)=0,$ that $a$ is parameter $\mathcal{L}$-elliptic with respect to $\Lambda=\{0\},$ is equivalent to say that $a$ is  $\mathcal{L}$-elliptic.
\end{remark}
The following theorem classifies the matrix resolvent $R_{\lambda}(x,\xi)$ of a parameter $\mathcal{L}$-elliptic symbol $a.$
\begin{theorem}\label{lambdalambdita} Let $m>0,$ and let $0\leqslant \delta<\rho\leqslant 1.$ If $a$ is parameter $\mathcal{L}$-elliptic with respect to $\Lambda,$ the following estimate 
\begin{equation*}
   \sup_{\lambda\in \Lambda}\sup_{(x,[\xi])\in G\times \widehat{G}}\Vert (|\lambda|^{\frac{1}{m}}+\widehat{\mathcal{M}}(\xi))^{m(k+1)}\widehat{\mathcal{M}}(\xi)^{\rho|\alpha|-\delta|\beta|}\otimes  \partial_{\lambda}^k\partial_{X}^{(\beta)}\Delta_{\xi}^{\alpha}R_{\lambda}(x,\xi)\Vert_{\textnormal{op}}<\infty,
\end{equation*}holds true for all $\alpha,\beta\in \mathbb{N}_0^n$ and $k\in \mathbb{N}_0.$

\end{theorem}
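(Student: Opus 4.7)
The plan is to argue by induction on $N:=k+|\alpha|+|\beta|$, reducing each of the three types of differentiation/difference to the parameter-ellipticity hypothesis (the $N=0$ case). Throughout, since $a(x,\xi)-\lambda = R_\lambda(x,\xi)^{-1}$ and $\lambda$ is a scalar, we have the key identities
\begin{equation*}
\partial_\lambda R_\lambda(x,\xi)^{-1}=-I,\qquad \partial_X^{(\beta)}(a-\lambda)=\partial_X^{(\beta)}a\text{ for }|\beta|\geq 1,\qquad \Delta_\xi^{\alpha}(a-\lambda)=\Delta_\xi^{\alpha}a\text{ for }|\alpha|\geq 1,
\end{equation*}
which decouple the $\lambda$-dependence of $R_\lambda^{-1}$ from the $(X,\Delta_\xi)$-dependence. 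As an auxiliary device I will use Theorem \ref{characterisations} in exactly the same way that Theorem \ref{IesT} was proved: to freely move powers of $\widehat{\mathcal{M}}(\xi)$ past products of endomorphism-valued symbols. I will systematically replace the weight $\widehat{\mathcal{M}}(\xi)^{m}$ used there by the stronger parameter weight $(|\lambda|^{1/m}+\widehat{\mathcal{M}}(\xi))^{m}$, which dominates it uniformly in $\lambda$.

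First I would dispose of $\partial_\lambda^k$. Differentiating $R_\lambda\circ(a-\lambda)=I$ in $\lambda$ and using $\partial_\lambda(a-\lambda)=-I$ gives $\partial_\lambda R_\lambda = R_\lambda\circ R_\lambda$, and by induction $\partial_\lambda^k R_\lambda = k!\,R_\lambda^{k+1}$. Combined with the hypothesis
\begin{equation*}
 \sup_{\lambda,x,[\xi]}\Vert (|\lambda|^{1/m}+\widehat{\mathcal{M}}(\xi))^{m}\otimes R_\lambda(x,\xi)\Vert_{\textnormal{op}}<\infty,
\end{equation*}
and splitting $(|\lambda|^{1/m}+\widehat{\mathcal{M}}(\xi))^{m(k+1)}$ as a product of $(k+1)$ copies of $(|\lambda|^{1/m}+\widehat{\mathcal{M}}(\xi))^{m}$ (inserting them between the $R_\lambda$'s via Theorem \ref{characterisations}), this yields the case $k\geq 0$, $\alpha=\beta=0$.

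Next I would handle the $X$-derivatives by induction on $|\beta|$. Applying $\partial_X^{(\beta)}$ to $R_\lambda\circ(a-\lambda)=I$ and solving for the top-order term gives
\begin{equation*}
\partial_X^{(\beta)}R_\lambda = -R_\lambda\circ\!\!\sum_{\beta_1+\beta_2=\beta,\,|\beta_1|<|\beta|}\!\!c_{\beta_1,\beta_2}\,(\partial_X^{(\beta_1)}R_\lambda)\circ(\partial_X^{(\beta_2)}a).
\end{equation*}
For each term one factors
\begin{equation*}
(|\lambda|^{1/m}+\widehat{\mathcal{M}}(\xi))^{m(k+1)}\widehat{\mathcal{M}}(\xi)^{-\delta|\beta|}=(|\lambda|^{1/m}+\widehat{\mathcal{M}}(\xi))^{m}\cdot (|\lambda|^{1/m}+\widehat{\mathcal{M}}(\xi))^{mk}\widehat{\mathcal{M}}(\xi)^{-\delta|\beta_1|}\cdot \widehat{\mathcal{M}}(\xi)^{-\delta|\beta_2|},
\end{equation*}
and absorbs the three factors, respectively, into: $R_\lambda$ (hypothesis), $\partial_X^{(\beta_1)}R_\lambda$ (inductive hypothesis with $k$ replaced by $k-1$, plus one extra $R_\lambda$; alternatively just the inductive hypothesis at a smaller $|\beta|$), and $\partial_X^{(\beta_2)}a\in S^{m,\mathcal{L}}_{\rho,\delta}$. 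Theorem \ref{characterisations} is used again to move the $\widehat{\mathcal{M}}(\xi)$ factors between the left and right of each endomorphism-valued factor in the product.

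Finally, the same mechanism handles the differences $\Delta_\xi^\alpha$. The Leibniz rule (Remark \ref{Leibnizrule}, in the endomorphism form \eqref{leibniz:End:symbol}) applied to $\Delta_\xi^\alpha\bigl(R_\lambda\circ(a-\lambda)\bigr)=0$ yields, after separating the top-order term,
\begin{equation*}
\Delta_\xi^\alpha R_\lambda = -R_\lambda\circ\Big(\Delta_\xi^\alpha a+\!\!\sum_{\substack{|\alpha_1|<|\alpha|,\,|\alpha_2|\geq 1,\\|\alpha_1|+|\alpha_2|\geq|\alpha|}}\!\!c_{\alpha_1,\alpha_2}(\Delta_\xi^{\alpha_1}R_\lambda)\circ(\Delta_\xi^{\alpha_2}a)\Big),
\end{equation*}
again splitting the weight $(|\lambda|^{1/m}+\widehat{\mathcal{M}}(\xi))^{m(k+1)}\widehat{\mathcal{M}}(\xi)^{\rho|\alpha|}$ and absorbing each piece via the inductive hypothesis and the bound $a\in S^{m,\mathcal{L}}_{\rho,\delta}$. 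To carry out the full joint induction one mimics the cascade in the proof of Theorem \ref{IesT}: for each $(k,\alpha,\beta)$ with $k+|\alpha|+|\beta|=N+1$, one first lowers the order of differentiation/difference on $R_\lambda$ by the appropriate formula above, obtains terms of order $\leq N$ at the cost of a right multiplication by $\partial_X^{(\beta_2)}a$ or $\Delta_\xi^{\alpha_2}a$, and finishes by Theorem \ref{characterisations}.

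The hard part is purely bookkeeping: choosing at each step \emph{which} power of $(|\lambda|^{1/m}+\widehat{\mathcal{M}}(\xi))^{m}$ to peel off (to be consumed by one $R_\lambda$) versus which power of $\widehat{\mathcal{M}}(\xi)$ (to be consumed by a derivative of $a$), and verifying that the exponents add up correctly so that every factor falls under either the inductive hypothesis or the hypothesis $a\in S^{m,\mathcal{L}}_{\rho,\delta}$. The argument is in every respect parallel to the proof of Theorem \ref{IesT}; the only novelty is that the weight $\widehat{\mathcal{M}}(\xi)^{m}$ there is replaced by the parameter-dependent weight $(|\lambda|^{1/m}+\widehat{\mathcal{M}}(\xi))^{m}$, and all manipulations with $\widehat{\mathcal{M}}(\xi)$ in that proof remain valid because $(|\lambda|^{1/m}+\widehat{\mathcal{M}}(\xi))^{m}$ commutes with $\widehat{\mathcal{M}}(\xi)$ and the constants are uniform in $\lambda\in\Lambda$.
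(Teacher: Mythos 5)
Your strategy — a single joint induction on $k+|\alpha|+|\beta|$ carried out directly with the parameter weight $W_\lambda:=(|\lambda|^{1/m}+\widehat{\mathcal{M}}(\xi))^m$ — is genuinely different from the paper's organization, which splits the argument into the two regimes $|\lambda|\le 1$ and $|\lambda|>1$: in the first, it bounds the ratio $W_\lambda^{k+1}\widehat{\mathcal{M}}(\xi)^{-m(k+1)}$ once and for all and then reduces everything to the $\lambda$-free Theorem \ref{IesT} plus compactness of $\Lambda\cap\{|\lambda|\le 1\}$; in the second, it renormalizes $a-\lambda$ to $\theta a-\omega$ with $\theta=1/|\lambda|\in(0,1]$ and $|\omega|=1$ and works with $\theta$-uniform calculus estimates plus a limit as $\theta\to 0^+$. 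A case-free induction would be cleaner if it worked, but as written your proposal has a gap precisely at the place you label "bookkeeping."

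The problem is the closing assertion that "all manipulations with $\widehat{\mathcal{M}}(\xi)$ in that proof remain valid because $W_\lambda$ commutes with $\widehat{\mathcal{M}}(\xi)$ and the constants are uniform in $\lambda$." Commutativity is true but not the issue; the issue is the two-sided/conjugation estimates. For example, to bound $\Vert W_\lambda^{k+1}\otimes R_\lambda^{k+1}\Vert_{\mathrm{op}}$ by peeling off $(k+1)$ factors of $W_\lambda\otimes R_\lambda$, you need estimates of the form $\Vert W_\lambda^{\,j}\,(W_\lambda\otimes R_\lambda)\,W_\lambda^{-j}\Vert_{\mathrm{op}}\lesssim 1$ uniformly in $\lambda$, i.e.\ conjugation of the hypothesis by powers of the $\lambda$-dependent weight. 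Theorem \ref{characterisations} does give left/right/conjugation equivalence — but only for the weight $\widehat{\mathcal{M}}(\xi)$, and with constants controlled by the $S^{m',\mathcal{L}}_{\rho,\delta}$ seminorms of the symbol in question. Applying it to $R_\lambda$ is circular here: the $S^{-m,\mathcal{L}}_{\rho,\delta}$ seminorms of $R_\lambda$ produced by Theorem \ref{IesT} are \emph{not} $\lambda$-uniform, since they depend on the $S^{m,\mathcal{L}}_{\rho,\delta}$ seminorms of $a-\lambda$, and already $\sup_{x,[\xi]}\Vert\widehat{\mathcal{M}}(\xi)^{-m}\otimes(a(x,\xi)-\lambda)\Vert_{\mathrm{op}}$ grows like $|\lambda|$. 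So the constant in the conjugation estimate you invoke grows with $|\lambda|$, and the very thing you are trying to prove — a $\lambda$-uniform bound for $R_\lambda$ and its derivatives — is what you would need to justify the step. The paper avoids this circularity exactly through the case split and renormalization described above. To rescue your route you would have to prove, as a separate lemma, a parameter-uniform conjugation estimate for the weight $W_\lambda$ (e.g.\ a $\lambda$-uniform Corach--Porta--Recht type inequality); this is plausible but is not contained in the quoted Theorem \ref{characterisations}, and must be supplied before the induction can close.
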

\begin{proof}
    We will split the proof in the cases $|\lambda|\leqslant 1,$  and $|\lambda|> 1,$ where $\lambda\in \Lambda.$ It is possible however that one of these two cases could be trivial in the sense that $\Lambda_{1}:=\{\lambda\in \Lambda:|\lambda|\leqslant 1\}$ or $\Lambda_{1}^{c}:=\{\lambda\in \Lambda:|\lambda|> 1\}$ could be empty sets. In such a case the proof is self-contained in the situation that we will consider where we assume that $\Lambda_{1}$ and $\Lambda_{1}^c$ are not trivial sets.   For $|\lambda|\leqslant 1,$ observe that
\begin{align*}
    &\Vert (|\lambda|^{\frac{1}{m}}+\widehat{\mathcal{M}}(\xi))^{m(k+1)}\widehat{\mathcal{M}}(\xi)^{\rho|\alpha|-\delta|\beta|} \otimes \partial_{\lambda}^k\partial_{X}^{(\beta)}\Delta_{\xi}^{\alpha}R_{\lambda}(x,\xi)\Vert_{\textnormal{op}}\\
    &=\Vert (|\lambda|^{\frac{1}{m}}+\widehat{\mathcal{M}}(\xi))^{m(k+1)}\widehat{\mathcal{M}}(\xi)^{-m(k+1)}\widehat{\mathcal{M}}(\xi)^{m(k+1)+\rho|\alpha|-\delta|\beta|} \otimes \partial_{\lambda}^k\partial_{X}^{(\beta)}\Delta_{\xi}^{\alpha}R_{\lambda}(x,\xi)\Vert_{\textnormal{op}}\\
    &\leqslant \Vert (|\lambda|^{\frac{1}{m}}+\widehat{\mathcal{M}}(\xi))^{m(k+1)}\widehat{\mathcal{M}}(\xi)^{-m(k+1)}\Vert_{\textnormal{op}}\\
    &\hspace{5cm}\times\Vert\widehat{\mathcal{M}}(\xi)^{m(k+1)+\rho|\alpha|-\delta|\beta|} \otimes\partial_{\lambda}^k\partial_{X}^{(\beta)}\Delta_{\xi}^{\alpha}R_{\lambda}(x,\xi)\Vert_{\textnormal{op}}.
\end{align*} We have 
\begin{align*}
    &\Vert (|\lambda|^{\frac{1}{m}}+\widehat{\mathcal{M}}(\xi))^{m(k+1)}\widehat{\mathcal{M}}(\xi)^{-m(k+1)}\Vert_{\textnormal{op}}\\
    &= \Vert (|\lambda|^{\frac{1}{m}}\widehat{\mathcal{M}}(\xi)^{-1}+I_{d_\xi})^{m(k+1)}\Vert_{\textnormal{op}}\leqslant \Vert |\lambda|^{\frac{1}{m}}\widehat{\mathcal{M}}(\xi)^{-1}+I_{d_\xi}\Vert^{m(k+1)}_{\textnormal{op}}\\
    &\leqslant \sup_{|\lambda|\in [0,1]}\sup_{1\leqslant j\leqslant d_\xi}(|\lambda|^{\frac{1}{m}}(1+\nu_{jj}(\xi)^2)^{-\frac{1}{2}}+1))^{k(m+1)}\\
    &=O(1).
\end{align*} On the other hand, we can prove that
\begin{align*}
   \Vert\widehat{\mathcal{M}}(\xi)^{m(k+1)+\rho|\alpha|-\delta|\beta|} \otimes \partial_{\lambda}^k\partial_{X}^{(\beta)}\Delta_{\xi}^{\alpha}R_{\lambda}(x,\xi)\Vert_{\textnormal{op}}=O(1).
\end{align*} For $k=1,$ $\partial_{\lambda}R_{\lambda}(x,\xi)= R_{\lambda}(x,\xi)^{2}.$ This can be deduced from the Leibniz rule, indeed,
\begin{align*}
 0=\partial_{\lambda}(R_{\lambda}(x,\xi) \circ (a(x,\xi)-\lambda))=(\partial_{\lambda}R_{\lambda}(x,\xi)) \circ (a(x,\xi)-\lambda)+ R_{\lambda}(x,\xi)(-1) 
\end{align*}implies that
    \begin{align*}
 \partial_{\lambda}(R_{\lambda}(x,\xi))\circ(a(x,\xi)-\lambda)= R_{\lambda}(x,\xi). 
\end{align*} Because $(a(x,\xi)-\lambda)=R_{\lambda}(x,\xi)^{-1}$ the identity for the first derivative of $R_\lambda,$ $\partial_{\lambda}R_{\lambda}$ it follows. So, from the chain rule we obtain that the term of higher order expanding the derivative   $ \partial_{\lambda}^kR_{\lambda} $ is $ R_{\lambda}^{k+1}.$ From Theorem   \ref{IesT} we deduce that $R_{\lambda}\in S^{-m,\mathcal{L}}_{\rho,\delta}((G\times \widehat{G})\otimes \textnormal{End}(E_0)).$ The subelliptic calculus implies that $R_{\lambda}^{k+1}\in S^{-m(k+1),\mathcal{L}}_{\rho,\delta}((G\times \widehat{G})\otimes \textnormal{End}(E_0)).$ This fact, and the compactness of $\Lambda_1\subset \mathbb{C},$ provide us the uniform estimate 
    \begin{equation*}
   \sup_{\lambda\in \Lambda_1}\sup_{(x,[\xi])\in G\times \widehat{G}}\Vert\widehat{\mathcal{M}}(\xi)^{m(k+1)+\rho|\alpha|-\delta|\beta|} \otimes \partial_{\lambda}^k\partial_{X}^{(\beta)}\Delta_{\xi}^{\alpha}R_{\lambda}(x,\xi)\Vert_{\textnormal{op}}<\infty.
\end{equation*}Now, we will analyse the situation for $\lambda\in \Lambda_1^c.$ We will use induction over $k$ in order to prove that
\begin{equation*}
   \sup_{\lambda\in \Lambda_1^c}\sup_{(x,[\xi])\in G\times \widehat{G}}\Vert (|\lambda|^{\frac{1}{m}}+\widehat{\mathcal{M}}(\xi))^{m(k+1)}\widehat{\mathcal{M}}(\xi)^{\rho|\alpha|-\delta|\beta|} \otimes \partial_{\lambda}^k\partial_{X}^{(\beta)}\Delta_{\xi}^{\alpha}R_{\lambda}(x,\xi)\Vert_{\textnormal{op}}<\infty.
\end{equation*}
For $k=0$ notice that
\begin{align*}
     &\Vert (|\lambda|^{\frac{1}{m}}+\widehat{\mathcal{M}}(\xi))^{m(k+1)}\widehat{\mathcal{M}}(\xi)^{\rho|\alpha|-\delta|\beta|} \otimes \partial_{\lambda}^k\partial_{X}^{(\beta)}\Delta_{\xi}^{\alpha}R_{\lambda}(x,\xi)\Vert_{\textnormal{op}}\\
     &=\Vert (|\lambda|^{\frac{1}{m}}+\widehat{\mathcal{M}}(\xi))^{m}\widehat{\mathcal{M}}(\xi)^{\rho|\alpha|-\delta|\beta|}\otimes \partial_{X}^{(\beta)}  \Delta_{\xi}^{\alpha}(a(x,\xi)-\lambda)^{-1}\Vert_{\textnormal{op}},
\end{align*}and denoting $\theta=\frac{1}{|\lambda|},$ $\omega=\frac{\lambda}{|\lambda|},$ we have
 \begin{align*}
     &\Vert (|\lambda|^{\frac{1}{m}}+\widehat{\mathcal{M}}(\xi))^{m(k+1)}\widehat{\mathcal{M}}(\xi)^{\rho|\alpha|-\delta|\beta|} \otimes \partial_{\lambda}^k\partial_{X}^{(\beta)}\Delta_{\xi}^{\alpha}R_{\lambda}(x,\xi)\Vert_{\textnormal{op}}\\
     &=\Vert (|\lambda|^{\frac{1}{m}}+\widehat{\mathcal{M}}(\xi))^{m}|\lambda|^{-1}\widehat{\mathcal{M}}(\xi)^{\rho|\alpha|-\delta|\beta|} \otimes \partial_{X}^{(\beta)}\Delta_{\xi}^{\alpha}(\theta\times  a(x,\xi)-\omega)^{-1}\Vert_{\textnormal{op}}\\
     &=\Vert (1+|\lambda|^{-\frac{1}{m}}\widehat{\mathcal{M}}(\xi))^{m}\widehat{\mathcal{M}}(\xi)^{\rho|\alpha|-\delta|\beta|} \otimes \partial_{X}^{(\beta)}\Delta_{\xi}^{\alpha}(\theta\times  a(x,\xi)-\omega)^{-1}\Vert_{\textnormal{op}}\\
     &=\Vert (1+\theta^{\frac{1}{m}}\widehat{\mathcal{M}}(\xi))^{m}\widehat{\mathcal{M}}(\xi)^{\rho|\alpha|-\delta|\beta|} \otimes \partial_{X}^{(\beta)}\Delta_{\xi}^{\alpha}(\theta\times  a(x,\xi)-\omega)^{-1}\Vert_{\textnormal{op}}\\
     &=\Vert (1+\theta^{\frac{1}{m}}\widehat{\mathcal{M}}(\xi))^{m}\widehat{\mathcal{M}}(\xi)^{-m}\widehat{\mathcal{M}}(\xi)^{m+\rho|\alpha|-\delta|\beta|} \otimes \partial_{X}^{(\beta)}  \Delta_{\xi}^{\alpha}(\theta\times  a(x,\xi)-\omega)^{-1}\Vert_{\textnormal{op}}\\
     &\leqslant \Vert (1+\theta^{\frac{1}{m}}\widehat{\mathcal{M}}(\xi))^{m}\widehat{\mathcal{M}}(\xi)^{-m}\Vert_{\textnormal{op}}\Vert\widehat{\mathcal{M}}(\xi)^{m+\rho|\alpha|-\delta|\beta|} \otimes \partial_{X}^{(\beta)}\Delta_{\xi}^{\alpha}(\theta\times  a(x,\xi)-\omega)^{-1}\Vert_{\textnormal{op}}.
\end{align*}   Because $ (1+\theta^{\frac{1}{m}}\widehat{\mathcal{M}}(\xi))^{m}\widehat{\mathcal{M}}(\xi)^{-m} \in S^{0,\mathcal{L}}_{\rho,\delta}(G\times \widehat{G}) ,$ we have that the operator norm $ \Vert (1+\theta^{\frac{1}{m}}\widehat{\mathcal{M}}(\xi))^{m}\widehat{\mathcal{M}}(\xi)^{-m}\Vert_{\textnormal{op}}$ is uniformly bounded in $\theta\in [\varepsilon_0
    ,1]$ where $\varepsilon_{0}>0   $ is small enough and to be chosen later.  The same argument can be applied to the operator norm $$ \Vert\widehat{\mathcal{M}}(\xi)^{m+\rho|\alpha|-\delta|\beta|}\otimes \partial_{X}^{(\beta)}\Delta_{\xi}^{\alpha}(\theta\times  a(x,\xi)-\omega)^{-1}\Vert_{\textnormal{op}}, $$
    by using that $(\theta\times  a(x,\xi)-\omega)^{-1}\in S^{-m,\mathcal{L}}_{\rho,\delta}((G\times \widehat{G})\otimes \textnormal{End}(E_0)), $ with $\theta\in [\varepsilon_0
    ,1]$  and with $\omega$ being an element of the complex circle. Now, let us analyse the case where $\theta\rightarrow 0^{+}.$ Note that in this case $|\omega|= 1.$ Consider the operator norm
    $$\Vert (1+\theta^{\frac{1}{m}}\widehat{\mathcal{M}}(\xi))^{m}\widehat{\mathcal{M}}(\xi)^{-m}\widehat{\mathcal{M}}(\xi)^{m+\rho|\alpha|-\delta|\beta|} \otimes \partial_{X}^{(\beta)}  \Delta_{\xi}^{\alpha}(\theta\times  a(x,\xi)-\omega)^{-1}\Vert_{\textnormal{op}}$$
    when $|\alpha|=|\beta|=0.$ Then, we have that
    \begin{align*}
       &\lim_{\theta\rightarrow0^{+}} \Vert (1+\theta^{\frac{1}{m}}\widehat{\mathcal{M}}(\xi))^{m}\widehat{\mathcal{M}}(\xi)^{-m}\widehat{\mathcal{M}}(\xi)^{m} \otimes(\theta\times  a(x,\xi)-\omega)^{-1}\Vert_{\textnormal{op}}\\
       &\lesssim\lim_{\theta\rightarrow0^{+}} \Vert (\theta\times  a(x,\xi)-\omega)^{-1}\Vert_{\textnormal{op}}=|\omega|^{-1}=1.
    \end{align*}On the other hand, if $|\beta|\neq 0,$ when  $\theta\rightarrow 0^{+},\,|\omega|=1,$ the symbol $(\theta\times  a(x,\xi)-\omega)^{-1}$ tends to the constant operator $\omega^{-1}I,$ and the derivative   $ \partial_{X}^{(\beta)}  \Delta_{\xi}^{\alpha}(\theta\times  a(x,\xi)-\omega)^{-1}$ tends to the null operator with respect to the operator norm of matrices and uniformly in $[\xi]\in \widehat{G}$. Consequently, the term
    $$\Vert (1+\theta^{\frac{1}{m}}\widehat{\mathcal{M}}(\xi))^{m}\widehat{\mathcal{M}}(\xi)^{-m}\widehat{\mathcal{M}}(\xi)^{m+\rho|\alpha|-\delta|\beta|} \otimes \partial_{X}^{(\beta)}  \Delta_{\xi}^{\alpha}(\theta\times  a(x,\xi)-\omega)^{-1}\Vert_{\textnormal{op}}$$
    can be estimated from above by $O(1).$
    
    The case  $k\geqslant 1$ for $\lambda\in \Lambda_1^c$ can be proved in an analogous way.
\end{proof} Combining Proposition \ref{IesTParametrix} and Theorem \ref{lambdalambdita} we obtain the following corollaries.
\begin{corollary}\label{parameterparametrix}
Let $m>0,$ and let $0\leqslant \delta<\rho\leqslant 1.$ Let  $a$ be a parameter $\mathcal{L}$-elliptic symbol with respect to $\Lambda.$ Then  there exists a parameter-dependent parametrix of $A-\lambda I,$ with symbol $a^{-\#}(x,\xi,\lambda)$ satisfying the estimates
\begin{equation*}
   \sup_{\lambda\in \Lambda}\sup_{(x,[\xi])\in G\times \widehat{G}}\Vert (|\lambda|^{\frac{1}{m}}+\widehat{\mathcal{M}}(\xi))^{m(k+1)}\widehat{\mathcal{M}}(\xi)^{\rho|\alpha|-\delta|\beta|}\otimes \partial_{\lambda}^k\partial_{X}^{(\beta)}\Delta_{\xi}^{\alpha}a^{-\#}(x,\xi,\lambda)\Vert_{\textnormal{op}}<\infty,
\end{equation*}for all $\alpha,\beta\in \mathbb{N}_0^n$ and $k\in \mathbb{N}_0.$
\end{corollary}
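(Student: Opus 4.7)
The plan is to adapt the parametrix construction from Proposition \ref{IesTParametrix} to the family $A-\lambda I$, while tracking the parameter dependence using the uniform estimates for the matrix resolvent $R_\lambda$ supplied by Theorem \ref{lambdalambdita}. Since $a$ is parameter $\mathcal{L}$-elliptic, Theorem \ref{lambdalambdita} ensures that $R_\lambda(x,\xi)=(a(x,\xi)-\lambda)^{-1}$ is pointwise invertible for every $\lambda\in\Lambda$ and that, for all $\alpha,\beta\in\mathbb{N}_0^{n}$ and $k\in\mathbb{N}_0$,
\begin{equation*}
\sup_{\lambda\in \Lambda}\sup_{(x,[\xi])}\Vert(|\lambda|^{\frac{1}{m}}+\widehat{\mathcal{M}}(\xi))^{m(k+1)}\widehat{\mathcal{M}}(\xi)^{\rho|\alpha|-\delta|\beta|}\otimes \partial_{\lambda}^{k}\partial_{X}^{(\beta)}\Delta_{\xi}^{\alpha}R_{\lambda}(x,\xi)\Vert_{\textnormal{op}}<\infty.
\end{equation*}
I take $b_{0}(x,\xi,\lambda):=R_\lambda(x,\xi)$ as the leading term of the parametrix.

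First, I compute the symbolic composition $(a(x,\xi)-\lambda)\#\,b_{0}(x,\xi,\lambda)=I-r(x,\xi,\lambda)$ using the asymptotic expansion from Theorem \ref{VectorcompositionC}; the first term cancels and the remainder $r$ is a sum of products of $(\Delta_\xi^\gamma a)\#(\partial_X^{(\gamma)}R_\lambda)$ with $|\gamma|\geq 1$. Since each $\partial_X^{(\gamma)}R_\lambda$ gains a factor $(|\lambda|^{1/m}+\widehat{\mathcal{M}}(\xi))^{-m}\widehat{\mathcal{M}}(\xi)^{-\delta|\gamma|}$ by Theorem \ref{lambdalambdita}, and each $\Delta_\xi^\gamma a$ gains $\widehat{\mathcal{M}}(\xi)^{m-\rho|\gamma|}$, the remainder $r$ satisfies uniform estimates of the same parameter-dependent form as $R_\lambda$ but with an extra gain $(|\lambda|^{1/m}+\widehat{\mathcal{M}}(\xi))^{-(\rho-\delta)}\widehat{\mathcal{M}}(\xi)^{0}$—that is, $r$ lies in a parameter-dependent analogue of $S^{-(\rho-\delta),\mathcal{L}}_{\rho,\delta}$.

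Next, following the recursion from the proof of Proposition \ref{IesTParametrix}, I set $b_{j}(x,\xi,\lambda):=b_{0}\# r^{\#j}$ for $j\geq 1$. Inductively, each $b_{j}$ obeys
\begin{equation*}
\Vert(|\lambda|^{\frac{1}{m}}+\widehat{\mathcal{M}}(\xi))^{m(k+1)+j(\rho-\delta)}\widehat{\mathcal{M}}(\xi)^{\rho|\alpha|-\delta|\beta|}\otimes \partial_{\lambda}^{k}\partial_{X}^{(\beta)}\Delta_{\xi}^{\alpha}b_{j}\Vert_{\textnormal{op}}<\infty
\end{equation*}
uniformly in $\lambda\in\Lambda$ (and in $(x,[\xi])$). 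Then I apply Lemma \ref{lemma:asymp:exp}, in its parameter-dependent form, to produce a symbol $a^{-\#}(x,\xi,\lambda)\sim\sum_{j=0}^{\infty}b_{j}(x,\xi,\lambda)$ such that for every $N\in\mathbb{N}$ the partial remainder $a^{-\#}-\sum_{j<N}b_{j}$ enjoys the order-gain of $b_{N}$. By construction, $(a-\lambda)\#a^{-\#}-I$ is a symbol of order $-\infty$ in the parameter-dependent sense, so $a^{-\#}$ is a genuine parameter-dependent parametrix for $A-\lambda I$.

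The main obstacle, and the step I expect to require the most careful bookkeeping, is verifying that the parameter-dependent weight $(|\lambda|^{1/m}+\widehat{\mathcal{M}}(\xi))^{m(k+1)}$ propagates correctly through the symbolic composition $\#$ and the asymptotic summation. Concretely, I must show that multiplying two parameter-dependent symbols whose weights involve $(|\lambda|^{1/m}+\widehat{\mathcal{M}}(\xi))$-powers adds those powers in the right way, and that the $\lambda$-derivatives interact compatibly with the Leibniz rule for the $\#$-product (since $\partial_\lambda R_\lambda=R_\lambda^{2}$, and higher $\lambda$-derivatives produce larger powers of $R_\lambda$). Once this is done, plugging the $b_j$ into the parameter-dependent analogue of Lemma \ref{lemma:asymp:exp} yields the desired estimates on $a^{-\#}(x,\xi,\lambda)$ for all $\alpha,\beta\in\mathbb{N}_0^{n}$ and $k\in\mathbb{N}_0$.
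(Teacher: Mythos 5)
The paper gives no explicit proof of this corollary; it simply asserts that it is obtained by combining Proposition \ref{IesTParametrix} and Theorem \ref{lambdalambdita}. Your proposal spells out the construction the authors evidently have in mind (Neumann series seeded with $b_0=R_\lambda$, remainder $r$ of positive $(\rho-\delta)$-gain, asymptotic summation), so it is essentially the same approach, just made explicit. That said, one of your intermediate claims overstates the gain. You assert that $b_j$ carries a factor $(|\lambda|^{1/m}+\widehat{\mathcal{M}}(\xi))^{j(\rho-\delta)}$ in the parameter-dependent weight; but from the estimates of Theorem \ref{lambdalambdita} combined with $a\in S^{m,\mathcal{L}}_{\rho,\delta}$, the product $(\Delta_\xi^\gamma a)(\partial_X^{(\gamma)}R_\lambda)$ for $|\gamma|\ge1$ is controlled by $(|\lambda|^{1/m}+\widehat{\mathcal{M}}(\xi))^{-m}\widehat{\mathcal{M}}(\xi)^{m-(\rho-\delta)|\gamma|}$, so the $(\rho-\delta)$-gain lives in the $\widehat{\mathcal{M}}(\xi)$ scale, not in the full weight $|\lambda|^{1/m}+\widehat{\mathcal{M}}(\xi)$. (Promoting $\widehat{\mathcal{M}}(\xi)^{m-(\rho-\delta)}$ to $(|\lambda|^{1/m}+\widehat{\mathcal{M}}(\xi))^{m-(\rho-\delta)}$ goes the wrong way unless $m\ge\rho-\delta$, which is not assumed.) The correct inductive estimate for $b_j$ is therefore $\Vert(|\lambda|^{1/m}+\widehat{\mathcal{M}}(\xi))^{m(k+1)}\widehat{\mathcal{M}}(\xi)^{j(\rho-\delta)+\rho|\alpha|-\delta|\beta|}\otimes\partial_\lambda^k\partial_X^{(\beta)}\Delta_\xi^\alpha b_j\Vert_{\textnormal{op}}<\infty$. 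This weaker (and true) bound still suffices: it gives the increasing $\widehat{\mathcal{M}}(\xi)$-decay needed for the parameter-dependent asymptotic summation, and the resulting sum $a^{-\#}$ inherits exactly the $R_\lambda$-type estimate demanded by the corollary. So the overclaim is an inaccuracy rather than a gap, but you should correct it, as stated it could break the induction hypothesis in the regime $m<\rho-\delta$. The step you flag as requiring "careful bookkeeping" — tracking the parameter weight through the $\#$-product and $\lambda$-differentiation (noting $\partial_\lambda R_\lambda=R_\lambda^2$ and its higher iterates) — is indeed where the real work sits; the paper leaves this implicit, so your level of rigor is already at least as high as the original.
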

\begin{corollary}\label{resolv}
Let $m>0,$ and let $a\in S^{m,\mathcal{L}}_{\rho,\delta}((G\times \widehat{G}) \otimes \textnormal{End}(E_0)) $ where  $0\leqslant \delta<\rho\leqslant 1.$ Let us assume that $\Lambda$ is a subset of the $L^2$-resolvent set of $A,$ $\textnormal{Resolv}(A):=\mathbb{C}\setminus \textnormal{Spec}(A).$ Then $A-\lambda I$ is invertible on $\mathscr{D}'(G)$ and the symbol of the resolvent operator $\mathcal{R}_{\lambda}:=(A-\lambda I)^{-1},$ $\widehat{\mathcal{R}}_{\lambda}(x,\xi)$ belongs to $S^{-m,\mathcal{L}}_{\rho,\delta}((G\times \widehat{G}) \otimes \textnormal{End}(E_0)).$ 
\end{corollary}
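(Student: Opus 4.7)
The plan is to upgrade the parameter-dependent parametrix produced by Corollary \ref{parameterparametrix} to the genuine resolvent using the standing assumption that $\Lambda$ lies in the $L^{2}$-resolvent set of $A$. First I would invoke Corollary \ref{parameterparametrix}, applied under the parameter $\mathcal{L}$-ellipticity of $a$ with respect to $\Lambda$ carried over from the preceding development, to obtain for each $\lambda\in\Lambda$ an operator $B_{\lambda}$ with symbol $a^{-\#}(x,\xi,\lambda)\in S^{-m,\mathcal{L}}_{\rho,\delta}((G\times\widehat{G})\otimes\textnormal{End}(E_{0}))$ and with parametrix identities
\[
(A-\lambda I)B_{\lambda}=I-T_{\lambda},\qquad B_{\lambda}(A-\lambda I)=I-S_{\lambda},
\]
where both error operators $T_{\lambda},S_{\lambda}$ belong to $\Psi^{-\infty,\mathcal{L}}((G\times\widehat{G})\otimes\textnormal{End}(E_{0}))$.

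Next, since $\lambda\in\textnormal{Resolv}(A)$, I can set $\mathcal{R}_{\lambda}=(A-\lambda I)^{-1}$ and compose the first parametrix identity on the left by $\mathcal{R}_{\lambda}$ to obtain the identity
\[
\mathcal{R}_{\lambda}=B_{\lambda}+\mathcal{R}_{\lambda}T_{\lambda}.
\]
Since $B_{\lambda}$ already lies in the target class $\Psi^{-m,\mathcal{L}}_{\rho,\delta}$, the whole problem reduces to showing that the correction $\mathcal{R}_{\lambda}T_{\lambda}$ belongs to $\Psi^{-\infty,\mathcal{L}}$. The decisive observation is that $T_{\lambda}$, being smoothing, has a smooth Schwartz kernel and thus maps $\mathscr{D}'(G,E_{0})$ continuously into $C^{\infty}(G,E_{0})$. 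On the other hand, the hypoellipticity implicit in the parametrix construction (applying the second parametrix identity: if $(A-\lambda I)u=f\in C^{\infty}$ then $u=B_{\lambda}f+S_{\lambda}u\in C^{\infty}$, since $B_{\lambda}$ gains $m$ subelliptic derivatives and $S_{\lambda}$ is smoothing) guarantees that $\mathcal{R}_{\lambda}$ preserves $C^{\infty}(G,E_{0})$. Hence the composition $\mathcal{R}_{\lambda}T_{\lambda}$ maps $\mathscr{D}'(G,E_{0})$ into $C^{\infty}(G,E_{0})$, which forces its Schwartz kernel to be smooth on $G\times G$.

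To conclude I would invert the quantisation formula \eqref{sigmair}: any continuous operator with smooth Schwartz kernel has global matrix-valued symbol in $S^{-\infty,\mathcal{L}}$, because $\partial_{X}^{(\beta)}\Delta_{\xi}^{\alpha}\sigma_{\mathcal{R}_{\lambda}T_{\lambda}}(i,r,x,\xi)$ is obtained by integrating the smooth right-convolution kernel against the polynomial factors of the admissible collection $\{q_{(j)}\}$ on $G$, yielding arbitrary decay in $[\xi]\in\widehat{G}$ through the Plancherel/Sobolev embedding on $G$. Combining everything,
\[
\mathcal{R}_{\lambda}=B_{\lambda}+\mathcal{R}_{\lambda}T_{\lambda}\in \Psi^{-m,\mathcal{L}}_{\rho,\delta}+\Psi^{-\infty,\mathcal{L}}\subset\Psi^{-m,\mathcal{L}}_{\rho,\delta},
\]
which is the assertion. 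The hard part will be the last step: turning the smooth-kernel property of $\mathcal{R}_{\lambda}T_{\lambda}$ into the decay of its global symbol simultaneously in all the seminorms of Definition \ref{symbolclass}, taking proper account of the matrix-valued structure indexed by $(i,r)\in I_{d_{\tau}}\times I_{d_{\omega}}$ and of the difference operators $\Delta_{\xi}^{\alpha}$ on $\widehat{G}$. The underlying computation is the standard one, but propagating the estimates uniformly through all indices $(\alpha,\beta,\gamma)$ while keeping track of the endomorphism-valued symbol interpretation of Definition \ref{equivalenceXXX} is the one piece demanding genuine care.
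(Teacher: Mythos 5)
Your proposal is correct and takes essentially the route the paper intends. The paper does not spell out a proof of Corollary \ref{resolv}; it only asserts that it is obtained by ``combining Proposition \ref{IesTParametrix} and Theorem \ref{lambdalambdita}'' — that is, take the parameter-dependent parametrix $B_\lambda$ with symbol in $S^{-m,\mathcal{L}}_{\rho,\delta}$ guaranteed by the parameter $\mathcal{L}$-ellipticity, and upgrade it to the true resolvent by exploiting $\lambda\in\textnormal{Resolv}(A)$, which is exactly what you do via $\mathcal{R}_\lambda=B_\lambda+\mathcal{R}_\lambda T_\lambda$ and the smooth-kernel argument. The one step you flag as ``demanding genuine care'' — that a smooth-kernel operator has global symbol in the subelliptic smoothing class — is standard in this framework: the right-convolution kernel $R(x,z)=K(x,xz^{-1})$ is smooth, so its group Fourier transform decays rapidly in $\langle\xi\rangle$, and since $\langle\xi\rangle^{1/\kappa}\lesssim\widehat{\mathcal{M}}(\xi)\lesssim\langle\xi\rangle$ the rapid decay transfers to the subelliptic weight, with the $(i,r)$ indices contributing only finitely many scalar components; so your hedged caution is warranted but there is no actual gap there.
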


\subsection{Vector-valued subelliptic global functional calculus}\label{S8}
Let $a\in S^{m,\mathcal{L}}_{\rho,\delta}((G\times \widehat{G}) \otimes \textnormal{End}(E_0))$ be a parameter $\mathcal{L}$-elliptic symbol  of order $m>0$ with respect to the sector $\Lambda\subset\mathbb{C}.$ For a pseudo-differential operator $A$ with symbol $a,$ let us define the operator $F(A)$  by the (Dunford-Riesz) complex functional calculus
\begin{equation}\label{F(A)}
    F(A)=-\frac{1}{2\pi i}\oint\limits_{\partial \Lambda_\varepsilon}F(z)(A-zI)^{-1}dz,
\end{equation}where
\begin{itemize}
    \item[(CI).] $\Lambda_{\varepsilon}:=\Lambda\cup \{z:|z|\leqslant \varepsilon\},$ $\varepsilon>0,$ and $\Gamma=\partial \Lambda_\varepsilon\subset\textnormal{Resolv}(A)$ is a positively oriented curve in the complex plane $\mathbb{C}$.
    \item[(CII).] $F$ is a holomorphic function in $\mathbb{C}\setminus \Lambda_{\varepsilon},$ and continuous on its closure. 
    \item[(CIII).] We will assume  decay of $F$ along $\partial \Lambda_\varepsilon$ in order that the operator \eqref{F(A)} will be densely defined on $C^\infty(G, E_0)$ in the strong sense of the topology on $L^2(G, E_0)$ (for instance, $|F(\lambda)|\leqslant C|\lambda|^s$ uniformly in $\lambda,$ for some $s<0$ will be enough).
\end{itemize} Now, we will compute the  $\textnormal{End}(E_0)$-valued symbols for operators defined by this complex functional calculus.
\begin{lemma}\label{LemmaFC}
Let $a\in S^{m,\mathcal{L}}_{\rho,\delta}((G\times \widehat{G}) \otimes \textnormal{End}(E_0))$ be a parameter $\mathcal{L}$-elliptic symbol  of order $m>0$ with respect to the sector $\Lambda\subset\mathbb{C}.$ Let $F(A):C^\infty(G, E_0)\rightarrow \mathscr{D}'(G, E_0)$ be the operator defined by the analytical functional calculus as in \eqref{F(A)}. Under the assumptions $\textnormal{(CI)}$, $\textnormal{(CII)}$, and $\textnormal{(CIII)}$, the $\textnormal{End}(E_0)$-valued symbol of $F(A),$ $\sigma_{F(A)}(x,\xi)$ is given by,
\begin{equation*}
    \sigma_{F(A)}(x,\xi)=-\frac{1}{2\pi i}\oint\limits_{\partial \Lambda_\varepsilon}F(z)\widehat{\mathcal{R}}_z(x,\xi)dz,
\end{equation*}where $\mathcal{R}_z=(A-zI)^{-1}$ denotes the resolvent of $A,$ and $\widehat{\mathcal{R}}_z=\widehat{\mathcal{R}}_z(x,\xi)\in S^{-m,\mathcal{L}}_{\rho,\delta}((G\times \widehat{G})\otimes \textnormal{End}(E_0) ) $ is its symbol.
\end{lemma}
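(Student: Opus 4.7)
The plan is to combine the quantisation formula \eqref{Quantization} for each resolvent $\mathcal{R}_z$ together with the uniform parameter-dependent symbol estimates from Theorem \ref{lambdalambdita} (or, equivalently, Corollary \ref{parameterparametrix}) to justify an interchange of the Dunford--Riesz contour integral with the Fourier series defining the quantisation. The starting point is Corollary \ref{resolv}, which guarantees that for every $z\in\partial\Lambda_\varepsilon\subset\textnormal{Resolv}(A)$ the resolvent $\mathcal{R}_z=(A-zI)^{-1}$ is a continuous linear operator on $C^{\infty}(G,E_0)$ with $\textnormal{End}(E_0)$-valued symbol $\widehat{\mathcal{R}}_z\in S^{-m,\mathcal{L}}_{\rho,\delta}((G\times\widehat{G})\otimes\textnormal{End}(E_0))$. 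Consequently, for any $f\in C^{\infty}(G,E_0)$ and $x\in G$,
\begin{equation*}
    \mathcal{R}_z f(x)=\sum_{i,r,[\xi]\in\widehat{G}} d_\xi\,\textnormal{Tr}\bigl[\xi(x)\,\sigma_{\mathcal{R}_z}(i,r,x,\xi)\,\widehat{f}(i,\xi)\bigr]\,e_{r,E_0}.
\end{equation*}

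Next, I would substitute this identity into the definition \eqref{F(A)} of $F(A)$ and formally swap the contour integral with the summation over $[\xi]\in\widehat{G}$ (and the finite sums over $i,r$), yielding the candidate identity
\begin{equation*}
    F(A)f(x)=\sum_{i,r,[\xi]\in\widehat{G}} d_\xi\,\textnormal{Tr}\!\left[\xi(x)\left(-\tfrac{1}{2\pi i}\oint_{\partial\Lambda_\varepsilon}\!F(z)\,\sigma_{\mathcal{R}_z}(i,r,x,\xi)\,dz\right)\widehat{f}(i,\xi)\right]e_{r,E_0}.
\end{equation*}
By the uniqueness of the matrix-valued symbol encoded in \eqref{Homogeneoussymbol'} (and the one-to-one correspondence with $\textnormal{End}(E_0)$-valued symbols of Remark \ref{remarkendvaluedmatrixvalued}), this identity forces
\begin{equation*}
    \sigma_{F(A)}(x,\xi)=-\tfrac{1}{2\pi i}\oint_{\partial\Lambda_\varepsilon}F(z)\,\widehat{\mathcal{R}}_z(x,\xi)\,dz,
\end{equation*}
which is precisely the conclusion of the lemma.

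The main obstacle is justifying the interchange of $\oint_{\partial\Lambda_\varepsilon}\!dz$ with the (infinite) summation $\sum_{[\xi]\in\widehat{G}}$ and with the Fourier pairing against $\widehat f$. The plan is to apply Fubini's theorem after producing absolutely summable majorants. For this I would use Theorem \ref{lambdalambdita} in the case $k=|\alpha|=|\beta|=0$, which yields
\begin{equation*}
    \|\widehat{\mathcal{R}}_z(x,\xi)\|_{\textnormal{op}}\lesssim \bigl(|z|^{\frac{1}{m}}+\widehat{\mathcal{M}}(\xi)\bigr)^{-m},
\end{equation*}
uniformly in $z\in\Lambda$ and $(x,[\xi])\in G\times\widehat G$, combined with the decay assumption $\textnormal{(CIII)}$ on $F$ (say $|F(z)|\lesssim|z|^{s}$ with $s<0$), and with the rapid decay of $\widehat f(i,\xi)$ as $[\xi]\to\infty$ (which holds since $f\in C^{\infty}(G,E_0)$, so $\langle\xi\rangle^N\widehat f(i,\xi)\in\ell^{2}(\widehat G)$ for every $N$). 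A similar pointwise majorant for the derivatives $\partial_X^{(\beta)}\Delta_\xi^{\alpha}\widehat{\mathcal{R}}_z$ follows from the full statement of Theorem \ref{lambdalambdita}, ensuring that the symbol estimates in Definition \ref{symbolclass} are preserved when one brings the contour integral inside; this is the key ingredient that simultaneously validates the Fubini argument and shows that the resulting symbol indeed lies in a subelliptic H\"ormander class (a fact that will be exploited in the main functional-calculus theorem that follows this lemma). Once Fubini is justified, the lemma follows directly.
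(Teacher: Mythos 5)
Your argument is correct, but it takes a genuinely different route than the paper's. The paper proves the lemma in two lines by applying the symbol reconstruction formula \eqref{Homogeneoussymbol'} directly to $F(A)$: writing
$\sigma_{F(A)}(i,r,x,\xi)=\xi(x)^* e_{r,E_0}^*[F(A)(\xi\otimes e_{i,E_0})(x)]$
and substituting the Dunford--Riesz integral \eqref{F(A)}, one pulls the evaluation at $x$ and the linear functional $e_{r,E_0}^*[\cdot]$ (and left multiplication by $\xi(x)^*$) through the $z$-integral by linearity and continuity, and then recognizes $\xi(x)^* e_{r,E_0}^*[(A-zI)^{-1}(\xi\otimes e_{i,E_0})(x)]$ as $\widehat{\mathcal{R}}_z(i,r,x,\xi)$. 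That avoids any infinite sum over $\widehat G$; the only thing to justify is that the Bochner contour integral commutes with applying the operator to a fixed (finite-dimensional) matrix coefficient and evaluating at a point. You instead apply $F(A)$ to a general smooth $f$, expand $\mathcal R_z f$ via the quantisation formula \eqref{Quantization}, and interchange $\oint_{\partial\Lambda_\varepsilon}$ with the Fourier series via Fubini, then invoke uniqueness of the symbol. This works and has the virtue of making the summability estimates explicit (your dominants from Theorem \ref{lambdalambdita}, $\|\widehat{\mathcal R}_z(x,\xi)\|_{\textnormal{op}}\lesssim\|(|z|^{1/m}+\widehat{\mathcal M}(\xi))^{-m}\|_{\textnormal{op}}$, together with $|F(z)|\lesssim|z|^s$, $s<0$, and rapid decay of $\widehat f$, are indeed what one needs), but it is heavier than necessary: the paper's observation reduces the whole matter to a single finite-dimensional linear operation commuting with a contour integral, with no Fubini argument at all. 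One small inaccuracy to flag: what you call "uniqueness of the matrix-valued symbol encoded in \eqref{Homogeneoussymbol'}" is really the one-to-one correspondence between operators and symbols set up around Definition \ref{identificationofsymbols} and Remark \ref{remarkendvaluedmatrixvalued}; \eqref{Homogeneoussymbol'} is the explicit reconstruction formula, which is precisely what the paper uses as the starting point and which renders the Fubini argument unnecessary.
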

\begin{proof}
 From Corollary \ref{resolv}, we have that  $\widehat{\mathcal{R}}_z\in S^{-m,\mathcal{L}}_{\rho,\delta}((G\times \widehat{G})\otimes \textnormal{End}(E_0) ) .$ Now, observe from \eqref{Homogeneoussymbol} that, for $i,r \in I_{d_\omega},$  we have
 \begin{align*}
  \sigma_{F(A)}(i, r, x,\xi)&=\xi(x)^* e_{r,E_0}^*[F(A)(\xi \otimes e_{i, E_0})(x)]\\&=-\frac{1}{2\pi i}\oint\limits_{\partial \Lambda_\varepsilon}F(z) \xi(x)^* e_{r,E_0}^*[(A-zI)^{-1}(\xi \otimes e_{i, E_0})(x)]dz.  \end{align*} We finish the proof by observing that $\widehat{\mathcal{R}}_z(i,r, x,\xi)=\xi(x)^* e_{r,E_0}^*[(A-zI)^{-1}(\xi \otimes e_{i, E_0})(x)],$  for every $z\in \textnormal{Resolv}(A).$
\end{proof}
Assumption (CIII) will be clarified in the following theorem where we show that the subelliptic calculus on homogeneous vector bundles is stable under the action of the complex functional calculus.
\begin{theorem}\label{DunforRiesz}
Let $m>0,$ and let $0\leqslant \delta<\rho\leqslant 1.$ Let  $a\in S^{m,\mathcal{L}}_{\rho,\delta}((G\times \widehat{G})\otimes \textnormal{End}(E_0))$ be a parameter $\mathcal{L}$-elliptic symbol with respect to $\Lambda.$ Let us assume that $F$ satisfies the  estimate $|F(\lambda)|\leqslant C|\lambda|^s$ uniformly in $\lambda,$ for some $s<0.$  Then  the symbol of $F(A),$  $\sigma_{F(A)}\in S^{ms,\mathcal{L}}_{\rho,\delta}((G\times \widehat{G})\otimes \textnormal{End}(E_0)) $ admits an asymptotic expansion of the form
\begin{equation}\label{asymcomplex}
    \sigma_{F(A)}(x,\xi)\sim 
     \sum_{N=0}^\infty\sigma_{{B}_{N}}(x,\xi),\,\,\,(x,[\xi])\in G\times \widehat{G},
\end{equation}where $\sigma_{{B}_{N}}(x,\xi)\in {S}^{ms-(\rho-\delta)N,\mathcal{L}}_{\rho,\delta}((G\times \widehat{G})\otimes \textnormal{End}(E_0))$ and 
\begin{equation*}
    \sigma_{{B}_{0}}(x,\xi)=-\frac{1}{2\pi i}\oint\limits_{\partial \Lambda_\varepsilon}F(z)(a(x,\xi)-z)^{-1}dz\in {S}^{ms,\mathcal{L}}_{\rho,\delta}((G\times \widehat{G})\otimes \textnormal{End}(E_0)).
\end{equation*}Moreover, 
\begin{equation*}
     \sigma_{F(A)}(x,\xi)\equiv -\frac{1}{2\pi i}\oint\limits_{\partial \Lambda_\varepsilon}F(z)a^{-\#}(x,\xi,\lambda)dz \textnormal{  mod  } {S}^{-\infty,\mathcal{L}}((G\times \widehat{G})\otimes \textnormal{End}(E_0)),
\end{equation*}where $a^{-\#}(x,\xi,\lambda)$ is the symbol of the parametrix to $A-\lambda I,$   in Corollary \ref{parameterparametrix}.
\end{theorem}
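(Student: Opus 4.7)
The plan is to combine the contour integral representation of $\sigma_{F(A)}$ from Lemma \ref{LemmaFC} with the parameter-dependent parametrix of Corollary \ref{parameterparametrix}. By Corollary \ref{resolv} the resolvent symbol $\widehat{\mathcal{R}}_z$ lies in $S^{-m,\mathcal{L}}_{\rho,\delta}$, and by Corollary \ref{parameterparametrix} the parametrix $a^{-\#}(x,\xi,z)$ agrees with $\widehat{\mathcal{R}}_z$ modulo a symbol whose parameter-dependent seminorms decay rapidly in $\widehat{\mathcal{M}}(\xi)$; the decay hypothesis $|F(z)|\leq C|z|^{s}$ with $s<0$ ensures that the contribution of this error to the contour integral lies in $S^{-\infty,\mathcal{L}}((G\times\widehat{G})\otimes\textnormal{End}(E_0))$. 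Thus it suffices to analyse
\begin{equation*}
  -\frac{1}{2\pi i}\oint_{\partial\Lambda_{\varepsilon}} F(z)\, a^{-\#}(x,\xi,z)\,dz.
\end{equation*}

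Next, following the iterative scheme of Proposition \ref{IesTParametrix} adapted with parameter, I expand $a^{-\#}(x,\xi,z)\sim \sum_{N\geq 0} b^{\#}_N(x,\xi,z)$ with $b^{\#}_0(x,\xi,z)=(a(x,\xi)-z)^{-1}$ and successive terms obtained by composing against the remainder $r^{\#}$, which lies in a class with symbol order improving by $(\rho-\delta)$ at each step. Using the Leibniz rule \eqref{leibniz:End:symbol} for endomorphism-valued symbols together with the parameter estimates of Theorem \ref{lambdalambdita}, each term satisfies
\begin{equation*}
  \|\partial_{X}^{(\beta)}\Delta_{\xi}^{\alpha}b_{N}^{\#}(x,\xi,z)\|_{\textnormal{op}}
  \lesssim (|z|^{1/m}+\widehat{\mathcal{M}}(\xi))^{-m}\widehat{\mathcal{M}}(\xi)^{-N(\rho-\delta)-\rho|\alpha|+\delta|\beta|}.
\end{equation*}
Define $\sigma_{B_N}(x,\xi):=-\frac{1}{2\pi i}\oint_{\partial\Lambda_{\varepsilon}} F(z)\,b_N^{\#}(x,\xi,z)\,dz$. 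Interchanging $\partial_X^{(\beta)}\Delta_{\xi}^\alpha$ with the contour integral (justified uniformly in $(x,[\xi])$ by the estimates above), the verification that $\sigma_{B_N}\in S^{ms-N(\rho-\delta),\mathcal{L}}_{\rho,\delta}((G\times\widehat{G})\otimes\textnormal{End}(E_0))$ reduces through Theorem \ref{characterisations} to bounding
\begin{equation*}
  \int_{\partial\Lambda_{\varepsilon}}|F(z)|(|z|^{1/m}+\widehat{\mathcal{M}}(\xi))^{-m}|dz|.
\end{equation*}
The substitution $|z|=\widehat{\mathcal{M}}(\xi)^{m}t$ transforms this into $\widehat{\mathcal{M}}(\xi)^{ms}$ times an absolutely convergent integral in $t$ (convergence being guaranteed by $s<0$ at infinity and by the cutoff $|z|\geq\varepsilon$ at zero), producing the desired order.

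Finally, Lemma \ref{lemma:asymp:exp} allows one to assemble the sequence $\{\sigma_{B_N}\}_{N\geq 0}$, whose orders $ms-N(\rho-\delta)$ tend to $-\infty$, into a single symbol in $S^{ms,\mathcal{L}}_{\rho,\delta}$ realising \eqref{asymcomplex} in the Fr\'echet sense; the above argument identifies this sum with $\sigma_{F(A)}$ modulo $S^{-\infty,\mathcal{L}}$, and the explicit formula for $\sigma_{B_0}$ follows from $b_0^{\#}(x,\xi,z)=(a(x,\xi)-z)^{-1}$. The main obstacle will be controlling the contour integrals in an iterative fashion compatible with the asymptotic expansion: while Corollary \ref{parameterparametrix} provides the clean parameter-dependent estimates for the full parametrix, deriving termwise estimates for each $b_N^{\#}$ requires carefully propagating the Leibniz-rule bookkeeping through the vector-valued composition formula of Theorem \ref{VectorcompositionC}, ensuring that the parameter-dependent estimates from Theorem \ref{lambdalambdita} are preserved at each step and are uniform in $z\in\partial\Lambda_{\varepsilon}$.
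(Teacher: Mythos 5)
Your overall route is the same as the paper's: use the contour integral formula from Lemma \ref{LemmaFC}, replace the resolvent symbol by the parameter-dependent parametrix $a^{-\#}$ modulo a smoothing error (justified because $\partial\Lambda_\varepsilon\subset\textnormal{Resolv}(A)$), and then estimate the contour integrals of the parametrix terms using Theorem \ref{lambdalambdita} and the rescaling $|z|=\widehat{\mathcal{M}}(\xi)^{m}t$. The plan for $\sigma_{B_N}$ and the appeal to Lemma \ref{lemma:asymp:exp} matches how the paper organises the argument as well.

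There is, however, a genuine gap in your estimate for $\sigma_{B_0}$. After the substitution $|z|=\widehat{\mathcal{M}}(\xi)^{m}t$ (with the model scalar $\varkappa=(1+\nu_{jj}(\xi)^2)^{1/2}$ standing in for $\widehat{\mathcal{M}}(\xi)$), the contour integral is controlled by $\varkappa^{ms}\int_{\varepsilon/\varkappa^m}^{\infty} t^{s}(t^{1/m}+1)^{-m}\,dt$. You assert this $t$-integral is uniformly bounded because $|z|\geq\varepsilon$ protects against the singularity at the origin, but after the rescaling the lower endpoint is $\varepsilon/\varkappa^m\to 0$ as $\varkappa\to\infty$, so the cutoff offers no protection in the rescaled variable. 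For $s\leq -1$ the piece $\int_{\varepsilon/\varkappa^m}^{1}t^{s}\,dt$ grows like $\varkappa^{-m(1+s)}$ (or $\log\varkappa$ when $s=-1$), so the final bound degrades to order $-m$ rather than $ms$, and since $ms<-m$ when $s<-1$ this is strictly worse than what you need. The direct substitution argument therefore establishes membership in $S^{ms,\mathcal{L}}_{\rho,\delta}$ only in the range $-1<s<0$.

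The paper closes this gap with an extra step you omit: for $s\leq -1$ it writes $F(\lambda)=J(\lambda)^{1+[-s]}$ with $J$ holomorphic on $\mathbb{C}\setminus\Lambda_\varepsilon$ and $|J(\lambda)|\lesssim|\lambda|^{s/(1+[-s])}$, a decay rate in $(-1,0)$ for which the direct estimate applies. Then $\sigma_{F(A)}=\sigma_{J(A)^{1+[-s]}}$, and iterating the composition formula of Theorem \ref{VectorcompositionC} $1+[-s]$ times shows the leading term $\sigma_{J(A)}(x,\xi)^{1+[-s]}$ has order $m\cdot\frac{s}{1+[-s]}\cdot(1+[-s])=ms$. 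You should incorporate this case split (or some equivalent device, e.g.\ integration by parts in the contour parameter to raise the effective decay of $F$) before concluding that $\sigma_{F(A)}\in S^{ms,\mathcal{L}}_{\rho,\delta}$ for all $s<0$.
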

\begin{proof}
    First, we need to prove that the condition $|F(\lambda)|\leqslant C|\lambda|^s$ uniformly in $\lambda,$ for some $s<0,$ is enough in order to guarantee that \begin{equation*}
    \sigma_{{B}_{0}}(x,\xi):=-\frac{1}{2\pi i}\oint\limits_{\partial \Lambda_\varepsilon}F(z)(a(x,\xi)-z)^{-1}dz,
\end{equation*} is a well defined matrix-symbol.
From Theorem \ref{lambdalambdita} we deduce that $(a(x,\xi)-z)^{-1}$ satisfies the estimate
\begin{equation*}
   \Vert (|z|^{\frac{1}{m}}+\widehat{\mathcal{M}}(\xi))^{m(k+1)}\widehat{\mathcal{M}}(\xi)^{\rho|\alpha|-\delta|\beta|} \otimes \partial_{z}^k\partial_{X}^{(\beta)}\Delta_{\xi}^{\alpha}(a(x,\xi)-z)^{-1}\Vert_{\textnormal{op}}<\infty.
\end{equation*}
Observe that 
\begin{align*}
    &\Vert(a(x,\xi)-z)^{-1}\Vert_{\textnormal{op}}\\
    & =\Vert (|z|^{\frac{1}{m}}+\widehat{\mathcal{M}}(\xi))^{-m}(|z|^{\frac{1}{m}}+\widehat{\mathcal{M}}(\xi))^{m} \otimes (a(x,\xi)-z)^{-1}\Vert_{\textnormal{op}} \\
    &\lesssim  \sup_{1\leqslant j\leqslant d_\xi}(|z|^{\frac{1}{m}}+(1+\nu_{jj}(\xi)^2)^{\frac{1}{2}})^{-m}\\&\leqslant |z|^{-1},
\end{align*} and the condition $s<0$ implies
\begin{align*}
    \left|\frac{1}{2\pi i}\oint\limits_{\partial \Lambda_\varepsilon}F(z)(a(x,\xi)-z)^{-1}dz\right|\lesssim \oint\limits_{\partial \Lambda_\varepsilon}|z|^{-1+s}|dz|<\infty,
\end{align*}uniformly in $(x,[\xi])\in G\times \widehat{G}.$ In order to check that $\sigma_{B_0}\in {S}^{ms,\mathcal{L}}_{\rho,\delta}((G\times \widehat{G})\otimes \textnormal{End}(E_0))$ let us analyse the cases $-1<s<0$ and $s\leqslant -1$ separately. So, let us analyse first the situation of $-1<s<0.$ We observe that
\begin{align*}
   &\Vert \widehat{\mathcal{M}}(\xi)^{-ms+\rho|\alpha|-\delta|\beta|} \otimes \partial_{X}^{(\beta)}\Delta_{\xi}^{\alpha}\sigma_{B_0}(x,\xi)\Vert_{\textnormal{op}}\\
   &\leqslant \frac{C}{2\pi }\oint\limits_{\partial \Lambda_\varepsilon} |z|^{s}\Vert      \widehat{\mathcal{M}}(\xi)^{-ms+\rho|\alpha|-\delta|\beta|} \otimes \partial_{X}^{(\beta)}\Delta_{\xi}^{\alpha}(a(x,\xi)-z)^{-1}\Vert_{\textnormal{op}} |dz|.
\end{align*}Now, we will estimate the operator norm inside of the integral. Indeed, the identity
\begin{align*}
    &\Vert      \widehat{\mathcal{M}}(\xi)^{-ms+\rho|\alpha|-\delta|\beta|} \otimes \partial_{X}^{(\beta)}\Delta_{\xi}^{\alpha}(a(x,\xi)-z)^{-1}\Vert_{\textnormal{op}}=\\
    &\Vert (|z|^{\frac{1}{m}}+\widehat{\mathcal{M}}(\xi))^{-m}(|z|^{\frac{1}{m}}+\widehat{\mathcal{M}}(\xi))^{m}\widehat{\mathcal{M}}(\xi)^{-ms+\rho|\alpha|-\delta|\beta|} \otimes \partial_{X}^{(\beta)}\Delta_{\xi}^{\alpha}(a(x,\xi)-z)^{-1}\Vert_{\textnormal{op}}
\end{align*}implies that
\begin{align*}
    &\Vert      \widehat{\mathcal{M}}(\xi)^{-ms+\rho|\alpha|-\delta|\beta|}\otimes \partial_{X}^{(\beta)}\Delta_{\xi}^{\alpha}(a(x,\xi)-z)^{-1}\Vert_{\textnormal{op}} \lesssim  \Vert (|z|^{\frac{1}{m}}+\widehat{\mathcal{M}}(\xi))^{-m}\widehat{\mathcal{M}}(\xi)^{-ms}\Vert_{\textnormal{op}}
\end{align*}where we have used that
\begin{align*}
\sup_{z\in \partial \Lambda_\varepsilon}  \sup_{(x,\xi)} \Vert (|z|^{\frac{1}{m}}+\widehat{\mathcal{M}}(\xi))^{m}\widehat{\mathcal{M}}(\xi)^{\rho|\alpha|-\delta|\beta|} \otimes \partial_{X}^{(\beta)}\Delta_{\xi}^{\alpha}(a(x,\xi)-z)^{-1}\Vert_{\textnormal{op}} <\infty.
\end{align*}
Consequently, by using that  $s<0,$ we deduce
\begin{align*}
   & \frac{C}{2\pi }\oint\limits_{\partial \Lambda_\varepsilon} |z|^{s}\Vert      \widehat{\mathcal{M}}(\xi)^{ms+\rho|\alpha|-\delta|\beta|}\otimes \partial_{X}^{(\beta)}\Delta_{\xi}^{\alpha}(a(x,\xi)-z)^{-1}\Vert_{\textnormal{op}} |dz|\\
    &\lesssim \frac{C}{2\pi }\oint\limits_{\partial \Lambda_\varepsilon} |z|^{s}\Vert (|z|^{\frac{1}{m}}+\widehat{\mathcal{M}}(\xi))^{-m} \widehat{\mathcal{M}}(\xi)^{-ms}\Vert_{\textnormal{op}} |dz|\\
    &= \frac{C}{2\pi }\oint\limits_{\partial \Lambda_\varepsilon} |z|^{s}\sup_{1\leqslant j\leqslant d_\xi} (|z|^{\frac{1}{m}}+(1+\nu_{jj}(\xi)^{2})^{\frac{1}{2}})^{-m}(1+\nu_{jj}(\xi)^{2})^{-\frac{ms}{2}}|dz|.
\end{align*}To study the convergence of the last contour integral we only need to check the convergence of $\int_{1}^{\infty}r^s(r^{\frac{1}{m}}+\varkappa)^{-m}\varkappa^{-ms}dr,$ where $\varkappa>1$ in a parameter. The change of variable $r=\varkappa^{m}t$ implies that
\begin{align*}
   \int\limits_{1}^{\infty}r^s(r^{\frac{1}{m}}+\varkappa)^{-m}\varkappa^{-ms}dr&=\int\limits_{\varkappa^{-m}}^{\infty}\varkappa^{ms}t^s(\varkappa t^{\frac{1}{m}}+\varkappa)^{-m}\varkappa^{-ms}\varkappa^mdt=\int\limits_{\varkappa^{-m}}^{\infty}t^s(t^{\frac{1}{m}}+1)^{-m}dt\\
   &\lesssim \int\limits_{\varkappa^{-m}}^{1}t^sdt+\int\limits_{1}^{\infty}t^{-1+s}<\infty.
\end{align*}Indeed, for $t\rightarrow\infty,$ $t^s(t^{\frac{1}{m}}+1)^{-m}\lesssim t^{-1+s}$ and we conclude the estimate because $\int\limits_{1}^{\infty} t^{-1+s'}dt<\infty,$ for all $s'<0.$ On the other hand, the condition $-1<s<0$ implies that
\begin{align*}
 \int\limits_{\varkappa^{-m}}^{1}t^sdt=\frac{1}{1+s}-\frac{\varkappa^{-m(1+s)}}{1+s}=   O(1).
\end{align*} In the case where $s\leqslant -1,$ we can find an analytic function $J(z)$ such that it is a holomorphic function in $\mathbb{C}\setminus \Lambda_{\varepsilon},$ and continuous on its closure and additionally satisfying that $F(\lambda)=J(\lambda)^{1+[-s]}.$\footnote{ $[-s]$ denotes  the integer part of $-s.$} In this case,  $J(A)$ defined by the complex functional calculus 
\begin{equation}\label{G(A)}
    J(A)=-\frac{1}{2\pi i}\oint\limits_{\partial \Lambda_\varepsilon}J(z)(A-zI)^{-1}dz,
\end{equation}
has symbol belonging to ${S}^{\frac{sm}{1+[-s]},\mathcal{L}}_{\rho,\delta}((G\times \widehat{G})\otimes \textnormal{End}(E_0))$ because $J$ satisfies the estimate $|J(\lambda)|\leqslant C|\lambda|^{\frac{s}{1+[-s]}},$ with $-1<\frac{s}{1+[-s]}<0.$ 
By observing that
\begin{align*}
    \sigma_{F(A)}(x,\xi)&=-\frac{1}{2\pi i}\oint\limits_{\partial \Lambda_\varepsilon}F(z)\widehat{\mathcal{R}}_z(x,\xi)dz=-\frac{1}{2\pi i}\oint\limits_{\partial \Lambda_\varepsilon}J(z)^{1+[-s]}\widehat{\mathcal{R}}_z(x,\xi)dz\\
    &=\sigma_{J(A)^{1+[-s]}}(x,\xi),
\end{align*}and computing the symbol $\sigma_{J(A)^{1+[-s]}}(x,\xi)$ by iterating $1+[-s]$-times  the asymptotic formula for the composition in the vector-valued subelliptic calculus (see Theorem \ref{VectorcompositionC}), we can see that the term with higher order in such expansion is $\sigma_{J(A)}(x,\xi)^{1+[-s]}\in {S}^{ms,\mathcal{L}}_{\rho,\delta}((G\times \widehat{G})\otimes \textnormal{End}(E_0)).$ Consequently we have proved that $\sigma_{F(A)}(x,\xi)\in {S}^{ms,\mathcal{L}}_{\rho,\delta}((G\times \widehat{G})\otimes \textnormal{End}(E_0)).$
This completes the proof for the first part of the theorem.
For the second part of the proof, let us denote by $a^{-\#}(x,\xi,\lambda)$  the symbol of the parametrix to $A-\lambda I,$   in Corollary \ref{parameterparametrix}. Let $P_{\lambda}=\textnormal{Op}(a^{-\#}(\cdot,\cdot,\lambda)).$ Because $\lambda\in \textnormal{Resolv}(A)$ for $\lambda\in \partial \Lambda_\varepsilon,$ $(A-\lambda)^{-1}-P_{\lambda}$ is a smoothing operator. Consequently, from Lemma \ref{LemmaFC} we deduce that
\begin{align*}
   & \sigma_{F(A)}(x,\xi)=-\frac{1}{2\pi i}\oint\limits_{\partial \Lambda_\varepsilon}F(z)\widehat{\mathcal{R}}_z(x,\xi)dz\\
    &=-\frac{1}{2\pi i}\oint\limits_{\partial \Lambda_\varepsilon}F(z)a^{-\#}(x,\xi,z)dz-\frac{1}{2\pi i}\oint\limits_{\partial \Lambda_\varepsilon}F(z)(\widehat{\mathcal{R}}_z(x,\xi)-a^{-\#}(x,\xi,z))dz\\
    &\equiv -\frac{1}{2\pi i}\oint\limits_{\partial \Lambda_\varepsilon}F(z)a^{-\#}(x,\xi,z)dz  \textnormal{  mod  } {S}^{-\infty,\mathcal{L}}((G\times \widehat{G})\otimes \textnormal{End}(E_0)).
\end{align*}The asymptotic expansion \eqref{asymcomplex} comes from the construction of the parametrix in the vector-valued subelliptic calculus (see Proposition \ref{IesTParametrix}).
\end{proof}

\subsection{Global functional calculus  on homogeneous vector bundles}
Let $a\in S^{m,\mathcal{L}}_{\rho,\delta}((G\times \widehat{G}) \otimes \textnormal{End}(E_0))$ be a parameter $\mathcal{L}$-elliptic symbol  of order $m>0$ with respect to the sector $\Lambda\subset\mathbb{C}.$ For a pseudo-differential operator $\tilde{A}$ with symbol $a,$ defined via \eqref{quantizationonhomogeneous2}, let us define, again, as in the vector-valued case, the operator $F(\tilde{A})$  by using the (Dunford-Riesz) complex functional calculus:
\begin{equation}\label{F(A)'}
    F(\tilde{A}):=-\frac{1}{2\pi i}\oint\limits_{\partial \Lambda_\varepsilon}F(z)(\tilde{A}-zI)^{-1}dz,
\end{equation}where
\begin{itemize}
    \item[(CI)':] $\Lambda_{\varepsilon}:=\Lambda\cup \{z:|z|\leqslant \varepsilon\},$ $\varepsilon>0,$ and $\Gamma=\partial \Lambda_\varepsilon\subset\textnormal{Resolv}(\tilde{A})$ is a positively oriented curve in the complex plane $\mathbb{C}$.
    \item[(CII)':] $F$ is a holomorphic function in $\mathbb{C}\setminus \Lambda_{\varepsilon},$ and continuous on its closure. 
    \item[(CIII)':] We will assume  decay of $F$ along $\partial \Lambda_\varepsilon$ in order that the operator ${F(\tilde{A})}$ will be densely defined on $\Gamma^\infty(E)$ in the strong sense of the topology on $L^2(E).$  For instance, we require  $|F(\lambda)|\leqslant C|\lambda|^s$ uniformly in $\lambda,$ for some $s<0.$ 
\end{itemize}
\begin{remark}\label{equivalenceofresolv}
Let $\tilde{A}\in \Psi^{m,\mathcal{L}}_{\rho,\delta}(E),$ and let $A$ be its associated vector-valued operator. Because $A$ and $\tilde{A}$ are unitarily equivalent, (because of  \eqref{unitarilyequivaklent} for $\tau=\omega$), and using the fact that the spectrum of densely defined linear operators is invariant under unitary transformations, we have that
\begin{equation*}
    \textnormal{Resolv}(A)=\textnormal{Resolv}(\tilde{A}),\quad \textnormal{Spectrum}(A)= \textnormal{Spectrum}(\tilde{A}).
\end{equation*}These equalities of real/complex sets imply that
\begin{equation*}
   \widetilde{ [(A-\lambda I)^{-1}]}=(\tilde{A}-\lambda I)^{-1},
\end{equation*}for any $\lambda \in  \textnormal{Resolv}(A).$ For the proof of the last statement observe that the identity
\begin{align}
    \widetilde{ [(A-\lambda I)^{-1}]}=\varkappa_\tau^{-1} [(A-\lambda I)^{-1}]\varkappa_\tau.
\end{align}implies that
\begin{align*}
  \varkappa_\tau^{-1} [(A-\lambda I)^{-1}]\varkappa_\tau \circ  [(\tilde{  A} -\lambda I)]&= \varkappa_\tau^{-1} [(A-\lambda I)^{-1}] \varkappa_\tau \circ[(\varkappa_\tau^{-1}  A\varkappa_\tau -\lambda I)] \\
  &=\varkappa_\tau^{-1} [(A-\lambda I)^{-1}]\varkappa_\tau \circ \varkappa_\tau^{-1} [(A-\lambda I)]\varkappa_\tau\\
  &=\varkappa_\tau^{-1} [(A-\lambda I)^{-1}] \circ (A-\lambda I)\varkappa_\tau \\
  &=I. 
\end{align*}
\end{remark}
So, in view of Theorem \ref{DunforRiesz} and Remark \ref{equivalenceofresolv} we have:
\begin{theorem}\label{DunforRiesz222}
Let $m>0,$ and let $0\leqslant \delta<\rho\leqslant 1.$ Let  $a\in S^{m,\mathcal{L}}_{\rho,\delta}((G\times \widehat{G})\otimes \textnormal{End}(E_0))$ be a parameter $\mathcal{L}$-elliptic symbol with respect to $\Lambda.$ Let us assume that $F$ satisfies the  estimate $|F(\lambda)|\leqslant C|\lambda|^s$ uniformly in $\lambda,$ for some $s<0.$  Then  the symbol of $F(\tilde{A}),$  $\sigma_{F(\tilde{A})}\in S^{ms,\mathcal{L}}_{\rho,\delta}((G\times \widehat{G})\otimes \textnormal{End}(E_0)) $ admits an asymptotic expansion of the form
\begin{equation}\label{asymcomplex2}
     \sigma_{F(\tilde{A})}(x,\xi)=\sigma_{\widetilde{F(A)}}(x,\xi)\sim 
     \sum_{N=0}^\infty\sigma_{{B}_{N}}(x,\xi),\,\,\,(x,[\xi])\in G\times \widehat{G},
\end{equation}where $\sigma_{{B}_{N}}\in {S}^{ms-(\rho-\delta)N,\mathcal{L}}_{\rho,\delta}((G\times \widehat{G})\otimes \textnormal{End}(E_0))$ and 
\begin{equation*}
    \sigma_{{B}_{0}}(x,\xi)=-\frac{1}{2\pi i}\oint\limits_{\partial \Lambda_\varepsilon}F(z)(a(x,\xi)-z)^{-1}dz\in {S}^{ms,\mathcal{L}}_{\rho,\delta}((G\times \widehat{G})\otimes \textnormal{End}(E_0)),
\end{equation*}where $a$ is the $\textnormal{End}(E_0)$-valued symbol of the vector-valued operator $A$ associated with $\tilde{A}.$
\end{theorem}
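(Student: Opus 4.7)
The plan is to reduce Theorem \ref{DunforRiesz222} to its vector-valued analogue, Theorem \ref{DunforRiesz}, via the unitary equivalence $\tilde{A} = \varkappa_\tau^{-1}\circ A\circ \varkappa_\tau$ coming from \eqref{unitarilyequivaklent}, and then invoke the identification of symbols introduced in Definition \ref{symbolofAtilde}. The decisive observation, already partly spelled out in Remark \ref{equivalenceofresolv}, is that the functional calculi of $A$ and $\tilde{A}$ are intertwined by $\varkappa_\tau$, so the asymptotic expansion on the vector bundle side is literally the one on the vector-valued side.

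First, I would verify that for every $\lambda\in \textnormal{Resolv}(A)=\textnormal{Resolv}(\tilde{A})$, one has the identity $(\tilde{A}-\lambda I)^{-1} = \varkappa_\tau^{-1}\circ (A-\lambda I)^{-1}\circ \varkappa_\tau$, which is the content of Remark \ref{equivalenceofresolv}. With this in hand, I would prove
\begin{equation*}
F(\tilde{A}) = -\frac{1}{2\pi i}\oint\limits_{\partial \Lambda_\varepsilon}F(z)(\tilde{A}-zI)^{-1}\,dz
= \varkappa_\tau^{-1}\circ\left(-\frac{1}{2\pi i}\oint\limits_{\partial \Lambda_\varepsilon}F(z)(A-zI)^{-1}\,dz\right)\circ \varkappa_\tau = \varkappa_\tau^{-1}\circ F(A)\circ \varkappa_\tau,
\end{equation*}
i.e.\ $F(\tilde{A})=\widetilde{F(A)}$. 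The only nontrivial point here is to justify pulling the bounded linear maps $\varkappa_\tau$ and $\varkappa_\tau^{-1}$ through the contour integral in the strong operator sense; this follows because under assumption $\textnormal{(CIII)}'$ the integrand is Bochner-integrable with values in $\mathscr{B}(L^2(E), L^2(E))$ (the estimate $\|(\tilde{A}-zI)^{-1}\|_{\mathscr{B}(L^2(E))}\lesssim |z|^{-1}$ on the contour, combined with $|F(z)|\lesssim |z|^s$ with $s<0$, gives absolute convergence), and $\varkappa_\tau$ is a unitary between $L^2(E)$ and $L^2(G,E_0)^\tau\subset L^2(G,E_0)$.

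Once the intertwining $F(\tilde{A})=\widetilde{F(A)}$ is established, Definition \ref{symbolofAtilde} gives the identification of symbols $\sigma_{F(\tilde{A})}=\sigma_{F(A)}$ on $G\times \widehat{G}$ (viewed as $\textnormal{End}(E_0)$-valued symbols, cf.\ Definition \ref{equivalenceXXX}). Theorem \ref{DunforRiesz} applied to the vector-valued operator $A\in \Psi^{m,\mathcal{L}}_{\rho,\delta}((G\times \widehat{G})\otimes \textnormal{End}(E_0))$ yields both the class $\sigma_{F(A)}\in S^{ms,\mathcal{L}}_{\rho,\delta}((G\times \widehat{G})\otimes \textnormal{End}(E_0))$ and the asymptotic expansion
\begin{equation*}
\sigma_{F(A)}(x,\xi)\sim \sum_{N=0}^{\infty}\sigma_{B_N}(x,\xi),\qquad \sigma_{B_0}(x,\xi)=-\frac{1}{2\pi i}\oint\limits_{\partial \Lambda_\varepsilon}F(z)(a(x,\xi)-z)^{-1}\,dz,
\end{equation*}
with $\sigma_{B_N}\in S^{ms-(\rho-\delta)N,\mathcal{L}}_{\rho,\delta}((G\times \widehat{G})\otimes \textnormal{End}(E_0))$. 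Substituting $\sigma_{F(\tilde{A})}=\sigma_{F(A)}$ gives \eqref{asymcomplex2}.

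The step I expect to be the main technical obstacle is the interchange of $\varkappa_\tau^{\pm 1}$ with the contour integral in the construction above, because it requires the integrand $z\mapsto F(z)(\tilde{A}-zI)^{-1}$ to be handled as a Bochner-integrable $\mathscr{B}(L^2(E))$-valued function on $\partial\Lambda_\varepsilon$ (at least on a dense subspace such as $\Gamma^\infty(E)$), rather than merely in a formal sense. Everything else is a clean bookkeeping argument: the parametrix identity at the end of the statement then follows verbatim from the same identity for $A$ proved in Theorem \ref{DunforRiesz}, since $\widetilde{(A-\lambda I)^{-1}}-\widetilde{P_\lambda}$ is smoothing on $\Gamma^\infty(E)$ whenever $(A-\lambda I)^{-1}-P_\lambda$ is smoothing on $C^\infty(G,E_0)$.
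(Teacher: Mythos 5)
Your proposal is correct and follows essentially the same route as the paper: intertwine the resolvents via Remark \ref{equivalenceofresolv}, pull $\varkappa_\tau^{\pm1}$ through the Dunford--Riesz contour integral to get $F(\tilde{A})=\widetilde{F(A)}$, identify the symbols, and invoke Theorem \ref{DunforRiesz}. The only difference is that you explicitly flag and justify the Bochner-integrability needed to interchange $\varkappa_\tau$ with the contour integral, a step the paper treats as immediate.
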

\begin{proof}
Let us prove that $\widetilde{F(A)}=F(\tilde{A}).$ Indeed, Remark \ref{equivalenceofresolv} implies that
\begin{align*}
    F(\tilde{A}):&=-\frac{1}{2\pi i}\oint\limits_{\partial \Lambda_\varepsilon}F(z)(\tilde{A}-zI)^{-1}dz=-\frac{1}{2\pi i}\oint\limits_{\partial \Lambda_\varepsilon}F(z)\varkappa_\tau^{-1} [(A-z I)^{-1}]\varkappa_\tau dz\\
    &=\varkappa_\tau^{-1}\left(-\frac{1}{2\pi i}\oint\limits_{\partial \Lambda_\varepsilon}F(z)(A-z I)^{-1} dz\right)\varkappa_\tau=\varkappa_\tau^{-1}F(A)\varkappa_\tau=\widetilde{F(A)}.
\end{align*}So, we can conclude that   $\sigma_{F(\tilde{A})}=\sigma_{\widetilde{F(A)}}:=\sigma_{{F(A)}}.$ Observe that the asymptotic expansion \eqref{asymcomplex2} for the symbol $\sigma_{{F(A)}}$ was proved in Theorem \ref{DunforRiesz}. So, we end the proof.
\end{proof}

\subsection{Global functional calculus of the Laplacian} Let us consider the Laplacian $\mathcal{L}_G$ on $G.$  If we choose   $\mathbb{X}=\{X_1,\cdots X_n\}$ to be  an orthonormal basis of the Lie algebra $\mathfrak{g}$ of $G,$ then $$\mathcal{L}_G:=-\sum_{j=1}^{n}X_j^2.$$
The Laplacian $\mathcal{L}_G$ induces a vector-valued Laplacian $\mathcal{L}_{G,E_0}$ (see \eqref{InI:L:L}) that in principle maps $C^{\infty}(G,E_0)$ into $C^{\infty}(G,E_0).$ However, it was proved in \cite[Page 124]{Wallach1973}   that $\mathcal{L}_{G,E_0}$ maps  $C^{\infty}(G,E_0)^{\tau}$ into $C^{\infty}(G,E_0)^{\tau}.$ So, in view of the diagram
\begin{center}
    \begin{tikzpicture}[every node/.style={midway}]
  \matrix[column sep={10em,between origins}, row sep={4em}] at (0,0) {
    \node(R) {$\Gamma^\infty(E)$}  ; & \node(S) {$\Gamma^\infty(E)$}; \\
    \node(R/I) {$C^\infty(G,E_0)^\tau$}; & \node (T) {$C^\infty(G,E_0)^\tau$};\\
  };
  \draw[<-] (R/I) -- (R) node[anchor=east]  {$\varkappa_{\tau}$};
  \draw[->] (R) -- (S) node[anchor=south] {$\tilde{\mathcal{L}}_{G,E}$};
  \draw[->] (S) -- (T) node[anchor=west] {$\varkappa_\tau$};
  \draw[->] (R/I) -- (T) node[anchor=north] {$\mathcal{L}_{G,E_0}$};
\end{tikzpicture} 
\end{center}
$\mathcal{L}_{G,E_0}$ induces  a continuous linear operator $\tilde{\mathcal{L}}_{G,E}$ defined on the sections of the homogeneous vector-bundle $E.$ The operator $\tilde{\mathcal{L}}_{G,E}$  is $G$-invariant and it agrees with the Laplacian on the vector-bundle $E\rightarrow M$ (see \cite[Page 124]{Wallach1973} for details).

In this subsection we discuss the membership of the  operators defined by the functional calculus of ${\mathcal{L}}_{G,E}$  and $\tilde{\mathcal{L}}_{G,E}$ to the global (elliptic) Hörmander classes (see Definition \ref{symbolclass:L} and Remark \ref{Remark:L:G}).

Recall that  the {\it vector-valued Laplacian} is the operator defined via
 \begin{equation}\label{Vec:sub:2}
     \mathcal{L}_{G,E_0}:=\textnormal{Op}\left(\widehat{\mathcal{L}_G}(\xi) \otimes I_{ \textnormal{End}(E_0)}\right):C^{\infty}(G,E_0)\rightarrow C^{\infty}(G,E_0),
 \end{equation} In \eqref{Vec:sub}, $\widehat{\mathcal{L}}_G$ denotes the matrix-valued symbol of the  Laplacian $\mathcal{L}_G.$
 For $m\in \mathbb{R},$ let us consider the Euclidean class of symbols $S^{m}(\mathbb{R}^{+}_0),$ endowed with the standard countable family of seminorms 
\begin{equation}
    \Vert f \Vert_{d,\,S^{m}(\mathbb{R}^{+}_0)}:=\sup_{\ell\leq d}\sup_{\lambda\geq 0}(1+\lambda)^{-m+\ell}|\partial^\ell_{\lambda}f(\lambda)|,\,d\in \mathbb{N}_0.
\end{equation}
For any $f\in S^{m}(\mathbb{R}^{+}_0), $ we define
 \begin{equation}\label{Vec:sub:f:2}
    f( \mathcal{L}_{G, E_0}):=\textnormal{Op}\left(f(\widehat{\mathcal{L}}_G(\xi)) \otimes I_{ \textnormal{End}(E_0)}\right):C^{\infty}(G,E_0)\rightarrow C^{\infty}(G,E_0),
 \end{equation} with $f(\widehat{\mathcal{L}}_G(\xi)),$ $[\xi]\in \widehat{G},$ defined by the spectral calculus of matrices.
As a consequence of Theorem \ref{orders:theorems:VB} we have the following result.
\begin{corollary} For $f\in S^{\frac{m}{2}}(\mathbb{R}^{+}_0), $ $m\in \mathbb{R},$ we have  
\begin{equation}
   f(\mathcal{L}_{G,E_0})\in \Psi^{m}_{1,0}((G\times \widehat{G})\otimes \textnormal{End}(E_0)). 
\end{equation}
 In particular for $f_s(\lambda):=(1+\lambda)^{\frac{s}{2}},$ the global symbol of ${M}_{s,E_0}=f_s(\mathcal{L}_{G,E_0})$ belongs to $ S^{s}_{1,0}( (G\times \widehat{G})\otimes \textnormal{End}(E_0)) $ for all $s\in \mathbb{R}.$ In other words, we have 
\begin{equation}\label{thepowersof:LE0:2}
    {M}_{s,E_0}:=(1+\mathcal{L}_{G,E_0})^{\frac{s}{2}}\in \Psi^{s}_{1,0}((G\times \widehat{G})\otimes\textnormal{End}(E_0))
\end{equation}
for all $s\in \mathbb{R}.$
\end{corollary}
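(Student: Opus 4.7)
The plan is to reduce this corollary to Theorem \ref{orders:theorems:VB}, exploiting the fact that the Laplace--Beltrami operator $\mathcal{L}_G$ is the special case of the sub-Laplacian $\mathcal{L}=\mathcal{L}_X$ obtained by choosing the H\"ormander system $X=\{X_1,\dots,X_n\}$ to be an orthonormal basis of $\mathfrak{g}$. In this situation the step of the system is $\kappa=1$, and the control distance associated to $X$ coincides (up to equivalence) with the Riemannian distance on $G$. Consequently the symbolic weight
\begin{equation*}
\widehat{\mathcal{M}}(\xi) = (1+\widehat{\mathcal{L}}_G(\xi))^{1/2}
\end{equation*}
is the usual elliptic weight $\langle\xi\rangle$, and the subelliptic H\"ormander class $S^{m,\mathcal{L}_G}_{1,0}((G\times\widehat{G})\otimes\textnormal{End}(E_0))$ of Definition \ref{symbolclass} coincides with the elliptic class $S^{m}_{1,0}((G\times\widehat{G})\otimes\textnormal{End}(E_0))$ of Definition \ref{symbolclass:L}, as an immediate comparison of the symbol inequalities \eqref{InI} with $\rho=1,\delta=0$ and \eqref{InI:L:L} shows.

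Given this identification of classes, I would simply apply Theorem \ref{orders:theorems:VB} to the choice $\mathcal{L}=\mathcal{L}_G$. That theorem asserts, for $f\in S^{m/2}(\mathbb{R}^{+}_0)$, the membership
\begin{equation*}
f(\mathcal{L}_{E_0}) \in \Psi^{m,\mathcal{L}}_{1,0}((G\times\widehat{G})\otimes\textnormal{End}(E_0)),
\end{equation*}
and specialised to $\mathcal{L}=\mathcal{L}_G$ it becomes the desired statement
\begin{equation*}
f(\mathcal{L}_{G,E_0}) \in \Psi^{m}_{1,0}((G\times\widehat{G})\otimes\textnormal{End}(E_0)).
\end{equation*}
No further work is needed here, since the proof of Theorem \ref{orders:theorems:VB} reduces via Lemma \ref{Symbol:Lemma:VB} to the scalar statement that $f(\mathcal{L})\in S^{m,\mathcal{L}}_{1,0}(G\times\widehat{G})$ (Theorem 8.17 of \cite{RuzhanskyCardona2020}), which remains valid when $\mathcal{L}$ is the Laplacian.

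For the particular case of $M_{s,E_0}=(1+\mathcal{L}_{G,E_0})^{s/2}$, I would note that $f_s(\lambda)=(1+\lambda)^{s/2}$ trivially satisfies
\begin{equation*}
|\partial_\lambda^\ell f_s(\lambda)| \lesssim (1+\lambda)^{s/2-\ell}
\end{equation*}
for every $\ell\in\mathbb{N}_0$, so that $f_s\in S^{s/2}(\mathbb{R}^{+}_0)$, and the conclusion $M_{s,E_0}\in\Psi^{s}_{1,0}((G\times\widehat{G})\otimes\textnormal{End}(E_0))$ follows from the general statement just proved. The main (and essentially only) conceptual point is the identification of the subelliptic class with the elliptic one in the step-one case; everything else is bookkeeping.
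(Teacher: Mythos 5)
Your proposal is correct and takes essentially the same route as the paper: the paper simply observes that the elliptic classes $S^{m}_{1,0}$ are the special case of the subelliptic classes $S^{m,\mathcal{L}}_{1,0}$ with $\mathcal{L}=\mathcal{L}_G$, and then invokes Theorem \ref{orders:theorems:VB} directly. Your additional verification that $f_s\in S^{s/2}(\mathbb{R}^+_0)$ is the same elementary observation implicit in the paper.
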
 As a consequence of Theorem \ref{DunforRiesz222} and the previous corollary, we get the following result that guarantees the existence of matrix-valued symbols for the operators defined by the real functional calculus of the Laplacian on $E$.
\begin{corollary} For $f\in S^{\frac{m}{2}}(\mathbb{R}^{+}_0), $ $m\in \mathbb{R},$ we have  
\begin{equation}
   f(\tilde{\mathcal{L}}_{G,E})\in \Psi^{m}_{1,0}(E). 
\end{equation}
 In particular for $f_s(\lambda):=(1+\lambda)^{\frac{s}{2}},$ the global symbol of $\tilde{{M}}_{s,E}=f_s(\tilde{\mathcal{L}}_{G,E})$ belongs to $ S^{s}_{1,0}( E) $ for all $s\in \mathbb{R}.$ In other words, we have 
\begin{equation}\label{thepowersof:LE0:3}
    \tilde{M}_{s,E}:=(1+\tilde{\mathcal{L}}_{G,E})^{\frac{s}{2}}\in \Psi^{s}_{1,0}(E)
\end{equation}
for all $s\in \mathbb{R}.$
\end{corollary}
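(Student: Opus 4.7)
The plan is to reduce the assertion on the homogeneous vector bundle $E$ to the immediately preceding corollary, which concerns the vector-valued operator $f(\mathcal{L}_{G,E_0})$ on $G$. The bridge between the two is the commutative diagram preceding the statement, which exhibits $\tilde{\mathcal{L}}_{G,E}$ as unitarily equivalent to $\mathcal{L}_{G,E_0}$ via the isometric isomorphism $\varkappa_\tau:L^2(E)\to L^2(G,E_0)^\tau$; explicitly,
\begin{equation*}
\tilde{\mathcal{L}}_{G,E}=\varkappa_\tau^{-1}\,\mathcal{L}_{G,E_0}\,\varkappa_\tau
\end{equation*}
on $\Gamma^\infty(E)\cong C^\infty(G,E_0)^\tau$.

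First I would apply the preceding corollary to conclude that $f(\mathcal{L}_{G,E_0})\in \Psi^{m}_{1,0}((G\times\widehat{G})\otimes\textnormal{End}(E_0))$ for every $f\in S^{m/2}(\mathbb{R}_0^+)$. Next, I would verify that $f(\mathcal{L}_{G,E_0})$ maps $C^\infty(G,E_0)^\tau$ into itself. Since the $\textnormal{End}(E_0)$-valued symbol $\widehat{\mathcal{L}}_G(\xi)\otimes I_{\textnormal{End}(E_0)}$ is scalar in the fiber variable, its spectral projections commute with the right $K$-action induced by $\tau$, so the closed subspace $C^\infty(G,E_0)^\tau$ is invariant under any $f(\mathcal{L}_{G,E_0})$ defined by the spectral calculus. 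Equivalently, one may appeal to Wallach's observation cited in the paragraph above that $\mathcal{L}_{G,E_0}$ preserves $C^\infty(G,E_0)^\tau$ and transfer this invariance to its resolvent (and hence, via Theorem \ref{DunforRiesz222} applied to $f(\lambda)(1+\lambda)^{-N}$ for large $N$, to $f(\mathcal{L}_{G,E_0})$ after composition with the powers $M_{2N,E_0}$ provided by \eqref{thepowersof:LE0:2}).

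Having established these two points, the operator induced by $f(\mathcal{L}_{G,E_0})$ through the commutative diagram is exactly $f(\tilde{\mathcal{L}}_{G,E})$. By Definition \ref{Hormanderclassesonvectorbunbdles} in its elliptic form (Remark \ref{Remark:L:G}), the membership of the symbol of $f(\mathcal{L}_{G,E_0})$ in $S^{m}_{1,0}((G\times\widehat{G})\otimes\textnormal{End}(E_0))$ then gives $f(\tilde{\mathcal{L}}_{G,E})\in \Psi^{m}_{1,0}(E)$. The particular statement for $f_s(\lambda)=(1+\lambda)^{s/2}$ follows at once since $f_s\in S^{s/2}(\mathbb{R}_0^+)$, yielding $\tilde{M}_{s,E}\in \Psi^{s}_{1,0}(E)$ for every $s\in\mathbb{R}$.

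The only nontrivial step is the second one: checking that the full functional calculus, including positive-order symbols $f$, preserves $\tau$-invariance without appealing to decay. The cleanest way to handle it is to split $f$ as $f(\lambda)=(1+\lambda)^{N}\,g(\lambda)$ with $g\in S^{m/2-N}(\mathbb{R}_0^+)$ for $N$ so large that $g$ fits the hypothesis of Theorem \ref{DunforRiesz222}; then $(1+\mathcal{L}_{G,E_0})^N$ is a differential operator (polynomial in $\mathcal{L}_{G,E_0}$) that manifestly preserves $C^\infty(G,E_0)^\tau$, while $g(\mathcal{L}_{G,E_0})$ preserves it through the Dunford-Riesz representation because each resolvent does. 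Composing, the invariance of $C^\infty(G,E_0)^\tau$ under $f(\mathcal{L}_{G,E_0})$ follows, completing the transfer to the bundle side.
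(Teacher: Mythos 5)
Your proposal is correct. The paper itself only says the corollary is ``a consequence of Theorem \ref{DunforRiesz222} and the previous corollary,'' so there is no explicit proof to compare with, but both of the routes you sketch close the argument in a valid way. The core work is exactly where you put it: (i) the immediately preceding corollary gives $f(\mathcal{L}_{G,E_0})\in\Psi^{m}_{1,0}((G\times\widehat{G})\otimes\operatorname{End}(E_0))$; (ii) since $f(\mathcal{L}_{G,E_0})$ is a Fourier multiplier acting componentwise by the bi-invariant operator $f(\mathcal{L}_G)$, it commutes with the unitary $K$-action $k\mapsto\tau(k)R_k$ on $L^2(G,E_0)$ whose fixed subspace is $L^2(G,E_0)^\tau$, so $L^2(G,E_0)^\tau$ reduces $\mathcal{L}_{G,E_0}$ and the spectral calculus on the full space restricts; combined with pseudo-differential continuity on $C^\infty$, this gives invariance of $C^\infty(G,E_0)^\tau$; (iii) unitary equivalence identifies the induced bundle operator with $f(\tilde{\mathcal{L}}_{G,E})$, and Definition \ref{Hormanderclassesonvectorbunbdles} (via Remark \ref{Remark:L:G}) then gives membership in $\Psi^{m}_{1,0}(E)$.

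Two small stylistic points. First, in your ``Option 1'' paragraph you refer to $C^\infty(G,E_0)^\tau$ as a closed subspace and to ``the right $K$-action induced by $\tau$''; what is actually closed is $L^2(G,E_0)^\tau$, and the relevant commuting action is $k\mapsto\tau(k)R_k$, not $R_k$ alone. This is a minor imprecision rather than a gap, since the reduction argument you intend is the correct one. Second, your flagged ``nontrivial step'' — invariance of $\tau$-invariance under the functional calculus at positive order — is in fact the least delicate part: the commutation of $\mathcal{L}_{G,E_0}$ with the $K$-action passes to \emph{all} of its spectral calculus at once, bounded or not, so the splitting $f(\lambda)=(1+\lambda)^N g(\lambda)$ is an unnecessary detour (though it is a perfectly valid alternative that tracks the route through Theorem \ref{DunforRiesz222} hinted at in the text). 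In that alternative, remember that $\mathcal{L}_{G,E_0}$ itself is only parameter-elliptic after the shift to $1+\mathcal{L}_{G,E_0}$ (the symbol vanishes at the trivial representation), so the Dunford--Riesz step should be applied to $1+\mathcal{L}_{G,E_0}$ with $F(\lambda)=g(\lambda-1)$.
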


\subsection{Vector-valued G\r{a}rding inequality} \label{Gardingsection'}
In this section we will prove the vector-valued G\r{a}rding inequality for the vector-valued subelliptic calculus. 
We start our analysis with the following proposition.
\begin{proposition}
Let $0\leqslant \delta<\rho\leqslant 1.$  Let  $a\in S^{m,\mathcal{L}}_{\rho,\delta}((G\times \widehat{G})\otimes \textnormal{End}(E_0))$ be an $\mathcal{L}$-elliptic symbol where $m\geqslant 0$ and let us assume that $a$ is positive definite,  namely, that the symbol
$$ a(x,\xi):=(\sigma_A(x,\xi)_{u,v})_{1\leq v,u\leq d_\xi}\in \mathbb{C}^{d_\xi\times d_\xi}(\textnormal{End}(E_0)),$$ is self-adjoint and its spectrum is contained in $\mathbb{R}^{+}_0:=[0,\infty).$  Then $a$ is parameter-elliptic with respect to $\mathbb{R}_{-}:=\{z=x+i0:x<0\}\subset\mathbb{C}.$ Furthermore, for any number $s\in \mathbb{C},$ 
\begin{equation*}
    \widehat{B}(x,\xi)\equiv a(x,\xi)^s:=\exp(s\log(a(x,\xi))),\,\,(x,[\xi])\in G\times \widehat{G},
\end{equation*}defines a symbol $\widehat{B}\in S^{m\times\textnormal{Re}(s),\mathcal{L}}_{\rho,\delta}((G\times \widehat{G})\otimes \textnormal{End}(E_0)).$
\end{proposition}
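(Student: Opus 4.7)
The plan is to prove the two assertions in turn, using only the $\mathcal{L}$-ellipticity and positive-definiteness of $a$, together with the Dunford--Riesz functional calculus already established in Theorem \ref{DunforRiesz}.

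For the parameter-ellipticity with respect to $\mathbb{R}_{-}$, write $\lambda=-t$ with $t>0$. Since $a(x,\xi)$ is self-adjoint with spectrum in $[0,\infty)$, the operator $a+t$ is strictly positive self-adjoint with spectrum in $[t,\infty)$, hence invertible with $R_{\lambda}(x,\xi)=(a+t)^{-1}$. The heart of the matter is the uniform bound
$$\|(t^{1/m}+\widehat{\mathcal{M}}(\xi))^{m}\otimes (a+t)^{-1}\|_{\textnormal{op}}^{2}
=\|(a+t)^{-1}(t^{1/m}+\widehat{\mathcal{M}})^{2m}(a+t)^{-1}\|_{\textnormal{op}}.$$
Because $\widehat{\mathcal{M}}(\xi)$ is diagonal with non-negative entries $(1+\nu_{ii}(\xi)^{2})^{1/2}$, the elementary scalar estimate $(t^{1/m}+\mu)^{2m}\leq C_{m}(t^{2}+\mu^{2m})$, valid for all $t,\mu\geq 0$, lifts to the operator inequality $(t^{1/m}+\widehat{\mathcal{M}})^{2m}\leq C_{m}(t^{2}+\widehat{\mathcal{M}}^{2m})$. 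This reduces the bound to two pieces: first, $\|t^{2}(a+t)^{-2}\|_{\textnormal{op}}\leq 1$, since $a\geq 0$ implies $t/(a_{i}+t)\leq 1$ for every eigenvalue $a_{i}$ of $a$; and second, $\|(a+t)^{-1}\widehat{\mathcal{M}}^{2m}(a+t)^{-1}\|_{\textnormal{op}}=\|\widehat{\mathcal{M}}^{m}(a+t)^{-1}\|_{\textnormal{op}}^{2}$, which in turn is controlled by the factorisation
$$\widehat{\mathcal{M}}^{m}(a+t)^{-1}=(\widehat{\mathcal{M}}^{m}a^{-1})\cdot(a(a+t)^{-1}),$$
where the first factor is bounded by the hypothesis $\mathcal{L}$-ellipticity of $a$ (equivalently, \eqref{Ies}) and the second satisfies $\|a(a+t)^{-1}\|_{\textnormal{op}}\leq 1$ since $a\geq 0$.

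For the complex power $a^{s}$, the argument splits according to the sign of $\textnormal{Re}(s)$. If $\textnormal{Re}(s)<0$, apply Theorem \ref{DunforRiesz} with $\Lambda=\mathbb{R}_{-}$ and $F(z)=z^{s}$ defined using the branch cut along $\mathbb{R}_{-}$. Then $F$ is holomorphic on $\mathbb{C}\setminus\mathbb{R}_{-}$ and satisfies $|F(\lambda)|\leq C|\lambda|^{\textnormal{Re}(s)}$ on $\partial\Lambda_{\varepsilon}$, so the theorem produces
$$a(x,\xi)^{s}=-\frac{1}{2\pi i}\oint_{\partial\Lambda_{\varepsilon}}z^{s}(a(x,\xi)-z)^{-1}\,dz\in S^{m\,\textnormal{Re}(s),\mathcal{L}}_{\rho,\delta}((G\times\widehat{G})\otimes\textnormal{End}(E_{0})),$$
the pointwise identity being Cauchy's formula for the matrix functional calculus of the positive self-adjoint matrix $a(x,\xi)$. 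If $\textnormal{Re}(s)\geq 0$, choose an integer $N>\textnormal{Re}(s)$ and factor $a^{s}=a^{N}\cdot a^{s-N}$ pointwise. The factor $a^{s-N}$ belongs to $S^{m(\textnormal{Re}(s)-N),\mathcal{L}}_{\rho,\delta}$ by the previous case; $a^{N}$ belongs to $S^{mN,\mathcal{L}}_{\rho,\delta}$ as an iterated pointwise product of $a$, using the Leibniz rules (Remark \ref{Leibnizrule}) for $\partial_{X}^{(\beta)}$ and $\Delta_{\xi}^{\alpha}$, which imply that the pointwise product of symbols of orders $m_{1},m_{2}$ lies in $S^{m_{1}+m_{2},\mathcal{L}}_{\rho,\delta}$. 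Their pointwise product then lands in $S^{m\,\textnormal{Re}(s),\mathcal{L}}_{\rho,\delta}$.

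The main obstacle is the uniform bound in the first part: since $\widehat{\mathcal{M}}(\xi)$ and $a(x,\xi)$ do not commute in general, one cannot diagonalise them simultaneously, and several naive operator-monotonicity arguments fail because $x\mapsto x^{2}$ is not operator monotone. The route taken above circumvents this by passing to the self-adjoint positive product $X^{*}X$ with $X=(t^{1/m}+\widehat{\mathcal{M}})^{m}(a+t)^{-1}$, which renders the diagonal comparison for $\widehat{\mathcal{M}}$ effective and isolates the non-commutative factor $\widehat{\mathcal{M}}^{m}(a+t)^{-1}$ in a form directly amenable to the ellipticity hypothesis \eqref{Ies}.
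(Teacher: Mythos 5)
Your argument is correct, and your proof of the parameter-ellipticity is a genuinely different (and more elementary) route than the paper's.

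For the first part, the paper factorizes
$(|\lambda|^{1/m}+\widehat{\mathcal{M}})^m (a-\lambda)^{-1}
 = \bigl[(|\lambda|^{1/m}+\widehat{\mathcal{M}})^m(\widehat{\mathcal{M}}^m-\lambda)^{-1}\bigr]\bigl[(\widehat{\mathcal{M}}^m-\lambda)(a-\lambda)^{-1}\bigr]$,
then shows the second factor is uniformly bounded in $\lambda$ by introducing $\theta=1/|\lambda|$, observing $(\theta\widehat{\mathcal{M}}^m+1)\otimes(\theta a+1)^{-1}\in S^{0,\mathcal{L}}_{\rho,\delta}$, and invoking compactness of $\theta\in[0,1]$, while the first (diagonal) factor is controlled by a case split on $|\lambda|\leqslant 1/2$ vs. $|\lambda|>1/2$. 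Your route bypasses all of this: the passage to $X^{*}X$ turns the non-commuting pair $(\widehat{\mathcal{M}},a+t)$ into a positive self-adjoint operator for which the Loewner-order comparison $(t^{1/m}+\widehat{\mathcal{M}})^{2m}\leq C_m(t^2 I+\widehat{\mathcal{M}}^{2m})$ (valid because $\widehat{\mathcal{M}}$ is a positive diagonal matrix, so this is an entrywise scalar inequality) is legitimate, and the resulting two pieces are controlled by the elementary spectral bounds $\|t(a+t)^{-1}\|_{\textnormal{op}}\leq 1$, $\|a(a+t)^{-1}\|_{\textnormal{op}}\leq 1$ (both from $a\geq 0$) combined with the ellipticity hypothesis \eqref{Ies}. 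What you gain is explicit uniformity in $t>0$ with no compactness or continuity-in-the-parameter argument; what the paper's approach buys is a template that recycles the parameter-ellipticity machinery already developed in Subsection \ref{Parameterellipticity}, making the proof read uniformly with the surrounding results. Your observation that the detour through $X^{*}X$ is needed precisely because $x\mapsto x^2$ is not operator monotone is exactly the right diagnosis of why the naive comparison fails. For the second part (the complex power $a^s$), your argument and the paper's are essentially identical: apply Theorem \ref{DunforRiesz} when $\textnormal{Re}(s)<0$ with $F(z)=z^s$, and for $\textnormal{Re}(s)\geq 0$ write $a^s=a^{s-N}\cdot a^N$ pointwise with $N>\textnormal{Re}(s)$, controlling the integer power via the Leibniz rules. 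You are slightly more explicit than the paper about why $a^N$ has subelliptic order $mN$, but the idea is the same.
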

\begin{proof}Note that for any $(x,\xi)\in G\times \widehat{G},$ $a(x,\xi)$ is invertible. Then its spectrum is strictly contained in $\mathbb{R}^{+}:=(0,\infty).$ From the estimates 
\begin{equation*}
     \sup_{(x,[\xi])}\Vert\widehat{\mathcal{M}}(\xi)^{-m} \otimes a(x,\xi) \Vert_{\textnormal{op}}<\infty,\quad \sup_{(x,[\xi])}\Vert \widehat{\mathcal{M}}^m(\xi)\otimes a(x,\xi)^{-1} \Vert_{\textnormal{op}}<\infty,
\end{equation*}
   we deduce that
    \begin{equation*}
       \sup_{x\in G}\Vert a(x,\xi) \Vert_{\textnormal{op}}\asymp\Vert  \widehat{\mathcal{M}}(\xi)^{m}\Vert_{\textnormal{op}}, \quad \sup_{x\in G}\Vert a(x,\xi)^{-1} \Vert_{\textnormal{op}}\asymp\Vert  \widehat{\mathcal{M}}(\xi)^{-m}\Vert_{\textnormal{op}}. 
    \end{equation*} Consequently we have
    \begin{equation*}
      \Vert  \widehat{\mathcal{M}}(\xi)^{m}\Vert_{\textnormal{op}}^{-1} \sup_{x\in G}\Vert a(x,\xi) \Vert_{\textnormal{op}}\asymp 1, \quad \Vert  \widehat{\mathcal{M}}(\xi)^{-m}\Vert_{\textnormal{op}}^{-1} \sup_{x\in G}\Vert a(x,\xi)^{-1} \Vert_{\textnormal{op}}\asymp 1, 
    \end{equation*} from which we deduce that
    \begin{equation*}
        \Vert  \widehat{\mathcal{M}}(\xi)^{-m}\Vert_{\textnormal{op}}^{-1}\textnormal{Spectrum}(a(x,\xi))\subset [c,C],
    \end{equation*}where $c,C>0$ are positive real numbers. For every $\lambda\in \mathbb{R}_{-}$ we have
    \begin{align*}
      &   \Vert (|\lambda|^{\frac{1}{m}}+\widehat{\mathcal{M}}(\xi))^m \otimes (a(x,\xi)-\lambda)^{-1}\Vert_{\textnormal{op}}\\
         &\asymp \Vert (|\lambda|^{\frac{1}{m}}+\widehat{\mathcal{M}}(\xi))^m (\widehat{\mathcal{M}}(\xi)^m-\lambda)^{-1}(\widehat{\mathcal{M}}(\xi)^m-\lambda) \otimes (a(x,\xi)-\lambda)^{-1}\Vert_{\textnormal{op}} \\
         &\lesssim \Vert (|\lambda|^{\frac{1}{m}}+\widehat{\mathcal{M}}(\xi))^m (\widehat{\mathcal{M}}(\xi)^m-\lambda)^{-1}\Vert_{\textnormal{op}} \Vert (\widehat{\mathcal{M}}(\xi)^m-\lambda)\otimes(a(x,\xi)-\lambda)^{-1}\Vert_{\textnormal{op}}\\
         &\lesssim \Vert (|\lambda|^{\frac{1}{m}}+\widehat{\mathcal{M}}(\xi))^m (\widehat{\mathcal{M}}(\xi)^m-\lambda)^{-1}\Vert_{\textnormal{op}},
    \end{align*}
where we have used that $\Vert (\widehat{\mathcal{M}}(\xi)^m-\lambda)\otimes(a(x,\xi)-\lambda)^{-1}\Vert_{\textnormal{op}}$ is uniformly bounded in $\lambda.$ Indeed, with $\theta=1/|\lambda|=-1/\lambda,$ and $\omega=-\lambda/|\lambda|=1,$ the previous operator norm can be written as 
$$\Vert (\theta\widehat{\mathcal{M}}(\xi)^m+1)\otimes(\theta a(x,\xi)+1)^{-1}\Vert_{\textnormal{op}},\,\theta\in [0,1],$$
and the compactness of the unit interval allows us to estimate this operator norm uniformly in $[0,1].$ Indeed, note that the functional calculus, the positivity of the symbol $a(x,\xi),$  and its $\mathcal{L}$-ellipticity imply that for $\theta\in [0,1],$   $(\theta a(x,\xi)+1)^{-1}\in S^{-m,\mathcal{L}}_{\rho,\delta}((G\times \widehat{G})\otimes \textnormal{End}(E_0)).$ Because $(\theta \widehat{\mathcal{M}}(\xi)^m+1)\in S^{m,\mathcal{L}}_{1,0}(G\times \widehat{G})), $ then $$  (\theta\widehat{\mathcal{M}}(\xi)^m+1)\otimes(\theta a(x,\xi)+1)^{-1}\in S^{0,\mathcal{L}}_{\rho,\delta}((G\times \widehat{G})\otimes \textnormal{End}(E_0),  $$
    and then $\sup_{\theta\in [0,1]}\Vert (\theta\widehat{\mathcal{M}}(\xi)^m+1)\otimes(\theta a(x,\xi)+1)^{-1}\Vert_{\textnormal{op}}=O(1).$
    
    Now, fixing again $\lambda\in \mathbb{R}_{-}$ observe that from the compactness of $[0,1/2]$ we deduce that
    \begin{align*}
        \sup_{0\leqslant |\lambda| \leqslant 1/2}\Vert (|\lambda|^{\frac{1}{m}}+\widehat{\mathcal{M}}(\xi))^m (\widehat{\mathcal{M}}(\xi)^m-\lambda)^{-1}\Vert_{\textnormal{op}}&\asymp \sup_{0\leqslant |\lambda| \leqslant 1/2}\Vert \widehat{\mathcal{M}}(\xi)^m (\widehat{\mathcal{M}}(\xi)^m-\lambda)^{-1}\Vert_{\textnormal{op}} \\
         &\asymp \sup_{0\leqslant |\lambda| \leqslant 1/2}\Vert  (I_{d_\xi}-\lambda\widehat{\mathcal{M}}(\xi)^{-m})^{-1}\Vert_{\textnormal{op}} \lesssim 1,
    \end{align*} 
    where (using the notation in \eqref{eigenvalues:hatM}) we have used the estimate 
    $$  0\leq |\lambda|(1+\nu_{ii}(\xi)^2)^{-\frac{m}{2}}\leq \frac{1}{2},  $$ for $\lambda<0$ with
    $0\leq |\lambda|\leq 1/2,$ which implies
    \begin{align*}
       \Vert  (I_{d_\xi}-\lambda\widehat{\mathcal{M}}(\xi)^{-m})^{-1}\Vert_{\textnormal{op}}=\sup_{1\leq i\leq d_\xi}\left( 1+ |\lambda|(1+\nu_{ii}(\xi)^2)^{-\frac{m}{2}} \right)^{-1}\in [2/3,1]. 
    \end{align*}
On the other hand, by writing $\theta=1/|\lambda|$ and $\omega=\lambda/|\lambda|,$ we have that 
    \begin{align*}
    &  \sup_{ |\lambda|\geqslant 1/2 }\Vert (|\lambda|^{\frac{1}{m}}+\widehat{\mathcal{M}}(\xi))^m (\widehat{\mathcal{M}}(\xi)^m-\lambda)^{-1}\Vert_{\textnormal{op}}  \\
      &=\sup_{ |\lambda|\geqslant 1/2 }\Vert (|\lambda|^{\frac{1}{m}}\widehat{\mathcal{M}}(\xi)^{-1}+I_{d_\xi})^m (I_{d_\xi}-\widehat{\mathcal{M}}(\xi)^{-m}\lambda)^{-1}\Vert_{\textnormal{op}}\\
      &=\sup_{ |\lambda|\geqslant 1/2 }\Vert (\widehat{\mathcal{M}}(\xi)^{-1}+|\lambda|^{-\frac{1}{m}}I_{d_\xi})^m |\lambda|(I_{d_\xi}-\widehat{\mathcal{M}}(\xi)^{-m}\lambda)^{-1}\Vert_{\textnormal{op}}\\
      &\leq \sup_{ |\theta|\leqslant 2 }\Vert (\widehat{\mathcal{M}}(\xi)^{-1}+\theta^{\frac{1}{m}}I_{d_\xi})^m (\theta-\omega\widehat{\mathcal{M}}(\xi)^{-m})^{-1}\Vert_{\textnormal{op}}.
    \end{align*}The previous operator norm is $O(1),$ by considering the compatness of the interval $[0,1/2],$ and the fact that $(\widehat{\mathcal{M}}(\xi)^{-1}+\theta^{\frac{1}{m}}I_{d_\xi})^m (\theta-\omega\widehat{\mathcal{M}}(\xi)^{-m})^{-1}\in S^{0,\mathcal{L}}_{1,0}(G\times \widehat{G}).$

    So, we have proved that $a$ is parameter-elliptic with respect to $\mathbb{R}_{-}.$ To prove that $\widehat{B}=\widehat{B}(x,\xi)\in S^{m\times\textnormal{Re}(s),\mathcal{L}}_{\rho,\delta}((G\times \widehat{G})\otimes \textnormal{End}(E_0)),$ we can observe that for $\textnormal{Re}(s)<0,$ we can apply Theorem \ref{DunforRiesz}. If  $\textnormal{Re}(s)\geqslant 0,$ we can find $k\in \mathbb{N}$ such that $\textnormal{Re}(s)-k<0$ and consequently from the spectral calculus of matrices we deduce that $a(x,\xi)^{\textnormal{Re}(s)-k}\in S^{m\times(\textnormal{Re}(s)-k),\mathcal{L}}_{\rho,\delta}((G\times \widehat{G})\otimes \textnormal{End}(E_0)).$ So, from the calculus we conclude that   $$a(x,\xi)^{s}=a(x,\xi)^{s-k}\circ a(x,\xi)^{k}\in S^{m\times\textnormal{Re}(s),\mathcal{L}}_{\rho,\delta}((G\times \widehat{G})\otimes \textnormal{End}(E_0)).$$ Thus the proof is complete.
\end{proof}
\begin{corollary}\label{1/2}
Let $0\leqslant \delta<\rho\leqslant 1.$  Let  $a\in S^{m,\mathcal{L}}_{\rho,\delta}((G\times \widehat{G})\otimes \textnormal{End}(E_0)),$  be an $\mathcal{L}$-elliptic symbol  where $m\geqslant 0$ and let us assume that $a$ is positive definite. Then 
$\widehat{B}(x,\xi)\equiv a(x,\xi)^\frac{1}{2}:=\exp(\frac{1}{2}\log(a(x,\xi)))\in S^{\frac{m}{2},\mathcal{L}}_{\rho,\delta}((G\times \widehat{G})\otimes \textnormal{End}(E_0)).$
\end{corollary}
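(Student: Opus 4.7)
The plan is to deduce Corollary \ref{1/2} as an immediate specialisation of the preceding proposition. Indeed, the proposition already establishes the membership $a(x,\xi)^{s}\equiv \exp(s\log(a(x,\xi)))\in S^{m\cdot \textnormal{Re}(s),\mathcal{L}}_{\rho,\delta}((G\times \widehat{G})\otimes \textnormal{End}(E_0))$ for \emph{any} $s\in \mathbb{C}$, under the standing hypotheses that $a$ is $\mathcal{L}$-elliptic, positive definite, and of order $m\geqslant 0$. Hence it suffices to take $s=1/2$, which is a real number with $\textnormal{Re}(s)=1/2$, to obtain $\widehat{B}=a^{1/2}\in S^{m/2,\mathcal{L}}_{\rho,\delta}((G\times \widehat{G})\otimes \textnormal{End}(E_0))$.

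Let me indicate why the specialisation is legitimate and needs no further work. The construction of $a^{s}$ in the proposition uses two ingredients that are already verified for our symbol $a$: (i) parameter-ellipticity with respect to $\mathbb{R}_{-}$, which ensures that $\log(a(x,\xi))$ is well-defined through a Dunford--Riesz contour avoiding the negative real axis; and (ii) the fact that the functional calculus preserves the H\"ormander class structure, which follows from Theorem \ref{DunforRiesz} when $\textnormal{Re}(s)<0$, and is extended to arbitrary $s\in \mathbb{C}$ in the proposition by writing $a^{s}=a^{s-k}\circ a^{k}$ for a suitably large $k\in \mathbb{N}$, and invoking the composition property of the subelliptic calculus (Theorem \ref{VectorcompositionC}). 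Since $s=1/2<1$, the simplest route is to take $k=1$, so that $a^{1/2}=a^{-1/2}\circ a$, where $a^{-1/2}\in S^{-m/2,\mathcal{L}}_{\rho,\delta}$ by direct application of Theorem \ref{DunforRiesz} (noting that $|\lambda^{-1/2}|\leqslant C|\lambda|^{-1/2}$ satisfies the decay hypothesis with $s_0=-1/2<0$), and $a\in S^{m,\mathcal{L}}_{\rho,\delta}$ by hypothesis; composition then yields a symbol of order $m/2$.

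The main point to verify, and the only piece that is not purely bookkeeping, is that the positivity of the spectrum of $a(x,\xi)$ guarantees that the branch $\lambda\mapsto \lambda^{1/2}$ is holomorphic on a neighbourhood of $\textnormal{Spectrum}(a(x,\xi))$ uniformly in $(x,[\xi])$, so that the Dunford--Riesz contour $\partial\Lambda_\varepsilon$ can be chosen as in the proof of the proposition independently of $(x,[\xi])$; this is ensured by the uniform inclusion $\|\widehat{\mathcal{M}}(\xi)^{-m}\|_{\textnormal{op}}^{-1}\textnormal{Spectrum}(a(x,\xi))\subset[c,C]$ with $c>0$, which was already derived in the proof of the proposition from the $\mathcal{L}$-ellipticity and positive-definiteness of $a$. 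With this observation in place, the conclusion $a^{1/2}\in S^{m/2,\mathcal{L}}_{\rho,\delta}((G\times \widehat{G})\otimes \textnormal{End}(E_0))$ follows without additional computation.
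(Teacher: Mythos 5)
Your proof is correct and takes exactly the approach the paper intends: the corollary has no separate proof in the paper precisely because it is the special case $s=1/2$ of the preceding proposition, and you correctly identify this. Your additional remarks (taking $k=1$ in the factorisation $a^{1/2}=a^{-1/2}\circ a$, the uniform spectral containment in $[c,C]$ that makes the Dunford--Riesz contour independent of $(x,[\xi])$, the decay $|\lambda^{-1/2}|\le C|\lambda|^{-1/2}$ fitting the hypothesis of Theorem~\ref{DunforRiesz}) are accurate unpacking of the proposition's proof and do not change the argument.
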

Now, let us assume that 
\begin{equation*}
    A(x,\xi):=\frac{1}{2}(a(x,\xi)+a(x,\xi)^{*}),\,(x,[\xi])\in G\times \widehat{G},\,\,a\in S^{m,\mathcal{L}}_{\rho,\delta}((G\times \widehat{G})\otimes \textnormal{End}(E_0)), 
\end{equation*}satisfies
\begin{align}\label{eqi}
    \Vert\widehat{\mathcal{M}}(\xi)^{m} \otimes A(x,\xi)^{-1} \Vert_{\textnormal{op}}\leqslant C_{0}.
\end{align} Observe that \eqref{eqi} implies that
\begin{equation*}
    \lambda(x,\xi):=\inf\{\tilde{\lambda}(x, \xi)^{-1}\}\leqslant C_{0},\,\,\,(x,[\xi])\in G\times \widehat{G},
\end{equation*}
where $\tilde{\lambda}(x,\xi)$ runs over the eigenvalues of the operator $\widehat{\mathcal{M}}(\xi)^{-m}\otimes A(x,\xi).$ So, $\lambda(x,\xi)^{-1}\geqslant \frac{1}{C_0}$ and consequently, in the sense of matrix of operators, we have
\begin{align*}
  \widehat{\mathcal{M}}(\xi)^{-m} \otimes A(x,\xi)\geqslant\frac{1}{C_0}I_{\mathbb{C}^{d_\xi\times d_\xi}( \textnormal{End}(E_0))}.
\end{align*}This implies that
\begin{align*}
  A(x,\xi)\geqslant\frac{1}{C_0}\widehat{\mathcal{M}}(\xi)^{m} \otimes I_{\mathbb{C}^{d_\xi\times d_\xi}( \textnormal{End}(E_0))},
\end{align*}and for $C_1\in(0, \frac{1}{C_0})$ we have that
\begin{align*}
 A(x,\xi)-C_{1}  \widehat{\mathcal{M}}(\xi)^{m} \otimes I_{\mathbb{C}^{d_\xi\times d_\xi}( \textnormal{End}(E_0))}\geqslant \left(\frac{1}{C_0}-C_1\right) \widehat{\mathcal{M}}(\xi)^{m} \otimes I_{\mathbb{C}^{d_\xi\times d_\xi}( \textnormal{End}(E_0))}>0.
\end{align*}If  $0\leqslant \delta<\rho\leqslant 1,$   from Corollary \ref{1/2}, we have that
\begin{align*}
    q(x,\xi):=(A(x,\xi)-C_{1}  \widehat{\mathcal{M}}(\xi)^{m} \otimes I_{\mathbb{C}^{d_\xi\times d_\xi}( \textnormal{End}(E_0))})^{\frac{1}{2}}\in  S^{\frac{m}{2},\mathcal{L}}_{\rho,\delta}((G\times \widehat{G})\otimes \textnormal{End}(E_0)).
\end{align*}From the symbolic calculus we obtain
\begin{align*}
  q(x,\xi)q(x,\xi)^*= A(x,\xi)-C_{1}  \widehat{\mathcal{M}}(\xi)^{m} \otimes I_{\mathbb{C}^{d_\xi\times d_\xi}( \textnormal{End}(E_0))}+r(x,\xi),\,\, 
\end{align*}
where $  r= r(x,\xi)\in  S^{m-(\rho-\delta),\mathcal{L}}_{\rho,\delta}((G\times \widehat{G})\otimes \textnormal{End}(E_0)).$
Now, let us assume that $u\in C^\infty(G, E_0).$ Then we have, by setting $$\mathcal{M}_{m,E_0}:=\textnormal{Op}(\widehat{\mathcal{M}}(\xi)^{m} \otimes I_{\mathbb{C}^{d_\xi\times d_\xi}( \textnormal{End}(E_0))}),$$ that 
\begin{align*}
    \textnormal{Re}(a(x,D)u,u)&=\frac{1}{2}((a(x,D)+a(x, \xi)^*))u,u)=(A(x,D)u,u)\\
    &=C_{1}(\mathcal{M}_{m,E_0} u,u)+(q(x,D)q(x,D)^{*}u,u)-(r(x,D)u,u)\\
    &=C_{1}(\mathcal{M}_{m,E_0}u,u)+(q(x,D)^{*}u,q(x,D)^*u)-(r(x,D)u,u)\\
    &\geqslant C_{1}\Vert u\Vert^2_{{L}^{2,\mathcal{L}}_{\frac{m}{2}}(G, E_0)}-(r(x,D)u,u)\\
     &= C_{1}\Vert u\Vert_{{L}^{2,\mathcal{L}}_{\frac{m}{2}}(G, E_0)}^2-(\mathcal{M}_{-\frac{m-(\rho-\delta)}{2},E_0}r(x,D)u, \mathcal{M}_{\frac{m-(\rho-\delta)}{2},E_0}u).
\end{align*}By using Remark \ref{characterisationSobolevspaces}, we have
\begin{align*}
    (\mathcal{M}_{-\frac{m-(\rho-\delta)}{2},E_0}r(x,D)u, \mathcal{M}_{\frac{m-(\rho-\delta)}{2},E_0}u)&\leqslant \Vert \mathcal{M}_{-\frac{m-(\rho-\delta)}{2},E_0}r(x,D)u \Vert_{L^2(G, E_0)}\Vert u\Vert_{L^{2,\mathcal{L}  }_{\frac{m-(\rho-\delta)}{2}}(G, E_0)}\\
    &= \Vert r(x,D)u \Vert_{L^{2,\mathcal{L}  }_{-\frac{m-(\rho-\delta)}{2}}(G, E_0)}\Vert u\Vert_{L^{2,\mathcal{L}  }_{\frac{m-(\rho-\delta)}{2}}(G, E_0)}\\
     &\leqslant C\Vert u \Vert_{L^{2,\mathcal{L}  }_{\frac{m-(\rho-\delta)}{2}}(G, E_0)}\Vert u\Vert_{L^{2,\mathcal{L}  }_{\frac{m-(\rho-\delta)}{2}}(G, E_0)},
\end{align*}where in the last line we have used the subelliptic Sobolev boundedness of $r(x,D)$ from $L^{2,\mathcal{L}  }_{\frac{m-(\rho-\delta)}{2}}(G, E_0)$ into $L^{2,\mathcal{L}  }_{-\frac{m-(\rho-\delta)}{2}}(G, E_0).$ Consequently, we deduce the lower bound
\begin{align*}
    \textnormal{Re}(a(x,D)u,u) \geqslant C_{1}\Vert u\Vert^2_{{L}^{2,\mathcal{L}}_{\frac{m}{2}}(G, E_0)}-C\Vert u\Vert_{L^{2,\mathcal{L}  }_{\frac{m-(\rho-\delta)}{2}}(G, E_0)}^2.
\end{align*} If we assume for a moment that for every $\varepsilon>0,$ there exists $C_{\varepsilon}>0,$ such that
\begin{equation}\label{lemararo}
    \Vert u\Vert_{{L}^{2,\mathcal{L}}_{\frac{m-(\rho-\delta)}{2}}(G, E_0)}^2\leqslant \varepsilon\Vert u\Vert_{{L}^{2,\mathcal{L}}_{\frac{m}{2}}(G, E_0)}^2+C_{\varepsilon}\Vert u \Vert_{L^2(G, E_0)}^2,
\end{equation} for $0<\varepsilon<C_{1}$ we have
\begin{align*}
    \textnormal{Re}(a(x,D)u,u) \geqslant (C_{1}-\varepsilon)\Vert u\Vert_{{L}^{2,\mathcal{L}}_{\frac{m}{2}}(G, E_0)}^2-C_{\varepsilon}\Vert u\Vert_{L^2(G, E_0)}^2.
\end{align*}So, with the exception of the proof of \eqref{lemararo} we have deduced the following estimate which is the main result of this subsection.

\begin{theorem}[Subelliptic vector-valued G\r{a}rding inequality]\label{GardinTheorem} Let $G$ be a compact Lie group. For $0\leqslant \delta<\rho\leqslant 1$, let $a(x,D):C^\infty(G, E_0)\rightarrow\mathscr{D}'(G, E_0)$ be an operator with symbol  $a\in {S}^{m,\mathcal{L}}_{\rho,\delta}(( G\times \widehat{G})\otimes \textnormal{End}(E_0))$, $m\in \mathbb{R}$. Let us assume that 
\begin{equation*}
    A(x,\xi):=\frac{1}{2}(a(x,\xi)+a(x,\xi)^{*}),\,(x,[\xi])\in G\times \widehat{G},\,\,a\in S^{m,\mathcal{L}}_{\rho,\delta}((G\times \widehat{G})\otimes \textnormal{End}(E_0)), 
\end{equation*}satisfies
\begin{align*}\label{garding}
    \Vert\widehat{\mathcal{M}}(\xi)^{m} \otimes A(x,\xi)^{-1} \Vert_{\textnormal{op}}\leqslant C_{0}.
\end{align*}Then, there exist $C_{1},C_{2}>0,$ such that the lower bound
\begin{align}
    \textnormal{Re}(a(x,D)u,u) \geqslant C_1\Vert u\Vert^2_{{L}^{2,\mathcal{L}}_{\frac{m}{2}}(G, E_0)}-C_2\Vert u\Vert_{L^2(G, E_0)}^2,
\end{align}holds true for every $u\in C^\infty(G, E_0).$
\end{theorem}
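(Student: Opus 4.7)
The plan is to follow the computation already outlined in the paragraphs preceding the statement of the theorem, which reduces the G\r{a}rding inequality to an interpolation inequality in subelliptic Sobolev spaces. First, I will record the identity $\textnormal{Re}(a(x,D)u,u)=(A(x,D)u,u)$ and observe that the hypothesis $\|\widehat{\mathcal{M}}(\xi)^{m}\otimes A(x,\xi)^{-1}\|_{\textnormal{op}}\leq C_0$ forces $A(x,\xi)\geq C_0^{-1}\widehat{\mathcal{M}}(\xi)^{m}\otimes I$ pointwise in $(x,[\xi])$, so that for any fixed $C_1\in(0,C_0^{-1})$ the symbol
\[
A(x,\xi)-C_1\widehat{\mathcal{M}}(\xi)^{m}\otimes I
\]
is positive definite and elliptic of order $m$. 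By Corollary \ref{1/2}, its square root $q(x,\xi)$ belongs to $S^{m/2,\mathcal{L}}_{\rho,\delta}((G\times\widehat{G})\otimes\textnormal{End}(E_0))$.

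Next I will apply the composition formula of Theorem \ref{VectorcompositionC} and the adjoint formula of Theorem \ref{VectorAdjoint}, both valid since $\delta<\rho$, to conclude that
\[
q(x,D)q(x,D)^{*}=A(x,D)-C_1\mathcal{M}_{m,E_0}+r(x,D),
\]
with $r\in S^{m-(\rho-\delta),\mathcal{L}}_{\rho,\delta}((G\times\widehat{G})\otimes\textnormal{End}(E_0))$. Taking the inner product with $u\in C^{\infty}(G,E_0)$ and using positivity $(q(x,D)q(x,D)^{*}u,u)=\|q(x,D)^{*}u\|_{L^{2}(G,E_0)}^{2}\geq 0$ together with the definition of the subelliptic Sobolev norm (see Remark \ref{characterisationSobolevspaces}), I obtain
\[
\textnormal{Re}(a(x,D)u,u)\geq C_{1}\|u\|_{L^{2,\mathcal{L}}_{m/2}(G,E_0)}^{2}-|(r(x,D)u,u)|.
\]
The Sobolev mapping property proved in Theorem \ref{VectSobosub} gives $r(x,D):L^{2,\mathcal{L}}_{(m-(\rho-\delta))/2}\to L^{2,\mathcal{L}}_{-(m-(\rho-\delta))/2}$ boundedly, so by Cauchy--Schwarz
\[
|(r(x,D)u,u)|\leq C\,\|u\|_{L^{2,\mathcal{L}}_{(m-(\rho-\delta))/2}(G,E_0)}^{2}.
\]

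The main technical step, and the one I expect to be the delicate point, is the interpolation inequality \eqref{lemararo}: for every $\varepsilon>0$ there exists $C_{\varepsilon}>0$ such that
\[
\|u\|_{L^{2,\mathcal{L}}_{(m-(\rho-\delta))/2}(G,E_0)}^{2}\leq \varepsilon\|u\|_{L^{2,\mathcal{L}}_{m/2}(G,E_0)}^{2}+C_{\varepsilon}\|u\|_{L^{2}(G,E_0)}^{2}.
\]
To prove it I would apply the Plancherel identity in the vector-valued setting (Theorem \ref{HS:cHara:VB}'s underlying Plancherel formula) componentwise to reduce the statement to a scalar estimate on the eigenvalues of $\widehat{\mathcal{M}}(\xi)$. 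The scalar inequality $\langle\mu\rangle^{m-(\rho-\delta)}\leq \varepsilon\langle\mu\rangle^{m}+C_{\varepsilon}$ for $\mu\geq 0$ follows because the function $t\mapsto t^{m-(\rho-\delta)}-\varepsilon t^{m}$ attains a finite maximum on $[0,\infty)$ whenever $\rho-\delta>0$; taking $C_{\varepsilon}$ to be this maximum plus a constant handles the term $\langle\mu\rangle^{0}$. Summing these scalar bounds over $i$, $[\xi]$ with the Plancherel weights $d_{\xi}$ yields \eqref{lemararo}. Finally, choosing $\varepsilon<C_{1}/C$ absorbs the bad term into the principal one and produces the lower bound $C_{1}'\|u\|_{L^{2,\mathcal{L}}_{m/2}(G,E_0)}^{2}-C_{2}\|u\|_{L^{2}(G,E_0)}^{2}$, completing the proof.
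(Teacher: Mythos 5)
Your proposal reproduces the paper's argument step for step: the reduction $\textnormal{Re}(a(x,D)u,u)=(A(x,D)u,u)$, the choice of $C_{1}\in(0,C_{0}^{-1})$ making $A(x,\xi)-C_{1}\widehat{\mathcal{M}}(\xi)^{m}\otimes I$ positive, the square root via Corollary~\ref{1/2}, the calculus expansion $q(x,D)q(x,D)^{*}=A(x,D)-C_{1}\mathcal{M}_{m,E_{0}}+r(x,D)$ with $r$ of order $m-(\rho-\delta)$, the Cauchy--Schwarz/Sobolev bound on the remainder, and the interpolation estimate~\eqref{lemararo} proved via Plancherel, with $\varepsilon$ then chosen small enough to absorb. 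The one small inaccuracy is the claim that $t\mapsto t^{m-(\rho-\delta)}-\varepsilon t^{m}$ has a finite maximum on $[0,\infty)$; when $m<\rho-\delta$ this fails near $t=0$, but since $\langle\mu\rangle=(1+\nu_{ii}(\xi)^{2})^{1/2}\geq 1$ the relevant supremum is over $[1,\infty)$, where your bound (and the paper's) is indeed valid.
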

In view of the analysis above, for the proof of Theorem \ref{GardinTheorem} we only need to prove \eqref{lemararo}. However we will deduce it from the following more general lemma.
\begin{lemma}Let us assume that $s\geqslant t\geqslant 0$ or that $s,t<0.$ Then, for every $\varepsilon>0,$ there exists $C_\varepsilon>0$ such that 
\begin{equation}\label{lemararo2}
    \Vert u\Vert_{{L}^{2,\mathcal{L}}_{t}(G, E_0)}^2\leqslant \varepsilon\Vert u\Vert_{{L}^{2,\mathcal{L}}_{s}(G, E_0)}^2+C_{\varepsilon}\Vert u \Vert_{L^2(G, E_0)}^2,
\end{equation}holds true for every $u\in C^\infty(G, E_0).$ 
\end{lemma}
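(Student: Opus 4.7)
The plan is to reduce the inequality to a pointwise scalar estimate on the eigenvalues of $\widehat{\mathcal{M}}(\xi)$ via the Plancherel identity. Recall from Definition \ref{Defivectorsob} that
\[
\Vert u\Vert_{L^{2,\mathcal{L}}_{r}(G,E_0)}^{2}=\sum_{i=1}^{d_\tau}\sum_{[\xi]\in\widehat{G}}d_\xi\Vert \widehat{\mathcal{M}}(\xi)^{r}\widehat{u}(i,\xi)\Vert_{\textnormal{HS}}^{2},
\]
and, by \eqref{eigenvalues:hatM}, $\widehat{\mathcal{M}}(\xi)^{r}=\textnormal{diag}[(1+\nu_{ii}(\xi)^{2})^{r/2}]_{1\le i\le d_\xi}$. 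First I would expand the Hilbert--Schmidt norms entrywise to obtain
\[
\Vert u\Vert_{L^{2,\mathcal{L}}_{r}(G,E_0)}^{2}=\sum_{i=1}^{d_\tau}\sum_{[\xi]\in\widehat{G}}d_\xi\sum_{u,v=1}^{d_\xi}(1+\nu_{uu}(\xi)^{2})^{r}\,|\widehat{u}(i,\xi)_{uv}|^{2},
\]
so that the claim reduces to showing that for each admissible pair $(s,t)$ and every $\varepsilon>0$ there exists $C_{\varepsilon}>0$ with
\[
(1+\nu_{uu}(\xi)^{2})^{t}\le \varepsilon\,(1+\nu_{uu}(\xi)^{2})^{s}+C_{\varepsilon},\qquad \forall\,[\xi]\in\widehat{G},\ 1\le u\le d_\xi.
\]
Multiplying by $|\widehat{u}(i,\xi)_{uv}|^{2}$ and summing recovers \eqref{lemararo2} via the Plancherel identity on $L^{2}(G,E_0)$.

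Next I would verify the scalar inequality $y^{t}\le \varepsilon y^{s}+C_{\varepsilon}$ for every $y\ge 1$. In the first case $s\ge t\ge 0$: if $s=t$, any $\varepsilon\ge 1$ and $C_{\varepsilon}=0$ work; if $s>t\ge 0$, the inequality $\varepsilon y^{s}\ge y^{t}$ holds exactly when $y\ge \varepsilon^{-1/(s-t)}$, and on the complementary range $1\le y\le \varepsilon^{-1/(s-t)}$ one has $y^{t}\le \varepsilon^{-t/(s-t)}=:C_{\varepsilon}$. In the second case $s,t<0$, for any $y\ge 1$ we simply have $y^{t}\le 1\le \varepsilon y^{s}+1$, so $C_{\varepsilon}=1$ is admissible uniformly in $\varepsilon>0$. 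Combining these two cases yields the desired scalar estimate.

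Finally I would substitute back into the Plancherel expansion to conclude \eqref{lemararo2}. The argument does not present any real obstacle; the only point requiring care is to ensure that the spectral variable $y=1+\nu_{uu}(\xi)^{2}$ is always $\ge 1$ (which it is, because $\mathcal{L}$ is a positive operator, so its eigenvalues $\nu_{uu}(\xi)^{2}$ are nonnegative), and that the constant $C_\varepsilon$ produced is independent of $(i,[\xi],u,v)$, which is clear from the derivation.
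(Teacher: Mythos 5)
Your proof is correct and follows essentially the same route as the paper: reduce \eqref{lemararo2} to the scalar eigenvalue inequality $(1+\nu_{uu}(\xi)^2)^t \leq \varepsilon (1+\nu_{uu}(\xi)^2)^s + C_\varepsilon$ via the Plancherel expansion of the $L^{2,\mathcal{L}}_r$ norms, then sum. The paper merely asserts the existence of $C_\varepsilon$ without verification, so your explicit case analysis of $y^t \leq \varepsilon y^s + C_\varepsilon$ for $y\geq 1$ is a welcome addition. One small caveat on the degenerate sub-case $s=t>0$: you remark that \textquotedblleft any $\varepsilon\geq 1$ and $C_\varepsilon=0$ work,\textquotedblright{} but the lemma demands a constant for \emph{every} $\varepsilon>0$, and for $\varepsilon<1$ the required bound $(1-\varepsilon)y^t\leq C_\varepsilon$ fails since the sub-Laplacian has unbounded spectrum. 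This is actually a flaw in the lemma's stated hypothesis $s\geq t\geq 0$ rather than in your argument; the paper's proof suffers the same implicit gap, and the issue is moot in the application \eqref{lemararo}, where $s-t=\tfrac{\rho-\delta}{2}>0$ strictly.
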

\begin{proof}
    Let $\varepsilon>0.$ Then, there exists $C_{\varepsilon}>0$ such that
    \begin{equation*}
        (1+\nu_{ii}(\xi)^2)^{t}-\varepsilon (1+\nu_{ii}(\xi)^2)^s\leqslant C_{\varepsilon},
    \end{equation*}uniformly in $[\xi]\in \widehat{G},$ where $\nu_{ii}(\xi)^2$ are eigenvalues of  the symbol of the sub-Laplacian $\mathcal{L}$ at $[\xi].$ Then \eqref{lemararo2}  would follow from the Plancherel theorem. Indeed,
    \begin{align*}
     \Vert u\Vert_{{L}^{2,\mathcal{L}}_{t}(G, E_0)}^2& =\sum_{[\xi]\in \widehat{G};1\leq i_0\leq d_\tau}d_\xi\sum_{i,j=1}^{d_\xi} (1+\nu_{ii}(\xi)^2)^{t}|\widehat{u}_{ij}(i_0,\xi)|^{2}  \\
     &\leqslant \sum_{[\xi]\in \widehat{G};1\leq i_0\leq d_\tau}d_\xi\sum_{i,j=1}^{d_\xi} (\varepsilon(1+\nu_{ii}(\xi)^2)^{s}+C_\varepsilon)|\widehat{u}_{ij}(i_0,\xi)|^{2}  \\
    &= \varepsilon\Vert u\Vert_{{L}^{2,\mathcal{L}}_{s}(G, E_0)}^2+C_{\varepsilon}\Vert u \Vert_{L^2(G, E_0)}^2,
    \end{align*}
    completing the proof.
\end{proof}
\begin{corollary}\label{GardinTheorem2} Let $G$ be a compact Lie group. Let $a(x,D):C^\infty(G, E_0)\rightarrow\mathscr{D}'(G, E_0)$ be an operator with symbol  $a\in {S}^{m,\mathcal{L}}_{\rho,\delta}(( G\times \widehat{G})\otimes \textnormal{End}(E_0))$, $m\in \mathbb{R}$ and let $0\leqslant \delta<\rho\leqslant 1$. Let us assume that 
\begin{equation*}
    a(x,\xi)\geqslant 0,\,(x,[\xi])\in G\times \widehat{G}, 
\end{equation*}satisfies
\begin{align*}\label{garding22}
    \Vert\widehat{\mathcal{M}}(\xi)^{m} \otimes a(x,\xi)^{-1} \Vert_{\textnormal{op}}\leqslant C_{0}.
\end{align*}Then, there exist $C_{1},C_{2}>0,$ such that the lower bound
\begin{align}
    \textnormal{Re}(a(x,D)u,u) \geqslant C_1\Vert u\Vert^2_{{L}^{2,\mathcal{L}}_{\frac{m}{2}}(G, E_0)}-C_2\Vert u\Vert_{L^2(G, E_0)}^2,
\end{align}holds true for every $u\in C^\infty(G, E_0).$
\end{corollary}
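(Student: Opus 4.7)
The plan is to observe that Corollary \ref{GardinTheorem2} is a direct specialization of Theorem \ref{GardinTheorem} once we recognize that the positivity hypothesis forces $a(x,\xi)$ to coincide with its symmetric part. Concretely, the assumption $a(x,\xi)\geqslant 0$ at every point $(x,[\xi])\in G\times\widehat{G}$ means that $a(x,\xi)$ is a positive (hence self-adjoint) element of $\mathbb{C}^{d_\xi\times d_\xi}(\textnormal{End}(E_0))$. Consequently $a(x,\xi)^{*}=a(x,\xi)$, and the symmetric part
\[
A(x,\xi):=\tfrac{1}{2}\bigl(a(x,\xi)+a(x,\xi)^{*}\bigr)
\]
appearing in the statement of Theorem \ref{GardinTheorem} coincides with $a(x,\xi)$ itself.

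Given this identification, the hypothesis
\[
\Vert\widehat{\mathcal{M}}(\xi)^{m}\otimes a(x,\xi)^{-1}\Vert_{\textnormal{op}}\leqslant C_{0}
\]
of Corollary \ref{GardinTheorem2} becomes precisely the hypothesis
\[
\Vert\widehat{\mathcal{M}}(\xi)^{m}\otimes A(x,\xi)^{-1}\Vert_{\textnormal{op}}\leqslant C_{0}
\]
demanded by Theorem \ref{GardinTheorem}. Note that the finiteness of this operator norm already guarantees the invertibility of $a(x,\xi)$ required to form the inverse, so there is no implicit ambiguity in the hypothesis.

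Applying Theorem \ref{GardinTheorem} to $a\in S^{m,\mathcal{L}}_{\rho,\delta}((G\times\widehat{G})\otimes\textnormal{End}(E_0))$ (whose membership in the vector-valued subelliptic H\"ormander class is given), we obtain the existence of constants $C_1,C_2>0$ such that
\[
\textnormal{Re}(a(x,D)u,u)\geqslant C_1\Vert u\Vert_{L^{2,\mathcal{L}}_{\frac{m}{2}}(G,E_0)}^{2}-C_2\Vert u\Vert_{L^{2}(G,E_0)}^{2}
\]
for every $u\in C^{\infty}(G,E_0)$, which is the desired conclusion. Since the argument is a straightforward reduction, there is no real obstacle beyond recording the identification $A=a$; all of the analytic work (the construction of the square-root symbol via Corollary \ref{1/2}, the symbolic calculus for the remainder, and the interpolation lemma \eqref{lemararo2}) has already been absorbed into the proof of Theorem \ref{GardinTheorem}.
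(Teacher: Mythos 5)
Your reduction is exactly what the paper intends: since $a(x,\xi)\geqslant 0$ on the complex space $E_0$ forces $a(x,\xi)^{*}=a(x,\xi)$, the symmetric part $A(x,\xi)$ coincides with $a(x,\xi)$ and the hypothesis of Corollary \ref{GardinTheorem2} becomes verbatim that of Theorem \ref{GardinTheorem}. The paper gives no explicit proof for this corollary precisely because it is this immediate specialization, so your argument matches the intended one.
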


\subsection{G\r{a}rding inequality on homogeneous vector bundles}\label{Gardingsection} Now, the analysis above allows us to transfer the G\r{a}rding inequality from the vector-valued context to the setting of vector-bundles.

\begin{theorem}[Subelliptic  G\r{a}rding inequality on vector bundles]\label{GardinTheorem22} Let $E$ be a $G$-invariant vector bundle over $M=G/K,$ where $G$ is a compact Lie group. For $0\leqslant \delta<\rho\leqslant 1,$   let $\tilde{A}\in \Psi^{m,\mathcal{L}}_{\rho,\delta}(E)$ be an operator with symbol  $a\in {S}^{m,\mathcal{L}}_{\rho,\delta}(( G\times \widehat{G})\otimes \textnormal{End}(E_0))$, $m\in \mathbb{R}$. Let us assume that 
\begin{equation*}
    A(x,\xi):=\frac{1}{2}(a(x,\xi)+a(x,\xi)^{*}),\,(x,[\xi])\in G\times \widehat{G},\,\,a\in S^{m,\mathcal{L}}_{\rho,\delta}((G\times \widehat{G})\otimes \textnormal{End}(E_0)), 
\end{equation*}satisfies
\begin{align*}\label{garding2}
    \Vert\widehat{\mathcal{M}}(\xi)^{m} \otimes A(x,\xi)^{-1} \Vert_{\textnormal{op}}\leqslant C_{0}.
\end{align*}Then, there exist $C_{1},C_{2}>0,$ such that the lower bound
\begin{align}
    \textnormal{Re}(\tilde{A}s,s) \geqslant C_1\Vert s\Vert^2_{{L}^{2,\mathcal{L}}_{\frac{m}{2}}(E)}-C_2\Vert s\Vert_{L^2(E)}^2,
\end{align}holds true for every $s\in \Gamma^\infty(E).$
\end{theorem}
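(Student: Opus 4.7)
The plan is to reduce the statement on the homogeneous vector bundle $E\cong G\times_\tau E_0$ to the vector-valued G\r{a}rding inequality already established in Theorem \ref{GardinTheorem}, by transporting everything via the unitary identification $\varkappa_\tau\colon L^2(E)\to L^2(G,E_0)^\tau.$ Let $A:=\varkappa_\tau\circ \tilde{A}\circ \varkappa_\tau^{-1}\colon C^{\infty}(G,E_0)^\tau\to C^{\infty}(G,E_0)^\tau$ be the continuous linear operator associated with $\tilde{A}$ via the commutative diagram \eqref{maindiagram}. By Definition \ref{Hormanderclassesonvectorbunbdles}, the matrix-valued symbol of $A$ coincides with the one of $\tilde{A}$, namely $a\in S^{m,\mathcal{L}}_{\rho,\delta}((G\times\widehat{G})\otimes \textnormal{End}(E_0))$, so the positivity hypothesis on $A(x,\xi)=\tfrac{1}{2}(a(x,\xi)+a(x,\xi)^*)$ is verified with respect to the same constant $C_0$.

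First, I would observe that although $A$ is a priori only defined on the closed subspace $C^{\infty}(G,E_0)^\tau,$ the symbol $a$ canonically determines a continuous extension $\mathbf{A}\colon C^{\infty}(G,E_0)\to C^{\infty}(G,E_0)$ in the class $\Psi^{m,\mathcal{L}}_{\rho,\delta}((G\times\widehat{G})\otimes\textnormal{End}(E_0))$, and by construction $\mathbf{A}|_{C^{\infty}(G,E_0)^\tau}=A$. Next, I would apply Theorem \ref{GardinTheorem} to $\mathbf{A}$: there exist $C_1,C_2>0$ such that
\begin{equation*}
\textnormal{Re}\,(\mathbf{A}u,u)_{L^2(G,E_0)}\geqslant C_1\Vert u\Vert^2_{L^{2,\mathcal{L}}_{m/2}(G,E_0)}-C_2\Vert u\Vert^2_{L^2(G,E_0)},\quad u\in C^{\infty}(G,E_0).
\end{equation*}
Specialising to $u=\varkappa_\tau s\in C^{\infty}(G,E_0)^\tau$ with $s\in \Gamma^\infty(E)$, the left-hand side equals $\textnormal{Re}\,(A\varkappa_\tau s,\varkappa_\tau s)_{L^2(G,E_0)}$.

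Finally, I would invoke the fact that $\varkappa_\tau$ extends to a unitary isomorphism $L^2(E)\to L^2(G,E_0)^\tau$ and, by Remark \ref{soboiso}, to an isometry $L^{2,\mathcal{L}}_{m/2}(E)\to L^{2,\mathcal{L}}_{m/2}(G,E_0)^\tau\subset L^{2,\mathcal{L}}_{m/2}(G,E_0)$ (the last inclusion being isometric since it is the inclusion of a closed subspace with respect to the same Sobolev norm). Consequently
\begin{equation*}
(A\varkappa_\tau s,\varkappa_\tau s)_{L^2(G,E_0)}=(\varkappa_\tau \tilde{A}s,\varkappa_\tau s)_{L^2(G,E_0)}=(\tilde{A}s,s)_{L^2(E)},
\end{equation*}
and $\Vert\varkappa_\tau s\Vert_{L^{2,\mathcal{L}}_{m/2}(G,E_0)}=\Vert s\Vert_{L^{2,\mathcal{L}}_{m/2}(E)}$, $\Vert\varkappa_\tau s\Vert_{L^2(G,E_0)}=\Vert s\Vert_{L^2(E)}$ by Definition \ref{subvectsobo}. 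Substituting these identities yields the desired lower bound
\begin{equation*}
\textnormal{Re}\,(\tilde{A}s,s)_{L^2(E)}\geqslant C_1\Vert s\Vert^2_{L^{2,\mathcal{L}}_{m/2}(E)}-C_2\Vert s\Vert^2_{L^2(E)},\quad s\in\Gamma^\infty(E),
\end{equation*}
with the same constants $C_1,C_2>0$ produced by Theorem \ref{GardinTheorem}. The main (and essentially only) technical point is the compatibility of the $L^{2,\mathcal{L}}_{m/2}$-norm with the $\tau$-invariant subspace under $\varkappa_\tau$; this is precisely the content of Remark \ref{soboiso}, so no genuine new obstacle arises beyond the bookkeeping of unitary equivalences.
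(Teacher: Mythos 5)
Your proof is correct and takes essentially the same approach as the paper: reduce to the vector-valued G\r{a}rding inequality (Theorem \ref{GardinTheorem}) via the unitary isomorphism $\varkappa_\tau$, using Remark \ref{soboiso} and Definition \ref{subvectsobo} to match up the $L^2$ and subelliptic Sobolev norms. You are a touch more careful than the paper in making explicit the step where the symbol $a$ quantises to an extension of $A$ from $C^{\infty}(G,E_0)^\tau$ to all of $C^{\infty}(G,E_0)$ so that Theorem \ref{GardinTheorem} applies; the paper uses this silently, so your extra remark is a welcome clarification rather than a deviation.
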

\begin{proof}
    Observe that, if $A:C^{\infty}(G,E_0)^{\tau}\rightarrow C^{\infty}(G,G_0)^{\omega} $ is the vector-valued operator associated to $\tilde{A},$ and $u=\varkappa_\tau s,$ $s\in \Gamma^\infty(E),$ then Theorem \ref{GardinTheorem} implies that
    \begin{align*}
    \textnormal{Re}(Au,u) &\geqslant C_1\Vert u\Vert^2_{{L}^{2,\mathcal{L}}_{\frac{m}{2}}(G, E_0)}-C_2\Vert u\Vert_{L^2(G, E_0)}^2\\
    &= C_1\Vert s\Vert^2_{{L}^{2,\mathcal{L}}_{\frac{m}{2}}(E)}-C_2\Vert s\Vert_{L^2(E)}^2.
\end{align*}Now, the proof is completed by observing that
\begin{align*}
  \textnormal{Re}(\tilde{A}s,s)=\textnormal{Re}(\varkappa_{\tau }^{-1}{A}\varkappa_\tau \varkappa_\tau^{-1}u,\varkappa_{\tau}^{-1} u) =\textnormal{Re}({A}\varkappa_\tau \varkappa_\tau^{-1}u, u)=  \textnormal{Re}({A}u, u),
\end{align*}where we have use that $\varkappa_\tau^{-1}$ is an isometry.
\end{proof}

\subsection{Global wellposedness  on homogeneous vector bundles}
In this subsection we present some applications of the subelliptic pseudo-differential calculus developed in previous sections to study global solvability, global wellposedness and uniqueness of the solution of hyperbolic/parabolic PDE associated to subelliptic pseudo-differential operators on homogeneous vector bundles. As before we consider $E\cong G\times_{\tau}E_0.$

Specifically, let us consider the following Cauchy problem 
\begin{equation}\label{PVI}(\textnormal{PVI}): \begin{cases}\frac{\partial u}{\partial t}=K(t,x,D)u+f ,& \text{ }u\in \mathscr{D}'((0,T)\times E),
\\u(0)=u_0, & \text{ } \end{cases}
\end{equation}where the initial data $u_0\in L^2(E),$ $K(t):=K(t,x,D)\in C([0,T], S^{m,\mathcal{L}}_{\rho,\delta}((G\times \widehat{G}) \otimes \textnormal{End}(E_0)),$ $f\in L^2([0,T],L^2(E)) ,$ $m>0.$ In addition, we also assume that the associated operator $\textnormal{Re}(K(t))$ with operator $K(t)$ defined by 
\begin{equation*}
    \textnormal{Re}(K(t)):=\frac{1}{2}(K(t)+K(t)^*),\,\,0\leqslant t\leqslant T,\footnote{ This means that $A=K(t)$ is  strongly $\mathcal{L}$-elliptic.}
\end{equation*}is $\mathcal{L}$-elliptic.

Under such assumptions we will prove the existence and uniqueness of a solution $u\in C^1([0,T],L^2(E))\cap C([0,T], L^{2, \mathcal{L}}_{m}(E)),$ $E\cong G\times_{\tau}E_0.$ We will start with the following energy estimate.
\begin{theorem}\label{energyestimate}
Let $K(t)=K(t,x,D)\in C([0,T], S^{m,\mathcal{L}}_{\rho,\delta}((G\times \widehat{G}) \otimes \textnormal{End}(E_0)),$ $0\leqslant \delta<\rho\leqslant  1,$   be a subelliptic pseudo-differential operator of order $m>0,$ and let us assume that $\textnormal{Re}(K(t))$ is $\mathcal{L}$-elliptic, for every $t\in[0,T]$ with $T>0.$ If  $u\in C^1([0,T], L^2(E) )  \cap C([0,T],L^{2,\mathcal{L}}_m (E))$ is solution of the problem \eqref{PVI} then there exist $C,C'>0,$ such that
\begin{equation} \label{GI-03}
\Vert u(t) \Vert_{L^2(E)}\leqslant   \left(C\Vert u(0) \Vert^2_{L^2(E)}+C'\int\limits_{0}^T \Vert (\partial_{t}-K(t_1))u(t_1) \Vert^2_{L^2(E)}dt_1 \right),  
\end{equation}holds for every $0\leqslant t\leqslant T.$ 

Moreover, we also have the estimate
\begin{equation}\label{Q*}
\Vert u(t) \Vert_{L^2(E)}\leqslant   \left(C\Vert u(T) \Vert^2_{L^2(E)}+C'\int\limits_{0}^T \Vert (\partial_{t}-K(\tau)^{*})u(t_1) \Vert^2_{L^2(E)}dt_1 \right).  
\end{equation}
\end{theorem}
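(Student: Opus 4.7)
The plan is to combine the classical energy method with the subelliptic G\r{a}rding inequality of Theorem \ref{GardinTheorem22} and Gronwall's lemma. Let me focus first on \eqref{GI-03}. Set $f_{0}(t):=(\partial_{t}-K(t))u(t),$ which belongs to $L^{2}([0,T],L^{2}(E))$ by hypothesis. Since $u\in C^{1}([0,T],L^{2}(E)),$ the function $t\mapsto \Vert u(t)\Vert_{L^{2}(E)}^{2}$ is $C^{1}$ on $[0,T],$ with
\begin{equation*}
\frac{d}{dt}\Vert u(t)\Vert_{L^{2}(E)}^{2}=2\,\textnormal{Re}\,(\partial_{t}u(t),u(t))_{L^{2}(E)}=2\,\textnormal{Re}\,(K(t)u(t),u(t))_{L^{2}(E)}+2\,\textnormal{Re}\,(f_{0}(t),u(t))_{L^{2}(E)}.
\end{equation*}

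The central step is to dominate the quadratic form $2\,\textnormal{Re}\,(K(t)u,u)$ by a constant multiple of $\Vert u\Vert_{L^{2}(E)}^{2}.$ To this end, I would apply the subelliptic G\r{a}rding inequality of Theorem \ref{GardinTheorem22} to the operator $-K(t)$ (equivalently, invoking the $\mathcal{L}$-ellipticity hypothesis on $\textnormal{Re}(K(t))$ with the sign appropriate to the forward Cauchy problem): this yields constants $C_{1},C_{2}>0,$ uniform in $t\in [0,T]$ thanks to the assumption $K(\cdot)\in C([0,T], S^{m,\mathcal{L}}_{\rho,\delta}((G\times\widehat{G})\otimes\textnormal{End}(E_{0})))$ and the compactness of $[0,T],$ such that
\begin{equation*}
2\,\textnormal{Re}\,(K(t)u,u)_{L^{2}(E)}\leqslant -2C_{1}\Vert u\Vert_{L^{2,\mathcal{L}}_{m/2}(E)}^{2}+2C_{2}\Vert u\Vert_{L^{2}(E)}^{2}\leqslant 2C_{2}\Vert u\Vert_{L^{2}(E)}^{2}.
\end{equation*}
Combined with the elementary Cauchy--Schwarz and Young estimate $2\,\textnormal{Re}\,(f_{0},u)\leqslant \Vert f_{0}\Vert_{L^{2}(E)}^{2}+\Vert u\Vert_{L^{2}(E)}^{2},$ this produces the differential inequality
\begin{equation*}
\frac{d}{dt}\Vert u(t)\Vert_{L^{2}(E)}^{2}\leqslant (2C_{2}+1)\Vert u(t)\Vert_{L^{2}(E)}^{2}+\Vert f_{0}(t)\Vert_{L^{2}(E)}^{2}.
\end{equation*}

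With $\lambda:=2C_{2}+1,$ multiplying by $e^{-\lambda t}$ one obtains
\begin{equation*}
\frac{d}{dt}\bigl(e^{-\lambda t}\Vert u(t)\Vert_{L^{2}(E)}^{2}\bigr)\leqslant e^{-\lambda t}\Vert f_{0}(t)\Vert_{L^{2}(E)}^{2},
\end{equation*}
and integration from $0$ to $t$ followed by the bound $e^{\lambda t}\leqslant e^{\lambda T}$ yields \eqref{GI-03} with $C=C'=e^{\lambda T}.$

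For the estimate \eqref{Q*}, I would run the same energy argument for the adjoint evolution $\partial_{t}u=K(t)^{*}u+g_{0},$ where $g_{0}:=(\partial_{t}-K(t)^{*})u.$ Using $\textnormal{Re}\,(K(t)^{*}u,u)=\textnormal{Re}\,(K(t)u,u)$ (since the self-adjoint part is preserved under taking adjoints), the same G\r{a}rding bound is available, but now the direction of integration is reversed: integrating the analogous differential inequality from $t$ to $T$ (or, equivalently, changing variables $s\mapsto T-s$ so as to reduce to the forward case of \eqref{GI-03} applied to $v(s):=u(T-s)$ solving a Cauchy problem with coefficient $-K(T-s)^{*}$) produces \eqref{Q*} with the same structure of constants.

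The main obstacle I anticipate is the careful bookkeeping of the sign with which G\r{a}rding's inequality is invoked, and in particular verifying that $-K(t)$ (respectively $\pm K(t)^{*}$) fulfils the $\mathcal{L}$-ellipticity hypothesis of Theorem \ref{GardinTheorem22} uniformly in $t.$ Uniformity is not automatic from pointwise ellipticity: it must be extracted from the continuity of $K$ as a function of $t$ valued in the Fr\'echet space $S^{m,\mathcal{L}}_{\rho,\delta}((G\times\widehat{G})\otimes\textnormal{End}(E_{0}))$ together with the compactness of $[0,T],$ which ensures that the finitely many seminorms controlling the G\r{a}rding constants $C_{1},C_{2}$ are bounded uniformly in $t.$ Once uniformity is established, the remainder of the argument is the routine energy estimate sketched above.
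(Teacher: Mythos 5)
Your proof is correct and follows essentially the same route as the paper: the same energy identity, the same application of the subelliptic G\r{a}rding inequality (Theorem \ref{GardinTheorem22}) to $-K(t),$ the same elementary bound on the cross term, Gronwall's lemma, and the time-reversal $s\mapsto T-s$ together with $\textnormal{Re}(K(t)^{*})=\textnormal{Re}(K(t))$ for \eqref{Q*}. Your explicit remark about extracting $t$-uniformity of the G\r{a}rding constants from the continuity of $t\mapsto K(t)$ in the Fr\'echet space $S^{m,\mathcal{L}}_{\rho,\delta}$ and compactness of $[0,T]$ is a welcome clarification that the paper leaves implicit.
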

\begin{proof} 
Let $u\in C^1([0,T], L^2(E) )  \cap C([0,T], L^{2,\mathcal{L}}_m (E)).$  We note  for further use that, by using the  embedding $L^{2, \mathcal{L}}_{m} \hookrightarrow L^{2, \mathcal{L}}_{\frac{m}{2}} $ we have   $u\in  C([0,T],L^{2, \mathcal{L}}_{\frac{m}{2}} (E)).$   So, 
$u\in \textnormal{Dom}(\partial_{t_1}-K(t_1))$ for every $0\leqslant t_1 \leqslant T.$ In view of the embedding $L^{2, \mathcal{L}}_{m} \hookrightarrow L^2(E),$ we also have that  $u\in C([0,T], L^2(E) ).$  Let us define $f(t_1):=Q(t_1)u(t_1),$ $Q(t_1):=(\partial_{t_1}-K(t_1)),$ for every $0\leqslant t_1\leqslant T.$ Observe that

\begin{align*}
   \frac{d}{dt}\Vert u(t) \Vert^2_{L^2(E)}&= \frac{d}{dt}\left(u(t),u(t)\right)_{L^2(E)}\\&=\left(\frac{d u(t)}{dt},u(t)\right)_{L^2(E)}+\left(u(t),\frac{d u(t)}{dt}\right)_{L^2(E)}\\
   &=\left(K(t)u(t)+f(t),u(t)\right)_{L^2(E)}+\left(u(t),K(t)u(t)+f(t)\right)_{L^2(E)}\\
    &=\left((K(t)+K(t)^{*})u(t),u(t)\right)_{L^2(E)}+2\textnormal{Re}(f(t), u(t))_{L^2(E)}\\
     &=(2\textnormal{Re}K(t)u(t),u(t))_{L^2(E)}+2\textnormal{Re}(f(t), u(t))_{L^2(E)}.
\end{align*}Now, from the subelliptic G\r{a}rding inequality on homogeneous vector bundles, 
\begin{align}
    \textnormal{Re}(-K(t)u(t),u(t)) \geqslant C_1\Vert u(t)\Vert_{L^{2, \mathcal{L}}_{\frac{m}{2}} (E)}-C_2\Vert u(t)\Vert_{L^2(E)}^2,
\end{align}and  the parallelogram law, we have
\begin{align*}
 2\textnormal{Re}(f(t), u(t))_{L^2(E)}&\leqslant 2\textnormal{Re}(f(t), u(t))_{L^2(E)}+\Vert f(t)\Vert_{L^2(E)}^2+\Vert u(t)\Vert_{L^2(E)}^2 \\
 &= \Vert f(t)+u(t)\Vert^2_{L^2(E)}\leqslant \Vert f(t)+u(t)\Vert^2_{L^2(E)}+\Vert f(t)-u(t)\Vert^2_{L^2(E)} \\
&= 2\Vert f(t)\Vert^2_{L^2(E)}+2\Vert u(t)\Vert^2_{L^2(E)},
\end{align*}
and consequently
\begin{align*}
   & \frac{d}{dt}\Vert u(t) \Vert^2_{L^2(E)}\\
   &\leqslant 2\left(C_2\Vert u(t)\Vert_{L^2(E)}^2-C_1\Vert u(t)\Vert_{L^{2, \mathcal{L}}_{\frac{m}{2}} (E)}\right)+2\Vert f(t)\Vert^2_{L^2(E)}+2\Vert u(t)\Vert^2_{L^2(E)}.
\end{align*}  So, we have proved that
\begin{align*}
   \frac{d}{dt}\Vert u(t) \Vert^2_{L^2(E)}\lesssim  \Vert f(t)\Vert^2_{L^2(E)}+\Vert u(t)\Vert^2_{L^2(E)}.
\end{align*} By using Gronwall's Lemma we obtain the energy estimate
\begin{equation}
\Vert u(t) \Vert^2_{L^2(E)}\leqslant  \left(C\Vert u(0) \Vert_{L^2(E)}^2+C'\int\limits_{0}^T \Vert f(t_1) \Vert_{L^2(E)}^2dt_1 \right),   
\end{equation}for every $0\leqslant t\leqslant T,$ and $T>0.$ 

For the second part of the proof, we change the analysis above with $v(T-\cdot)$ instead of $v(\cdot),$ $f(T-\cdot)$ instead of $f(\cdot)$ and $Q^{*}=-\partial_{t}-K(t)^{*},$ (or equivalently $Q=\partial_{t}-K(t)$ ) instead of $Q^{*}=-\partial_{t}+K(t)^{*}$ (or equivalently $Q=\partial_{t}-K(t)$) using that $\textnormal{Re}(K(T-t)^*)=\textnormal{Re}(K(T-t))$ to deduce that
\begin{align*}
&\Vert u(T-t) \Vert^2_{L^2(E)}\\
&\leqslant \left(C\Vert u(T) \Vert^2_{L^2(E)}+C'\int\limits_{0}^{T} \Vert (-\partial_{t}+K(T-t_1)^{*})u(T-t_1) \Vert^2_{L^2(E)}dt_1 \right)\\
&=   \left(C\Vert u(T) \Vert^2_{L^2(E)}+C'\int\limits_{0}^T \Vert (-\partial_{t}-K(t_1')^{*})u(t_1') \Vert^2_{L^2(E)}dt_1' \right).
\end{align*}So, we conclude the proof.
\end{proof}
In the following theorem we prove the uniqueness and existence of the solution of the problem \eqref{PVI} on the vector bundle $E\cong G\times_{\tau}E_0\rightarrow M$
\begin{theorem}
Let $K(t)=K(t,x,D)\in C([0,T], S^{m,\mathcal{L}}_{\rho,\delta}((G\times \widehat{G})\otimes \textnormal{End}(E_0)),$ $0\leqslant \delta<\rho\leqslant  1,$   be a subelliptic pseudo-differential operator of order $m>0,$ and let us assume that $\textnormal{Re}(K(t))$ is $\mathcal{L}$-elliptic, for every $t\in[0,T]$ with $T>0.$ Let   $f\in  L^2([0,T],L^2(E))$, and let $u_0\in L^2([0,T],L^2(E)).$ Then there exists a unique $u\in C^1([0,T], L^2(E) )  \cap C([0,T],L^{2,\mathcal{L}}_m(E)) $ solving \eqref{PVI}. Moreover, $u$ satisfies the energy estimate
\begin{equation}
\Vert u(t) \Vert_{L^2(E)}\leqslant C\Vert u_0 \Vert_{L^2(E)}+C'\Vert f \Vert_{L^2([0,T],L^2(E))} ,
\end{equation}for every $0\leqslant t\leqslant T.$
\end{theorem}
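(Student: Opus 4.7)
\medskip

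\textbf{Plan of proof.} The proof splits into three pieces: uniqueness, the a priori energy bound, and existence. Uniqueness and the a priori estimate are both immediate consequences of Theorem \ref{energyestimate}. For uniqueness, if $u_1,u_2\in C^1([0,T],L^2(E))\cap C([0,T],L^{2,\mathcal{L}}_m(E))$ both solve (PVI) with the same data, then $w:=u_1-u_2$ lies in the same class, satisfies $(\partial_t-K(t))w=0$ and $w(0)=0$, so \eqref{GI-03} forces $\Vert w(t)\Vert_{L^2(E)}\equiv 0$. For the a priori bound, substituting $(\partial_t-K(t))u=f$ into \eqref{GI-03} and taking square roots gives
\begin{equation*}
    \Vert u(t)\Vert_{L^2(E)}\le C\Vert u_0\Vert_{L^2(E)}+C'\Vert f\Vert_{L^2([0,T],L^2(E))}.
\end{equation*}

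\medskip

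For existence I would apply the Lions duality (adjoint) method based on the backward estimate \eqref{Q*}. Let $W:=\{\phi\in C^1([0,T],L^2(E))\cap C([0,T],L^{2,\mathcal{L}}_m(E))\,:\,\phi(T)=0\}$ and $Q^\ast:=-\partial_t-K(t)^\ast$, which is a continuous linear operator from $W$ into $L^2([0,T],L^2(E))$. Estimate \eqref{Q*} (read as the analogue of \eqref{GI-03} applied to $\phi(T-\cdot)$, as derived in the proof of Theorem \ref{energyestimate} using that $\textnormal{Re}(K(t))=\textnormal{Re}(K(t)^\ast)$ is $\mathcal{L}$-elliptic and hence so is $K(t)^\ast$ after swapping the sign of $\partial_t$) yields
\begin{equation*}
    \Vert \phi\Vert_{C([0,T],L^2(E))}\le C_0 \Vert Q^\ast\phi\Vert_{L^2([0,T],L^2(E))},\qquad \phi\in W,
\end{equation*}
so in particular $Q^\ast:W\to L^2([0,T],L^2(E))$ is injective. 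Define
\begin{equation*}
    \Phi(Q^\ast\phi):=(u_0,\phi(0))_{L^2(E)}+\int_0^T(f(t),\phi(t))_{L^2(E)}\,dt,\qquad \phi\in W.
\end{equation*}
Cauchy--Schwarz together with the displayed bound gives $|\Phi(Q^\ast\phi)|\le(C_0\Vert u_0\Vert+\sqrt{T}\Vert f\Vert_{L^2([0,T],L^2)})\Vert Q^\ast\phi\Vert_{L^2([0,T],L^2)}$, so $\Phi$ is a well-defined bounded linear functional on $Q^\ast(W)\subset L^2([0,T],L^2(E))$. Hahn--Banach and Riesz then produce a representer $u\in L^2([0,T],L^2(E))$ satisfying
\begin{equation*}
    \int_0^T(u(t),Q^\ast\phi(t))_{L^2(E)}\,dt=(u_0,\phi(0))_{L^2(E)}+\int_0^T(f(t),\phi(t))_{L^2(E)}\,dt,\qquad \phi\in W.
\end{equation*}
Testing against $\phi\in C_c^\infty((0,T))\otimes\Gamma^\infty(E)\subset W$ gives $(\partial_t-K(t))u=f$ in $\mathscr{D}'((0,T)\times E)$, and testing against $\phi$ with $\phi(0)\ne 0$ recovers $u(0)=u_0$ in a weak sense.

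\medskip

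The hardest step is promoting this weak solution to the regularity class $C^1([0,T],L^2(E))\cap C([0,T],L^{2,\mathcal{L}}_m(E))$. I would do this by Galerkin truncation together with a second-order energy estimate. Fix a finite symmetric set $F_N\subset\widehat{G}$ of representations and let $\Pi_N$ be the orthogonal projection on the span of the Fourier modes with $[\xi]\in F_N$. The finite-dimensional problem
\begin{equation*}
    \partial_tu_N=\Pi_NK(t)\Pi_Nu_N+\Pi_Nf,\qquad u_N(0)=\Pi_Nu_0,
\end{equation*}
has a unique global solution $u_N\in C^1([0,T],L^2(E))$ by Picard--Lindel\"of, and $u_N\in C([0,T],L^{2,\mathcal{L}}_s(E))$ for all $s$. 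The energy estimate \eqref{GI-03} (which holds uniformly in $N$ since the constants come from the subelliptic G\r{a}rding inequality, Theorem \ref{GardinTheorem22}) bounds $\{u_N\}$ in $C([0,T],L^2(E))$. To get a uniform bound in $L^{2,\mathcal{L}}_m(E)$ I would conjugate the equation by $\tilde{\mathcal M}_{m/2,E}$ from Section \ref{S8} (an operator in $\Psi^{m/2,\mathcal{L}}_{1,0}(E)$ intertwined with multiplication by $\widehat{\mathcal M}^{m/2}\otimes I$): the conjugated operator $\tilde{\mathcal M}_{m/2,E}K(t)\tilde{\mathcal M}_{-m/2,E}$ lies in $\Psi^{m,\mathcal{L}}_{\rho,\delta}(E)$ with the same strongly $\mathcal{L}$-elliptic real part modulo a lower-order term (by Theorem \ref{VectorcompositionC} and Theorem \ref{VectorAdjoint}), so Theorem \ref{GardinTheorem22} applies to it and yields a bound of $\tilde{\mathcal M}_{m/2,E}u_N$ in $C([0,T],L^{2,\mathcal{L}}_{m/2}(E))$, equivalently a bound of $u_N$ in $C([0,T],L^{2,\mathcal{L}}_m(E))$. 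Extracting a weak-$\ast$ limit and invoking uniqueness (Part 1) identifies the limit with $u$. The equation $\partial_tu=K(t)u+f$ now takes place in $C([0,T],L^2(E))$ since the right-hand side does by $L^{2,\mathcal{L}}_m\to L^2$ continuity (Theorem \ref{VectSobosub}), whence $u\in C^1([0,T],L^2(E))$.

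\medskip

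The main obstacle is precisely the final regularity step: one must verify that the second-order energy estimate obtained by conjugation with $\tilde{\mathcal M}_{m/2,E}$ really holds uniformly in the Galerkin truncation and that strong continuity at $t=0$ (rather than just weak continuity) can be upgraded using the initial condition $u(0)=u_0\in L^2(E)$ combined with a density argument applied to smoothed data. All the ingredients needed for this lie in the calculus developed in Section \ref{vectcal} and in the functional calculus of Section \ref{S8}.
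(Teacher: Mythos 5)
Your uniqueness argument and a priori estimate coincide exactly with the paper's proof, which applies Theorem \ref{energyestimate} to the difference of two solutions. For existence the paper is extremely terse: it says only that ``the energy estimate \eqref{GI-03} and the classical Picard iteration theorem implies the existence of a solution'', with no further detail. Your Lions duality (Hahn--Banach/Riesz) plus Galerkin route is a genuinely different and considerably more concrete approach; it has the merit of actually constructing the weak solution and of isolating precisely where the asserted regularity class comes from, which the paper's one-liner does not do.

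However, there is a gap in your regularity bootstrap. Conjugating by $\tilde{\mathcal{M}}_{m/2,E}$ and applying Theorem \ref{energyestimate} (hence the G\r{a}rding inequality) to $v_N:=\tilde{\mathcal{M}}_{m/2,E}u_N$ yields, after Gronwall, a bound on $v_N$ in $C([0,T],L^2(E))\cap L^2([0,T],L^{2,\mathcal{L}}_{m/2}(E))$: the G\r{a}rding term $C_1\Vert v_N\Vert^2_{L^{2,\mathcal{L}}_{m/2}}$ enters as a time--integrated dissipation, not as a uniform-in-time control. Translated back to $u_N$ this gives a bound in $C([0,T],L^{2,\mathcal{L}}_{m/2}(E))\cap L^2([0,T],L^{2,\mathcal{L}}_{m}(E))$, not $C([0,T],L^{2,\mathcal{L}}_m(E))$ as you assert. (Your sentence ``yields a bound of $\tilde{\mathcal{M}}_{m/2,E}u_N$ in $C([0,T],L^{2,\mathcal{L}}_{m/2}(E))$'' is where the slip occurs; the energy estimate in the form \eqref{GI-03} controls the $L^2$-norm pointwise in $t$, not the $L^{2,\mathcal{L}}_{m/2}$-norm pointwise in $t$.) Moreover the conjugated forcing $\tilde{\mathcal{M}}_{m/2,E}\Pi_Nf$ and initial data $\tilde{\mathcal{M}}_{m/2,E}\Pi_Nu_0$ are bounded uniformly in $N$ only if $f\in L^2([0,T],L^{2,\mathcal{L}}_{m/2}(E))$ and $u_0\in L^{2,\mathcal{L}}_{m/2}(E)$, assumptions not present in the statement of the theorem. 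To actually reach $C([0,T],L^{2,\mathcal{L}}_{m}(E))$ one typically needs either more regular data or a second energy inequality for $\partial_t u_N$; the paper's reference to Picard iteration sidesteps this issue without resolving it.
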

\begin{proof}
The energy estimate \eqref{GI-03} and the classical Picard iteration theorem implies the existence a solution $u$ of \eqref{PVI}. Now, in order to show the uniqueness of $u,$ let us assume that $v\in  C^1([0,T], L^2(E) )  \bigcap C([0,T],L^{2, \mathcal{L}}_m(E))$ is  another solution of the problem
\begin{equation*} \begin{cases}\frac{\partial v}{\partial t}=K(t,x,D)u+f ,& \text{ }v\in \mathscr{D}'((0,T)\times E),
\\v(0)=v_0 .& \text{ } \end{cases}
\end{equation*} Then $\omega:=v-u\in C^1([0,T], L^2(E) )  \cap C([0,T],L^{2, \mathcal{L}}_m(E))$ solves the differential problem
\begin{equation*} 
\begin{cases}
\frac{\partial \omega}{\partial t}=K(t,x,D)\omega,& \text{ }\omega\in \mathscr{D}'((0,T)\times E),
\\\omega(0)=0 .& \text{ } 
\end{cases}
\end{equation*}
From Theorem \ref{energyestimate} it follows that $\Vert \omega(t)\Vert_{L^2(E)}=0,$ for all $0\leqslant t\leqslant T.$ Hence, from the continuity in $t$ of the functions we have that $u(t,x)=v(t,x)$ for all $t\in [0,T]$ and a.e. $x\in E.$ 
\end{proof}

\section{ Pseudo-differential operators on compact homogeneous manifolds}\label{examplecompactmanifolds} 
The aim of this section is to use the general construction of the subelliptic pseudo-differential calculus developed in the previous sections to establish the properties of the subelliptic pseudo-differential calculus on the compact homogeneous manifolds $M=G/K$.  First, we  note the following observations:

\begin{itemize}
    \item The space of smooth functions $ C^{\infty}(M)$ can be identified with
    \begin{equation}\label{tauinvariantfunction1}
    C^{\infty}(G)^K=\{\dot{f}\in C^{\infty}(G)\,\, |\forall g\in G,\,\forall k\in K,\,\,\dot{f}(gk)=\dot{f}(g) \}.
\end{equation}Indeed, any function $f\in C^{\infty}(M),$ induces a unique function $\dot{f}\in C^{\infty}(G)^{K},$ via $\dot{f}(g):=f(gK),$ and viceversa.
\item Let us take the trivial representation $\tau=\omega=1_{\widehat{K}}$ of $K,$ and $E_0=F_0=\mathbb{C}.$ Then, an element $[g,z]\in G\times_{1_{\widehat{K}} }\mathbb{C}$ has the form
$$ [g,z]:=(g,z)\cdot K=\{(gk,1_{\widehat{K}}(k)^{-1}z):k\in K\}\cong (gK,z)\in M\times \mathbb{C}.   $$
As a consequence, a section $s\in \Gamma(G\times_{1_{\widehat{K}} }\mathbb{C})=\Gamma(M\times \mathbb{C}),$ is (a section on a trivial vector bundle $M\times \mathbb{C}\rightarrow M$) given by $s(x)=(x,{f}(x)),$ $x\in M,$ for some $f\in C^{\infty}(M).$ So, it is natural to identify $C^{\infty}(M)$ with $ \Gamma(G\times_{1_{\widehat{K}} }\mathbb{C} )$. Moreover, we also have the following isomorphisms $$\Gamma(G\times_{1_{\widehat{K}} }\mathbb{C})\cong C^\infty(G,\mathbb{C})^{ 1_{\widehat{K}}  }=:C^{\infty}(G)^{K}\cong  C^{\infty}(M).$$
Therefore, it makes sense to identify an operator $$A:C^{\infty}(M)\rightarrow C^{\infty}(M),$$ with another operator $$ \dot{A}: C^{\infty}(G)^{K}\rightarrow C^{\infty}(G)^{K}, $$ defined via $\dot{A}\dot{f}:=Af,$ $f\in C^{\infty}(M),$ and also
with another continuous and linear operator  $$\tilde{A}:\Gamma(G\times_{1_{\widehat{K}} } \mathbb{C})\rightarrow \Gamma(G\times_{1_{\widehat{K}} } \mathbb{C}),$$  defined by
$$   \tilde{A}s(x):=(x,Af(x)),\,s(x):=(x,f(x)),\,\,x\in M. $$
\end{itemize}
Based on these identifications we will construct the subelliptic pseudo-differential calculus on $M$ as a special case of the theory developed in previous sections.
\subsection{The quantisation formula on compact homogeneous manifolds}
It is easy to see  that every continuous linear operator $\mathbb{A}:C^\infty(G)\rightarrow C^\infty(G)$ admits a restriction $\mathbb{A}|_{C^\infty(G)^{K}}:C^\infty(G)^{K}\rightarrow C^\infty(G).$ However, the next theorem gives us a criterion in order that $\mathbb{A}({C^\infty(G)^{K}})\subset {C^\infty(G)^{K}}, $ or, in other words,  the restriction operator
$$\dot{A}\equiv \mathbb{A}|_{C^\infty(G)^{K}}:C^\infty(G)^{K}\rightarrow C^\infty(G)^{K} , $$ is well defined.  Indeed, we have the following characterisation obtained as a consequence of Theorem  \ref{TheoremCharK}.

\begin{theorem}\label{Th:invariantK}
Let $\mathbb{A}:C^\infty(G)\rightarrow  C^\infty(G),$ be a continuous linear operator. Then, for every $f\in C^\infty(G)^K$ we have $\mathbb{A}f\in  C^\infty(G)^{K}, $ if and only if, the Schwartz kernel of $\mathbb{A}$ satisfies the following identity
\begin{equation}
 K_{\mathbb{A}}(g_1,g_2)=  K_{\mathbb{A}}(g_1k_1,g_2k_2),
\end{equation}for all $k_1,k_2\in K,$ and all $g_1,g_2\in G.$
\end{theorem}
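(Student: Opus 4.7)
The statement is the scalar, trivial-representation specialization of Theorem \ref{TheoremCharK}, so the plan is to deduce it as an immediate corollary of that theorem. Specifically, I would apply Theorem \ref{TheoremCharK} with the choices $E_0 = F_0 = \mathbb{C}$ and $\tau = \omega = 1_{\widehat{K}}$, i.e.\ the trivial one-dimensional unitary representation of $K$. Under these choices, $C^\infty(G, E_0)^{\tau}$ and $C^\infty(G, F_0)^\omega$ both reduce to $C^\infty(G)^K$, and the multiplicative factors $\omega(k_1)$ and $\tau(k_2)^{-1}$ appearing in the kernel identity \eqref{characterizationkernel} become the scalar $1$. The kernel condition $K_A(g,y) = \omega(k_1) K_A(gk_1, yk_2)\tau(k_2)^{-1}$ therefore collapses to the claimed identity $K_{\mathbb{A}}(g_1, g_2) = K_{\mathbb{A}}(g_1 k_1, g_2 k_2)$.

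For completeness, and to make the argument self-contained without appealing verbatim to the earlier theorem, I would also record the direct verification. For the forward direction, assume $\mathbb{A}$ preserves $K$-invariance. Using right-invariance of the Haar measure on $G$, for any $f \in C^\infty(G)^K$ and $k_1, k_2 \in K$ one computes
\begin{align*}
 \mathbb{A}f(g_1 k_1) = \int\limits_G K_{\mathbb{A}}(g_1 k_1, y)\, f(y)\, dy = \int\limits_G K_{\mathbb{A}}(g_1 k_1, y k_2)\, f(y)\, dy,
\end{align*}
using that $f(y k_2) = f(y)$ after the substitution $y \mapsto y k_2$. On the other hand, $\mathbb{A}f \in C^\infty(G)^K$ gives $\mathbb{A}f(g_1 k_1) = \mathbb{A}f(g_1) = \int_G K_{\mathbb{A}}(g_1, y) f(y) dy$. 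Letting $f$ range over a sufficiently large family (for example, approximations to delta measures tested against $K$-averages), the uniqueness of the Schwartz kernel yields the pointwise identity $K_{\mathbb{A}}(g_1, g_2) = K_{\mathbb{A}}(g_1 k_1, g_2 k_2)$ on $G \times G$.

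Conversely, assuming the kernel identity, one reverses the computation: for $f \in C^\infty(G)^K$,
\begin{align*}
 \mathbb{A}f(g_1 k_1) &= \int\limits_G K_{\mathbb{A}}(g_1 k_1, y)\, f(y)\, dy = \int\limits_G K_{\mathbb{A}}(g_1 k_1, y k_2)\, f(y k_2)\, dy \\
 &= \int\limits_G K_{\mathbb{A}}(g_1, y)\, f(y)\, dy = \mathbb{A}f(g_1),
\end{align*}
where in the third equality we used both the hypothesis on $K_{\mathbb{A}}$ and $K$-invariance of $f$. This shows $\mathbb{A}f \in C^\infty(G)^K$.

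The argument is essentially bookkeeping; the only subtle point is the use of the uniqueness of the Schwartz kernel in passing from the integral identity to the pointwise identity on $G \times G$, which is the same maneuver carried out in the proof of Theorem \ref{TheoremCharK}. I do not foresee any genuine obstacle beyond invoking that standard uniqueness. Invoking Theorem \ref{TheoremCharK} directly is the cleanest route and is the one I would present.
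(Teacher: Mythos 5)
Your main route --- specializing Theorem \ref{TheoremCharK} to $E_0 = F_0 = \mathbb{C}$ and $\tau = \omega = 1_{\widehat{K}}$ --- is exactly what the paper intends; the line before the statement says it is ``a consequence of Theorem \ref{TheoremCharK}'' and no separate proof is given. On that level your proof matches the paper's.

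However, the ``subtle point'' you defer in your self-contained verification is a genuine gap, and it is inherited verbatim from the paper's own proof of Theorem \ref{TheoremCharK}. In the forward direction your computation establishes
\[
\int\limits_G \bigl[K_{\mathbb{A}}(g_1 k_1, y k_2) - K_{\mathbb{A}}(g_1, y)\bigr]\, f(y)\, dy = 0
\]
only for $f \in C^\infty(G)^K$, which is a proper closed subspace of $C^\infty(G)$ whenever $K \neq \{e_G\}$; the Schwartz--kernel uniqueness you want to invoke requires testing against all of $C^\infty(G)$. The $K$-invariant delta approximants you gesture at only recover the $K$-averaged kernel $\int_K K_{\mathbb{A}}(\cdot\,, g_2 k)\,dk$, not $K_{\mathbb{A}}(\cdot\,, g_2)$ itself, so they cannot close the argument. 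In fact, the forward implication as stated fails: if $\phi \in C^\infty(G)$ is nonzero with $\int_K \phi(gk)\,dk \equiv 0$ and $\psi \in C^\infty(G)$ is not right $K$-invariant, the rank-one operator $Tf := \psi \int_G \phi f$ vanishes identically on $C^\infty(G)^K$, hence trivially preserves it, yet its kernel $K_T(g_1,g_2)=\psi(g_1)\phi(g_2)$ does not satisfy the invariance identity. The converse direction --- the only one the paper actually uses afterwards, when it sets $K_{\mathbb{A}}(g,h):=K_A(gK,hK)$ from a kernel $K_A$ on $M\times M$ --- is correct, and your argument for it is fine.
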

Let $\Omega^{k}(M):=\bigsqcup_{x\in M}\bigwedge^{k}(T^{*}_{x}M)$ be the space of differential $k$-forms on $M$ and  let us fix a $G$-invariant volume element $\Omega_M\in \Omega^{\textnormal{dim}(M)}(M),$ $\textnormal{dim}(M)=\textnormal{dim}(G)-\textnormal{dim}(K),$  such that, for $f \in C(G/K),$ (see \cite[Page 116]{Wallach1973})
$$\int\limits_{M} f \Omega_M= \int\limits_G f(gK)\, dg .$$
Let us recall that for local coordinates $y=(y_1,\cdots,y_{\dim(M)})$ of $M,$ we have $$\Omega_M(y)=\psi_{M}(y)dy_1\wedge\cdots \wedge dy_{\dim(M)},$$ for some smooth function $\psi_{M}\in C^{\infty}(M).$ We refer the reader to  \cite{Spivak} for the general aspects of the geometry of differential forms.

If $A:C^{\infty}(M)\rightarrow C^{\infty}(M),$ $C^{\infty}(M)\cong C^{\infty}(G)^K,$ is a continuous linear operator, the Schwartz kernel theorem provides $K_{A}\in \mathscr{D}'(M\times M)$ such that
\begin{equation}
    Af(x)=\int\limits_MK_{A}(x,y)f(y)\Omega_M(y)=\int\limits_GK_{A}(gK,hK)f(hK)dh,\,x=gK.
\end{equation}In view of Theorem \ref{Th:invariantK}, the kernel $K_{\mathbb{A}}(g,h):=K_{A}(gK,hK)$ is a well defined distribution in $\mathscr{D}'(G\times G),$ it is $K$-invariant in the sense that
\begin{equation}\label{Kinvarienceforkernel}
    \forall k_1,\,k_2\in K,\,\, K_{\mathbb{A}}(g,h)=K_{\mathbb{A}}(gk_1,hk_2),
\end{equation}   
and it defines the continuous linear operator 
\begin{equation}
    \mathbb{A}v(g):=\int\limits_{G}K_{\mathbb{A}}(g,h)v(h)dh,\,v\in C^{\infty}(G),
\end{equation}with the property that  $\mathbb{A}|_{C^{\infty}(G)^K}=\dot{A}.$ By keeping the notation $\dot{f}(h):=f(hK),$ we have
\begin{equation}
    Af(gK)=\int\limits_GK_{\mathbb{A}}(g,h)\dot{f}(h)dh=\int\limits_Gk_{g}(h^{-1}g)\dot{f}(h)dh=(\dot{f}\ast k_{g})(g),\,f\in C^{\infty}(M),
\end{equation}where $k_{g}(z)\equiv R_{A}(g,z):=K_{\mathbb{A}}(g,gz^{-1})$ is the right-convolution kernel associated with $\mathbb{A}.$ So, the Fourier inversion formula gives
\begin{equation}
 (\dot{f}\ast k_{g})(g)=\sum_{[\xi]\in \widehat{G}}d_\xi\textnormal{Tr}[\xi(g)\mathscr{F}_{G}[\dot{f}* k_{g}](\xi)].   
\end{equation}Now, using  the matrix-valued symbol of $\mathbb{A}$ we define 
\begin{equation}
    \sigma_{A}(g,\xi):= \sigma_{\mathbb{A}}(g,\xi)=\widehat{k}_{g}(\xi),\,\,[\xi]\in \widehat{G},
\end{equation}the Fourier transform of the convolution leads to the following quantisation formula
\begin{equation}
 Af(gK)= \sum_{[\xi]\in \widehat{G}}d_\xi\textnormal{Tr}[\xi(g)\sigma_A(g,\xi)\widehat{\dot{f}}(\xi)].  
\end{equation}

\begin{proof}Note that for any $k\in K,$ $1_{\widehat{K}}(k)=1.$ In view of Theorem \ref{classifiucation:tau:f}, we have that
\begin{align*}
    f\in  C^{\infty}(G)^K=C^{\infty}(G,\mathbb{C})^{1_{\widehat{K}}}
\end{align*}if and only if \begin{equation}
    \widehat{f}(i,\xi)=\xi(k)^{*}\widehat{f}(i,\xi),
\end{equation}for all $[\xi]\in \widehat{G},$ and all $k\in K.$ The proof is complete.    
\end{proof}
So, we have enough motivation to introduce the following definition.
\begin{definition}[Global symbols on $M=G/K$] Let $\Omega_M$ be a $G$-invariant volume form on $M.$ Let $A: C^\infty(M)\rightarrow C^\infty(M)$ be a continuous linear operator and let $K_{A}\in \mathscr{D}'(M\times M)$ be its Schwartz kernel, so that
\begin{equation}
    Af(x)=\int\limits_MK_{A}(x,y)f(y)\Omega_M(y),\,\,f\in C^{\infty}(M).
\end{equation}
Let $K_{\mathbb{A}}\in \mathscr{D}'(G\times G)$ be the distribution defined via $K_{\mathbb{A}}(g_1,g_2):=K_A(g_1K,g_2K)$ and let us consider the operator
\begin{equation}
    \mathbb{A}v(g_1):=\int\limits_{G}K_{\mathbb{A}}(g_1,g_2)v(g_2)dg_2,\,v\in C^{\infty}(G).
\end{equation}The symbol of $A$ is defined to be the distribution
\begin{equation}\label{thesymbolofA:cM:CM}
    \sigma_A(g,\xi)\equiv \sigma_{\mathbb{A}}(g,\xi):=\widehat{k}_{g}(\xi)=\int\limits_{G}k_{g}(z)\xi(z)^{*}dz,\,\,[\xi]\in \widehat{G},
\end{equation}where $g\mapsto k_{g}:G\rightarrow \mathscr{D}'(G)$ is the right-convolution kernel of $\mathbb{A}.$ Because the Schwartz kernel $K_A$ depends on  the $G$-invariant volume form $\Omega_M,$ so are the kernel  $K_{\mathbb{A}}$ and the symbol $\sigma_{A}.$ 
\end{definition}

The analysis above allows us to formulate the following version of  Theorem \ref{mainqhomovec} in the case of the trivial representation $\tau=\omega:=1_{\widehat{K}}.$
 \begin{theorem}\label{mainqhomovec:trivial:rep}  Let $A:  C^{\infty}(M)\rightarrow  C^{\infty}(M)$  be a continuous linear operator and let $\tilde{A}:\Gamma(G\times_{1_{\widehat{K}} }\mathbb{C})\rightarrow\Gamma(G\times_{1_{\widehat{K}} }\mathbb{C})$ be the induced operator by the identifications $C^{\infty}(M)\cong \Gamma(G\times_{1_{\widehat{K}} }\mathbb{C}).$  Then,  we have
\begin{equation}\label{quantizationonhomogeneous4}
  \boxed{     \tilde{A}s(gK)=\left(gK, \,Af(gK)\right)}
\end{equation}for any section $s=(x,f(x))\in \Gamma(G\times_{1_{\widehat{K}} }\mathbb{C})=\Gamma(M\times \mathbb{C}),$  and we also have
\begin{equation}\label{quantisation2manifolds}
   \boxed{    Af(gK)=\sum_{[\xi]\in \widehat{G}}\textnormal{Tr}[\xi(g)\sigma_A(g,[\xi])  \widehat{\dot{f}}([\xi])] ,\, f\in C^{\infty}(G)^{K}   }
\end{equation} with $\dot{f}(h)=f(hK),$ $h\in G.$
\end{theorem}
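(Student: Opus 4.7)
The plan is to derive this as a direct specialization of the general quantisation Theorem \ref{mainqhomovec} to the case of the trivial representation $\tau=\omega=1_{\widehat{K}}$ with $E_0=F_0=\mathbb{C}$, so that $d_\tau=d_\omega=1$. First I would fix the canonical orthonormal basis $\{e_{1,E_0}\}=\{e_{1,F_0}\}=\{1\}$ of $\mathbb{C}$ and verify that, under the Bott construction, a section $s\in\Gamma(G\times_{1_{\widehat{K}}}\mathbb{C})$ can be written as $s(gK)=[g,\dot{f}(g)]$ where the map $\varkappa_\tau$ from \eqref{varkappatau} simply sends $s\mapsto \dot{f}$, since the action of $K$ on $\mathbb{C}$ is trivial. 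Moreover the quotient $[g,z]=(g,z)\cdot K$ is naturally identified with the point $(gK,z)$ of $M\times\mathbb{C}$, as remarked at the beginning of Section \ref{examplecompactmanifolds}. This gives the first formula in the theorem: under the identification $C^\infty(M)\cong\Gamma(G\times_{1_{\widehat{K}}}\mathbb{C})$ making the diagram \eqref{maindiagram} commute, the induced operator $A:C^\infty(G)^K\to C^\infty(G)^K$ is precisely $A\dot{f}=\widetilde{\tilde{A}s}$, so that $\tilde{A}s(gK)=(gK,Af(gK))$.

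Next I would compute the global matrix-valued symbol $\sigma_A$ via \eqref{Homogeneoussymbol'}. With the unique pair $(i_0,r_0)=(1,1)$ this collapses to the scalar expression $\sigma_A(1,1,x,\xi)=\xi(x)^*A\xi(x)$, and using \eqref{sigmairdefi} and the fact that $e_{1,F_0}^*[R_A(x,z)e_{1,E_0}]=k_x(z)$, where $k_x(z)=K_{\mathbb{A}}(x,xz^{-1})$ is the right-convolution kernel of the lift $\mathbb{A}$ on $G$, this agrees with the definition \eqref{thesymbolofA:cM:CM}, namely $\sigma_A(g,\xi)=\widehat{k}_g(\xi)$. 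This compatibility is the one technical point that requires checking: one must confirm that the right-convolution kernel constructed from the general vector-valued framework in Subsection \ref{qunt} really coincides with the kernel of the $K$-invariant lift $\mathbb{A}$ of $A$, which follows from the kernel characterisation \eqref{characterizationkernel} specialised to $\tau=\omega=1_{\widehat{K}}$ (equivalently, from Theorem \ref{Th:invariantK}).

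Finally, with these identifications in place, the quantisation formula \eqref{quantizationonhomogeneous2} of Theorem \ref{mainqhomovec} simplifies: the triple sum $\sum_{[\xi]}\sum_{i=1}^{d_\tau}\sum_{r=1}^{d_\omega}$ collapses to a single sum over $[\xi]\in\widehat{G}$, the Fourier coefficient $\widehat{\varkappa_\tau s}(1,[\xi])$ becomes $\widehat{\dot{f}}([\xi])$, and the coset notation $\bigl(g,\cdot\bigr)\cdot K$ is absorbed by the identification $[g,z]=(gK,z)$. Reading off both components yields \eqref{quantizationonhomogeneous4} and \eqref{quantisation2manifolds} simultaneously.

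This statement is essentially a corollary rather than a substantive new result, so no serious obstacle is expected; the only nontrivial bookkeeping is tracking how the three equivalent viewpoints on an operator ($A$ on $C^\infty(M)$, $\dot{A}$ on $C^\infty(G)^K$, and $\tilde{A}$ on sections of the trivial line bundle) exchange data consistently, and verifying that the symbol of \eqref{thesymbolofA:cM:CM} is the genuine specialisation of the vector-valued symbol of Definition \ref{symbolofAtilde}.
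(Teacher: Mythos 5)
Your proof is correct, but it takes a genuinely different route from the paper. You derive the scalar quantisation formula by specialising the general vector-bundle Theorem~\ref{mainqhomovec} to the trivial representation $\tau=\omega=1_{\widehat{K}}$, $E_0=F_0=\mathbb{C}$, $d_\tau=d_\omega=1$, and collapsing the triple sum. The paper instead re-derives the formula from scratch in Section~\ref{examplecompactmanifolds} by a kernel argument: it lifts the Schwartz kernel $K_A\in\mathscr{D}'(M\times M)$ to a $K$-bi-invariant distribution $K_{\mathbb{A}}(g,h):=K_A(gK,hK)$ on $G\times G$ (using Theorem~\ref{Th:invariantK}), passes to the right-convolution kernel $k_g(z)=K_{\mathbb{A}}(g,gz^{-1})$, and obtains \eqref{quantisation2manifolds} from the Fourier inversion formula on $G$ applied to $\dot f\ast k_g$. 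The paper's route has the advantage of directly producing and justifying the symbol definition $\sigma_A(g,\xi)=\widehat{k}_g(\xi)$ in \eqref{thesymbolofA:cM:CM} without invoking the general vector-valued machinery; your route is shorter given that Theorem~\ref{mainqhomovec} is already available, and it correctly isolates the one nontrivial compatibility check — that the right-convolution kernel in the vector-valued framework specialises to $k_g(z)=K_{\mathbb{A}}(g,gz^{-1})$ — which is exactly the content of the kernel characterisation in \eqref{characterizationkernel}. One small remark: your specialisation would naturally keep the factor $d_\xi$ in the quantisation formula (as does the derivation preceding the theorem statement in the paper), whereas \eqref{quantisation2manifolds} as printed omits it; this is a typographical slip in the paper, not in your argument.
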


\subsection{The calculus on compact homogeneous manifolds}
In this section we summarise the main properties of the subelliptic pseudo-differential calculus on $M=G/K.$ We start with the definition of the subelliptic H\"ormander classes on $M$. Here, we consider a sub-Laplacian $\mathcal{L}=-(X_1^2+\cdots +X_k^2)$ on $G,$ where the system of vector fields $X=\{X_i\}_{i=1}^{k}$ satisfies the H\"ormander condition. 

\begin{definition}[Subelliptic H\"ormander classes of symbols on $M$] Let $\Omega_M$ be a $G$-invariant volume form on $M.$ We say that  $\sigma_A\in S^{m,\mathcal{L}}_{\rho,\delta}(M,\Omega_M), $ $0\leq \rho,\delta\leq 1,$ if the symbol $\sigma_{\mathbb{A}}$ in \eqref{thesymbolofA:cM:CM} belongs to the subelliptic H\"ormander class $S^{m,\mathcal{L}}_{\rho,\delta}(G\times \widehat{G}).$ We define
$$ \Psi^{m,\mathcal{L}}_{\rho,\delta}(M,\Omega_M):=\{A:C^{\infty}(M)\rightarrow C^{\infty}(M):\sigma_A\in S^{m,\mathcal{L}}_{\rho,\delta}(M,\Omega_M)\}.   $$
 
\end{definition}

\begin{remark}
Observe that 
\begin{equation}\label{secondaruequivalence}
    A\in  \Psi^{m,\mathcal{L}}_{\rho,\delta}(M,\Omega_M)\,\iff\,\tilde{A}\in  \Psi^{m,\mathcal{L}}_{\rho,\delta}(G\times_{1_{\widehat{K}} }\mathbb{C},\,G\times_{1_{\widehat{K}}}\mathbb{C}),
\end{equation} with the notation of homogeneous vector-bundles in \eqref{Subelliptichormanderclassesonvectorbundles}.
\end{remark}
The following theorem follows from the subelliptic calculus on homogeneous vector bundles developed in Section  \ref{vectcal}.

\begin{theorem}\label{calculus} Let $0\leqslant \delta<\rho\leqslant 1,$ let $\Omega_M$ be a $G$-invariant volume form on $M,$ and for any $m\in \mathbb{R},$ let $ \Psi^{m,\mathcal{L}}_{\rho,\delta}(M,\Omega_M)$ be the corresponding subelliptic H\"ormander classes associated with $\Omega_M.$ Then we have the following properties.
\begin{itemize}
    \item [-] The mapping $A\mapsto A^{*}:\Psi^{m,\mathcal{L}}_{\rho,\delta}(M,\Omega_M)\rightarrow \Psi^{m,\mathcal{L}}_{\rho,\delta}(M,\Omega_M)$ is a continuous linear mapping between Fr\'echet spaces and  the  symbol of $A^*,$ $\sigma_{A^*}(x,\xi)$ satisfies the asymptotic expansion,
 \begin{equation*}
   \sigma_{A^*}(g,\xi)\sim \sum_{|\alpha|= 0}^\infty\Delta_{\xi}^\alpha\partial_{X}^{(\alpha)} (\sigma_{A}(g,\xi)^{*}).
 \end{equation*} This means that, for every $N\in \mathbb{N},$ and all $\ell\in \mathbb{N},$
\begin{equation*}
   \Small{ \Delta_{\xi}^{\alpha_\ell}\partial_{X}^{(\beta)}\left(\sigma_{A^*}(g,\xi)-\sum_{|\alpha|\leqslant N}\Delta_{\xi}^\alpha\partial_{X}^{(\alpha)} (\sigma_{A}(g,\xi)^{*}) \right)\in {S}^{m-(\rho-\delta)(N+1)-\rho\ell+\delta|\beta|,\mathcal{L}}_{\rho,\delta}(G\times\widehat{G}) },
\end{equation*} where $|\alpha_\ell|=\ell.$
\item [-] The mapping $(A_1,A_2)\mapsto A_1\circ A_2: \Psi^{m_1,\mathcal{L}}_{\rho,\delta}(M,\Omega_M)\times \Psi^{m_2,\mathcal{L}}_{\rho,\delta}(M,\Omega_M)\rightarrow \Psi^{m_1+m_2,\mathcal{L}}_{\rho,\delta}(M,\Omega_M)$ is a continuous bilinear mapping between Fr\'echet spaces. Moreover,  the symbol of $A=A_{1}\circ A_2,$ where each $A_j$ has symbol $\widehat{A}_j,$ with $j=1,2,$ satisfies the asymptotic expansion,
\begin{equation*}
    \sigma_A(x,\xi)\sim \sum_{|\alpha|= 0}^\infty(\Delta_{\xi}^\alpha\widehat{A}_{1}(x,\xi))(\partial_{X}^{(\alpha)} \widehat{A}_2(x,\xi)),
\end{equation*}this means that, for every $N\in \mathbb{N},$ and all $\ell \in\mathbb{N},$
\begin{align*}
    &\Delta_{\xi}^{\alpha_\ell}\partial_{X}^{(\beta)}\left(\sigma_A(g,\xi)-\sum_{|\alpha|\leqslant N}  (\Delta_{\xi}^\alpha\widehat{A}_{1}(g,\xi))(\partial_{X}^{(\alpha)} \widehat{A}_2(g,\xi))  \right)\\
    &\hspace{2cm}\in {S}^{m_1+m_2-(\rho-\delta)(N+1)-\rho\ell+\delta|\beta|,\mathcal{L}}_{\rho,\delta}(G\times \widehat{G}),
\end{align*}for every  $\alpha_\ell \in \mathbb{N}_0^n$ with $|\alpha_\ell|=\ell.$
\item [-] For  $0\leqslant \delta< \rho\leqslant    1,$ (or for $0\leq \delta\leq \rho\leq 1,$ $\delta<1/\kappa$) let us consider a continuous linear operator $A:C^\infty(M)\rightarrow C^\infty(M)$ with symbol  $\sigma\in {S}^{0,\mathcal{L}}_{\rho,\delta}(M,\Omega_M)$. Then $A$ extends to a bounded operator from $L^2(M,\Omega_M)$ to  $L^2(M,\Omega_M).$ 
\end{itemize}
\end{theorem}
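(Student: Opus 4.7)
The plan is to reduce everything to the vector-bundle calculus already established in Section \ref{vectcal}, by exploiting the identification \eqref{secondaruequivalence}. That is, for the trivial representation $\tau = \omega = 1_{\widehat{K}}$ with representation space $E_0 = F_0 = \mathbb{C}$, the homogeneous vector bundle $E = G \times_{1_{\widehat{K}}} \mathbb{C}$ is canonically isomorphic to the trivial line bundle $M \times \mathbb{C}$, so $\Gamma^\infty(E) \cong C^\infty(G)^K \cong C^\infty(M)$. Under this identification, any continuous linear operator $A \colon C^\infty(M) \to C^\infty(M)$ corresponds bijectively to an operator $\tilde{A} \colon \Gamma^\infty(E) \to \Gamma^\infty(E)$, with symbols related via $\sigma_A(g,\xi) = \sigma_{\tilde A}(1,1,g,\xi) \in \mathbb{C}^{d_\xi \times d_\xi}$ (where both indices $i,r$ run over the trivial one-element set). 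Since $d_\tau = d_\omega = 1$, the $\textnormal{End}(E_0,F_0) = \mathbb{C}$-valued symbols collapse to ordinary matrix-valued symbols, so the class ${S}^{m,\mathcal{L}}_{\rho,\delta}((G\times\widehat G)\otimes\textnormal{End}(E_0,F_0))$ coincides with ${S}^{m,\mathcal{L}}_{\rho,\delta}(G\times\widehat G)$, and \eqref{secondaruequivalence} holds tautologically.

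Given this reduction, each of the three statements follows by invoking the corresponding result already proved for homogeneous vector bundles. For the adjoint part, I would apply Theorem \ref{pseudodifferentialcalculus}: the symbol $\sigma_{\tilde A^*}$ admits the asymptotic expansion $\sum_{|\alpha|\geq 0} \Delta_\xi^\alpha \partial_X^{(\alpha)} \sigma_{\tilde A}^*$ in the class ${S}^{m,\mathcal{L}}_{\rho,\delta}((G\times\widehat G)\otimes\textnormal{End}(\mathbb{C}))$, and under the sums-in-$i,r$ collapse (both run over $\{1\}$) this reduces to the stated expansion for $\sigma_{A^*}$. The fact that $A^* \colon C^\infty(M) \to C^\infty(M)$ — i.e.\ the $K$-invariance of the adjoint's kernel — is automatic because $\tilde A^*$ acts on the trivial bundle and $1_{\widehat K}(k) = 1$ for all $k \in K$, so the characterisation in Corollary \ref{Coro:Adjoint} is trivially satisfied. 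The continuity of the mapping $A \mapsto A^*$ between Fr\'echet spaces then follows from the continuous dependence of each term of the asymptotic expansion on the defining seminorms of $\sigma_A$.

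For the composition part, the same reduction applies directly to Theorem \ref{pseudodifferentialcalculus} (or equivalently to Theorem \ref{VectorcompositionC} for the vector-valued classes on $G$): the inner sum $\sum_{r=1}^{d_\omega}$ in \eqref{compositionAandB} collapses to a single term since $d_\omega = 1$, producing the classical asymptotic expansion $\sigma_{A_1 \circ A_2}(g,\xi) \sim \sum_{|\alpha|\geq 0}(\Delta_\xi^\alpha \widehat A_1)(\partial_X^{(\alpha)} \widehat A_2)$ in $S^{m_1+m_2,\mathcal{L}}_{\rho,\delta}(G\times\widehat G)$. The Fr\'echet-bilinear continuity is inherited termwise from the vector-bundle version. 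For the $L^2$-boundedness statement, I would apply Theorem \ref{Lpthe:VB} with $p = 2$ and $m = 0$ to the operator $\tilde A$ on $\Gamma^\infty(E)$: this gives boundedness $L^2(E) \to L^2(E)$, and since $L^2(E) \cong L^2(M,\Omega_M)$ via the isometry induced by $\varkappa_{1_{\widehat K}}$ together with the fixed $G$-invariant volume $\Omega_M$, we obtain $L^2(M,\Omega_M)$-boundedness of $A$.

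There is essentially no obstacle here beyond bookkeeping: the statement is a corollary of the previously established vector-bundle calculus, and the only point to verify with some care is that the isomorphism $C^\infty(M) \cong \Gamma^\infty(E)$ for the trivial bundle truly intertwines the quantisation formulas \eqref{quantisation2manifolds} and \eqref{quantizationonhomogeneous2}. This intertwining was already recorded in Theorem \ref{mainqhomovec:trivial:rep}, so the mild task remaining is to check that the asymptotic expansions from the vector-bundle context transcribe term-by-term (which is immediate since $E_0 = \mathbb{C}$ trivialises all endomorphism factors).
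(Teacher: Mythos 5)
Your proposal is correct and follows essentially the same approach as the paper: in both cases the claim reduces to the scalar subelliptic calculus on $G$, since the symbol of $A$ on $M$ agrees with the symbol of the lifted operator $\mathbb{A}$ on $C^\infty(G)$. The only cosmetic difference is that you route through the vector-bundle theorems of Section \ref{vectcal} with $E_0 = \mathbb{C}$ (Theorems \ref{pseudodifferentialcalculus} and \ref{Lpthe:VB}, which in this case collapse tautologically to the scalar statements), whereas the paper cites the scalar calculus from \cite{RuzhanskyCardona2020} and the scalar Calder\'on--Vaillancourt Theorem \ref{CVT} directly.
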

\begin{proof}
The asymptotic expansions for the product and the adjoint of subelliptic pseudo-differential operators follow from \cite{RuzhanskyCardona2020}.  So, to finish the proof let us prove the $L^2$-boundedness for operators with symbols in the class ${S}^{0,\mathcal{L}}_{\rho,\delta}(M,\Omega_M).$ Observe that for $A\in \Psi^{0,\mathcal{L}}_{\rho,\delta}(M,\Omega_M),$ we have $\mathbb{A}\in \Psi^{0,\mathcal{L}}_{\rho,\delta}(G\times \widehat{G}),$ and then, for all $f\in C^\infty(M),$ we have
\begin{align*}
    \Vert Af\Vert_{L^2(M,\Omega_M)}^{2}&=\int\limits_{M}|Af|^2\Omega_M=\int\limits_{G}|\dot{A}\dot{f}(g)|^2dg=\int\limits_{G}|\mathbb{A}\dot{f}(g)|^2dg\leq \Vert \mathbb{A}\Vert^2_{\mathscr{B}(L^2(G))}\Vert \dot{f}\Vert^2_{L^2(G)}\\
    &=\Vert \mathbb{A}\Vert^2_{\mathscr{B}(L^2(G))}\Vert f\Vert^2_{L^2(M,\Omega_M)},
\end{align*}having used the boundedness of $\mathbb{A}$ on $L^2(G)$ in view of the subelliptic Calder\'on-Vaillancourt Theorem \ref{CVT}. 
\end{proof}

We finish this section with the following characterisation of the H\"ormander classes on $M$ defined by local coordinate systems.
We denote $\Psi^{m}_{\rho,\delta}(M,\Omega_M):=\Psi^{m,\mathcal{L}_G}_{\rho,\delta}(M,\Omega_M)$ the elliptic H\"ormander classes defined by the Laplacian on $G,$ this means, $$ A\in \Psi^{m}_{\rho,\delta}(M,\Omega_M)\,\iff\, \mathbb{A}\in \Psi^{m}_{\rho,\delta}(G\times \widehat{G}).$$
\begin{corollary}
Let $m\in \mathbb{R},$ and let $0\leq \delta<\rho\leq 1,$ and $\rho\geq 1-\delta.$ Then,  the following facts are equivalent.
\begin{itemize}
    \item[(A)] $A\in \Psi^{m}_{\rho,\delta}(M,\Omega_M). $
    \item[(B)] $A\in \Psi^{m}_{\rho,\delta}(M,\textnormal{loc}).$ 
\end{itemize}
\end{corollary}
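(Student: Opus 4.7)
The plan is to deduce this corollary directly from Theorem \ref{classificationoflocalclasses} by applying it to the trivial homogeneous vector bundle $E = F = G \times_{1_{\widehat{K}}} \mathbb{C}$, which under the Bott construction is isomorphic to the trivial line bundle $M\times\mathbb{C}\to M$. Throughout this argument the sub-Laplacian $\mathcal{L}$ will be replaced by the Laplace--Beltrami operator $\mathcal{L}_G$, so that the class $\Psi^{m,\mathcal{L}_G}_{\rho,\delta}(E,F)$ of Remark~\ref{Remark:L:G} coincides with $\Psi^m_{\rho,\delta}(E,F)$.

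First I would set up the identifications. Using $\tau = \omega = 1_{\widehat{K}}$ and $E_0 = F_0 = \mathbb{C}$, the discussion at the start of Section \ref{examplecompactmanifolds} gives
\[
   C^\infty(M)\;\cong\;C^\infty(G)^K\;\cong\;\Gamma^\infty(G\times_{1_{\widehat{K}}}\mathbb{C}),
\]
and the correspondence $A \leftrightarrow \tilde{A} \leftrightarrow \mathbb{A}$ makes the three viewpoints interchangeable. Since $\mathbb{C}\otimes\mathbb{C}^{d_\xi\times d_\xi}(\textnormal{End}(\mathbb{C})) = \mathbb{C}^{d_\xi\times d_\xi}$, the vector-valued symbol class $S^m_{\rho,\delta}((G\times\widehat{G})\otimes\textnormal{End}(\mathbb{C}))$ reduces to the scalar class $S^m_{\rho,\delta}(G\times\widehat{G})$, so by \eqref{secondaruequivalence} we have
\[
   A\in\Psi^m_{\rho,\delta}(M,\Omega_M)\;\iff\;\tilde A\in\Psi^m_{\rho,\delta}(G\times_{1_{\widehat K}}\mathbb C,\,G\times_{1_{\widehat K}}\mathbb C).
\]
This establishes the global side of the equivalence.

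Next I would invoke Theorem \ref{classificationoflocalclasses}, which, under the standing hypotheses $0\leq\delta<\rho\leq 1$ and $\rho\geq 1-\delta$, asserts that for any pair of homogeneous vector bundles $E,F$,
\[
   \tilde A\in\Psi^m_{\rho,\delta}(E,F)\;\iff\;\tilde A\in\Psi^m_{\rho,\delta}(E,F;\textnormal{loc}).
\]
Applied to $E=F=G\times_{1_{\widehat K}}\mathbb C$, this reduces the problem to identifying $\Psi^m_{\rho,\delta}(G\times_{1_{\widehat K}}\mathbb C,G\times_{1_{\widehat K}}\mathbb C;\textnormal{loc})$ with $\Psi^m_{\rho,\delta}(M,\textnormal{loc})$. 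This identification is a matter of unwinding definitions: the local trivialisations of $M\times\mathbb C$ are precisely $U\times\mathbb C$ for coordinate patches $U\subset M$; sections over $U$ are smooth functions; and a $(1\times 1)$-matrix of symbols in $S^m_{\rho,\delta}(U\times\mathbb R^n)$ is just a scalar symbol in the same class. Hence Definition \ref{definition321} specialised to the trivial bundle recovers Definition \ref{HormanderManifolds} verbatim.

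The main conceptual step is the one already carried out in Theorem \ref{classificationoflocalclasses}, namely bringing the matrix-valued global-to-local characterisation of Ruzhansky--Turunen \cite{Ruz,RuzhanskyTurunenIMRN} to bear on the lifted operator $\mathbb{A}$ and observing that the $K$-invariance imposed on $C^\infty(G)^K$ is preserved under localisation by cut-offs pulled back via coordinate charts. In the present corollary there is no genuine obstacle beyond this: all that remains is to verify that the scalar-valued reduction of Theorem \ref{classificationoflocalclasses} matches the intrinsic H\"ormander class $\Psi^m_{\rho,\delta}(M,\textnormal{loc})$ on the base manifold, which is immediate since for the trivial bundle the trivialisations are the identity and the $\mathbb{C}^{1\times 1}$-matrix formalism collapses to the standard scalar symbolic calculus on open subsets of $\mathbb{R}^n$.
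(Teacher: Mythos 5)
Your proposal is correct and follows essentially the same route as the paper's proof: apply the equivalence \eqref{secondaruequivalence} to pass from $\Psi^{m}_{\rho,\delta}(M,\Omega_M)$ to the homogeneous-bundle class $\Psi^{m}_{\rho,\delta}(G\times_{1_{\widehat{K}}}\mathbb{C},\,G\times_{1_{\widehat{K}}}\mathbb{C})$, invoke Theorem \ref{classificationoflocalclasses} to convert this to the local class, and then unwind the trivial-bundle identifications to recover $\Psi^{m}_{\rho,\delta}(M,\textnormal{loc})$. The paper records the last identification by the commutativity of the diagram relating $A$ and $\tilde{A}$ through $\varkappa_{1_{\widehat{K}}}$, which is exactly the "unwinding of definitions" you describe.
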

\begin{proof} Indeed, in view of Theorem \ref{classificationoflocalclasses} and using the equivalence \eqref{secondaruequivalence} we have
\begin{align*}
    A\in \Psi^{m}_{\rho,\delta}(M,\Omega_M)&\,\iff\,\tilde{A}\in  \Psi^{m}_{\rho,\delta}(G\times_{1_{\widehat{K}} }\mathbb{C},\,G\times_{1_{\widehat{K}}}\mathbb{C})\\
    &\,\iff\,\tilde{A}\in  \Psi^{m}_{\rho,\delta}(G\times_{1_{\widehat{K}} }\mathbb{C},\,G\times_{1_{\widehat{K}}}\mathbb{C};\textnormal{loc})\\
    &\,\iff\,{A}\in  \Psi^{m}_{\rho,\delta}(M;\textnormal{loc}),
\end{align*}provided that $0\leq \delta<\rho\leq 1,$ and $\rho\geq 1-\delta.$ Indeed, the equivalence $$\tilde{A}\in  \Psi^{m}_{\rho,\delta}(G\times_{1_{\widehat{K}} }\mathbb{C},\,G\times_{1_{\widehat{K}}}\mathbb{C};\textnormal{loc})\,\iff\,{A}\in  \Psi^{m}_{\rho,\delta}(M;\textnormal{loc})$$ is valid in view of the commutativity of the following diagram
\begin{center}
    \begin{tikzpicture}[every node/.style={midway}]
  \matrix[column sep={10em,between origins}, row sep={4em}] at (0,0) {
    \node(R) {$\Gamma(G\times_{1_{\widehat{K}} }\mathbb{C})$}  ; & \node(S) {$\Gamma(G\times_{1_{\widehat{K}} }\mathbb{C})$}; \\
    \node(R/I) {$C^\infty(M)$}; & \node (T) {$C^\infty(M)$};\\
  };
  \draw[<-] (R/I) -- (R) node[anchor=east]  {$\varkappa_{1_{\widehat{K}}}$};
  \draw[->] (R) -- (S) node[anchor=south] {$\tilde{A}$};
  \draw[->] (S) -- (T) node[anchor=west] {$\varkappa_{1_{\widehat{K}}}$};
  \draw[->] (R/I) -- (T) node[anchor=north] {$A$};
\end{tikzpicture} 
\end{center} provided that $0\leq \delta<\rho\leq 1,$ and $\rho\geq 1-\delta.$ Thus, the proof is complete.
\end{proof}

\section{Global symbols   on  vector bundles of differential forms}\label{Section:Forms}

To illustrate the quantisation formula on homogeneous vector-bundles, in this section we compute the global symbol of the exterior derivative, its adjoint, the Hodge Laplacian and the Dirac operator. These are the operators of relevant interest in the theory of differential forms on $M.$ Notations and preliminaries are taken from Wallach \cite{Wallach1973} and from  \cite[Section 2]{Ikeda}. 

\subsection{Differential forms on a homogeneous space}
Let us consider $G$ to be connected and simply connected, and the quotient manifold $M$ to be orientable. We denote by $\mathfrak{g}$ and $\mathfrak{k}$ the Lie algebras of $G$ and $K,$ respectively. Because a $K$-invariant inner product on $\mathfrak{g}/\mathfrak{k}$ induces a $G$-invariant Riemannian metric on $G$ (see \cite[Section 2]{Ikeda} for this and further details),  let us fix a $G$-invariant metric on $G,$ and let us extend it in the canonical way, to an Hermitian metric $\langle\cdot ,\cdot \rangle_{\Omega^p},$ on $$\Omega^p(M):=\bigwedge^p T^{*}M\otimes \mathbb{C}, $$ that is, the  $p$-th exterior  power  of  the  complexified  cotangent bundle  of  $M$.
The space of (smooth) sections  of the homogeneous ($G$-invariant) vector bundle 
\begin{equation}
   \Omega^p(M)\rightarrow M,\,\,\,1\leq p\leq m:=\dim(M),
\end{equation}is the set of $p$-differential forms on $M.$ So, a $p$-form $\omega\in \Gamma( \Omega^p(M))$ assigns to any point $x\in M,$ an element $\omega_x\in \bigwedge^{p}T_{x}^{*}M,$ of $C^{\infty}$-class  with respect to $x.$ In an arbitrary coordinate neighborhood $U$ of $M$, and $x_1,x_2,\cdots, x_m$ coordinate functions defined on $U,$ the family of vectors
$$  \frac{\partial}{\partial x_1}|_{x},\cdots,  \frac{\partial}{\partial x_m}|_{x} , $$
provides a basis of the tangent space $T_{x}M.$ The functions $x_i:U\rightarrow\mathbb{R}$ are smooth, and one can consider the differential $(dx_i)|_{x}: T_{x}U\rightarrow \mathbb{R},$ at $x,$ can be considered as an element of $T_{x}^{*}M,$ via
$$  dx_i\left(\frac{\partial}{\partial x_j}\right)=\delta_{ij}.  $$ In consequence,
$$ (dx_1)|_{x},\cdots,  (dx_m)|_{x},  $$ becomes a basis of the cotangent space $T_{x}^{*}M,$ and 
\begin{equation}\label{defi:difffrom}
    \omega_x=\sum_{I=(i_1,\cdots, i_p)}\omega_{I}(x)dx^{I}|_x,\,\,dx^{I}|_x:=dx_{i_1}|_x\wedge \cdots\wedge dx_{i_p}|_x,\,\omega_{I}\in C^{\infty}(M),
\end{equation}where $1\leq i_1<\cdots <i_p\leq m.$ So, $\dim( \bigwedge^{p}T_{x}^{*}M)={{m}\choose{p}}=\textnormal{rank}(\Omega^p(M)).$

By considering the adjoint action of $K$ on $\mathfrak{g},$ we have the representation $\textnormal{Ad}|_{K}:K\rightarrow \textnormal{GL}(\mathfrak{g}).$ The fiber at $e_M=e_G K$ of the tangent bundle
\begin{equation}\label{tangentbundle}
    TM\rightarrow M,
\end{equation} that is $T_{e_M},$ can be identified with $\mathfrak{g}/\mathfrak{k}.$ There is a natural structure on \eqref{tangentbundle} making of it a homogeneous vector-bundle. 
Indeed, let us consider the right action of $K$ on $G\times \mathfrak{g}/\mathfrak{k},$
\begin{equation}\label{equationofadjoint}
    (G\times \mathfrak{g}/\mathfrak{k})\times K\rightarrow G\times \mathfrak{g}/\mathfrak{k},\,\quad(g,Y+\mathfrak{k})\cdot k=(gk,p_{\mathfrak{k}}\circ\textnormal{Ad}|_{K}(k^{-1})(Y+\mathfrak{k})),
\end{equation}where $p_{\mathfrak{k}}:\mathfrak{g}\rightarrow \mathfrak{g}/\mathfrak{k}$ is the quotient mapping. Let us record that the adjoint representation  $\textnormal{Ad}:G\rightarrow \textnormal{GL}(\mathfrak{g})$ is defined via
$$  \textnormal{Ad}(g):=(dC_{g})_{e_G}:\mathfrak{g}\rightarrow \mathfrak{g},\,\,\, C_{g}:G\rightarrow G,\,C_{g_1}(g_2):=g^{-1}_1g_2g_1,\,\,g_1,g_2\in G.  $$ In particular, in \eqref{equationofadjoint} we have considered the restriction operator $\textnormal{Ad}|_{K}:K\rightarrow \textnormal{GL}(\mathfrak{g}).$
So, the fact that the fiber at $e_{G}K\in M,$ of the tangent bundle $TM\rightarrow M$ can be identified with $\mathfrak{g}/\mathfrak{k},$   implies that
\begin{equation}
  TM\cong   G\times_{p_{\mathfrak{k}}\circ \textnormal{Ad}|_{K}}  \mathfrak{g}/\mathfrak{k}.
\end{equation}Similarly, we have
\begin{equation}
  T^*M\cong   G\times_{(p_{\mathfrak{k}}\circ\textnormal{Ad}|_{K})^{*} }  ( \mathfrak{g}/\mathfrak{k})^{*},
\end{equation}
with the representation $(p_{\mathfrak{k}}\circ\textnormal{Ad}|_{K})^{*}$ of $K$ given by
\begin{equation}
  (\textnormal{Ad}|_{K})^{*}(k)(\eta)(Y+\mathfrak{k})=\eta(p_{\mathfrak{k}}\circ\textnormal{Ad}|_{K}(k^{-1})(Y+\mathfrak{k})  ).
\end{equation}
By considering the complexification $TM\otimes \mathbb{C}$ and $T^*M\otimes \mathbb{C},$ of the spaces $TM$ and $T^*M,$ respectively, we also have the following identifications for the space of $p$-forms,
\begin{equation}
    \Omega^{p}(M)\cong G\times_{\bigwedge^{p}\textnormal{Ad}|_{K}}   \bigwedge^{p}(\mathfrak{g}/\mathfrak{k})^{*}\otimes\mathbb{C}.
\end{equation}Let us remark that the representation $\bigwedge^{p}\textnormal{Ad}|_{K}$ is defined via
\begin{align*}
   & \bigwedge^{p}\textnormal{Ad}|_{K}(k)(e_{i_1}\wedge \cdots \wedge e_{i_p} )[Y_1+\mathfrak{k},\cdots,Y_p+\mathfrak{k}]\\
    &:=e_{i_1}\wedge \cdots \wedge e_{i_p} [p_{\mathfrak{k}}\circ \textnormal{Ad}|_{K}(k^{-1})(Y_1+\mathfrak{k}),\cdots, p_{\mathfrak{k}}\circ \textnormal{Ad}|_{K}(k^{-1})(Y_1+\mathfrak{k})],\,\,k\in K,    
\end{align*} on canonical elements $e_{i_1}\wedge \cdots \wedge e_{i_p}$ of $\bigwedge^{p}(\mathfrak{g}/\mathfrak{k})^{*}\otimes\mathbb{C}.$
Because the previous identifications can be defined in terms of the adjoint action of $G$ into $\mathfrak{g},$ we allow the following notations
\begin{itemize}
    \item $ TM\cong  G\times_{K}  \mathfrak{g}/\mathfrak{k}:= G\times_{p_{\mathfrak{k}}\circ \textnormal{Ad}|_{K}}  \mathfrak{g}/\mathfrak{k},$\\
    \item $ T^*M\cong  G\times_{K }  (\mathfrak{g}/\mathfrak{k})^{*}:= G\times_{(p_{\mathfrak{k}}\circ \textnormal{Ad}|_{K})^{*} }  (\mathfrak{g}/\mathfrak{k})^{*},$\\
    \item $ \Omega^{p}(M)\cong G\times_{K}   \bigwedge^{p}(\mathfrak{g}/\mathfrak{k})^{*}\otimes\mathbb{C}:=    G\times_{\bigwedge^{p}\textnormal{Ad}|_{K}}   \bigwedge^{p}(\mathfrak{g}/\mathfrak{k})^{*}\otimes\mathbb{C}.$
\end{itemize}Finally, the family of sections $\Gamma(\Omega^{p}(M))$ of the homogeneous ($G$-invariant) vector-bundle $\Omega^{p}(M)\rightarrow M$ is the family of $p$-differential forms on $M,$ which admits the following identification

\begin{equation}
    \Gamma(\Omega^{p}(M))\cong C^{\infty}(G,\bigwedge^{p}(\mathfrak{g}/\mathfrak{k})^{*}\otimes\mathbb{C})^{\bigwedge^{p}\textnormal{Ad}|_{K}}=:C^{\infty}(G,\bigwedge^{p}(\mathfrak{g}/\mathfrak{k})^{*}\otimes\mathbb{C})^{K}.
\end{equation}In this terminology, a $p$-form $\omega\in \Gamma(\Omega^{p}(M)), $ is identified with a function from $G$ into $\bigwedge^{p}(\mathfrak{g}/\mathfrak{k})^{*}\otimes\mathbb{C}$ satisfying
\begin{equation}
    \omega(gk)=\bigwedge^{p}\textnormal{Ad}|_{K}(k^{-1}) \omega(g).
\end{equation}We also have the isomorphism of Hilbert spaces
\begin{equation}
   L^2(\Omega^{p}(M))\cong L^2(G,\bigwedge^{p}(\mathfrak{g}/\mathfrak{k})^{*}\otimes\mathbb{C})^{\bigwedge^{p}\textnormal{Ad}|_{K}}=:L^2(G,\bigwedge^{p}(\mathfrak{g}/\mathfrak{k})^{*}\otimes\mathbb{C})^{K}.
\end{equation}

\subsection{Fourier analysis associated to differential forms} Starting with a $K$-invariant inner product on $\mathfrak{g}/\mathfrak{k},$ we can identify $(\mathfrak{g}/\mathfrak{k})^{*},$ with the set of linear forms on $\mathfrak{g},$ vanishing in $\mathfrak{k}.$  Let $\mathfrak{m}$ be the orthogonal complement of $\mathfrak{k}$ in $\mathfrak{g}$ with respect to the the Killing form
\begin{equation*}
   B(Y_1,Y_2):=\textnormal{Tr}(\textnormal{ad}(Y_1)\circ \textnormal{ad}(Y_2)).
\end{equation*}Restricting    $-B$  to  $\mathfrak{m}$, we  can  define  a  $G$-invariant Riemannian  metric  on $M=G/K.$ Now, we choose an orthonormal basis
\begin{equation}\label{basisofg}
    \{ Y_1,\cdots, Y_m, Y_{m+1}\cdots,Y_{n}    \},\,m=\dim(M),\,n=\dim(G),
\end{equation}of $\mathfrak{g}$ with respect to the Killing form, in such a way that $\{ Y_1,\cdots, Y_m\}$ is a basis of $\mathfrak{m},$ and $\{ Y_{m+1}\cdots,Y_{n}    \}$ a basis of $\mathfrak{k}.$ A $p$-form $\omega \in  \Gamma(\Omega^{p}(M))\cong C^{\infty}(G,\bigwedge^{p}(\mathfrak{g}/\mathfrak{k})^{*}\otimes\mathbb{C})^{K}, $ is determined by the system of $C^{\infty}$-functions
\begin{equation}
    \omega(Y_{i_1},\cdots, Y_{i_p}),\,\,\,1\leq i_1<\cdots<i_p\leq m,
\end{equation}
satisfying
\begin{equation}
    \omega(g)=\sum_{I_p}\omega_{I_p}(g) \omega(Y_{i_1},\cdots, Y_{i_p}).
\end{equation}
Indeed, denoting $I_p:=(i_1,\cdots,i_p),$ and by considering the dual basis of \eqref{basisofg},
\begin{equation*}
    \{ dY_1,\cdots, dY_m, dY_{m+1}\cdots,dY_{n}    \},\,dY_i(Y_j):=\delta_{i,j},
\end{equation*}
we have the unique representation 
\begin{equation}\label{extensionintipicallemtnts}
    \omega(g)=\sum_{I_p}\omega_{I_p}(g)dY_{i_1}\wedge\cdots\wedge dY_{i_p},\,\, 
\end{equation}from which one deduces
\begin{equation}
    \omega(g)(Y_{i_1},\cdots, Y_{i_p})=\omega_{I_p}(g).
\end{equation}
\begin{definition}
   Let  us consider a $p$-form $\omega \in  \Gamma(\Omega^{p}(M))\cong C^{\infty}(G,\bigwedge^{p}(\mathfrak{g}/\mathfrak{k})^{*}\otimes\mathbb{C})^{K}. $ The Fourier transform of $\omega$ at $(I_p,[\xi])$ is given by
   \begin{equation}
       \widehat{\omega}(I_p,[\xi]):=\int\limits_G \omega_{I_p}(g)\xi(g)^{*}dg.
   \end{equation}
 
 \end{definition}

\subsection{Global symbol of the exterior derivative $d$}

 The exterior derivative $d_p:\Gamma(\Omega^{p}(M))\rightarrow \Gamma(\Omega^{p+1}(M)) $ acting on $p$-forms is defined in local coordinates on a $p$-form $\omega$ (see \eqref{defi:difffrom}) by
\begin{equation}\label{defi:difffrom2}
    d_p\omega_x=\sum_{i=1}^{m}\sum_{I=(i_1,\cdots, i_p)}\frac{\omega_{I}(x)}{\partial x_i}dx_{i}|_{x}\wedge dx^{I}|_{x},\,x\in M.
\end{equation}As a section of the vector bundle $\Omega^{p+1}(M)\rightarrow M,$ $d_p\omega$ is defined via
\begin{equation}\label{defi:difffrom22}
    d_p\omega=\sum_{i=1}^{m}\sum_{I=(i_1,\cdots, i_p)}\frac{\omega_{I}(x)}{\partial x_i}dx_{i}\wedge dx^{I}.
\end{equation}
\begin{remark}[The identification $\Gamma(\Omega^{p}(M))\cong C^{\infty}(G,\bigwedge^{p}(\mathfrak{g}/\mathfrak{k})^{*}\otimes\mathbb{C})^{K}$]\label{remarkaboutidentification}
 As usual, let  $p_{M}:G\rightarrow M,$ $g\mapsto gK,$ $g\in G,$ be the natural projection. Let us define the function $\omega\mapsto \tilde{\omega},$
\begin{eqnarray}
 \Gamma(\Omega^{p}(M))  \ni \omega\mapsto \tilde{\omega},\,&\tilde{\omega}(Y_1,\cdots, Y_p)=p_{M}^{*}\omega(Y_1,\cdots, Y_p)(g)\label{identificationof forms}\\
 &:=\omega_{gK}(dp_{M}\circ dL_{g}|_{e_G}Y_1,\cdots,dp_{M}\circ dL_{g}|_{e_G}Y_p )\label{identificationof forms2}.
\end{eqnarray}Observe that we have used the isomorphism between the set of left-invariant vector fields and the Lie algebra $\mathfrak{g}.$\footnote{Indeed, the mapping $TG\rightarrow\mathfrak{g},$ $Y\mapsto Y_{e_G},$ is an isomorphism, and the mapping 
$$  Y_{g}:=dL_{g}|_{e_G}Y,\,\,Y\in \mathfrak{g},\,\,L_{g}:G\rightarrow G,\,L_{g}(h):=gh,\,g, h\in G, $$ associates to any vector $Y\in \mathfrak{g},$ a vector-field $g\mapsto Y_g$ in $TG$.} Observe that we also have done the  identification of  $(\mathfrak{g}/\mathfrak{k})^{*},$ with the set of linear forms on $\mathfrak{g},$ vanishing in $\mathfrak{k}$ which makes of $\tilde{\omega}$ in  \eqref{identificationof forms} defined via  \eqref{identificationof forms2} a well defined function  in  $C^{\infty}(G,\bigwedge^{p}(\mathfrak{g}/\mathfrak{k})^{*}\otimes\mathbb{C})^{K}.$
\end{remark}
The exterior derivative $d_{p}$ induces an operator $\mathfrak{d}_{p}$ defined by the identification $\Gamma(\Omega^{p}(M))\cong C^{\infty}(G,\bigwedge^{p}(\mathfrak{g}/\mathfrak{k})^{*}\otimes\mathbb{C})^{K}$ in Remark \ref{remarkaboutidentification}. It makes commutative the following diagram

\begin{eqnarray}
    \begin{tikzpicture}[every node/.style={midway}]
  \matrix[column sep={14em,between origins}, row sep={4em}] at (0,0) {
    \node(R) {$\Gamma(\Omega^{p}(M))$}  ; & \node(S) {$\Gamma(\Omega^{p+1}(M))$}; \\
    \node(R/I) {$C^{\infty}(G,\bigwedge^{p}(\mathfrak{g}/\mathfrak{k})^{*}\otimes\mathbb{C})^{K}$}; & \node (T) {$C^{\infty}(G,\bigwedge^{p+1}(\mathfrak{g}/\mathfrak{k})^{*}\otimes\mathbb{C})^{K}.$};\\
  };
  \draw[<-] (R/I) -- (R) node[anchor=east]  {$\cong$};
  \draw[->] (R) -- (S) node[anchor=south] {$d_{p}$};
  \draw[->] (S) -- (T) node[anchor=west] {$\cong$};
  \draw[->] (R/I) -- (T) node[anchor=north] {$\mathfrak{d}_{p}$};
\end{tikzpicture}
\end{eqnarray} More precisely,
\begin{equation}
   \boxed{ \mathfrak{d}_{p}\tilde{\omega}=\widetilde{d_{p}\omega}  ,\,\,\omega\in \Omega^{p}(M) }
\end{equation}
Using the basis \begin{equation}
    \{ Y_1,\cdots, Y_m, Y_{m+1}\cdots,Y_{n}    \},\,m=\dim(M),\,n=\dim(G),
\end{equation}of $\mathfrak{g}$ in \eqref{basisofg}, one can express the operator $\mathfrak{d}_{p}$ by  (see \cite[Page 520]{Ikeda})
\begin{equation}
   \mathfrak{d}_{p}\tilde{\omega}(Y_{i_1},Y_{i_2},\cdots ,Y_{i_{p+1}}):=\sum_{u=1}^{p+1}(-1)^{u-1}Y_{i_u}\tilde{\omega}(Y_{i_1},Y_{i_2},\cdots,Y_{i_u}^{\bullet} ,\cdots ,Y_{i_{p+1}}).
\end{equation}The notation $Y_{i_u}^{\bullet}$ means that the term  $Y_{i_{u}}$ is omitted for any $1\leq i_{1}<\cdots < i_{p+1}\leq n.$ Let us consider the basis 
\begin{equation}\label{basisiofthefiberat0}
    \{dY^{I_p}:=dY_{i_1}\wedge\cdots\wedge dY_{i_{p+1}} \},\,\,\,I_{p}=(i_1,\cdots,i_p), \,1\leq i_1<\cdots < i_p\leq n,
\end{equation} of $\bigwedge^{p}(\mathfrak{g}/\mathfrak{k})^{*}\otimes\mathbb{C}.$ We compute the symbol of the exterior derivative in the following theorem. We are going to use the notation $J_{p}=\{j_1,\cdots, j_p\},$ where $1\leq j_1<\cdots<j_{n}\leq n,$ for any $p\in \mathbb{N}.$
\begin{theorem}\label{symbolofD}Let us consider  the Cartan decomposition $\mathfrak{g}=\mathfrak{m}\oplus \mathfrak{k}.$ Under the  identification of  $(\mathfrak{g}/\mathfrak{k})^{*},$ with the set of linear forms on $\mathfrak{g},$ vanishing in $\mathfrak{k},$\footnote{and in terms of the basis in \eqref{basisiofthefiberat0}, where 
$
    \{ Y_1,\cdots, Y_m, Y_{m+1}\cdots,Y_{n}    \},\,m=\dim(M),\,n=\dim(G),
$ is  an orthonormal basis of $\mathfrak{g}$ with respect to the Killing form, in such a way that $\{ Y_1,\cdots, Y_m\}$ is a basis of $\mathfrak{m},$ and $\{ Y_{m+1}\cdots,Y_{n}    \}$ a basis of $\mathfrak{k}.$} the symbol of the exterior derivative $\mathfrak{d}_p$ is given by
\begin{equation}\label{symbol:of:d_p}
    \sigma_{\mathfrak{d}_{p}}(I_{p},J_{p+1},[\xi])=\sum_{u=1}^{p+1}(-1)^{u-1}\textnormal{det}\left[\delta_{i_k,j_v}\right]_{ \begin{subarray}{l}  1\leq k\leq p\\
  1\leq v\leq p+1,\,v\neq u
  \end{subarray} }\sigma_{Y_{j_u}}(\xi),
\end{equation}for any triple $(I_p,J_{p+1},[\xi]),$ with $[\xi]\in \widehat{G}.$

\end{theorem}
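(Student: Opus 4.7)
The plan is to apply the general symbol formula \eqref{Homogeneoussymbol'} directly to $\tilde A=\mathfrak{d}_p$, which, under the identifications of Remark \ref{remarkaboutidentification}, is a continuous linear operator between $C^\infty(G,E_0)$ and $C^\infty(G,F_0)$ with $E_0=\bigwedge^p(\mathfrak{g}/\mathfrak{k})^*\otimes\mathbb{C}$ and $F_0=\bigwedge^{p+1}(\mathfrak{g}/\mathfrak{k})^*\otimes\mathbb{C}$, equipped with the bases $\{dY^{I_p}\}$ and $\{dY^{J_{p+1}}\}$ from \eqref{basisiofthefiberat0}. That formula reduces the task to computing
$$ \sigma_{\mathfrak{d}_p}(I_p,J_{p+1},x,\xi)=\xi(x)^{*}\,(dY^{J_{p+1}})^{*}\bigl[\mathfrak{d}_p(\xi\otimes dY^{I_p})(x)\bigr], $$
i.e.\ evaluating $\mathfrak{d}_p$ on the matrix-valued form $g\mapsto\xi(g)\otimes dY^{I_p}$ and extracting its $J_{p+1}$-component.

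First I would use that $dY^{I_p}$ is constant as a function $G\to\bigwedge^p(\mathfrak{g}/\mathfrak{k})^{*}\otimes\mathbb{C}$; pairing with alternating tuples then gives, for any scalar $f\in C^\infty(G)$,
$$ (f\otimes dY^{I_p})(Y_{j_1},\ldots,Y_{j_u}^{\bullet},\ldots,Y_{j_{p+1}})=f(g)\,\det\bigl[\delta_{i_k,j_v}\bigr]_{\substack{1\le k\le p\\ 1\le v\le p+1,\,v\ne u}}. $$
Applying the Koszul-type formula for $\mathfrak{d}_p$ displayed just before the statement of the theorem, and noting that extracting the $J_{p+1}$-coefficient in the dual basis is exactly evaluation on $(Y_{j_1},\ldots,Y_{j_{p+1}})$, I get
$$ (dY^{J_{p+1}})^{*}\bigl[\mathfrak{d}_p(\xi\otimes dY^{I_p})(x)\bigr]=\sum_{u=1}^{p+1}(-1)^{u-1}\det[\delta_{i_k,j_v}]\,(Y_{j_u}\xi)(x). $$

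Next I would invoke the standard identity $Y\xi(x)=\xi(x)\sigma_Y(\xi)$ for any left-invariant vector field $Y\in\mathfrak{g}$, where $\sigma_Y(\xi)=\tfrac{d}{dt}|_{t=0}\xi(\exp(tY))$. Substituting this into the previous line and multiplying on the left by $\xi(x)^{*}$ collapses the $\xi(x)^{*}\xi(x)=I$ factor and yields
$$ \sigma_{\mathfrak{d}_p}(I_p,J_{p+1},x,\xi)=\sum_{u=1}^{p+1}(-1)^{u-1}\det[\delta_{i_k,j_v}]\,\sigma_{Y_{j_u}}(\xi), $$
which is exactly \eqref{symbol:of:d_p}. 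Notice that the right-hand side does not depend on $x$, which is consistent with the $G$-invariance of the exterior derivative via Lemma \ref{G-inv:Fourier:Mult}.

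The main obstacle is not analytic but notational: one must carefully track the identification between the exterior-algebra formalism (forms evaluated on tuples of vector fields, with the Koszul sign $(-1)^{u-1}$) and the matrix-valued symbolic formalism of the paper, and in particular verify that $\{dY^{I_p}\}$ is genuinely the dual basis to the tuples $(Y_{i_1},\ldots,Y_{i_p})$ used to test forms, which is precisely what produces the determinant of Kronecker deltas. No subelliptic calculus is required since the assertion is purely algebraic and amounts to unwinding definitions.
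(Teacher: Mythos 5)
Your proof is correct and takes essentially the same route as the paper: both boil the computation down to the Koszul formula for $\mathfrak{d}_p$, the determinant of Kronecker deltas coming from pairing $dY^{I_p}$ with the tuples $(Y_{j_1},\ldots,\widehat{Y_{j_u}},\ldots,Y_{j_{p+1}})$, and the identification $Y_{j_u}\xi(e_G)=\sigma_{Y_{j_u}}(\xi)$. The only difference is presentational: you plug $\xi\otimes dY^{I_p}$ directly into the symbol formula \eqref{Homogeneoussymbol'}, while the paper Fourier-expands a general $\tilde\omega$ and matches against the quantisation formula \eqref{quantization}; the two procedures are equivalent by construction.
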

\begin{proof}
For any $\tilde{\omega}\in C^{\infty}(G,\bigwedge^{p}(\mathfrak{g}/\mathfrak{k})^{*}\otimes\mathbb{C})^{K} $ we have
\begin{equation}
      \tilde{\omega}(g)=\sum_{I_p}\tilde{\omega}_{I_p}(g)dY_{i_1}\wedge\cdots\wedge Y_{i_p}=:\sum_{I_p}\omega_{I_p}(g)dY^{I_p},\,\,
\end{equation}with components $\tilde{\omega}_{I_p}\in C^{\infty}(G).$ Let us choose a $(p+1)-$multi-index  $J_{p+1}=(j_{1},\cdots, j_{p+1}),$ $1\leq j_1<\cdots<j_{p+1}\leq n.$  Observing that \begin{align*}
   &  \mathfrak{d}_{p}\tilde{\omega}(g)(Y_{j_1},Y_{j_2},\cdots ,Y_{j_{p+1}})\\
   &=\sum_{u=1}^{p+1}(-1)^{u-1}Y_{j_u}\tilde{\omega}(Y_{j_1},Y_{j_2},\cdots,Y_{j_u}^{\bullet} ,\cdots ,Y_{j_{p+1}})\\
     &=\sum_{I_p}\sum_{u=1}^{p+1}(-1)^{u-1}Y_{j_u}\omega_{I_p}(g)dY^{I_p}(Y_{j_1},Y_{j_2},\cdots,Y_{j_u}^{\bullet} ,\cdots ,Y_{j_{p+1}})\\
     &=\sum_{I_p}\sum_{u=1}^{p+1}(-1)^{u-1}Y_{j_u}\omega_{I_p}(g)\textnormal{det}[dY_{i_k}(Y_{j_v})]_{ \begin{subarray}{l}  1\leq k\leq p\\
  1\leq v\leq p+1,\,v\neq u
  \end{subarray} },
\end{align*}
and making use of the Fourier inversion formula, we have
\begin{align*}
& \mathfrak{d}_{p}\tilde{\omega}(g)(Y_{j_1},Y_{j_2},\cdots ,Y_{j_{p+1}})=\sum_{I_p}\sum_{u=1}^{p+1}(-1)^{u-1}Y_{j_u}\omega_{I_p}(g)\textnormal{det}[dY_{i_k}(Y_{j_v})]_{ \begin{subarray}{l}  1\leq k\leq p\\
  1\leq v\leq p+1,\,v\neq u
  \end{subarray} }\\
  &=\sum_{[\xi]\in \widehat{G}}\sum_{I_p}d_\xi  \textnormal{Tr}[ \xi(g) \left(\sum_{u=1}^{p+1}(-1)^{u-1}\textnormal{det}[dY_{i_k}(Y_{j_v})]_{ \begin{subarray}{l}  1\leq k\leq p\\
  1\leq v\leq p+1,\,v\neq u
  \end{subarray} }Y_{j_u}\xi(e_G)\right)\widehat{\tilde{\omega}}_{I_p}(\xi)].
\end{align*}
Note that the terms $  \mathfrak{d}_{p}\tilde{\omega}(g)(Y_{j_1},Y_{j_2},\cdots ,Y_{j_{p+1}})$ are the coefficients of the function $\mathfrak{d}_{p}\tilde{\omega}(g)$ with respect to its expansion as a linear combination of elements of the basis $\{dY_{J_{p+1}}\},$ so that
\begin{align*}
& \mathfrak{d}_{p}\tilde{\omega}(g)=\\
&\sum_{[\xi]\in \widehat{G}}\sum_{I_p,J_{p+1}} d_\xi  \textnormal{Tr}[ \xi(g) \left(\sum_{u=1}^{p+1}(-1)^{u-1}\textnormal{det}[dY_{i_k}(Y_{j_v})]_{ \begin{subarray}{l}  1\leq k\leq p\\
  1\leq v\leq p+1,\,v\neq u
  \end{subarray} }Y_{j_u}\xi(e_G)\right)\widehat{\tilde{\omega}}_{I_p}(\xi)]dY_{J_{p+1}}.
\end{align*}
Since we want to  find the unique matrix-valued function $$(I_{p},J_{p+1},[\xi])\mapsto \sigma_{\mathfrak{d}_{p}}(I_{p},J_{p+1},[\xi])$$ satisfying
\begin{equation*}
   \mathfrak{d}_p\tilde{\omega}(g)=\sum_{I_{p},J_{p+1},[\xi]\in \widehat{G}}d_{\xi}\textnormal{Tr}[\xi(g)\sigma_{\mathfrak{d}_p}(I_p,J_{p+1},\xi)    \widehat{\tilde{\omega}}(I_p,\xi)   ]dY^{J_{p+1}},
\end{equation*}
 we deduce that
\begin{equation*}
    \sigma_{\mathfrak{d}_{p}}(I_{p},J_{p+1},[\xi])=\sum_{u=1}^{p+1}(-1)^{u-1}\textnormal{det}[dY_{i_k}(Y_{j_v})]_{ \begin{subarray}{l}  1\leq k\leq p\\
  1\leq v\leq p+1,\,v\neq u
  \end{subarray} }Y_{j_u}\xi(e_G),
\end{equation*} We conclude the proof by using  the identity $\sigma_{Y_{j_u}}(\xi)=Y_{j_u}\xi(e_G).$
\end{proof}

\subsection{The global symbol of $d^{*}$} The adjoint operator $d_{p}^{*}$ of the exterior derivative $d_{p}$ induces an operator $\mathfrak{d}_{p}^{*}$ defined by the identification $\Gamma(\Omega^{p}(M))\cong C^{\infty}(G,\bigwedge^{p}(\mathfrak{g}/\mathfrak{k})^{*}\otimes\mathbb{C})^{K}$ in Remark \ref{remarkaboutidentification}. It makes commutative the following diagram

\begin{eqnarray}
    \begin{tikzpicture}[every node/.style={midway}]
  \matrix[column sep={14em,between origins}, row sep={4em}] at (0,0) {
    \node(R) {$\Gamma(\Omega^{p+1}(M))$}  ; & \node(S) {$\Gamma(\Omega^{p}(M))$}; \\
    \node(R/I) {$C^{\infty}(G,\bigwedge^{p+1}(\mathfrak{g}/\mathfrak{k})^{*}\otimes\mathbb{C})^{K}$}; & \node (T) {$C^{\infty}(G,\bigwedge^{p}(\mathfrak{g}/\mathfrak{k})^{*}\otimes\mathbb{C})^{K}.$};\\
  };
  \draw[<-] (R/I) -- (R) node[anchor=east]  {$\cong$};
  \draw[->] (R) -- (S) node[anchor=south] {$d_{p}^*$};
  \draw[->] (S) -- (T) node[anchor=west] {$\cong$};
  \draw[->] (R/I) -- (T) node[anchor=north] {$\mathfrak{d}_{p}^*$};
\end{tikzpicture}
\end{eqnarray} More precisely,
\begin{equation}
   \boxed{ \mathfrak{d}_{p}^*\tilde{\omega}=\widetilde{d_{p}^*\omega}  ,\,\,\omega\in\Gamma( \Omega^{p+1}(M)) }
\end{equation}
Using the basis \begin{equation}
    \{ Y_1,\cdots, Y_m, Y_{m+1}\cdots,Y_{n}    \},\,m=\dim(M),\,n=\dim(G),
\end{equation}of $\mathfrak{g}$ in \eqref{basisofg}, one can express the operator $\mathfrak{d}_{p}^*$ by (see \cite[Page 520]{Ikeda})
\begin{equation}
   \mathfrak{d}_{p}^*\tilde{\omega}(Y_{i_1},Y_{i_2},\cdots ,Y_{i_{p}}):=-\sum_{u=1}^{n}Y_{u}\tilde{\omega}(Y_u,Y_{i_1},Y_{i_2},\cdots ,Y_{i_{p}}).
\end{equation}
\begin{theorem}\label{symbolofadjoint} Let us consider the  Cartan decomposition $\mathfrak{g}=\mathfrak{m}\oplus \mathfrak{k}.$ Under the  identification of  $(\mathfrak{g}/\mathfrak{k})^{*},$ with the set of linear forms on $\mathfrak{g},$ vanishing in $\mathfrak{k},$ the symbol of $\mathfrak{d}_p^{*}$ is given by
\begin{equation}\label{symbol:of:d_p1}
    \sigma_{\mathfrak{d}_{p}^*}(I_{p+1},J_{p},[\xi])=-\sum_{u=1}^{n}\sigma_{Y_{u}}(\xi) \det[\delta_{i_k,j_v}]_{ \begin{subarray}{l}  1\leq k\leq p+1\\
  0\leq v\leq p,\,j_0= u
  \end{subarray} },
\end{equation}for any triple $(I_p,J_{p+1},[\xi]),$ with $[\xi]\in \widehat{G}.$

\end{theorem}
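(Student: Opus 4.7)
The plan is to mirror the argument of Theorem \ref{symbolofD} (the symbol of $\mathfrak{d}_p$), replacing the Cartan-like formula for $\mathfrak{d}_p$ by the formula
\begin{equation*}
\mathfrak{d}_{p}^*\tilde{\omega}(Y_{j_1},\cdots ,Y_{j_{p}})=-\sum_{u=1}^{n}Y_{u}\tilde{\omega}(Y_u,Y_{j_1},\cdots ,Y_{j_{p}}).
\end{equation*}
First, I would expand an arbitrary $\tilde{\omega}\in C^{\infty}(G,\bigwedge^{p+1}(\mathfrak{g}/\mathfrak{k})^{*}\otimes\mathbb{C})^{K}$ in the basis $\{dY^{I_{p+1}}\}$ of $\bigwedge^{p+1}(\mathfrak{g}/\mathfrak{k})^{*}\otimes\mathbb{C}$,
\begin{equation*}
\tilde{\omega}(g)=\sum_{I_{p+1}}\tilde{\omega}_{I_{p+1}}(g)\,dY^{I_{p+1}},\qquad \tilde{\omega}_{I_{p+1}}\in C^{\infty}(G),
\end{equation*}
and evaluate $\mathfrak{d}_p^*\tilde{\omega}(g)$ on a tuple $(Y_{j_1},\dots,Y_{j_p})$ using the multilinearity of $dY^{I_{p+1}}$:
\begin{equation*}
\mathfrak{d}_p^*\tilde{\omega}(g)(Y_{j_1},\dots,Y_{j_p})=-\sum_{u=1}^{n}\sum_{I_{p+1}}Y_{u}\tilde{\omega}_{I_{p+1}}(g)\,\det[dY_{i_k}(Y_{j_v})]_{ \begin{subarray}{l}  1\leq k\leq p+1\\ 0\leq v\leq p,\,j_0=u\end{subarray} }.
\end{equation*}

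Second, I would apply the Fourier inversion formula on $G$ to each scalar coefficient $Y_u\tilde{\omega}_{I_{p+1}}(g)$, using that the (matrix-valued) symbol of the left-invariant vector field $Y_u$ is $\sigma_{Y_u}(\xi)=Y_u\xi(e_G)$, to obtain
\begin{equation*}
Y_u\tilde{\omega}_{I_{p+1}}(g)=\sum_{[\xi]\in\widehat{G}}d_\xi\,\textnormal{Tr}\bigl[\xi(g)\,\sigma_{Y_u}(\xi)\,\widehat{\tilde{\omega}}(I_{p+1},\xi)\bigr].
\end{equation*}
Plugging this in and using $dY_{i_k}(Y_{j_v})=\delta_{i_k,j_v}$, the formula for $\mathfrak{d}_p^*\tilde{\omega}(g)(Y_{j_1},\dots,Y_{j_p})$ becomes
\begin{equation*}
-\sum_{u=1}^{n}\sum_{I_{p+1},[\xi]}d_\xi\,\textnormal{Tr}\bigl[\xi(g)\,\sigma_{Y_u}(\xi)\,\widehat{\tilde{\omega}}(I_{p+1},\xi)\bigr]\det[\delta_{i_k,j_v}]_{ \begin{subarray}{l}  1\leq k\leq p+1\\ 0\leq v\leq p,\,j_0=u\end{subarray} }.
\end{equation*}

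Third, I would re-expand $\mathfrak{d}_p^*\tilde\omega(g)=\sum_{J_p}\mathfrak{d}_p^*\tilde\omega(g)(Y_{j_1},\dots,Y_{j_p})\,dY^{J_p}$ and compare the resulting expression with the quantisation formula
\begin{equation*}
\mathfrak{d}_{p}^{*}\tilde{\omega}(g)=\sum_{I_{p+1},J_p,[\xi]\in\widehat{G}}d_\xi\,\textnormal{Tr}\bigl[\xi(g)\,\sigma_{\mathfrak{d}_{p}^{*}}(I_{p+1},J_p,\xi)\,\widehat{\tilde{\omega}}(I_{p+1},\xi)\bigr]dY^{J_p}.
\end{equation*}
The uniqueness of Fourier coefficients on $G$ then forces
\begin{equation*}
\sigma_{\mathfrak{d}_{p}^{*}}(I_{p+1},J_{p},[\xi])=-\sum_{u=1}^{n}\sigma_{Y_{u}}(\xi)\,\det[\delta_{i_k,j_v}]_{ \begin{subarray}{l}  1\leq k\leq p+1\\ 0\leq v\leq p,\,j_0= u\end{subarray} },
\end{equation*}
which is exactly \eqref{symbol:of:d_p1}. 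The main obstacle is purely combinatorial bookkeeping with the determinant (which encodes the antisymmetry in the slot $j_0=u$ together with the ordered multi-index $I_{p+1}$), but this proceeds exactly as in the determinantal expansion performed in Theorem \ref{symbolofD}, with the single contraction $Y_u\mapsto Y_u$ replacing the alternating sum over omitted indices. Once the Fourier inversion and uniqueness step has been set up as above, no new analytic ingredient is needed beyond what was used for $\mathfrak{d}_p$.
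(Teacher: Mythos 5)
Your proposal is correct and follows essentially the same route as the paper's own proof: expand $\tilde\omega$ in the basis $\{dY^{I_{p+1}}\}$, evaluate $\mathfrak{d}_p^*\tilde\omega$ on $(Y_{j_1},\dots,Y_{j_p})$ to expose the determinantal coefficient, apply Fourier inversion and the identity $\sigma_{Y_u}(\xi)=Y_u\xi(e_G)$, and then read off the symbol from the quantisation formula by uniqueness. The paper does exactly these steps (it inserts the Fourier inversion directly into the whole expression rather than coefficient-by-coefficient, but that is a cosmetic difference, not a change of argument).
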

\begin{proof} We want to  find the unique matrix-valued function $$(I_{p+1},J_{p},[\xi])\mapsto \sigma_{\mathfrak{d}_{p}^*}(I_{p+1},J_{p},[\xi])$$ satisfying the quantisation formula
\begin{equation*}
   \mathfrak{d}_p^*\tilde{\omega}(g)=\sum_{I_{p+1},J_{p},[\xi]\in \widehat{G}}d_{\xi}\textnormal{Tr}[\xi(g)\sigma_{\mathfrak{d}_p^*}(I_{p+1},J_{p},\xi)    \widehat{\tilde{\omega}}(I_{p+1},\xi)   ]dY^{J_{p}}.
\end{equation*}By considering the expansion of the $p+1$-form
 $\tilde{\omega}\in C^{\infty}(G,\bigwedge^{p+1}(\mathfrak{g}/\mathfrak{k})^{*}\otimes\mathbb{C})^{K} $ we have
\begin{equation}
      \tilde{\omega}(g)=\sum_{I_{p+1}}\omega_{I_{p+1}}(g)dY^{I_{p+1}}.\,\,
\end{equation}By evaluating $\mathfrak{d}_p^*\tilde{\omega}(g)$ at $(Y_{j_1},Y_{j_2},\cdots ,Y_{j_{p}}),$ we obtain
\begin{align*}
\mathfrak{d}_p^*\tilde{\omega}(g)(Y_{j_1},Y_{j_2},\cdots ,Y_{j_{p}})
&=-\sum_{u=1}^{n}Y_{u}\tilde{\omega}(Y_u,Y_{j_1},Y_{j_2},\cdots ,Y_{j_{p}})\\
&=-\sum_{I_{p+1}}\sum_{u=1}^{n}Y_{u}\omega_{I_{p+1}}(g)dY^{I_{p+1}}(Y_u,Y_{j_1},Y_{j_2},\cdots ,Y_{j_{p}})\\
&=-\sum_{I_{p+1}}\sum_{u=1}^{n}Y_{u}{\omega}_{I_{p+1}}(g)\det[dY_{i_k}(Y_{j_v})]_{ \begin{subarray}{l}  1\leq k\leq p+1\\
  0\leq v\leq p,\,j_0= u
  \end{subarray} }.
\end{align*}
The Fourier inversion formula implies
\begin{align*}
 &\mathfrak{d}_p^*\tilde{\omega}(g)(Y_{j_1},Y_{j_2},\cdots ,Y_{j_{p}})\\
 &=\sum_{[\xi]\in \widehat{G}} \sum_{I_{p+1}}d_\xi\textnormal{Tr}[\xi(g)\left(-\sum_{u=1}^{n}Y_{u}\xi(e_G) \det[dY_{i_k}(Y_{j_v})]_{ \begin{subarray}{l}  1\leq k\leq p+1\\
  0\leq v\leq p,\,j_0= u
  \end{subarray} } \right)\widehat{{\omega}}(I_{p+1},[\xi])].
\end{align*}Consequently,
\begin{align*}
 &\mathfrak{d}_p^*\tilde{\omega}(g)\\
 &=\sum_{[\xi]\in \widehat{G}} \sum_{I_{p+1},J_p}d_\xi\textnormal{Tr}[\xi(g)\left(-\sum_{u=1}^{n}Y_{u}\xi(e_G) \det[dY_{i_k}(Y_{j_v})]_{ \begin{subarray}{l}  1\leq k\leq p+1\\
  0\leq v\leq p,\,j_0= u
  \end{subarray} } \right)\widehat{{\omega}}(I_{p+1},[\xi])]dY_{J_p},
\end{align*}and we deduce that
\begin{equation*}
      \sigma_{\mathfrak{d}_{p}}(I_{p+1},J_{p},[\xi])=-\sum_{u=1}^{n}Y_{u}\xi(e_G) \det[dY_{i_k}(Y_{j_v})]_{ \begin{subarray}{l}  1\leq k\leq p+1\\
  0\leq v\leq p,\,j_0= u
  \end{subarray} }. 
\end{equation*}Thus, we conclude the proof.
\end{proof}

\subsection{The Dirac operator and the global symbol of the Hodge-Laplacian}

Let us consider the total space $\Omega^{\bullet}(M)=\bigoplus_{p=0}^{m}\Omega^{p}(M)$ and the corresponding vector bundle
\begin{equation}
     \Omega^{\bullet}(M)=\bigoplus_{p=0}^{m}\Omega^{p}(M)\rightarrow M,
\end{equation}
over $M.$ A section of the vector bundle $\Omega^{\bullet}(M)\rightarrow M$ is an element of the form
\begin{equation}
    \omega=\bigoplus \omega^{p},\,\,\omega^{p}\in \Gamma(\Omega^p(M)),
\end{equation}that is, a direct sum of $p$-differential forms over $M.$ The Dirac operator
\begin{equation}
    \slashed{D}:=d+d^{*}:\Gamma(\Omega^\bullet(M))\rightarrow \Gamma(\Omega^\bullet(M))
\end{equation}operates on a graded form $ \omega=\bigoplus_{p=0}^{m} \omega^{p},$ by
\begin{equation}
  \slashed{D}\omega\equiv (d+d^{*})\omega:= d^{*}_0\omega^{1}\oplus(d_0\omega^{0}+d^{*}_1\omega^2)\oplus \cdots \oplus (d_{k-1}\omega^{k-1}+d^{*}_k\omega^{k+1})\oplus\cdots   
\end{equation}
So, to compute the action of the Dirac operator on a graded form $\omega$, we may compute the components $d_{k-1}\omega^{k-1}+d^{*}_k\omega^{k+1}.$
The Fourier analysis above and Theorems  \ref{symbolofD} and \ref{symbolofadjoint} imply 
\begin{align*}
 &\widetilde{\slashed{D}\omega}|_{\Omega^{k}(M)}:=  \widetilde{d_{k-1}\omega^{k-1}}+\widetilde{d^{*}_k\omega^{k+1}}\\ &=\sum_{I_{k-1},J_{k},[\xi]\in \widehat{G}}d_{\xi}\textnormal{Tr}[\xi(g)\sigma_{\mathfrak{d}_p}(I_{k-1},J_{k},\xi)    \widehat{\tilde{\omega}}^{k-1}(I_{k-1},\xi)   ]dY^{J_{k}}\\
 &\,\,\,+\sum_{I_{k+1},J_{k},[\xi]\in \widehat{G}}d_{\xi}\textnormal{Tr}[\xi(g)\sigma_{\mathfrak{d}_{k}^*}(I_{k+1},J_{k},\xi)    \widehat{\tilde{\omega}}^{k+1}(I_{k+1},\xi)   ]dY^{J_{k}}\\
 &=\sum_{\begin{subarray}{l} I_{k-1},I_{k+1}\\
 J_{k},\,[\xi]\in \widehat{G}
  \end{subarray}}d_{\xi}\textnormal{Tr}[\xi(g)\left(\sigma_{\mathfrak{d}_p}(I_{k-1},J_{k},\xi)    \widehat{\tilde{\omega}}^{k-1}(I_{k-1},\xi)+\sigma_{\mathfrak{d}_{k}^*}(I_{k+1},J_{k},\xi)    \widehat{\tilde{\omega}}^{k+1}(I_{k+1},\xi)    \right)  ]dY^{J_{k}}.
\end{align*}
Finally, if one considers the Hodge-Laplacian $$ \slashed{D}^2_{k+1}=d_{k}d^{*}_k+d^{*}_{k+1}d_{k+1}:\Gamma(\Omega^{k+1}(M))\rightarrow \Gamma(\Omega^{k+1}(M)),$$ it was shown in \cite[Page 519]{Ikeda} that
\begin{equation}
   \boxed{ -\mathcal{L}_G\tilde\omega^{k+1}=\widetilde{\slashed{D}^2_{k+1}\omega^{k+1}}  ,\,\,\omega^{k+1}\in \Gamma( \Omega^{k}(M))  }
\end{equation}
restricting  the  Killing  form  sign  changed  to  $\mathfrak{m}$ in the Cartan decomposition $\mathfrak{g}=\mathfrak{m}\oplus\mathfrak{k}.$
By using the argument in the proof of Theorem \ref{symbolofD}, let us find the unique matrix-valued function $$(I_{k+1},J_{k+1},[\xi])\mapsto \sigma_{\slashed{D}^2_{k+1}}(I_{k+1},J_{k+1},[\xi])$$ satisfying the quantisation formula
\begin{equation*}
    -\mathcal{L}_G\tilde\omega^{k+1}(g)=\sum_{I_{k+1},J_{k+1},[\xi]\in \widehat{G}}d_{\xi}\textnormal{Tr}[\xi(g)\sigma_{\slashed{D}^2_{k+1}}(I_{k+1},J_{k+1},[\xi])   \widehat{\tilde{\omega}}^{k+1}(I_{k+1},\xi)   ]dY^{J_{k+1}}.
\end{equation*}By considering the expansion of the $k+1$-form
 $\tilde{\omega}\in C^{\infty}(G,\bigwedge^{k+1}(\mathfrak{g}/\mathfrak{k})^{*}\otimes\mathbb{C})^{K} $ we have
\begin{equation}
      \tilde{\omega}(g)=\sum_{I_{k+1}}\omega_{I_{k+1}}(g)dY^{I_{k+1}}.\,\,
\end{equation}By evaluating $-\mathcal{L}_G\tilde\omega^{k+1}(g)$ at $(Y_{j_1},Y_{j_2},\cdots ,Y_{j_{k+1}}),$ and using the Fourier inversion formula, we obtain
\begin{align*}
-\mathcal{L}_G\tilde\omega^{k+1}(g)(Y_{j_1},Y_{j_2},\cdots ,Y_{j_{k+1}})
&=\sum_{I_{k+1}}-\mathcal{L}_G\omega_{I_{k+1}}(g)\\
 &=\sum_{[\xi]\in \widehat{G}} \sum_{I_{p+1}}d_\xi\textnormal{Tr}[\xi(g)\left(-\lambda_{[\xi]}I_{d_\xi}\right)\widehat{{\omega}}(I_{k+1},[\xi])].
\end{align*}Consequently,
\begin{align*}
 -\mathcal{L}_G\tilde\omega^{k+1}(g)
 &=\sum_{[\xi]\in \widehat{G}} \sum_{I_{k+1},J_{k+1}}d_\xi\textnormal{Tr}[\xi(g)\left(-\lambda_{[\xi]}I_{d_\xi}\right)\widehat{{\omega}}^{k+1}(I_{k+1},[\xi])]dY_{J_{k+1}},
\end{align*}and we deduce that
\begin{equation*}
      \sigma_{\slashed{D}^2_{k+1}}(I_{k+1},J_{k+1},[\xi])   =-\lambda_{[\xi]}I_{d_\xi}, 
\end{equation*}as expected.

\subsubsection*{Acknowledgements} The authors thank Julio Delgado and  David Rottensteiner for discussions.

\bibliographystyle{amsplain}

\end{document}